\def\R{{\mathbb{R}}}
\def\N{{\mathbb{N}}}
\def\Z{{\mathbb{Z}}}
\newcommand{\E}{\mathbb{E}}
\renewcommand{\P}{\mathbb{P}}
\def\e{{\varepsilon}}
\let\reftagform@=\tagform@
\def\tagform@#1{\maketag@@@{(\ignorespaces\textcolor{purple}{#1}\unskip\@@italiccorr)}}
\renewcommand{\eqref}[1]{\textup{\reftagform@{\ref{#1}}}}
\DeclareUrlCommand\ULurl@@{%
  \def\UrlLeft{\uline\bgroup}%
  \def\UrlRight{\egroup}}
\def\ULurl@#1{\hyper@linkurl{\ULurl@@{#1}}{#1}}
\DeclareRobustCommand*\ULurl{\hyper@normalise\ULurl@}
\def\<{\langle}
\def\>{\rangle}
\newcommand{\Pro}{\ensuremath{\mathbb{P}}}
\newcommand{\indi}{\ensuremath{\boldsymbol 1}}
\DeclareMathOperator*{\esssup}{ess\,sup}
\theoremstyle{usual}
\newtheorem{theorem}{Theorem}[section]
\newtheorem{corollary}[theorem]{Corollary}
\newtheorem{lemma}[theorem]{Lemma}
\newtheorem{proposition}[theorem]{Proposition}
\newtheoremstyle{likedef}
  {}%
  {}%
  {}%
  {\parindent}%
  {\bfseries}%
  {.}%
  {.5em}%
  {}%
\theoremstyle{likedef}
\newtheorem{definition}[theorem]{Definition}
\newtheorem{remark}[theorem]{Remark}
\newtheorem{example}[theorem]{Example}
\newtheorem{question}{Question}
\newtheorem{questionend}{Question}
\numberwithin{equation}{section}
\newtheorem{thm}{Theorem}
\newtheorem{cor}[thm]{Corollary}
\newtheorem{prop}[thm]{Proposition}
\newtheorem{claim}{Claim}
\begin{document}

\title{50 years of first passage percolation}
\author{Antonio Auffinger \thanks{The research of A. A. is supported by NSF grant DMS-1597864.}  \\ \small{Northwestern University} \and Michael Damron \thanks{The research of M. D. is supported by NSF grant DMS-1419230 and an NSF CAREER award.} \\ \small{Georgia Tech} \and Jack Hanson \thanks{The research of J. H. is supported by the AMS-Simons travel grant and NSF grant DMS-1612921.} \\ \small{CUNY}}\maketitle

\footnotetext{MSC2000: Primary 60K35, 82B43.}
\footnotetext{Keywords: First-passage percolation, shape fluctuations, Busemann functions, Richardson's growth model, graph of infection.}

\begin{abstract}

We celebrate the 50th anniversary of one the most classical models in probability theory. In this survey, we describe the main results of first passage percolation, paying special attention to the recent burst of advances of the past 5 years. The purpose of these notes is twofold. In the first chapters, we give self-contained proofs of seminal results obtained in the '80s and '90s on limit shapes and geodesics, while covering the state of the art of these questions.  Second, aside from these classical results, we discuss recent perspectives and directions including (1) the connection between Busemann functions and geodesics, (2) the proof of sublinear variance under $2+\log$ moments of passage times and (3) the role of growth and competition models. We also provide a collection of (old and new) open questions, hoping to solve them before the 100th birthday.  \end{abstract}

\tableofcontents

\newpage
\section{Introduction}

\subsection{The model of first passage percolation and its history}

First passage percolation (FPP) was originally introduced by Hammersley and Welsh \cite{HW} in 1965 as a model of fluid flow through a random medium. It has been a stage of research for probabilists since its origins but despite all efforts through the past decades, most of the predictions about its important statistics remain to be understood. Most of the beauty of the model lies in its simple definition (as a random metric space) and the property that several of its fascinating conjectures do not require much effort to be stated. During these $50$ years, FPP brought attention of theoretical physicists, biologists, and computer scientists and also gave birth to some of the most classical tools in mathematics, the sub-additive ergodic theorem as one of the main examples. Here, we will focus on the model defined on the lattice $\mathbb Z^d$; some variants will be discussed in Section \ref{sec:alltheotherthings}.

The model is defined as follows. We place a non-negative random variable $\tau_e$, called the {\it passage time} of the edge $e$, at each nearest-neighbor edge in $\Z^d$. The collection $(\tau_e)$ is assumed to be independent, identically distributed with common distribution $F$ and probability measure~$\nu$. 
The random variable $\tau_e$ is interpreted as the time or the cost needed to traverse edge~$e$.

 A {\it path} $\Gamma$ is a finite or infinite sequence of edges $e_1, e_2, \ldots$ in $\Z^d$ such that for each $n \geq 1$, $e_n$ and $e_{n+1}$ share exactly one endpoint. For any finite path $\Gamma$ we define the {\it passage time} of $\Gamma$ to be
\[
T(\Gamma)=\sum_{e \in \Gamma} \tau_e.
\]
Given two points $x,y \in \mathbb{R}^d$ one then sets
\begin{equation}\label{definition:passagetime}
T(x,y) = \inf_{\Gamma} T(\Gamma),
\end{equation}
where the infimum is over all finite paths $\Gamma$ that contain both $x'$ and $y'$, and $x'$ is the unique vertex in $\mathbb{Z}^d$ such that $x \in x' + [0,1)^d$ (similarly for $y'$). The random variable $T(x,y)$ will be called the passage time between points $x$ and $y$. In the original interpretation of the model, $T(x,y)$ represents the time that a fluid with source in $x$ takes to reach a location $y$. 

For each $t \geq 0$ let 
\[
B(t) = \{y \in \mathbb{R}^d~:~T(0,y)\leq t\} .
\]
In the case that $F(0)=0$, the pair $(\mathbb{Z}^d,T(\cdot, \cdot))$ is a metric space and $B(t) \cap \mathbb {Z}^d$ is the (random) ball of radius $t$ around the origin. The ultimate goal of first passage percolation is to understand this metric as the observer moves away from $\mathbb Z^d$ or as we make the Euclidean length of the edges small. A variety of questions comes to mind almost immediately. We write $|\cdot|$ for the $\ell^1$ norm in $\mathbb R^d$.

\begin{enumerate}
\item What is the typical distance between two points that are far from each other in the lattice? Or in other words, what can we say about $T(x,y)$ as $|x-y| \to \infty$? Does it converge? What is the rate of convergence?
\item How does a ball of large radius look? Do we have a scaling limit and fluctuation theory for the set $B(t)$? 
\item What is the geometry of geodesics (time-minimizing paths) between two distant points? How different they are from straight lines?  
\item What role does the distribution of the passage times play in describing the metric?
\end{enumerate}

In this manuscript, we will discuss progress on these and related questions. The purpose is twofold. First, we hope that this set of notes will serve as a quick guide for readers who are not necessarily experts in the field. We will try to provide not only the main results but also the main techniques and a large collection of open problems. Second, the field had a  burst of activity in the past $5$ years and the most complete survey is more than a decade old. We hope that these notes will fill this gap. At least, we hope to share some of the beautiful mathematical ideas and constructions that arise through FPP and which have enchanted many throughout these years.

\begin{figure}[ht]
\centering
\subfigure[$t=0$]{%
  \includegraphics[scale=0.2]{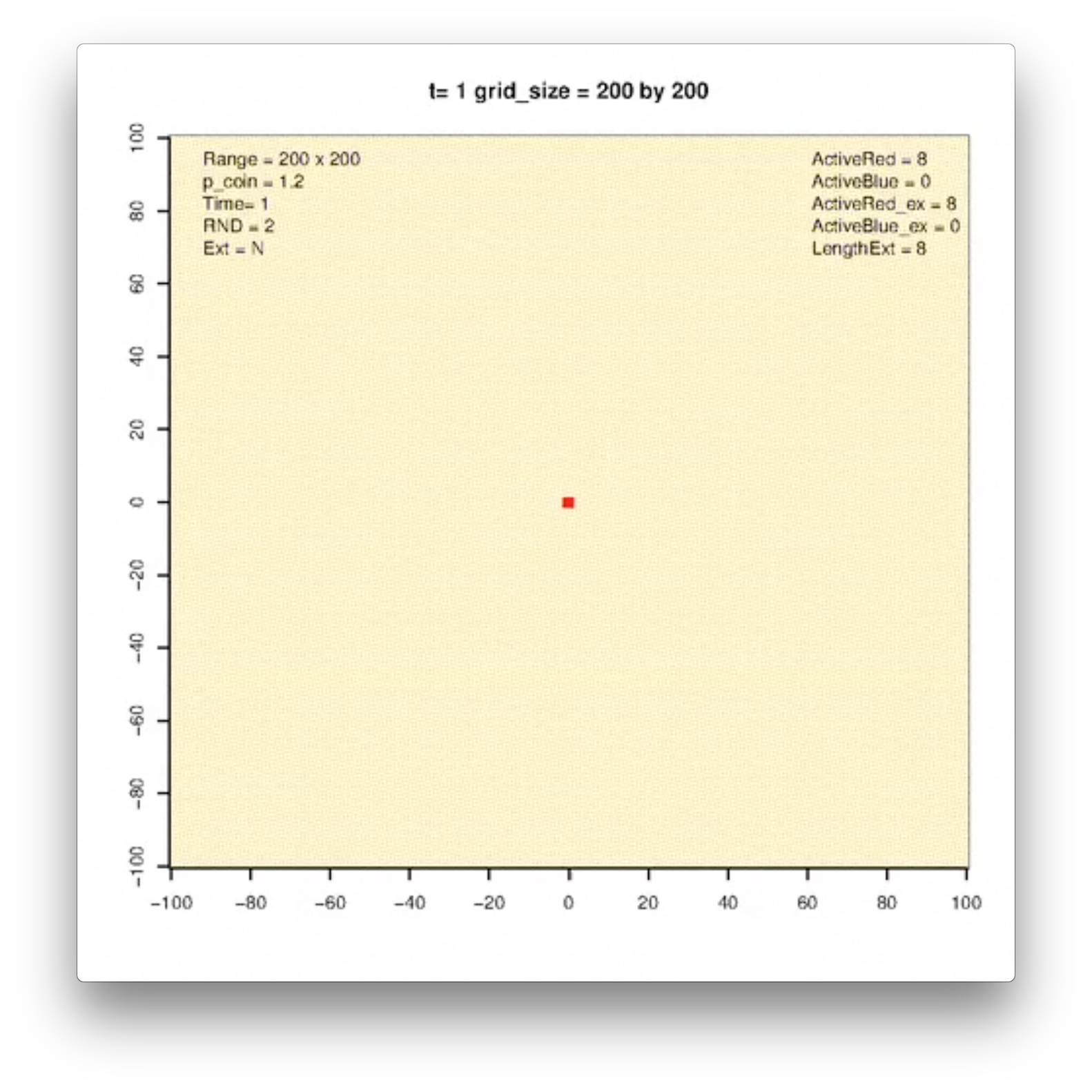}
  \label{fig:subfigure1}}
\quad
\subfigure[$t=70$]{%
  \includegraphics[scale=0.2]{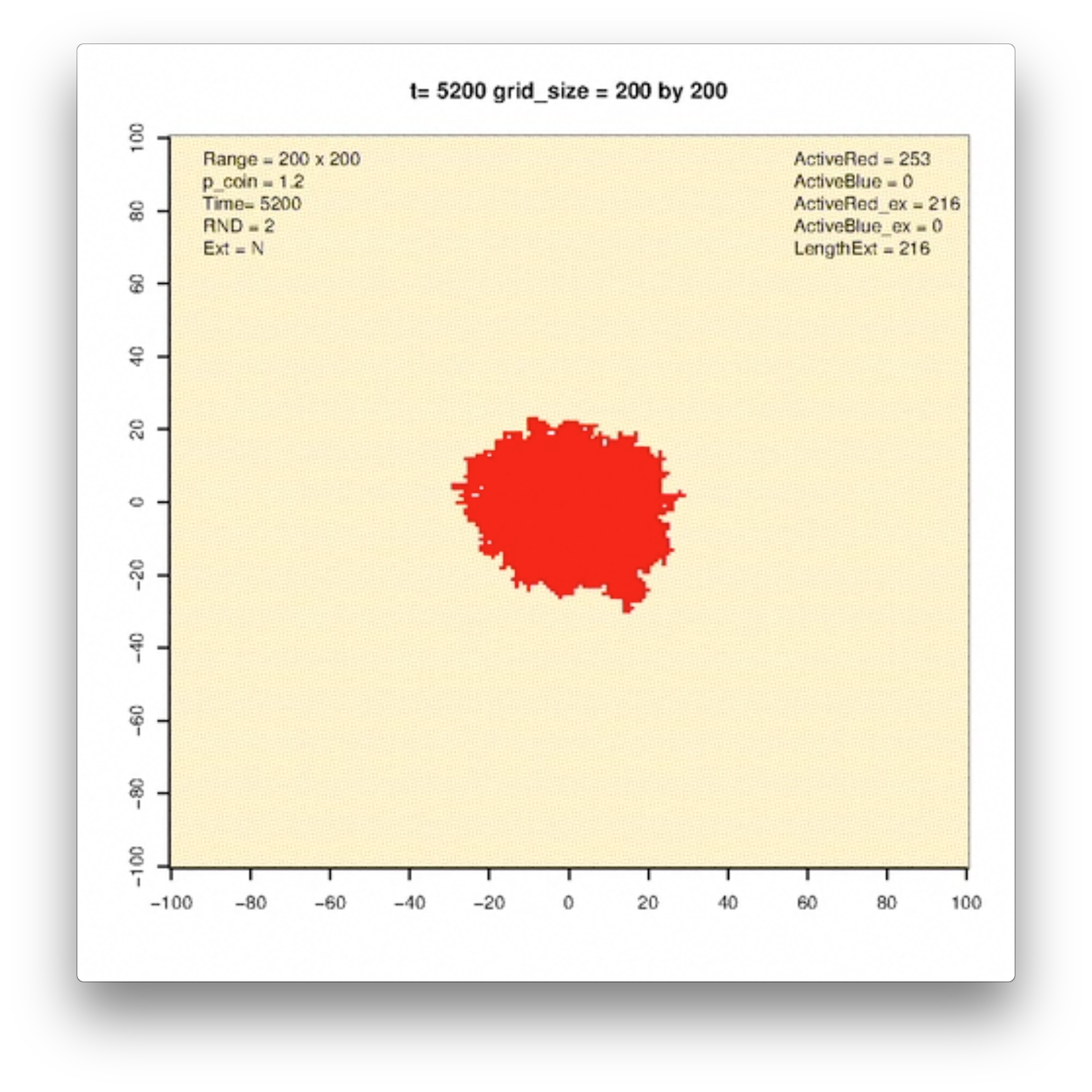}
  \label{fig:subfigure2}}
\subfigure[$t=135$]{%
  \includegraphics[scale=0.2]{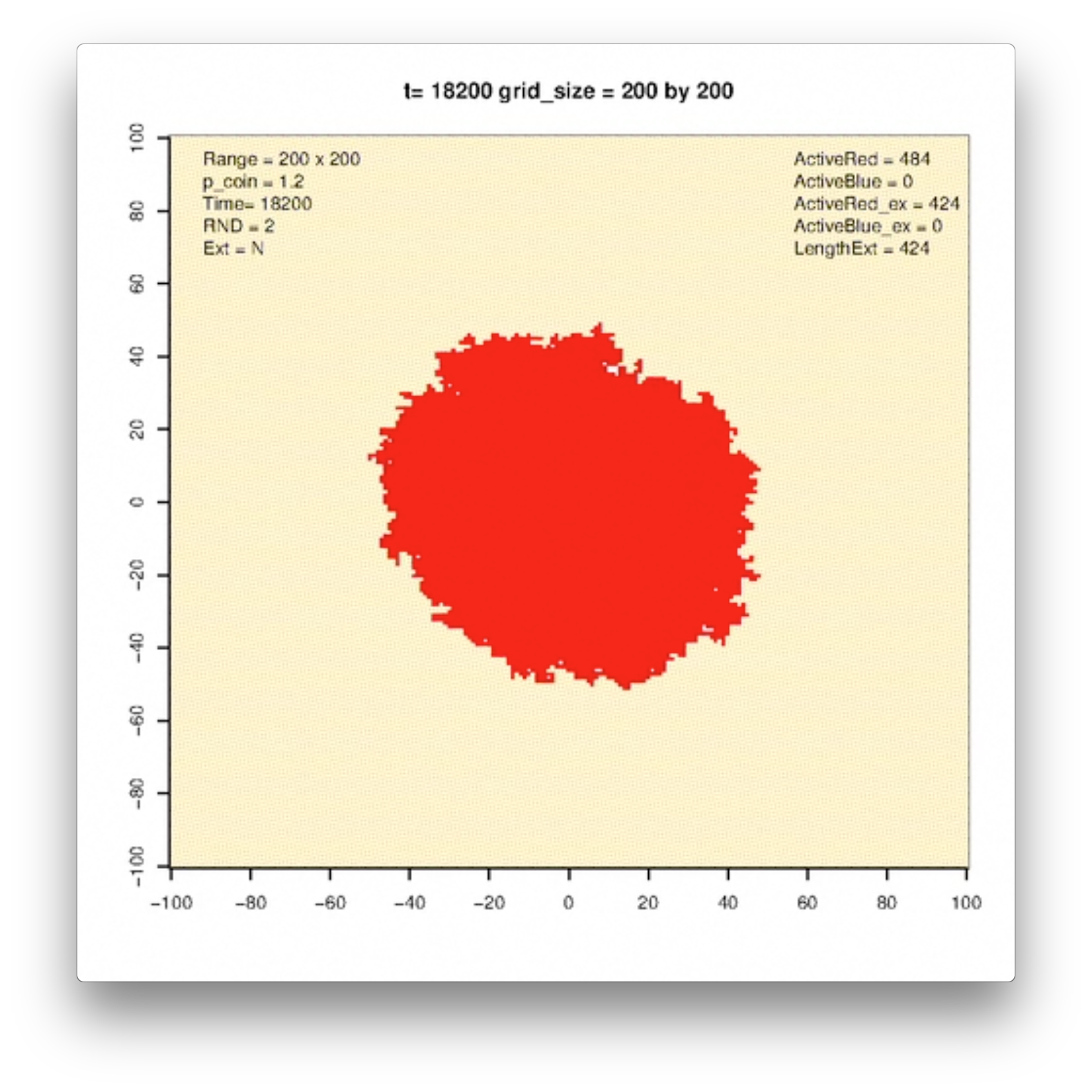}
  \label{fig:subfigure3}}
\quad
\subfigure[$t=250$]{%
  \includegraphics[scale=0.2]{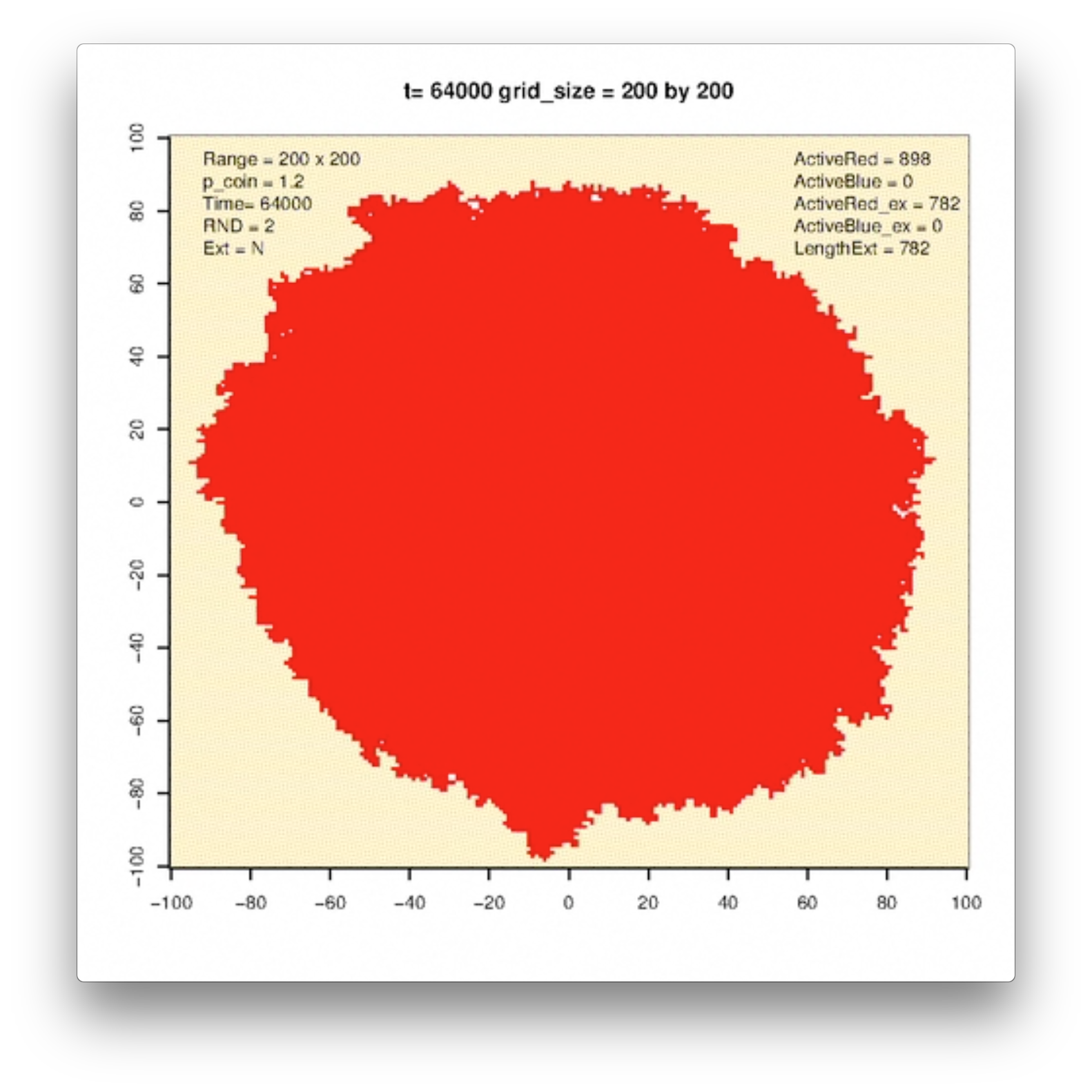}
  \label{fig:subfigure4}}

\caption{Simulation of the ball $B(t)$ at $t=0, t= 70, t= 135$ and $t=250$. The passage times $\tau_{e}$ have exponential mean $1$ distribution. Simulations by Si Tang.}
\label{fig:figure}
\end{figure}

Let's now go back to questions $1$ to $4$. The original paper of Hammersley and Welsh \cite{HW} considered question $1$ for a class of passage times in $\mathbb Z^2$. If we write $e_1$ for the first coordinate vector, they showed that $T(0,ne_1)$ grows linearly in $n$. Their result was extended in the famous paper of Kingman \cite{Burkholder, Kingman, Kingman73}. It was also the building block for the classical ``shape theorem'' of Richardson \cite{Richardson}, improved by Cox and Durrett \cite{CoxDurrett} and Kesten \cite{Kesten} that gives the analogue of the law of large numbers for the random ball $B(t)$.  It roughly says that $B(t)$ grows linearly in $t$ and, when normalized, it converges to a deterministic subset $\mathcal B_\nu$ of $\mathbb R^d$, called the limit shape. The set $\mathcal B_\nu$ is not universal and depends on the distribution of the passage times.  Section \ref{sec:limitshape} is devoted to explaining the  shape theorem and certain properties of the limit shape $\mathcal B_\nu$.

In Section \ref{sec:fluctuations} we discuss the variance and the order of fluctuations of the passage time $T$.  In two dimensions, it is expected that under certain assumptions on $\mathcal B_\nu$ the fluctuations are governed by the predictions of physicists, including Kardar, Parisi, Zhang \cite{KPZ,K, Krug, DD}. In higher dimensions, the picture is less clear and some of the predictions disagree. After stating what is conjectured, we focus our attention on presenting a simple proof of sub linear variance valid under minimal assumptions on the passage time. 

Section \ref{sec:geodesics} is devoted to the study of geodesics. We briefly discuss the existence of finite geodesics between any two points, then move to the study of geodesic rays. We present results on coalescence, directional properties and we sketch the proof of the absence of geodesic lines (or bigeodesics) in the upper half plane. The important connection between geodesic lines and ground states of the two-dimensional Ising ferromagnet is also presented.

In Section \ref{sec:Busemann} we describe the modern role of Busemann functions. We explain a beautiful argument by Hoffman for the existence of $2$ or more geodesic rays. 
We then focus on Busemann function limits  and the relation to limiting geodesic graphs.  Section \ref{sec:growth}  tries to cover the vast relation between FPP, growth processes and infection models. We focus on questions of coexistence of multiple species and the limiting interface.  

Section \ref{sec:alltheotherthings} is our attempt to show the reader what this survey is not about. In the literature, there are thousands of pages of related (equally fascinating) questions and models, similar to or inspired by FPP. We collect a few of these directions and try to point the right references. In particular, we briefly discuss FPP on different graphs, the maximum flow problem, the exactly solvable models for last passage percolation and the positive temperature version of the model. Section~\ref{sec:open} is just a recollection of the open questions spread throughout this manuscript for easy reference.

\subsection{Acknowledgments} We thank the American Institute of Mathematics  and its staff for helping us organize a workshop on this subject. A.A. thanks the hospitality of the Mathematics department at Indiana University, and its visiting professor program, where part of this survey was completed. He also thanks Elizabeth Housworth who, introduced him to the japanese eraser and chalk dust cleaner. 
M.D. thanks the hospitality of the Mathematics Department at Northwestern University. We thank Phil Sosoe for several fruitful discussions on this topic, especially on concentration estimates. We thank Si Tang for making the simulations used in Figure 1 available to us, Sven Erick Alm and Maria Deijfen for the simulations and Figures 2, 4, and 5 and Xuan Wang for discussions on non-random fluctuations. We also thank Daniel Ahlberg, Wai-Kit Lam, and Phil Sosoe for spotting typos and giving comments.


\newpage

\section{The time constant and the limit shape}\label{sec:limitshape}

\subsection{Subadditivity and the time constant} 
The first-order of growth of the passage time $T(0,ne_1)$ is described by the following theorem.
\begin{theorem}[Theorem 2.18 in \cite{KestenAspects}] \label{thm:timeconstant} Assume that 
\begin{equation}\label{mincondition} \E \min[t_1, \ldots, t_{2d}] < \infty 
\end{equation} where $t_i's$ are i.i.d copies of $\tau_e$. Then there exists a constant $\mu(e_1) \in [0,\infty)$ (called the time constant)  such that 
\begin{equation*}
\lim_{n \rightarrow \infty} \frac{T(0,ne_1)}{n} = \mu(e_1) \quad \text{ a.s. and in } L^1.
\end{equation*}
\end{theorem}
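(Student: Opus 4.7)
My plan is to apply Kingman's subadditive ergodic theorem to the two-parameter array $X_{m,n} := T(me_1, ne_1)$ for integers $0 \leq m < n$. This reduces the statement to verifying three standard hypotheses: (i) subadditivity $X_{0,n} \leq X_{0,m} + X_{m,n}$, (ii) stationarity of the distribution of $\{X_{m+k, n+k}\}$ in $k$, and (iii) the integrability bound $\E X_{0,1}^+ < \infty$ (together with $\inf_n \E X_{0,n}/n > -\infty$, which is automatic since passage times are nonnegative).

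Hypotheses (i) and (ii) are essentially immediate. Subadditivity follows because $T(\cdot,\cdot)$ is a pseudometric on $\Z^d$: concatenating any path from $0$ to $me_1$ with any path from $me_1$ to $ne_1$ produces a path from $0$ to $ne_1$, so the triangle inequality gives (i). Stationarity follows from translation invariance of the nearest-neighbor edge set of $\Z^d$ together with the i.i.d. assumption on the family $(\tau_e)$: the shift by $e_1$ is a measure-preserving map on the configuration space, and in fact the family is ergodic under this shift, so the limit in Kingman's theorem will be deterministic.

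The main obstacle is verifying (iii), i.e., $\E T(0,e_1) < \infty$, because the hypothesis \eqref{mincondition} is genuinely weaker than $\E \tau_e < \infty$ — under \eqref{mincondition} the marginal $\tau_e$ need not be integrable at all. To handle this, my plan is to construct a family of paths $\Gamma_1, \ldots, \Gamma_{2d}$ from $0$ to $e_1$, each of bounded (in fact constant) combinatorial length, such that the $2d$ paths use $2d$ distinct edges incident to $0$ as their first edges and their remaining edges are as disjoint as possible (for instance, by a short detour through $2d$ different neighbors of $0$ and then back toward $e_1$). Since $T(0,e_1) \leq T(\Gamma_i)$ for each $i$, we get $T(0,e_1) \leq \min_i T(\Gamma_i)$, and the minimum is dominated by $\min[t_1,\dots,t_{2d}]$ (the first edges, one for each $\Gamma_i$) plus the passage times of the finitely many remaining edges, of which the minimum can be controlled in a similar fashion. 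A careful count then yields $\E T(0,e_1) \leq C \E \min[t_1,\ldots,t_{2d}] < \infty$ for an absolute constant $C = C(d)$.

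Once (iii) is in hand, Kingman's subadditive ergodic theorem delivers the a.s.\ and $L^1$ existence of
\[
\mu(e_1) := \lim_{n \to \infty} \frac{T(0,ne_1)}{n} = \inf_{n \geq 1} \frac{\E T(0,ne_1)}{n} \in [0,\infty),
\]
and ergodicity of the shift by $e_1$ makes the limit deterministic. Finiteness of $\mu(e_1)$ follows from $\mu(e_1) \leq \E T(0,e_1) < \infty$, and nonnegativity from $T \geq 0$, completing the proof.
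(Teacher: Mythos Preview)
Your overall strategy matches the paper's: set $X_{m,n}=T(me_1,ne_1)$ and verify the hypotheses of the subadditive ergodic theorem. Conditions (i) and (ii) are fine as you state them. The gap is in your verification of (iii), the integrability of $T(0,e_1)$. You write that $\min_i T(\Gamma_i)$ is ``dominated by $\min[t_1,\dots,t_{2d}]$ (the first edges) plus the passage times of the finitely many remaining edges, of which the minimum can be controlled in a similar fashion.'' But this inequality is in the wrong direction: for nonnegative arrays one has $\min_i \sum_k a_{ik} \geq \sum_k \min_i a_{ik}$, not $\leq$. Splitting off the first edge and iterating therefore gives a \emph{lower} bound on $\min_i T(\Gamma_i)$, not the upper bound you need. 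There is no way to bound a minimum of sums from above by a sum of minimums, so the inductive scheme you sketch does not yield $\E T(0,e_1)\leq C\,\E\min[t_1,\dots,t_{2d}]$.

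The fix, which is what the paper does, is to choose the $2d$ paths $\Gamma_1,\dots,\Gamma_{2d}$ to be \emph{completely edge-disjoint} (this is possible in $\Z^d$; e.g.\ by Menger's theorem, since every vertex has degree $2d$). Then the random variables $T(\Gamma_1),\dots,T(\Gamma_{2d})$ are independent, and one works with tail probabilities rather than with expectations directly. If $B$ is the maximal length of the $\Gamma_i$, then $T(\Gamma_i)>s$ forces some edge of $\Gamma_i$ to have weight exceeding $s/B$, so $\Pro(T(\Gamma_i)>s)\leq B\,\Pro(\tau_e>s/B)$. Independence then gives
\[
\Pro\bigl(T(0,e_1)>s\bigr)\;\leq\;\prod_{i=1}^{2d}\Pro\bigl(T(\Gamma_i)>s\bigr)\;\leq\;B^{2d}\,\Pro(\tau_e>s/B)^{2d}\;=\;B^{2d}\,\Pro\bigl(\min[t_1,\dots,t_{2d}]>s/B\bigr),
\]
and integrating in $s$ yields $\E T(0,e_1)<\infty$ from \eqref{mincondition}. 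The key point you are missing is that full edge-disjointness (not just of the first edges) is what converts the $2d$-fold minimum into a $2d$-th power of a tail probability.
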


The proof of Theorem \ref{thm:timeconstant} is a classic application of the subadditive ergodic theorem that we now state. The version that we write here is due to Liggett \cite{LiggettST} and suffices for our purposes. Several versions, with different hypotheses, including the one with Kingman's original assumptions can be found in \cite{LiggettST}. 

\begin{theorem}[Subadditive Ergodic Theorem \cite{LiggettST}] \label{subadditivity}
Let $(X_{m,n})_{0\leq m <n}$ be a family of random variables that satisfies:
\begin{enumerate}
\item[$(a)$] $X_{0,n} \leq X_{0,m}+X_{m,n}$, for all $0< m < n$.
\item[$(b)$] The distribution of the sequences $(X_{m, m+k})_{k\geq 1}$ and $(X_{m+1,m+k+1})_{k\geq1}$ is the same for all $m\geq 0$.
\item[$(c)$] For each $k \geq 1$, the sequence $(X_{nk, (n+1)k})_{n\geq 0}$ is stationary.
\item[$(d)$] $\E X_{0,1} < \infty$ and $\E X_{0,n} > -cn$ for some finite constant $c$.
\end{enumerate}
Then 
\begin{equation}\label{eq:ergodicthm}
\lim_{n \to \infty} \frac{X_{0,n}}{n} \text{ exists  a.s. and in } L^1.
\end{equation}
Furthermore, if the stationary sequence in $(c)$ is also ergodic, then the limit in \eqref{eq:ergodicthm} is constant almost surely and equal to
$$ \lim_{n\to \infty } \frac{\E X_{0,n}}{n} = \inf_{n} \frac{1}{n} \E X_{0,n}.$$
\end{theorem}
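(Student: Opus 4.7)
The natural plan is to identify $\lim_n X_{0,n}/n$ with $\gamma := \inf_n \E X_{0,n}/n$ by separately bounding $\limsup$ above and $\liminf$ below by $\gamma$, after first establishing that $\gamma$ is a finite candidate limit. Hypothesis (b) forces $\E X_{m,n}$ to depend only on $n-m$, so (a) makes the sequence $a_n := \E X_{0,n}$ subadditive; combined with $a_n \ge -cn$ and $a_1 < \infty$ from (d), Fekete's subadditive lemma gives $a_n/n \to \gamma \in [-c, a_1]$, which will also serve as the identification of the limit in the last clause of the theorem.

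For the upper bound, I would fix $k \ge 1$, iterate (a) to obtain
\[
X_{0,Nk} \le \sum_{j=0}^{N-1} X_{jk,(j+1)k},
\]
and apply Birkhoff's ergodic theorem to the stationary sequence provided by (c), getting $\limsup_N X_{0,Nk}/(Nk) \le a_k/k$ a.s. Passage to arbitrary $n$ between $Nk$ and $(N+1)k$ is handled by bounding the remainder $X_{Nk,n}$ by a finite sum of single-step terms $X_{Nk+i,Nk+i+1}$ and showing this remainder is $o(n)$ via stationarity. Sending $k \to \infty$ yields $\limsup_n X_{0,n}/n \le \gamma$ a.s.

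For the lower bound, I would use a shift-invariance argument. Applying (a) as $X_{0,n+1} \le X_{0,1} + X_{1,n+1}$, dividing by $n$, and taking $\liminf$ gives $\underline{X} := \liminf_n X_{0,n}/n \le \liminf_n X_{1,n}/n$ a.s. Condition (b) says the right side has the same law as $\underline{X}$, which together with the inequality forces a.s. equality, showing $\underline{X}$ is invariant under the natural index-shift $X_{m,n}\mapsto X_{m+1,n+1}$; the same argument applies to $\bar X := \limsup_n X_{0,n}/n$. Under the ergodicity hypothesis, both are a.s. constants. To pin them down, I would show $\E \underline X \ge \gamma$ by Liggett's method of constructing stopping times $\tau$ along which $X_{0,\tau}/\tau$ is close to $\underline{X}$, chaining these intervals using the stationarity in (c) to build long paths with near-optimal average cost, and comparing against the infimum definition of $\gamma$; together with the upper bound this forces $\underline X = \bar X = \gamma$. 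The $L^1$ convergence then follows from the a.s. convergence, the one-sided control $\E X_{0,n}/n \ge -c$ in (d), and a uniform integrability argument for the positive parts via the dominating Birkhoff average.

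The \emph{main obstacle} is the final step of phase three: identifying the constant $\underline X$ with $\gamma$ once shift-invariance has been extracted. The shift-invariance step is short and elegant (one-line subadditivity plus equidistribution), but exchanging the $\liminf$ with the expectation requires precisely the integrability tailored by (d), and the combinatorial chaining of stopping times—needed to promote pathwise near-optimality to a bound on $\E X_{0,N}/N$ that contradicts the definition of $\gamma$ in the opposite direction—is the delicate technical heart of the argument. Everything else (Fekete, Birkhoff, the gap-filling from multiples of $k$ to general $n$) is routine once the framework is set up.
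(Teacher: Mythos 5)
Your proposal is correct in outline, but it takes a genuinely different route from what the paper presents. The paper never proves Theorem \ref{subadditivity} itself: it cites Liggett \cite{LiggettST} (and Durrett's book) for the proof, and what it actually discusses, in Section \ref{sec:visit} and Remark \ref{rem: busemann}, is Kingman's original strategy -- decompose the subadditive array as $X_{m,n}=Y_{m,n}+Z_{m,n}$ with $Y$ additive of mean $\mu$ and $Z\geq 0$ subadditive with time constant zero, the additive part being obtained as a weak-$*$ limit (Bourbaki--Alaoglu) of the averaged increments $f_k=\frac{1}{k}\sum_{i=1}^{k}(X_{0,i}-X_{1,i})$, or, in Liggett's variant, as a distributional limit after averaging over a uniform random shift. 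Your plan is instead the direct argument (Liggett's published proof, reproduced in Durrett): Fekete for $\E X_{0,n}/n\to\gamma$, Birkhoff along $k$-blocks for the upper bound, shift-invariance of $\liminf$ and $\limsup$ extracted from $(a)$--$(b)$, and the interval-filling/stopping-time construction to force $\E\big[\liminf_n X_{0,n}/n\big]\geq\gamma$. The direct route avoids functional-analytic compactness and yields the theorem in one pass; the decomposition route buys the structural object (the generalized Busemann function) that the survey exploits later. One caveat in your write-up: without ergodicity the Birkhoff limit of the $k$-block averages is a random variable with mean $\E X_{0,k}$, so your upper-bound step gives $\E\big[\limsup_n X_{0,n}/n\big]\leq \E X_{0,k}/k\to\gamma$ rather than the almost sure bound $\limsup_n X_{0,n}/n\leq\gamma$ (which fails, e.g., for a mixture of two ergodic laws); combined with $\E\big[\liminf\big]\geq\gamma$ and $\liminf\leq\limsup$ this still squeezes the two to coincide almost surely, and the identification of the constant limit with $\gamma$ is then exactly the ergodic case asserted in the theorem.
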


One can find different proofs of the subadditive ergodic theorem in the literature and some of them are in standard books of probability theory, for instance \cite[Section 6.6]{Durrettbook}. We will visit and discuss the original proof given by Kingman in Section \ref{sec:visit}. We now show how Theorem~\ref{thm:timeconstant} is an easy consequence of Theorem \ref{subadditivity}.

\begin{figure}
\scalebox{1.0}{ 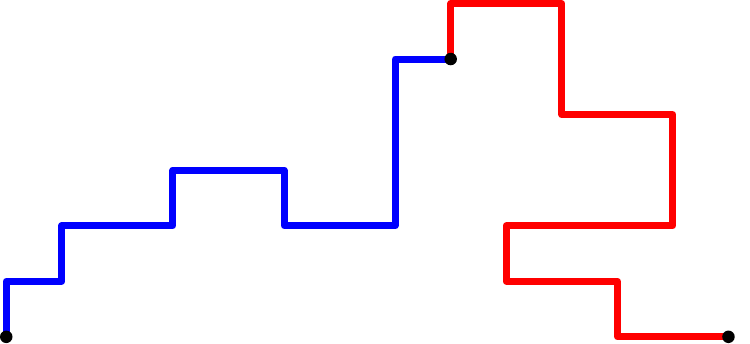}
 \centering
 \def \svgwidth{3000pt}
 \caption{The geodesics $\Gamma(x,y)$ and $\Gamma(y,z)$ and their concatenation. As a geodesic from $x$ to $z$ does not need to pass through $y$, we have $T(x,z)\leq T(x,y) + T(y,z)$.  }
 \label{fig:easy}
\end{figure}

\begin{proof}[Proof of Theorem \ref{thm:timeconstant}] We apply the sub-additive ergodic theorem to $X_{m,n} = T(me_1, ne_1)$. It is not difficult to verify conditions $(a)$ to $(d)$. To check $(a)$ (which is just the triangle inequality) note that a path from $0$ to $ne_1$ does not necessarily need to go through $me_1$ while a concatenation of paths from $0$ to $me_1$ and from $me_1$ to $ne_1$ gives us a path from $0$ to $ne_1$. Therefore,  see Figure \ref{fig:easy}, $$T(0,ne_1) \leq T(0,me_1)+T(me_1,ne_1).$$ Items $(b)$, $(c)$ and ergodicity follow directly from the fact that the environment is i.i.d., thus it is invariant under horizontal shifts of $\Z^d$. $\E X_{0,n} > -cn$ holds as passage times are non-negative. The rest of item $(d)$ follows from assumption \eqref{mincondition} as there are $2d$ disjoint deterministic paths $\Gamma_1, \ldots, \Gamma_{2d}$ in $\mathbb Z^d$ joining $0$ to $e_1$ and therefore
$$T(0,e_1) \leq \min\{T(\Gamma_1), \ldots, T(\Gamma_{2d})\}.$$ Order them in such a way that $\Gamma_1$ is the path with the largest number of edges and denote by $B$ the number of edges of $\Gamma_1$. Then 
$$\Pro(T(0,e_1)>s)\leq \prod_{i=1}^{2d} \Pro(T(\Gamma_{i})>s) \leq \Pro(T(\Gamma_1)>s)^{2d},$$
and
$$ \Pro (T(\Gamma_1)>s) \leq B\Pro (\tau_e > s/B).$$
The second inequality comes from the fact that if $T(\Gamma_1)>s$ then at least one of the edges in $\Gamma_1$ must have a passage time larger than $s/B$. Combining the previous two inequalities and setting $Y= \min[t_1, \ldots, t_{2d}]$ we obtain 
\begin{equation}\label{eq:moments}
 \Pro (T(0,e_1)>s) \leq B^{2d}\Pro(\tau_e>s/B)^{2d} = B^{2d} \Pro(Y>s/B)
 \end{equation} which proves the desired result.
\end{proof}

In fact, the following lemma comes directly from \eqref{eq:moments} and the observation that any path from $0$ to $x$ must contain at least one edge incident to $0$.
\begin{lemma}\label{lemma:moments}Let $k\geq 1$. Then $\E \min[t_1, \ldots, t_{2d}]^{k} <\infty$ if and only if  $\E T(0,x)^k<\infty$ for all $x \in \mathbb Z^d$.  
\end{lemma}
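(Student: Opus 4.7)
The plan is to prove the two implications separately, in each case recycling the tail estimate \eqref{eq:moments} with $e_1$ replaced by an arbitrary $x$.

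For the ``if'' direction, fix $x\in\mathbb Z^d$; the case $x=0$ is trivial because $T(0,0)=0$, so assume $x\neq 0$. The first step is to produce $2d$ edge-disjoint finite deterministic paths $\Gamma_1,\dots,\Gamma_{2d}$ from $0$ to $x$. Such paths exist because $\mathbb Z^d$ is $2d$-edge-connected (apply Menger's theorem, or construct them explicitly by routing one path through each of the $2d$ edges incident to $0$ and completing them coordinate by coordinate). Let $B_x$ be the maximum of their lengths. Because the paths share no edges, the random variables $T(\Gamma_1),\dots,T(\Gamma_{2d})$ are independent, and the union bound used in the proof of Theorem \ref{thm:timeconstant} gives, for every $s\geq 0$,
$$\Pro(T(0,x)>s)\leq \prod_{i=1}^{2d}\Pro(T(\Gamma_i)>s)\leq B_x^{2d}\,\Pro(Y>s/B_x),$$
where $Y=\min[t_1,\dots,t_{2d}]$. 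The second step is to integrate this tail: using $\E T(0,x)^k=k\int_0^\infty s^{k-1}\Pro(T(0,x)>s)\,ds$ and the substitution $u=s/B_x$ yields $\E T(0,x)^k\leq B_x^{k+2d}\,\E Y^k<\infty$.

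For the ``only if'' direction, the key observation, already indicated in the statement of the lemma, is that for any $x\neq 0$ every path $\Gamma$ from $0$ to $x$ must use at least one of the $2d$ edges incident to the origin. Hence $T(\Gamma)$ is bounded below by the minimum passage time of those $2d$ edges, which has the same law as $Y$. Taking the infimum over $\Gamma$ gives $T(0,x)\geq Y$ almost surely, so $\E T(0,x)^k\geq \E Y^k$, and finiteness of the left-hand side for any single $x\neq 0$ forces $\E Y^k<\infty$.

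The only genuine point that requires more than rereading the proof of Theorem \ref{thm:timeconstant} is the existence of $2d$ edge-disjoint paths from $0$ to an arbitrary vertex $x$ (needed only for the ``if'' direction). This is standard lattice geometry and not a real obstacle, but it is the step where a reader has to do a small piece of work beyond what was already done for $x=e_1$; every other ingredient is either the independence coming from edge-disjointness, the layer-cake formula for $\E[\cdot^k]$, or the elementary bound on the tail of a sum used earlier.
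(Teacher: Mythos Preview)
Your proof is correct and follows exactly the approach the paper indicates: one direction uses the tail bound \eqref{eq:moments} (extended from $e_1$ to a general $x$ via $2d$ edge-disjoint paths), and the other uses the observation that any path from $0$ to $x\neq 0$ must use an edge incident to the origin. One minor slip: you have the ``if'' and ``only if'' labels reversed (the statement reads ``$\E Y^k<\infty$ if $\E T(0,x)^k<\infty$'', so the ``if'' direction is the lower bound $T(0,x)\geq Y$), but the mathematics is unaffected.
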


The convergence in probability of the normalized passage time was first proved in two dimensions in the original paper of Hammersley and Welsh \cite{HW} under the assumption of finite mean of the random variable $\tau_e$. Still under the assumption of $\mathbb E \tau_e < \infty$, this result was strengthened to almost sure and $L^1$ convergence by Kingman, using his sub-additivity theorem. 

The condition \eqref{mincondition} is necessary to have almost sure or $L^1$ convergence.  Indeed, if \eqref{mincondition} does not hold, denoting by $\tau_1^n, \ldots, \tau_{2d}^n$ the edge weights of edges incident to the vertex $2ne_1$, then for any $c>0$,  the events
$$ A_n := \{ \min [\tau_1^n, \ldots, \tau_{2d}^n] > cn  \} $$ 
are independent and satisfy 
$$ \sum_n \Pro (A_n) = \infty.$$
Thus, an application of Borel-Cantelli Lemma shows that with probability one $\limsup_n T(0,ne_1)/n > c$. 

Without assuming \eqref{mincondition}, Kesten \cite[Page 137]{KestenAspects}, Cox-Durrett \cite{CoxDurrett} and Wierman \cite{Wierman} establish the existence of a constant $\tilde \mu(e_1)$ such that $$\lim_{n \to \infty} T(0,ne_1)/n= \tilde \mu(e_1) \text{ in probability.}$$  Clearly, if \eqref{mincondition} holds, then $\tilde \mu(e_1) = \mu(e_1)$.

We now gather more information on the time constant $\mu(e_1)$. It is clear that $\mu(e_1)$ satisfies
$$ 0 \leq \mu(e_1) \leq \E \tau_e,$$
by considering the direct path from $0$ to $ne_1$ and using the law of the large numbers.

That strict inequality does not always hold is seen by taking  $\tau_e=1$ almost surely. However, if the distribution $F$ of the passage times has at least two points in its support, we can prove: 
\begin{theorem}[Hammersley-Welsh \cite{HW}] If $F$ is not a trivial distribution, we have $\mu(e_1)~<~\E \tau_e$.
\end{theorem}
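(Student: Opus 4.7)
The plan is to exhibit, for a single sufficiently large $n$, a concrete alternative path from $0$ to $ne_1$ that forces $\E T(0,ne_1) < n\bar\mu$, and then conclude via subadditivity. We may assume $\bar\mu := \E\tau_e < \infty$, since otherwise the statement $\mu(e_1) < \infty = \bar\mu$ is automatic under the hypothesis \eqref{mincondition} of Theorem~\ref{thm:timeconstant}.

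The first step is a subadditivity reduction. The triangle inequality for $T$ together with translation invariance of the edge weights makes $a_n := \E T(0,ne_1)$ subadditive in $n$, and Fekete's lemma gives $\mu(e_1) = \inf_{n \ge 1} a_n/n$. Hence it suffices to produce one $n$ with $a_n/n < \bar\mu$.

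To do so I would compare two edge-disjoint paths from $0$ to $ne_1$. The first is the direct horizontal path, whose passage time $D_n$ is a sum of $n$ i.i.d.\ copies of $\tau_e$, so $\E D_n = n\bar\mu$. The second is the rectangular detour $0 \to e_2 \to e_1+e_2 \to \cdots \to ne_1+e_2 \to ne_1$, whose passage time $A_n$ is an independent sum of $n+2$ i.i.d.\ copies of $\tau_e$ (independence because the detour shares no edge with the direct path; this is immediate upon writing out the edges). Since $T(0,ne_1) \le \min(D_n,A_n)$ and $\min(x,y) = x - (x-y)^+$,
\begin{equation*}
a_n \;\le\; \E\min(D_n,A_n) \;=\; n\bar\mu - \E(D_n - A_n)^+,
\end{equation*}
so the target reduces to showing $\P(D_n > A_n) > 0$ for some $n$.

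For this last step I would invoke nontriviality of $F$ directly: fix $a < b$ in the support of $F$ and $\epsilon > 0$ so small that $b - \epsilon > a + \epsilon$. Then both $p_+ := \P(\tau_e \ge b-\epsilon)$ and $p_- := \P(\tau_e \le a+\epsilon)$ are strictly positive. By independence of the $2n+2$ disjoint edges on the two paths, the event that every direct-path edge weighs at least $b-\epsilon$ and every detour edge weighs at most $a+\epsilon$ has probability $p_+^n\,p_-^{n+2} > 0$. On that event $D_n \ge n(b-\epsilon)$ while $A_n \le (n+2)(a+\epsilon)$, so $D_n > A_n$ whenever $n > 2(a+\epsilon)/(b-a-2\epsilon)$. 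Choosing such $n$ gives $\E(D_n - A_n)^+ > 0$, hence $a_n < n\bar\mu$, as desired. The main (quite mild) obstacle is the conceptual step of upgrading ``direct beats detour with positive probability'' to a strict gap in expectation, which is handled by the identity $\E\min(D,A) = \E D - \E(D-A)^+$; the remaining manipulations are routine bookkeeping about edge-disjointness.
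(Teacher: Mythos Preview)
Your proof is correct and follows essentially the same route as the paper: both compare the direct path $D_n$ to the one-step rectangular detour $A_n$, use nontriviality of $F$ to force the event $\{A_n < D_n\}$ to have positive probability for large enough $n$, and conclude $\E T(0,ne_1) < n\E\tau_e$ for that $n$. Your write-up is slightly more explicit in isolating the identity $\E\min(D_n,A_n) = \E D_n - \E(D_n-A_n)^+$ and in disposing of the case $\E\tau_e = \infty$ upfront, but the core argument is identical.
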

\begin{proof} Choose $a < b$ such that $0<F(a)\leq F(b) < 1$. Pick $n >  2a/(b-a)$. Let $e(1), \ldots, e(n)$ be the $n$ edges from $0$ to $ne_1$ and $f(1), \ldots, f(n+2)$ the edges from $0$ to $e_2$, $e_2$ to $e_2 +ne_1$, and $ e_2 +ne_1$ to $ne_1$. On the positive probability event 
$$ \{ \tau_{e(i)} \geq b, \tau_{f(j)}\leq a, \quad \forall i,j \}$$
one has $T(0,ne_1)< \tau_{e(1)} + \ldots + \tau_{e(n)}$. Thus, $\E T(0,ne_1) < n \E \tau_e$ and $$\mu(e_1) \leq \frac{\E T(0,ne_1)}{n}< \E \tau_e.$$
\end{proof}

Let's now look at lower bounds for the time constant. If $F(0)>0$; that is, if we have edges that are cost-free to cross, one may wonder if $\mu$ is equal to $0$ and the growth of the time constant is in fact not linear. This issue is handled in the next theorem. Let $p_c(d)$ be the critical probability for bond percolation in $\mathbb Z^d$.  

\begin{theorem}[Theorem 6.1 in \cite{KestenAspects}] \label{thm:percolate} For FPP on $\mathbb Z^d$, 
$\mu(e_1) > 0$ if and only if $F(0)<p_c(d)$. \end{theorem}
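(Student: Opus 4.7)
The plan is to prove the two directions separately: for $F(0) \geq p_c(d) \Rightarrow \mu(e_1) = 0$, I exploit that zero-weight edges form a supercritical percolation; for the reverse, I use that ``near-zero'' edges form a subcritical percolation to force many positive-weight edges onto every path.

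First I would handle the case $F(0) > p_c(d)$. Then the edges with $\tau_e = 0$ form a supercritical Bernoulli bond percolation on $\mathbb{Z}^d$, so there exists almost surely a unique infinite cluster $\mathcal{C}_\infty$ of zero-weight edges. Define $\rho(x) := \inf_{y \in \mathcal{C}_\infty} T(x,y)$; by stationarity, $\rho(x)$ is identically distributed in $x$ and a.s.\ finite, hence tight. Travel inside $\mathcal{C}_\infty$ being cost-free,
\[
T(0, ne_1) \leq \rho(0) + \rho(ne_1),
\]
so $T(0, ne_1)/n \to 0$ in probability (and a.s.\ along subsequences, which combined with Theorem~\ref{thm:timeconstant} yields $\mu(e_1) = 0$). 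The critical case $F(0) = p_c(d)$ is more subtle, since no infinite zero-cluster exists; I would attempt to reduce it to the supercritical case by a continuity/monotonicity argument in $F$ (couple $\tau_e$ with modified weights that are zero with extra probability $\eta$ and let $\eta \downarrow 0$), or by a direct renormalization using the large finite critical zero-clusters, as in Kesten's \emph{Aspects}.

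For the reverse direction, assume $F(0) < p_c(d)$ and use right-continuity of $F$ to pick $\delta > 0$ with $p := F(\delta) < p_c(d)$. Replace $\tau_e$ by $\tau_e^{\delta} := \delta \cdot \mathbf{1}\{\tau_e > \delta\} \leq \tau_e$; it suffices to show that the corresponding time constant $\mu^\delta$ is positive. In this $\{0, \delta\}$-valued FPP, $T^\delta(0, ne_1) = \delta \cdot N$, where $N$ is the minimum number of $\delta$-weight (``slow'') edges on any path from $0$ to $ne_1$. To show $N \geq cn$ with high probability, I would use the exponential decay of cluster diameter in subcritical Bernoulli percolation (Menshikov / Aizenman--Barsky). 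Partition $\mathbb{Z}^d$ into boxes of side $L$; call a box \emph{good} if no zero-cluster of diameter $\geq L/4$ meets it. For $L$ large, $\mathbb{P}(\text{good}) \geq 1 - \varepsilon(L)$ with $\varepsilon(L) \to 0$. By a union bound, all boxes in a large tube around $[0, ne_1]$ are good with probability tending to $1$; on that event, every path traversing a good box from one face to the opposite face must use at least one $\delta$-edge, so a $0$-to-$ne_1$ path confined to the tube uses $\geq n/L$ slow edges, giving $T(0, ne_1) \geq \delta n / L$.

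The main obstacle is twofold. In the subcritical direction, the renormalization as described only controls paths confined to a tube around the segment $[0, ne_1]$; one must also rule out cheap detours by long paths leaving the tube, either by a separate BK-type argument or by verifying that any minimizing path stays in the tube. In $d = 2$ this is vastly simplified by duality: closed edges are then supercritical ($1-p > 1/2 = p_c(2)$), and a disjoint closed-crossings argument directly lower-bounds $N$ by $cn$. In the forward direction, the genuinely subtle step is the critical case $F(0) = p_c(d)$, where the clean ``connect to $\mathcal{C}_\infty$'' bound no longer applies and one must work with critical cluster geometry.
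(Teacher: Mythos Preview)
The paper does not prove this theorem; it only states it with a reference to Kesten's \emph{Aspects}, so there is no proof here to compare against and I assess your proposal directly.

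Your forward direction is fine in the strictly supercritical case, and the critical case $F(0)=p_c$ can indeed be closed by invoking Cox--Kesten continuity (Theorem~\ref{thm:contcox}): the modified distributions $\tilde F_\eta$ converge weakly to $F$ as $\eta\downarrow 0$, each has $\tilde\mu^\eta(e_1)=0$ by the supercritical case, and hence $\mu(e_1)=\lim_\eta\tilde\mu^\eta(e_1)=0$. Kesten's original treatment of the critical case is direct and does not route through continuity, but your approach is valid once Theorem~\ref{thm:contcox} is available.

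The genuine gap is in the reverse direction, and it is not only the tube restriction you flag. Even within the tube, the union bound as written fails: with $L$ fixed and $O(n/L)$ boxes, the probability that \emph{all} boxes are good is at best $1-O(n)\varepsilon(L)$, which does not tend to $1$ as $n\to\infty$. That part is repairable by an ergodic-theorem argument on the \emph{fraction} of good boxes along the axis. The tube restriction, however, is not so cheaply repaired: nothing in your argument prevents the minimizing path from leaving the tube, and ``verifying that any minimizing path stays in the tube'' is essentially equivalent to what you are trying to prove. The correct ingredient --- Kesten's Proposition~5.8, which the survey invokes later in the proof of Theorem~\ref{thm: new_geo_length_bound} --- controls \emph{all} self-avoiding paths of length $\geq n$ from the origin simultaneously: one shows
\[
\mathbb{P}\bigl(\exists\text{ self-avoiding }\gamma\text{ from }0,\ |\gamma|\geq n,\ T(\gamma)<an\bigr)\leq e^{-Cn}.
\]
The proof is a lattice-animal counting argument rather than a tube argument: a path with fewer than $\epsilon n$ edges of weight $>\delta$ spends at least $(1-\epsilon)n$ edges inside open clusters of the subcritical $\{\tau_e\leq\delta\}$-percolation, and the union of those clusters together with the $\leq\epsilon n$ closed connecting edges forms a connected lattice animal of size $\geq n$ containing the origin. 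One then bounds the number of such animal skeletons and, for each, uses exponential decay of subcritical cluster sizes to show that realizing the required open mass is exponentially unlikely. This global counting is what your box-renormalization in a tube cannot substitute for.
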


We now make a very simple but important remark. One can extend Theorem \ref{thm:timeconstant} with a similar proof to arbitrary directions with rational coordinates. Then, we define a homogeneous function $\mu:\mathbb Q^d\rightarrow \R$ such that, for any $x \in \mathbb Q^d$,
\begin{equation}\label{def:mu}
\lim_{n \to \infty} \frac{T(0,nx)}{n} = \mu(x) \quad \text{ a.s. and in } L^1.
\end{equation}

The reader should see \eqref{def:mu} as the analogue of a law of large numbers. In Section \ref{sec:fluctuations}, we will discuss the fluctuations of $T(0,nx)$ around $n\mu(x)$, and we will discover that, in general, a central limit theorem with Gaussian fluctuations does not hold. Before moving to the study of fluctuations, we continue to gather more information on $\mu(x)$. It is not difficult to establish the following properties of $\mu$ for $x,y \in \mathbb Q^{d}$ and $c \in \mathbb Q$:

\begin{enumerate}
\item $\mu(x+y) \leq \mu(x) + \mu(y).$
\item $\mu(cx)=|c|\mu(x)$
\item $\mu$ is invariant under symmetries of $\mathbb Z^{d}$ that fix the origin.
\item $\mu$ is uniformly continuous and Lipschitz on bounded subsets of $\mathbb Q^{d}$, so it has a unique continuous extension to $\mathbb R^{d}.$
\end{enumerate}

Furthermore, these properties imply the following:

\begin{theorem}
$\mu(e_1)>0$ if and only if $\mu(x)>0$ for all $x \in \mathbb R^d \setminus \{ 0 \}$.
\end{theorem}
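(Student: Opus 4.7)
The reverse implication is immediate by specializing $x = e_1$, so the content is in the forward direction: $\mu(e_1) > 0$ should force $\mu(x) > 0$ for every $x \in \mathbb{R}^d \setminus \{0\}$. My plan is to use the four listed properties of $\mu$ to argue that $\mu$ is a seminorm on $\mathbb{R}^d$ and then exploit the symmetry of its zero set to see that the set is either trivial or all of $\mathbb{R}^d$.

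First, I would note that properties (1) and (2), originally stated on $\mathbb{Q}^d$, together with the continuous extension provided by property (4) make $\mu$ subadditive and absolutely homogeneous on $\mathbb{R}^d$. In particular $\mu(0)=0$ and $0 = \mu(x-x) \leq \mu(x)+\mu(-x) = 2\mu(x)$, so $\mu \geq 0$; thus $\mu$ is a seminorm. It follows that the kernel
\[
K \;:=\; \{x \in \mathbb{R}^d : \mu(x) = 0\}
\]
is a linear subspace of $\mathbb{R}^d$: if $\mu(x) = \mu(y) = 0$, then $0 \leq \mu(x+y) \leq \mu(x)+\mu(y) = 0$, and $\mu(cx) = |c|\mu(x) = 0$ for every $c \in \mathbb{R}$.

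The main step is then a short symmetry argument. Property (3) tells us that $K$ is preserved by every symmetry of $\mathbb{Z}^d$ fixing the origin; in particular $K$ is invariant under all permutations of coordinates and all sign changes of coordinates. I claim the only linear subspaces of $\mathbb{R}^d$ with this hyperoctahedral invariance are $\{0\}$ and $\mathbb{R}^d$. Indeed, if $v \in K$ is nonzero, pick an index $i$ with $v_i \neq 0$; the vector $v'$ obtained from $v$ by flipping the sign of the $i$-th coordinate also lies in $K$, so $\tfrac{1}{2}(v - v') = v_i e_i \in K$, whence $e_i \in K$. Applying coordinate permutations gives $e_j \in K$ for every $j$, and so $K = \mathbb{R}^d$.

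To conclude: if $\mu(e_1) > 0$ then $e_1 \notin K$, so $K \neq \mathbb{R}^d$; by the dichotomy above $K = \{0\}$, which is exactly the statement $\mu(x) > 0$ for all $x \neq 0$. The only place any care is needed is checking that the four listed properties genuinely package $\mu$ as a seminorm on $\mathbb{R}^d$ (in particular, that nonnegativity is forced); once that is in hand, the symmetry step is a couple of lines and carries the proof.
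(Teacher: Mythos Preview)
Your argument is correct. The paper does not give an explicit proof of this theorem; it simply asserts that the four listed properties of $\mu$ imply the result, and your proposal supplies precisely the details of that implication. The seminorm structure (from properties (1), (2), (4)) forces the zero set $K$ to be a linear subspace, and the hyperoctahedral invariance (property (3)) then pins $K$ down to $\{0\}$ or $\mathbb{R}^d$ --- this is the natural route and matches the paper's intent.
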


Subadditivity is a nice argument to show the existence of the time constant. However it gives no insight, nor closed expression for $\mu$.  The determination of $\mu(e_1)$ as a function of $F$ is a fundamental and difficult problem in first passage percolation. 
\begin{question}\label{q1}
Find a non-trivial explicit distribution for  which we can actually determine $\mu(e_1)$. 
\end{question}

Although fundamental and old, the question above is perhaps not as interesting as one addressing geometric properties of the limit shape discussed later in this section. Furthermore, it is not even clear what one should mean by ``determine'' $\mu(e_{1})$.  In FPP, one does not know any distribution that allows exact computations, and maybe there are none. In Section \ref{sec:alltheotherthings}, we discuss similar solvable models where explicit computations are possible. A more interesting related question at this point is: 

\vspace{0.2cm}
 
{\it Given distributions $F$ and $\tilde F$, how different are their respective time constants $\mu(e_{1})$ and $\tilde \mu(e_{1})$? }

\vspace{0.2cm}

In general, we lack strong information about how the limit shape changes under small perturbations of the edge-weight distribution. If one could derive strong results in this direction, perhaps the establishment of various conjectures about the limit shape (e.g., curvature) could be made easier, or reduced to finding some special class of distributions for which the properties are explicitly derivable. The best current results on stability, dating back over thirty years, say simply that the time constant is a continuous function of the edge-weight distribution.

\begin{theorem}[Cox-Kesten \cite{CoxKesten81}, Kesten \cite{KestenAspects}]
\label{thm:contcox}
  The time constant $\mu(e_1)$ is continuous under weak convergence of i.i.d. distribution. That is, if $(F_n)_n$ is a sequence of distribution functions for the edge weight $\tau_e$ with $F_n \Rightarrow F$, and if $(\mu_n(e_1)),$ $\mu(e_1)$ denote the respective time constants, then
\[\lim_n \mu_n(e_1) = \mu(e_1)\ . \] 
\end{theorem}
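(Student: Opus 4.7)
The plan is to combine a quantile coupling with a truncation argument. Let $(U_e)$ be i.i.d.\ Uniform$[0,1]$ variables indexed by the edges of $\Z^d$, and set $\tau_e = F^{-1}(U_e)$ and $\tau_e^{(n)} = F_n^{-1}(U_e)$, where $F^{-1}$ and $F_n^{-1}$ are generalized inverses. Weak convergence $F_n \Rightarrow F$ forces $F_n^{-1}(u) \to F^{-1}(u)$ at almost every $u \in (0,1)$, so $\tau_e^{(n)} \to \tau_e$ almost surely, edge by edge. Denote the induced passage times by $T^{(n)}$ and $T$. The goal is to show $\mu_n(e_1) \to \mu(e_1)$ in this coupled setting.

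For each truncation level $M > 0$, let $\mu^M$ and $\mu_n^M$ be the time constants of $\tau_e \wedge M$ and $\tau_e^{(n)} \wedge M$, respectively. The first step is to show that, for each fixed $M$, one has $\mu_n^M \to \mu^M$. Here the coupled truncated weights are uniformly bounded by $M$ and converge a.s., so one can transfer the limit along paths: take a near-optimal path $\pi_N$ for $T^M$ from $0$ to $Ne_1$ and bound $T^{(n),M}(0,Ne_1) \le T^{(n),M}(\pi_N)$; dominated convergence together with the subadditive ergodic theorem of Theorem \ref{subadditivity} then yields $\limsup_n \mu_n^M \le \mu^M$, and the reverse inequality follows by symmetry using a $T^{(n),M}$-geodesic. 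Some care is needed to control the length of these paths when $F(0)$ is close to $p_c(d)$, but this is standard.

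The main obstacle is the second step: proving that $\mu^M \to \mu$ as $M \to \infty$, and crucially that the convergence is uniform in the sequence, $\sup_{n \ge n_0}|\mu_n - \mu_n^M| \to 0$. This is the heart of the Cox--Kesten trick. By weak convergence, one can pick $M$ so large that both $\P(\tau_e > M)$ and $\sup_{n \ge n_0} \P(\tau_e^{(n)} > M)$ lie below $p_c(d)$. Then, for every distribution under consideration, the set of ``bad'' edges (those of weight exceeding $M$) forms a subcritical bond-percolation configuration, whose connected clusters have exponentially integrable diameter. A rerouting construction lets one detour any geodesic around each bad cluster it meets, using only edges of weight at most $M$, at a total extra cost of order $o(N)$ along any path from $0$ to $Ne_1$. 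This bound holds uniformly in $n \ge n_0$ and yields $|\mu_n - \mu_n^M| = o_M(1)$ uniformly in $n$. Combining this with the first step by a standard $\e/3$ argument completes the proof; the technical core is the rerouting estimate, where subcriticality of the bad-edge percolation is essential.
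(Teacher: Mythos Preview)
The paper does not prove this theorem; it is stated with attribution to Cox--Kesten \cite{CoxKesten81} and Kesten \cite{KestenAspects} and then used. Your outline does follow their strategy: quantile coupling, truncation at a large level $M$, and a rerouting estimate exploiting subcriticality of the edges of weight exceeding $M$.

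The step you flag as ``standard'' is in fact where the real work lies. For the lower bound $\liminf_n \mu_n^M \ge \mu^M$ you invoke a $T^{(n),M}$-geodesic $\gamma_n$ and compare $T^M(\gamma_n)$ to $T^{(n),M}(\gamma_n)$. Since $\gamma_n$ varies with $n$, you must control $|\gamma_n|$ \emph{uniformly in $n$}; otherwise the finitely many edges on which $\tau_e^{(n)}\wedge M$ has not yet approached $\tau_e\wedge M$ could be precisely the ones $\gamma_n$ chooses to exploit. When some $F_n(0)$ (or $F(0)$) sits at or above $p_c(d)$ there is no a priori length bound at all, and even strictly below $p_c$ the bound you need is not cheaper than the rerouting estimate itself. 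In the Cox--Kesten argument this is absorbed into the shell passage times $\hat T$ (cf.\ Section~\ref{sec:shell}): one works throughout with an auxiliary passage time between contours living in the supercritical cluster $\{\tau_e \le M\}$, which furnishes uniform geometric control and makes both the lower bound in your Step~2 and the truncation error in your Step~3 go through at once. So your Steps~2 and~3 are not independent --- the rerouting/shell machinery you reserve for Step~3 is exactly what is required to make Step~2 rigorous, and the case $F(0)\ge p_c(d)$ (where $\mu=0$) needs its own short argument that your sketch does not supply.
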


While the preceding result is not strong enough to preserve curvature, it does guarantee a certain semicontinuity property of the set of extreme points of $\mathcal{B}$. In \cite{DH}, this was used to establish the existence of  limit shapes with arbitrarily many extreme points for some nonatomic edge weight distributions; improvements to Theorem \ref{thm:contcox} could be useful for similar constructions. One existing improvement of Theorem  \ref{thm:contcox} is the recent work of Garet, Marchand, Procaccia and Th\'eret \cite{GMPT}, which establishes an analogous continuity result in the case that the edge weights are allowed to assume the value $\tau_e = +\infty$.

On the other hand, when comparing distributions $F,\, \tilde F$ which obey certain stochastic orderings, much more can be said\cite[Theorem 3]{CoxKesten81} (see also \cite[Section 6.4]{HW} and \cite{Cox80}). When $\tilde F(t) \leq F(t)$ for all $t \in \mathbb R$, it is possible to provide an easy answer to this question, as we can straightfowardly couple the passage times to obtain $ \mu(e_{1}) \leq \tilde \mu(e_{1})$.
The question above was considered by several authors \cite{KestenAspects, vdbk, Marchand}. A gorgeous answer came with the work of van den Berg and Kesten \cite{vdbk} proving the strict inequality  $\mu(e_{1}) < \tilde \mu(e_{1})$ if $F$ is strictly more variable than $\tilde F$. Here we follow an extension of the van den Berg - Kesten comparison theorem provided by Marchand \cite{Marchand}.

\begin{definition} Let $F$ and $\tilde F$ be two distributions on $\mathbb R$. We say that $F$ is more variable than $ \tilde F$ if for every concave increasing function $\phi:\R \to \mathbb R$, 
\[ \int \phi \; d  F \leq \int \phi \; d \tilde F \]
when the two integrals exist.  If in addition $F\neq \tilde F$ then we say that  $F$ is strictly more variable than $\tilde F$.
\end{definition}

\begin{example}
If $\tilde F$ stochastically dominates $ F$ then $F$ is more variable than $\tilde F$. \end{example}
\begin{example}
For any non-negative random variable $X$ and any constant $M$ the distribution of $X \wedge M$ is more variable than the distribution of $X$. 
\end{example}

\begin{example}
For $0<p<1$ and $\rho$ a probability measure, consider the probability measures $\tilde \nu = p\delta_{a} + (1-p)\rho$ and $\nu = (p/2)\delta_{a-\epsilon} + (p/2)\delta_{a+\epsilon} + (1-p)\rho$. Writing $F_{\pi}$ for the probability distribution associated to a probability measure $\pi$,  then $F_{\nu}$ is strictly more variable than $F_{\tilde \nu}$. 
\end{example}

\begin{theorem}[van den Berg-Kesten \cite{vdbk}, Marchand \cite{Marchand}]\label{thm:Marchand}
Assume that $d=2$ and suppose that $F(0)< p_c$. If $F$ is strictly more variable than $\tilde F$ then $ \mu(e_{1}) < \tilde \mu(e_{1})$.
\end{theorem}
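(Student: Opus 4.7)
The plan is to prove the inequality in two stages: first the weak bound $\mu(e_1) \leq \tilde\mu(e_1)$ via a one-edge-at-a-time concavity argument, and then the strict gap by exhibiting a positive density of ``flexible'' edges along the geodesic, using $F(0) < p_c$ and planar duality.

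For the weak inequality, I would fix an edge $e$ and freeze all other weights. The passage time $T(0, ne_1)$, viewed as a function $T_e(t)$ of $\tau_e = t$, equals $\min(A + t, B)$ for random nonnegative $A, B$ (the best passage time among paths using $e$, minus $t$, and the best among paths avoiding $e$); hence $T_e$ is increasing and concave in $t$. Let $Q_k$ denote the product measure on a large finite box in which the first $k$ edges (in any fixed enumeration) have distribution $\tilde F$ and the remaining edges have $F$. Conditioning on every edge except the $(k{+}1)$st and applying the definition of ``$F$ more variable than $\tilde F$'' to the concave increasing function $T_e$ yields $\E_{Q_k} T(0, ne_1) \leq \E_{Q_{k+1}} T(0, ne_1)$. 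Chaining these inequalities, dividing by $n$, and letting $n \to \infty$ gives $\mu(e_1) \leq \tilde\mu(e_1)$.

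To upgrade to a strict inequality, I would quantify the defect in this one-edge concavity. Since $F \neq \tilde F$, there is an interval $(a,b)$ witnessing the strict variability, in the sense that any concave increasing $\phi$ with a genuine kink at some point of $(a,b)$ satisfies $\int \phi\, d\tilde F - \int \phi\, dF \geq \kappa > 0$ for some $\kappa = \kappa(F, \tilde F)$. For each edge $e$ in the box $[0,n]\times[-n,n]$, I would then show that with probability bounded away from zero (uniformly in $n$ and in the position of $e$), the random function $T_e(\cdot)$ has its kink inside $(a,b)$ --- equivalently, that there exist two near-optimal routes from $0$ to $ne_1$ whose passage times differ by a value in $(a,b)$, one using $e$ and one avoiding it. This flexibility is exactly where $d = 2$ and $F(0) < p_c$ enter: planar duality together with subcriticality of the zero-weight cluster produce, at positive density, a local ``bypass'' pattern around $e$ furnishing a competing cheap alternative to traversing $e$. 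Summing the per-edge concavity defect over the linearly many such $e$ gives $\E_{\tilde F} T(0,ne_1) - \E_F T(0,ne_1) \geq \delta n$ for some $\delta > 0$ independent of $n$, hence $\tilde\mu(e_1) - \mu(e_1) \geq \delta$.

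The hard step is the construction of the bypass configurations. The soft concavity bound gives equality whenever the geodesic is rigid --- a unique optimal path determined by weights far from $e$ --- so one needs a quantitative source of flexibility that is robust to the unknown environment outside a bounded window around $e$. This is the technical heart of van den Berg--Kesten and of Marchand's extension: one constructs explicit finite patterns of edge weights that, when they appear around $e$, force the existence of competing sub-geodesics whose relative ordering is determined by $\tau_e$ alone. The two-dimensional hypothesis is essential, because in $\mathbb Z^2$ subcriticality of the zero-weight percolation is equivalent to supercriticality of its dual, which is precisely what produces the required encircling bypass paths at positive density.
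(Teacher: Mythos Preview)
The paper does not include its own proof of this theorem; it states the result and attributes it to \cite{vdbk} and \cite{Marchand}. So there is no in-paper argument to compare against, but your outline can be checked against what those references actually do and against the paper's own commentary surrounding the theorem.

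Your weak-inequality step is correct and is exactly the van den Berg--Kesten mechanism: freezing all weights but $\tau_e$, the map $t\mapsto T(0,ne_1)$ is $\min(A+t,B)$, hence concave increasing, and a Lindeberg replacement plus the definition of ``more variable'' gives $\mu(e_1)\le\tilde\mu(e_1)$.

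For the strict inequality, the strategy (extract a per-edge concavity defect at edges where the kink location $B_e-A_e$ falls in a fixed interval, and show linearly many such edges contribute) is the right picture, and your identification of the gain as $g(B_e-A_e)$ with $g(c)=\int_0^c(F-\tilde F)$ is accurate. But two points are off. First, the claim that the kink falls in $(a,b)$ ``with probability bounded away from zero uniformly in the position of $e$'' cannot hold: for edges $e$ far from any geodesic one has $A_e>B_e$ and the gain is zero. Only edges on (or forced onto) the geodesic contribute, and since the geodesic is random and shifts during the Lindeberg interpolation, one cannot simply sum over a deterministic linear set. The van den Berg--Kesten argument handles this by working with local \emph{patterns} that, when they occur at a site, force the geodesic to visit and simultaneously guarantee a controlled bypass; the key lemma is that geodesics traverse a positive density of such pattern occurrences.

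Second, your explanation of why $d=2$ enters is incorrect. You attribute it to planar self-duality of the zero-weight percolation (``subcriticality of the zero-weight percolation is equivalent to supercriticality of its dual''). But the van den Berg--Kesten proof already works in every dimension under $F(r)<\vec p_c(d)$, where $r=\inf\mathrm{supp}\,F$; the case Marchand added in $d=2$ is precisely $r>0$ with $F(r)\ge\vec p_c$, where there are \emph{no} zero-weight edges at all, so duality of zero-weight clusters is irrelevant. As the paper itself notes in the remark after Question~\ref{q:q3}, Marchand's two-dimensional input is large-deviation estimates for supercritical \emph{oriented} percolation, used to control how often the geodesic meets the required patterns. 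That, not planar duality, is the genuine $d=2$ ingredient.
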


For $x\neq e_{1}$ a version of $\mu(x) < \tilde \mu(x)$ can also be found in \cite{Marchand,vdbk}.
Van den Berg-Kesten-Marchand's comparision theorem also comes with a nice remark that gets rid of some na\"ive intuition. Suppose that the distribution of the passage time has unbounded support. One may (wrongly) imagine that there exists a threshold $M$ such that an optimal path between $0$ and $ne_{1}$, $n$ large,  never takes a linear fraction of edges with weights above $M$. However, the theorem above implies that, if one truncates the passage time at level $M$, the time constant $\mu_M(e_1)$ of the truncated model is strictly less than the original $\mu(e_1)$. Thus, it is more efficient for the model to use a certain positive proportion of edges with very large passage time than to try to always avoid them.

This remark leads also to the following question, which seems to be open. 

\begin{question} Suppose that the support of the distribution of $\tau_e$ equals $\mathbb R^+$. 
Let $$X_n = \max \{ \tau_e: e \text{ is in a geodesic from } 0 \text{ to } ne_1 \}.$$ How does  $X_n$ scale with $n$?
\end{question}

The assumption $F(0)< p_c$ in Theorem \ref{thm:Marchand} cannot be removed because of Theorem \ref{thm:percolate}. Now, let $r$ be the infimum of the support of $F$. In higher dimensions, a  version of the theorem above is known \cite{vdbk} if $F(r)<\vec p_c(d)$, where $\vec p_c(d)$ is the critical probability for directed edge percolation.   The only missing case at this point is:

\begin{question}\label{q:q3}
Extend the comparision theorem to the case $d>2$, $r>0$ and $\vec p_c(d)\leq F(r)$.
\end{question}

\begin{remark} Marchand's original approach does not work in higher dimensions, as she uses large deviations for supercritical oriented percolation available only in dimension $2$ (see \cite[Section 7]{Marchand}) at the time of her paper (see the estimate in \cite[Page 1014]{Marchand}). New estimates were obtained in $d>2$ in \cite[Proposition 2]{GM4} recently. These combined with Marchand's arguments may provide a solution to Question \ref{q:q3}.
\end{remark}
\subsection{The time constant through a homogenization problem}\label{sec:3243}

Another way to interpret the time constant was recently explored  by Krishnan \cite{Arjun} in FPP and by Georgiou, Rassoul-Agha and Sepp\"{a}l\"{a}inen \cite{GRS} in last-passage percolation. We briefly describe it here. 

The idea is to interpret the passage time as a solution of an optimal-control problem. Define
   \begin{equation*}
     A := \{\pm e_1,\ldots,\pm e_d\}.
   \end{equation*} We think of $A$ as the collection of possible directions to exit a vertex. We now write $\tau(v,\alpha)$ to refer to the weight $\tau_e$ at $v \in \Z^d$ along the direction $\alpha \in A$. It is now possible to check that
 $$ T(0,x) = \inf_{\alpha \in A} \{ T(0,x+\alpha) + \tau(x,\alpha) \}. $$
This suggests that one could think of the problem as a homogenization problem for metric Hamilton-Jacobi equations in $\mathbb R^d$. The advantage of such a perspective is to allow us to use the work of Lions-Souganidis  \cite{LS, LS2}  on certain homogenization problems in $\R^d$ to give a different characterization of the time constant. To see this we need a few definitions.

\begin{definition}
  For a function $f:\Z^d \to \R$, let
  \[ Df (x,{\alpha}) = f(x+\alpha) - f(x) \]
    be its discrete derivative at $x$ in the direction $\alpha \in A$. 
\end{definition}

We write $\Omega$ for the probability space and $\sigma_v:\Omega \rightarrow \Omega$ for the shift that translates the random variables $\tau_e$ by $v$.  
Define \begin{equation*}
  \mathcal F:= \left\{ 
    f:\Z^d \times \Omega \to \R ~\left|~ 
  \begin{split}  
      & Df(x + v,\alpha) (\omega) = Df(x, \alpha)(\sigma_v \omega), ~\forall v, x \in \mathbb Z^d, \forall \omega \in \Omega \\
      & \E[Df(x,\alpha)] = 0 ~\forall x \in \Z^d \text{ and } \alpha \in A 
\end{split} 
\right\} \right. .
\end{equation*}

For $p \in \mathbb R^d$, $x \in \mathbb Z^d$ and $f \in \mathcal F$ define
\begin{equation}
  H(f,p,x) = \sup_{\alpha \in A} \left\{ \frac{-Df(x,\alpha) - ( p, \alpha )}{\tau(x,\alpha)} \right\} 
  \label{eq:discrete-hamiltonian-1},
\end{equation}
where $( \cdot, \cdot )$ is the standard inner product in $\mathbb R^d$.
The following is the main result of \cite{Arjun}.

\begin{theorem}
 Assume that the passage times are bounded and bounded away from $0$, that is, there exist $m,M$ such that $0 < m < \tau_e <M$ almost surely. Then $\mu(x)$ solves the following Hamilton-Jacobi equation
  \begin{equation*} 
    \begin{split}
      \bar H(D\mu(x)) & = 1,\\
      \mu(0)     & = 0
    \end{split}   \end{equation*}
where $\bar H(p)$ is a convex, coercive, Lipschitz continuous function given by
    \begin{equation*}
   \bar H(p) = \inf_{f \in \mathcal F} \esssup_{\omega \in \Omega} \sup_{x \in \Z^d} H(f,p,x)(\omega) . 
    \end{equation*}
Furthermore, $\bar H(p)$ is the dual norm of $\mu(x)$ on $\mathbb R^d$, defined by  
$$ \bar H(p) = \sup_{\mu(x)=1} (p,x ).$$
\end{theorem}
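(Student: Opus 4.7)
The plan is to view the identity $T(0,x) = \inf_{\alpha \in A}\{T(0, x+\alpha) + \tau(x,\alpha)\}$ as a discrete Bellman equation and rewrite it as a discrete Hamilton--Jacobi equation of the form $H(T,0,x) = 1$, where $H$ is as in \eqref{eq:discrete-hamiltonian-1} with $p = 0$ and $f = T(0,\cdot)$. After rescaling by $\varepsilon$ and setting $u^\varepsilon(x) = \varepsilon T(0, \lfloor x/\varepsilon \rfloor)$, one obtains a family of equations $H^\varepsilon(u^\varepsilon, 0, x) = 1$ posed on the lattice $\varepsilon \Z^d$. The goal is to pass to the $\varepsilon\downarrow 0$ limit and obtain a deterministic macroscopic Hamilton--Jacobi equation satisfied by $\mu$. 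The shape theorem, valid under the hypothesis $0 < m < \tau_e < M$, already guarantees that $u^\varepsilon(x) \to \mu(x)$ a.s., so the work is to identify the limit PDE.

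The next step is to apply the Lions--Souganidis stochastic homogenization theorem \cite{LS,LS2}. The boundedness assumption $0 < m < \tau_e < M$ is exactly what is needed: it makes $H$ uniformly coercive and Lipschitz in $p$, and convex (in fact sup of linear functionals of $p$). Under these hypotheses, the theorem produces an effective Hamiltonian $\bar H(p)$, convex, coercive and Lipschitz, such that any rescaled sequence of solutions converges to the viscosity solution of the macroscopic equation $\bar H(Du) = 1$. Combining this with the convergence $u^\varepsilon \to \mu$ yields $\bar H(D\mu(x)) = 1$ with $\mu(0)=0$.

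To obtain the variational formula for $\bar H$, I would invoke the ``sup-cell-problem'' characterization that is standard in this framework: one seeks an almost-corrector, a function $f(x,\omega)$ with stationary, mean-zero increments (that is, $f \in \mathcal F$), so that $H(f, p, x)(\omega) \le \bar H(p) + \delta$ uniformly in $x$ and essentially surely in $\omega$. Taking infimum over $f \in \mathcal F$ and essential supremum over $\omega$ gives exactly
\[
\bar H(p) = \inf_{f \in \mathcal F} \esssup_{\omega \in \Omega} \sup_{x \in \Z^d} H(f,p,x)(\omega).
\]
The stationarity condition on $Df$ ensures that $f$ plays the role of a lifted corrector, and the mean-zero condition is the replacement for the solvability condition of the periodic cell problem. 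Checking that the inf-sup is attained up to arbitrary error requires constructing good approximate correctors by taking the solutions of the discrete HJ equation themselves (after subtracting the linear profile $(p,x)$) and averaging; this is the main technical obstacle, since stationary exact correctors typically do not exist and one must work with quasi-correctors in $\mathcal F$.

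Finally, the dual-norm identification follows from soft convex analysis once the PDE is in hand. The function $\mu$ is positively $1$-homogeneous, subadditive, and finite (from properties (1)--(4) collected earlier in this section), so $\mathcal B = \{x : \mu(x) \le 1\}$ is a convex, balanced, compact body. For such a $\mu$, the equation $\bar H(D\mu) = 1$ forces $\bar H$ to coincide with the support function of $\mathcal B$, because any convex positively homogeneous function is recovered from its $1$-level set by duality: for each $p$,
\[
\bar H(p) = \sup_{x \neq 0} \frac{(p,x)}{\mu(x)} = \sup_{\mu(x)=1}(p,x),
\]
the first equality being a consequence of $\bar H$ being convex, positively homogeneous (inherited from the homogeneity of $\mu$ via the HJ equation), and attaining the value $1$ precisely on $\partial \mathcal B^\circ$. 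The main difficulty in the whole proof is thus concentrated in the homogenization step and in the construction of approximate correctors showing the variational formula for $\bar H$; once that is done, the remaining statements are structural consequences.
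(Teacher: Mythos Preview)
The paper does not contain a proof of this theorem. It is stated as ``the main result of \cite{Arjun}'' (Krishnan's work), and the surrounding text only explains the motivation --- rewriting the dynamic programming identity for $T(0,x)$ as a discrete Hamilton--Jacobi equation and invoking the Lions--Souganidis homogenization theory \cite{LS,LS2} --- before moving on. So there is nothing in the paper to compare your argument against line by line.

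That said, your sketch is exactly the route the paper signals and that Krishnan follows: recast the passage time as a solution of a discrete HJ equation, rescale, use the boundedness hypothesis $0<m<\tau_e<M$ to verify the coercivity and Lipschitz conditions needed for Lions--Souganidis, identify $\bar H$ via the inf--sup corrector formula over the class $\mathcal F$ of functions with stationary mean-zero increments, and finally read off the dual-norm relation from the $1$-homogeneity of $\mu$. Your identification of the main technical difficulty --- that exact stationary correctors need not exist, so one must work with approximate correctors --- is accurate and is indeed where the substance of Krishnan's argument lies. As a high-level outline your proposal is correct; a complete proof would have to fill in the construction of those approximate correctors and the verification that the discrete-to-continuum passage fits the Lions--Souganidis framework, neither of which the survey itself attempts.
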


Although the theorem above gives a different characterization for the time constant,  it has not yet been used to get a better understanding of $\mu(e_1)$ as a function of $F$. An algorithm for finding a minimizer for the above variational formula is explained in part II of the  Ph.D. thesis of Krishnan \cite{KrishnanPhD}. It would be extremely nice to extend these ideas to say more geometric information on the limit ball (see questions in the next subsections). 

\subsection{The limiting ball: Cox-Durrett shape theorem}\label{subsec:limitshape}

For each unit vector $x \in \mathbb R^d$ we define the time constant $\mu(x)$ in direction $x$ through \eqref{def:mu}. In this section, we will see how the function $\mu$ describes the first order approximation of the random ball $B(t)$ as $t$ goes to infinity. The main result of the section is the world famous shape theorem, Theorem \ref{thm:limitshape}. 

Let $\mathcal{M}$ be the set of Borel probability measures on $[0,\infty)$ satisfying
\begin{equation}\label{eq:conditionmomentsLS}
\E \min \{ t_1^d, \ldots, t_{2d}^d\} <\infty,
\end{equation}
where $t_i, i=1, \ldots 2d$, are independent copies of $\tau_e$ and with 
\begin{equation}\label{eq:conditionmomentsLS2}
F(0)<p_c(d),
\end{equation} where $p_c(d)$ is the threshold for bond percolation in $\Z^d$. If $S$ is a subset of $\mathbb{R}^d$ and $r\in \mathbb{R}$ we write $rS = \{rs~:~s \in S\}$.

\begin{theorem}[Cox and Durrett \cite{CoxDurrett}]\label{thm:limitshape}
For each $\nu \in \mathcal{M}$, there exists a deterministic, convex, compact set $\mathcal B_\nu$ in $\mathbb{R}^d$ such that for each $\varepsilon>0$,
\begin{equation}\label{eq:limitshapeeq}
\mathbb{P}\left( (1-\varepsilon)\mathcal B_\nu \subset \frac{B(t)}{t} \subset (1+\varepsilon) \mathcal B_\nu \text{ for all large } t \right) = 1\ .
\end{equation}
Furthermore, $\mathcal B_{\nu}$ has non-empty interior and is symmetric about the axes of $\mathbb R^{d}.$
\end{theorem}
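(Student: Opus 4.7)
The plan is to define $\mathcal B_\nu$ as the unit ball of the deterministic norm $\mu$ and then deduce \eqref{eq:limitshapeeq} by upgrading the directional convergence \eqref{def:mu} to the following uniform statement:
$$ \lim_{|x|\to\infty} \frac{|T(0,x) - \mu(x)|}{|x|} = 0 \quad \text{almost surely.} $$
Both inclusions in \eqref{eq:limitshapeeq} then follow by a direct rewriting, using continuity of $\mu$ and the fact that $y\in B(t)$ iff $T(0,y)\leq t$.

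First I would extend $\mu$ from $\mathbb Q^d$ to a continuous positively homogeneous subadditive function on $\mathbb R^d$, using the homogeneity, triangle inequality, and uniform Lipschitz continuity on bounded sets listed just after \eqref{def:mu}. Combining Theorem \ref{thm:percolate} with the theorem following \eqref{def:mu} (that $\mu(e_1)>0$ forces $\mu(x)>0$ on $\mathbb R^d\setminus\{0\}$), the assumption $F(0)<p_c(d)$ makes $\mu$ a genuine norm. Setting
$$ \mathcal B_\nu := \{ x \in \mathbb R^d : \mu(x) \leq 1 \}, $$
subadditivity and homogeneity make $\mathcal B_\nu$ convex, positivity makes it bounded and hence compact with non-empty interior, and invariance of $\nu$ under coordinate reflections transfers to $\mu$ and hence to $\mathcal B_\nu$, giving the axis symmetry.

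For the uniform convergence itself, I would fix a large $K$ and tile $\mathbb R^d$ by closed axis-aligned cubes of side $K$ with corners in $K\mathbb Z^d$; for any $x\in\mathbb R^d$ let $\tilde x$ denote the nearest such corner. Subadditivity yields
$$ |T(0,x) - T(0,\tilde x)| \leq T(\tilde x, x), $$
so one must verify: (i) $T(0,\tilde x)/|x| \to \mu(x)/|x|$ uniformly in $x$ with $|x|$ large, and (ii) the corrector $T(\tilde x, x)/|x|$ vanishes uniformly as $|x|\to\infty$. For (i), I would choose a finite $\varepsilon$-net $\{y_1,\ldots,y_N\}$ of rational unit vectors (measured in the $\mu$-metric); \eqref{def:mu} along each $y_i$ together with a countable union and interpolation via the cone around each $y_i$, using the homogeneity and continuity of $\mu$, upgrades the finitely many almost sure statements to a uniform one.

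The main obstacle is (ii), the uniform control of the corrector:
$$ \lim_{K\to\infty}\limsup_{t\to\infty}\frac{1}{t}\max_{|x|\leq t} T(\tilde x, x) = 0 \quad \text{a.s.} $$
Since there are of order $t^d$ cubes intersecting the ball of radius $t$, a Borel--Cantelli argument requires the tail of a single point-to-point passage time within a $K$-cube to decay fast enough to beat a $t^d$ union bound. Lemma \ref{lemma:moments} shows that this is possible precisely when $\E\min[t_1,\ldots,t_{2d}]^d<\infty$, which is why \eqref{eq:conditionmomentsLS} is the right moment hypothesis and why the naive $L^1$ assumption from Theorem \ref{thm:timeconstant} does not suffice. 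Once this sublinear corrector bound is in hand, combining it with (i) produces the uniform convergence displayed above, and together with the structural facts of the second paragraph this completes the proof of both inclusions in \eqref{eq:limitshapeeq}.
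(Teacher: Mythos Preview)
Your overall strategy—define $\mathcal B_\nu$ as the unit ball of $\mu$ and reduce \eqref{eq:limitshapeeq} to the uniform statement $|T(0,x)-\mu(x)|/|x|\to 0$—is correct, and your treatment of the structural properties of $\mathcal B_\nu$ is fine. The Borel--Cantelli computation in (ii) is also right: $\E Y^d<\infty$ is exactly what makes $\sum_{x}\Pr\big(T(u,v)>\epsilon|x|\big)<\infty$ for pairs $(u,v)$ at bounded distance, via Lemma~\ref{lemma:moments}.

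The gap is in (i). You claim that convergence along the finite net $\{y_i\}$ together with ``continuity of $\mu$'' upgrades to uniform convergence, but continuity of $\mu$ only lets you compare $\mu(x)$ with $\mu(|x|y_i)$; it says nothing about comparing $T(0,x)$ with $T(0,|x|y_i)$. For that you need $T(|x|y_i,x)\lesssim |x-|x|y_i|\leq \epsilon|x|$, i.e.\ a linear bound on $T$ over distances of order $\epsilon|x|$, which are \emph{unbounded}. Your corrector (ii) only controls $T(\tilde x,x)$ over bounded distances ($\leq CK$), so it cannot supply this. In effect you have split the problem so that all the difficulty is hidden inside the ``interpolation'' of (i), and that step as written is circular.

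The paper closes exactly this gap with Lemma~\ref{lem: estimate}: using the same $d$-th moment input you invoke in (ii), it shows that with positive probability a vertex $x$ is ``good'' in the sense that $\sup_{z\neq x}T(x,z)/\|x-z\|_1<\kappa$. Ergodicity then places good vertices at asymptotic density along every rational ray (Claim~\ref{clam: ergodic_ratio}). Given a putative bad sequence $(x_n)$, one picks a rational direction $z$ close to $x_n/\|x_n\|_1$, finds a good vertex $n_kMz$ near $x_n$, and uses the Lipschitz bound \emph{from that good vertex} to control $|T(0,x_n)-T(0,n_kMz)|$. So your Borel--Cantelli estimate is the right tool, but it should be deployed to produce a global linear bound $T(x,z)\leq\kappa\|x-z\|_1$ from a positive-density set of basepoints, not the local $K$-cube corrector; once you have that, the $K$-grid is superfluous and the $\varepsilon$-net interpolation goes through.
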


\begin{remark}
If \eqref{eq:conditionmomentsLS2} does not hold, edges with zero passage time percolate, creating several instantaneous 'highways'. In this case, Theorem \ref{thm:percolate} says that the time constant $\mu(e_1)=0$. As for the limit shape, one has $\mathcal B_\nu=\mathbb R^d$ \cite[Theorem 1.10]{KestenAspects}. Precisely, under  assumption  \eqref{eq:conditionmomentsLS},  we have $F(0) \geq p_c(d)$ if and only if for every $M >0$
$$ \{ x:|x| \leq M \} \subset \frac{B(t)}{t}  \quad \text{ eventually w.p. } 1.$$
\end{remark}

\begin{remark}\label{rem:itblowsup!} For $x \in \mathbb Z^d$, let $m(x)$ be the minimum of  $\tau_e$ over all edges $e$ incident to $x$. 
If \eqref{eq:conditionmomentsLS} fails, then for any $C>0$, $$ \sum_{x \in \mathbb Z^d} \Pro \bigg( m(x) > C|x| \bigg) = \infty.$$
Now note that $T(0,x) \geq m(x)$ and the random variables $\{ m(x): x \in (2\mathbb Z)^{d}\}$ are independent. Then we can apply the Borel-Cantelli Lemma to obtain
$$\frac{T(0,x)}{|x|} > C \text{ for infinitely many } x \in \mathbb Z^d, \text{ a.s.} $$ Thus, if \eqref{eq:conditionmomentsLS} fails, \eqref{eq:limitshapeeq} also does not hold.
\end{remark}

\begin{remark}
With Theorem \ref{thm:limitshape}, Cox and Durrett provided sufficient and necessary conditions for the  shape theorem to hold in FPP. The first shape theorem, however, was proven in the seminal work of Richardson \cite{Richardson} in 1973. We will come back to discuss Richardson's model in Section \ref{sec:growth}.
\end{remark}

\begin{figure}[h]
\centering
 \includegraphics[scale=0.77]{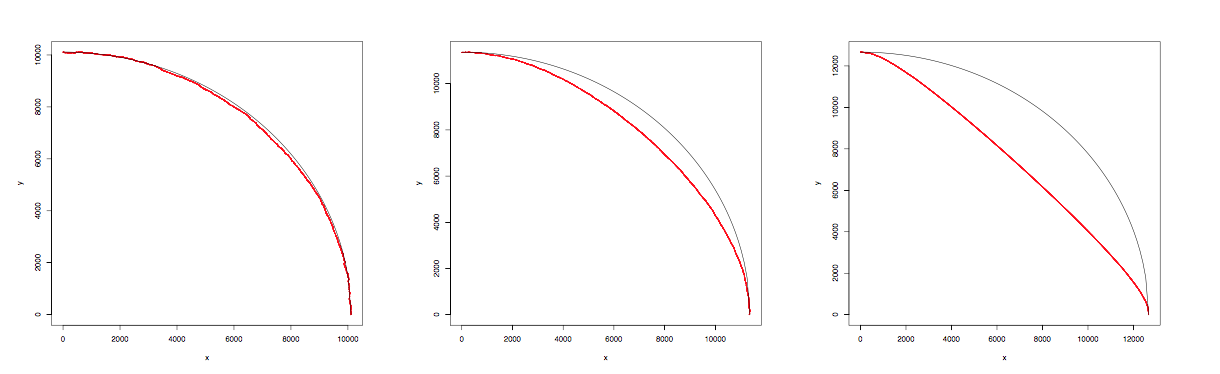}
 \caption{In red, simulation of the $2$ dimensional ball $B(t)$, intersected with the first quadrant, when $t=15000$. The passage times $\tau_{e}$ are distributed according to an exponential random variable of mean $1$ plus a constant $C$.  Here, $C=0$ (left), $C=0.5$ (middle), and $C=4$ (right). In blue, the shape of a circle of radius $T(0,te_{1})$. Simulations and figures are courtesy of S.-E. Alm and M. Deijfen.}
 \label{FigureAlm}
\end{figure}

The idea of the proof of Theorem \ref{thm:limitshape} is to first use subadditivity to demonstrate the linear growth of $B(t)$ in a fixed rational direction. This implies that, with probability one, we have the right growth rate in a countable dense set of directions simultaneously. To obtain the full result from this, we need some bound which allows us to interpolate between these directions, insuring that the convergence occurs along all rays with probability one.

There are different ways to implement this interpolation step. Here we sketch a method which first appeared in \cite{hansonthesis} for establishing the interpolative bound just mentioned. This method has the advantage of applying (with modifications) to the stationary ergodic case of FPP, and is inspired by \cite{B90}. This proof method was further used for the Busemann shape theorem (see Lemma~\ref{thm:Busemannlimitshape}) in \cite{DHanson}. 

\begin{lemma}[Difference estimate] \label{lem:estimate}
Let $\nu \in \mathcal{M}$. Then there exists a constant $\kappa < \infty$ such that, for any $x \in \mathbb{Z}^d$,
\begin{equation}\label{eq: maximal_inequality}
\mathbb{P}\left(\sup_{\substack{z \in \mathbb{Z}^d\\ z \neq x}}
\frac{T(x,z)}{\|x - z\|_1} < \kappa \right) > 0\ .
\end{equation}
\end{lemma}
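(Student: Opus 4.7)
The plan is a block renormalization argument that reduces the uniform linear bound to a chemical-distance estimate in a supercritical Bernoulli site percolation. By translation invariance of the i.i.d.\ weights we may set $x = 0$.

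Fix a large integer $L$ and partition $\Z^d$ into blocks $B_v := Lv + \{0,\ldots,L-1\}^d$, $v \in \Z^d$. Let $\widehat B_v := Lv + \{-L,\ldots,2L-1\}^d$ be the enlargement, and denote by $T^{\widehat B_v}$ the passage time restricted to paths in $\widehat B_v$. Call $v$ \emph{good} (event $G_v$) if $T^{\widehat B_v}(y,y') \leq L^4$ for every pair $y \in B_v$ and $y' \in B_v \cup \bigcup_{i=1}^d (B_{v+e_i}\cup B_{v-e_i})$. This event depends only on the edges in $\widehat B_v$, so $(G_v)$ is a finite-range dependent field.

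The key moment estimate --- an axis-parallel path combined with the ``$2d$ short-paths'' trick from the proof of Theorem~\ref{thm:timeconstant} --- together with Minkowski's inequality and Lemma~\ref{lemma:moments} gives $\|T^{\widehat B_v}(y,y')\|_d \leq CL$ for the pairs in question. Markov and a union bound over $O(L^{2d})$ such pairs yield $\Pro(G_v^c) \leq C'/L^d$. By the Liggett--Schonmann--Stacey stochastic-domination theorem, $(G_v)$ dominates an i.i.d.\ Bernoulli site percolation of density $p(L) \to 1$ as $L\to\infty$; for $L$ large this is supercritical, and the Antal--Pisztora chemical-distance theorem provides a positive-probability event on which (i) $0$ lies in the infinite good cluster $\mathcal C$ with $d^{\mathcal C}(0,v)\leq \rho \|v\|_1$ for all $v \in \mathcal C$, and (ii) all finite components of bad blocks are uniformly bounded in diameter.

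On this event, for $z$ with $v(z) := \lfloor z/L\rfloor \in \mathcal C$, concatenating passage times along a chemical path $0 = v_0, \ldots, v_m = v(z)$ (with $m \leq \rho\|v(z)\|_1$) yields $T(0,z) \leq (m+1)L^4 \leq C L^3\|z\|_1$. For $z$ with $v(z) \notin \mathcal C$, a short detour to the nearest block in $\mathcal C$ adds an $O(L^4)$ additive overhead, which together with the finitely many small-$\|z\|_1$ exceptions (where $T(0,z)$ is a.s.\ finite) is absorbed by enlarging $\kappa$. This gives $T(0,z) \leq \kappa \|z\|_1$ for all $z\neq 0$ on a positive-probability event. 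The main technical obstacle is step~(ii): converting the exponential control on bad-cluster sizes into a uniform diameter bound on a positive-probability event requires subcritical percolation tail estimates; granted this, the construction produces the finite constant $\kappa$ claimed.
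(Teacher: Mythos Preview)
Your block-renormalization route is quite different from the paper's Cox--Durrett argument, and it runs into a genuine obstacle at step~(ii). The event ``all finite components of bad blocks are uniformly bounded in diameter'' has probability \emph{zero}, not positive probability: by the finite-range dependence of $(G_v)$, for any radius $R$ the local event ``every block within $\ell^1$-distance $R$ of $v$ is bad'' has strictly positive probability and is independent for well-separated centers $v$, so by the second Borel--Cantelli lemma it occurs for infinitely many $v$ almost surely. No subcritical tail estimate can turn infinitely many clusters into a uniform bound. Nor does a weakened version help: even if on a positive-probability event the bad cluster through $v$ has diameter $o(\|v\|)$, for $z$ with $v(z)$ deep inside such a cluster you still need to bound $T(y',z)$ for some $y'$ in a good block at distance up to $o(\|z\|)$, and every path from $y'$ to $z$ must traverse bad blocks where the edge weights are completely uncontrolled (badness says only that some restricted passage time exceeds $L^4$; it says nothing about how large it is). So the ``$O(L^4)$ additive overhead'' claim fails precisely for the $z$ that matter.

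The paper's approach sidesteps this by never sorting blocks into good and bad. It renormalizes to a sparse lattice $C\Z^d$ and assigns to each renormalized edge the \emph{minimum} over $2d$ edge-disjoint $\Z^d$-paths between its endpoints; the hypothesis $\E\min\{t_1^d,\ldots,t_{2d}^d\}<\infty$ then gives every renormalized edge a finite $d$-th moment. Between $x$ and any $z$ one builds $2d$ disjoint paths at the renormalized level, bounds each tail by a standard i.i.d.-sum estimate, takes the $2d$-fold product, and sums over $z$ via Borel--Cantelli. The key conceptual difference is that the paper's renormalized weight is always controlled --- there are no ``bad'' edges to route around --- whereas your percolation picture necessarily leaves uncontrolled regions that any path to an interior vertex must enter.
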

\begin{proof}[Idea of proof]
This proof follows the line of a similar estimate in \cite{CoxDurrett}. If $\mathbb{E} \tau_e^d < \infty$, the proof follows by noting that there exist $2d$ edge-disjoint paths $\{r_i\}$ between $x$ and $z$, each of length order $\|x-z\|_1$. For the event $\{T(x,z) > \kappa \|x - z\|_1\}$ to occur, each of these paths must have $T(r_i) > \kappa \|x-z\|_1$. Using standard estimates for i.i.d. sums, this probability is small for $\kappa$ sufficiently large--in fact, we can get an estimate which is summable in $z$. Using the Borel-Cantelli Lemma (and adjusting the constant $\kappa$ upwards if necessary) allows us to complete the proof.

In the general case, we consider a sparse lattice $C\mathbb{Z}^d$ such that between ``neighboring'' vertices of this lattice, there exist $2d$ disjoint paths lying inside cells of side length of order $C$. In particular, paths corresponding to well-separated pairs of ``neighboring'' vertices are disjoint, and their passage times are independent. The smallest passage time among these $2d$ paths is clearly an upper bound for the passage time between ``neighboring'' vertices; we treat this as the passage time of a ``renormalized'' edge of the sparse lattice. Mimicking the argument of the preceding paragraph, then extending the bound to the rest of $\mathbb{Z}^d$, completes the proof.
\end{proof}
\begin{proof}[Proof of Theorem \ref{thm:limitshape}]
We will call an $x$ in $\mathbb{Z}^d$ for which the event appearing in \eqref{eq: maximal_inequality} occurs a ``good'' vertex. We can immediately leverage the information in that lemma to show
\begin{claim}\label{clam: ergodic_ratio}
Let $\zeta \in \mathbb{Z}^d\setminus \{ 0 \}$. For a given realization of edge-weights, denote by $(n_k)$ the sequence of natural numbers such that $n_k \zeta$ is a good vertex. Then with probability one, the sequence $(n_k)$ is infinite and $\lim_k (n_{k+1}/n_k) = 1$.
\end{claim}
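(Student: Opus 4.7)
The plan is to recognize that the event ``$0$ is good'' is a measurable event in the product space $\Omega$ with positive probability (by Lemma~\ref{lem:estimate}), and then to apply Birkhoff's ergodic theorem along the shift $\sigma_\zeta$ to conclude that good vertices along the ray $\{m\zeta : m \in \mathbb{N}\}$ occur with positive asymptotic density.

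First I would observe that ``$m\zeta$ is good'' is precisely the event
\[
\sup_{z \neq 0} \frac{T(m\zeta, m\zeta + z)}{\|z\|_1} < \kappa,
\]
which is the $\sigma_{m\zeta}$-translate of the event ``$0$ is good.'' Writing $p := \mathbb{P}(0 \text{ is good})$, Lemma~\ref{lem:estimate} guarantees $p > 0$. Since the edge weights $(\tau_e)$ are i.i.d.\ under $\nu$, the shift $\sigma_\zeta$ acts as a measure-preserving transformation on $\Omega$, and because $\zeta \neq 0$ this action is ergodic (indeed, mixing, as any cylinder event depends on only finitely many edges and the shifts eventually move the support to disjoint regions). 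Birkhoff's ergodic theorem then yields, almost surely,
\[
\lim_{N \to \infty} \frac{1}{N} \sum_{m=1}^{N} \mathbf{1}\{m\zeta \text{ is good}\}(\omega) \;=\; p \;>\; 0.
\]

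In particular, almost surely the sequence $(n_k)$ is infinite. Moreover, writing $N_k := \#\{m \leq n_k : m\zeta \text{ is good}\} = k$, the ergodic average above gives $k/n_k \to p$, i.e.\ $n_k/k \to 1/p$. From this,
\[
\frac{n_{k+1}}{n_k} \;=\; \frac{n_{k+1}}{k+1} \cdot \frac{k+1}{k} \cdot \frac{k}{n_k} \;\longrightarrow\; \frac{1}{p} \cdot 1 \cdot p \;=\; 1,
\]
as required.

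The only real obstacle is verifying the hypotheses of Lemma~\ref{lem:estimate} produce a translation-invariant event with positive probability (straightforward since $\tau_e$ is i.i.d.) and confirming ergodicity of $\sigma_\zeta$; both are routine but essential. All other steps are bookkeeping.
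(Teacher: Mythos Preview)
Your proof is correct and follows essentially the same approach as the paper: both apply the ergodic theorem to the indicators $\mathbf{1}\{m\zeta \text{ is good}\}$ to get $k/n_k \to p > 0$, and then deduce $n_{k+1}/n_k \to 1$ via the identical telescoping product $\frac{n_{k+1}}{k+1}\cdot\frac{k+1}{k}\cdot\frac{k}{n_k}$. Your version is slightly more explicit about why $\sigma_\zeta$ is ergodic, but otherwise the arguments coincide.
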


To see that the claim is correct, note that the ergodic theorem implies that the sequence $(n_{k})$ is infinite almost surely. Let $B_m$ denote the event that $m\zeta$ is a good vertex.   Then
\[
\frac{k}{n_k} = \frac{1}{n_k} \sum_{i=1}^{n_k} \mathbf{1}_{B_i};
\]
the right side converges to the probability in \eqref{eq: maximal_inequality} by the ergodic theorem. Thus,
\[
\frac{n_{k+1}}{n_k} = \left( \frac{n_{k+1}}{k+1} \right) \left( \frac{k}{n_k} \right) \left( \frac{k+1}{k} \right) \to 1
\]
almost surely. This proves the claim.

Let $\Xi_1$ denote the event that $\lim_n T(0, nq) / n = \mu(q)$ for all $q$ having rational coordinates; let $\Xi_2$ denote the event that for every $\zeta \in \mathbb{Z}^d$, the sequence $(n_k)$ defined in Claim~\ref{clam: ergodic_ratio} is infinite and that the ratio of successive terms tends to one. From here, the proof of Theorem~\ref{thm:limitshape} proceeds by contradiction. Assume the Shape Theorem does not hold. Then there exists $\delta>0$ and a collection of edge-weight configurations $D_\delta$ with $\mathbb{P}(D_\delta)>0$ such that, for every outcome in $D_\delta$, there are infinitely many vertices $x \in \mathbb{Z}^d$ with
\begin{equation}\label{eq: toward_contradiction}
|T(0,x) - \mu(x)| > \delta\|x\|_1.
\end{equation}

Since $\mathbb{P}(\Xi_1)=1=\mathbb{P}(\Xi_2)$, the event $D_\delta \cap \Xi_1 \cap \Xi_2$ contains some outcome $\omega$; we claim that $\omega$ has contradictory properties. On outcome $\omega$, there must exist a sequence $(x_i) \subset \mathbb{Z}^d$ satisfying the condition in \eqref{eq: toward_contradiction}. We can assume that $x_i / \|x_i\|_1$ converges to some $y$ with $\|y\|_1=1$ by compactness of the unit sphere. Let $\delta'>0$ be arbitrary; we will fix its value at the end of the proof. We first choose some large $N$ such that $\|x_n/\|x_n\|_1-y\|_1 < \delta'$ and such that $$|\mu(x_n) - \|x_n\|_1\mu(y)| < \delta \|x_n\|_1/2$$ for $n > N$. Then we have for $n>N$ (using our assumption \eqref{eq: toward_contradiction}:
\begin{equation}\label{eq: toward_contradiction_2}
|T(0,x_n) - \|x_n\|_1\mu(y)| > \delta \|x_n\|_1/2.
\end{equation}

Next, we set up a sequence of approximating good vertices. We find some $z \in \mathbb{R}^d$, $\|z\|_1=1$ such that $\|z-y\|_1 < \delta'$, with the additional property that $z=x/M$ for some $x \in \mathbb{Z}^d$ and some positive integer $M$. This can be done because vectors with rational coordinates are dense in the unit sphere. On $\omega$, there must exist a sequence $(n_k)$ such that $n_k Mz$ is a good vertex and such that $n_{k+1}/n_k$ tends to one. For any $n$, there exists a value of $k$ such that $$n_{k+1}M \geq \|x_n\|_1 \geq n_k M;$$ denote this value by $k(n)$. Finally, fix $K>0$ such that $n_{k+1} < (1+\delta')n_k$ and $$|T(0,n_kMz)/(n_kM) - \mu(z) | <\delta'$$ for all $k > K$. We now let $n > N$ be large enough that $k(n) > K$.

Before completing the calculation here, it is worth considering where the contradiction will arise. We have (essentially by assumption) that $T(0,ny) - n\mu(y)$ is of order $n$ for infinitely many $n$. Since $\mu$ is a norm, $\mu(y)$ and $\mu(z)$ are arbitrarily close and since infinitely many of the $\{nz\}$ are good vertices, $T(0,ny)$ and $T(0,nz)$ are arbitrarily close. Thus $|T(0,nz) - n\mu(z)|$ is large -- but this is counter to the properties assumed for $\omega$.

To turn the above into a rigorous estimate, write $k$ for $k(n)$ and expand
\begin{align*}
\left| \frac{T(0,x_n)}{\|x_n\|_1} - \mu(y) \right| &\leq \left| \frac{T(0,x_n) - T(0,n_kMz)}{\|x_n\|_1}\right| + \frac{T(0,n_kMz)}{n_kM} \left( 1- \frac{n_kM}{\|x_n\|_1}\right) \\
&+ \left| \frac{T(0,n_kMz)}{n_kM} - \mu(z) \right| + |\mu(z)-\mu(y)|.
\end{align*}
There are four terms on the right side of the above, which we number from left to right and bound individually in terms of $\delta'$.

\medskip
\noindent
{\bf Term 1.} Since $n>N$ and $k>K$, one has $n_kM \leq \|x_n\|_1 \leq (1+\delta')n_{k}M$, that $\|n_{k}My-n_kMz\|_1 \leq \delta'n_kM$, and that $\|x_n/\|x_n\|_1 - y \|_1 < \delta'$. Therefore, $\|x_n - n_kMz\|_1 \leq 2\delta'\|x_n\|_1$. Using the fact that $n_kMz$ is a good vertex yields
\[
|T(0,x_n) - T(0,n_kMz)| \leq \kappa\|x_n-n_kMz\|_1 \leq 2\kappa \delta'\|x_n\|_1.
\]

\medskip
\noindent
{\bf Term 2.} The relationship between $n_kM$ and $\|x_n\|_1$ given in the Term 1 estimates yield an upper bound for the second factor of Term 2. By the fact that $k>K$, we can bound the first factor. The overall bound is
\[
[\mu(z)+\delta']\left( 1 - (1+\delta')^{-1} \right).
\]

\medskip
\noindent
{\bf Term 3.} By the fact that $k$ is chosen greater than $K$, this term is bounded above by $\delta'$.

\medskip
\noindent
{\bf Term 4.} If $\mu$ is identically zero, this term is trivially zero. If $\mu$ is not identically zero, it is a norm on $\mathbb{R}^d$ and is thus bounded by the $\| \cdot \|_{1}$ norm:
\[
c_L\|\cdot\|_1 \leq \mu(\cdot) \leq c_U\|\cdot\|_1.
\]
Since $\|z-y\|_1 < \delta'$, Term 4 is bounded above by $\delta'$ times a constant depending only on $\mu$. 

We have therefore bounded the left side of \eqref{eq: toward_contradiction_2} by an expression of the form $f(\delta')\|x_n\|_1$, where $f$ tends to zero as $\delta' \to 0$. Since $\delta'$ was arbitrary, we can choose it such that $f(\delta')\|x_n\|_1$ is smaller than the right side of \eqref{eq: toward_contradiction_2}. This contradiction proves the theorem.

\end{proof}

\subsection{Other limit shapes}\label{sec:shell}
In this section, we briefly discuss a few extensions of Theorem \ref{thm:limitshape}.
\subsubsection{Shell passage times}
As we saw in Remark \ref{rem:itblowsup!}, when \eqref{eq:conditionmomentsLS} fails we have with probability one

\begin{equation*} \limsup_{v \to \infty} \frac{T(0,v)}{|v|}  =\infty.
\end{equation*}
Nevertheless, without any moment condition, one can define a modified passage time $\hat T(u,v)$ such that the family of random variables $\hat T(u,v) - T (u,v)$ is tight and one has a limit shape for the modified $\hat T$. This was first done by Cox-Durrett in dimension $2$ and later extended by Kesten to all dimensions. Their construction goes as follows.  Let  $M \in \mathbb R_+$ be large enough so that $F([0,M])$ is very close to $1$. The collection of edges $e$ such that $\tau_{e} \leq M$ is a super-critical percolation process, so if we denote by $\mathcal C_M$ its infinite cluster, each point $u \in \mathbb Z^d$ is a.s. surrounded by a small contour (or shell) $S(u) \subset \mathcal C_M$. They define $\hat T(u,v) = T (S(u), S(v))$ for $u, v \in \mathbb Z^d$. The times $\hat T(0, u)$ have good moment properties; thus their limit shape can be defined using the a.s. and $L^{1}$ limit given by the previous arguments. The details of this construction in $d>2$ requires certain topological properties of the exterior boundary of a subset of $\mathbb Z^{d}$. These properties were derived by Kesten and generalized in the work of Tim\'{a}r \cite{Timar}.

\subsubsection{FPP in the super-critical percolation cluster}
Another direction where  shape theorems have been proven is where we allow passage times to be infinite. This is equivalent to considering FPP on a super-critical Bernoulli percolation performed independently. When the edge weights are either $1$ or $\infty$, the passage time is also known as the chemical distance. 

In this setting, the benchmark is the work of Gar\'et-Marchand \cite{GM2}, where the analogues of Theorems \ref{thm:percolate}, \ref{thm:limitshape} were proven under a moment condtion $\E \tau_{e}^{\alpha(d)} \mathbf 1_{\{\tau_{e}<\infty\}} <\infty$, where $\alpha(d)= 2(d^{2}+d-1)+\epsilon$; see hypothesis $H_{\alpha}$ on page 4 in \cite{GM2}. Their results are also valid for stationary ergodic passage times, in the spirit of the work of Boivin \cite{B90}. 

In the i.i.d. case, in two independent works, Cerf-Th\'{e}ret \cite{CT0} and Mourrat \cite{Mourrat} recently removed all moment assumptions of \cite{GM2}, by proving a weak shape theorem. In his paper, Mourrat considers a model of a random walk in a random potential, but he discusses how his theorems easily extend to our setting (See Section 11 of \cite{Mourrat}). We describe their results below, as they are a nice compromise between the results of Gar\'et-Marchand and shell passage times of Cox-Durrett and Kesten. Let $\mathcal C_\infty$ be the infinite cluster for the Bernoulli percolation. For all $x \in \mathbb Z^d$, let $x^* \in \mathbb Z^d$ be the random point of $\mathcal C_\infty$ such that $\| x - x^* \|_1$ is minimal, with a deterministic rule to break any possible ties. Define $$T^*(x,y) = T(x^*, y^*)$$ and, for $t \geq 0$,

$$B_t^* = \{ z + u | z \in \mathbb Z^d, T^*(0,z)\leq t, \; u \in [-1/2,1/2]^d\}. $$

The time constant satisfies:

\begin{theorem}[Theorem 4 \cite{CT0}, Theorem 1.2 \cite{Mourrat}] Suppose that $F([0,\infty)) > p_c(d)$. Then there exists $\mu^*(x)$ such that  for all $x \in \mathbb Z^d$,
$$ \lim_n \frac{T^*(0,nx)}{n} = \mu^*(x) \; \text{ in probability, }$$
and
 $$  \lim_n \frac{T(0,nx)}{n} = Z \quad \text{ in law, }$$ 
where the distribution measure of $Z$ is given by $\theta^2 \delta_{\mu^*(x)} + (1-\theta^2) \delta_{\infty} $
and $\theta = \mathbb P (0\in \mathcal C_\infty)$.
\end{theorem}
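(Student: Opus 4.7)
The plan is to reduce to the i.i.d.\ shape theorem (Theorem~\ref{thm:limitshape}) applied to a bounded truncation of the edge weights, and to handle the distributional statement via a mixing argument for the event that both endpoints lie in $\mathcal{C}_\infty$. First I would fix $M$ large enough that $F([0,M])>p_c(d)$, and let $\mathcal{C}_\infty^M\subset\mathcal{C}_\infty$ denote the (a.s.\ unique) infinite cluster of $\{e:\tau_e\leq M\}$. Let $v^{*,M}$ denote the $\ell^1$-closest point of $\mathcal{C}_\infty^M$ to $v$ (with a deterministic lexicographic tie-break preserving translation-covariance), and set $T^{*,M}(x,y):=T(x^{*,M},y^{*,M})$.

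The engine for the first assertion is Kingman's theorem applied to $T^{*,M}$. Any path in $\mathcal{C}_\infty^M$ uses only edges of weight $\leq M$, so $T^{*,M}(0,x)\leq M\cdot d_{\mathcal{C}_\infty^M}(0^{*,M},x^{*,M})$; the Antal--Pisztora chemical distance theorem provides exponential tails for the right-hand side in $\|x\|_1$, so $T^{*,M}(0,x)$ has all moments. Combined with the subadditivity $T^{*,M}(0,nx)\leq T^{*,M}(0,mx)+T^{*,M}(mx,nx)$ and translation invariance, Theorem~\ref{subadditivity} produces a constant $\mu^{*,M}(x)$ with $T^{*,M}(0,nx)/n\to\mu^{*,M}(x)$ a.s.\ and in $L^1$. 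To pass to $T^*$, I would use the triangle inequality
\[
\bigl|T^*(0,nx)-T^{*,M}(0,nx)\bigr|\leq T(0^*,0^{*,M})+T\bigl((nx)^*,(nx)^{*,M}\bigr).
\]
Each summand on the right is an a.s.\ finite random variable depending only on the local environment near its basepoint, and by translation invariance the second is equidistributed with the first, so the right-hand side is tight in $n$ and becomes negligible after division by $n$. Hence $T^*(0,nx)/n\to\mu^{*,M}(x)$ in probability; since the left-hand side is independent of $M$, the limit $\mu^{*,M}(x)$ must be constant in $M$ for all $M$ large enough, and I set this common value to be $\mu^*(x)$.

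For the distributional assertion, put $E_n:=\{0\in\mathcal{C}_\infty\}\cap\{nx\in\mathcal{C}_\infty\}$ and note that when $v\in\mathcal{C}_\infty$ we may take $v^*=v$, so $T(0,nx)=T^*(0,nx)$ on $E_n$. Off $E_n$, the passage time $T(0,nx)$ can be finite only when $0$ and $nx$ lie in a common \emph{finite} cluster of $\{e:\tau_e<\infty\}$, an event of probability $O(e^{-cn})$ by exponential decay of supercritical finite-cluster sizes. Mixing for Bernoulli percolation (or FKG plus ergodic decoupling) gives $\mathbb{P}(E_n)\to\theta^2$, and combining with the first assertion identifies the limit law as $\theta^2\delta_{\mu^*(x)}+(1-\theta^2)\delta_{\infty}$. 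The main technical obstacle is securing integrability of $T^{*,M}$: I rely on the Antal--Pisztora estimate as a black box, and without it (or a suitable substitute) Kingman's theorem cannot be applied and the entire scheme collapses; the exponential decay of supercritical finite-cluster sizes used in the second part is a second classical input, while everything else is a soft triangle-inequality and mixing argument.
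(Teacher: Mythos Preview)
The paper is a survey and does not prove this theorem; it merely quotes the result from Cerf--Th\'eret and Mourrat. There is therefore no in-paper proof to compare against, and your task is really to reconstruct an argument.

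Your sketch is essentially correct and follows a natural truncation route. A few points deserve a bit more care. First, the subadditivity and stationarity hypotheses for $X_{m,n}=T^{*,M}(mx,nx)$ do hold, but you should spell out that $v\mapsto v^{*,M}$ is translation-covariant (your tie-break rule guarantees this), so that $X_{m,n}(\omega)=X_{0,n-m}(\theta_{mx}\omega)$; ergodicity of the constant limit then comes from the i.i.d.\ structure. Second, the Antal--Pisztora input needs to be combined with exponential tails for $\|0^{*,M}\|$ (i.e., the fact that large holes in supercritical percolation are exponentially unlikely) to handle the random endpoints; this is routine but worth stating. Third, your ``mixing'' step $\mathbb{P}(E_n)\to\theta^2$ is slightly more than generic mixing: the events $\{0\in\mathcal{C}_\infty\}$ and $\{nx\in\mathcal{C}_\infty\}$ are not local. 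The clean way is to use uniqueness of the infinite cluster to get $E_n\subset\{0\leftrightarrow nx\}$, invoke the classical two-point function convergence $\mathbb{P}(0\leftrightarrow nx)\to\theta^2$ in supercritical Bernoulli percolation, and combine with the FKG lower bound $\mathbb{P}(E_n)\geq\theta^2$.

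As for the cited papers, Mourrat's route goes through random walks in random potentials (a quenched Lyapunov-exponent framework) rather than a direct truncation-plus-Kingman argument, so your approach is genuinely different in flavor; it is more elementary but relies on the percolation inputs (Antal--Pisztora, exponential decay of finite supercritical clusters) as black boxes, which you rightly flag.
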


The limit shape obeys:
\begin{theorem}[Theorem 5 \cite{CT0}, Theorem 1.2 \cite{Mourrat}] Suppose that $F([0,\infty)) > p_c(d)$ and $F(\{ 0\}) < p_c(d)$. Then there exists a compact set $B^*$ such that almost surely 
$$ \lim_t \mathcal \lambda^d\left( \frac{B_t^*}{t} \bigtriangleup B^* \right) = 0,$$
where $\mathcal \lambda^d$ denotes the Lebesgue measure in $\mathbb R^d$ and $A \bigtriangleup B$ is the symmetric difference of sets $A$ and $B$. 
\end{theorem}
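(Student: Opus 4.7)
My plan is to take $B^* := \{x \in \mathbb{R}^d : \mu^*(x) \leq 1\}$ as the candidate limit shape and prove the convergence first in $L^1$ via Fubini and the preceding in-probability limit for $T^*(0,nx)/n$, then upgrade to almost sure convergence by exploiting the monotonicity $B_s^* \subset B_t^*$ for $s < t$. The first task is to show $B^*$ is compact. Subadditivity and homogeneity of $\mu^*$ pass from the corresponding properties of $T^*$, so the only issue is positivity: $\mu^*(x) > 0$ for $x \neq 0$. This is the counterpart of Theorem~\ref{thm:percolate} inside $\mathcal{C}_\infty$: under $F(\{0\}) < p_c(d)$ the zero-weight edges do not percolate, and a standard renormalization over well-connected boxes gives $T^*(0, x^*) \geq c|x|$ with high probability. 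Combined with homogeneity this makes $\mu^*$ a norm on $\mathbb{R}^d$, so $B^*$ is a compact convex symmetric body with nonempty interior.

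Next I would establish a deterministic confinement: there exists $R$ such that, almost surely, $B_t^*/t \subset B_R(0)$ for all large $t$. One way is to compare with a truncated FPP model on $\mathbb{Z}^d$ with edge weights $\tau_e \wedge M$ for $M$ chosen so that $F([0,M]) > p_c(d)$ is large; its limit shape from Theorem~\ref{thm:limitshape} bounds $B_t^*/t$ from above, up to a shell correction $|x^* - x|/|x|$ that tends to zero by standard estimates on the chemical distance from a vertex to $\mathcal{C}_\infty$. With the confinement in place, Fubini gives
\[
\E[\lambda^d(B_t^*/t \triangle B^*)] = \int_{B_R(0)} \P\bigl( \mathbf{1}\{y \in B_t^*/t\} \neq \mathbf{1}\{y \in B^*\} \bigr)\, dy.
\]
Since $\{y : \mu^*(y) = 1\}$ is the boundary of a convex body and so has Lebesgue measure zero, and since for $\mu^*(y) \neq 1$ the in-probability convergence of $T^*(0, \lfloor ty \rfloor)/t$ to $\mu^*(y)$ stated in the preceding theorem forces the integrand to tend to zero, dominated convergence yields $L^1$ (and hence in-probability) convergence of $\lambda^d(B_t^*/t \triangle B^*)$ to zero.

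The main obstacle is promoting this in-probability statement to the almost sure conclusion. The key observation is that $B_t^*$ is monotone increasing in $t$, so for each fixed $y$ the event $\{y \in B_t^*/t\}$ depends on $t$ only through the single random threshold $T^*(0, y^*)$. For $t \in [t_k, t_{k+1}]$ with $t_k = (1+\eta)^k$, one can sandwich $B_t^*/t$ between $(t_k/t_{k+1}) B_{t_k}^*/t_k$ and $(t_{k+1}/t_k) B_{t_{k+1}}^*/t_{k+1}$; since $t_{k+1}/t_k = 1+\eta$, a.s.\ convergence of $\lambda^d(B_{t_k}^*/t_k \triangle B^*)$ to zero along the subsequence would imply the same for all $t$ up to an $O(\eta)$ error, and one then lets $\eta \to 0$ along a countable sequence. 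To obtain the a.s.\ convergence on the subsequence I would produce a variance estimate for $\lambda^d(B_{t_k}^*/t_k \triangle B^*)$ that is summable in $k$ using concentration for $T^*$ on the cluster (in the spirit of Section~\ref{sec:fluctuations}), apply Chebyshev and Borel--Cantelli, and then conclude by the monotone interpolation above. The concentration/variance control on the supercritical cluster is where I expect the main technical difficulty to lie.
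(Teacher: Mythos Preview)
The survey does not prove this theorem; it is simply quoted from \cite{CT0} and \cite{Mourrat}, so there is no in-paper argument to compare against. I will therefore evaluate your outline on its own merits.

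Your route through $L^1$ convergence is sound: the definition of $B^*$ as the $\mu^*$-unit ball, compactness via positivity of $\mu^*$ under $F(\{0\})<p_c(d)$, the confinement $B_t^*/t\subset B_R(0)$, and the Fubini/dominated-convergence step are all correct in outline. The geometric-subsequence interpolation using $B_s^*\subset B_t^*$ for $s<t$ is also fine (your parenthetical about ``a single random threshold $T^*(0,y^*)$'' is misstated, but the sandwiching that follows it does not use that claim).

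The genuine gap is the almost-sure upgrade along the subsequence. You propose summable variance bounds for $\lambda^d(B^*_{t_k}/t_k \bigtriangleup B^*)$ via ``concentration for $T^*$ on the cluster, in the spirit of Section~\ref{sec:fluctuations}''. But every result in that section carries a moment hypothesis: Theorem~\ref{thm: subdiffusive} needs exponential moments for the upper tail, Theorem~\ref{thm:futurereference} needs $\E Y^2<\infty$ even for the lower tail, and the variance bounds all assume at least $\E\tau_e^2<\infty$. The entire content of the Cerf--Th\'eret/Mourrat statement is that it holds with \emph{no} moment assumption on $\tau_e\mathbf{1}_{\{\tau_e<\infty\}}$; in that regime $\E T^*(0,x)$ can be infinite and there is no reason for any useful decay of $\Pro(T^*(0,ty)>t)$ on the upper side, so Chebyshev/Borel--Cantelli cannot be run. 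This is exactly the obstruction described in Remark~\ref{rem:itblowsup!} transplanted to the cluster.

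The approach in the cited papers instead pushes your truncation idea much further. With $\tau_e^M=\tau_e\wedge M$ on finite edges (keeping $\tau_e=\infty$ where it was) the cluster $\mathcal{C}_\infty$ is unchanged, the truncated weights have all moments, and the Garet--Marchand almost-sure shape theorem \cite{GM2} gives $B_t^{*,M}/t\to B^{*,M}$ in Hausdorff distance. Since $T^{*,M}\le T^*$ one has $B_t^*/t\subset B_t^{*,M}/t$, and combined with $B^{*,M}\downarrow B^*$ (this is the continuity of the time constant, cf.\ \cite{GMPT} after Theorem~\ref{thm:contcox}) one obtains $\lambda^d(B_t^*/t\setminus B^*)\to 0$ a.s.\ directly, with no concentration input. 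The inner half $\lambda^d(B^*\setminus B_t^*/t)\to 0$ is the delicate direction, since truncation bounds $T^*$ from below, not above; it is handled by combining the a.s.\ outer bound with control of the rescaled volume $t^{-d}\lambda^d(B_t^*)$ via ergodic/density arguments, again avoiding concentration entirely.
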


\subsection{Properties of the limit shape}

\subsubsection{Flat edges for limit shapes}\label{sec: flat_edge} In this section, we address one of the questions presented in the introduction.
\begin{equation} \label{QuestionLS}
\textit{ Which compact convex sets } C \textit{ are realizable as limit shapes?}
\end{equation}

The question above is \textbf{ completely open} in the i.i.d. setting. A partial expected answer is given by the following conjecture.

\begin{question}\label{q:strictconvexity}
Show that if $F$ is a continuous distribution then the limit shape is strictly convex.
\end{question}

Surprisingly, not even the following is known:

\begin{question}\label{q:square}
Show that the $d$-dimensional cube is not a possible limit shape for a FPP model with independent, identically distributed passage times.
\end{question}
\begin{remark}\label{rem: HM}
Interestingly, question \eqref{QuestionLS} is  solved by H\"aggstr\"om and Meester \cite{HM} in the case of stationary (not necessarily i.i.d.) passage times. They establish that any non-empty compact, convex set $C$ that is symmetric about the coordinate axes is a limit shape for some FPP model with weights distributed according to a stationary (under translations of $\mathbb Z^d$) and ergodic measure.  This is in sharp contrast with the i.i.d. case explained above.
\end{remark}

However, there is one class of weights where the limit shape is known in some directions. This collection was introduced by Durrett and Liggett \cite{DurrettLiggett} and further studied by Marchand \cite{Marchand}, Zhang \cite{Zhang, Zhang2} and by Auffinger and Damron \cite{AD12}. Its main feature is the presence of a flat edge for the limit shape, as we describe below. We will stick to dimension $2$ in what follows.

Write $supp(\nu)$ for the support of $\nu$ where $\nu[0,x]=F(x)$ is the probability distribution of $\tau_e$. Let $\mathcal{M}_p$ be the set of measures $\nu$ that satisfy the following:
\begin{enumerate}
\item $supp(\nu) \subseteq [1,\infty)$ and
\item $\nu(\{1\})=p\geq\vec p_c$,
\end{enumerate}
where $\vec p_c$ is the critical parameter for oriented percolation on $\mathbb{Z}^2$ (see, e.g., \cite{Durrett}). In \cite{DurrettLiggett}, it was shown that if $\nu \in \mathcal{M}_p$ then $\mathcal B_\nu$ has some flat edges. The precise location of these edges was found in \cite{Marchand}. To describe this, write $\mathcal{B}_1$ for the closed $\ell^1$ unit ball:
\[
\mathcal{B}_1 = \{(x,y)\in \mathbb{R}^2~:~ |x|+|y|\leq 1\}
\]
and write $int ~\mathcal{B}_1$ for its interior. For $p > \vec p_c$ let $\alpha_p$ be the asymptotic speed of oriented percolation \cite{Durrett}, define the points
\begin{equation}\label{eq:NP}
M_p = \left(\frac{1}{2} - \frac{\alpha_p}{\sqrt 2}, \frac{1}{2} + \frac{\alpha_p}{\sqrt 2}\right) \text{ and } N_p = \left(\frac{1}{2} + \frac{\alpha_p}{\sqrt 2}, \frac{1}{2} - \frac{\alpha_p}{\sqrt 2}\right)
\end{equation}
and let $[M_p,N_p]$ be the line segment in $\mathbb{R}^2$ with endpoints $M_p$ and $N_p$. For symmetry reasons, the following theorem is stated only for the first quadrant.

\begin{theorem}[Durrett-Liggett \cite{DurrettLiggett},  Marchand \cite{Marchand}]\label{thm:marchand1}
Let $\nu \in \mathcal{M}_p$.
\begin{enumerate}
\item $\mathcal B_\nu \subset \mathcal{B}_1$.
\item If $p < \vec p_c$ then $\mathcal B_\nu \subset int~\mathcal{B}_1$.
\item If $p > \vec p_c$ then $\mathcal B_\nu \cap [0,\infty)^2 \cap \partial \mathcal{B}_1 = [M_p,N_p]$.
\item If $p = \vec p_c$ then $\mathcal B_\nu \cap [0,\infty)^2 \cap \partial \mathcal{B}_1 = (1/2,1/2)$.
\end{enumerate}
\end{theorem}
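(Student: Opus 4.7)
The plan is to handle the four parts of Theorem~\ref{thm:marchand1} in a logical order that isolates the main technical difficulty at the end.

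\textbf{Part (1) (trivial inclusion).} Since $\tau_e \ge 1$ almost surely, every finite path $\Gamma$ satisfies $T(\Gamma) \ge |\Gamma| \ge \|u-v\|_1$ for its endpoints $u,v$. Therefore $T(0,nx) \ge n\|x\|_1$, so $\mu(x) \ge \|x\|_1$ for every $x$, and $\mathcal B_\nu = \{x : \mu(x)\le 1\} \subseteq \mathcal B_1$.

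\textbf{Part (3), flat edge contains $[M_p,N_p]$.} Declare an edge $e$ \emph{open} iff $\tau_e = 1$; restricted to the two oriented directions $+e_1,+e_2$ starting at the origin this is Bernoulli-$p$ oriented bond percolation, supercritical for $p>\vec p_c$. By definition of the asymptotic speed, on non-extinction the rightmost vertex reachable at level $n$ (sum of coordinates $=n$) is $(\lfloor(\tfrac12+\tfrac{\alpha_p}{\sqrt2})n\rfloor,\lfloor(\tfrac12-\tfrac{\alpha_p}{\sqrt2})n\rfloor)+o(n)$, with the symmetric statement for the leftmost vertex. A standard Borel--Cantelli shift argument yields with probability one a vertex $u$ within $O(1)$ of the origin whose forward oriented weight-$1$ cluster is infinite; concatenating a deterministic path from $0$ to $u$ with an open oriented path of length $n$ from $u$ to a vertex within $o(n)$ of $u+nN_p$ gives $T(0,\lfloor nN_p\rfloor) \le n+o(n)$ a.s. Hence $\mu(N_p)\le 1$, and Part (1) upgrades this to $\mu(N_p)=1$. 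Symmetry of $\mu$ yields $\mu(M_p)=1$, and the convexity of $\mu$ (from subadditivity plus positive homogeneity) gives $\mu\equiv 1$ on the entire segment $[M_p,N_p]$.

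\textbf{Part (3) maximality, Part (4), and Part (2) (strict inequality outside the flat edge).} Fix $x\in\partial\mathcal B_1\cap[0,\infty)^2$ with $x\notin[M_p,N_p]$ (in the subcritical case $p<\vec p_c$, this is every such $x$; in the critical case $p=\vec p_c$, $\alpha_p=0$ makes $[M_p,N_p]$ degenerate to $\{(\tfrac12,\tfrac12)\}$). I claim $\mu(x)>1$. Suppose $T(0,\lfloor nx\rfloor) \le (1+\delta)n$ for some small $\delta>0$; any witnessing path has length at most $(1+\delta)n$, so at most $\tfrac{\delta n}{2}$ of its edges traverse a backward direction $-e_1$ or $-e_2$, and at most $\delta n/(M-1)$ edges have weight $\ge M>1$. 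Removing these $O(\delta n)$ defect edges leaves a collection of open oriented sub-paths of weight-$1$ edges whose concatenation reaches from near $0$ to near $\lfloor nx\rfloor$. Because $x$ is past the oriented percolation cone, the large deviations results for (sub)critical oriented percolation (see e.g.\ the exponential decay bounds in \cite{Durrett}) give that, even conditioning on the positions of the defect edges, the probability of such a weight-$1$ connection is at most $e^{-c(x)n}$ for some $c(x)>0$. The combinatorial cost of choosing the defect positions is bounded by $\binom{n(1+\delta)}{O(\delta n)}=e^{h(\delta)n}$ with $h(\delta)\to 0$ as $\delta\to 0$, so for $\delta$ small enough the union bound gives $\mathbb P(T(0,\lfloor nx\rfloor)\le(1+\delta)n)\le e^{-c'(x)n}$. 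Combined with the a.s.\ and $L^1$ convergence $T(0,\lfloor nx\rfloor)/n\to\mu(x)$, we conclude $\mu(x)\ge 1+\delta>1$. This simultaneously establishes Parts (2), (4), and the maximality half of Part (3).

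\textbf{Main obstacle.} The bookkeeping in the last step is the hard part: one must decompose an arbitrary near-geodesic into a sparse set of ``defect'' edges plus a skeleton of oriented weight-$1$ sub-paths, and then trade off the combinatorial entropy of the defect positions against the exponential decay supplied by (sub)critical oriented percolation. The constants $M$ and $\delta$ must be chosen in the right order so that the entropic contribution $h(\delta)$ is strictly smaller than the oriented percolation rate $c(x)$, which requires the quantitative large deviation statements used by Marchand. The subtlety of the critical case $p=\vec p_c$ lies in the fact that $\alpha_p$ is continuous and vanishes there, so the direction-dependent rate $c(x)$ degenerates as $x\to(\tfrac12,\tfrac12)$, but the same scheme still works for each fixed $x\neq(\tfrac12,\tfrac12)$.
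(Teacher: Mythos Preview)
The paper does not prove Theorem~\ref{thm:marchand1}; it is quoted as a result of Durrett--Liggett and Marchand and used as input for later statements. So there is no ``paper's own proof'' to compare against beyond the attributions. Your sketch follows the original strategy faithfully: Part~(1) is the trivial lower bound $\mu\ge\|\cdot\|_1$; the inclusion $[M_p,N_p]\subset\partial\mathcal B_\nu$ is Durrett--Liggett's oriented-percolation construction; and the strict inequality outside the cone is Marchand's contribution, obtained by combining path-counting (entropy of defects) with large-deviation estimates for oriented percolation. Structurally you have the right picture.

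There is, however, a genuine gap in Part~(4). You establish the \emph{exclusion} half ($\mu(x)>1$ for $x\neq(1/2,1/2)$) but never prove the \emph{inclusion} $\mu(1/2,1/2)=1$ at $p=\vec p_c$. Your argument for $[M_p,N_p]\subset\partial\mathcal B_\nu$ explicitly invokes the existence of an infinite oriented weight-$1$ cluster, which fails at criticality. Something extra is needed here---either a continuity-of-the-time-constant argument as $p\downarrow\vec p_c$, or a direct estimate using that critical oriented clusters, while finite, still reach level $n$ with only polynomially small probability (so an ergodic/translation argument still produces long enough oriented weight-$1$ paths near the diagonal). Marchand handles this case separately, and you should too.

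A secondary comment: in the exclusion argument, the phrase ``even conditioning on the positions of the defect edges, the probability of such a weight-$1$ connection is at most $e^{-c(x)n}$'' hides real work. After removing defects you have not one oriented open path but $O(\delta n)$ disjoint oriented segments, and you must control the probability that \emph{some} placement of segment endpoints admits open oriented connections whose total displacement hits $nx$. Marchand's proof makes this precise via a BK-type decoupling and the quantitative large-deviation bounds for oriented percolation (which in the supercritical case concern overshooting the edge speed $\alpha_p$, not ``(sub)critical'' decay as you wrote). Your sketch has the right architecture, but the constants and the order of choosing $M,\delta$ versus the union bound are exactly where the difficulty lives, as you yourself flag.
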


The angles corresponding to points in the line segment $[M_p,N_p]$ are said to be in the \emph{percolation cone}; see Figure~4 below.

\begin{figure}[h]
\centering
 \includegraphics[scale=0.65]{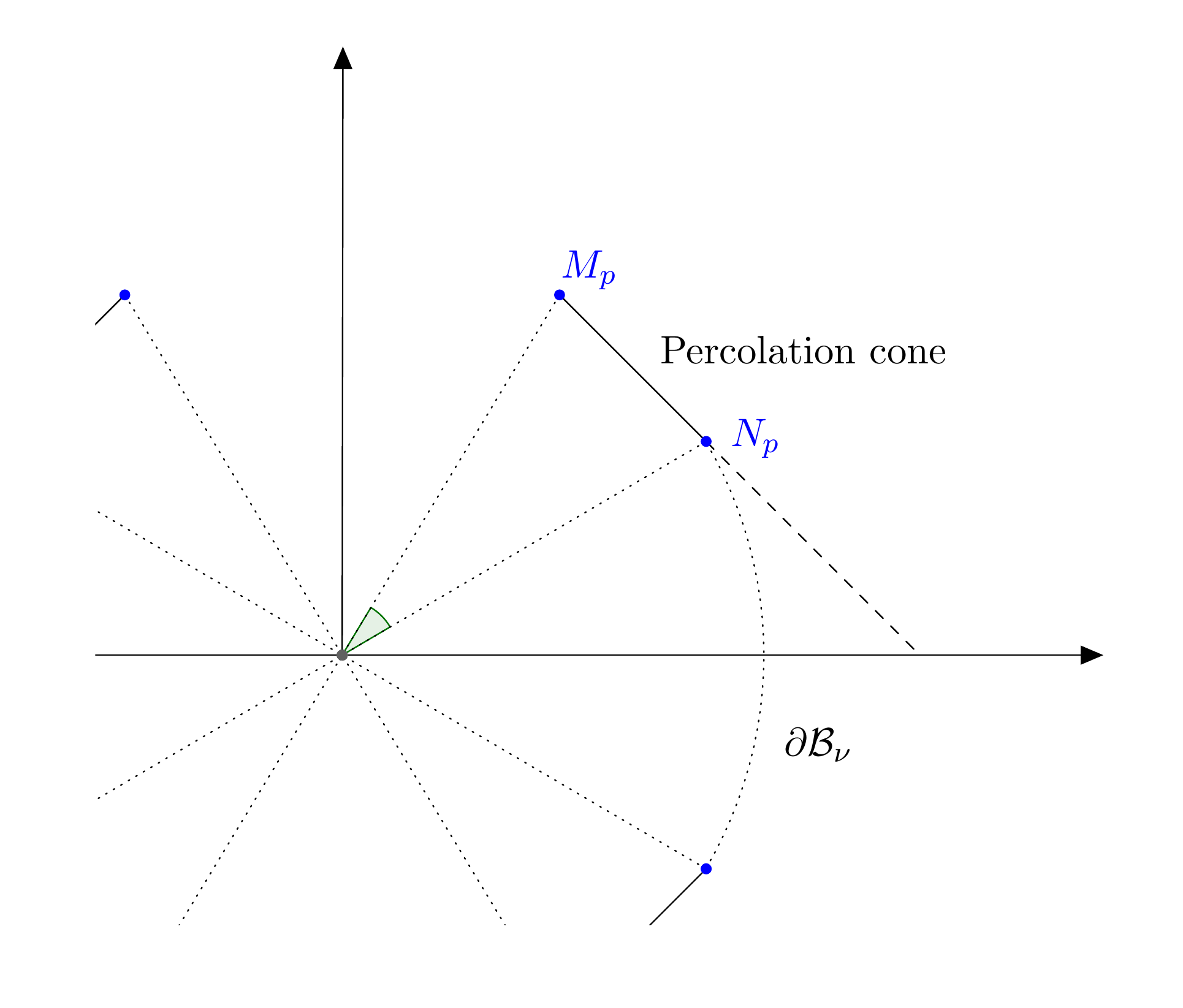}
 \caption{Pictorial description of the limit shape $\mathcal B_\nu$. If $\nu \in \mathcal{M}_p$, the shape has a flat edge between the points $M_p$ and $N_p$. The limit shape is differentiable at $M_p$ and $N_p$. Outside the percolation cone the limit shape is unknown.}
 \label{FigureCone}
\end{figure}

Let $\beta_p := 1/2 + \alpha_p/\sqrt{2}$, that is, define $\beta_p$ as the $x$ coordinate of $N_p$. Convexity and symmetry of the limit shape imply that $1/\mu(e_1) \geq \beta_p$. A non-trivial statement about the edge of the percolation cone came in 2002 when Marchand \cite[Theorem 1.4]{Marchand} proved that this inequality is in fact strict:

$$1/\mu(e_1) > 1/2 + \alpha_p/\sqrt{2}.$$ In other words, Marchand's result says that the line that goes through $N_p$ and is orthogonal to the $x$-axis is not a tangent line of $\partial \mathcal B_\nu$. The following theorem builds on Marchand's result and technique and says that at the edge of the percolation cone, one cannot have a corner.

\begin{theorem}[Auffinger-Damron \cite{AD12}]\label{thm: diffll}
Let $\nu \in \mathcal{M}_p$ for $p\in[\vec p_c,1)$. The boundary $\partial \mathcal B_\nu$ is differentiable at $N_p$.
\end{theorem}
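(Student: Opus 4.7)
The plan is to argue by contradiction, ruling out a corner of $\mathcal B_\nu$ at $N_p$. The flat edge $[M_p,N_p]$ gives the supporting line $\ell_0=\{x+y=1\}$ at $N_p$; differentiability of $\partial \mathcal B_\nu$ at $N_p$ is equivalent to $\ell_0$ being the unique supporting line there. By convexity and the presence of the flat edge, any additional supporting line must have slope $s\in[-\infty,-1)$. Marchand's strict inequality $1/\mu(e_1)>\beta_p$ already excludes $s=-\infty$, so toward a contradiction we may assume there is a second supporting line $\ell_{s_0}$ at $N_p$ with slope $s_0\in(-\infty,-1)$.

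This assumption is equivalent to a linear-order gap near $N_p$: for every unit direction $\vec v$ just outside the percolation cone on the $e_1$ side of $N_p$, at angular distance $\varepsilon$ from $N_p/|N_p|_2$, convexity yields $\mu(\vec v)\ge \mu(N_p/|N_p|_2)+c\varepsilon$ for some $c=c(s_0)>0$. The strategy is to construct, for each small $\varepsilon>0$ and each large $n$, a path from $0$ to $n\vec v$ whose passage time is $n\,\mu(N_p/|N_p|_2)+o(n\varepsilon)$, contradicting the bound above. The construction refines Marchand's proof of $1/\mu(e_1)>\beta_p$, which itself rests on the Durrett--Liggett realization of super-critical oriented percolation inside the cone using only weight-$1$ edges. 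Because $p>\vec p_c$, after a block renormalization these oriented weight-$1$ paths realize the $\ell^1$ distance up to sub-linear error, which is already optimal for targets in the cone.

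The path proceeds in two phases: (i) follow an oriented weight-$1$ percolation path to an intermediate waypoint $w_n$ strictly inside the cone, at cost $|w_n|_1+o(n)$; (ii) patch from $w_n$ to $n\vec v$ by a short segment whose cost is controlled by the $L^1$ moments of $\tau_e$ and by another oriented-percolation block scaled to $\varepsilon$. The waypoint $w_n$ and the block scale in phase (ii) are optimized as functions of $\varepsilon$ and $n$ so that the total cost becomes $n\,\mu(N_p/|N_p|_2)+o(n\varepsilon)$, strictly smaller than the lower bound forced by $\ell_{s_0}$.

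The main obstacle is the quantitative improvement needed in phase (ii): a crude estimate yields a patch cost of order $n\varepsilon$, matching rather than beating the putative corner. To beat this bound I would use a multi-scale renormalization in the spirit of Marchand's estimates on oriented percolation just outside the cone: within blocks of scale adapted to $\varepsilon$, oriented weight-$1$ paths can be concatenated with controlled deviations to realize, with high probability, passage times saving a power of $\varepsilon$ compared to the $\ell^1$ length of the patch. Combined with Borel--Cantelli across block scales and the symmetry of $\mathcal B_\nu$ about the diagonal (so that the analogous argument applies on the other side of $N_p$), this refinement should push the patch cost to $o(n\varepsilon)$, close the contradiction, and yield the theorem.
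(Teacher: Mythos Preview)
The paper does not give its own proof of this theorem; it is quoted from \cite{AD12} with the remark that it ``builds on Marchand's result and technique.'' Your plan is in the right spirit and matches that description: set up a contradiction with a hypothetical extra supporting line at $N_p$, and defeat the resulting linear lower bound $\mu(v_\varepsilon)\ge 1+c\varepsilon$ by building cheap paths that use oriented percolation on weight-$1$ edges for the bulk of the distance and a short correction near the end.

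However, you have correctly located---and not closed---the decisive gap. The ``crude'' two-phase construction (oriented percolation to a waypoint on or inside the cone, then a short FPP patch of length $\sim n\varepsilon$) yields only $\mu(v_\varepsilon)\le 1+C\varepsilon$, the same order as the lower bound you want to beat. Worse, if you try to bootstrap by taking the patch direction $w$ itself close to $N_p$, a short computation gives only $(\mu(v_\varepsilon)-1)/\varepsilon\le(\mu(w)-1)/\varepsilon_w$, which is nothing more than convexity of $t\mapsto\mu(N_p+t(1,-1))$; no improvement comes from iteration. Your proposed ``multi-scale renormalization'' does not escape this, because renormalized oriented percolation has the \emph{same} asymptotic cone with edge at $N_p$, so rescaling alone cannot bring the target inside it. Something beyond the law of large numbers for the cone edge is needed---this is precisely the quantitative oriented-percolation input that Marchand develops and \cite{AD12} exploits: one must control how cheaply points at distance $n\varepsilon$ outside the cone can be reached, using estimates on the right-edge process and its large deviations rather than just the shape theorem. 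Without making that input precise, the plan remains a heuristic.

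A smaller omission: your argument tacitly assumes $p>\vec p_c$ (a genuine flat edge and supercritical oriented percolation), but the statement includes $p=\vec p_c$, where $N_{\vec p_c}=(1/2,1/2)$ and there is no flat edge; that endpoint needs separate treatment.
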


\begin{remark}
Theorem~\ref{thm: diffll} is stated for the single point $N_p$  but, due to symmetry, it is clearly valid for $M_p$ and for the reflections of these two points about the coordinate axes.   
\end{remark}

The theorem above shows that any measure in $\mathcal M_p$ has a non-polygonal limit shape. The question of finding a single distribution where the limit shape is non-polygonal was raised by H. Kesten \cite{KestenAspects}. The first example of a non-polygonal limit shape was discovered by Damron-Hochman \cite{DH}. 

The flat part of the percolation cone ends at $M_p$ and $N_p$; however, that does not exclude the limit shape from having further flat spots. This is not expected though.

\begin{question}
Show that for any measure $\nu \in \mathcal M_p$ the boundary of the limit shape is not flat outside the percolation cone. 
\end{question}

A similar, but perhaps more ambitious question is to show:

\begin{question}
Show that if the limit shape of a measure $\nu$ has a flat piece then $\nu \in \mathcal M_p$, for $p > p_{c}$ and the flat piece is delimited by the percolation cone. 
\end{question}

More approachable may be the following two open questions:

\begin{question}\label{oq:3} 
Show that for any measure $\nu$, in direction $e_1$ the boundary of the boundary of the limit shape does not contain any segment parallel to the $e_{2}$ axis.  
\end{question}

A solution to \ref{oq:3} immediately implies a solution to Question \ref{q:square}.

\begin{question}\label{oq:4}
Find a non-trivial example of a measure $\nu$ such that in direction $e_1$ the limit shape boundary is not parallel to the $e_{2}$ axis. 
\end{question}

One may reasonably guess that to solve Question \ref{oq:4}, it suffices to consider small perturbations of the trivial measure $\nu= \delta_1$ (every edge has non-random passage time equal to $1$) as the limit shape in this case is just the $\ell^{1}$ ball, a diamond.  This observation was explored by  Basdevant and co-authors by considering FPP in the highly super-critical bond percolation cluster. 
Precisely, let $\tau_e$ be equal to $1$ with probability $p \in (0,1)$ and infinite with probability $1-p$.  
\begin{theorem}[Basdevant et al. \cite{BNGG}] For all $0 \leq \lambda \leq 1$, on the event that the origin is connected to infinity, almost surely we have
\begin{equation*}
\lim_{n \rightarrow \infty} \frac{T(0, n(1,\lambda(1-p))}{n} = 1 + (1-p)\frac{1+\lambda^2}{2} + O\big((1-p)^2\big).
\end{equation*}

\end{theorem}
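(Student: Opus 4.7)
The plan is to establish matching upper and lower bounds for $\mu^*(1,\lambda(1-p))$ expanded to order $(1-p)$. First I would reduce to the integer-coordinate case: since $n(1,\lambda(1-p))$ generally has irrational entries, I would replace it by $(n,\lfloor \lambda n(1-p)\rfloor)$ and use a Lipschitz-type estimate for the chemical distance in the supercritical cluster (in the spirit of Lemma~\ref{lem:estimate}) to bound the resulting error by $O(1)$, which vanishes after dividing by $n$. Existence of the limit in the supercritical regime is then provided by the Garet–Marchand shape theorem (or by the weaker version of Cerf–Théret and Mourrat stated above applied to the direction in question).

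For the upper bound, I would construct an explicit quasi-geodesic. The idea is to target the $\ell^1$ minimum $n(1+\lambda(1-p))$ via an oriented (right-and-up) path and correct it for missing edges. At each of the $n$ columns we are free to choose the row in which the right-move is taken; for most columns the horizontal edge in the current row is present, and when it is not, the rare defect can be bypassed either by a local detour (at excess cost $+2$) or --- more efficiently --- by a single upward step drawn from the upward ``budget'' $\lambda n(1-p)$. Optimizing over how to allocate the upward moves so as to absorb as many defects as possible, while paying the full $+2$ on the remaining defects, should yield the coefficient $(1+\lambda^2)/2$ after straightforward bookkeeping.

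For the lower bound, I would combine the elementary $\ell^1$ bound $T(0,v_n)\geq n(1+\lambda(1-p))$ with a local defect-counting argument that charges any path an expected excess proportional to $(1-p)$. A more conceptual alternative is to invoke the Hamilton–Jacobi formula of Krishnan from Section~\ref{sec:3243} and perform a first-order perturbation of $\bar H$ around the deterministic case $p=1$, in which $\bar H$ is the $\ell^\infty$ norm dual to $\mu = \| \cdot \|_1$.

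The main obstacle is recovering the sharp coefficient $(1+\lambda^2)/2$ and, in particular, its vanishing at $\lambda=1$. This vanishing reflects the Durrett–Liggett percolation-cone phenomenon of Theorem~\ref{thm:marchand1}: as $p\to 1$ the direction $(1,1-p)$ lies asymptotically inside the percolation cone, where the limit shape coincides with the $\ell^1$ ball and no net detour is required. Turning this qualitative statement into a quantitative expansion at the rate $(1-p)$ appears to require a careful analysis of how oriented-percolation ``highways'' can be exploited by near-optimal paths, and this is where I expect the principal technical difficulty to lie.
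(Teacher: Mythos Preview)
This survey does not prove the theorem; it is stated as a result of Basdevant et al.\ \cite{BNGG} and is followed only by the one-sentence paraphrase ``the four corners of the $\ell_1$ ball are replaced by curves that resemble parabolas.'' There is therefore no proof in this paper to compare your proposal against --- for that you would need to consult \cite{BNGG} directly.

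That said, your overall plan (existence of the limit via the Garet--Marchand shape theorem on the supercritical cluster; upper bound by an explicit near-$\ell^1$ path that uses the $\lambda n(1-p)$ upward moves to absorb horizontal defects; lower bound by defect counting) is the natural architecture for such a first-order expansion and is in the spirit of \cite{BNGG}. One remark on your heuristic for the vanishing of the excess at $\lambda=1$: the simplest explanation is not the percolation cone per se but the elementary matching you already described --- when $\lambda=1$ the upward budget $\lambda n(1-p)$ exactly equals (to leading order) the expected number $n(1-p)$ of blocked horizontal edges, so each defect is absorbed at no extra cost. The percolation-cone picture is consistent with this (as $p\to 1$ the cone widens toward the axes), but invoking Theorem~\ref{thm:marchand1} directly is delicate since the direction $(1,\lambda(1-p))$ sits near the axis rather than the diagonal, and making that quantitative requires knowing the rate at which $N_p\to(1,0)$.
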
 
The theorem above roughly says that when $p$ is close to $1$, the four corners of the $\ell_1$ ball are replaced by curves that resemble parabolas.

\subsection{The subadditive ergodic theorem revisited}\label{sec:visit}

Recall that the main tool to prove  the existence of the time constant (Theorem \ref{thm:timeconstant}) was the subadditive ergodic theorem (Theorem \ref{subadditivity}). The fact that Theorem \ref{subadditivity} requires few assumptions makes it very powerful and widens its scope far beyond FPP. This strength also comes with two main drawbacks. First, it only provides the existence of the limiting object. The characterization as $\inf \mathbb E X_{0,n}/n$ is not always helpful to obtain further information on the time constant. Second, if one tries to obtain different results (fluctuations, concentration, large deviations) for ergodic processes under the same assumptions, one fails miserably. Nearly anything can happen.

The goal of this section is to dissect a major tool in Kingman's original proof of the subadditive ergodic theorem. Kingman's proof provides a non-trivial decomposition of the subadditive process. One term of this decomposition has the same properties as the Busemann function in FPP, an object  that we will study in detail in Section~\ref{sec:Busemann}. Later on, we will pursue this direction and  provide extra assumptions that allow one to derive further information about the subadditive process $X_{0,n}$. In this chapter, we stick with the first task.

An array $X = (X_{m,n})$ satisfying the assumptions of Theorem \ref{subadditivity} but also $X_{0,n}=X_{0,m}+X_{m,n}$ is called an additive ergodic sequence.  The proof of Kingman's theorem depends on the following decomposition:

\begin{theorem}\label{thm:decomposition}
If $X_{m,n}$ is a subadditive ergodic sequence then there exist arrays $Y_{m,n}$ and $Z_{m,n}$ such that 
\begin{enumerate}
\item[$(a)$] $Y_{m,n}$ is an additive ergodic sequence with $\E Y_{0,1} = \mu = \inf_{n} \E X_{0,n}/n$.   
\item[$(b)$] $Z_{m,n}$ is a non-negative subadditive ergodic sequence with time constant equal to $0$. 
\item[$(c)$] $X_{m,n} = Z_{m,n} + Y_{m,n}.$
\end{enumerate}
\end{theorem}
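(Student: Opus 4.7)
The plan is to mirror the Busemann function construction from FPP (to be treated in detail in Section~\ref{sec:Busemann}). The key observation is that for every $N > n$, the auxiliary array
$$Y^{(N)}_{m,n} := X_{m,N} - X_{n,N}$$
is genuinely additive in the index pair, i.e.\ $Y^{(N)}_{m,n} + Y^{(N)}_{n,p} = Y^{(N)}_{m,p}$, and subadditivity of $X$ immediately gives the one-sided bound $Y^{(N)}_{m,n} \le X_{m,n}$. So the natural candidate for the additive part $Y$ is a suitable limit of $Y^{(N)}$ as $N \uparrow \infty$.

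Concretely, I would first reduce to the centered case $\mu = 0$ by passing to $\hat X_{m,n} := X_{m,n} - (n-m)\mu$, which preserves all four subadditive-ergodic hypotheses. Then, for each fixed $k$, I would extract an almost-sure subsequential limit
$$V_k := \lim_{j \to \infty} \bigl( \hat X_{k, N_j} - \hat X_{k+1, N_j} \bigr)$$
along a carefully chosen deterministic subsequence $N_j \uparrow \infty$, using a diagonal compactness argument (for instance, Koml\'os's subsequence theorem applied to the family $(\hat X_{0,N} - \hat X_{1,N})_N$, which is bounded above by $\hat X_{0,1}$), and then invoking stationarity of $(X_{m,n})$ under the index shift to transfer the existence of the limit to all indices $k$ simultaneously. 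Define
$$Y_{m,n} := \sum_{k=m}^{n-1} V_k + (n-m)\mu \qquad \text{and} \qquad Z_{m,n} := X_{m,n} - Y_{m,n}.$$

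The verification of $(a)$--$(c)$ would then be short. Additivity of $Y$ is automatic from the telescoping definition, and stationarity and ergodicity of the array $(V_k)$ are inherited from the corresponding properties of $(X_{m,n})$ under the shift. Passing to the limit along $N_j$ in the inequality $Y^{(N_j)}_{m,n} \le \hat X_{m,n}$ produces $Y_{m,n} - (n-m)\mu \le \hat X_{m,n}$, i.e.\ $Z_{m,n} \ge 0$. Since $Y$ is additive, $Z = X - Y$ inherits subadditivity from $X$. Finally, provided $\mathbb{E} V_k = 0$, one has $\mathbb{E} Y_{0,n}/n = \mu$ for every $n$, so
$$\frac{\mathbb{E} Z_{0,n}}{n} = \frac{\mathbb{E} X_{0,n}}{n} - \mu \longrightarrow 0,$$
which, combined with $Z \ge 0$ and subadditivity, forces the time constant of $Z$ to equal $0$.

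The main obstacle will be showing that the subsequential limit $V_k$ genuinely exists almost surely and has the correct expectation $\mathbb{E} V_k = 0$ rather than only $\mathbb{E} V_k \le 0$. The easy inequality follows directly from Fatou applied to $X_{m,n} - Y^{(N)}_{m,n} \ge 0$; the matching lower bound requires exploiting the infimum characterization $\mu = \inf_n \mathbb{E} X_{0,n}/n$ to rule out loss of mass as $N \to \infty$, and this is exactly where the sequence $(N_j)$ must be selected to catch the values of $n$ along which the ratio $\mathbb{E} X_{0,n}/n$ approaches $\mu$ fastest. This is essentially the Busemann-function existence issue in abstract form, and it is the delicate point of Kingman's original argument.
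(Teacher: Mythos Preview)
Your instinct to build $Y$ from Busemann-type differences $X_{m,N}-X_{n,N}$ is exactly the picture the paper draws, but the particular route you propose has a real gap, and it is precisely the one you flag at the end.

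First, two technical points. Koml\'os's theorem does \emph{not} give you an almost-sure limit along a subsequence; it gives almost-sure convergence of Ces\`aro means along a subsequence and all further subsequences. Moreover, to even invoke Koml\'os you would need an $L^1$ bound on $X_{0,N}-X_{1,N}$, and in the abstract subadditive setting you only have the one-sided bound $X_{0,N}-X_{1,N}\le X_{0,1}$; there is no triangle inequality providing the other side. Second, even if you had an a.s.\ limit $V_0(\omega)=\lim_j(X_{0,N_j}-X_{1,N_j})(\omega)$, stationarity only yields $V_0(\theta^k\omega)=\lim_j(X_{k,N_j+k}-X_{k+1,N_j+k})(\omega)$, with the shifted subsequence $N_j+k$; the telescoping sum $\sum_{k=m}^{n-1}V_k$ would then involve different subsequences in each term and you lose the clean comparison with $X_{m,n}$.

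The decisive obstruction, though, is the mean. You need $\mathbb{E}V_0=0$ (i.e.\ $\mathbb{E}Y_{0,1}=\mu$), but
\[
\mathbb{E}\bigl[X_{0,N}-X_{1,N}\bigr]=\mathbb{E}X_{0,N}-\mathbb{E}X_{0,N-1},
\]
and for a subadditive sequence $a_n=\mathbb{E}X_{0,n}$ the increments $a_N-a_{N-1}$ need not converge to $\mu$ at all, so no clever choice of subsequence $(N_j)$ will save you. What \emph{does} converge is the Ces\`aro average: telescoping gives $\frac{1}{k}\sum_{i=1}^k(a_i-a_{i-1})=a_k/k\to\mu$. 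This is why Kingman, as the paper explains, works not with $X_{0,N}-X_{1,N}$ directly but with
\[
f_k=\frac{1}{k}\sum_{i=1}^k\bigl(X_{0,i}-X_{1,i}\bigr),
\]
for which $\mathbb{E}f_k=\mathbb{E}X_{0,k}/k\to\mu$ automatically, and then extracts a weak limit of $(f_k)$ in $L^1$ via Bourbaki--Alaoglu compactness rather than seeking any almost-sure limit. The additive part is then built as $Y_{m,n}=\sum_{j=m}^{n-1}T^jf$ using the shift operator on $L^1$, which sidesteps the subsequence-alignment problem entirely. So your proposal identifies the right object but the compactness has to happen in a weak topology, and the Ces\`aro averaging is not optional---it is what makes the mean come out right.
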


The decomposition above is not necessarily unique. Here is an easy counter-example. Let $(y_k)_{k\geq 1}$ be a sequence of independent standard Gaussians and set $S_{m,n} = y_{m+1} + \ldots + y_n$. Then $S$ is additive, with time constant $0$. If we put
$$X_{m,n}= \max(S_{m,n},0),$$ 
we see that $X$ is subadditive, with $\mathbb E X_{0,n} =(n/2\pi)^{1/2}$, so $\mu=0$. Two decompositions of $X$ are given by $Y=0$, $Z=X$ and $Y=S$, $Z=\max(-S,0)$.

To see some implications of the above decomposition, suppose that the following limit exists almost surely:
\begin{equation}\label{Busemanningeneral}
B_{m,n} := \lim_{N} (X_{m,N} - X_{n,N})
\end{equation}
and satisfies $\E |B_{0,n}| \leq Cn$ for some positive constant $C$.
We claim that $B_{m,n}$ is  an additive process in the decomposition above. To see this, note that $$B_{0,n} = \lim_{N} (X_{0,N} - X_{n,N}) =  \lim_{N} (X_{0,N} - X_{m,N} + X_{m,N} - X_{n,N}) = B_{0,m} + B_{m,n}$$
and $(b)$ and $(c)$ from the definition in Theorem \ref{subadditivity} follow from the stationarity of the process $(X_{m,n})$. For instance,
\begin{eqnarray*} 
B_{m,n} =  \lim_{N} (X_{m,N} - X_{n,N}) &=&  \lim_{N} (X_{m,N-1} - X_{n,N-1})\\ 
&\stackrel{d}{=}&  \lim_{N} (X_{m+1,N} - X_{n+1,N})=B_{m+1,n+1},
\end{eqnarray*}
where we used the fact that if $X_N \stackrel{d}{=}Y_N$ and $X_N$, $Y_N$ converge in distribution to $X$ and $Y$ respectively then $X \stackrel{d}{=}Y$. The moment bounds follows from assumption.

Now set 

$$Z_{m,n} = X_{m,n} - B_{m,n}.$$

\noindent For any integer $N > n$, subadditivity gives $X_{m,n}  \geq X_{m,N} - X_{n,N}$ almost surely. Thus, we also have $Z_{m,n} \geq 0$ almost surely. As $B$ is additive, $Z_{m,n}$ is sub-additive and thus it satisfies~$(b)$. 

The importance of the limit in \eqref{Busemanningeneral} will become clear in Section \ref{sec:Busemann}. At this point, it would be interesting to determine when we can actually use \eqref{Busemanningeneral}.
\begin{question}\label{q10}
Find conditions that guarantee the existence of the limit \eqref{Busemanningeneral}.
\end{question}

The way that Kingman avoided the problem of existence of the limit in \eqref{Busemanningeneral} was to construct weak averaged limits. It will be beneficial to explain his idea here. Let $\Omega$ be the space of all subadditive functions. A subadditive ergodic process is a random element on $\Omega$, inducing a probability measure $\mathbb P$ on this space. For any $x \in \Omega$ we define the shift $\theta x$ as the element in $\Omega$ given by $$\theta x(m,n)= x(m+1,n+1).$$ 
Now let $f$ be an element of $L_1(\Omega, \mathbb P)$ and define $T: L_1(\Omega, \mathbb P) \to L_1(\Omega, \mathbb P)$ as the bounded linear operator given by
$$ (Tf)(x)= f(\theta x).$$
Kingman's magic was to construct a function $f$ in $L_1(\Omega, \mathbb P)$ such that 
$$f +Tf+\ldots + T^{n-1}f \leq X_{0,n}$$ and $\mathbb E f = g$. Once this function is constructed, the reader can easily check that the representation follows by taking
\begin{equation}\label{eq:Kingmanisgood}
Y_{m,n}= T^m f + T^{m+1}f + \ldots + T^{n-1}f
\end{equation}
 and $Z_{m,n}=X_{m,n}-Y_{m,n}$. Note that both sides in \eqref{eq:Kingmanisgood} are random variables in $\Omega$.
To construct such $f$ he considered the process: 
\begin{equation}\label{eq:metoohahaha}
f_k = \frac{1}{k} \sum_{i=1}^{k} (X_{0,i}-X_{1,i}) 
\end{equation}
and for each $n\geq1$ its iterates
\begin{equation}\label{eq:sohumble}
 f_k + Tf_k + \ldots + T^{n-1}f_k = \frac{1}{k} \sum_{i=1}^{k+n-1} (X_{a,i} - X_{b,i})
 \end{equation}
where $a=\max(i-k,0)$ and $b=\min(i,n)$. The reader can now see the connection: as $k$ goes to infinity, \eqref{eq:metoohahaha} and \eqref{eq:sohumble} play the role of $B_{0,1}$ and $B_{0,n}$, respectively. The existence of the $k$ limit turns out to follow from the Bourbaki-Alaoglu theorem. We will come back to this point in Section \ref{sec:Busemann}. When the limit in \eqref{Busemanningeneral} exists, we will call it a {\it generalized Busemann function} for the subadditive process $X$.

\subsection{Pointed Gromov-Hausdorff convergence}
FPP is a model of a random (pseudo)metric space. There is a classical way of defining convergence of a sequence of metric spaces, using the Gromov-Hausdorff distance on the space of metric spaces. We follow this route in this section and rephrase the limit shape in this context. The reader is invited to check \cite{Burago, Bridson} for a detailed explanation and historic motivation of the basic topics we touch here. Although this approach brings a different perspective, these methods have not yet provided significant new progress in FPP on $\mathbb Z^{d}$. However, they were successfully used to extend the limit shape to FPP in Cayley graphs with polynomial growth, where the subadditive ergodic theorem does not immediately apply \cite{Tessera, TesseraB}. We will need a few definitions before we start, and most of the following is taken from \cite{Bridson}.

\begin{definition}
A subset $S$ of a metric space $X$ is said to be {\it$\epsilon$-dense} if every point of $X$ lies in the $\epsilon$-neighborhood of $S$. 
\end{definition}
\begin{definition}
An $\epsilon$-relation between two (pseudo)metric spaces $X_{1}$ and $X_{2}$ is a subset $R~\subset~X_{1} \times~X_{2}$ such that
\begin{enumerate}
\item For $i=1,2$, the projection of $R$ to $X_{i}$ is $\epsilon$-dense.
\item If $(x_{1},x_{2}), (x_{1}',x_{2}') \in R$ then $|d_{X_{1}}(x_{1},x_{1}') - d_{X_{2}}(x_{2},x_{2}')| < \epsilon$.
\end{enumerate}
\end{definition}
If there exists an $\epsilon$-relation between the  metric spaces $X_{1}$ and $X_{2}$, we say that $X_{1}$ and $X_{2}$ are $\epsilon$-related and use the notation $X_{1}\sim_{\epsilon}X_{2}$. When the projection of an $\epsilon$-relation is onto in both of its coordinates, we say that the relation is surjective and we write $X_{1} \simeq_{\epsilon} X_{2}$. It is an exercise to show that if $X_{1} \sim_{\epsilon} X_{2}$ then $X_{1} \simeq_{3\epsilon} X_{2}$.

The {\it Gromov-Hausdorff} distance between $X_{1}$ and $X_{2}$ is defined as:
$$ D_{H}(X_{1},X_{2}):=\inf \{ \epsilon | X_{1} \simeq_{\epsilon} X_{2}\}. $$
If there is no $\epsilon$ such that $X_{1} \simeq_{\epsilon} X_{2}$, then $D_{H}(X_{1},X_{2})$ is infinite. It is not difficult to see that $D_{H}$ satisfies the triangle inequality on the space of metric spaces and thus it is a pseudo-metric which may take the value infinity.

\begin{definition}
We say that a sequence of (pseudo)metric spaces $X_{n}$ converges to $X$ in the Gromov-Hausdorff metric, and write $X_{n} \to X$ if and only if $D_{H}(X_{n},X) \to 0$ as $n\to \infty$.
\end{definition}

Gromov-Hausdorff convergence works well in contexts where one deals with sequences of compact metric spaces, but it is a less satisfactory concept when applied to non-compact spaces. One disadvantage is that the distance between a compact space and an unbounded set is always infinity. Since the spaces that we care about are not compact, we will need the following alternative definition of convergence. The magic here is that the intuitive sense of convergence comes from observations from a fixed point.

\begin{definition}
A pointed space $(X, d, x)$ is a pair of a metric space $(X,d)$ and a point $x \in X$. The point $x$ is called the basepoint of the pointed space $(X,d,x)$.
\end{definition}

\begin{definition}
A sequence of pointed spaces $(X_{n}, d_{n}, x_{n})$ converges to a pointed space $(X,d,x)$ if for every $r>0$ the sequence of closed balls $B(x_{n},r)$ (with induced metrics) converges to the closed ball $B(x,r)$ in the Gromov-Hausdorff metric.
\end{definition}

One of the nice features of pointed Gromov-Hausdorff convergence is that it preserves several properties of the sequence of metric spaces in the limit. We will comment on this at the end of the section.

Now, let's go back to FPP. Let $(\Omega,\mathcal F, \mathbb P)$ be a probability space where all the $\tau_{e}$'s are defined. For each $\omega \in \Omega$ we define a sequence of pseudometric spaces: 

$$ \bigg(X_{n}, d_{n}(x,y)\bigg) = \left(\frac{1}{n}\mathbb Z^{d}, \frac{T(nx,ny)}{n} \right).$$

The pseudometric space $X_{n}$ is just the original lattice rescaled by $n$ with the normalized pseudometric; that is, $d_{n}(0,x) = T(0,nx)/n$. The origin $0$ is a point of $X_{n}$ for all $n \geq 1$. Now recall the construction from Section \ref{subsec:limitshape}. Given an edge distribution on $\mathbb Z^{d}$, there exists a norm $\mu$ on $\mathbb R^{d}$ where the unit ball in that norm is the limit shape of the FPP model. The pair $(\mathbb R^{d}, \mu)$ is a normed vector space with distance $d(x,y) = \mu(y-x)$ for $x,y \in \mathbb R^{d}$. We will  assume that the passage times have finite exponential moments. This assumption is to make sure the metric satisfies a concentration bound given by Lemma \ref{lem:Pokahontas} below.

The limit shape theorem translates to the following statement. 
\begin{theorem}\label{thm:GromoHauss} Assume that $F(0)<p_c(d)$ and $\int e^{\alpha x} \; d\nu < \infty$ for some $\alpha>0$. Almost surely, the sequence $(X_{n},d_{n},0)$ converges in the pointed Gromov-Hausdorff sense to $(\mathbb R^{d}, \mu, 0).$
\end{theorem}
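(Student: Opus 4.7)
The plan is to verify, almost surely, that for every fixed $r>0$ the closed ball $\bar B_n := \{v \in X_n : d_n(0,v) \leq r\}$ in $(X_n, d_n)$ converges in the ordinary Gromov--Hausdorff distance to the closed $\mu$-ball $r\mathcal B_\nu \subset \mathbb R^d$. Since the basepoint is $0$ for every $n$, this is equivalent to pointed Gromov--Hausdorff convergence. By the implication ``$X_1 \sim_\epsilon X_2 \Rightarrow X_1 \simeq_{3\epsilon} X_2$'' recorded in the text, it suffices to construct, for each $\epsilon>0$, a (not necessarily surjective) $\epsilon$-relation between $\bar B_n$ and $r\mathcal B_\nu$ for all large $n$.

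The core input is a uniform version of the shape theorem. First I would show that for every $R>0$,
\[ \sup_{\substack{x \in \mathbb Z^d \\ |x|_1 \leq Rn}} \left| \frac{T(0,x)}{n} - \mu\!\left(\frac{x}{n}\right) \right| \; \xrightarrow[n\to\infty]{} \; 0 \quad \text{almost surely.} \]
This is the only place the finite-exponential-moment hypothesis is used. The Kesten-type concentration estimate of Lemma~\ref{lem:Pokahontas} yields a stretched-exponential tail bound of the form $\mathbb P(|T(0,x) - \mu(x)| > \epsilon|x|) \leq C e^{-c\sqrt{|x|}}$, which remains summable when unioned over the $O((Rn)^d)$ lattice points in the $\ell^1$-ball of radius $Rn$; Borel--Cantelli then gives the uniform bound. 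By translation invariance of the i.i.d.\ environment, the analogous statement holds with $T(nv, nw)/n$ and $\mu(w-v)$, uniformly over $v, w \in \tfrac{1}{n}\mathbb Z^d \cap [-R,R]^d$.

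With this in hand, fix $r>0$ and $\epsilon \in (0,r)$ and choose $n$ so large that (i) $(r-\epsilon)\mathcal B_\nu \cap X_n \subset \bar B_n \subset (r+\epsilon)\mathcal B_\nu$ (from Theorem~\ref{thm:limitshape}) and (ii) the uniform estimate above holds with tolerance $\epsilon$ on the $\ell^1$-ball of radius $(r+1)n$. I then define $R_n \subset \bar B_n \times r\mathcal B_\nu$ by pairing (a) each nonzero $v \in \bar B_n$ with its radial projection $\pi(v) := \min(1, r/\mu(v)) \, v$, which lies in $r\mathcal B_\nu$ and satisfies $\mu(v-\pi(v)) \leq \epsilon$ by (i), with the convention $\pi(0) := 0$; and (b) each $y \in r\mathcal B_\nu$ with a nearest lattice point $v(y) \in X_n$ to $(1-\epsilon/r)y$, which lies in $\bar B_n$ again by (i) and satisfies $\mu(v(y)-y) \leq \epsilon + c/n$. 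Both projections of $R_n$ are then $O(\epsilon)$-dense, and $(0,0) \in R_n$. For the distance comparison, given $(v_1, w_1), (v_2, w_2) \in R_n$, the triangle inequality for $\mu$ together with (ii) yields
\[ |d_n(v_1,v_2) - \mu(w_1 - w_2)| \leq \left| \tfrac{T(nv_1, nv_2)}{n} - \mu(v_1-v_2) \right| + \mu(v_1-w_1) + \mu(v_2-w_2) \leq 5\epsilon. \]

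Invoking the ``$\sim_\epsilon \Rightarrow \simeq_{3\epsilon}$'' observation, one obtains $D_H(\bar B_n, r\mathcal B_\nu) \leq 15\epsilon$ for all large $n$ almost surely. Letting $\epsilon$ run through a countable sequence decreasing to $0$ and $r$ through a countable dense set in $(0,\infty)$, and discarding the resulting countable union of null sets, delivers the theorem. The main obstacle is the uniform shape theorem: the pointwise statement of Theorem~\ref{thm:limitshape} alone cannot rule out isolated bad lattice points $x$ of norm $\asymp n$ with $T(0,x)$ deviating linearly from $n\mu(x/n)$, and any such defect would wreck the distance comparison in the $\epsilon$-relation. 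Upgrading to uniformity requires the quantitative concentration supplied by the exponential-moment assumption, which is precisely the role of Lemma~\ref{lem:Pokahontas}.
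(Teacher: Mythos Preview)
Your proof is correct and follows essentially the same approach as the paper's: both construct an $\epsilon$-relation between $\bar B_n$ and $r\mathcal B_\nu$ via two radial scalings (lattice points to their contractions, points of $r\mathcal B_\nu$ to nearby lattice points), use the shape theorem for ball containment and the concentration of Lemma~\ref{lem:Pokahontas} for the uniform distance comparison, and conclude by Borel--Cantelli over a countable family of $\epsilon$'s and radii. One small remark: Lemma~\ref{lem:Pokahontas} as stated already gives uniformity over \emph{all pairs} $x,y$ in the box with a bound summable in $n$, so your detour through single-point tail bounds, a union bound, and ``translation invariance'' is unnecessary---and the translation-invariance step as written only gives equality in law, not an almost-sure uniform bound over all basepoints, so it is cleaner to invoke the lemma directly.
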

\begin{proof} 
Fix $r>0$ rational. We first  show that almost surely the balls $B_{n}:=B_{n}(0,r) \subset X_{n}$ converge in the Gromov-Hausdorff sense to the ball $$ B:=B(0,r)= \{ x \in \mathbb R^{d} | \mu(x) \leq r \}.$$ For this, it suffices to show that for any $\epsilon >0$ there exists $n_{0} \in \mathbb N$ so that, for any $n \geq n_{0}$ there is an $\epsilon$-relation between $B_{n}$ and $B$. 

We construct such a relation as follows. Fix $\epsilon>0$. Given $0<\epsilon' < \epsilon$, use Theorem \ref{thm:limitshape} to choose $n_{0}=n_{0}(\omega)$ so that for $n \geq n_{0}$ 

$$ \mathbb P \bigg( (1-\epsilon') B \subseteq \bar B_{n} \subseteq (1+\epsilon')B \text{ for all } n \geq n_{0} \bigg) = 1,$$
where $\bar B_{n} = B_{n} + [-1/(2n),1/(2n)]^{d}$.
The set $R$ is defined as  the union of two sets, $R = R_{1} \cup R_{2}$ where
$$ R_{1}:=\bigg\{ (x_{1},x_{2}) \in B_{n} \times B : x_2=x_{1}/(1+\epsilon') \bigg\},$$
$$ R_{2}:=\bigg\{(x_{1},x_{2}) \in B_{n} \times B : (1-\epsilon')x_{2}\in x_{1} + [-1/(2n),1/(2n)]^{d} \bigg \}.$$ 
Note that $R$ is surjective. Indeed, since  $B_{n} \subseteq (1+\epsilon')B$, every element of $B_{n}$ is related to some element of $B$ through $R_{1}$ while $(1-\epsilon')B \subseteq \bar B_{n}$ implies that every element of $B$ is related to at least one element of $B_{n}$ through $R_{2}$.
Now take $(x_{1},x_{2})$ and $(x_{1}',x_{2}')$ in $R$. We have 
\begin{equation} \label{eq:triangle}
\begin{split}
|d_{n}(x_{1},x_{1}') - d(x_{2},x_{2}')| &= |T(nx_{1}, nx_{1}')/n - \mu(x_{2}-x_{2}')| \\
&\leq |T(nx_{1}, nx_{1}')/n - \mu(x_{1}-x_{1}')| + |\mu(x_{1}-x_{1}')-\mu(x_{2}-x_{2}')|.
\end{split}
\end{equation}
Let's first look at the second term in the right side of  \eqref{eq:triangle}. If $(x_{1}, x_{2})$ and $(x_{1}',x_{2}')$ are both in $R_{1}$ we have 
\begin{equation}\label{eq:bothinr1}
\mu(x_{1}-x_{1}')-\mu(x_{2}-x_{2}')= (1+\epsilon')\mu(x_{2}-x_{2}') - \mu(x_{2}-x_{2}') = \mu(x_{2}-x_{2}') \epsilon' < 2r \epsilon'.
\end{equation}
If both pairs of points are in $R_{2}$, then $x_{2} =(1-\epsilon')^{-1}(x_{1}+z_{n})$, $x_{2}' =(1-\epsilon')^{-1}(x_{1}'+z_{n}')$ for some $z_{n}, z_{n}'$ in $[-1/(2n),1/(2n)]^{d}$. We thus obtain
\begin{equation}\label{eq:replacingr2}
\mu(x_{2}-x_{2}') = (1-\epsilon')^{-1}\mu(x_{1}-x_{1}'+z_{n}-z_{n}').
\end{equation}
 Since $\mu$ is a norm, we can find $n_{1} \in \mathbb N$ so that for any $n \geq \max(n_{0},n_{1})$ and any $|w|<2\sqrt{d}/n$, $|\mu(z+w)-\mu(z)|\leq \epsilon'$. 
As $|z_{n}-z_{n}'|\leq 2\sqrt{d}/n$, we have by \eqref{eq:replacingr2} and the triangle inequality
\begin{equation}\label{eq:bothinr2}
|\mu(x_{1}-x_{1}')-\mu(x_{2}-x_{2}')|\leq ((1-\epsilon')^{{-1}}-1)\mu(x_{1}-x_{1}') + \epsilon'(1-\epsilon')^{-1} \leq 2\epsilon, \end{equation}
for $\epsilon'$ small. If $(x_{1}, x_{2}) \in R_1$ and $(x_{1}',x_{2}') \in R_{2}$, then similarly to \eqref{eq:bothinr1} and \eqref{eq:bothinr2} we obtain
  \begin{equation}\label{eq:bothinr3}
|\mu(x_{1}-x_{1}')-\mu(x_{2}-x_{2}')|\leq  2\epsilon, 
\end{equation}
for sufficiently small $\epsilon'$. Thus a combination of \eqref{eq:bothinr1}, \eqref{eq:bothinr2} and \eqref{eq:bothinr3} tells us that if we choose $\epsilon'$ small enough
\begin{equation}\label{eq:secondtermin}
|\mu(x_{1}-x_{1}')-\mu(x_{2}-x_{2}')| < \epsilon/2 \quad  \text{ for all } n \geq \max\{ n_0,n_1\}.
\end{equation}

The first term of \eqref{eq:triangle} is controlled by the following concentration bound.

\begin{lemma}\label{lem:Pokahontas}
Given $\epsilon >0$ and $R>0$ there exists $C_1>0$ such that for any $n\geq 1$
$$\mathbb P \bigg( \exists \; x,y \in [-Rn,Rn]^d  \text{ with } T(x,y) > n (\mu(y-x)+\epsilon/2) \bigg) \leq \exp(-n^{C_1}).   $$
\end{lemma}
\begin{proof} See Theorem \ref{thm: subdiffusive}.
\end{proof}

Combining \eqref{eq:triangle}, \eqref{eq:secondtermin} and Lemma \ref{lem:Pokahontas}, we see that 
$$ \mathbb P\bigg( R \text{ is an } \epsilon\text{-relation  between } B \text{ and } B_n \bigg) \geq 1 -  \exp(-n^{C_1}), $$ and thus by taking a countable sequence of $\epsilon_{n} \to 0$ and using Borel-Cantelli we obtain the desired result for each $r \in \mathbb Q$. However, if $R$ is a $\epsilon$-relation between $B_n(0,r)$ and $B(0,r)$ then (the restriction of) $R$ is also an   $\epsilon$-relation between $B_n(0,r')$ and $B(0,r')$ for any $r' < r$. This last observation suffices to end the proof of Theorem~\ref{thm:GromoHauss}.
\end{proof}

\subsection{Strict convexity of the limit shape}\label{sec:curvaturesection}
In this section, we explore in more detail the conjecture that, under mild assumptions on  $F$, the limit shape (see Question 4) is strictly convex. We also introduce the definition of uniform positive  curvature,  a  concept related to strict convexity. In Sections $3$ and $4$, we will discuss important results of Newman where this unproven property of uniform positive curvature will play a major role. Strict convexity also plays an important part in certain questions regarding the evolution of multi-type stochastic competition models discussed in Section $6$.

Recall that we call a subset $\mathcal B$ of $\mathbb R^{d}$ strictly convex if every line segment connecting any two points of $\mathcal B$ is entirely contained, except for its endpoints, in the interior of $\mathcal B$.

Let $u$ be a unit vector of $\mathbb R^d$ and let $H_0$ be a hyperplane such that $u+H_0$ is supporting hyperplane for $\mu(u) \mathcal B_\nu$ at $u$ (this means that $u+H_{0}$ contains $u$ and $\mu(u) \mathcal B_{\nu}$ intersects only one of the two halfspaces determined by $u+H_{0}$).
We introduce an exponent that captures the nature of the boundary of $\mathcal B_\nu$ in direction~$u$, called the \textit{curvature exponent}, as follows.

\begin{definition}[Curvature Exponent]\label{def:curvatureexponent}
Assume that $\partial \mathcal B_{\nu}$ is differentiable. The curvature exponent $\kappa(u)$ in the direction $u$ is a real number such that there exist positive constants $c$, $C$ and $\e$ such that for any $z \in H_0$ with $|z|<\e$, one has 
\begin{equation}\label{eq: curvatureassumption}
c|z|^{\kappa(u)} \leq \mu(u+z)-\mu(u) \leq C|z|^{\kappa(u)}.
\end{equation}
\end{definition}

\begin{definition}[Uniformly curved]\label{def:unifcurved} We say that $\mathcal B_\nu$ is uniformly curved if for every unit vector $u\in \R^d$, 
\[
\kappa(u) \geq 2,
\]
with constants in \eqref{eq: curvatureassumption} that are uniform in $u$.
\end{definition}
In the case that $\partial \mathcal B_{\nu}$ is not necessarily differentiable, Newman \cite{Newman} gave a general definition of uniform curvature: there exists $C>0$ such that for all $z_1,z_2 \in \partial \mathcal{B}_\nu$ and $z = (1-\lambda)z_1 + \lambda z_2$ with $\lambda \in [0,1]$,
\[
1-\mu(z) \geq C \min\{\mu(z-z_1),\mu(z-z_2)\}^2.
\]
Either of these definitions is suitable for the results in this survey.

\begin{figure}[h]
\centering
 \includegraphics[scale=1]{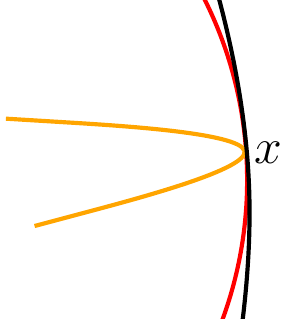}
 \caption{Representation of the limit shape (in red) in direction $x$ with curvature exponent $\kappa(x)=2$. The limit shape curve fits two tangent parabolas, one inside (orange), other outside (black). }
 \label{FigureCurvature}
\end{figure}

In two dimensions, the exponent $\kappa(u)$ tells us that it is possible to trace two curves of the form $y=x^{\kappa(u)}$ that are tangent to the limit shape, one inside and the other outside of $\mathcal B_\nu$ (see Figure~\ref{FigureCurvature}).  For instance, a Euclidean ball is uniformly curved with $\kappa(u)=2$ for every $u$. The $\ell^{1}$ ball is not uniformly curved as outside its corners one has $\kappa(u) = 1$.
Unfortunately, uniform curvature has not been proved for the limit shape of any FPP model with i.i.d. passage times. Any advance in the direction of the following question would be a major contribution.

\begin{question}
Show that for continuous distributions of passage times, the limit shape is uniformly curved.
\end{question}

The importance of the notion of curvature will be revealed in the next two sections. One characterization of curvature of the limit shape is to establish that $\mathcal B$ is a strict convex set of $\mathbb R^{d}$. A conditional proof of strict convexity was obtained by Lalley \cite{Lalley}. The two hypotheses of Lalley's result, however, seem to be out of reach at this moment. Hypothesis two may not be valid as for instance the Tracy-Widom distribution does not have mean $0$. We describe them now. 

For $u$ a fixed nonzero vector in $\mathbb R^{d}$ let $L_{u}$ be the ray through $u$ emanating from the origin.

\textbf{(H1).} For any convex cone $\mathcal A$ of $\mathbb  R^{d}$ containing the vector $u$ in its interior, and for each $\delta>0$ there exists $R=R(\delta, \mathcal A)<\infty$ such that the following is true: For each point $v \in \mathbb Z^{d} \cap \mathcal A$ at distance $\leq 2$ from the line $L_{u}$, the probability that the time-minimizing path from the origin to $v$ is contained in $\mathcal A \cup \{ x: \|x\| \leq R\} $ is at least $1-\delta$.

\vspace{0.5cm}
The second assumption requires a fluctuation theorem for the normalized passage times.
\vspace{0.5cm}

\textbf{(H2).} There exists a mean-zero probability distribution $G_{u}$ on the real line and a scalar sequence $a(n) \to \infty$ such that as $n \to \infty$
$$\frac{T(0,nu)-n\mu(u)}{a(n)} \stackrel{d}{\to} G_{u}.$$

\begin{theorem}[Theorem 1,\cite{Lalley}] Let $u$ and $v$ be linearly independent vectors in $\mathbb R^{d}$ and assume hypothesis \textbf{(H1)} and \textbf{(H2)} for both $u$ and $v$. Then for each $\lambda \in (0,1)$,
$$\mu(\lambda u + (1-\lambda)v) < \lambda \mu(u) + (1-\lambda)\mu(v).$$
\end{theorem}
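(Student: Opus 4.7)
The plan is to argue by contradiction. Suppose $\mu(w) = \lambda \mu(u) + (1-\lambda)\mu(v)$ where $w := \lambda u + (1-\lambda) v$. Fix $a_n, b_n \in \mathbb Z^d$ within $O(1)$ of $n\lambda u$ and $n(1-\lambda) v$ respectively, and set $w_n := a_n + b_n$, so that $|w_n - nw| = O(1)$. Subadditivity provides two competing upper bounds for the passage time to $w_n$:
\begin{equation*}
T(0, w_n) \leq T(0, a_n) + T(a_n, w_n), \qquad T(0, w_n) \leq T(0, b_n) + T(b_n, w_n).
\end{equation*}
Translation invariance of the environment gives $T(a_n, w_n) \stackrel{d}{=} T(0, b_n)$ and $T(b_n, w_n) \stackrel{d}{=} T(0, a_n)$. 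Centering each summand and invoking the assumed equality $n\mu(w) = n\lambda\mu(u) + n(1-\lambda)\mu(v)$ yields the key pathwise bound
\begin{equation*}
T(0, w_n) - n\mu(w) \;\leq\; \min\bigl(U_1 + V_1,\; V_2 + U_2\bigr),
\end{equation*}
where $U_1 := T(0, a_n) - n\lambda\mu(u)$, $V_1 := T(a_n, w_n) - n(1-\lambda)\mu(v)$, $V_2 := T(0, b_n) - n(1-\lambda)\mu(v)$, and $U_2 := T(b_n, w_n) - n\lambda\mu(u)$.

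I would next use (H1) to show that $(U_1, V_1, U_2, V_2)$ are asymptotically mutually independent. Since $u$ and $v$ are linearly independent, pick open convex cones $\mathcal A_u$ about $u$ and $\mathcal A_v$ about $v$ with $\overline{\mathcal A_u} \cap \overline{\mathcal A_v} = \{0\}$. Hypothesis (H1) then guarantees, outside an event of probability $\leq 4\delta$, that the four geodesics in question lie respectively in $\mathcal A_u \cup B_R$, $a_n + (\mathcal A_v \cup B_R)$, $\mathcal A_v \cup B_R$, and $b_n + (\mathcal A_u \cup B_R)$. For $n$ large these four ``cone-plus-ball'' regions pairwise overlap only inside bounded neighborhoods of the three distinguished points $\{0, a_n, b_n\}$, so replacing each $T$ by its cone-restricted version produces four quantities measurable with respect to disjoint edge algebras, hence exactly independent; the replacement error in each $U_i, V_i$ is $o(a(n))$ in probability, bounded by $\delta \to 0$ together with standard tail estimates from Lemma~\ref{lemma:moments}.

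Applying (H2) in each of the directions $u, v, w$ and rescaling by a common $a(n)$ (after verifying that the three scaling sequences are comparable, or passing to a subsequence on which the dominant scale is identified) yields
\begin{equation*}
\frac{(U_1, V_1, U_2, V_2)}{a(n)} \;\stackrel{d}{\longrightarrow}\; (Z_1, W_1, Z_2, W_2),
\end{equation*}
with $Z_1, Z_2$ i.i.d.\ copies of (a rescaled) $G_u$, $W_1, W_2$ i.i.d.\ copies of $G_v$, and all four mutually independent. Assuming at least one of $G_u, G_v$ is non-degenerate, the independent mean-zero pair $(Z_1+W_1,\; Z_2+W_2)$ satisfies the elementary identity
\begin{equation*}
\E\bigl[\min(Z_1+W_1,\, Z_2+W_2)\bigr] \;=\; -\tfrac{1}{2}\,\E\bigl|(Z_1-Z_2)+(W_1-W_2)\bigr| \;<\; 0.
\end{equation*}
By the pathwise bound and (H2) for $w$, the mean-zero limit $G_w$ of $(T(0,w_n)-n\mu(w))/a(n)$ is stochastically dominated by $\min(Z_1+W_1,\, Z_2+W_2)$, whence $0 = \E G_w \leq \E[\min(\cdots)] < 0$, the desired contradiction.

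The hard part will be the asymptotic-independence step, since (H1) only controls cone-containment with high probability rather than deterministically. One must quantify the extra cost of forcing a geodesic to remain in its cone — with high probability the cone-restricted passage time should differ from $T$ by $o(a(n))$ — which requires a union-bound argument combined with an $L^1$ tail bound on path lengths, available via Lemma~\ref{lemma:moments} and arguments in the spirit of Cox--Durrett. A secondary nuisance is reconciling the three potentially different scaling sequences $a(n)$ produced by (H2) in directions $u, v, w$: if they genuinely differ, one must pass to a subsequence along which their ratios converge, and carry the stochastic-dominance conclusion through with the dominant scale; I expect this to be technical but not fundamentally obstructive.
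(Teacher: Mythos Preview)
Your overall architecture — contradiction via flatness, the parallelogram with two broken paths $0\to a_n\to w_n$ and $0\to b_n\to w_n$, the pathwise bound by the minimum, and asymptotic independence via the cone confinement of (H1) — is exactly Lalley's strategy. But your finishing step has a genuine gap.

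You explicitly invoke ``(H2) for $w$'' and use ``the mean-zero limit $G_w$ of $(T(0,w_n)-n\mu(w))/a(n)$''. The theorem only assumes (H1) and (H2) for $u$ and $v$; no fluctuation hypothesis is given in the direction $w=\lambda u+(1-\lambda)v$, so there is no $G_w$ available to be contradicted. The obvious repair is to take expectations in the pathwise bound and use Fatou on $|A_n-B_n|/a(n)$ to get $\mathbb E|A_n-B_n|\gtrsim a(n)$, which (since $\mathbb E A_n=\mathbb E B_n$ and $\mathbb E T(0,w_n)\geq n\mu(w)-O(1)$) yields
\[
\bigl(\mathbb E T(0,a_n)-n\lambda\mu(u)\bigr)+\bigl(\mathbb E T(0,b_n)-n(1-\lambda)\mu(v)\bigr)\;\gtrsim\;a(n).
\]
But this does \emph{not} contradict (H2) for $u$ or $v$: convergence in distribution to a mean-zero law does not force the non-random fluctuation $\mathbb E T(0,nu)-n\mu(u)$ to be $o(a(n))$. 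One can have $X_n\Rightarrow G$ with $\mathbb E G=0$ while $\mathbb E X_n\to c>0$ (mass escaping to $+\infty$), and no uniform-integrability is built into (H2). So the contradiction does not close as written. Lalley's actual finishing step must either carry an additional integrability input or route the contradiction back through a direction in which (H2) is assumed; that is the missing idea here.

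A secondary imprecision: the four cone-plus-ball regions do not pairwise overlap only in $O(1)$ neighborhoods. Near the corners $a_n$ and $w_n$ the relevant cones (e.g.\ the $u$-cone from $0$ meeting the $v$-cone from $a_n$) intersect in sets of diameter $O(\epsilon n)$, where $\epsilon$ is the aperture. This is repairable — choose $\epsilon$ small relative to $\lambda,1-\lambda$ and truncate each cone at distance $Cn$ so that the two L-shaped regions for $(U_1,V_1)$ and $(U_2,V_2)$ genuinely miss each other — but it needs more than ``bounded neighborhoods of three points''.
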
 

\subsection{Simulations}

Although we are celebrating the fiftieth anniversary of the model, simulation studies on first passage percolation were somewhat limited until very recently. Initial work is due to Richardson \cite{Richardson} in 1973, where the  model with exponentially distributed weights was analyzed. In \cite{Richardson} the limit shape $\mathcal B_{\nu}$ seemed to be curved, with a shape resembling a circle. As one could imagine, these simulations were restricted due to limitations in computer power. Further investigation (also in the Eden model) came in the work of Zabolitzky and Stauffer \cite{Zabolitsky}, in 1986, and by Durrett and Liggett in 1981. In particular, the numbers obtained in \cite{Zabolitsky} indicate the predicted fluctuation exponents for $\xi=2/3$ and $\chi=1/3$ by theoretical physicists \cite{K,KPZ,Krug} (see next two sections for the study of these exponents). 

A major contribution was done recently in the beautiful and extensive work of Alm and Deifjen \cite{AlmDeifjen} for FPP in two dimensions. Running $19$ years of CPU time, in a cluster of 28 Linux machines, they investigate the value of the time constant and the limit shape for several continuous distributions. Their results are consistent with most of the famous conjectures of the model. Their numerical simulations show strict convexity of the limit shape, with a limit shape different from a circle in all cases. The exponents for the standard deviation of hitting times and for the fluctuations of hitting points on lines also matched the predicted values 1/3 and 2/3, respectively. 

The paper of Alm and Deifjen also brought new findings to the table. It seems that the time constant depends primarily on the mean of the minimum edge weight adjacent to the origin,  that is, $$\mathbb E_{4}\tau_{e}:= \mathbb E \min\{t_{1}, t_{2}, t_{3},t_{4}\},$$ where the $t_{i}'s$ are independent copies of $\tau_{e}$, at least for continuous distributions which are not too concentrated. They reported that the time constants along the axis and along the diagonal for all simulated distributions have an almost perfect linear relation to $\mathbb E_4 \tau_e$. The distributions are not scaled to have the same mean, but most of them have $\inf \mathrm{supp}\; \nu = 0$. They also suggest that if $F(0)=0$ then $\mu \geq \mathbb E_{4} \tau_{e}$.

\begin{question}[Alm-Deifjen] 
Assume $F(0)=0$. Show that $\mu \geq \mathbb E_{4} \tau_{e}$.
\end{question}

\begin{figure}[h]
\centering
 \includegraphics[scale=0.7]{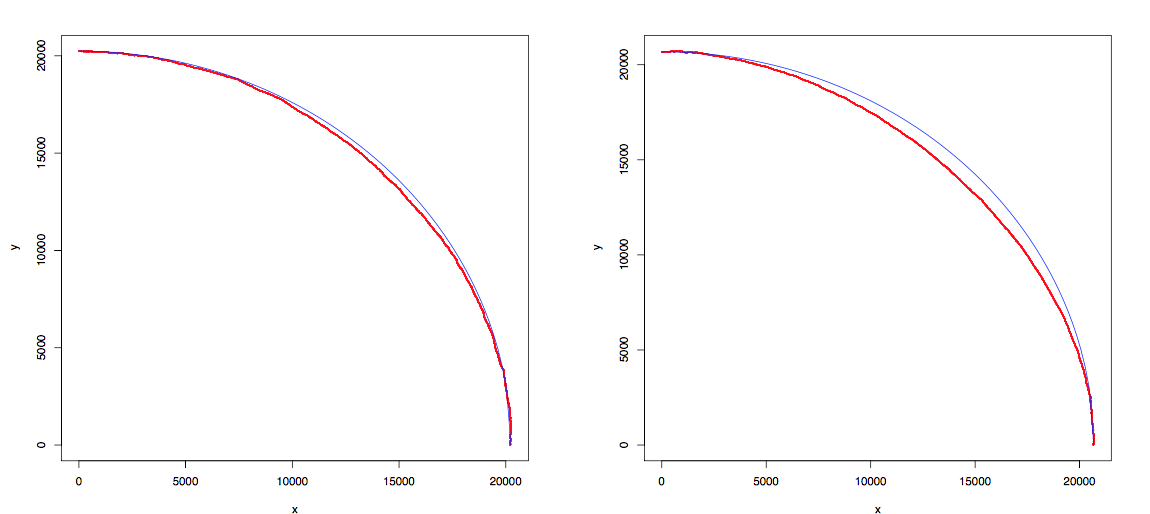}
 \caption{In red, simulation of the ball $B(t)$ for passage times distributed according to a Gamma random variable $\Gamma(k,k)$ with parameters $k=1$ (left) and $k=2$ (right) with $t=20000$.  In blue, the shape of a circle of radius $T(0,te_{1})$. Simulations and figures are courtesy of S.-E. Alm and M. Deijfen.}
 \label{FigureGamma1}
\end{figure}

\begin{figure}[h]
\centering
 \includegraphics[scale=0.7]{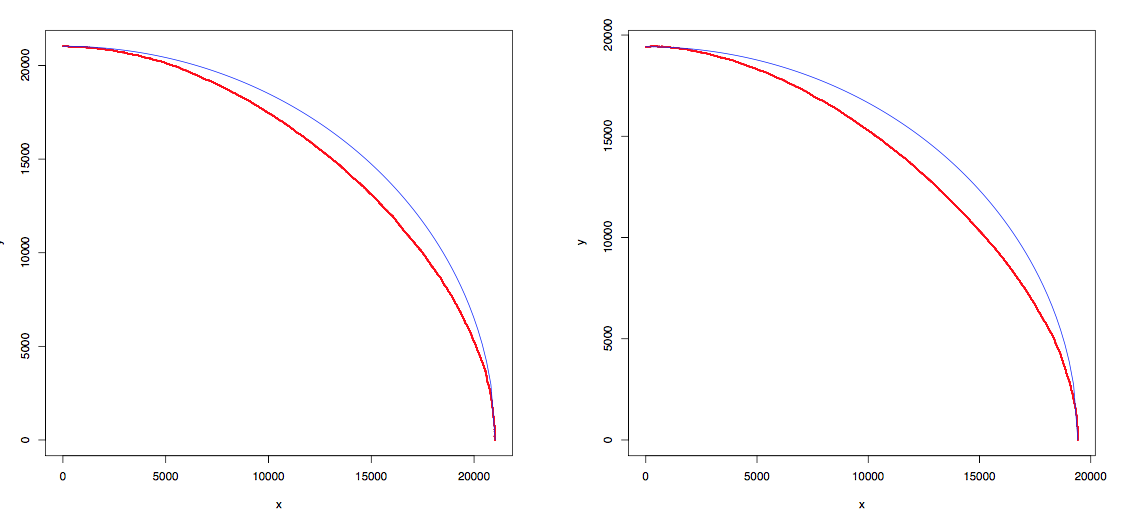}
 \caption{In red, simulation of the ball $B(t)$ for passage times distributed according to a Gamma random variable $\Gamma(k,k)$ with parameters $k=3$ (left) and $k=4$ (right) with $t=20000$.  In blue, the shape of a circle of radius $T(0,te_{1})$. Simulations and figures are courtesy of S.-E. Alm and M. Deijfen.}
 \label{FigureGamma2}
\end{figure}


\newpage

\section{Fluctuations and concentration bounds}\label{sec:fluctuations}
The passage time between $0$ and a vertex $x \in \mathbb{Z}^d$ can be approximated (almost surely) as
\[
T(0,x) = \mu(x) + o(\|x\|_1),
\]
due to the shape theorem. Quantifying the error term is the main subject of this section. It has been traditionally analyzed in two pieces:
\[
o(\|x\|_1) = \underbrace{T(0,x) - \mathbb{E}T(0,x)}_{random~fluctuations} + \underbrace{\mathbb{E}T(0,x) - \mu(x)}_{non-random~fluctuations}.
\]
The reason this splitting is useful is that the first term can typically be treated using techniques from concentration of measure, whereas the second term is analyzed using, in part, bounds on the first.

\subsection{Variance bounds}\label{sec:variancebounds}
The most basic control on the random fluctuation term is a variance bound. It has been predicted in the physics literature by simulation \cite{Zabolitsky, Wolf} and some scaling theory \cite{KPZ, Krug} (see also in Kesten \cite{Kesten}) that there is a dimension-dependent exponent $\chi = \chi(d)$ such that
\begin{equation}\label{eq:variancescaling}
\mathrm{Var}~T(0,x) \sim \|x\|_1^{2\chi}\ .
\end{equation}
This exponent is expected to be universal; it should not depend on the underlying environment, as long as $F$ satisfies some mild moment conditions and the limit shape has no flat edges. The meaning of ``$\sim$'' has not been made clear. For instance, it could be that as $x \to \infty$, the ratio of both sides converges to a constant, is bounded away from $0$ and $\infty$, or even that the variance has the expression $\|x\|_1^{2\chi + o(1)}$, as in the current case of Bernoulli percolation exponents. Regardless, the following dependence on $d$ is predicted for $\chi$:
\begin{table}[h]
\centering
\begin{tabular}{|c | c |}
\hline
$d$ & $\chi$\\
\hline
\hline
1 & 1/2 \\
\hline
2 & 1/3 \\
\hline
3 & ? \\
\hline
$\cdot$ & $\cdot$ \\
\hline
$d_c$ & 0 ? \\
\hline
\end{tabular}
\end{table}
In $d=1$, the passage time $T(0,x)$ is just a sum of $\|x\|_1$ i.i.d. random variables, and so one has $\chi(1)=1/2$ under any reasonable definition of ``$\sim$''. For $d \geq 2$, the infimum in the definition of $T(0,x)$ is predicted to produce sub-diffusive fluctuations, giving $\chi < 1/2$. It is clear that $\chi$ should decrease with dimension, but there is not agreement on whether $\chi(d) = 0$ for all $d$ at least equal to some $d_c$, and some even debate whether $\chi \to 0$.

The history of rigorous variance bounds begins with Kesten's work \cite[Theorem~5.16]{KestenAspects}, showing that $\mathrm{Var}~T(0,ne_1) \leq C(n/\log^{1/p} n)^2$ for $p = 9d+3$. Although this bound is only logarithmically better than a trivial bound (say in the case of bounded weights), the proof is far from trivial. In '93, Kesten introduced the ``method of bounded differences'' to FPP, and with this he was able to prove the best current bounds for $\chi$:
\[
0 \leq \chi(d) \leq 1/2 \text{ for all } d \geq 1\ .
\]
We will begin by giving a sketch of his argument using the Efron-Stein inequality.
\begin{theorem}[Kesten \cite{Kesten}]\label{thm: kesten_variance}
Assume $\mathbb{E}\tau_e^2<\infty$, $\mathbb{P}(\tau_e=0) < p_c(d)$ and that the distribution of $\tau_e$ is not concentrated at one point. There exist $C_1,C_2>0$ such that for all non-zero $x \in \mathbb{Z}^d$,
\[
C_1 \leq \mathrm{Var}~T(0,x) \leq C_2 \|x\|_1\ .
\]
\end{theorem}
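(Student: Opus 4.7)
The plan is to combine the method of bounded differences (Efron--Stein) for the upper bound with a conditional-variance argument for the lower bound. Neither step uses anything sharper than the standing second moment assumption and the hypothesis $F(0)<p_c(d)$.

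For the upper bound I would enumerate the edges of $\mathbb Z^d$ as $e_1,e_2,\ldots$ and, for each $k$, let $\tau'_{e_k}$ be an independent copy of $\tau_{e_k}$ and $T^{(k)}$ the passage time with $\tau_{e_k}$ replaced by $\tau'_{e_k}$. Efron--Stein yields
\[
\mathrm{Var}\, T(0,x) \leq \tfrac{1}{2}\sum_k \mathbb E\bigl[(T-T^{(k)})^2\bigr].
\]
The deterministic inequality $|T-T^{(k)}|\le|\tau_{e_k}-\tau'_{e_k}|\,\mathbf 1\{e_k\in\Gamma\cup\Gamma^{(k)}\}$---obtained by testing each environment's geodesic in the other---reduces the sum to edges actually used by a geodesic $\Gamma$ from $0$ to $x$. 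Expanding $(\tau_{e_k}-\tau'_{e_k})^2\le 2\tau_{e_k}^2+2(\tau'_{e_k})^2$, the terms containing $(\tau'_{e_k})^2$ are immediate by independence: they sum to $\mathbb E\tau^2\cdot\mathbb E|\Gamma|$. The auxiliary estimate $\mathbb E|\Gamma|=O(\|x\|_1)$ is a consequence of the shape theorem (with overwhelming probability $\Gamma$ is contained in a Euclidean ball of radius $O(\|x\|_1)$), with $F(0)<p_c(d)$ preventing pathologically long detours through cost-free clusters.

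For the lower bound I would apply the conditional-variance inequality
\[
\mathrm{Var}\, T(0,x)\geq \mathbb E\bigl[\mathrm{Var}(T(0,x)\mid\mathcal F_{\setminus f})\bigr]
\]
for a single edge $f$ incident to the origin, where $\mathcal F_{\setminus f}$ is the $\sigma$-algebra generated by all other weights. Conditionally, $T(0,x)$ is a non-decreasing, $1$-Lipschitz, piecewise-affine function of $\tau_f$, strictly increasing on the range of $\tau_f$ for which $f$ lies on the geodesic. Non-degeneracy of $F$ combined with the observation that every path from $0$ to $x$ must begin at one of the $2d$ edges incident to $0$ (and the minimum over these is non-degenerate while $\mu(x)>0$ by Theorem \ref{thm:percolate}) guarantees that this pivotality event has probability bounded away from zero uniformly in $x$. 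This yields $\mathrm{Var}\,T\geq C_1>0$.

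The principal technical obstacle is the bound $\sum_k\mathbb E[\tau_{e_k}^2\,\mathbf 1\{e_k\in\Gamma\}]\leq C\|x\|_1$. The events $\{e\in\Gamma\}$ are strongly negatively correlated with $\tau_e$---edges are selected because they are cheap---but turning this intuition into a rigorous linear-in-$\|x\|_1$ bound under only $\mathbb E\tau_e^2<\infty$ is delicate. The standard fix is truncation: write $\tau_e=\tau_e\mathbf 1\{\tau_e\le M\}+\tau_e\mathbf 1\{\tau_e>M\}$, control the bounded part by $M^2\cdot\mathbb E|\Gamma|$, and argue that the heavy contribution is negligible by coupling a geodesic in the environment with truncated weights to the original one and exploiting $\mathbb E\tau^2\mathbf 1\{\tau>M\}\to 0$. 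Were the edge weights bounded, both the upper and lower bounds would follow essentially directly from Efron--Stein and conditional variance without further work.
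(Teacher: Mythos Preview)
Your ``principal technical obstacle'' is real, but it is entirely self-inflicted, and the paper's proof avoids it with one clean stroke that you missed. The paper uses the \emph{one-sided} Efron--Stein inequality
\[
\mathrm{Var}\,T \ \le\ \sum_i \mathbb E\bigl[(T_i-T)_+^{\,2}\bigr],
\]
and then observes that $T_i-T>0$ forces $e_i$ to lie in $\textsc{Geo}(0,x)$ (the intersection of all geodesics in the \emph{original} weights) and that on this event $(T_i-T)_+\le \tau_{e_i}'$. Hence $(T_i-T)_+^{\,2}\le(\tau_{e_i}')^2\mathbf 1\{e_i\in\textsc{Geo}(0,x)\}$, and the two factors are \emph{independent}, because $\textsc{Geo}(0,x)$ depends only on $(\tau_e)$ while $\tau_{e_i}'$ is fresh. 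This gives $\mathbb E\tau_e^2\cdot\mathbb E\#\textsc{Geo}(0,x)$ immediately, with no truncation, no coupling, and no need to control $\sum_k\mathbb E[\tau_{e_k}^2\mathbf 1\{e_k\in\Gamma\}]$ at all. Your symmetric bound $|T-T^{(k)}|\le|\tau_{e_k}-\tau_{e_k}'|\,\mathbf 1\{e_k\in\Gamma\cup\Gamma^{(k)}\}$ inevitably produces the cross term $\tau_{e_k}^2\mathbf 1\{e_k\in\Gamma\}$; your sketched truncation fix is vague and, as written, does not close: truncating at a fixed $M$ leaves a heavy tail whose control via ``coupling to the truncated environment'' you do not actually carry out.

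Two smaller points. First, ``$\mathbb E|\Gamma|=O(\|x\|_1)$ by the shape theorem'' is not right: the shape theorem confines $\Gamma$ to a ball of radius $O(\|x\|_1)$, but a path inside such a ball can have up to $O(\|x\|_1^d)$ edges. The linear bound on $\mathbb E|\Gamma|$ uses $F(0)<p_c$ through the estimate that a self-avoiding path of $n$ edges has passage time $\ge an$ with high probability (Lemma~\ref{lem: geo_bound} in the paper). Second, for the lower bound the paper conditions on \emph{all} $2d$ edges at $0$ and uses the other half of the variance decomposition, $\mathrm{Var}\,T\ge\mathrm{Var}\bigl[\mathbb E[T\mid\Sigma]\bigr]$, comparing the events $\{\text{all }t_i<a\}$ and $\{\text{all }t_i'>b\}$ to get a clean $(b-a)^2$ gap. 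Your single-edge conditional variance $\mathbb E[\mathrm{Var}(T\mid\mathcal F_{\setminus f})]$ requires not just that $f$ is pivotal with uniformly positive probability, but that the \emph{range} of pivotality $B-A$ is bounded below with uniformly positive probability; you assert this without proof, and it needs an argument.
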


The proof will use the following inequality for functions of independent random variables. We use the notation $x_+ = \max\{0,x\}$.
\begin{lemma}[Efron-Stein's inequality]
Let $X_1, X_2, \ldots$ be independent and let $X_i'$ be an independent copy of $X_i$, for $i \geq 1$. If $f$ is an $L^2$ function of $(X_1, X_2, \ldots)$ then
\[
\mathrm{Var}~f \leq \sum_{i=1}^\infty \mathbb{E}[(Z_i-Z)_+]^2\ ,
\]
where $Z = f(X_1, X_2, \ldots)$ and
\[
Z_i = f(X_1, \ldots, X_{i-1},X_i', X_{i+1}, \ldots)\ .
\]
\end{lemma}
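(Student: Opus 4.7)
The plan is the standard Doob martingale decomposition combined with a symmetrization argument. Set $\mathcal{F}_i = \sigma(X_1, \ldots, X_i)$ with $\mathcal{F}_0$ trivial, and let $M_i = \mathbb{E}[Z \mid \mathcal{F}_i]$. Since $Z \in L^2$ is measurable with respect to $\sigma(\bigcup_i \mathcal{F}_i)$, the martingale convergence theorem gives $M_i \to Z$ in $L^2$. Writing $V_i = M_i - M_{i-1}$, the orthogonality of martingale differences yields
\[
\mathrm{Var}(Z) = \sum_{i=1}^{\infty} \mathbb{E}[V_i^2].
\]

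The key step is the identity $V_i = \mathbb{E}[Z - Z_i \mid \mathcal{F}_i]$. To see this, observe that $Z_i$ depends only on $(X_j)_{j \neq i}$ together with the independent copy $X_i'$; since $X_i'$ is independent of $\mathcal{F}_i$ and has the same law as $X_i$, while the $X_{i+1}, X_{i+2}, \ldots$ are also independent of $\mathcal{F}_i$, integrating $(X_i', X_{i+1}, \ldots)$ out of $Z_i$ gives the same function of $(X_1, \ldots, X_{i-1})$ as integrating $(X_i, X_{i+1}, \ldots)$ out of $Z$. Thus $\mathbb{E}[Z_i \mid \mathcal{F}_i] = \mathbb{E}[Z \mid \mathcal{F}_{i-1}]$, and the identity follows. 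Conditional Jensen then gives $V_i^2 \leq \mathbb{E}[(Z - Z_i)^2 \mid \mathcal{F}_i]$, and so $\mathrm{Var}(Z) \leq \sum_i \mathbb{E}[(Z - Z_i)^2]$.

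To reach the positive-part form of the lemma, I would symmetrize: swapping $X_i$ and $X_i'$ shows $(Z, Z_i) \stackrel{d}{=} (Z_i, Z)$, so $\mathbb{E}[(Z - Z_i)_+^2] = \mathbb{E}[(Z_i - Z)_+^2]$; together with the pointwise identity $(a - b)^2 = (a-b)_+^2 + (b-a)_+^2$ this produces $\mathbb{E}[(Z - Z_i)^2] = 2 \mathbb{E}[(Z_i - Z)_+^2]$. Summing yields $\mathrm{Var}(Z) \leq 2\sum_i \mathbb{E}[(Z_i - Z)_+^2]$. The sharper constant $1$ that appears in the lemma requires tightening the Jensen step; the cleanest route is the Hoeffding (ANOVA) decomposition $Z - \mathbb{E}Z = \sum_{\emptyset \neq S} Z_S$ into pairwise orthogonal $\sigma(X_j: j \in S)$-measurable components, which gives $\mathrm{Var}(Z) = \sum_{S \neq \emptyset}\mathbb{E}[Z_S^2] \leq \sum_i \sum_{S \ni i}\mathbb{E}[Z_S^2] = \sum_i \mathbb{E}[\mathrm{Var}(Z \mid X_{\neq i})]$, and since $Z$ and $Z_i$ are conditionally i.i.d.\ given $\{X_j: j \neq i\}$ one has $\mathrm{Var}(Z \mid X_{\neq i}) = \tfrac{1}{2}\mathbb{E}[(Z - Z_i)^2 \mid X_{\neq i}]$.

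The main technical issue is bookkeeping for the infinite index set: one must verify that the Doob martingale converges in $L^2$ (immediate from $Z \in L^2$) and that the Fubini-type manipulation identifying $\mathbb{E}[Z_i \mid \mathcal{F}_i]$ with $\mathbb{E}[Z \mid \mathcal{F}_{i-1}]$ is justified across countably many coordinates. For $f$ depending on finitely many $X_j$ everything is transparent, and the general case follows by approximation and monotone convergence on both sides of the inequality.
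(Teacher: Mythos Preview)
Your approach is correct and matches the paper's. The paper gives only a one-line sketch: write $\mathrm{Var}\,f=\sum_i \mathbb{E}\Delta_i^2$ with $\Delta_i=\mathbb{E}[f\mid\Sigma_i]-\mathbb{E}[f\mid\Sigma_{i-1}]$, apply Jensen to $\Delta_i^2$, and use symmetry, citing Efron and Steele for details. Your martingale decomposition, the identity $\Delta_i=\mathbb{E}[Z-Z_i\mid\mathcal{F}_i]$, and the symmetrization step are exactly this route. Your observation that the bare Jensen step yields constant $2$ and that the sharp constant $1$ is most cleanly obtained via the Hoeffding/ANOVA decomposition (equivalently, via $\mathbb{E}[\mathrm{Var}(Z\mid X_{\neq i})]=\tfrac12\,\mathbb{E}[(Z-Z_i)^2]$) is correct and is more than the paper spells out.
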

\begin{proof}
Letting $\Sigma_i = \sigma(X_1, \ldots, X_i)$, the proof consists of writing $\mathrm{Var}~f = \sum_{i=1}^\infty \mathbb{E}\Delta_i^2$, where $\Delta_i = \mathbb{E}[f\mid \Sigma_i] - \mathbb{E}[f \mid \Sigma_{i-1}]$ and then applying Jensen's inequality to $\Delta_i^2$, along with symmetry. See \cite{Efron,Steele}.
\end{proof}

\begin{proof}[Proof of Theorem~\ref{thm: kesten_variance}]
We now apply Efron-Stein to the passage time, noting that the condition $\mathbb{E}\tau_e^2<\infty$ implies that $T$ has two moments. Then
\[
\mathrm{Var}~T(0,x) \leq \sum_{i=1}^\infty \mathbb{E}[(T_i(0,x)-T(0,x))_+]^2\ ,
\]
where $e_1, e_2, \ldots$ is any enumeration of the edges and $T_i$ is the passage time in the edge-weight configuration $(\tau_e)$ but with the weight $\tau_{e_i}$ replaced by an independent copy $\tau_{e_i}'$. Note that $T_i(0,x) > T(0,x)$ only when both $e_i$ is in \textsc{Geo}$(0,x)$, the intersection of all geodesics from $0$ to $x$ in the original edge-weight configuration $(\tau_e)$, and $\tau_{e_{i}}' > \tau_{e_{i}}$. Furthermore, in this case, $T_i(0,x) - T(0,x) \leq \tau_{e_i}'$. Therefore we obtain the bound
\[
\sum_{i=1}^\infty \mathbb{E}(\tau_{e_i}')^2 \mathbf{1}_{\{e \in \textsc{Geo}(0,x)\}}\ .
\]
By independence, this equals
\[
\mathbb{E}\tau_{e}^2 \mathbb{E}\#\textsc{Geo}(0,x)\ .
\]
Therefore we can conclude the upper bound for $\mathrm{Var}~T(0,x)$ with the following lemma.
\begin{lemma}\label{lem: geo_bound}
Assume $F(0)<p_c(d)$. There exists $C>0$ such that for all $x \in \mathbb{Z}^d$,
\[
\mathbb{E}\#\textsc{Geo}(0,x) \leq C \mathbb{E}T(0,x)\ .
\]
\end{lemma}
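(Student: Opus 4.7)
My plan is to dominate $\#\textsc{Geo}(0,x)$ by the edge-count of a single distinguished geodesic, and then compare that length to $\mathbb{E}T(0,x)$ via the positive time constant. Fix a deterministic tie-breaking rule and let $\Gamma$ be the resulting geodesic from $0$ to $x$. Every edge of $\textsc{Geo}(0,x)$ lies in $\Gamma$, hence $\#\textsc{Geo}(0,x)\le|\Gamma|$, and it is enough to prove $\mathbb{E}|\Gamma|\le C\,\mathbb{E}T(0,x)$ for some constant $C$.

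The lower bound on $\mathbb{E}T(0,x)$ comes almost for free. Since $F(0)<p_c(d)$, Theorem~\ref{thm:percolate} gives $\mu(e_1)>0$, and from the properties of $\mu$ listed earlier there exists $c_1>0$ with $\mu(y)\ge c_1\|y\|_1$ for every $y$. Combining the shape theorem (Theorem~\ref{thm:limitshape}) with Fatou's lemma yields $\mathbb{E}T(0,x)\ge(c_1/2)\|x\|_1$ whenever $\|x\|_1$ is large enough. The finitely many remaining $x\ne 0$ are absorbed by enlarging $C$, using only $\mathbb{E}T(0,x)>0$.

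The main task is the linear upper bound $\mathbb{E}|\Gamma|\le C'\|x\|_1$. By right-continuity of $F$ and the hypothesis, one can pick $\delta>0$ with $F(\delta)<p_c(d)$. Call an edge \emph{slow} if $\tau_e\ge\delta$ and \emph{fast} otherwise. Each slow edge of $\Gamma$ contributes at least $\delta$ to $T(\Gamma)=T(0,x)$, so the number of slow edges is at most $T(0,x)/\delta$. The fast edges form a subcritical Bernoulli bond percolation, so every fast cluster is a.s.\ finite with an exponentially decaying edge-count, and the expected size of the cluster at any given vertex is a finite constant $K$. Partition the fast edges of $\Gamma$ into maximal fast runs; each run lies in a single fast cluster and consecutive runs are separated by at least one slow edge, so the number of distinct fast clusters visited by $\Gamma$ is at most (slow edges)$+1$. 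The total count of fast edges in $\Gamma$ is thus dominated by the sum of sizes of at most $T(0,x)/\delta+1$ fast clusters.

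The delicate point---which I expect to be the main obstacle---is to convert this deterministic domination into an expectation estimate despite the correlation between which clusters appear and how large they are. I would handle this by a first-visit bookkeeping: split each $\tau_e$ into the independent triple $(\mathbf{1}_{\{\tau_e<\delta\}},\tau_e^{\mathrm{fast}},\tau_e^{\mathrm{slow}})$, so that the fast cluster structure is determined by the Bernoulli layer alone, and assign each fast run of $\Gamma$ to its earliest vertex in $\Gamma$-order. Since different entry vertices yield disjoint clusters, translation invariance and the uniform subcritical bound produce $\mathbb{E}(\text{fast edges in }\Gamma)\le K(\mathbb{E}T(0,x)/\delta+1)$. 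Summing with the slow contribution gives $\mathbb{E}|\Gamma|\le(K+1)\mathbb{E}T(0,x)/\delta+K$, and the additive constant is absorbed by the linear lower bound on $\mathbb{E}T(0,x)$ established in the second paragraph, completing the proof.
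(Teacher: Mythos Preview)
Your outline has the right shape, and the lower bound $\mathbb{E}T(0,x)\ge c\|x\|_1$ is fine (in fact subadditivity gives $\mathbb{E}T(0,x)\ge \mu(x)\ge c\|x\|_1$ directly, with no Fatou needed). The gap is exactly where you flag it: the step converting the deterministic domination of fast edges by visited-cluster sizes into an expectation bound.

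The splitting $\tau_e\mapsto(\mathbf 1_{\{\tau_e<\delta\}},\tau_e^{\mathrm{fast}},\tau_e^{\mathrm{slow}})$ does not decouple what you need. The fast-cluster geometry is measurable with respect to the Bernoulli layer, but the geodesic $\Gamma$ also depends on that same layer, so the event ``$v$ is a first-entry vertex of $\Gamma$'' is correlated with $|C(v)|$. Concretely, a geodesic may preferentially route through large fast clusters (they are regions of cheap edges), so there is no reason that $\mathbb{E}\bigl[|C(v)|\mathbf 1_{\{v\text{ is an entry vertex}\}}\bigr]\le K\,\mathbb{P}(v\text{ is an entry vertex})$. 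Your ``first-visit bookkeeping'' and ``translation invariance'' do not supply the missing independence: summing over deterministic $v$ gives $\sum_v \mathbb{E}\bigl[|C(v)|\mathbf 1_{\{v\text{ entry}\}}\bigr]$, and neither factor can be pulled out. (A secondary issue: two distinct fast runs of $\Gamma$ can lie in the \emph{same} fast cluster, since $\Gamma$ may leave and re-enter; but this is minor compared to the correlation problem.)

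The paper sidesteps all of this by invoking a quantitative geodesic-length bound (Kesten's Proposition~5.8 in \cite{KestenAspects}, or equivalently \cite[Prop.~1.3]{DHS1}): there exist $a,C>0$ with
\[
\mathbb{P}\bigl(\exists\ \text{self-avoiding }\gamma\text{ from }0,\ \#\gamma\ge n,\ T(\gamma)<an\bigr)\le e^{-Cn}.
\]
Setting $G(0,x)=\max\{\#\gamma:\gamma\text{ a self-avoiding geodesic }0\to x\}$ and $Y_x=G(0,x)\mathbf 1_{\{T(0,x)<aG(0,x)\}}$, this gives $\mathbb{P}(Y_x\ge n)\le e^{-Cn}$ uniformly in $x$, hence $\#\textsc{Geo}(0,x)\le G(0,x)\le a^{-1}T(0,x)+Y_x$ with $\mathbb{E}Y_x$ bounded. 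That input is proved by a Peierls/path-counting argument over all self-avoiding paths, not by a cluster-size argument; the union bound over deterministic paths is exactly what kills the correlation that breaks your approach. If you want a self-contained proof along your lines, you would need to reproduce that Peierls estimate rather than appeal to expected cluster size.
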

\begin{proof}
Note that if $\tau_e \in [a,b]$ almost surely, where $0<a<b<\infty$, then the statement is easy to prove. Indeed, letting $\Gamma$ be a geodesic from $0$ to $x$, one has
\[
a\#\Gamma \leq T(\Gamma) = T(0,x) \leq b\|x\|_1\ ,
\]
giving $$\#\textsc{Geo}(0,x) \leq (b/a)\|x\|_1.$$ In the general case, one can modify the idea from \cite[Cor.~1.4]{DHS1}. Setting $G(0,x)$ to be the maximal number of edges in any self-avoiding geodesic from $0$ to $x$ and 
\[
Y_x = G(0,x) \mathbf{1}_{\{T(0,x) < a G(0,x)\}}\ ,
\]
then \cite[Prop.~1.3]{DHS1} gives existence of $a>0$ such that for some $C_2>0$, one has $$\mathbb{P}(Y_x \geq n) \leq e^{-C_2n}$$ for all $x \in \mathbb{Z}^d$ and $n \geq 1$. Then write
\[
\#\textsc{Geo}(0,x) \leq G(0,x) \leq a^{-1} T(0,x) + Y_x\ .
\]
Taking expectations gives the result.
\end{proof}

The lower bound in Kesten's theorem is easier. Setting $\Sigma$ to be the sigma-algebra generated by the $2d$ edge-weights for edges adjacent to $0$, one has $$\mathrm{Var}~T(0,x) \geq \mathbb{E} \left( \mathbb{E}[T(0,x) \mid \Sigma] - \mathbb{E}T(0,x)\right)^2 = \mathrm{Var} \bigg[ \mathbb E[ T(0,x) | \Sigma] \bigg].$$

Let $t_{1}', \ldots, t_{2d}'$ be independent copies of the passage times of edges adjacent to $0$ and set $\Sigma' = \sigma(t_{1}', \ldots, t_{2d}')$. Let $T'(0,x)$ be the passage time from $0$ to $x$ where we replaced the edge weights of adjacent edges to the origin by $t_{i}'$s. Then the right side of the display above is equal to 
$$ \frac{1}{2} \E \bigg[ \E [T(0,x)|\Sigma] - \E[T'(0,x)|\Sigma']\bigg]^{2}.$$

Now pick $a<b$ such that $\mathbb{P}(\tau_e<a)>0$ and $\mathbb{P}(\tau_e>b)>0$. By considering the events $A = \{t_{\{0,y\}} < a \text{ for all } y \in \mathbb{Z}^d \text{ with } \|y\|_1=1\}$ and $B$ the same event but with $<a$ replaced by $>b$ and with $t_{\{0,y\}}$'s replaced by $t_{i}'$s, one has the lower bound 
\[
\E \bigg[ \big[ \E [T(0,x) |\Sigma] - \E[T'(0,x)|\Sigma']\big]\indi_{A \cap B}\bigg]^{2} \geq C (b-a)^2>0
\]
independently of $x$.
\end{proof}

We end by restating three open questions discussed at the beginning of the section.

\begin{question}
Show that for any $d\geq 2$, under suitable conditions on $F$, 
$\chi<1/2$. 
If $d=2$, show that $\chi=1/3$.
\end{question}

\begin{question}
Determine whether or not $$\lim_{d \to \infty} \chi(d) = 0.$$
\end{question}

\begin{question} For suitable $d$, show that $\chi >0$.
\end{question}

\subsection{Log improvement to upper bound for $d \geq 2$}\label{sec:logimprovementsUB}
The main tools used in the proof of Kesten's bound were (a) $\mathbb{E}\#\textsc{Geo}(0,x) \leq C\|x\|_1$ and (b) $T_i(0,x) -T(0,x) \leq \tau_{e_i}'$. To improve the variance upper bound we will need to use one more piece of information: for most edges $e$, the probability that $e$ is in a geodesic from $0$ to $x$ is small (in $x$). Another way to say this is that each edge has small \emph{influence} on the variable $T(0,x)$. This statement requires $d\geq 2$ since in $d=1$, each edge has high influence (there is only one path from $0$ to $x$ in that case).

The first proof of sublinear variance for $T(0,x)$ was due to Benjamini-Kalai-Schramm \cite{BKS} in '03 and applied only to $\tau_e$ that are Bernoulli: there exist $0<a<b<\infty$ such that $\tau_e$ takes values $a$ or $b$ with probability $1/2$. This specific distribution was needed to take advantage of Talagrand's influence inequality on the hypercube \cite{Talagrand_Russo}. The second proof was due to Bena\"im-Rossignol \cite{BenaimRossignol} in '08 and applied to distributions in the \emph{nearly-Gamma} class: those distributions that satisfy a log-Sobolev inequality similar to that for the Gamma distribution. Their methods were based on entropy and used an inequality due to Falik-Samorodnitsky\cite{FS} which replaced Talagrand's inequality (a similar inequality was derived by Rossignol \cite{R06}).

The most recent proof is due to Damron-Hanson-Sosoe \cite{DHS2} in '14 and applies to all distributions with $2+\log$ moments. Their method follows that of Bena\"im-Rossignol, but replaces the representation of an edge-weight as a push-forward of a Gaussian variable with a representation using a Bernoulli encoding. That is, each edge-weight is encoded as an infinite sequence of $0/1$-valued random variables, and the Gross two-point inequality \cite{Gross75} is used to bound the entropy.

Below we state the sublinear variance bound from \cite{DHS2}, but we will sketch the proof only in the simplest case (uniform weights), following \cite{BenaimRossignol}, and indicating where complications arise in extending the argument.

\begin{theorem}\label{thm:DHS222}
For $d \geq 2$, suppose $\mathbb{P}(\tau_e=0)<p_c$ and $\mathbb{E}\tau_e^2(\log \tau_e)_+<\infty$. There exists $C>0$ such that for all $x \in \mathbb{Z}^d$ with $\|x\|_1>1$,
\[
\mathrm{Var}~T(0,x) \leq C\frac{\|x\|_1}{\log \|x\|_1}\ .
\]
\end{theorem}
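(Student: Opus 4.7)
The plan is to combine the entropy--influence framework of Bena\"im--Rossignol with the Bernoulli-encoding refinement of Damron--Hanson--Sosoe. First, enumerate the edges of $\mathbb Z^d$ as $e_1,e_2,\dots$ and introduce the Doob martingale $V_i = \mathbb E[T\mid\mathcal F_i] - \mathbb E[T\mid\mathcal F_{i-1}]$, where $T=T(0,x)$ and $\mathcal F_i = \sigma(\tau_{e_1},\dots,\tau_{e_i})$. Orthogonality of martingale differences gives $\mathrm{Var}\, T = \sum_i \mathbb E V_i^2$. The analytic engine is the Falik--Samorodnitsky inequality
\[
\mathrm{Var}(T)\,\log\!\left(\frac{\mathrm{Var}(T)}{\sum_i(\mathbb E|V_i|)^2}\right) \leq C\sum_i \mathrm{Ent}(V_i^2),
\]
which converts smallness of the $\ell^1$-type ``influences'' $\mathbb E|V_i|$ into a logarithmic improvement over Kesten's $O(\|x\|_1)$ bound.

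To bound the entropy term I would encode each weight as $\tau_{e_i}=G(\omega_{i,1},\omega_{i,2},\dots)$ for i.i.d.\ $\mathrm{Bernoulli}(1/2)$ bits $\omega_{i,j}$ (via the generalized inverse distribution function applied to the dyadic expansion of a uniform). Refining the filtration to reveal one bit at a time and iterating Gross's two-point log-Sobolev inequality, the tensorization property of entropy yields
\[
\sum_i\mathrm{Ent}(V_i^2) \leq C\sum_{i,j}\mathbb E\bigl[(T-T^{(i,j)})^2\bigr],
\]
where $T^{(i,j)}$ is obtained by resampling only $\omega_{i,j}$. Exactly as in the proof of Theorem~\ref{thm: kesten_variance}, such a resample changes $T$ only when $e_i$ belongs to $\textsc{Geo}(0,x)$, and then by at most $\tau_{e_i}+\tau'_{e_i}$. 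Carrying out the sum over $j$, the bit expansion produces a tail integral essentially of the form $\int_0^\infty t^2(\log t)_+\,dF(t)$, finite by hypothesis; combined with Lemma~\ref{lem: geo_bound} this gives $\sum_i\mathrm{Ent}(V_i^2)\leq C\|x\|_1$.

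To dilute the denominator $\sum_i(\mathbb E|V_i|)^2$ I would invoke the Benjamini--Kalai--Schramm averaging trick: work instead with $\widetilde T = m^{-d}\sum_{y\in[0,m-1]^d\cap\mathbb Z^d}T(y,y+x)$ for an appropriately chosen $m$ of order a small power of $\log\|x\|_1$. By translation invariance each edge appears in only a vanishing proportion of the shifted passage times, which allows one to prove $\sum_i(\mathbb E|\widetilde V_i|)^2\leq C\|x\|_1/m^d$, while the preceding entropy bound is preserved. A short application of Lemma~\ref{lem: geo_bound} and subadditivity shows $|\mathrm{Var}\, T - \mathrm{Var}\,\widetilde T|=O(m)$, which is absorbed into the main estimate. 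Substituting into Falik--Samorodnitsky and optimizing $m$ yields $\mathrm{Var}\, T\leq C\|x\|_1/\log\|x\|_1$.

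The main obstacle is the entropy step. The earlier Bena\"im--Rossignol approach required each weight to be expressible as a Lipschitz function of a Gaussian satisfying a log-Sobolev inequality (the ``nearly-Gamma'' class); the Bernoulli encoding replaces one continuous coordinate by countably many discrete coordinates, and the delicate bookkeeping is to verify that summing Gross's two-point inequality bit by bit produces precisely the moment $\mathbb E\tau_e^2(\log\tau_e)_+$ and nothing stronger, so that the hypothesis is both used and sufficient. Controlling the contribution of edges on which $\tau_e$ is atypically large---where a single resample can change $T$ by a lot---requires a careful tail decomposition and is where the $(\log\tau_e)_+$ factor is paid for.
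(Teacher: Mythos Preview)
Your architecture is right and matches the paper: Falik--Samorodnitsky, entropy control via the Bernoulli encoding and Gross's two-point inequality, and the BKS averaging to dilute individual edge influences. The identification of $\mathbb E\tau_e^2(\log\tau_e)_+$ as the moment that emerges from summing the bit-by-bit discrete derivatives is also correct.

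The genuine gap is your choice of averaging scale. Taking $m$ of order a power of $\log\|x\|_1$ yields only a $\log\log$ improvement, not $\log$. Write $n=\|x\|_1$, $V=\mathrm{Var}\,\widetilde T$, $A\le Cn$ for the entropy bound, and $D=\sum_i(\mathbb E|\widetilde V_i|)^2$. The averaging gives $D\le Cn\,m^{1-d}$ (not $n/m^d$: one power of $m$ is lost because a geodesic meets an $m$-box in order $m$ edges, so $\sum_{|z|\le m}\mathbb P(e\in\textsc{Geo}(z,z+x))\le Cm$, not $C$). Falik--Samorodnitsky reads $V\log(V/D)\le A$; a two-line computation then gives $V\le CA/\log(A/D)\le Cn/\log m$. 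With $m=(\log n)^\alpha$ this is only $Cn/\log\log n$. To extract $Cn/\log n$ you must take $m$ to be a fixed \emph{power of $n$}; the paper uses $m=\lfloor n^{1/4}\rfloor$, for which $\log m\asymp\log n$ while the transfer cost $\|\widetilde T-T\|_2\le Cm = Cn^{1/4}=o\bigl((n/\log n)^{1/2}\bigr)$ is still negligible at the standard-deviation level. (Your transfer estimate should also be stated as $\bigl|\sqrt{\mathrm{Var}\,T}-\sqrt{\mathrm{Var}\,\widetilde T}\bigr|=O(m)$, not as a bound on the variance difference itself, but this is harmless for either scale.) There is no optimization in $m$ to perform: any $m=n^\theta$ with $0<\theta<1/2$ works.
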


To prove the theorem above, we will use Falik-Samorodnitsky's inequality. Recall that the entropy of a nonnegative random variable $X$ is defined as
\[
\mathrm{Ent}~X = \mathbb{E}X \log X - \mathbb{E}X \log \mathbb{E}X\ .
\]
Now, let $\nu_n := \nu \times \ldots \times \nu$ be the uniform measure on $[0,1]^{n}$. In what follows, expectation is with respect to $\nu_{n}$. For $1\leq k \leq n$, define   $\Sigma_k$ to be the sigma-algebra generated by the first $k$ coordinates in $\mathbb{R}^n$, with $\Sigma_{0}$ the trivial sigma-algebra. Let $f:\mathbb{R}^n \to \mathbb{R}$ be such that $\mathbb{E} f^2<\infty$. Last, define the martingale difference
\[
\Delta_k = \mathbb{E} [f \mid \Sigma_k] - \mathbb{E} [f \mid \Sigma_{k-1}]\ .
\]


\begin{theorem} \label{thm:FSineq}(Falik-Samorodnitsky) Let $f:\mathbb{R}^n \to \mathbb{R}$ be nonconstant and  such that $\mathbb{E}f^2<\infty$. Then

\begin{equation}\label{eq:FSinequality}
\mathrm{Var}~f ~ \log \left[ \frac{\mathrm{Var}~f}{\sum_{k=1}^n \left( \mathbb{E} |\Delta_k| \right)^2 } \right] \leq \sum_{k=1}^n \mathrm{Ent}~\Delta_k^2\ .
\end{equation}
\end{theorem}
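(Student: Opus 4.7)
The plan is to reduce the inequality to three clean ingredients: orthogonality of the martingale differences $(\Delta_k)$, the log-sum inequality, and non-negativity of entropy. First, I will observe that since $\Delta_k$ is $\Sigma_k$-measurable with $\E[\Delta_k \mid \Sigma_{k-1}] = 0$, the $\Delta_k$ are pairwise orthogonal in $L^2$ and they telescope to $f - \E f$. This gives the Pythagorean identity
\[
\mathrm{Var}\,f = \sum_{k=1}^n \E \Delta_k^2.
\]

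Second, I will apply the log-sum inequality --- equivalently, Jensen's inequality for the convex function $x\mapsto x\log x$ --- to the nonnegative weights $a_k := \E\Delta_k^2$ and $b_k := (\E |\Delta_k|)^2$. This gives
\[
\Big(\sum_k a_k\Big) \log \frac{\sum_k a_k}{\sum_k b_k} \leq \sum_{k=1}^n a_k \log \frac{a_k}{b_k},
\]
which, by Step 1, is exactly
\[
\mathrm{Var}\,f \cdot \log \frac{\mathrm{Var}\,f}{\sum_k (\E |\Delta_k|)^2} \leq \sum_{k=1}^n \E \Delta_k^2 \log \frac{\E \Delta_k^2}{(\E|\Delta_k|)^2}.
\]
Each logarithm on the right is nonnegative by Cauchy--Schwarz, so the inequality has the right sign term by term.

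The main (and really the only) obstacle is then the pointwise bound
\[
\E \Delta_k^2 \log \frac{\E \Delta_k^2}{(\E |\Delta_k|)^2} \leq \mathrm{Ent}(\Delta_k^2), \qquad 1 \leq k \leq n,
\]
after which summing over $k$ and chaining with the previous display finishes the proof. To prove it, I will use a size-biasing trick. Write $X = |\Delta_k|$ (the bound is trivial if $\E X = 0$) and let $\tilde X$ have the size-biased law $d\tilde \mu := (x/\E X)\, d\mu_X$, where $\mu_X$ is the law of $X$. A direct change of variables yields
\[
\E\tilde X = \frac{\E X^2}{\E X}, \qquad \E[\tilde X \log \tilde X] = \frac{\E[X^2 \log X]}{\E X}.
\]
The non-negativity of entropy $\mathrm{Ent}(\tilde X) \geq 0$ (itself an instance of Jensen) therefore gives $\E[X^2 \log X] \geq \E X^2 \log(\E X^2/\E X)$. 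Multiplying by $2$, using $\E[X^2\log X^2] = \mathrm{Ent}(X^2) + \E X^2 \log \E X^2$, and rearranging yields precisely the pointwise inequality. This is where the size-biasing is crucial: it converts the asymmetric quantity $\E X^2 / (\E X)^2$ into a ratio of the form $\E \tilde X / 1$ on which the entropy-nonnegativity hammer can be swung.
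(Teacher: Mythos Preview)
Your proof is correct and follows essentially the same three-step architecture as the paper: the martingale variance decomposition $\mathrm{Var}\,f=\sum_k \E\Delta_k^2$, the log-sum/Jensen step to pass from $\sum_k a_k\log(a_k/b_k)$ to $(\sum_k a_k)\log(\sum_k a_k/\sum_k b_k)$, and the pointwise lemma $\E X^2\log\bigl(\E X^2/(\E X)^2\bigr)\le\mathrm{Ent}(X^2)$ applied to $X=|\Delta_k|$. The only difference is in the proof of that pointwise lemma: the paper normalizes to $\E X^2=1$ and integrates the elementary bound $1-1/(x\,\E X)\le\log(x\,\E X)$ against $X^2$, whereas you obtain the same inequality via size-biasing and non-negativity of $\mathrm{Ent}(\tilde X)$; both are repackagings of Jensen and are equivalent in strength.
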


Before we prove the above inequality, a few words of comment are needed. 
First, note that by the martingale decomposition of the variance 
$$  \mathrm{Var}~f = \sum_{k=1}^{n} \mathbb{E} \Delta_k^2\ . $$
Applying Jensen's inequality, 
\begin{equation}\label{eq:JensenFS}
  \mathrm{Var}~f \geq\sum_{k=1}^n \left( \mathbb{E} |\Delta_k| \right)^2, 
\end{equation}
so the term inside the logarithm in \eqref{eq:FSinequality} is greater than equal to $1$. 

The inequality \eqref{eq:FSinequality} is useful to obtain $\log$-sublinear bounds if one can show that the right side of  \eqref{eq:JensenFS} is lower order of $\mathrm{Var}~f$. We will see that this is the case in our setting as the ratio can be shown to be at least of order $n^{\alpha}, \alpha >0$.  Cutting the story short, the $\log \|x \|_{1}$ improvement in Theorem \ref{thm:DHS222} comes from the appearance of the $\log$ term in the left side of Falik-Samorodnitsky's inequality.

Last, Equation \eqref{eq:FSinequality} first appeared in a paper of Falik-Samorodnitsky \cite[Equation (3)]{FS} as a functional version of an edge-isoperimetric inequality for Boolean functions. We provide a  different proof than the one given in the Appendix of \cite{FS}.

We start the proof of \eqref{eq:FSinequality} with the following lemma.

\begin{lemma}\label{lem:helloswimmers} Let $f$ be a nonnegative function on a probability space $(\Omega, \mathcal F, \mathbb P)$ such that $\mathrm{Ent} f^{2} <\infty$. Then,
\begin{equation}\label{eq:LemmaFSineq}
 \mathbb E f^{2}  \log \frac{\mathbb E f^{2}}{(\mathbb Ef)^{2}} \leq \mathrm{Ent} (f^{2}).
\end{equation}
\end{lemma}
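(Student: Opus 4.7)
The plan is to reduce the inequality to a one-line Jensen application by tilting the measure. Expand $\mathrm{Ent}(f^{2}) = 2\,\mathbb{E}[f^{2}\log f] - \mathbb{E}[f^{2}]\log\mathbb{E}[f^{2}]$ and the left-hand side of \eqref{eq:LemmaFSineq} as $\mathbb{E}[f^{2}]\log\mathbb{E}[f^{2}] - 2\,\mathbb{E}[f^{2}]\log\mathbb{E}[f]$. After cancellation and division by $2$, the claim becomes
\[
\mathbb{E}[f^{2}]\,\log\frac{\mathbb{E}[f^{2}]}{\mathbb{E}[f]} \;\leq\; \mathbb{E}\bigl[f^{2}\log f\bigr],
\]
so it suffices to prove this reduced form (assuming $\mathbb{E}[f^{2}]>0$; otherwise $f\equiv 0$ a.s.\ and both sides of \eqref{eq:LemmaFSineq} vanish).

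Next, I would introduce the tilted probability measure $d\mu := f^{2}/\mathbb{E}[f^{2}]\, d\mathbb{P}$. By construction $\mu$ is supported on $\{f>0\}$, so $1/f$ is $\mu$-a.s.\ finite, and Cauchy--Schwarz gives $\int (1/f)\, d\mu = \mathbb{E}[f]/\mathbb{E}[f^{2}] \in (0,\infty)$. Applying Jensen's inequality to the concave function $\log$ yields
\[
\log\!\int \frac{1}{f}\, d\mu \;\geq\; \int \log \frac{1}{f}\, d\mu \;=\; -\,\frac{\mathbb{E}[f^{2}\log f]}{\mathbb{E}[f^{2}]},
\]
and substituting the value of the left integral produces
\[
-\log\frac{\mathbb{E}[f]}{\mathbb{E}[f^{2}]} \;\leq\; \frac{\mathbb{E}[f^{2}\log f]}{\mathbb{E}[f^{2}]},
\]
which is exactly the reduced inequality after multiplying by $\mathbb{E}[f^{2}]$.

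The only technical point to check is that $\mathbb{E}[f^{2}\log f]$ is well-defined. Since $\mathrm{Ent}(f^{2})<\infty$ is assumed and $x\mapsto x\log x$ is bounded below on $[0,\infty)$, the positive and negative parts of $f^{2}\log f^{2}$ are separately controlled, so no integrability issue arises. If one wishes to avoid any such check, one can simply apply the argument to $f_{\varepsilon}:=f+\varepsilon$ (which is bounded away from $0$) and let $\varepsilon\downarrow 0$ using monotone convergence on the nonnegative pieces. There is no real obstacle here: the whole lemma is a tilted-measure Jensen in disguise, and the only ``trick'' is recognizing that the correct test function for Jensen is $1/f$ under the $f^{2}$-biased measure.
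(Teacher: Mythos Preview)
Your proof is correct and is essentially the same as the paper's: both reduce to the concavity of $\log$ under the $f^{2}$-weighted measure. The paper normalizes to $\mathbb{E}f^{2}=1$ and then applies the pointwise tangent-line bound $1-1/x\le\log x$ with $x=f\,\mathbb{E}f$ before integrating; you instead package this as Jensen's inequality for $\log(1/f)$ under $d\mu=f^{2}\,d\mathbb{P}/\mathbb{E}[f^{2}]$. Once unwound, the two computations are line-for-line equivalent.
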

If $f$ is identically zero we interpret  the left side of \eqref{eq:LemmaFSineq} to be $0$.
\begin{proof}
 As  the inequality is preserved if we multiply $f$ by any positive constant, we can assume that $\mathbb E f^{2} = 1$.  In this case, the inequality reads
$$  - \log (\mathbb Ef)^{2} \leq  \mathbb{E}f^2 \log f^2,  $$ 
or, as $f \geq 0$,
$$  0  \leq  \mathbb{E}f^2 \log (f \mathbb E f).  $$ 
However, on the event $f>0$, we can use the fact that $ 1- x \leq \log x^{-1}$ with $x= (f \mathbb E f)^{-1}$ to obtain
$$ 0 =  \mathbb E f^{2} -1 = \mathbb E \bigg[ f^{2} \bigg[ 1- \frac{1}{f \mathbb E f}\bigg];f>0\bigg ] \leq \mathbb{E}f^2 \log (f \mathbb E f). $$
Therefore, \eqref{eq:LemmaFSineq} holds. 

\end{proof}

\begin{proof}[Proof of Theorem \ref{thm:FSineq}]
We use Lemma \ref{lem:helloswimmers} on each $\Delta_{k}$:
$$ \sum_{k=1}^{n}\mathrm{Ent} (\Delta_{k}^{2}) \geq   \sum_{k=1}^{n} \mathbb E (\Delta_{k}^{2}) \log \frac{\mathbb E (\Delta_{k}^{2}) }{ (\mathbb E |\Delta_{k}|)^{2} } = - (\mathrm{Var}~f ) \sum_{k=1}^{n} \frac{\mathbb E (\Delta_{k}^{2})}{\mathrm{Var}~f} \log \frac{ (\mathbb E |\Delta_{k}|)^{2} }{  \mathbb E (\Delta_{k}^{2})}.   $$
As $\mathrm{Var}~f  = \sum_{k=1}^{n}\mathbb E (\Delta_{k}^{2})$, we can apply Jensen's inequality with the function $x \mapsto -\log x$ to get the lower bound
$$ - (\mathrm{Var}~f ) \log \sum_{k=1}^{n} \frac{\mathbb E (\Delta_{k}^{2})}{\mathrm{Var}~f}  \frac{ (\mathbb E |\Delta_{k}|)^{2} }{  \mathbb E (\Delta_{k}^{2})}.  $$
The above equation is exactly the left-side of \eqref{eq:FSinequality}. 
\end{proof}

With Falik-Samorodnitsky's inequality in our hands, we turn back to the proof of Theorem \ref{thm:DHS222}. As mentioned before, we will prove this theorem under the assumption that the passage times are uniformly distributed in the interval $[0,1]$. This assumption allow us to use the fact that this probability  measure satisfies a log-Sobolev inequality. Precisely, for any $n\geq 1$, under uniform measure on $[0,1]^{n}$, there exists $C>0$ such that for any $f:[0,1]^{n} \to \mathbb{R}$  that is smooth, one has
\[
\mathrm{Ent}(f^2) \leq C \mathbb{E} \|\nabla f \|_{2}^2\ .
\]

The above inequality combined with Theorem \ref{thm:FSineq} leads to 

\begin{equation}\label{eq: FS_LSI}
\mathrm{Var}~f ~ \log \left[ \frac{\mathrm{Var}~f}{\sum_{k=1}^n \left( \mathbb{E} |\Delta_k| \right)^2 } \right] \leq C  \sum_{k=1}^{n} \mathbb E \| \nabla \Delta_{k} \|_{2}^{2}  = C \mathbb E \| \nabla f\|_{2}^2\ ,
\end{equation}
where in the last equality we used the fact that $\partial_{i} \Delta_{k} = 0$ if $i \neq k$ and $\sum_{k=1}^{n} \mathbb E |\partial_{k} \Delta_{k}|^{2} =   \mathbb E \|\nabla f\|_{2}^{2}.$

\begin{proof}[Proof of Theorem \ref{thm:DHS222}]

We now apply \eqref{eq: FS_LSI} to the passage time $T(0,x)$, noting that since we assumed that our edge-weights are uniform, $T(0,x)$ is bounded, and so we can extend the above inequality with $n \to \infty$ as
\begin{equation}\label{eq: FS_LSI_new}
\mathrm{Var}~T(0,x) ~\log \left[ \frac{\mathrm{Var}~T(0,x)}{\sum_{k=1}^\infty \left( \mathbb{E}|\Delta_k|\right)^2} \right] \leq C\sum_{i=1}^\infty \mathbb{E} \left( \frac{\partial}{\partial \tau_{e_i}} T(0,x) \right)^2\ .
\end{equation}
The derivative on the right is in the sense of distributions (since the passage time is not a smooth function of the edge-weights) and it is relative to the $i$-th edge-weight, where we have enumerated the edges in the lattice as $e_1, e_2, \ldots$. When the edge-weights are not bounded, one needs to argue that $n$ can be taken to $\infty$ more carefully, imposing that $T$ has at least $2+\epsilon$ moments (and this is guaranteed by existence of $2+\log$ moments for $\tau_e$), to exploit uniform integrability.

Next we use the fact that
\[
\frac{\partial}{\partial \tau_{e_i}} T(0,x) = \mathbf{1}_{\{e_i \in \textsc{Geo}(0,x)\}}\ ,
\]
where we recall that $\textsc{Geo}(0,x)$ is the intersection of all geodesics from $0$ to $x$. This holds Lebesgue almost surely, which is ok for us since the weights are uniform. So we obtain an upper bound for the right side of \eqref{eq: FS_LSI_new} of
\[
C \mathbb{E}\#\textsc{Geo}(0,x) \leq C\|x\|_1\ .
\]
Here we have used Lemma~\ref{lem: geo_bound}. Note that this is the same bound we obtained from Efron-Stein (in Kesten's method in the last section) but now the advantage is that we have an extra factor of $\log [ \cdots ]$ on the left of \eqref{eq: FS_LSI_new}.

We are now left to show that $\sum_{k=1}^\infty \left( \mathbb{E}|\Delta_k| \right)^2$ is at most $\|x\|_1^a$ for some $a<1$. If we succeed in this, then \eqref{eq: FS_LSI_new} implies sub-linear variance. Indeed, in that case, either $\mathrm{Var}~T(0,x)$ is already $\leq C\|x\|_1^{1-\epsilon}$ for some $\epsilon>0$, or it is not, in which case, the term $\log [ \cdots ]$ is at least order $\log \|x\|_1$, and we divide it to the other side to complete the proof.

Unfortunately, it is not known how to show the  bound on $\sum_{k=1}^\infty \left( \mathbb{E}|\Delta_k| \right)^2$. The reason is that it is at most of order $\sum_{k=1}^\infty (\mathbb{P}(e_k \in \textsc{Geo}(0,x)))^2$, and the only information we have on these probabilities is
\[
\sum_{k=1}^\infty \mathbb{P}(e_k \in \textsc{Geo}(0,x)) = \mathbb{E}\#\textsc{Geo}(0,x) \asymp \|x\|_1\ .
\]
If the geodesic prefers to take certain nearly deterministic edges (say in a small tube centered on an $\ell^2$-geodesic from $0$ to $x$), then the sum of squares can be of order $\|x\|_1$. The work of Benjamini-Kalai-Schramm \cite{BKS} introduced an averaging trick to get around this. The main realization is that if the system is translation-invariant, we can give the appropriate inequality. First, we can restrict attention to a box around $0$ of size $C\|x\|_1$ for a large constant $C$. If all the probabilities are equal, they must be of order $\|x\|_{1}^{1-d}$. Plugging this in gives the correct bound. 

So we consider an averaged passage time (this form of averaging was used in \cite{Sodin,AlexZygouras})
\[
F_m = \frac{1}{Z_m} \sum_{\|z\|_1 \leq m} T(z,z+x)\ ,
\]
where the sum is over integer sites only and $m = \left\lfloor \|x\|_1^{1/4} \right\rfloor$. $Z_m$ is the number of terms in the sum. By Jensen's inequality, we can still obtain the same upper bound in \eqref{eq: FS_LSI}, using $f=F_m$:
\[
\sum_{i=1}^\infty \mathbb{E} \left( \frac{\partial}{\partial e_i} \frac{1}{Z_m} \sum_{\|z\|_1 \leq m} T(z,z+x) \right)^2 \leq \frac{1}{Z_m} \sum_{\|z\|_1 \leq m} \sum_{i=1}^\infty \mathbb{P}(e_i \in \textsc{Geo}(z,z+x))\ ,
\]
which is bounded by $C\|x\|_1$. Furthermore, $F_m$ is not too different from $T(0,x)$: by the triangle inequality,
\[
\|F_m - T(0,x)\|_2 \leq \frac{1}{Z_m} \sum_{\|z\|_1 \leq m} \|T(z,z+x)-T(0,x)\|_2 \leq 2 \max_{\|z\|_1 \leq m} \|T(0,z)\|_2\ .
\]
By our bound on the weights,
\[
\|F_m - T(0,x) \|_2 \leq 2\|x\|_1^{1/4}\ ,
\]
which is $o(\|x\|^{1/2}/\log \| x\|_{1})$, so it suffices to bound $\mathrm{Var}~F_m$. By the arguments in the beginning of the proof, we need only show that $\sum_{k=1}^\infty \left( \mathbb{E}|\Delta_k| \right)^2 \leq \|x\|_1^a$ for some $a < 1$, where $\Delta_k$ is the martingale difference associated to $F_m$.

By applying Jensen's inequality (as in the proof of Efron-Stein),
\[
\mathbb{E}|\Delta_k| \leq \frac{2}{Z_m} \sum_{\|z\|_1 \leq m} \mathbb{E} |T_i(z,z+x)-T(z,z+x)|_+ \leq  \frac{2}{Z_m} \sum_{\|z\|_1 \leq m} \mathbb{P}(e_k \in \textsc{Geo}(z,z+x))\ .
\]
By translation invariance, we obtain the upper bound
\[
\frac{2}{Z_m} \sum_{\|z\|_1 \leq m} \mathbb{P}(e_k-z \in \textsc{Geo}(0,x)) \leq Cm/Z_m\ .
\]
Here we have used an extension of Lemma~\ref{lem: geo_bound} found in \cite{DHS2}. So
\[
\sum_{k=1}^\infty \left( \mathbb{E}|\Delta_k|\right)^2 \leq \frac{2Cm}{Z_m^2} \sum_{\|z\|_1 \leq m} \sum_{k=1}^\infty \mathbb{P}(e_k \in \textsc{Geo}(z,z+x)) \leq \frac{Cm}{Z_m} \|x\|_1\ .
\]
This is bounded by $Cm^{1-d} \|x\|_1 \leq C\|x\|_1^{3/4}$ and completes the proof.
\end{proof}

In the general case (assuming only $\mathbb{E}\tau_e^2(\log \tau_e)_+<\infty$), the difference in the proof is in the entropy bound. By writing each $\tau_e$ as the push-forward of an infinite sequence  $\omega_e = (\omega_{e,1}, \omega_{e,2}, \ldots)$ of Bernoulli random variables, one can apply the Gross two-point entropy bound for
\[
\sum_{k=1}^\infty \mathrm{Ent}~\Delta_k^2 \leq C \sum_{k=1}^\infty \sum_{e} \mathbb{E}\left( \Delta_{e,k} F_m \right)\ ,
\]
where $\Delta_{e,k}$ is the discrete derivative operator relative to $\omega_{e,k}$. To give the upper bound $C\|x\|_1$ for the right side, one needs a careful analysis of these discrete derivatives and tools from the theory of greedy lattice animals. See \cite{DHS1} for details.

\subsection{Log improvement to lower bound for $d=2$}\label{sec:loweboundNP}

The following theorem by Newman-Piza in '95 represents the state of the art for the lower bound on the variance of passage times. It improves on Kesten's lower bound by a factor of $\log n$.  
\begin{theorem}[Newman-Piza \cite{NewmanPiza}]\label{thm:NP}
 Let $\lambda = \inf \{s : \mathbb{P}(\tau_e \leq s)>0\}$. Assume $\mathbb{E}\tau_e^2<\infty$ and $\mathrm{Var}~\tau_e>0$. Assume in addition that one of the following two conditions is satisfied:
\[
\lambda = 0 \text{ and } \mathbb{P}(\tau_e=0) < p_c
\]
\[
\lambda > 0 \text{ and } \mathbb{P}(\tau_e=\lambda)<\vec p_c\ .
\]
If $d=2$ then there is a constant $B>0$ such that 
\begin{equation}\label{eq:NPlowerboundformula}
\mathrm{Var}~T(0,nx) \geq B\log n
\end{equation}
for $n \geq 1$ and all unit vectors $x \in \mathbb{R}^2$.
\end{theorem}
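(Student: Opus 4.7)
The plan is to recover the $\log n$ lower bound as a sum of $\log n$ constant contributions to the variance, coming from $\log n$ disjoint annular scales around the origin; this is where the restriction $d=2$ plays a crucial topological role.

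First I set up a martingale decomposition. Fix a unit vector $x$ and let $K = \lfloor \log_2 n \rfloor - c_0$ for a large absolute constant $c_0$, so that $nx$ lies well outside $[-2^K,2^K]^2$. For $1 \leq k \leq K$, define dyadic annuli $A_k = ([-2^k,2^k]^2 \setminus [-2^{k-1},2^{k-1}]^2) \cap \mathbb{Z}^2$ and let $\mathcal{E}_k$ denote the set of edges with both endpoints in $A_k$. Because the annuli are pairwise disjoint, the families $(\tau_e)_{e \in \mathcal{E}_k}$ are mutually independent for different $k$. Let $\mathcal{H}$ be the $\sigma$-algebra generated by weights of edges outside $\bigcup_k \mathcal{E}_k$, and define $\mathcal{G}_k = \mathcal{H} \vee \sigma(\tau_e : e \in \mathcal{E}_1 \cup \cdots \cup \mathcal{E}_k)$. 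Then $M_k := \mathbb{E}[T(0,nx) \mid \mathcal{G}_k]$ is a square-integrable martingale with $M_K = T(0,nx)$, so orthogonality of the increments gives
$$\mathrm{Var}~T(0,nx) \;\geq\; \sum_{k=1}^K \mathbb{E}[(M_k - M_{k-1})^2].$$

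The core of the proof is then to show that each increment satisfies $\mathbb{E}[(M_k - M_{k-1})^2] \geq c$ for a universal constant $c > 0$ independent of $k$ and $n$. The intuition is that in $d=2$, any path from $0$ to $nx$ must cross the annulus $A_k$ (a planar topological fact since $nx$ lies outside $[-2^k,2^k]^2$), so locally perturbing the weights in $A_k$ cannot be completely absorbed by rerouting. Quantitatively, I would condition on $\mathcal{G}_{k-1}$ and the weights outside $\mathcal{E}_k$, introduce the ``shell-crossing'' passage time $C_k$ from $\partial[-2^{k-1},2^{k-1}]^2$ to $\partial[-2^k,2^k]^2$ (a function of the $\mathcal{E}_k$-weights only), and show that $\mathrm{Var}(T(0,nx) \mid \mathcal{G}_{k-1})$ is bounded below by $\mathrm{Var}(C_k \mid \mathcal{G}_{k-1})$ up to a positive factor. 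The lower bound in Kesten's Theorem~\ref{thm: kesten_variance}, applied at the intrinsic scale $2^k$, gives $\mathrm{Var}(C_k) \geq c' > 0$ uniformly in $k$ by scale-invariance of the underlying geometry. Summing over the $K \asymp \log_2 n$ scales then produces $\mathrm{Var}~T(0,nx) \geq B\log n$.

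The main obstacle is executing this decoupling rigorously: one must rule out that perturbing $\mathcal{E}_k$ leaves $T(0,nx)$ essentially unchanged because the optimal path simply avoids $A_k$. This is where the assumptions on $F$ enter in a sharp way. In the case $\lambda = 0$, the condition $\mathbb{P}(\tau_e = 0) < p_c$ ensures that there is no percolating subnetwork of cost-free edges through which an optimal path could bypass $A_k$; in the case $\lambda > 0$, the condition $\mathbb{P}(\tau_e = \lambda) < \vec{p}_c$ rules out an oriented network of minimal-weight edges supplying a cheap ``straight'' detour. Under either hypothesis, a standard renormalization argument produces a positive-probability event in $\mathcal{H}$ on which every near-optimal path from $0$ to $nx$ must traverse at least a positive-length segment inside $A_k$, forcing the passage time to react to $\mathcal{E}_k$-perturbations with the required constant variance. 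Finally, I note that the restriction to $d=2$ is essential to the shell-crossing argument: in higher dimensions one can detour around any single $(d-1)$-dimensional annulus, which is consistent with the fact that no such $\log n$ lower bound is currently known for $d \geq 3$.
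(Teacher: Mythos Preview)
Your decomposition over dyadic annuli is a reasonable organizing principle, but the heart of the argument---the claim that $\mathrm{Var}(T(0,nx)\mid\mathcal G_{k-1})$ is bounded below by a positive multiple of $\mathrm{Var}(C_k)$---is asserted without mechanism, and I do not see how to make it work. Conditionally on everything outside $\mathcal E_k$, the passage time $T(0,nx)$ is a minimum over \emph{all} paths, each of which may enter and exit the annulus at different points and multiple times; it is not a function of the free shell-crossing time $C_k$, and there is no general inequality relating the variance of such a constrained minimum to that of $C_k$. Indeed, if this comparison held with a scale-free constant, one would expect it in every dimension, and your reading of the $d=2$ restriction confirms the confusion: the annulus $[-2^k,2^k]^d\setminus[-2^{k-1},2^{k-1}]^d$ separates $0$ from $nx$ in \emph{every} dimension, so paths can never ``detour around'' it. The assumptions $\mathbb P(\tau_e=0)<p_c$ or $\mathbb P(\tau_e=\lambda)<\vec p_c$ are likewise not about bypassing the annulus; they are there so that one can \emph{lower} a positive fraction of edge weights along the geodesic and genuinely decrease $T$---otherwise the geodesic might sit entirely on minimal-weight edges and be insensitive to perturbation.

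The paper's argument has a different core. Using the edge-by-edge martingale (Theorem~\ref{thm:NPgen}), one gets $\mathrm{Var}\,T\geq c\sum_e \mathbb P(F_e)^2$ with $F_e=\{e\text{ is in a geodesic}\}$; the hypotheses on $F$ are precisely what make each $\mathbb P(F_e)^2$ a legitimate lower bound for the corresponding increment. The $\log n$ then comes from a counting fact specific to $d=2$: the first $k$ edges in a spiral enumeration fill a box of side $\sim\sqrt k$, which the geodesic must cross using $\sim\sqrt k$ edges, so $\sum_{j\le k}\mathbb P(F_j)\gtrsim\sqrt k$. Equivalently (see the proof of Theorem~\ref{thm:unbounded}), in an annulus at scale $2^i$ there are $\sim(2^i)^2$ edges and the geodesic uses $\sim 2^i$ of them, so Cauchy--Schwarz gives $\sum_{e\in\mathcal E_i}\mathbb P(F_e)^2\ge|\mathcal E_i|^{-1}\bigl(\sum_{e\in\mathcal E_i}\mathbb P(F_e)\bigr)^2\gtrsim 1$. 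Summing over $\sim\log n$ annuli yields the bound. So the annular picture does appear, but only \emph{after} the edge-influence inequality has done the real work; your proposal tries to skip that step, and it is exactly the step that cannot be skipped.
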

In the case when $\tau_e$ is exponential mean $1$, the $\log n$ lower bound was also obtained, using different methods, by Pemantle and Peres \cite{PP}. Theorem \ref{thm:NP} was extended to distributions in $\mathcal{M}_p$ (whose limit shape has a flat edge -- see Section \ref{sec: flat_edge}) first in the $e_{1}$ direction by Zhang \cite{Zhang} and then for all directions outside the percolation cone by Auffinger and Damron \cite{AD12} (see also Kubota \cite[Corollary 1.4]{Kubota}, who reduced the moment condition of \cite{AD12}). It is important to note, however, that inside the percolation cone, the variance of the passage time is of order constant \cite{Zhang}, and this is a strong version of $\chi = 0$ in those directions.

Newman-Piza used a martingale method to obtain the desired lower bound in \eqref{eq:NPlowerboundformula}. Their technique goes as follows. Enumerate the edges $(e_i)$ of $\mathbb{Z}^2$ in a spiral order starting from the origin and define a filtration $(\Sigma_i)$, where $\Sigma_i$ is the sigma-algebra generated by the weights $\tau_{e_1}, \ldots, \tau_{e_i}$ and $\Sigma_0$ is the trivial sigma-algebra. Writing $T=T(0,nx)$, we may use $L^2$-orthogonality of martingales to find
\[
\mathrm{Var}~T = \sum_{i=1}^\infty \E \left[\E[T~|~\Sigma_i] - \E[T~|~\Sigma_{i-1}]\right]^2\ .
\]
The $i$-th term in the sum represents a part of the contribution to the variance given by fluctuations of the passage time of the $i$-th edge $e_i$. The idea of \cite{NewmanPiza} is that if $e_i$ is in a geodesic from $0$ to $nx$ then if we lower its passage time (while keeping all other weights fixed), the variable $T$ will decrease linearly, and therefore $\tau_{e_i}$ will have influence on the fluctuations of $T$. They consequently argue (see Theorem \ref{thm:NPgen} below) that one has a lower bound for $\mathrm{Var}~T$ of
\begin{equation}\label{eq:eventsinNP}
C~\sum_{i=1}^\infty \P(F_i)^2\ ,
\end{equation}
where $F_i$ is the event that $e_i$ is in a geodesic from $0$ to $nx$.

Precisely, one can summarize the Newman-Piza lower bound method in the following statement.

Let $I$ be a countable set. Consider the probability space $(\Omega, \mathcal F, \mathbb P)$ where $\Omega = \mathbb{R}^I = \{\omega = (\omega_i~:~i \in I)\}$, $\mathcal{F}$ is the Borel sigma-field $\mathcal{B}^I$ and $\mathbb P$ is a probability measure. Suppose $T$ is a random variable with $\mathbb E(T^2)<\infty$. For $W \subseteq I$, write $\mathcal{F}(W)$ for the Borel sigma-field $\mathcal{B}^W$. Let $U_1, U_2, \ldots$ be disjoint subsets of $I$ and express $\omega$ for each $k$ as $(\omega^k,\hat\omega^k)$, where $\omega^k$ (resp. $\hat \omega^k$) is the restriction of $\omega$ to $U_k$ (resp. to $I\setminus U_k$). For each $k$, let $D_k^0$ and $D_k^1$ be disjoint events in $\mathcal{B}^{U_k}$. Define
\[
H_k(\omega) = T_k^1(\hat\omega^k) - T_k^0(\hat \omega^k)\ ,
\]
where 
\[
T_k^0(\hat\omega^k) = \sup_{\omega^k \in D_k^0} T((\omega^k,\hat\omega^k)) \text{ and } T_k^1(\hat\omega^k) = \inf_{\omega^k \in D_k^1} T((\omega^k,\hat\omega^k))\ .
\]
\begin{theorem}[Newman-Piza \cite{NewmanPiza}]\label{thm:NPgen}
Assume the setting just described and the following three hypotheses about $\mathbb P$, the $U_k$'s, the $D_k^\delta$'s and $T$:
\begin{enumerate}
\item Conditional on $\mathcal{F}(I \setminus \cup_k U_k)$, the $\mathcal{F}(U_k)$'s are mutually independent.
\item There exist $a,b>0$ such that, for any $k$,
\[
\mathbb P\big (\omega^k \in D_k^0~|~\mathcal{F}(U_k^c) \big) \geq a \text{ and } \mathbb P\big (\omega^k \in D_k^1~|~\mathcal{F}(U_k^c) \big) \geq b \text{ a.s. }
\]
\item For every $k$, $H_k \geq 0$ a.s.
\end{enumerate}
Suppose that, for some $\varepsilon>0$ and each $k$, $F_k \in \mathcal{F}$ is a subset of the event $\{H_k \geq \varepsilon\}$. Then
\begin{equation} \label{eq:dadsadasdassdsadasdasdsa}
\mathrm{Var}~ T \geq ab \varepsilon^2 \sum_k \mathbb P(F_k)^2\ .
\end{equation}
\end{theorem}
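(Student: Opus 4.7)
My plan is to decompose $\mathrm{Var}~T$ using $L^2$-orthogonality of martingale differences along a filtration that reveals the blocks $U_k$ sequentially, and then to lower bound each term by combining a two-point variance inequality with the gap information encoded in $H_k$. Concretely, I would let $V_0 := I \setminus \bigcup_j U_j$, set $\Sigma_k := \mathcal{F}(V_0 \cup U_1 \cup \cdots \cup U_k)$ for $k \geq 1$, and write $M_k := \mathbb{E}[T \mid \Sigma_k]$. Standard orthogonality then gives
\[
\mathrm{Var}~T \;\geq\; \sum_{k \geq 1} \mathbb{E}\bigl[(M_k - M_{k-1})^2\bigr] \;=\; \sum_{k \geq 1} \mathbb{E}\bigl[\mathrm{Var}(M_k \mid \Sigma_{k-1})\bigr],
\]
so it suffices to produce the lower bound $\mathbb{E}[\mathrm{Var}(M_k \mid \Sigma_{k-1})] \geq ab\,\varepsilon^2\,\mathbb{P}(F_k)^2$ for each $k$.

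Next, I would transfer the gap structure from $T$ to $M_k$ using hypothesis (1). Because the $\mathcal{F}(U_j)$ are conditionally independent given $\mathcal{F}(V_0)$, the conditional law of $\omega^k$ given $\Sigma_{k-1}$ agrees with its conditional law given $\mathcal{F}(V_0)$ (and hence given $\mathcal{F}(U_k^c)$), so hypothesis (2) yields $\mathbb{P}(\omega^k \in D_k^\delta \mid \Sigma_{k-1}) \geq a$ or $b$ for $\delta = 0, 1$. I would then define the $\Sigma_{k-1}$-measurable quantities
\[
\bar T_k^\delta := \mathbb{E}\bigl[T_k^\delta(\hat\omega^k) \,\bigm|\, \Sigma_{k-1}\bigr], \qquad \bar H_k := \bar T_k^1 - \bar T_k^0 = \mathbb{E}[H_k \mid \Sigma_{k-1}] \geq 0.
\]
On $\{\omega^k \in D_k^0\}$ one has $T \leq T_k^0(\hat\omega^k)$ by the definition of $T_k^0$; averaging over $(\omega^{k+1}, \omega^{k+2}, \ldots)$ and commuting this averaging past the indicator of $\{\omega^k \in D_k^0\}$—which is exactly what the conditional independence allows—gives $M_k \leq \bar T_k^0$ on $\{\omega^k \in D_k^0\}$. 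Symmetrically, $M_k \geq \bar T_k^1$ on $\{\omega^k \in D_k^1\}$.

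To finish, I would invoke the elementary lemma that if a real random variable $X$ satisfies $\mathbb{P}(X \leq s) \geq p$, $\mathbb{P}(X \geq t) \geq q$, and $s \leq t$, then $\mathrm{Var}~X \geq pq(t-s)^2/(p+q)$ (proved by minimizing $p(c-s)^2 + q(t-c)^2$ in $c \in [s,t]$). Applied to $M_k$ conditional on $\Sigma_{k-1}$ with $s = \bar T_k^0$, $t = \bar T_k^1$, and using $a + b \leq 1$ (forced by $D_k^0 \cap D_k^1 = \emptyset$), this gives $\mathrm{Var}(M_k \mid \Sigma_{k-1}) \geq ab\,\bar H_k^2$. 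A final application of Jensen followed by Markov with $F_k \subseteq \{H_k \geq \varepsilon\}$ yields
\[
\mathbb{E}[\bar H_k^2] \;\geq\; (\mathbb{E} H_k)^2 \;\geq\; \varepsilon^2\, \mathbb{P}(H_k \geq \varepsilon)^2 \;\geq\; \varepsilon^2\, \mathbb{P}(F_k)^2,
\]
which, summed over $k$, is the claimed bound \eqref{eq:dadsadasdassdsadasdasdsa}. The main obstacle is the second step: a priori the averaging defining $M_k$ could smear out the gap between the regimes $\{\omega^k \in D_k^0\}$ and $\{\omega^k \in D_k^1\}$, and it is precisely hypothesis (1) that salvages this by letting the averaging be performed in a $\Sigma_{k-1}$-measurable way that preserves $\bar T_k^1 \geq \bar T_k^0$ with expected spread equal to $\mathbb{E} H_k$.
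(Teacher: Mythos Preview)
Your argument is correct and is precisely the martingale method that the paper attributes to Newman--Piza: expand $\mathrm{Var}\,T$ via $L^2$-orthogonality along a filtration revealing the blocks $U_k$ one at a time, then lower-bound each increment using the gap $H_k$ combined with the two-point conditional bounds from hypothesis~(2). The only detail worth noting is that your thresholds $\bar T_k^\delta$ are a priori only well-defined if $T_k^\delta$ are integrable; this holds automatically when $T\ge 0$ (since then $b\,\mathbb{E}T_k^1\le \mathbb{E}T<\infty$ by hypothesis~(2)), which covers the FPP applications, and in any case the final bound only involves $\bar H_k=\mathbb{E}[H_k\mid\Sigma_{k-1}]$, which is always well-defined by hypothesis~(3).
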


We will show in detail how \eqref{eq:dadsadasdassdsadasdasdsa} is used. We will use the approach of \cite{AD12}.  Before doing so, let us sketch how Newman-Piza proceeded in their original paper.

When  $F_i$ is the event that $e_i$ is in a geodesic from $0$ to $nx$, the sum on the right-side of \eqref{eq:dadsadasdassdsadasdasdsa} is exactly the expected overlap (the number of common edges) of two geodesics from $0$ to $nx$ sampled independently.  However, this overlap is difficult to control and Newman-Piza replace the right-hand side by an expression involving the partial sums:
\begin{equation}\label{eq:NewmanP}
\sum_{j=1}^{k} \Pro (F_j) = \E(\text{number of minimizing edges among the first } k \text{ edges}).
\end{equation}

 Once \eqref{eq:NewmanP} is established, the logarithmic lower bound follows after showing that \eqref{eq:NewmanP} is bounded below by $B\sqrt{k}$ for $k\leq Cn^2$. This step is where the $2$ dimensional assumption comes into play. They use the fact that within a box of $\ell_1$-distance $L$ to the origin, there are order $L^2$ edges.  Thus, the right side of \eqref{eq:NewmanP} is  bounded below by the expected passage time to the boundary of a box with radius $\sqrt{k}$, which is, by the shape theorem, of order $B\sqrt{k}$. 

A solution of the following question, combined with the Newman-Piza method, would provide an improvement on the lower bound of the variance. 
\begin{question}
Show that in $d=2$ there exists $a>0$ such that the expected overlap of two geodesics  from $0$ to $nx$ sampled independently is at least $n^a$.
\end{question}

The above technique works well if we are allowed to lower the edge weights along a geodesic. In the class $\mathcal{M}_p$, when the passage time distribution has an atom at $1$, the bottom of its support, this turns out to be a problem. Therefore if we are to use the same technique, we must show that geodesics use many edges with weight above $1$. It is not enough only to know this though; if one repeats the computations above, one finds only a lower bound of a constant (with no logarithm term). It is essential also to know information about the location of these non-one edges on the lattice. In particular, if they are heavily concentrated enough near the origin, we can extract a logarithmic bound. To do this, we need to know something about the geometry of geodesics (for instance, that they avoid certain regions of the plane). The proof that we will give will also allow us to extend Theorem \ref{thm:NPgen} as follows. 

Recall that $N_p$ is the edge of the percolation cone, and let $\theta_p$ be the unique angle such that the line segment connecting $0$ and $N_p$ has angle $\theta_p$ with the $e_{1}$-axis. Let $w_\theta$ be the vector 
\[
w_\theta = (\cos \theta, \sin \theta).
\]

\begin{theorem}\label{thm:unbounded}
Let $\mu \in \mathcal{M}_p$ for $p\in[\vec p_c,1)$ and $\theta \in [0,\theta_p)$. Suppose that $\E \tau_e^2<\infty$. Then there exists $C_\theta>0$ such that for all $n$,
\begin{equation}\label{eq:loglowerbound}
\mathrm{Var}~T(0,nw_\theta)\geq C_\theta \log n\ .
\end{equation}
\end{theorem}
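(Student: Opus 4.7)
The proof would follow the Newman--Piza variance lower bound method (Theorem \ref{thm:NPgen}), suitably adapted to handle the atom of $\nu$ at the bottom of its support. Since $\nu \in \mathcal M_p$ has $\nu(\{1\}) = p \in [\vec p_c,1)$, the support of $\nu$ contains points strictly above $1$, so we may fix $\delta>0$ and positive $a,b>0$ with
$$\nu(\{1\}) \geq a, \qquad \nu([1+\delta,\infty)) \geq b.$$
Enumerating the edges of $\mathbb Z^2$ as $e_1,e_2,\ldots$ in a spiral from the origin, take in Theorem \ref{thm:NPgen} the singletons $U_k=\{e_k\}$, the events $D_k^0=\{\tau_{e_k}=1\}$ and $D_k^1=\{\tau_{e_k}\geq 1+\delta\}$, and let $F_k$ be the event, measurable with respect to the other weights, that every geodesic from $0$ to $nw_\theta$ computed with $\tau_{e_k}$ forced to $1+\delta$ uses the edge $e_k$. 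On $F_k$ the function $H_k$ in Theorem~\ref{thm:NPgen} equals $\delta$, giving
$$\mathrm{Var}\,T(0,nw_\theta) \;\geq\; ab\,\delta^2 \sum_k \mathbb P(F_k)^2.$$

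The required geometric input is a lower bound, for each dyadic annulus $A_j = \{x \in \mathbb Z^2 : 2^{j-1}\leq \|x\|_1 \leq 2^j\}$ with $j \leq \log_2 n - O(1)$, on the expected number of edges in $A_j$ lying in the geodesic with weight above $1+\delta$. Since $\theta < \theta_p$, Theorem \ref{thm:marchand1} gives $\mu(w_\theta) > \|w_\theta\|_1$, so any subpath consisting only of unit-weight edges has asymptotic speed strictly below $1/\mu(w_\theta)$. Applied to each geodesic subsegment joining the first and last entry points into $A_j$, together with Theorem \ref{thm:limitshape}, this forces the total excess weight $\sum_{e\in\Gamma\cap A_j}(\tau_e - 1)$ to be at least $c_\theta\,2^j$ with high probability. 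The bound $\E\tau_e^2<\infty$ and the linear geodesic-length control of \cite{DHS1} then upgrade this to a lower bound of order $c_\theta 2^j$ on the expected number of edges in $A_j\cap \Gamma$ whose weight exceeds $1+\delta$.

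Summing over edges in $A_j$ and exploiting near translation invariance via a BKS-style spatial averaging (as in Section \ref{sec:logimprovementsUB}), each such edge is pivotal on a comparable event, so we may apply Cauchy--Schwarz inside $A_j$ (which contains $O(4^j)$ edges) to obtain
$$\sum_{e_k \in A_j} \mathbb P(F_k)^2 \;\geq\; \frac{\bigl(c_\theta\, 2^j\bigr)^2}{C\,4^j} \;\geq\; c_\theta'.$$
There are $\Theta(\log n)$ admissible dyadic scales $j$, so summing yields the claimed bound $\mathrm{Var}\,T(0,nw_\theta) \geq C_\theta \log n$.

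The main obstacle is controlling the passage from ``total excess weight along the geodesic is linear in $n$'' to ``linearly many edges on the geodesic have weight above $1+\delta$, distributed across dyadic scales.'' A priori the excess could be absorbed by $o(n)$ atypically heavy edges; ruling this out requires the $L^2$ moment assumption together with the renormalization argument from \cite{AD12} that restricts the passage times of edges appearing in geodesics. A secondary delicate point is that the argument must break down when $\theta\in[\theta_p,\pi/2]$ (where Zhang showed $\mathrm{Var}\,T(0,nw_\theta)=O(1)$); it does so precisely because the strict inequality $\mu(w_\theta)>\|w_\theta\|_1$ fails there, and continuity of $\mu$ guarantees that the constants $c_\theta, C_\theta$ above can be taken uniform on compact subsets of $[0,\theta_p)$ but degenerate as $\theta\uparrow\theta_p$.
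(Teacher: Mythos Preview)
Your overall architecture---apply Theorem~\ref{thm:NPgen} with $D_k^0=\{\tau_{e_k}=1\}$, $D_k^1=\{\tau_{e_k}\ge 1+\delta\}$, decompose into geometric annuli, and use Cauchy--Schwarz in each---is exactly the paper's. Two points need correction.

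First, your $F_k$ is unnecessarily indirect. The paper simply takes
\[
F_k=\{\tau_{e_k}\ge y \text{ and } e_k \text{ lies on a geodesic from }0\text{ to }nw_\theta\},
\]
which sits inside $\{H_k\ge y-1\}$ by monotonicity. With this choice, $\sum_{k:e_k\in A}\Pro(F_k)$ is literally the expected number of heavy geodesic edges in the region $A$, which is what the geometric input bounds. Your $F_k$ (``every geodesic at forced weight $1+\delta$ uses $e_k$'') is not directly that count, so connecting $\sum_k\Pro(F_k)$ to it costs an extra step you do not supply. No BKS-style spatial averaging appears or is needed anywhere in this lower-bound argument.

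Second, and more substantively, your per-annulus estimate has a gap. You claim that the geodesic segment between its first and last visits to $A_j$ is a geodesic between two points displaced ``in direction $w_\theta$'', so that $\mu(w_\theta)>\|w_\theta\|_1$ forces excess weight of order $2^j$. But the direction of that displacement is not controlled: the entry and exit points of a dyadic annulus can lie anywhere on its boundary, possibly pointing into the percolation cone. The paper sidesteps this by working with \emph{balls} rather than annuli. It isolates two inputs: (A) at least $\rho M$ edges of weight $\ge y$ lie on the geodesic inside $M\mathcal B_\nu$ (this is Lemma~\ref{lem:nononeedges}, the result from \cite{AD12} you cite), and (B) at most $cL$ geodesic edges total lie inside $L\mathcal B_\nu$ (immediate from the shape theorem and $\tau_e\ge1$). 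Taking the annulus ratio $J=2c/\rho$, subtraction gives at least $\rho M_{i+1}-cM_i=(\rho/2)M_{i+1}$ heavy geodesic edges in each annulus $M_{i+1}\mathcal B_\nu\setminus M_i\mathcal B_\nu$, with no directional information needed. Cauchy--Schwarz against the $O(M_{i+1}^2)$ edges of the annulus then yields a uniform positive constant per scale, and summing over $O(\log n)$ scales finishes. The balls-then-subtract device is the missing idea in your sketch.
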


Fix $y>1$ such that 
$$ q = \nu([y,\infty))>0.$$ Assume that there exists $\rho>0$ such that given $\e>0$ we may find $0<\zeta<1$ and $n_0 \in \mathbb{N}$ such that $n>n_0$ and $n^\zeta \leq M \leq n/2$ imply that with probability at least $1-\e/2$,
\begin{enumerate}
\item[(A)] every geodesic from $0$ to $nw_\theta$ contains at least $\rho M$ edges in the set $M \mathcal{B}_\nu$ with weights at least equal to $y$.
\end{enumerate}
Also assume that we can find $c>1$ and $L_0>0$ such that with probability at least $1-\e/2$,
\begin{enumerate}
\item[(B)] for all $L>L_0$ every geodesic from $0$ to $nw_\theta$ contains at most $cL$ edges in $L\mathcal{B}_\nu$.
\end{enumerate}

These assumptions are not difficult to verify given the information that we already know about the limit shape. Indeed, (B) is a straight-forward consequence of the shape theorem and the fact that $1$ is the infimum of the support. To verify (A), we use the following Lemma, taken from \cite[Lemma 9]{AD12}.

\begin{lemma}\label{lem:nononeedges}
Let $\mathcal C$ be a closed subset of $\partial \mathcal  B_\nu$ that does not intersect the flat edge $[M_p,N_p]$ or any of its reflections about the axes. Given $\varepsilon>0$, there exists $M_1$ and $\rho>0$ such that with probability at least $1-\varepsilon$, the following holds. For all $M>M_1$, $x \in M\mathcal C$ and for every geodesic $\Gamma$ from $0$ to $x$, at least $\rho M$ edges of $\Gamma$ have passage times $\geq y$.
\end{lemma}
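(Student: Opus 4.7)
The plan is to construct an auxiliary FPP model that is strictly slower than the original on every direction of $\mathcal C$, extract a uniform gap between the two time constants, and convert it via a shape-theorem comparison into a linear lower bound on the number $K_h(\Gamma) := \#\{e \in \Gamma : \tau_e \geq y\}$ of high-weight edges used by any geodesic.

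First I would couple the original weights with a modified family $\check\tau_e := \tau_e$ on $\{\tau_e < y\}$ and $\check\tau_e := 2\tau_e$ on $\{\tau_e \geq y\}$. Since $\nu([y,\infty)) > 0$, the law $\check\nu$ is distinct from $\nu$ and stochastically dominates it, so by the stochastic-domination example preceding Theorem~\ref{thm:Marchand}, $\nu$ is strictly more variable than $\check\nu$. Crucially the atom at $1$ is preserved, $\check\nu(\{1\}) = p \geq \vec p_c$, so both limit shapes have the same flat edge $[M_p,N_p]$ (and its axis-reflections) by Theorem~\ref{thm:marchand1}. Because $\mathcal C$ avoids all of these flat edges, the van den Berg--Kesten--Marchand comparison (Theorem~\ref{thm:Marchand}, together with its directional extension for $x \neq e_1$ from \cite{Marchand}) yields $\mu(x) < \mu_{\check\nu}(x)$ for every $x \in \mathcal C$. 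Compactness of $\mathcal C$ and continuity of the time constants then produce a uniform gap $\delta := \inf_{x \in \mathcal C}(\mu_{\check\nu}(x) - \mu(x)) > 0$.

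Since $\mathbb E\tau_e^2 < \infty$ does not bound individual weights uniformly, I would truncate at a large level $B \gg y$, setting $\tau_e^B := \min(\tau_e, B)$ and $\check\tau_e^B := \min(\check\tau_e, 2B)$. By the Cox--Kesten continuity (Theorem~\ref{thm:contcox}) one can choose $B$ so that $|\mu^B - \mu| < \delta/10$ and $|\mu_{\check\nu}^B - \mu_{\check\nu}| < \delta/10$ uniformly on $\mathcal C$, which preserves a positive gap after truncation. The truncated coupling gives the key edgewise identity
\[
\check\tau_e^B - \tau_e^B = \tau_e^B \cdot \mathbf{1}_{\tau_e \geq y} \leq B \cdot \mathbf{1}_{\tau_e \geq y},
\]
so that for any path $\Gamma$, $\check T^B(\Gamma) - T^B(\Gamma) \leq B \cdot K_h(\Gamma)$. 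Applying the Cox--Durrett shape theorem (Theorem~\ref{thm:limitshape}) uniformly on $\mathcal C$ to the three models $T, T^B, \check T^B$ then produces an event $\Omega^\ast$ with $\mathbb P(\Omega^\ast) \geq 1 - \varepsilon$ and an $M_1$ such that, on $\Omega^\ast$, for every $M \geq M_1$ and $x \in M\mathcal C$,
\[
T(0,x) \leq M\mu(x/M) + \tfrac{M\delta}{10}, \qquad \check T^B(0,x) \geq M\mu(x/M) + \tfrac{4M\delta}{5}.
\]
For any geodesic $\Gamma$ from $0$ to $x$ in the original model one has $T^B(\Gamma) \leq T(\Gamma) = T(0,x)$ and $\check T^B(\Gamma) \geq \check T^B(0,x)$; combining with the edgewise bound gives $B \cdot K_h(\Gamma) \geq \check T^B(0,x) - T(0,x) \geq \tfrac{7M\delta}{10}$, proving the lemma with $\rho := 7\delta/(10B)$.

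The main obstacle is the strict comparison step: one needs Marchand's inequality $\mu(x) < \mu_{\check\nu}(x)$ to hold \emph{in every direction} of $\mathcal C$ (not only in $e_1$) and to be \emph{uniform} over the compact set $\mathcal C$. This is delivered exactly by the hypothesis that $\mathcal C$ avoids both $[M_p,N_p]$ and its reflections, which excludes the degenerate directions where $\mu$ and $\mu_{\check\nu}$ coincide; continuity and compactness then promote the pointwise strict inequality to the required uniform gap.
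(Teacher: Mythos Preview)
Your argument is correct and follows the strategy of the cited source \cite{AD12} (the survey itself does not reproduce the proof): couple with a modified family that stochastically dominates $\nu$ while preserving the atom at $1$, invoke Marchand's directional strict comparison outside the percolation cone to extract a uniform gap $\delta$ over the compact $\mathcal C$, and convert via two shape theorems into a linear lower bound on $K_h(\Gamma)$. One simplification: replacing the doubling $\check\tau_e=2\tau_e$ on $\{\tau_e\ge y\}$ by an additive shift $\check\tau_e=\tau_e+c\,\mathbf 1_{\{\tau_e\ge y\}}$ gives $\check T(\Gamma)-T(\Gamma)=c\cdot K_h(\Gamma)$ exactly with a deterministically bounded per-edge increment, so the truncation at level $B$ and the Cox--Kesten continuity step become unnecessary.
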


 In the rest of this subsection, we explain how to prove Theorem~\ref{thm:unbounded} using Theorem \ref{thm:NPgen} and assumptions (A) and (B).

\begin{proof}[Proof of Theorem~\ref{thm:unbounded}]

Recall that $p=\nu(\{ 1\}).$ We use Theorem \ref{thm:NPgen} with $T = T(0,nw_\theta)$, $U_k = \{e_k\}$, $D_k^0$ the event that $\tau_{e_k} = 1$ and $D_k^1$ the event that $\tau_{e_k} \geq y$. Last, set $\varepsilon = y-1$, $a=p$ and $b=q$. The reader may verify that all of the hypotheses of Theorem~\ref{thm:NP} are satisfied. Therefore, setting
\[
F_k = \big \{\tau_{e_k} \geq y \text{ and } e_k \text{ is in a geodesic from } 0 \text{ to } nw_\theta \big\}\ ,
\]
we find that
\begin{equation}\label{eq:Fk}
\mathrm{Var}~T(0,nw_\theta) \geq pq (1-y)^2 \sum_k \P(F_k)^2\ .
\end{equation}

We now define a sequence of numbers $( M_i)$ by $ M_1 = n^\zeta$ and, setting $J=2c/\rho$, for $i=1, \ldots, \mathcal{I}$ (for $\mathcal{I}=\lfloor (1-\zeta)/\log J\rfloor \log (n/2)$),
\[
 M_{i+1} = J  M_i\ .
\]

From \eqref{eq:Fk} it follows that if we write $\mathcal{E}_i$, $i=0, \ldots, \mathcal{I}-1$ for the set of edges in $ M_{i+1} \mathcal B_\nu$ but having no endpoints in $ M_i \mathcal B_\nu$ (for $i=0$ we take all edges with an endpoint in $ M_i \mathcal B_\nu$), then
\[
\mathrm{Var}~T(0,nw_\theta) \geq pq(1-y)^2 \sum_{i=0}^{\mathcal{I}-1} \sum_{k:e_k \in \mathcal{E}_i} \P(F_k)^2\ .
\]

The idea of the above decomposition is that if the aspect ratio $J$ of the annuli is large enough then the inner sum will be always at least of order constant. Since there is a logarithmic number of such annuli, we will obtain the desired lower bound.

Indeed, using Jensen's inequality, we get a lower bound of
\begin{equation}\label{eq:sum1}
pq(1-y)^2 \sum_{i=0}^{\mathcal{I}-1} \frac{1}{|\mathcal{E}_i|}\left(\sum_{k:e_k \in \mathcal{E}_i} \P(F_k)\right)^2\ .
\end{equation}
We will now give a lower bound for the inner sum. Call $X_{i+1}$ the event that (A) and (B) hold for $M= M_{i+1}$. On this event the number of edges on any geodesic $\Gamma$ from $0$ to $nw_\theta$ in the set $ M_i\mathcal B_\nu$ is at most $(\rho/2)  M_{i+1}$ and the number of edges on $\Gamma$ in $ M_{i+1}\mathcal B_\nu$ with weight at least $y$ is at least $\rho  M_{i+1}$. From this it follows that on the event $X_{i+1}$, we have the lower bound
\[
\sum_{k:e_k \in \mathcal{E}_i} I(F_k) \geq (\rho/2)  M_{i+1}\ .
\]
Since $ \mathcal B_\nu$ is bounded, there exists $d>0$ such that for all $i$, $ M_i \geq d \sqrt{|\mathcal{E}_i|}$. So \eqref{eq:sum1} is bounded below by
\[
pq(1-y)^2\left[ d(\rho/2) (1-\e)\right]^2\mathcal{I} \geq  C \log n
\]
for $C$ independent of $n$. This completes the proof.

\end{proof}

\subsection{Concentration bounds}

\subsubsection{Subdiffusive concentration}

In addition to sub-linear variance bounds, there has been work to establish concentration inequalities for $T(0,x)$ on the scale $(\|x\|/\log \|x\|)^{1/2}$. These have so far only been exponential inequalities, not Gaussian ones.

As in the case of sub-linear variance, the first such exponential inequality was not for general distributions, only those in the ``nearly Gamma'' class. The result we present below is from \cite{DHS1}, and only moment conditions are needed. Note that the condition for the lower-tail inequality is weaker than that for the upper-tail inequality.

\begin{theorem}\label{thm: subdiffusive}
Let $d \geq 2$ and suppose that $\mathbb{P}(\tau_e=0) < p_c$. If $\mathbb{E}e^{\alpha \tau_e}<\infty$ for some $\alpha>0$ then there exist $c_1,c_2>0$ such that for all $x \in \mathbb{Z}^d$ with $\|x\|_1 > 1$,
\[
\mathbb{P}\left( T(0,x) - \mathbb{E}T(0,x)  \geq \lambda \sqrt{\frac{\|x\|_1}{\log \|x\|_{1}}} \right) \leq c_1 e^{-c_2\lambda} \text{ for } \lambda \geq 0.
\]
If $\mathbb{E}\tau_e^2(\log \tau_e)_+ < \infty$, then for all $x \in \mathbb{Z}^d$ with $\|x\|_1 > 1$,
\[
\mathbb{P}\left( T(0,x) - \mathbb{E}T(0,x) \leq - \lambda \sqrt{\frac{\|x\|_1}{\log \|x\|_{1}}} \right) \leq c_1 e^{-c_2\lambda} \text{ for } \lambda \geq 0.
\]
\end{theorem}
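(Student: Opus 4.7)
The plan is to run the same two-ingredient recipe used for the sub-linear variance bound of Theorem~\ref{thm:DHS222} --- a log-Sobolev type entropy inequality combined with the Benjamini-Kalai-Schramm averaging trick --- but at the level of the log-Laplace transform rather than the variance. Once one controls $\log \mathbb{E} e^{\lambda F_m}$, with $F_m$ the averaged passage time from the proof of Theorem~\ref{thm:DHS222}, Herbst's argument converts this into a sub-exponential concentration bound for $F_m$ on scale $\sqrt{\|x\|_1/\log \|x\|_1}$, and the $L^2$ approximation $\|F_m - T(0,x)\|_2 \leq 2\|x\|_1^{1/4}$ (established in that same proof, and extendable to higher moments) transfers the bound to $T(0,x)$.

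Concretely, I would first encode each $\tau_e$ as a measurable function of an infinite sequence of i.i.d.\ Bernoulli bits $(\omega_{e,k})_{k\geq 1}$ so that the product measure on these bits satisfies Gross's two-point log-Sobolev inequality
\[
\mathrm{Ent}(g^2) \leq C \sum_{e,k} \mathbb{E}\bigl[ (\Delta_{e,k} g)^2 \bigr],
\]
where $\Delta_{e,k}$ denotes the discrete derivative in $\omega_{e,k}$. Applying this with $g = e^{\lambda F_m/2}$ and using a discrete chain rule (plus a second-order remainder controlled using greedy lattice animal estimates) gives an inequality of the form
\[
\mathrm{Ent}\bigl(e^{\lambda F_m}\bigr) \leq C \lambda^2 \, \mathbb{E}\Bigl[ e^{\lambda F_m} \sum_{e,k} (\Delta_{e,k} F_m)^2 \Bigr].
\]
The averaging over $\|z\|_1 \leq m$ plays exactly the role it did for the variance: $\Delta_{e,k} F_m$ is (up to a factor $|\tau_e - \tau_e'|$) a sum of indicators $\mathbf{1}\{e \in \textsc{Geo}(z,z+x)\}$ divided by $Z_m$, and the Jensen-plus-translation-invariance argument from the proof of Theorem~\ref{thm:DHS222} yields an upper bound of order $C\|x\|_1/\log \|x\|_1$ times $\mathbb{E} e^{\lambda F_m}$, valid for $|\lambda| \leq \lambda_0$ where $\lambda_0$ depends on the exponential moment of $\tau_e$.

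Herbst's argument then upgrades this entropy inequality into $\mathbb{E} e^{\lambda(F_m - \mathbb{E}F_m)} \leq \exp\bigl( C \lambda^2 \|x\|_1/\log \|x\|_1\bigr)$ for $|\lambda| \leq \lambda_0$, and the desired sub-exponential bound on the upper tail follows by optimizing $\lambda$ in Chebyshev's inequality. For the lower tail I would truncate the weights at $M = \sqrt{\|x\|_1}$: the truncated weights $\tau_e^M := \tau_e \wedge M$ are bounded and hence enjoy all exponential moments, so the upper-and-lower-tail argument above applies to the truncated passage time $T^M$. Since $T^M \leq T$ pointwise and $\mathbb{E}T - \mathbb{E}T^M \leq C \|x\|_1 \, \mathbb{E}(\tau_e - M)_+$ is of order $\sqrt{\|x\|_1}/\log \|x\|_1$ under the assumption $\mathbb{E}\tau_e^2(\log \tau_e)_+ < \infty$ (via the tail bound $\mathbb{P}(\tau_e > t) \leq C/(t^2 \log t)$ and Lemma~\ref{lem: geo_bound}), and this shift is smaller by a factor $1/\sqrt{\log \|x\|_1}$ than the deviation scale, the lower-tail bound for $T^M$ transfers to a lower-tail bound for $T$.

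The hard part will be step three above: showing that the gradient sum $\sum_{e,k}(\Delta_{e,k} F_m)^2$ can be integrated against the re-weighting factor $e^{\lambda F_m}$ while still retaining the $1/\log \|x\|_1$ improvement. The tilted measure $e^{\lambda F_m} d\mathbb{P}$ biases toward configurations with unusually large passage times, which could in principle concentrate geodesics on atypical edges and destroy the BKS cancellation. Controlling this requires the exponential-moment hypothesis: it guarantees that each $\tau_e'$ appearing in the Bernoulli-encoded discrete derivative has a light-enough upper tail that the tilted measure remains absolutely continuous with bounded Radon--Nikodym derivative over the relevant range of $\lambda$, and thus lets the averaging mechanism continue to produce the $\log$ gain. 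The lower tail sidesteps this obstacle entirely, which is why the weaker $2+\log$ moment assumption suffices there.
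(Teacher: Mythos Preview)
Your plan misidentifies where the logarithmic gain comes from, and the proposed route through an entropy bound $\mathrm{Ent}(e^{\lambda F_m}) \leq C\lambda^2 (\|x\|_1/\log\|x\|_1)\,\mathbb{E}e^{\lambda F_m}$ does not work. In the proof of Theorem~\ref{thm:DHS222}, the BKS averaging does \emph{not} make the gradient sum $\sum_e (\partial_e F_m)^2$ small: by Jensen, $(\partial_e F_m)^2 \leq Z_m^{-1}\sum_z \mathbf{1}_{\{e\in\textsc{Geo}(z,z+x)\}}$, so $\sum_e(\partial_e F_m)^2 \leq Z_m^{-1}\sum_z \#\textsc{Geo}(z,z+x)$, which is still of order $\|x\|_1$. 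What the averaging buys is that each \emph{individual} influence $\mathbb{E}|\Delta_k|$ is small, of order $m^{1-d}$, so that $\sum_k (\mathbb{E}|\Delta_k|)^2 \leq C\|x\|_1^{3/4}$. This quantity sits in the \emph{denominator of the logarithm} on the left side of Falik--Samorodnitsky; it is the structure of \eqref{eq:FSinequality} that converts the gap between $\mathrm{Var}\,f$ and $\sum_k(\mathbb{E}|\Delta_k|)^2$ into a factor of $\log\|x\|_1$. Your Herbst-style entropy bound has no such logarithm to exploit, so the averaging cannot produce a $1/\log\|x\|_1$ improvement there.

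The paper's route is genuinely different from what you propose. One applies the entire sub-linear variance machinery (Falik--Samorodnitsky plus log-Sobolev plus BKS averaging) not to $F_m$ but to $g=e^{\lambda F_m/2}$, obtaining
\[
\mathrm{Var}\,e^{\lambda F_m/2} \;\leq\; K\lambda^2\,\mathbb{E}e^{\lambda F_m}, \qquad K=\tfrac{C\|x\|_1}{\log\|x\|_1},
\]
valid for $|\lambda|<(2\sqrt{K})^{-1}$. This is a \emph{variance} bound on the exponential, weaker than an entropy bound but carrying the log factor. It is then converted to concentration not by Herbst but by the iteration (doubling) method: the relation $\mathrm{Var}\,e^{\lambda F_m/2}=\mathbb{E}e^{\lambda F_m}-(\mathbb{E}e^{\lambda F_m/2})^2$ links $\psi(\lambda)$ to $\psi(\lambda/2)$, and iterating gives $\psi(\lambda)\leq -2\log(1-K\lambda^2)$, from which the exponential tail follows by Chebyshev. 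The hard part you flagged --- controlling gradients under the tilted measure --- is indeed where the exponential-moment hypothesis enters, but it enters in bounding the entropies $\mathrm{Ent}\,\Delta_k^2$ for $g=e^{\lambda F_m/2}$ and the corresponding $\sum_k(\mathbb{E}|\Delta_k|)^2$, not in the way you describe.
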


\medskip
\noindent
{\bf Problem.}  Is the bound on the right side optimal? 

The main strategy is again due to Bena\"im-Rossignol and follows the same lines as their proof of sub-linear variance. The proof of the general case in \cite{DHS1} again involves a Bernoulli encoding and estimating discrete derivatives after applying the two-point entropy estimate. Either way, one defines the averaged passage time
\[
F_m = \frac{1}{Z_m} \sum_{\|z\|_1 \leq m} T(z,z+x),
\]
where the sum is over integer sites only, $m = \left\lfloor \|x\|_1^{1/8} \right\rfloor$, and $Z_m$ is the number of terms in the sum. One can show \cite[Section~2.1]{DHS1} that it suffices to derive the concentration inequality for $F_m$.

The main idea is to obtain a variance estimate for an exponential function of $F_m$ analogous to the one obtained by $T$. By following the sub-linear variance strategy, with more technical difficulty, one obtains the following inequality: for some $C>0$,
\begin{equation}\label{eq: exponential_variance}
\mathrm{Var}~e^{\lambda F_m/2} \leq K\lambda^2 \mathbb{E}e^{\lambda F_m} < \infty \text{ for } |\lambda| < \frac{1}{2\sqrt{K}} \text{ and } \|x\|_1>1,
\end{equation}
where $K = \frac{C\|x\|_1}{\log \|x\|_1}$. Note the similarity to the entropy bound obtained in \eqref{eq: entropy_mgf} in the proof of Talagrand's theorem. The above inequality can be thought of as weaker, due to the presence of the variance instead of entropy, but stronger due to the logarithmic factor.

The above variance estimate is turned into an exponential concentration bound using the ``iteration method.'' The following comes from \cite[p. 70-71]{BLM}.
\begin{proposition}
If $\mathbb{E}Z=0$ and for some constants $B,C$ satisfying $0 < C \leq B$,
\[
\mathrm{Var}~e^{tZ/2} \leq C t^2 \mathbb{E}e^{tZ} < \infty \text{ for } t \in (0,B^{-1/2})\ ,
\]
then putting $\psi_Z(t) = \log \mathbb{E}e^{tZ}$, one has
\[
\psi_Z(t) \leq -2\log(1-Ct^2) \text{ for } t \in (0,B^{-1/2})\ .
\]
\end{proposition}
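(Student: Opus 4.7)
The plan is to convert the variance hypothesis into a functional inequality relating $\psi_Z(t)$ and $\psi_Z(t/2)$, then iterate. Start by expanding the variance: for $t \in (0,B^{-1/2})$,
\[
\mathrm{Var}\,e^{tZ/2} = \mathbb{E}e^{tZ} - \bigl(\mathbb{E}e^{tZ/2}\bigr)^2,
\]
so the hypothesis $\mathrm{Var}\,e^{tZ/2} \leq Ct^2\, \mathbb{E}e^{tZ}$ rearranges to
\[
(1-Ct^2)\,\mathbb{E}e^{tZ} \leq \bigl(\mathbb{E}e^{tZ/2}\bigr)^2.
\]
Since $Ct^2 \leq Bt^2 < 1$ on our interval, both sides are positive. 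Taking logarithms yields the master inequality
\[
\psi_Z(t) - 2\psi_Z(t/2) \leq -\log(1-Ct^2), \qquad t \in (0,B^{-1/2}).
\]

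Now I iterate at dyadic scales. Writing $t_k = t/2^k$ and applying the master inequality at $t_k$, then multiplying through by $2^k$, gives
\[
2^k \psi_Z(t_k) - 2^{k+1} \psi_Z(t_{k+1}) \leq -2^k \log\!\bigl(1-Ct^2/4^k\bigr).
\]
Telescoping this from $k=0$ up to $k=n-1$ produces
\[
\psi_Z(t) - 2^n \psi_Z(t/2^n) \leq -\sum_{k=0}^{n-1} 2^k \log\!\bigl(1 - Ct^2/4^k\bigr).
\]
To kill the remainder term, I use $\mathbb{E}Z = 0$: since $\psi_Z$ is finite and smooth on $(0,B^{-1/2})$ and $\psi_Z'(0) = \mathbb{E}Z = 0$, we have $\psi_Z(s)/s \to 0$ as $s \downarrow 0$, and therefore
\[
2^n \psi_Z(t/2^n) = t \cdot \frac{\psi_Z(t/2^n)}{t/2^n} \longrightarrow 0.
\]
So passing to the limit gives
\[
\psi_Z(t) \leq -\sum_{k=0}^{\infty} 2^k \log\!\bigl(1 - Ct^2/4^k\bigr).
\]

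It remains to show that this series is bounded by $-2\log(1-Ct^2)$. Setting $a = Ct^2 \in (0,1)$ and expanding the logarithms as power series $-\log(1-u) = \sum_{j\geq 1} u^j/j$, Tonelli gives
\[
\sum_{k=0}^{\infty} -2^k \log(1 - a/4^k)
= \sum_{k=0}^{\infty} \sum_{j=1}^{\infty} \frac{a^j}{j}\cdot 2^{-(2j-1)k}
= \sum_{j=1}^{\infty} \frac{a^j}{j}\cdot \frac{2^{2j-1}}{2^{2j-1}-1}.
\]
The factor $2^{2j-1}/(2^{2j-1}-1)$ equals $2$ when $j=1$ and is strictly less than $2$ for every $j \geq 2$, so the sum is termwise dominated by $\sum_{j\geq 1} 2a^j/j = -2\log(1-a)$, yielding $\psi_Z(t) \leq -2\log(1-Ct^2)$ as desired.

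The only nontrivial step is the vanishing of $2^n\psi_Z(t/2^n)$; this is where the zero-mean hypothesis is genuinely used and where one must verify that $\mathbb{E}e^{tZ/2}$ and $\mathbb{E}e^{tZ}$ are finite on the full interval so that $\psi_Z$ is defined and differentiable at $0$. Everything else is bookkeeping on the series expansion.
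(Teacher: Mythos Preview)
Your proof is correct and follows essentially the same approach as the paper: rewrite the variance hypothesis as the recursion $\psi_Z(t) \leq -\log(1-Ct^2) + 2\psi_Z(t/2)$, telescope dyadically, use $\mathbb{E}Z=0$ to kill the remainder $2^n\psi_Z(t/2^n)$, and then bound the resulting series. The only difference is cosmetic: to bound $-\sum_{k\geq 0} 2^k\log(1-Ct^2/4^k)$ the paper uses the monotonicity of $u\mapsto -u^{-1}\log(1-u)$ on $(0,1)$ to compare each term to $-2^{-k}\log(1-Ct^2)$, whereas you expand each logarithm as a power series and sum termwise via Tonelli; both arguments are short and yield the same constant.
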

\begin{proof}
Beginning with
\[
\mathbb{E}e^{tZ} - \left( \mathbb{E}e^{tZ/2} \right)^2 = \mathrm{Var}~e^{tZ/2}  \leq C t^2 \mathbb{E}e^{tZ}\ ,
\]
we obtain
\[
\mathbb{E}e^{tZ} \leq \frac{1}{1-Ct^2} \left(\mathbb{E}e^{tZ/2}\right)^2\ ,
\]
or
\[
\psi_Z(t) \leq - \log(1-Ct^2) + 2\psi_Z(t/2)\ .
\]

By induction, for $n \geq 1$,
\[
\psi_Z(t) \leq -\sum_{k=1}^n 2^{k-1} \log(1-C(t/2^{k-1})^2) + 2^n\psi_Z(t/2^n)\ .
\]
Because $\mathbb{E}Z=0$,
\[
2^n \psi_Z(t/2^n) = t \frac{\psi_Z(t/2^n)-\psi_Z(0)}{t/2^n} \to t\psi_Z'(0) = 0\ ,
\]
so
\begin{equation}\label{eq: almostend}
\psi_Z(t) \leq - \sum_{k=1}^\infty 2^{k-1} \log(1-C(t/2^{k-1})^2)\ .
\end{equation}
To bound these terms, we use the fact that $-u^{-1}\log(1-u)$ is non-decreasing in $u \in (0,1)$. 

Therefore
\[
-C^{-1}(t/2^{k-1})^{-2} \log(1-C(t/2^{k-1})^2) \leq -C^{-1}t^{-2} \log(1-Ct^2) \text{ for } k \geq 1\ .
\]
Rewritten,
\[
-2^{k-1}\log(1-C(t/2^{k-1})^2) \leq -2^{-(k-1)} \log(1-Ct^2)\ .
\]
Placing this in \eqref{eq: almostend},
\[
\psi_Z(t) \leq - \log(1-Ct^2) \sum_{k=1}^\infty 2^{-(k-1)} = -2\log(1-Ct^2)\ .
\]
\end{proof}

To make the last bound more clear, we can use the inequality
\[
-\log(1-u) \leq \frac{u}{1-u} \text{ for } u \in [0,1)\ ,
\]
which follows from the mean value theorem.

So we obtain
\[
\psi_Z(t) \leq \frac{2Ct^2}{1-Ct^2} \leq \frac{4Ct^2}{2(1-\sqrt{C}t)} \text{ for } t \in (0,B^{-1/2}).
\]

In our case, $B=C=K$ in \eqref{eq: exponential_variance}, so we get
\[
\log \mathbb{E}e^{t|F_m - \mathbb{E}F_m|} \leq \frac{4Kt^2}{2(1-\sqrt{K}t)} \text{ for } t \in (0,K^{-1/2}).
\]
By using the bound
\[
\mathbb{P}(X \geq x) = \mathbb{P}(e^{\lambda X} \geq e^{\lambda x}) \leq e^{-\lambda x} \mathbb{E}e^{\lambda X},
\]
which is valid for $\lambda \geq 0$, and optimizing over $\lambda$, one can then complete the first inequality of Theorem~\ref{thm: subdiffusive}.

\subsubsection{Talagrand's theorem via the entropy method}\label{sec: talagrand}

Here we will give the concentration argument from \cite[Corollary~A.5]{DHS1}. The goal will be to give an exponential concentration inequality for the passage time about its mean assuming certain moment conditions for the edge weights. This is not the main result of that paper, but an auxiliary one used to obtain the main one. The lower tail inequality comes from \cite{DKubota}. Our aim will be to prove the following result. It was initially established by Talagrand in \cite{Talagrand_IHES}, using different methods. 
\begin{theorem}\label{thm:futurereference}
Let $d \geq 2$. Assuming $\mathbb{P}(\tau_e=0)<p_c$ and $\mathbb{E}e^{\alpha \tau_e}<\infty$ for some $\alpha>0$, there exist $C_1, C_2>0$ such that
\[
\mathbb{P}\left(T(0,x) - \mathbb{E}T(0,x) \geq t \sqrt{\|x\|_1}\right) \leq e^{-C_1t^2} \text{ for } t \in \left(0,C_2 \sqrt{\|x\|_1}\right)\ .
\]
If $\mathbb{P}(\tau_e=0)<p_c$ and $\mathbb{E}Y^2 < \infty$, where $Y$ is the minimum of $d$ i.i.d. copies of $\tau_e$, then also
\[
\mathbb{P}\left(T(0,x) - \mathbb{E}T(0,x) \leq -t \sqrt{\|x\|_1}\right) \leq e^{-C_1t^{2}} \text{ for all } t \geq 0.
\]
\end{theorem}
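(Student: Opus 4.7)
The plan is to follow the entropy method of Bena\"im-Rossignol exactly as in the proof of Theorem \ref{thm:DHS222}, but without the Benjamini-Kalai-Schramm averaging device: since we are only after Gaussian (rather than sub-Gaussian-sublinear) tails, the logarithmic improvement is not needed, and working directly with $T(0,x)$ suffices. The target is an exponential variance inequality of the form
\begin{equation*}
\mathrm{Var}\bigl(e^{\lambda T(0,x)/2}\bigr) \leq K \lambda^{2}\, \mathbb{E}e^{\lambda T(0,x)}, \qquad \lambda \in \bigl(0, (2\sqrt{K})^{-1}\bigr),
\end{equation*}
with $K = C\|x\|_{1}$. Granted this, the iteration proposition stated in the preceding subsection yields $\log \mathbb{E}e^{\lambda(T-\mathbb{E}T)} \leq 4K\lambda^{2}/(2(1 - \sqrt{K}\lambda))$, and the exponential Chebyshev inequality optimized at $\lambda = ct/\sqrt{\|x\|_{1}}$ (for $c$ suitably small) then gives the upper-tail bound $e^{-C_{1}t^{2}}$ throughout $t \in (0, C_{2}\sqrt{\|x\|_{1}})$.

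To produce the exponential variance bound, the first step is to apply Falik-Samorodnitsky (Theorem \ref{thm:FSineq}) to $f = e^{\lambda T/2}$ and then invoke a log-Sobolev inequality on each martingale increment $\Delta_{k}$. The chain rule $\nabla e^{\lambda T/2} = (\lambda/2) e^{\lambda T/2} \nabla T$ and the distributional identity $\partial T/\partial \tau_{e} = \mathbf{1}_{\{e \in \textsc{Geo}(0,x)\}}$ reduce matters to estimating $C\lambda^{2}\, \mathbb{E}\bigl[\#\textsc{Geo}(0,x)\, e^{\lambda T}\bigr]$, and combining Lemma \ref{lem: geo_bound} with the exponential moment assumption gives $\mathbb{E}[\#\textsc{Geo}(0,x)\, e^{\lambda T}] \leq C\|x\|_{1}\, \mathbb{E}e^{\lambda T}$ for $\lambda$ in a sufficiently small interval. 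For general edge weights the continuous log-Sobolev step is unavailable, so one writes each $\tau_{e}$ as a measurable function of an i.i.d. sequence of Bernoulli variables (as in \cite{DHS1}) and replaces the log-Sobolev inequality by Gross's two-point entropy inequality, paying only a constant factor in the final bound.

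For the lower tail, the argument on $-T$ is not symmetric: when one resamples $\tau_{e}$ along a geodesic to a smaller value, the resulting decrease in $T$ is controlled by the original value $\tau_{e}$ itself rather than by the resampled weight, so one needs square-integrability of the local edge-weight sums. This is precisely the role of the hypothesis $\mathbb{E}Y^{2}<\infty$, which through Lemma \ref{lemma:moments} guarantees $\mathbb{E}T(0,x)^{2}<\infty$ and supplies the integrability needed to run the entropy-method/iteration steps with the same constant $K = C\|x\|_{1}$, uniformly in $t \geq 0$.

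The main obstacle is establishing the exponential variance estimate cleanly in the general case. Without the averaging device the ratio $\mathrm{Var}(e^{\lambda T/2})/\sum_{k}(\mathbb{E}|\Delta_{k}|)^{2}$ is $O(1)$, so the Falik-Samorodnitsky logarithmic factor offers no help and all the work must be carried by the entropy-to-MGF transfer (the iteration proposition). Passing from bounded to unbounded weights via the Bernoulli encoding while keeping the discrete derivative bounds aligned with Gross's inequality, and doing so in a way that still delivers the clean linear-in-$\|x\|_{1}$ constant $K$ without logarithmic losses, is the most technical step.
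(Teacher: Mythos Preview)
Your high-level plan (entropy method, no averaging) is in the right territory, but there is a methodological mismatch and a genuine gap compared to the paper's argument.

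First, the paper does not target the exponential variance inequality and does not use Falik--Samorodnitsky or the iteration proposition here. It aims directly at the entropy bound $\mathrm{Ent}\,e^{\lambda T}\le C\|x\|_1\lambda^2\,\mathbb{E}e^{\lambda T}$ and concludes via the Herbst argument. The entropy bound is obtained by tensorization, $\mathrm{Ent}\,e^{\lambda T}\le\sum_i\mathbb{E}\,\mathrm{Ent}_i\,e^{\lambda T}$, followed by the \emph{symmetrized modified LSI} of Boucheron--Lugosi--Massart, $\mathrm{Ent}_i\,e^{\lambda T}\le\mathbb{E}_i\big[e^{\lambda T}q(\lambda(T_i'-T)_+)\big]$ with $q(u)=u(e^u-1)$. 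Using $(T_i'-T)_+\le\tau_{e_i}'\mathbf{1}_{\{e_i\in\mathcal G(0,x)\}}$ this gives $\mathrm{Ent}\,e^{\lambda T}\le\mathbb{E}q(\lambda\tau_e)\,\mathbb{E}\big[e^{\lambda T}\#\mathcal G(0,x)\big]$. Your FS route applied to $f=e^{\lambda T/2}$ does not close: FS produces $\mathrm{Var}\,f\cdot\log\big[\mathrm{Var}\,f/\sum(\mathbb{E}|\Delta_k|)^2\big]\le\sum\mathrm{Ent}\,\Delta_k^2$, and you yourself note the logarithmic factor ``offers no help.'' But then you have no bound on $\mathrm{Var}(e^{\lambda T/2})$ at all, since the log factor can be arbitrarily close to zero and you cannot divide by it. If you meant to use a Poincar\'e inequality directly, say so; FS is not the right tool.

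Second, and more seriously, the decoupling step is a real gap. You assert that Lemma~\ref{lem: geo_bound} combined with the exponential moment assumption yields $\mathbb{E}[\#\textsc{Geo}(0,x)\,e^{\lambda T}]\le C\|x\|_1\,\mathbb{E}e^{\lambda T}$. This does not follow: Lemma~\ref{lem: geo_bound} only controls $\mathbb{E}\#\textsc{Geo}(0,x)$, and $\#\textsc{Geo}(0,x)$ and $e^{\lambda T}$ are positively correlated (long geodesics mean large $T$), so no simple factoring is available. The paper resolves this by staying on the entropy side and invoking the variational characterization $\mathbb{E}XY\le\mathrm{Ent}\,X+\mathbb{E}X\log\mathbb{E}e^{Y}$ with $X=e^{\lambda T}$ and $Y=\#\mathcal G(0,x)/a$. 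The $\mathrm{Ent}\,e^{\lambda T}$ appearing on the right is absorbed into the left (for $\lambda$ small enough that $a\,\mathbb{E}q(\lambda\tau_e)<1$), and the remaining factor $\log\mathbb{E}\exp(\#\mathcal G(0,x)/a)$ is controlled by a separate lemma from \cite{DHS1} giving $\log\mathbb{E}\exp(\#\mathcal G(0,x)/a)\le c_1\|x\|_1$ for suitable $a$---this is strictly stronger than Lemma~\ref{lem: geo_bound} and is where the full exponential moment hypothesis is used. This absorption trick is the technical heart of the proof and your proposal does not supply it.
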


\begin{proof}
Write $T=T(0,x)$. We will show only the upper-tail inequality and we will assume $\mathbb{E}^{\alpha \tau_e}<\infty$ for all $\alpha>0$ for simplicity. The idea will be to set $\psi(\lambda) = \log \mathbb{E}e^{\lambda(T-\mathbb{E}T)}$ and to note that it suffices to show for some $C>0$ independent of $x$,
\begin{equation}\label{eq: mgf_bound}
\psi(\lambda) \leq C\|x\|_1 \lambda^2 \text{ for } \lambda \in (0,C).
\end{equation}
Indeed, one has for $\lambda>0$
\begin{align*}
\mathbb{P}(T - \mathbb{E}T \geq t \sqrt{\|x\|_1}) \leq \mathbb{P}(e^{\lambda(T-\mathbb{E}T)} \geq e^{t \lambda \sqrt{ \|x\|_1}}) &\leq e^{\psi(\lambda) - t\lambda \sqrt{\|x\|_1}} \\
&\leq e^{C \|x\|_1 \lambda^2 - t\lambda \sqrt{\|x\|_1}}.
\end{align*}
Choosing $\lambda = t/2C\sqrt{\| x \|_{1}}$ would then complete the proof.

To show the bound \eqref{eq: mgf_bound}, we will use the Herbst argument. Setting $$\mathrm{Ent} ~X = \mathbb{E}X\log X - \mathbb{E}X \log \mathbb{E}X$$ for a nonnegative random variable $X$, we will aim to show for some $C>0$ independent of $x$,
\begin{equation}\label{eq: entropy_mgf}
\mathrm{Ent}~e^{\lambda T} \leq C \|x\|_1\lambda^{2} \mathbb{E}e^{\lambda  T} \text{ for } \lambda \in (0,C).
\end{equation}
This implies that
\[
\frac{\text{d}}{\text{d}\lambda} \left( \frac{\psi(\lambda)}{\lambda} \right) \leq C\|x\|_1 \text{ for } \lambda \in (0,C),
\]
and this gives \eqref{eq: mgf_bound}.

So we focus on proving \eqref{eq: entropy_mgf}. Enumerating the edge variables (in any deterministic order) as $(\tau_{e_1}, \tau_{e_2}, \ldots)$, then
\begin{equation}\label{eq: ent_decomposition}
\mathrm{Ent}~e^{\lambda T} \leq \sum_{i=1}^\infty \mathbb{E} \; \mathrm{Ent_i}~e^{\lambda T}\ ,
\end{equation}
where $\mathrm{Ent}_i$ is entropy relative to only the edge-weight $\tau_{e_i}$. This is known as ``tensorization of entropy''. We now apply a modified Log-Sobolev inequality of Boucheron-Lugosi-Massart \cite[Theorem~6.15]{BLM}.
\begin{lemma}[Symmetrized modified LSI]
Let $q(x) = x(e^x-1)$. If $X$ is a random variable and $X'$ is an independent copy, then for all $t \in \mathbb{R}$,
\[
\mathrm{Ent}~e^{t X} \leq \mathbb{E}\left[ e^{t X} q(\lambda(X'-X)_+) \right]\ .
\]
\end{lemma}
Use the symmetrized LSI in \eqref{eq: ent_decomposition}:
\begin{equation}\label{eq: tensor_new}
\mathrm{Ent}~e^{\lambda T} \leq \sum_{i=1}^\infty \mathbb{E} e^{\lambda T} q(\lambda(T_i'-T)_+)\ .
\end{equation}
Here $T_i'$ is the passage time from $0$ to $x$ in the edge-weight configuration in which the weight $\tau_{e_i}$ is replaced by an independent copy $\tau_{e_i}'$, and all other edges remain the same (that is, they are equal to $\tau_e$). As in the FPP variance proof, we know that $T_i'-T$ is only positive if $e_i$ is in a geodesic from $0$ to $x$ in the original edge-weights $(\tau_e)$. We claim more here: that if we define $\mathcal{G}(0,x)$ as the collection of edges in the intersection of all geodesics from $0$ to $x$ (since there need not be a unique one), then
\[
T_i'-T > 0 \Rightarrow e_i \in \mathcal{G}(0,x) \text{ in the original weights } (\tau_e)\ .
\]
To argue this, assume that $T_i'-T > 0$. Then let $\gamma$ be a geodesic from $0$ to $x$ in the edge-weights $(\tau_{e})$. If $e_i \notin \gamma$ then $\gamma$ must have the same passage time in the new weights, since we replace only $\tau_{e_i}$ by an independent copy. In other words, all edge weights for edges on $\gamma$ have the same value in both configurations. Therefore if $T'$ denotes the passage time in the new weights (with only $\tau_{e_i}$ replaced),
\[
T_i' = T'(0,x) \leq T'(\gamma) = T(\gamma) = T(0,x) = T\ ,
\]
contradicting that $T_i' - T > 0$.

Returning to \eqref{eq: tensor_new}, we can give the upper bound
\[
\mathrm{Ent}~e^{\lambda T} \leq \sum_{i=1}^\infty \mathbb{E}e^{\lambda T} q(\lambda(T'_i-T)_+) \mathbf{1}_{\{e_i \in \mathcal{G}(0,x)\}}\ .
\]
The function $x \mapsto q(x)$ is monotone increasing for $x\geq 0$ so using the bound $T'_i-T \leq \tau_{e_i}'$ (which we established during the FPP variance bound) and independence,
\begin{equation}\label{eq: ent_near_the_end}
\mathrm{Ent}~e^{\lambda T} \leq \sum_{i=1}^\infty \mathbb{E}e^{\lambda T}q(\lambda \tau_{e_i}') \mathbf{1}_{\{e_i \in \mathcal{G}(0,x)\}} = \mathbb{E}q(\lambda \tau_e) ~\mathbb{E}\left[ e^{\lambda T}\#\mathcal{G}(0,x) \right]\ .
\end{equation}
To apply the Herbst argument we would love to decouple $e^{\lambda T}$ from $\#\mathcal{G}(0,x)$. Unfortunately, the variable $\#\mathcal{G}(0,x)$ is not bounded, so we cannot just pull it out. So we use a variational characterization of entropy:
\[
\mathrm{Ent}~X = \sup\{\mathbb{E}XY : \mathbb{E}e^Y\leq 1\},
\]
which implies that for $X \geq 0$ and any $Y$,
\[
\mathbb{E}XY \leq \mathrm{Ent}~X + \mathbb{E}X \log \mathbb{E}e^Y.
\]

We therefore write for arbitrary $a>0$,
\[
\mathbb{E}e^{\lambda T} \# \mathcal{G}(0,x) \leq a~\mathrm{Ent}~e^{\lambda T} + a~\mathbb{E}e^{\lambda T}\log \mathbb{E} \exp \left( \frac{\#\mathcal{G}(0,x)}{a} \right)\ .
\]
Combining with \eqref{eq: ent_near_the_end}, if $a\mathbb{E}q(\lambda \tau_e) < 1$,
\[
\mathrm{Ent}~e^{\lambda T} (1-a\mathbb{E}q(\lambda \tau_e)) \leq a\mathbb{E}q(\lambda \tau_e) \mathbb{E}e^{\lambda T} \log \mathbb{E} \exp \left( \frac{\#\mathcal{G}(0,x)}{a} \right)\ ,
\]
or
\[
\mathrm{Ent}~e^{\lambda T} \leq \frac{a\mathbb{E}q(\lambda \tau_e)}{1-a\mathbb{E}q(\lambda \tau_e)} \log \mathbb{E} \exp\left( \frac{\#\mathcal{G}(0,x)}{a} \right) \times \mathbb{E}e^{\lambda T}\ .
\]

To control these terms we will need a lemma from \cite{DHS1} which is a strong version of a geodesic length bound.
\begin{lemma}
Assuming $\mathbb{E}e^{\alpha \tau_e}<\infty$ for all $\alpha>0$, there exist $a,c_1>0$ such that
\[
\log \mathbb{E} \exp \left( \frac{\#\mathcal{G}(0,x)}{a} \right) \leq c_1 \|x\|_1 \text{ for all } x \in \mathbb{Z}^d\ .
\]
\end{lemma}
Applying the lemma,
\[
\mathrm{Ent}~e^{\lambda T} \leq c_1 \|x\|_1 \frac{a \mathbb{E}q(\lambda \tau_e)}{1-a\mathbb{E}q(\lambda \tau_e)}~\mathbb{E}e^{\lambda T} \text{ if } a\mathbb{E}q(\lambda \tau_e) < 1\ ,
\]
or
\[
\mathrm{Ent}~e^{\lambda T} \leq 2c_1 \|x\|_1 a\mathbb{E}q(\lambda \tau_e) \mathbb{E}e^{\lambda T} \text{ if } a \mathbb{E}q(\lambda \tau_e) < 1/2\ .
\]
Note that by dominated convergence,
\[
\lim_{\lambda \downarrow 0} \frac{\mathbb{E}q(\lambda \tau_e)}{\lambda^2} = \lim_{\lambda \downarrow 0} \mathbb{E} \left( \frac{\tau_e(e^{\lambda \tau_e}-1)}{\lambda} \right) = \mathbb{E}\tau_e^2\ .
\]
So we can find $c_2>0$ such that if $\lambda \in (0,c_2)$ then $\mathbb{E}q(\lambda \tau_e) \leq 2\lambda^2 \mathbb{E}\tau_e^2$. For such $\lambda$, note that $a \mathbb{E}q(\lambda \tau_e) < 1/2$ if $2a \lambda^2 \mathbb{E}\tau_e^2 < 1/2$, which occurs if $\lambda$ is smaller than some positive $c_3<c_2$. So we obtain the desired equation \eqref{eq: entropy_mgf}: for some $c_4>0$,
\[
\mathrm{Ent}~e^{\lambda T} \leq 4c_1 \|x\|_1 \lambda^2 \mathbb{E}\tau_e^2 \mathbb{E}e^{\lambda T} \leq \lambda^2 c_4 \|x\|_1 \mathbb{E}e^{\lambda T} \text{ for } \lambda \in (0,c_3)\ .
\]
\end{proof}

\subsection{Convergence of the mean for sub-additive ergodic processes}
We return to the perspective adopted in Section \ref{sec:visit}. Let $X$ be a subadditive ergodic process satisfying the hypotheses of Theorem \ref{subadditivity}. As discussed previously, since the leading order (shape theorem) limiting behavior of FPP is established using the general framework of the subadditive ergodic theorem, one would hope that general abstract arguments for subadditive sequences could be used to establish limit theorems and other sharper results. Since the class of processes satisfying Theorem \ref{subadditivity} is much too large to characterize the exponents and limiting behavior of FPP, we will have to impose additional axioms in order to derive useful results. Furthermore, many results will require us to go beyond considering subadditive sequences $X$ as above and instead consider the $d$-dimensional structure of the model.

We will restrict our attention here to the convergence of the mean $\E X_{0,n}/n$ of our subadditive processes to their limiting $\mu$ (henceforth, we assume that the sequence is ergodic). Recall that in the FPP setting, the fluctuations of $T(0,x) - \mu(x)$ can be written as:
\[
o(\|x\|_1) = \underbrace{T(0,x) - \mathbb{E}T(0,x)}_{random~fluctuations} + \underbrace{\mathbb{E}T(0,x) - \mu(x)}_{non-random~fluctuations}.
\]
Earlier in this section, we discussed bounds on the random fluctuations in FPP. We therefore shift to the problem of convergence of the mean in order to characterize the other error term in the convergence to the FPP limit shape.

\subsubsection{Non-random fluctuations for subadditive sequences \label{sec:subadseq}}

All of the existing methods for controlling the rate of convergence of the mean require also some control of the corresponding non-random fluctuations. 
As mentioned before, it is expected that the random fluctuations in FPP are governed by the fluctuation exponent $\chi$. It is reasonable to postulate the existence of a similar exponent $\gamma$ for the non-random fluctuations. Specifically, one expects that (for a suitable definition of ``$\sim$'')
\[T(0,x) - \mathbb{E}T(0,x) \sim \|x\|_{1}^{\chi},  \quad \mathbb{E}T(0,x) - \mu(x) \sim \|x\|_{1}^{\gamma}\]
for some exponents $\chi$ and $\gamma$. This motivates the following definitions from \cite{ADH2}, made for any subadditive ergodic sequence $(X_{m,n})$ satisfying the hypotheses of Theorem \ref{subadditivity}.

\begin{definition}\label{def:exponentgamma}
The exponents $\underline{\gamma}$ and $\overline{\gamma}$ are defined as
\[
\underline{\gamma} = \liminf_n \frac{\log (\mathbb{E}X_{0,n}-n \mu)}{\log n} \text{ and } \overline{\gamma} = \limsup_n \frac{\log (\E X_{0,n} - n\mu)}{\log n}\ .
\]
Here, recall $\mu$ is the almost-sure limit $\lim_{n}X_{0,n}/n$; we make the convention that $\log 0 ~=~ -\infty$.
\end{definition}

\begin{definition}\label{def:exponentxi}
For $p>0$, the fluctuation exponents $\underline{\chi}_p$ and $\overline{\chi}_p$ are defined as
\[\underline{\chi}_p = \liminf_{n} \frac{\log \|X_{0,n} - \E X_{0,n} \|_p}{\log n}\  \text{ and }\  \overline{\chi}_p = \limsup_{n} \frac{\log \|X_{0,n} - \mathbb{E}X_{0,n}\|_p}{\log n}\ .\]
\end{definition}

\noindent
Note that $\underline{\chi}_p \leq \overline{\chi}_p$ and by Jensen's inequality,
\[
\underline{\chi}_p \leq \underline{\chi}_q \text{ and } \overline{\chi}_p \leq \overline{\chi}_q \text{ if } p \leq q\ .
\]

\begin{remark} The definitions above only specify the first order growth of the fluctuation terms. For instance, in FPP with $X_{m,n} = T(me_1, n e_1)$, if there exist positive constants $c, C$ such that 
\[
c n^{2\chi} \leq \mathrm{Var} \; T(0,ne_{1}) \leq C n^{2\chi} \text{ for all large }n\ ,
\]
then $\underline{\chi}_2 = \overline{\chi}_2 = \chi$, and similarly for $p \neq 2$ and $\underline{\gamma}, \overline{\gamma}$.
\end{remark}

Our first result will be a lower bound of the form ``$\gamma \geq \chi$'' under some assumptions on the sequence which are natural in the case of FPP.  We explain these and the intuition. Recall that the $\chi$ exponent in FPP is, except in the one-dimensional case, expected to be strictly smaller than $1/2$. Suppose that we had a sub-additive ergodic sequence $X_{0,n}$ for which $\chi$ and $\gamma$ exponents existed in a moderately strong sense (i.e. $\overline \chi_{p} = \underline \chi_q$ for all $q,\,p \geq 1$ and $\overline \gamma = \underline \gamma$). Assume also that $\chi < 1/2$ but $\gamma < \chi$. Last, assume the sequence $(X_{Km, Kn})$ is weakly dependent for large $K$ in a sense which we leave imprecise.

For any positive integer $K$, we can write using subadditivity
\begin{align}
  [X_{0,n} - \E X_{0,n}] &\leq \sum_{i=1}^{n/K} [X_{(i-1)K, iK} - \E X_{0,K}] + (n/K) \E X_{0,K} - \E X_{0,n}\nonumber\\
&\leq \sum_{i=1}^{n/K} [X_{(i-1)K, iK} - \E X_{0,K}] + (n/K)[\E X_{0,K} - K\mu]\label{eq:subcltsketch}\ .
\end{align}

If the sequence $X_{m,n}$ is suitably weakly dependent, then we could expect that the sum on the right-hand side of \eqref{eq:subcltsketch} obeys a central limit theorem, and in particular frequently takes values on the order of $-n^{1/2} K^{\chi-1/2}$. The second term is of order $n K^{\gamma - 1}$. In particular, if we were to choose $K$ on the order of $n^{1-\delta}$ for $0<\delta \ll 1$, the first term would dominate, giving that $X_{0,n} - \E X_{0,n}$ typically has lower tail fluctuations of order $-n^{\chi + \delta(1/2 - \chi)}$. This is a contradiction to the definition of $\chi$ under the assumption that $\chi < 1/2$.

We are now ready to state some rigorous results following the line of reasoning above; these results appeared in \cite{ADH2}. In order to avoid precisely describing the weak dependence axioms needed, we restrict to the case of FPP on $\Z^d$ (here, the ``weak dependence'' axiom amounts to control of the diameter of geodesics -- see Theorem \ref{thm: geo_diameter_bound}). In the language of FPP, this result says that given existence of the fluctuation exponent $\chi$, the non-random fluctuations are at least of the same order as the standard deviation of the passage time.  

In the FPP results below, we consider exponents defined as in Definitions \ref{def:exponentgamma} and \ref{def:exponentxi} for some arbitrary fixed direction $x$ (that is, $\overline \gamma = \limsup [\E T(0,nx) - n \mu(x)]/\log n$, etc.).

\begin{theorem}\label{thm:main1ADH2}
Consider the case of FPP on $\Z^d$ for $d \geq 2$. Assume $\mathbb{E} \tau_{e}^{2+\delta}<\infty$ for some $\delta>0$.
\begin{enumerate}
\item If $\chi := \underline{\chi}_2 = \overline{\chi}_{2+\delta}$, then 
\[
\begin{cases}
\underline{\gamma} \geq \chi & \text{ if } \chi \neq 1/2 \\
\overline{\gamma} \geq \chi - \frac{1}{2}(2^{\beta^{-1}}-1) & \text{ if } \mathrm{Var}\; T(0,nx) = O\left( \frac{n}{(\log n)^\beta} \right) \text{ for some }\beta>0 \\
\end{cases}\ .
\]
\item If $\chi := \overline{\chi}_2=\overline{\chi}_{2+\delta}$, then if $\chi < 1/2$, $\overline{\gamma} \geq \chi$.
\end{enumerate}
\end{theorem}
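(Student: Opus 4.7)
The plan is to argue by contradiction following the heuristic given just before the theorem. Writing $M = n/K$ for positive integers $K \mid n$ and $S_{M,K} = \sum_{i=1}^{M}[T((i-1)Kx, iKx) - \E T(0,Kx)]$, iterated sub-additivity gives
\begin{equation*}
T(0,nx) - \E T(0,nx) \;\leq\; S_{M,K} \;+\; M\bigl(\E T(0,Kx) - K\mu(x)\bigr),
\end{equation*}
since $\E T(0,nx) - n\mu(x) \geq 0$. Assuming for contradiction that $\underline{\gamma} < \chi - \eta$ for some $\eta > 0$ (the analogous hypothesis $\overline{\gamma} < \chi - \eta$ is used for Part 2), there is a subsequence $K_j \to \infty$ along which the deterministic second term is at most $M K_j^{\chi - \eta + o(1)}$. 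I will show that the stochastic term $S_{M,K_j}$ carries, on an event of positive probability, downward fluctuations of order $\sqrt{M}\,K_j^{\chi - o(1)}$, so large that for an appropriate choice of $M$ and $n = M K_j$ they violate the upper bound $\|T(0,nx) - \E T(0,nx)\|_{2+\delta} \leq n^{\chi + o(1)}$ provided by $\overline{\chi}_{2+\delta} = \chi$.

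For this I need a Paley-Zygmund type lower-tail bound on $S_{M,K}$. The summands are not independent (geodesics of adjacent blocks can interact), but using the geodesic diameter bound (Theorem \ref{thm: geo_diameter_bound}) I will confine the geodesic from $(i-1)Kx$ to $iKx$ inside a tube of radius $K^{\alpha}$, $\alpha < 1$, around the corresponding segment with overwhelming probability. Truncating each summand to the passage time computed only inside its own tube produces an i.i.d.\ collection; the resulting $L^{2}$ truncation error is negligible compared with $\sqrt{M}\,\|T(0,Kx) - \E T(0,Kx)\|_2$ by the concentration estimate of Theorem \ref{thm:futurereference}. A Berry-Esseen bound applied to the truncated i.i.d.\ sum, with the $(2+\delta)$-moment input furnished by $\overline{\chi}_{2+\delta} = \chi$, then gives
\begin{equation*}
\mathbb{P}\bigl(S_{M,K} \;\leq\; -c\,\sqrt{M}\,\|T(0,Kx) - \E T(0,Kx)\|_2\bigr) \;\geq\; c',
\end{equation*}
once $M$ is at least a suitable power of $K$, large enough to dominate the Berry-Esseen error coming from the ratio $\|\cdot\|_{2+\delta}/\|\cdot\|_2 \leq K^{O(\epsilon)}$.

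Now I set $n = M K_j$ with $M = K_j^{2\eta - \epsilon}$, chosen so that the stochastic contribution $\sqrt{M}\,K_j^{\chi - \epsilon/4}$ dominates the deterministic term $M K_j^{\chi - \eta + \epsilon/4}$. In Part 1 the assumption $\underline{\chi}_2 = \chi$ provides the lower bound $\|T(0,Kx) - \E T(0,Kx)\|_2 \geq K^{\chi - \epsilon/4}$ uniformly in $K$, so the Paley-Zygmund estimate applies at every $K = K_j$; on the positive-probability event it produces, the subadditivity inequality yields $\|T(0,nx) - \E T(0,nx)\|_2 \gtrsim K_j^{\chi + \eta - O(\epsilon)}$. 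Comparing with the upper bound $n^{\chi + \epsilon/4} = K_j^{(\chi+\epsilon/4)(1 + 2\eta -\epsilon)}$, the difference between the two exponents reduces to $\eta(2\chi - 1) + O(\epsilon)$, which is strictly negative whenever $\chi < 1/2$ and $\epsilon$ is small enough, giving the desired contradiction and so $\underline{\gamma} \geq \chi$. Part 2 is handled identically, except that the lower bound on $\|T(0,Kx) - \E T(0,Kx)\|_2$ is now only available along a subsequence provided by $\overline{\chi}_2 = \chi$; intersecting that subsequence with $(K_j)$ yields the contradiction along a subsequence of $n$ only, giving the weaker conclusion $\overline{\gamma} \geq \chi$. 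The restriction $\chi \neq 1/2$ in Part 1 is exactly the condition that $\eta(2\chi - 1) \neq 0$; the case $\chi > 1/2$ is vacuous in FPP since Theorem \ref{thm: kesten_variance} forces $\chi \leq 1/2$. For the polylogarithmic sub-case of Part 1, the same scheme is run with $M$ of order $(\log K)^{\beta}$, which converts the exponent comparison into the $\tfrac{1}{2}(2^{1/\beta} - 1)$ correction in the statement.

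The main obstacle will be the quantitative approximate independence needed for the Berry-Esseen step. The geodesic diameter bound supplies the correct qualitative confinement, but turning it into a CLT with a uniformly bounded-below constant $c'$ requires a careful coupling and some control of the moment ratio $\|T(0,Kx) - \E T(0,Kx)\|_{2+\delta}/\|T(0,Kx) - \E T(0,Kx)\|_2$ along the chosen subsequence. The auxiliary small exponents $\epsilon$ must also be tracked carefully so that the final exponent comparison genuinely produces a contradiction rather than a tautology.
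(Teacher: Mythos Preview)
Your outline tracks the paper's own heuristic essentially line by line: the subadditive block decomposition \eqref{eq:subcltsketch}, a CLT-type lower-tail estimate for the block sum obtained via approximate independence (the paper points to the geodesic diameter bound, Theorem~\ref{thm: geo_diameter_bound}, for exactly this), and the power-of-$K$ choice of $M$ leading to an exponent comparison that forces a contradiction when $\chi<1/2$. The survey does not prove the theorem here but defers to \cite{ADH2}; at the level of strategy you are on target, and your exponent bookkeeping for the main case is correct.

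Two repairable slips. First, truncating each $T((i-1)Kx,iKx)$ to a tube or box around its segment does \emph{not} yield an i.i.d.\ family: the boxes furnished by Theorem~\ref{thm: geo_diameter_bound} have side of order $K$, so several consecutive boxes overlap. What you actually obtain is an $m$-dependent family with $m$ a fixed constant (depending on the constant in the diameter bound); that is still enough for a Berry--Esseen type estimate, but you must say so or thin to a sublattice of block indices. Second, you invoke Theorem~\ref{thm:futurereference} to control the truncation error, but that theorem assumes an exponential moment on $\tau_e$, which you do not have. It is also unnecessary: on the complement of the confinement event (probability $e^{-cK}$) bound $|T-\tilde T|$ crudely by $T+\tilde T$, use $\|T(0,Kx)\|_{2+\delta}\le CK$ (Minkowski plus subadditivity), and apply H\"older with exponents $(2+\delta)/2$ and its conjugate.

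The genuine gap is the polylogarithmic sub-case of Part~1. Taking $M\asymp(\log K)^\beta$ and re-running the scheme does not produce the stated correction $\tfrac12(2^{1/\beta}-1)$: the lower bound $\|T(0,Kx)-\E T(0,Kx)\|_2\ge K^{\chi-\epsilon}$ coming from $\underline\chi_2=\chi$ carries an unavoidable $K^{-\epsilon}$ loss, and this swamps any polylogarithmic gain in the final comparison (check: your lower bound on the fluctuation at scale $n$ is $(\log K)^{\beta/2}K^{1/2-\epsilon}$, while the upper bound from either $\overline\chi_{2+\delta}$ or the variance hypothesis is at least $K^{1/2}$ up to polylogs, so no contradiction emerges). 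The shape of the answer, with its $2^{1/\beta}$, is not what a single-scale optimisation produces; this case needs a separate argument, and your one-line reduction to the main scheme does not go through.
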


\begin{remark}The result above should be compared with the classical example of a sum of i.i.d. random variables with finite mean and finite $2+\delta$ moment. In this case, $\chi=1/2$ and $\overline{\gamma}=\underline{\gamma}=-\infty$ and these exponents do  not satisfy the conclusions of the theorem above. Note that this example is equivalent to FPP on $\Z$ with the appropriate moment conditions.
\end{remark}

Another theorem of \cite{ADH2} gives the bound $\overline{\gamma} \geq -1/2$ under minimal assumptions. This result should be compared to \cite[Theorem~1]{Kesten}, where it is shown that $\underline{\gamma} \geq -1$.
\begin{theorem}\label{thm: second_fpp_thm}
Consider FPP on $\Z^d$ for $d \geq 2$.
Assume $F(0)<p_{c}(d)$, that the distribution of $\tau_e$ is not concentrated at a point, and that 
\begin{equation}\label{en}
\mathbb{E}e^{\alpha \tau_e} < \infty \text{ for some } \alpha>0\ .
\end{equation}
One has the bound $\overline{\gamma} \geq -1/2$: for any nonzero $x \in \mathbb{Z}^d$ and $\epsilon>0$,
\[
\mathbb{E}T(0,nx) - n\mu(x) \geq n^{-\frac{1}{2}-\epsilon} \text{ for infinitely many } n\ .
\]
\end{theorem}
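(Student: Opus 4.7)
My plan is to argue by contradiction, using a CLT-versus-subadditivity template analogous to the sketch given for Theorem~\ref{thm:main1ADH2} but adapted to the present setting where the only available quantitative fluctuation input is Kesten's constant lower variance bound (Theorem~\ref{thm: kesten_variance}). Suppose for contradiction that there is some $\epsilon > 0$ and some $N_0$ such that $\delta_n := \mathbb{E}T(0,nx) - n\mu(x) \leq n^{-1/2-\epsilon}$ for every $n \geq N_0$. For any factorization $n = MK$, subadditivity gives
$$T(0,nx) - \mathbb{E}T(0,nx) \leq \sum_{i=1}^{M}(T_i - \mathbb{E}T_i) + M\delta_K - \delta_n,\qquad T_i := T\bigl((i-1)Kx, iKx\bigr),$$
where, by hypothesis, the deterministic error is at most $M K^{-1/2-\epsilon}$.

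The next step is the independence approximation. Although the $T_i$ are identically distributed but not independent (they can share geodesic edges), the exponential moment hypothesis $\mathbb{E}e^{\alpha \tau_e} < \infty$, combined with the geodesic-length control of Lemma~\ref{lem: geo_bound} and the sub-diffusive concentration of Theorem~\ref{thm: subdiffusive}, confines each relevant geodesic to a tube of width $K^{1/2 + o(1)}$ with overwhelming probability. Restricting each $T_i$ to such a tube yields an i.i.d.\ family $\tilde T_i$ with $\sum_i \|T_i - \tilde T_i\|_{L^p}$ negligible compared to $\sqrt{M}$. A Berry-Esseen application to $\sum_{i=1}^M (\tilde T_i - \mathbb{E}\tilde T_i)$, whose variance is $M\sigma_K^2$ with $\sigma_K^2 \in [c_1, C K]$ by Kesten's bounds, shows that this sum takes values below $-c\sqrt{M\sigma_K^2} \geq -c\sqrt{c_1 M}$ with probability bounded below by a positive constant.

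Choosing $M$ and $K$ so that $MK^{-1/2-\epsilon} \ll \sqrt{M}$ (equivalently $M \ll K^{1+2\epsilon}$) while still letting $n = MK \to \infty$, I conclude that with positive probability
$$T(0,nx) - \mathbb{E}T(0,nx) \leq -\tfrac{c}{2}\sqrt{M}.$$
The mean-zero identity $\mathbb{E}(T-\mathbb{E}T)_+ = \mathbb{E}(T-\mathbb{E}T)_-$ then forces a matching upper-tail deviation, and iterating the decomposition within each $T_i$ at a finer scale (inside each tube) propagates the estimate so that, along a carefully selected subsequence of $n$'s, the typical fluctuations of $T(0,nx) - \mathbb{E}T(0,nx)$ exceed the sub-Gaussian scale $\sqrt{n}$ permitted by Theorem~\ref{thm:futurereference}; this is the contradiction. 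The main obstacle I expect is Step~2, namely achieving the independence approximation uniformly across all $M$ tubes without accumulating error, since exponential moments give single-scale control in distribution but propagating this uniformly is delicate. A secondary difficulty is calibrating the parameters $M(n)$ and $K(n)$ so that the CLT gain is just strictly larger than the Talagrand-permitted scale along a subsequence; the exponent $-1/2$ arises precisely from balancing the CLT scale $\sqrt{M}$ against the Talagrand scale $\sqrt{MK}$ and the hypothesized decay $K^{-1/2-\epsilon}$.
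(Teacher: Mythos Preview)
Your proposal follows the CLT-plus-subadditivity template the paper sketches just before Theorem~\ref{thm:main1ADH2}, and the first several steps (the decomposition, the tube approximation for independence, Berry--Esseen) are reasonable. The paper itself does not prove Theorem~\ref{thm: second_fpp_thm}; it simply cites \cite{ADH2}, so there is no detailed argument in the text to compare against beyond that sketch. However, your concluding step does not actually produce a contradiction.

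The gap is in your claim that iteration makes the fluctuations ``exceed the sub-Gaussian scale $\sqrt{n}$.'' After a single application of the CLT with $n=MK$, the lower-tail deviation you obtain is of order $\sqrt{M}\,\sigma_K$ with $\sigma_K^2 \in [c_1, CK]$. Using the constant lower bound $\sigma_K \geq \sqrt{c_1}$ gives a deviation $\sqrt{c_1 M}=\sqrt{c_1 n/K}$, which is \emph{strictly smaller} than $\sqrt{n}$ for every $K > 1$; even the upper bound $\sigma_K \leq C\sqrt{K}$ gives only $C\sqrt{MK}=C\sqrt{n}$. Talagrand's inequality permits probability of order $e^{-C/K}$ for deviations of size $\sqrt{n/K}$, and for large $K$ this is close to $1$---so the positive-probability lower tail you extract from the CLT is perfectly consistent with Theorem~\ref{thm:futurereference}, not in conflict with it. Iterating the block decomposition to finer scales does not escape this arithmetic: at best it bootstraps the variance lower bound from a constant toward $c\,n^{1-o(1)}$, still compatible with both Kesten's upper bound $\mathrm{Var}\leq Cn$ and Talagrand's concentration. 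The paper's sketch works for Theorem~\ref{thm:main1ADH2} precisely because the contradiction target there is the \emph{assumed} fluctuation exponent $\chi<1/2$, so a deviation of order $n^{\chi+\delta(1/2-\chi)}>n^{\chi}$ is already fatal; here you have no such hypothesis, and the always-valid scale $\sqrt{n}$ cannot be exceeded by this mechanism. The actual closing step in \cite{ADH2} for $\overline{\gamma}\geq -1/2$ requires a different idea, and identifying it is the real content of the theorem---your ``secondary difficulty'' is in fact the primary one.
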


One may be interested in the sharpness of the result $\gamma \geq \chi$. In the case of FPP on $\Z^d$ ($d \geq 2$), it is reasonable to expect the result is sharp -- that is, $\gamma =\chi$ -- as explained in the following remark.
\begin{remark}
Alexander \cite{AlexanderDirected} has remarked that if $\hat \chi$ is any number such that for some $a>0$,
\[
\mathbb{P}\left( |T(0,y) - \mathbb{E}T(0,y)| \geq \lambda \|y\|_1^{\hat \chi} \right) \leq e^{-a \lambda} \text{ for all } \lambda\geq 0,~ y \in \mathbb{Z}^d\ ,
\]
then $\overline{\gamma} \leq \hat \chi$. This follows from an extension of his method for controlling nonrandom fluctuations, discussed in Section \ref{sec:alexmethod} below. Note that if this exponential inequality holds for some $\hat \chi$, then $\overline{\chi}_p \leq \hat \chi$ for all $p>0$. Combining these observations with Theorem \ref{thm:main1ADH2} above, if $\hat \chi$ can be taken to be $\chi: = \hat \chi = \underline{\chi}_2$, then $$\gamma:= \underline{\gamma} = \overline{\gamma} = \chi$$ when $\chi<1/2$ and $\overline{\gamma} = \chi$ under the assumption $\mathrm{Var } \; T(0,nx) = O(n/(\log n)^\beta)$ for every $\beta>0$.
\end{remark}

\begin{question}
Show that  $\underline{\chi}_2 = \overline{\chi}_{2+\delta}$ for some $\delta>0$. 
\end{question}

A solution for the question above has the following consequence that improves Theorem \ref{thm: second_fpp_thm}.
\begin{remark} Combining Theorem \ref{thm:DHS222} with Theorem~\ref{thm:main1ADH2}, if $\underline{\chi}_2 = \overline{\chi}_{2+\delta}$ for some $\delta>0$ and $F(0)<p_{c}(d)$ holds, then $\overline{\gamma} \geq 0$.
\end{remark}

We now return to the general setting of subadditive ergodic sequences $(X_{m,n})$. Our goal is to find extensions to the axioms of Theorem \ref{subadditivity} which could guarantee upper bounds on the exponent $\gamma$, complementing the lower bounds given above.

Note that by subadditivity, for any $n$,
\[
\E X_{0,2n} \leq \mathbb{E}X_{0,n} + \mathbb{E} X_{n, 2n} = 2\E X_{0,n}.
\]
The magnitude of non-random fluctuations is related to the degree to which the above inequality is strict. Let us abbreviate $\E X_{0,n} =: H(n)$. As a first step, assume the above inequality were actually an equality; then
\[\mu = \lim_{k} H(2^kn)/(2^kn) = \lim_k H(n)/n = H(n)/n\ , \]
in which case we would have $H(n) = n \mu$.

In fact, we can give upper bounds on $H(n) - n\mu$ assuming just some weaker quantitative control on $H(2n) - 2 H(n)$. To illustrate this, assume that for some function $j:\mathbb{N} \to [0,\infty)$, one has
\begin{equation}\label{eq: j_condition}
H(2n) \geq 2H(n) - j(n).
\end{equation}
We then have the following lemma from \cite{HN01}.
\begin{lemma}\label{lem:hnjlem}
Suppose that $j$ satisfies $\Psi := \limsup_n j(2n)/j(n) < 2$, $j(n)/n \to 0$, and \eqref{eq: j_condition}. Then for all large $n$, one has $H(n) \leq n\mu + cj(n)$.
\end{lemma}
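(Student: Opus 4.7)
The plan is to exploit the hypothesis $H(2n) \geq 2H(n) - j(n)$ by iterating it along the dyadic sequence $n, 2n, 4n, \ldots$ and comparing the telescoped bound against the known limit $H(m)/m \to \mu$.

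First I would recall that $H$ is subadditive, so $H(m)/m$ converges to $\mu = \inf_m H(m)/m$; in particular $H(m)/m \geq \mu$ for every $m$. Dividing the given inequality $H(2m) \geq 2 H(m) - j(m)$ by $2m$ yields
\[
\frac{H(m)}{m} \leq \frac{H(2m)}{2m} + \frac{j(m)}{2m}.
\]
Applying this with $m = 2^{k-1} n$ for $k = 1,2,\ldots, K$ and telescoping gives
\[
\frac{H(n)}{n} \leq \frac{H(2^K n)}{2^K n} + \sum_{k=1}^{K} \frac{j(2^{k-1} n)}{2^k n}.
\]
Since $H(2^K n)/(2^K n) \to \mu$ as $K \to \infty$, sending $K \to \infty$ produces
\[
H(n) - n\mu \leq \frac{1}{2} \sum_{k=0}^{\infty} \frac{j(2^k n)}{2^k}.
\]

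The remaining task is to show this series is at most a constant multiple of $j(n)$ for all large $n$. Pick $\rho$ with $\Psi < \rho < 2$; by the definition of $\Psi = \limsup_n j(2n)/j(n)$, there exists $N_0$ such that $j(2m) \leq \rho\, j(m)$ whenever $m \geq N_0$. For $n \geq N_0$ this gives by induction $j(2^k n) \leq \rho^k j(n)$ for every $k \geq 0$, hence
\[
\sum_{k=0}^{\infty} \frac{j(2^k n)}{2^k} \leq j(n) \sum_{k=0}^{\infty} \left(\frac{\rho}{2}\right)^k = \frac{2}{2-\rho}\, j(n).
\]
Combining the two displays yields $H(n) - n\mu \leq c\, j(n)$ with $c = 1/(2-\rho)$ for all $n \geq N_0$, which is the claim.

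There is no real obstacle, since the argument is an elementary telescoping of the doubling inequality; the condition $\Psi < 2$ is exactly what makes the geometric sum converge, and the hypothesis $j(n)/n \to 0$ is not needed for the bound itself (it is present to ensure $c\, j(n)$ is genuinely a small correction, so the conclusion is informative). The only mild care is the justification $H(2^K n)/(2^K n) \to \mu$, which comes directly from the subadditive ergodic theorem applied to the sequence $H$, and the harmless implicit assumption that $j(n)$ is eventually positive (otherwise the doubling recursion forces $H(n) = n\mu$ from that point on and the conclusion is trivial).
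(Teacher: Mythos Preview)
Your proof is correct and is essentially the same argument as the paper's, just unpacked differently. The paper defines $a(n)=H(n)-cj(n)$ with $c>1/(2-\Psi)$ and verifies directly that $a(2n)\ge 2a(n)$ for large $n$, then concludes $a(n)/n\le \lim_m a(2^m n)/(2^m n)=\mu$; you instead telescope $H(n)/n \le H(2^K n)/(2^K n)+\tfrac{1}{2}\sum_{k<K} j(2^k n)/(2^k n)$ and bound the geometric series using $j(2m)\le\rho j(m)$ with $\rho\in(\Psi,2)$. Both routes yield the same constant $c=1/(2-\rho)$ for any $\rho>\Psi$, and both ultimately rest on iterating the doubling hypothesis along $n,2n,4n,\ldots$ and invoking $H(m)/m\to\mu$.

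One minor quibble: your closing remark that ``if $j(n)=0$ from some point on the doubling recursion forces $H(n)=n\mu$'' is not quite immediate---$j(n_0)=0$ only gives $H(2n_0)=2H(n_0)$ at that single scale, not along the whole dyadic ladder. But this is irrelevant to the main argument, since the very definition of $\Psi$ presupposes the ratios $j(2n)/j(n)$ make sense eventually, and your telescoping bound does not otherwise require positivity of $j$.
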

\begin{proof}
For $c>1/(2-\Psi)$, set $a(n) = H(n) - cj(n)$. Then for large $n$,
\[
a(2n) = H(2n) - cj(2n) \geq 2H(n) - 2cj(n)\left( \frac{1}{2c} +\frac{j(2n)}{2j(n)} \right) \geq 2a(n).
\]
So for $m \geq 1$,
\[
\frac{a(2^mn)}{2^mn} \geq \frac{a(n)}{n}.
\]
Taking $m \to \infty$, we obtain
\[
\frac{a(n)}{n} \leq \mu,
\]
or
\[
H(n) \leq n\mu + cj(n).
\]
\end{proof}

We would like to apply the above lemma to bound non-random fluctuations in FPP, taking $H(n) = h(nx)$ (recall $h(y) = \E T(0,y)$) and $\mu = \mu(x)$. By the last lemma, if we could show that $h(nx)$ is close to being a linear function of $n$, we can upper bound $h(nx)-n\mu(x)$.
This approach is most successful in the axis direction $e_1$ for symmetry reasons, and a version of this method is the  basis for several such bounds (see Zhang \cite{Zhang}, Rhee\cite{Rhee}, and related reflection trick in Alexander\cite{Alexander2}). One typically tries to use the fact that the passage time along geodesics is additive. Define the events
\[
A = \{T(0,ne_1) - h(ne_1) \geq -j(n)/3\} \cap \{T(ne_1,2ne_1) - h(ne_1) \geq -j(n)/3\}
\]
\[
B = \{T(0,2ne_1) \leq 2h(ne_1) + j(n)/3\}
\]
\[
C = \{\text{a geodesic from } 0 \text{ to } 2ne_1 \text{ passes through } ne_1\}.
\]
Then if $A \cap B \cap C$ has positive probability, on this event, one has $$T(0,2ne_1) = T(0,ne_1) + T(ne_1,2ne_1)$$ and therefore $$h(2ne_1) \geq h(ne_1) - j(n).$$

It is reasonable that $C$ has probability at least some power in $n$ given the heuristics for geodesic wandering, which would -- along with strong bounds on the probabilities of $A$ and $B$, provided by concentration results on the passage time -- complete the argument. Unfortunately, not enough is known about geodesic wandering to make this argument work in general directions. In practice, showing this positive probability statement has been done using a reflection trick, and unfortunately this typically does not work in non-axis directions.

\subsubsection{Asymptotic geodesicity and rate of convergence of the expected ball} 

Since the axioms of Lemma \ref{lem:hnjlem} are not easy to verify in FPP, one is led to generalize from subadditive sequences to some broader framework. Recalling that $h(x) : = \E T(0,x)$, it is obvious that $h$ has more than the structure of a subadditive sequence: it is subadditive as a function of $\Z^d$. Since the time constant and $h$ itself depend on the entire $d$-dimensional structure of the lattice, it is reasonable to hope that one could progress by considering the geometry induced on the lattice by $h$.

We can think of $h$ as a metric given by $h(x,y) = h(y-x)$, and although the metric space $(\mathbb{Z}^d, h)$ does not have geodesics, we would like to show that it has approximate midpoints. Recall that to implement the approach outlined  following Lemma \ref{lem:hnjlem}, we would like to show that $ne_1$ is nearly a midpoint between $0$ and $2ne_1$. A realization of Tessera is that we do not have to choose $ne_1$: it actually suffices to find skeletons of paths $0=x_0, x_1, \ldots, x_k = 2ne_1$ which are approximately geodesics. That is, we would like to give an upper bound for
\[
h(2ne_1) - \sum_{i=0}^{k-1} h(x_{i+1}-x_i),
\]
for some suitably chosen skeleton.

\begin{definition}
\label{def:saggin}
Let $N:\mathbb{R}_+ \to \mathbb{R}_+$ be an increasing function such that $\lim_{\alpha \to \infty} N(\alpha) = \infty$. A metric space $(X,d)$ is called SAG$(N)$ (strongly asymptotically geodesic) if there exists $\alpha_0 \geq 0$ such that for all integers $m \geq 1$, and for all $x,y \in X$ such that $d(x,y)/m \geq \alpha_0$, there exists a sequence $x=x_0, \ldots, x_m = y$ satisfying, for all $0 \leq i \leq m-1$,
\[
\frac{d(x,y)}{m} \left( 1 - \frac{1}{N(\alpha)} \right) \leq d(x_i,x_{i+1}) \leq \frac{d(x,y)}{m} \left( 1 + \frac{1}{N(\alpha)} \right),
\]
where $\alpha = d(x,y)/m$; and for all $x \in X$ and large enough $r$,
\[
B\left(x, \left(1 + \frac{1}{N(r)}r \right) \right) \subset [B(x,r)]_{\frac{6r}{N(r)}}.
\]
Here, $B(x,r)$ is the $r$-neighborhood of $x$ and for any $T>0$ and set $S \subset X$, $[S]_T$ is the $T$ neighborhood of $S$.
\end{definition}

We briefly try to motivate the above definition in the setting of $(\Z^d, h)$. Although $h$ is not geodesic, the passage time metric $T$ is. Given suitably strong concentration results, one can fix a realization of edge weights and hope to approximate the passage times between vertices in a geodesic by the expected passage time between them, up to a smaller order correction. Breaking this geodesic into a skeleton of $m$ roughly equally spaced vertices would then furnish the sequence $(x_i)$ in Definition \ref{def:saggin}. The second condition of Definition \ref{def:saggin} guarantees that there are no large ``jumps'' in $B(x,r)$ as one increases $r$, which is also natural to expect in FPP.

The SAG condition guarantees that the metric $d$ obeys a shape theorem with an explicit rate of convergence estimate, which we now describe. We say that an increasing function $\phi : \mathbb{R}_+ \to \mathbb{R}_+$ is sublinearly doubling if there exists a function $\eta : \mathbb{R}_+ \to \mathbb{R}_+$ such that $\lim_{\lambda \to \infty} \eta(\lambda)/\lambda = 0$ and for all $\lambda > 0$, $\phi(\lambda r) \leq \eta(\lambda)\phi(r)$. The following theorem (from \cite{Tessera}) gives an equivalence between asymptotic geodesicity in the form of $SAG(N)$ and a shape theorem with error term on the order $N^{-1}$:

\begin{theorem}
\label{thm:sagshape}
Let $\delta$ be some translation-invariant metric on $\mathbb{Z}^d$ and $\phi:\mathbb{R}_+\to [1,\infty)$ be an increasing, sublinearly doubling function. The following are equivalent.
\begin{enumerate}
\item There exists $c>0$ such that $\delta$ is SAG$(N)$ with
\[
N(\alpha) \geq c\phi(\alpha),
\]
for $\alpha$ large enough.
\item There exists a norm $\|\cdot\|$ on $\mathbb{R}^d$ and $C>0$ such that for all large enough $n$,
\[
B_{\|\cdot\|}\left(0, n - \frac{Cn}{\phi(n)} \right) \cap \mathbb{Z}^d \subset B_\delta(0,n) \subset B_{\|\cdot\|}\left(0, n + \frac{Cn}{\phi(n)} \right).
\]
\end{enumerate}
\end{theorem}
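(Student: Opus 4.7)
My approach is to show each implication using the structure already assumed. The key observation for starting direction $(1)\Rightarrow(2)$ is that because $\delta$ is translation-invariant, the triangle inequality gives $\delta(0,x+y)\leq \delta(0,x)+\delta(0,y)$, so the function $n\mapsto \delta(0,nx)$ is subadditive in $n$. I will therefore define the candidate norm by Fekete's lemma as $\|x\|:=\lim_n \delta(0,nx)/n=\inf_n \delta(0,nx)/n$ for $x\in\mathbb Z^d$, then extend by homogeneity to $\mathbb Q^d$. To pass to $\mathbb R^d$ I will use the second (``no jumps'') clause of SAG together with a standard Lipschitz-extension argument to conclude that $\|\cdot\|$ is a norm. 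This norm is automatically the correct candidate: one immediately obtains $\|x\|\leq \delta(0,x)$, which gives the inner ball inclusion in $(2)$ after rearrangement.

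The heart of $(1)\Rightarrow(2)$ is the outer inclusion, which requires showing $\delta(0,y)\leq \|y\|+C\|y\|/\phi(\|y\|)$ for $\|y\|$ large. For this I will iterate the first SAG clause at a dyadic sequence of scales. Writing $\alpha_k=\delta(0,2^k y)/2^k$, I apply SAG at scale $(0,2^{k+1}y)$ with $m=2$ to extract a midpoint $z\in \mathbb Z^d$ with
\[\max\{\delta(0,z),\delta(z,2^{k+1}y)\}\leq \alpha_{k+1}\bigl(1+1/N(\alpha_{k+1})\bigr).\]
The midpoint $z$ need not equal $2^k y$, but the second SAG clause forces $z$ to lie within $6\alpha_{k+1}/N(\alpha_{k+1})$ of a point of $B_\delta(0,\alpha_{k+1})$, and combined with triangle inequality on the norm this yields a recursion of the form $\alpha_k - \|y\| \leq (1 + C/\phi(2^k \|y\|))(\alpha_{k+1}-\|y\|)+C\alpha_{k+1}/\phi(\alpha_{k+1})$. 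The sublinear doubling property $\phi(\lambda r)\leq \eta(\lambda)\phi(r)$ with $\eta(\lambda)/\lambda\to 0$ lets me sum the resulting geometric series of errors, ending at the scale where $\alpha_k\to \|y\|$ and working backward to bound $\alpha_0=\delta(0,y)$.

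For $(2)\Rightarrow(1)$ the argument is more direct. Fix $x,y\in\mathbb Z^d$ with $\delta(x,y)=m\alpha$, $\alpha\geq \alpha_0$. I build the SAG skeleton by picking, for each $i=0,\ldots,m$, a lattice point $x_i$ at $\|\cdot\|$-distance at most $C\alpha/\phi(\alpha)$ from the Euclidean convex combination $x+(i/m)(y-x)$; such $x_i$ exist because the norm balls are non-degenerate and $\mathbb Z^d$ is $O(1)$-dense in $\mathbb R^d$. Applying hypothesis (2) at radius $\alpha(1+C'/\phi(\alpha))$ sandwiches $\delta(x_i,x_{i+1})$ between $\alpha$ and $\alpha(1\pm C''/\phi(\alpha))$, which is exactly the first SAG clause with $N(\alpha)\geq c\phi(\alpha)$. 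The no-jumps clause is immediate: if $w\in B_\delta(x,r(1+1/N(r)))$ then by hypothesis (2) applied twice (once to pass to the norm ball at radius $r(1+1/N(r))+Cr/\phi(r)$, once to go back), we find $w'\in B_\delta(x,r)$ with $\|w-w'\|$ of order $r/\phi(r)$, hence $\delta(w,w')$ comparable (by hypothesis (2) again), giving the required $6r/N(r)$-neighborhood condition after adjusting constants.

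\textbf{Main obstacle.} The delicate step is the iteration in $(1)\Rightarrow(2)$. Each invocation of SAG introduces a multiplicative factor $1+O(1/\phi(\alpha_k))$, and these factors must be controlled uniformly along a dyadic tower of scales while simultaneously keeping track of the fact that the midpoint $z$ produced by SAG is not a multiple of $y$---so one must switch between comparing $z$ to $2^ky$ (via the no-jumps clause) and comparing norms (via subadditivity of $\|\cdot\|$). The sublinear doubling of $\phi$ is precisely what is needed to sum the resulting error series, so verifying that the induction closes with the sharp rate $C\|y\|/\phi(\|y\|)$---rather than merely a weaker rate such as $\|y\|/\sqrt{\phi(\|y\|)}$---is where the bookkeeping must be done carefully.
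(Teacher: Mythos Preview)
Your $(2)\Rightarrow(1)$ sketch is plausible and is the natural argument; the paper (which only sketches $(1)\Rightarrow(2)$, following \cite{Tessera}) does not discuss that direction. A small slip first: from $\|x\|=\inf_n \delta(0,nx)/n\leq \delta(0,x)$ you get $B_\delta(0,n)\subset B_{\|\cdot\|}(0,n)$, which is the \emph{outer} inclusion; the inequality $\delta(0,y)\leq \|y\|+C\|y\|/\phi(\|y\|)$ that you correctly flag as the main work is what is needed for the \emph{inner} inclusion.

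The real gap is in your dyadic iteration for $(1)\Rightarrow(2)$. Applying SAG with $m=2$ to $(0,2^{k+1}y)$ produces a midpoint $z$ with $\delta(0,z)$ and $\delta(z,2^{k+1}y)$ each close to $\delta(0,2^{k+1}y)/2$, but nothing in either SAG clause relates $z$ to the specific point $2^k y$. The no-jumps clause only says $z$ is $\delta$-close to \emph{some} point of the ball of the appropriate radius, not to $2^k y$, which could sit anywhere on that sphere. So there is no route from the SAG data to a bound on $\alpha_k=\delta(0,2^k y)/2^k$, and the recursion you write does not follow. (Separately, even granting the recursion as written, your additive error term $C\alpha_{k+1}/\phi(\alpha_{k+1})\approx C\|y\|/\phi(\|y\|)$ is independent of $k$, so the sum over the dyadic tower diverges.)

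The paper's sketch avoids tracking individual points and works with sets: SAG plus the no-jumps clause yields $d_H\bigl((mn)^{-1}B_\delta(0,n)^m,(mn)^{-1}B_\delta(0,mn)\bigr)\leq C/\phi(n)$, where $B^m$ is the $m$-fold Minkowski sum. For a convex set $A$ one has $A^m=mA$, so if $n^{-1}B_\delta(0,n)$ were convex the rescaled balls would be Cauchy in $d_H$ with the right rate. Non-convexity is handled by an inductive comparison of $B_\delta(0,n)$ with its convex hull, using Carath\'eodory's theorem in $\mathbb R^d$ to keep the Minkowski-sum error uniform in $m$. This convex-hull step is the missing ingredient that a pointwise recursion along multiples of a fixed $y$ cannot supply.
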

A version of SAG with exponent $1/2$ was shown  for FPP in \cite{Tessera}. Namely, if the edge weights have finite exponential moments and $F(0)<p_{c}(d)$ then $h$ satisfies SAG$(N)$ for $$N(\alpha) = c(\alpha/(\log \alpha))^{1/2},$$
for some $c>0$. Therefore there exist  $C>0$ and $n_{0}\geq 1$ such that for any $n \geq n_{0}$,
\begin{equation}
\label{eq:tesrate}
(n -C(n\log n)^{1/2})\mathcal B_{\nu}\cap \mathbb Z^{d} \subseteq \bar B(0,n)\subseteq (n + C(n\log n)^{1/2})\mathcal B_{\nu}, \end{equation}
where $\bar B(0,n) := \{ x \in \mathbb Z^{d}: h(x)\leq n \}.$

Let us conclude this subsection by giving some account of why Theorem \ref{thm:sagshape} should hold; we will freely make simplifying assumptions to avoid getting bogged down. We note that our simiplifications obscure some of the technical triumph of \cite{Tessera} and direct the reader there for a full account. We work in the metric $\delta$. Let $m$ and $n$ be arbitrary positive integers, and recall that the convergence in Theorem \ref{thm:sagshape} is convergence under the Hausdorff metric $d_H$, where for two compact sets $A$ and $B$
\[d_H(A,B) = \inf\{r > 0: A \subseteq [B]_r, \, B \subseteq [A]_r\}\ . \]
We will try to show that $d_H(n^{-1}B(0,n),\,(mn)^{-1}B(0,mn))$ is small; taking $m$ to infinity gives a corresponding bound for $d_H(n^{-1}B(0,n),\,B_{\|\cdot\|}(0,n)$.

A natural way to use the SAG assumption is to try to build the ball $B(0,mn)$ out of $m$ copies of the ball $B(0,n)$. This is because, given $x$ at distance $mn$ from $0$, we can find a skeleton sequence of vertices $(x_i)$ as in Definition \ref{def:saggin} each at distance $\sim n$ from each other. This reasoning may be made rigorous as in the following lemma, adapted from \cite[Lemma 5.1]{Tessera}:
\begin{lemma}
\label{lem:tessum}
  There exists a $C$ such that, for all $m$ and $n$ positive integers,
  \[d_H\left((mn)^{-1}B(0,n)^m,\, (mn)^{-1}B(0,mn) \right) \leq C / \phi(n)\ . \]
\end{lemma}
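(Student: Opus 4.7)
My plan is to establish the unnormalized bound
\[
d_H(B(0,n)^m,\ B(0,mn)) \leq Cmn/\phi(n),
\]
where $B(0,n)^m$ denotes the $m$-fold Minkowski sum, and then divide by $mn$. One inclusion will be immediate and exact: for $z_1,\ldots,z_m \in B(0,n)$, applying translation invariance of $\delta$ repeatedly gives $\delta(0, z_1+\cdots+z_k) \leq \delta(0, z_1+\cdots+z_{k-1}) + \delta(0,z_k)$, so by induction $\delta(0, \sum_i z_i) \leq mn$. Thus $B(0,n)^m \subseteq B(0,mn)$ contributes nothing to the Hausdorff distance.

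The real work is the reverse inclusion: given $x \in B(0,mn)$, I must produce $z_1,\ldots,z_m \in B(0,n)$ with $\delta(x, \sum_i z_i)$ small. If $\delta(0,x)\leq n$, take $z_1=x$ and $z_i=0$ for $i>1$, which is exact. Otherwise set $m' = \lceil \delta(0,x)/n \rceil \leq m$, so that $\alpha := \delta(0,x)/m' \in (n/2, n]$. For $n$ large enough that $\alpha \geq \alpha_0$, I will apply the first SAG axiom to get a skeleton $0=x_0, x_1, \ldots, x_{m'} = x$ with $\delta(x_{i-1}, x_i) \leq \alpha(1 + 1/N(\alpha))$. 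Setting $y_i = x_i - x_{i-1}$, translation invariance gives $y_i \in B(0, \alpha(1+1/N(\alpha)))$, so the second SAG axiom produces $z_i \in B(0,\alpha) \subseteq B(0,n)$ with $\delta(y_i, z_i) \leq 6\alpha/N(\alpha)$. Padding with $z_i = 0$ for $i > m'$, the inequality $\delta(a+b, a'+b') \leq \delta(a,a')+\delta(b,b')$ (which follows from translation invariance and the triangle inequality) combined with $\sum_i y_i = x$ gives
\[
\delta\Bigl(x, \sum_{i=1}^m z_i\Bigr) \leq \sum_{i=1}^{m'} \delta(y_i,z_i) \leq \frac{6m'\alpha}{N(\alpha)} = \frac{6\delta(0,x)}{N(\alpha)} \leq \frac{6mn}{N(\alpha)}.
\]

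The final step is to convert $1/N(\alpha)$ into $1/\phi(n)$. Using $N(\alpha) \geq c\phi(\alpha)$ together with the sublinear doubling of $\phi$, I get $\phi(n) \leq \eta(2)\phi(n/2) \leq \eta(2)\phi(\alpha)$ since $\alpha \geq n/2$, hence $N(\alpha) \geq (c/\eta(2))\phi(n)$ and $\delta(x, \sum_i z_i) \leq (6\eta(2)/c)\cdot mn/\phi(n)$. For small $n$ (where the SAG thresholds $\alpha_0$ and ``large enough $r$'' fail), I will use the trivial bound: translation invariance gives $\mathrm{diam}(B(0,mn)) \leq 2mn$, so the rescaled Hausdorff distance is at most $2$; since $\phi$ is increasing, $\phi(n)$ is bounded on this range and the bound $2 \leq C/\phi(n)$ holds after enlarging $C$.

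The main obstacle is the interplay between the scale $\alpha$ appearing in the SAG skeleton and the scale $n$ appearing in the target bound. The skeleton condition forces $\alpha \in [\alpha_0, n]$, and the natural error estimate is governed by $N(\alpha)$ rather than $N(n)$; when $\delta(0,x) \ll mn$, $\alpha$ can be much smaller than $n$ and these quantities are \emph{a priori} incomparable. The resolution is twofold: first, choose $m'$ proportional to $\delta(0,x)/n$ (not $m$) so as to pin $\alpha$ into $(n/2,n]$; second, invoke sublinear doubling of $\phi$ to compare $\phi(\alpha)$ with $\phi(n)$ on that range. Both uses of the SAG hypothesis—and the sublinear doubling of $\phi$—are essential to close the argument.
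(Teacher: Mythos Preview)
Your proof is correct. Note that the paper itself does not prove this lemma; it is stated as an adaptation of \cite[Lemma~5.1]{Tessera} and invoked without argument in the surrounding sketch. Your approach—using the first SAG axiom to build an $m'$-step skeleton with $m'=\lceil \delta(0,x)/n\rceil$ so that the step size $\alpha$ lands in $(n/2,n]$, then invoking the second SAG axiom to push each increment $y_i$ back into $B(0,\alpha)\subseteq B(0,n)$, and finally comparing $\phi(\alpha)$ with $\phi(n)$ via sublinear doubling—is exactly the natural reading of the SAG hypotheses and is in line with Tessera's original argument. The treatment of small $n$ by the trivial diameter bound is also fine, since $\phi$ is bounded on any bounded interval.
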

In the above, $AB = \{a + b: \, a \in A,\, b \in B \}$ and $A^2 = AA,$ etc. Using the fact that $d_H$ is a metric, we see
\begin{align}
d_H(n^{-1}B(0,n),\,(mn)^{-1}B(0,mn)) \leq &d_H(n^{-1}B(0,n),\,(mn)^{-1}B(0,n)^m) \nonumber\\
+ &d_H((mn)^{-1}B(0,n)^m,\,(mn)^{-1}B(0,mn))\ ,\label{eq:tessum2}
\end{align}
and the second term on the right-hand side is small by Lemma \ref{lem:tessum}.

In other words, we see that $B(0,mn)$ is approximately equal to $m$ copies of $B(0,n)$ strung together, and it remains to show that $m B(0,n)$ is also approximately equal to this set. Note that if $B(0,n)$ were a convex subset of $\mathbb{R}^n$, then $B(0,n)^m = m B(0,n)$, and we would be done. So the only way this line of attack could fail would be if $B(0,n)$ were irregular enough to be significantly different from its convex hull.

This complication is dealt with in \cite{Tessera} by inductively controlling the Hausdorff distance between balls $B(0,n)$ and their convex hulls $\widehat B(0,n)$. A crucial fact used is that in $\mathbb{R}^d$, Carath\'eodory's Theorem for convex hulls guarantees that $d_H(B(0,n)^m, \widehat B(0,n)^m)$ can be bounded by a uniform constant multiple of $d_H(B(0,n), \widehat B(0,n))$.

\subsubsection{Alexander's method \label{sec:alexmethod}}

The first upper bounds on non-random fluctuations go back to the work of Kesten \cite[Theorem 1]{Kesten}, who established the bound  
\[
\mathbb{E}T(0,ne_{1}) \leq n\mu(e_{1}) + C n^{5/6} \log n.
\] 
The right side has since been improved to $n\mu(e_1) + C(n\log n)^{1/2}$; see \eqref{eq:tesrate} and \eqref{nonrandomfluctuations}. 
An extraordinary contribution was the theory developed by Alexander in his famous paper \cite{Alexander}. It concerns the rate of convergence of a (deterministic) nonnegative function $h$ defined on the lattice $\mathbb Z^{d}$ to its limit $g$. That is, we assume 
$$h(x+y) \leq h(x) + h(y), \quad x,y \in \mathbb Z^{d},$$
and set 
$$g(x):= \lim_{n} h(nx)/n.$$
Clearly, $g(x)\leq h(x)$ for all $x\in \mathbb Z^{d}$.  Alexander's theory has a very geometric flavor and explores the fact that the function $h$ is defined on the whole space $\mathbb Z^{d}$; $h$ is not only a (one-dimensional) subadditive sequence. In his point of view, one way to guarantee an upper bound on the rate of convergence of $h(nx)/n$ is to request $h$ to have ``good skeleton paths'' towards $nx$ in $\mathbb Z^{d}$. It turns out that this condition is possible to verify in FPP and other models and suitable to bound the rate of convergence.     

To describe his approach,  let $0=v_{0}, v_{1}, v_{2}, \ldots, v_{k} =x$ be points in $\mathbb Z^{d}$. We will soon think of the $v_{i}'s$ as the skeleton of a path from $0$ to $x$. From subadditivity, we have
$$h(x)=h\bigg(\sum_{i=1}^{k} (v_{i}-v_{i-1})\bigg)\leq \sum_{i=1}^{k} h\bigg(v_{i}-v_{i-1}\bigg).$$
Imagine for a moment that we can choose these $v_{i}$'s in such a way that
\begin{equation}\label{eq:firsttryAlex}
h(v_{i}-v_{i-1}) = g(v_{i}-v_{i-1}) + o(|x|^{a}),
\end{equation}
 for some $a<1$. Well, this would lead to 
\begin{equation}\label{eq:thisisnotgood}
h(x)-g(x) \leq \sum_{i=1}^{k}\big(g(v_{i}-v_{i-1})\big) - g(x) + ko(|x|^{a}).
\end{equation}
 Unfortunately, \eqref{eq:thisisnotgood} is not enough for our purpose as we still need to control the difference $\sum_{i=1}^{k}\big(g(v_{i}-v_{i-1})\big) - g(x).$ Moreover, in a first look, it seems hard to actually find $v_{i}$'s satisfying \eqref{eq:firsttryAlex}.
 
To overcome these  problems, we will do two modifications. First, instead of requesting \eqref{eq:firsttryAlex}, we will replace $g$ by the projection $g_{x}$ of $g$ on the line that goes through $0$ and $x$. This projection is linear and satisfies $g_{x}(nx) = g(nx)$ for any $n\in \mathbb N$. In Alexander's description, the value $g_{x}(y)$ is the amount of progress toward $x$ made by a vector increment of $y$. The difference $h(y)-g_{x}(y)$ should be thought as the error or the inefficiency associated with such an increment. 
An increment $y$ will be called good if $y$ satisfies $h(y) = g_{x}(y) + C|x|^{a}$ for a given $C>0$.

This nice change from $g$ to $g_{x}$ helps us to get rid of the sum in the right side of \eqref{eq:thisisnotgood}. However, we still need to find the skeleton $v_{i}$'s of good increments. The main idea and contribution of \cite{Alexander} is to come up with an assumption on $h$ that allow us to roughly do so. It will suffice to verify the weaker condition that $x$ (divided by some constant larger than one) is in the convex hull of good increments.  This weaker condition can be verified in FPP and many other models. We will make this statement precise in the next definition.

 Let $\Phi$ denote the set of all positive nondecreasing functions on $(1,\infty)$ and fix $a<1$. For $\phi \in \Phi$, define $Q_{x}(a,\phi)$ be the set of good increments with exponent $a$, correction factor $\phi$ in direction $x$, that is,
 
$$ Q_{x}(a,\phi) := \bigg \{ y \in \mathbb Z^{d} : h(y) \leq g_{x}(y) + |x|^{a} \phi(|x|), \; g_{x}(y) \leq g(x) \bigg \}. $$
The condition $g_{x}(y) \leq g(x)$ is to avoid ``overshooting'' the increment. For a collection of points in $\mathbb R^{d}$, let $\text{Co}(\cdot)$ denote the convex hull. 

\begin{definition}[CHAP]\label{CHAP} The function $h$ satisfies the convex hull approximation property (or CHAP) with exponent $a$ and correction $\phi$ if there exists $L>1$ such that 
$$x/\alpha \in \text{Co}(Q_{x}(a,\phi)) \text{ for some } \alpha \in [1,L],$$
for all $x \in \mathbb Q^{d}$ with $|x|$ sufficiently large.
\end{definition}
 
We will discuss in a second how one would verify that a subadditive function $h$ satisfies CHAP and why it is natural to work with Definition \ref{CHAP}. The next definition is exactly the control on the rate of convergence of $h$.
 
\begin{definition}[GAP] For $a>0$ and $\phi \in \Phi$ we say that $h$ satisfies the general approximation property (GAP) with exponent $a$ and correction factor $\phi$ if there exist $M>1$ and $C>0$ such that for all $x\in \mathbb Z^{d}$ and $|x|\geq M$, 
$$g(x)\leq h(x) \leq g(x) + C |x|^{a} \phi(|x|).$$
\end{definition}

Alexander's main result is the following. 

\begin{theorem}[Theorem 1.8 \cite{Alexander}]\label{thm:AlexGAP} Suppose $h$ is a nonnegative subadditive function on $\mathbb Z^{d}$ with $h(x)\leq r |x|$ for all $x$ and some $r>0$. If $h$ satisfies CHAP with exponent $a>0$ and correction $\phi$, then it satisfies GAP with exponent $a$ and correction $\phi$.
\end{theorem}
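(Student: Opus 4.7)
The inequality $g(x) \leq h(x)$ is immediate from subadditivity: $h(nx) \leq n h(x)$ forces $g(x) = \lim_n h(nx)/n \leq h(x)$. The substance of the theorem is therefore the reverse bound $h(x) \leq g(x) + C|x|^a\phi(|x|)$ for $|x|$ large.

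My plan is to combine CHAP with a Carath\'eodory decomposition to write $x$ as an almost-integer positive combination of good increments, and then close the estimate using subadditivity together with the linearity of $g_x$. Fix $x \in \mathbb{Z}^d$ with $|x|$ large. By CHAP, pick $\alpha \in [1,L]$ with $x/\alpha \in \mathrm{Co}(Q_x(a,\phi))$; by Carath\'eodory, select $y_1,\ldots,y_{d+1} \in Q_x(a,\phi)$ and weights $\lambda_i \geq 0$ with $\sum_i \lambda_i = 1$ and $\sum_i \lambda_i y_i = x/\alpha$. Setting $n_i := \lfloor \alpha \lambda_i \rfloor$, the residual $e := x - \sum_i n_i y_i$ lies in $\mathbb{Z}^d$ with $|e| \leq (d+1)\max_i |y_i|$, while $\sum_i n_i \leq \alpha \leq L$. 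Iterating subadditivity and using the defining bound $h(y_i) \leq g_x(y_i) + |x|^a\phi(|x|)$,
\[
h(x) \leq h(e) + \sum_i n_i h(y_i) \leq h(e) + \sum_i n_i g_x(y_i) + L\,|x|^a\phi(|x|).
\]
By linearity of $g_x$ and the identity $g_x(x)=g(x)$, $\sum_i n_i g_x(y_i) = g_x(x - e) = g(x) - g_x(e)$, yielding the master estimate
\[
h(x) - g(x) \leq \bigl(h(e) - g_x(e)\bigr) + L\,|x|^a\phi(|x|).
\]

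The main obstacle is bounding the residual term $h(e) - g_x(e)$, which requires taming $|e|$. The crude bounds $h(\,\cdot\,)\leq r|\,\cdot\,|$ and $|g_x(\,\cdot\,)|\leq (g(x)/|x|)|\,\cdot\,|\leq r|\,\cdot\,|$ give $h(e) - g_x(e) \leq 2r|e|$, so it suffices to arrange $|e| \lesssim |x|^a\phi(|x|)$. This can be done provided the Carath\'eodory representatives $y_i$ in the CHAP decomposition are selected of size at most $\theta |x|$ for some fixed $\theta < 1$, and one then iterates the master estimate on the successive residuals. Since each step replaces the working scale by $\theta$ times its predecessor, the iteration terminates in $O(\log |x|)$ steps (stopping once the residual falls below the CHAP threshold $M$, at which point $h(e)-g_x(e)$ is absorbed into the constant), and the correction terms $\sum_k L(\theta^k|x|)^a\phi(\theta^k|x|)$ form a geometric-type series that sums to $C|x|^a\phi(|x|)$ by the monotonicity of $\phi$. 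The non-trivial input needed beyond subadditivity and the linearity of $g_x$ is thus a quantitative ``contraction'' built into the choice of CHAP representatives---automatic in FPP and the other target applications, where good skeleton increments are constructed at a strictly smaller scale than $|x|$.
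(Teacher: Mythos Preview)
Your master estimate $h(x)-g(x)\le \bigl(h(e)-g_x(e)\bigr)+L\,|x|^a\phi(|x|)$ is correct, but the iteration you propose on residuals does not close. First, CHAP gives no size control on the Carath\'eodory representatives beyond $g_x(y_i)\le g(x)$; nothing forces $|y_i|\le\theta|x|$ with $(d{+}1)\theta<1$, and this is \emph{not} automatic in FPP---the skeleton increments verifying CHAP (Lemma~\ref{lem:whataniceone}, Proposition~\ref{prop:daAle}) are only required to lie in $Q_x$, which allows $|y_i|$ comparable to $|x|$. Second, even granting a contraction, each residual $e_k$ points in a new direction, so re-applying the master estimate to $e_k$ uses $g_{e_k}$ rather than $g_x$. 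Chaining the steps forces you to insert the nonnegative discrepancy $g(e_k)-g_{x}(e_k)$ (recall $g_x\le g$), and these terms are generically of order $r|e_k|$; their sum is $\sum_k r|e_k|\asymp r|x|$, exactly the trivial bound. Your accounting $\sum_k L(\theta^k|x|)^a\phi(\theta^k|x|)$ omits this contribution entirely.

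The idea you are missing is Alexander's \emph{rescaling}: apply CHAP to $x/q$ for a large rational $q$, not to $x$ itself. Because $x/q$ is parallel to $x$, one has $g_{x/q}=g_x$ and no direction drift occurs. The increments $y_{q,i}\in Q_{x/q}$ now satisfy $g_x(y_{q,i})\le g(x)/q$, and one uses $\sim\alpha q\le Lq$ integer copies of them, producing an accumulated error $\lesssim q\cdot(|x|/q)^a\phi(|x|/q)\le q^{1-a}|x|^a\phi(|x|)$. The single remainder $x_{(r)}$ is then handled not by further spatial iteration but by a \emph{weaker} hypothesis GAP$(b,\phi)$ with $b>a$; optimizing $q$ upgrades GAP$(b)$ to GAP$(b')$ with $b'=b/(1+b-a)<b$. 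Starting from the trivial GAP$(1,\phi)$ (which is just $h(x)\le r|x|$) and iterating this exponent improvement drives $b$ to the fixed point $a$ of $b\mapsto b/(1+b-a)$. The correct induction is on the GAP exponent, not on spatial residuals.
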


We will now sketch the proof of Theorem \ref{thm:AlexGAP}. The following fixed point argument is the core of the proof.
\vspace{0.2cm}

\textsc{Step 1}. Suppose $h$ satisfies CHAP$(a,\phi)$ and GAP$(b,\phi)$ with $b \in (a,1]$. Then $h$ satisfies GAP$(b',\phi)$ with $b'=b/(1+b-a)<b$. 

\vspace{0.2cm}

\textit{Sketch of the proof of Step 1}. Suppose $|x|$ is large. Take $q \in \mathbb Q$ also large but smaller than $|x|$. Applying CHAP$(a,\phi)$ to $x/q$ we can express $x/(\alpha q)$ as a convex combination of $d+1$ good increments $y_{q,i}$, $1 \leq i \leq d+1$. Now, use the increments $y_{q,i}$'s to decompose 
$$x = \sum_{i=1}^{d+1} \alpha_{q,i} y_{q,i} + x_{(r)}$$ 
with $\alpha_{q,i} \in \mathbb N \cup \{0\}$ and some remainder $x_{(r)}$. By subadditivity and the fact that $g(x)=g_{x}(x)$, we have
\begin{equation}\label{eq:Alexanderequivalence}
\begin{split}
h(x)-g(x) &\leq \sum_{i=1}^{d+1}\alpha_{q,i}h(y_{q,i}) + h(x_{(r)}) - g_{x}(x-x_{(r)}) + g_{x}(x_{(r)})\\ &\leq \sum_{i=1}^{d+1}\alpha_{q,i} \big(h(y_{q,i}) - g_{x}(y_{q,i})\big) + h(x_{(r)})-g_{x}(x_{(r)}).
\end{split}
\end{equation}
Our choice of the $y_{q,i}$'s implies that the sum on the right side of  \eqref{eq:Alexanderequivalence} is bounded by 
$$ q^{1-a}|x|^{a}\phi(|x|),$$
while the remainder term $h(x_{(r)}) - g_{x}(x_{(r)})$ is controlled using GAP$(b,\phi)$. Optimizing over $q$ gives the desired result. (The optimal $q$ turns out to be of the order of a small power of $|x|$.) The details can be found in \cite[Proposition 2.1]{Alexander}.

\vspace{0.2cm}

\textsc{Step 2}.  By hypothesis, $h$ satisfies GAP$(1,\phi)$ with $\phi =1$. If $a\geq 1$, there is nothing to prove. If $a<1$, we can iterate \textsc{Step 1}  to obtain GAP$(a, \phi)$ as the fixed point of the function $x\to x/(1+x-a)$ is at $a$. 

Let us now end this subsection by saying a few words on why it is natural to expect CHAP and how one proves CHAP$(a,\phi)$ in FPP. The first ingredient is the following geometric lemma, that defines the skeleton $\{v_{i}\}$ discussed before. The proof is short, so we include it here.

\begin{lemma}[Lemma 1.6, \cite{Alexander}]\label{lem:whataniceone}
Let $h$ be a nonnegative subadditive function in $\mathbb Z^{d}$ and $c>1$. Suppose that for each 
$x\in \mathbb Q^{d}$, there exist $n\geq 1$, a lattice path $\Gamma$ from $0$ to  $nx$ and a sequence of sites $0= v_{0}, v_{1}, \ldots, v_{m}=nx$ in $\Gamma$ such that $m\leq cn$ and $v_{i}-v_{i-1} \in Q_{x}(a,\phi)$ for all $1 \leq i \leq m$. Then $h$ satisfies CHAP$(a,\phi)$.
\end{lemma}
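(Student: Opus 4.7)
The plan is to exhibit $x/\alpha$ explicitly as a convex combination of the telescoping increments handed to us by the hypothesis, and then bound the scaling constant $\alpha$ using the two defining conditions of $Q_x(a,\phi)$.

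First I would fix $x \in \mathbb{Q}^d$ with $|x|$ sufficiently large and apply the hypothesis to produce $n \geq 1$, a lattice path $\Gamma$ from $0$ to $nx$, and sites $0 = v_0, v_1, \ldots, v_m = nx$ on $\Gamma$ with $m \leq cn$ and each increment $y_i := v_i - v_{i-1} \in Q_x(a,\phi)$. Since the sites telescope along $\Gamma$, one has $\sum_{i=1}^m y_i = nx$, and therefore
$$\frac{x}{m/n} \;=\; \frac{1}{m}\sum_{i=1}^m y_i,$$
which exhibits $x/\alpha$, with $\alpha := m/n$, as a convex combination of points of $Q_x(a,\phi)$, hence as an element of $\mathrm{Co}(Q_x(a,\phi))$.

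It remains to check that $\alpha$ lies in $[1,L]$ for some $L>1$ independent of $x$. The bound $\alpha \leq c$ is immediate from $m \leq cn$. For the lower bound I would use that the second condition in the definition of $Q_x(a,\phi)$ gives $g_x(y_i) \leq g(x)$ for each $i$; since $g_x$ is a linear functional and agrees with $g$ on the ray through $x$,
$$n g(x) \;=\; g_x(nx) \;=\; \sum_{i=1}^m g_x(y_i) \;\leq\; m\, g(x),$$
yielding $m \geq n$ and hence $\alpha \geq 1$, provided $g(x) > 0$.

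In the degenerate case $g(x)=0$, notice that $0 \in Q_x(a,\phi)$ (both defining inequalities reduce to $0 \leq 0$ and $0 \leq |x|^a\phi(|x|)$), so one can simply pad the convex combination with $\max(0,\, n-m)$ copies of the zero increment; this preserves the telescoping identity, raises the number of terms to $\max(m,n) \leq cn$, and forces $\alpha \in [1,c]$. Taking $L = c > 1$ then verifies CHAP$(a,\phi)$. The only real obstacle is this bookkeeping for the degenerate case; the rest is immediate once one observes that the hypothesis already hands over the convex combination on a silver platter.
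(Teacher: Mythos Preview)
Your proof is correct and essentially identical to the paper's: both write $nx$ as the sum of the increments, set $\alpha=m/n$, bound $\alpha\le c$ from the hypothesis, and obtain $\alpha\ge 1$ by applying the linear functional $g_x$ and using $g_x(y_i)\le g(x)$. The only cosmetic difference is that the paper groups the increments by their value $y$ (writing $a(n,y)$ for the multiplicity) rather than summing over the index $i$; you are also slightly more careful in treating the degenerate case $g(x)=0$, which the paper's proof silently skips when it cancels $g(x)$ from both sides.
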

\begin{proof}
 We need to find $\alpha \geq 1$ so that 
$$x/\alpha \in \mathrm{Co}(Q_{x}(a,\phi)).$$
Let $a(n,y)$ be the number of indices such that $v_{i}-v_{i-1} =y$.  Then, by hypothesis, 
$$ nx = \sum_{y\in Q_{x}(a,\phi)} a(n,y) y$$
and applying $g_{x}$ to the equation above
$$ ng(x) =   \sum_{y\in Q_{x}(a,\phi)} a(n,y) g_{x}(y) \leq g(x) \sum_{y\in Q_{x}(a,\phi)} a(n,y) $$
so 
$$ n \leq \sum_{y\in Q_{x}(a,\phi)} a(n,y). $$
On the other hand, by hypothesis, $$n^{-1}\sum_{y\in Q_{x}(a,\phi)} a(n,y) \leq c.$$ 
Taking $\alpha = n^{-1}\sum_{y\in Q_{x}(a,\phi)} a(n,y)$ and $L=c$ gives us CHAP.
\end{proof}

Now look at the $m+1$ sites in the Lemma above. It is easy to find inductively a sequence of marked sites for any path $\Gamma$ from $0$ to $nx$. One starts with $v_{0}=0$ and chooses $z_{i}$ as the first site in $\Gamma$ after $v_{i-1}$ such that $z_{i}-v_{i-1} \notin Q(a,\phi)$. Now take $v_{i}$ as the last vertex visited by $\Gamma$. We call the sequence of marked sites, obtained from a self-avoiding path in this way, the $Q_{x}(a,\phi)$ skeleton of $\Gamma$.  The difficulty is to control the number of vertices in a $Q_{x}$ skeleton. This is the role of the next proposition, which combined with Lemma \ref{lem:whataniceone} ends the proof of CHAP$(1/2,\log |x|)$. 

\begin{proposition}\label{prop:daAle}
Assume $F(0)<p_{c}(d)$ and $\mathbb E e^{\lambda \tau_{e}} <\infty$ for some $\lambda>0$. Then there exists $C>0$ so that if $|x|>C$ then for sufficiently large $n$ there exists  a lattice path from $0$ to $nx$ with $Q_{x}(1/2,\log |x|)$-skeleton of $2n+1$ or fewer vertices.
\end{proposition}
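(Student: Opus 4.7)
The plan is to take $\Gamma$ to be a geodesic from $0$ to $nx$ and to control the size of its greedy $Q_x(1/2,\log|x|)$-skeleton by combining the exponential concentration of Theorem~\ref{thm:futurereference} with the geodesic-length bound of Lemma~\ref{lem: geo_bound}. The argument is naturally inductive in the spirit of Alexander's fixed-point scheme: as a working hypothesis I would assume GAP$(a_0,\phi_0)$ has been established for some $a_0\in(1/2,1)$ with $\phi_0$ a slowly varying correction, bootstrapped from the trivial GAP$(1,1)$ (which is just $h(y)\leq C|y|$) via Step~1 in the proof of Theorem~\ref{thm:AlexGAP}.

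First I would fix a high-probability event on which (i) the two-sided concentration $|T(u,w)-h(w-u)|\leq C_1\sqrt{|w-u|\log|w-u|}$ holds uniformly over all pairs $(u,w)$ in a box of side $\asymp n|x|$ around the origin (obtained by a union bound over the $O((n|x|)^{2d})$ such pairs, summable against the stretched-Gaussian tails of Theorem~\ref{thm:futurereference}), and (ii) the geodesic from $0$ to $nx$ uses at most $C_2 n|x|$ edges. On this event, let $\Gamma$ be a geodesic and $v_0=0,v_1,\ldots,v_m=nx$ its greedy $Q_x$-skeleton. Linearity of $g_x$ and $\sum(v_i-v_{i-1})=nx$ give $\sum_{i=1}^m g_x(v_i-v_{i-1})=ng(x)$. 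Partition the indices into $\mathcal I_L=\{i:g_x(v_i-v_{i-1})\geq g(x)/2\}$ and $\mathcal I_S$ its complement. Each $\mathcal I_L$-step has $|v_i-v_{i-1}|\gtrsim|x|$, so the length bound forces $|\mathcal I_L|=O(n)$ a priori.

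For $i\in\mathcal I_S$, the ``overshoot'' case $g_x(z_i-v_{i-1})>g(x)$ of the exit condition $z_i-v_{i-1}\notin Q_x$ is impossible for $|x|$ large, since $z_i-v_i$ is a single lattice edge and hence shifts $g_x$ by only $O(g(x)/|x|)\ll g(x)/2$. Thus the ``inefficiency'' case $h(z_i-v_{i-1})>g_x(z_i-v_{i-1})+|x|^{1/2}\log|x|$ must hold, which subadditivity and bounded edge-weights upgrade to $h(v_i-v_{i-1})\geq g_x(v_i-v_{i-1})+\tfrac12|x|^{1/2}\log|x|$, and the concentration event turns this into $T(v_{i-1},v_i)\geq g_x(v_i-v_{i-1})+\tfrac13|x|^{1/2}\log|x|$. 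Summing against the identity $\sum T(v_{i-1},v_i)=T(\Gamma)\leq h(nx)+C\sqrt{n|x|\log(n|x|)}$, inserting the inductive GAP bound $h(nx)-ng(x)\leq(n|x|)^{a_0}\phi_0(n|x|)$, and applying Cauchy--Schwarz against $\sum|v_i-v_{i-1}|\leq C_2 n|x|$ for the residual concentration errors on the $i\notin\mathcal I_S$ segments, yields
\[
|\mathcal I_S|\,|x|^{1/2}\log|x|\;\leq\; C(n|x|)^{a_0}\phi_0(n|x|)+C\sqrt{n|x|\log(n|x|)}+C\sqrt{mn|x|\log|x|}.
\]
With the a priori bound $m=O(n)$, the right-hand side divided by $|x|^{1/2}\log|x|$ is $o(n)$ provided $|x|$ (and hence $n$) is sufficiently large, so $|\mathcal I_S|=o(n)$. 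Refining $|\mathcal I_L|\leq 2n+o(n)$ via $\sum g_x(v_i-v_{i-1})=ng(x)$ with the now-controlled contribution from $\mathcal I_S$ finally gives $m\leq 2n$ for $n$ large.

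The principal obstacle is precisely the circular flavor of the argument: the bound on $|\mathcal I_S|$ uses the inductive GAP input with exponent $a_0>1/2$, which cannot be avoided by pure concentration alone, since the concentration scale $\sqrt{|y|\log|y|}$ is smaller than the non-random fluctuation $h(y)-g(y)$ being estimated. The map $a_0\mapsto 2a_0/(2a_0+1)$ coming from Step~1 of Theorem~\ref{thm:AlexGAP} has $1/2$ as its attracting fixed point, so iterating the construction above, starting from GAP$(1,1)$, drives the exponent down to $1/2$ in the limit, at which stage the logarithmic correction $\phi(|x|)=\log|x|$ emerges. Secondary technical issues — handling possibly negative contributions to $\sum g_x$ from $\mathcal I_S$, and arranging that $v_m$ lands exactly at $nx$ — are absorbed into the same $o(n)$ slack.
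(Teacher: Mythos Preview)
Your approach has two genuine gaps that the paper's argument avoids. First, the uniform concentration event you invoke --- $|T(u,w)-h(w-u)|\leq C_1\sqrt{|w-u|\log|w-u|}$ simultaneously for all pairs in a box of side $\sim n|x|$ --- does not hold with high probability: a union bound over the $O((n|x|)^{2d})$ pairs forces the deviation scale to be $C_1\sqrt{|w-u|\log(n|x|)}$ rather than $C_1\sqrt{|w-u|\log|w-u|}$. In the regime of the proposition ($|x|$ fixed, $n\to\infty$) this $\sqrt{\log(n|x|)}$ factor eventually dominates the $\log|x|$ in the threshold defining $Q_x(1/2,\log|x|)$, so your step ``the concentration event turns this into $T(v_{i-1},v_i)\geq g_x(\Delta v_i)+\tfrac13|x|^{1/2}\log|x|$'' breaks down. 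The paper takes the union bound not over pairs in an $n$-dependent box but over \emph{skeletons}, parametrized by the increments $\Delta v_i$, each lying in a ball of radius $O(|x|)$ (hence $O(|x|^{md})$ choices, independent of $n$), and controls $\sum_i[\mathbb{E}T(v_{i-1},v_i)-T(v_{i-1},v_i)]$ collectively via Kesten's BK-type estimate (Lemma~\ref{eq:BK}), obtaining $\sum h(\Delta v_i)-T(0,nx)\leq Cm|x|^{1/2}\log|x|$ with no $n$-dependent error term.

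Second, your inductive GAP$(a_0,\phi_0)$ hypothesis is circular, and the bootstrap does not start: from the only a priori input GAP$(1,1)$, the term $(n|x|)/(|x|^{1/2}\log|x|)\sim n|x|^{1/2}/\log|x|$ is $\Theta(n)$ for fixed $|x|$, not $o(n)$, so your displayed inequality gives nothing. The paper uses no GAP input at this stage --- only the almost-sure convergence $T(0,nx)/n\to\mu(x)$ from the shape theorem, which is exactly why ``for sufficiently large $n$'' appears in the statement. Proposition~\ref{prop:daAle} thereby establishes CHAP$(1/2,\log|x|)$ outright, and only then does Theorem~\ref{thm:AlexGAP} yield GAP$(1/2,\log|x|)$; your proposed joint iteration of CHAP and GAP conflates these two logically separate steps. (A smaller issue: your ``a priori bound $m=O(n)$'' is also unjustified --- you only argue $|\mathcal I_L|=O(n)$, and $m=|\mathcal I_L|+|\mathcal I_S|$.)
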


We will take one geodesic from $0$ to $nx$ as our lattice path. To prove Proposition \ref{prop:daAle}, Alexander takes use of one very useful estimate of Kesten \cite[(4.13)]{KestenAspects} and his skeleton construction. The argument goes as follows where here the constant $C$ will change from line to line. 

Consider a sequence of points $v_{i} \in \mathbb Z^{d}$ so that the increments $v_{i}-v_{i-1}$ are the $Q_{x}$ skeleton of some lattice path $\Gamma$. Let $$Y_{i}= \mathbb E T(v_{i-1},v_{i})-T(v_{i-1},v_{i}).$$ The hypothesis  $\mathbb E e^{\lambda \tau_{e}} <\infty$ is sufficient (see Theorem \ref{thm:futurereference}) to find a constant $C>0$ so that 
$$ \Pro\bigg(|Y_{i}| > \lambda |v_{i}-v_{i-1}|^{1/2}\bigg) \leq C \exp \bigg( - C \lambda \bigg).$$
In particular, this combined with the fact that $|v_{i}-v_{i-1}| \leq 2d|x|$ implies that the existence of a $\beta>0$ so that $\mathbb E \exp (\beta Y_{i} /|x|^{1/2})) <\infty$. Now, imagine for a second that the set of edges used by paths from $v_{i}$ to $v_{i+1}$ were disjoint and deterministic. Then $Y_{i}$ would independent. Moreover, taking exponentials and using Markov's inequality we obtain: 
\begin{equation}\label{eq:eqAlexr3}
\begin{split}
\Pro \bigg ( \sum_{i=0}^{m-1} \mathbb E T(v_{i},v_{i+1}) - T(0,v_{m}) \geq C |x|^{1/2}\log|x|\bigg) &\leq 
\Pro\bigg( \sum_{i=0}^{m-1} Y_{i} \geq C |x|^{1/2} \log |x|\bigg)\\ 
&\leq C\exp(-Cm\log |x|). 
\end{split}
\end{equation}
The fact that we assumed the use of disjoint edges and independence (illegally) can be handled by taking independent copies $Y_{i}'$ of $Y_{i}$ and using two extensions of the  BK inequality (this is done in Kesten \cite[(4.13)]{KestenAspects}), see also Theorem~\ref{eq:BK} below. So morally \eqref{eq:eqAlexr3} is correct. On the other hand, by geometric considerations, there are at most $C|x|^{md}$ ways to choose $v_{i}, i=1, \ldots, m$, with $|v_{i}-v_{i-1}| \leq 2d|x|$. Thus, union bound implies,
\begin{equation}\label{eq:faf55}
\begin{split}
\Pro\bigg( \sum_{i=0}^{m-1} \mathbb E T(v_{i},v_{i+1}) - T(0,v_{m}) \geq C |x|^{1/2} &\log |x| \text{ for some } m\geq 1 \text{ and some } Q_{x}\\
 \text{ skeleton}\text{ with }  &m \text{ vertices } \bigg) \leq C\exp(-C\log |x|). 
\end{split}
\end{equation}
Now take a $Q_{x}$-skeleton with $m$ vertices of a geodesic from $0$ to $nx$. By definition, it follows that $n\leq m$. The estimate \eqref{eq:faf55} above implies that we can choose $n$ large enough so that with probability going to one
$$ \sum_{i=0}^{m-1} \mathbb E T(v_{i},v_{i+1}) - T(0,nx) \leq Cm |x|^{1/2}\log |x|.$$
Hence, by the convergence to the time constant, 
$$ \sum_{i=0}^{m-1} \mathbb E T(v_{i},v_{i+1}) \leq n\mu(x)+ Cm |x|^{1/2}\log |x|.$$
Now the proof ends by arguing (see \cite[(3.16)]{Alexander}) that the large majority of the terms in the sum of the left-side are good increments, that is, they are of order $C\mu(x)$, which leads (when all constants are put in order) to $m\leq 2n$ for $n$ large and $|x|\geq C.$

We finish this section with a summary of the current state of the art on (a) non-random fluctuation upper bounds and (b) convergence rate to the limit shape. For (a), Alexander's methods were used in the low moment case in \cite{DKubota} along with a concentration inequality for the lower tail of $T(0,x)$ to obtain the following.

\begin{theorem}\label{thm:non-randomAlexanderFPP}
Assume $F(0)<p_c$ and $\mathbb{E}Y^2<\infty$, where $Y$ is the minimum of $d$ i.i.d. copies of $\tau_e$. There exists $C=C(F,d)$, such that for all $x \in \mathbb{Z}^d$ with $|x|>1$,
\begin{equation}\label{nonrandomfluctuations}
\mu(x) \leq \mathbb{E}T(0,x) \leq \mu(x) + C (|x| \log |x|)^{1/2}\ .
\end{equation}
\end{theorem}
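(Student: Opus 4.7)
The lower bound is immediate: the sequence $a_n = \E T(0,nx)$ is subadditive, so by Fekete's lemma $\mu(x) = \lim_n a_n/n = \inf_n a_n/n \leq \E T(0,x)$. The substance of the theorem is the upper bound, and the plan is to apply Alexander's scheme (Theorem~\ref{thm:AlexGAP}) to the function $h(x) := \E T(0,x)$ with exponent $a = 1/2$ and correction $\phi(t) = \log t$. The trivial bound $h(x) \leq r|x|$ with $r = \E \tau_e$ (which is finite under the hypothesis $\E Y^2 < \infty$, since $\tau_e$ has a finite first moment whenever its minimum of $d$ copies does) shows the hypothesis of Theorem~\ref{thm:AlexGAP} is met. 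Hence it suffices to verify CHAP$(1/2, \log|x|)$ for $h$, and the conclusion $\E T(0,x) - \mu(x) \leq C(|x|\log|x|)^{1/2}$ will follow.

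To verify CHAP, I invoke Lemma~\ref{lem:whataniceone}: it suffices to show that for each $x \in \Q^d$ with $|x|$ large enough, there exist $n \geq 1$ and a lattice path $\Gamma$ from $0$ to $nx$ whose $Q_x(1/2,\log|x|)$-skeleton $\{0 = v_0, v_1, \dots, v_m = nx\}$ satisfies $m \leq 2n$. The natural candidate for $\Gamma$ is a geodesic from $0$ to $nx$. Since $g(x) = g_x(x) = \mu(x)$ and $\sum_i T(v_{i-1},v_i) = T(0,nx)$ along the geodesic, subadditivity of $h$ combined with the defining property of the skeleton (each increment $v_i - v_{i-1}$ fails $Q_x$, so either overshoots — which is handled by elementary geometric bookkeeping — or satisfies $h(v_i - v_{i-1}) > g_x(v_i - v_{i-1}) + |x|^{1/2}\log|x|$) leads after summing to an inequality of the schematic form
\[
n\mu(x) + c\, m\, |x|^{1/2}\log|x| \;\leq\; T(0,nx) + \sum_{i=1}^m Y_i,
\qquad Y_i := \E T(v_{i-1},v_i) - T(v_{i-1},v_i).
\]
Using the convergence $T(0,nx)/n \to \mu(x)$, the bound $m \leq 2n$ will follow once we show $\sum_i Y_i$ is of order $o(m|x|^{1/2}\log|x|)$ with high probability, uniformly over admissible skeletons.

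The key input is a control of the upper tails of $-Y_i$, i.e.\ of the lower tail of $T(v_{i-1},v_i) - \E T(v_{i-1},v_i)$. This is where the low-moment assumption $\E Y^2 < \infty$ enters: the lower-tail bound in Theorem~\ref{thm:futurereference} gives, precisely under this hypothesis,
\[
\Pro\!\left(T(0,y) - \E T(0,y) \leq -\lambda |y|^{1/2}\right) \leq e^{-C_1 \lambda^2}
\]
for all $y$ and $\lambda \geq 0$, hence $\E \exp(\beta Y_i/|x|^{1/2}) \leq C$ for some $\beta > 0$ (using $|v_i - v_{i-1}| \leq 2d|x|$). If the $Y_i$'s were independent, a Markov/Chernoff estimate would immediately yield $\Pro(\sum Y_i \geq C|x|^{1/2}\log|x|) \leq \exp(-Cm\log|x|)$, and a union bound over the at most $C|x|^{md}$ possible sequences of skeleton vertices would finish the job.

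The main technical obstacle, and the place where Alexander's proof must be adapted, is the fact that consecutive geodesic segments share edges, so the $Y_i$'s are not independent. This is addressed by replacing the original $Y_i$'s with independent copies via a van den Berg--Kesten-type disjoint-occurrence inequality (Theorem~\ref{eq:BK}, used exactly as in \cite[(4.13)]{KestenAspects}): the event that all segments $\{v_{i-1} \to v_i\}$ have short passage times can be dominated by a product event for disjoint edge sets, after which the independent tail computation above applies. The passage from exponential-moment hypotheses (Alexander's setting) to the $\E Y^2 < \infty$ hypothesis is precisely the content of the refinement carried out in \cite{DKubota}: it relies on the Gaussian-type lower-tail inequality of Theorem~\ref{thm:futurereference} being available under $\E Y^2 < \infty$, which is enough to absorb the union bound over the $|x|^{md}$ skeleton configurations. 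Collecting all estimates, CHAP$(1/2, \log|x|)$ holds and Theorem~\ref{thm:AlexGAP} delivers the claimed bound $\E T(0,x) \leq \mu(x) + C(|x|\log|x|)^{1/2}$.
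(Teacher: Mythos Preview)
Your strategy --- Alexander's CHAP/GAP scheme applied to $h=\E T(0,\cdot)$, with the $Q_x$-skeleton of a geodesic controlled via the lower-tail concentration of Theorem~\ref{thm:futurereference} and a BK-type disjoint-occurrence argument --- is exactly the paper's route (the $\E Y^2<\infty$ case being precisely the refinement of \cite{DKubota}).

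Two points need correction. First, $\E Y^2<\infty$ does \emph{not} imply $\E\tau_e<\infty$: for instance take $\Pro(\tau_e>t)\sim (t\log t)^{-1}$ in $d=2$, or $\Pro(\tau_e>t)\sim t^{-1}$ in $d\ge 3$. The linear bound $h(x)\le r|x|$ required by Theorem~\ref{thm:AlexGAP} comes instead from \eqref{eq:moments} (which gives $\E T(0,e_1)<\infty$ once the minimum of $2d$ --- a fortiori $d$ --- copies has a finite moment) together with subadditivity. Second, you have the skeleton reversed: by construction the increments $v_i-v_{i-1}$ lie \emph{in} $Q_x$ (this is the hypothesis of Lemma~\ref{lem:whataniceone} you are trying to verify); it is $z_i-v_{i-1}$, one step further along $\Gamma$, that fails $Q_x$. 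Consequently your displayed inequality runs the wrong direction and does not deliver $m\le 2n$. The correct assembly is the one the paper sketches after Proposition~\ref{prop:daAle}: from $\sum_i Y_i\le Cm|x|^{1/2}\log|x|$ (BK plus lower-tail bound, exactly as you describe) and $T(0,nx)\sim n\mu(x)$ one obtains $\sum_i h(v_i-v_{i-1})\le n\mu(x)+Cm|x|^{1/2}\log|x|$; then the fact that each $z_i-v_{i-1}\notin Q_x$ forces most increments to satisfy $h(v_i-v_{i-1})\gtrsim\mu(x)$, and comparing gives $m\le 2n$. Your ingredients are the right ones; only this middle piece of bookkeeping needs to be straightened.
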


For (b) above, the following version of convergence rate bounds under low moments also appears in \cite{DKubota}.
\begin{theorem}
Assume $\mathbb{P}(\tau_e=0)<p_c$. If $\mathbb{E}Y^2<\infty$, where $Y$ is the minimum of $d$ i.i.d. copies of $\tau_e$, then there exists $C>0$ such that almost surely,
\[
B(t) \subset (t+ C(t \log t)^{1/2}) \mathcal{B}_\nu \text{ for all large }t.
\]
If in addition $\mathbb{E}\tau_e^\alpha < \infty$ for some $\alpha > 1+1/d$, then almost surely
\[
(t-C(\log t)^4 t^{1/2})\mathcal{B}_\nu \subset B(t) \text{ for all large }t.
\]
\end{theorem}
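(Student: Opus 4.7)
The plan is to prove both inclusions by establishing, uniformly over all $y$ in a box of radius $O(t)$, a pointwise estimate comparing $T(0,y)$ to $\mu(y)$, then summing failure probabilities over integer $t$ and applying Borel--Cantelli; restriction to such a box $[-R_1 t, R_1 t]^d$ is justified a.s.\ by the Cox--Durrett shape theorem (Theorem~\ref{thm:limitshape}, which holds under $\mathbb{E}Y^2<\infty$), and the interpolation from integer $t$ to real $t$ is immediate from the monotonicity $B(t) \subset B(\lceil t \rceil)$.

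For the upper inclusion $B(t) \subset (t + C\sqrt{t\log t})\mathcal{B}_\nu$, I would combine the trivial bound $\mu(y) \leq \mathbb{E}T(0,y)$ with the lower-tail inequality of Theorem~\ref{thm:futurereference} (which is available under $\mathbb{E}Y^2<\infty$):
\[
\mathbb{P}\bigl(T(0,y) \leq \mathbb{E}T(0,y) - \lambda \sqrt{|y|_1}\bigr) \leq e^{-C_1 \lambda^2}.
\]
Setting $\lambda = K\sqrt{\log t}$ and noting that the box has $\leq C t^d$ sites, the union bound gives failure probability $\leq C t^{d-C_1 K^2}$, which is summable over $t \in \mathbb{N}$ once $K$ is chosen large. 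Borel--Cantelli then yields a.s., for all large integer $t$ and all such $y$,
\[
T(0,y) \geq \mu(y) - K\sqrt{R_1\, t \log t},
\]
so that $T(0,y) \leq t$ forces $\mu(y) \leq t + C\sqrt{t\log t}$, which is the first claim.

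For the lower inclusion $(t - C(\log t)^4 \sqrt t)\mathcal{B}_\nu \subset B(t)$, one must instead bound $T(0,y) - \mu(y)$ from above. Decompose
\[
T(0,y) - \mu(y) = \bigl(\mathbb{E}T(0,y) - \mu(y)\bigr) + \bigl(T(0,y) - \mathbb{E}T(0,y)\bigr).
\]
Theorem~\ref{thm:non-randomAlexanderFPP} handles the non-random piece: $\mathbb{E}T(0,y) - \mu(y) \leq C\sqrt{|y|_1 \log |y|_1} \leq C\sqrt{t\log t}$. The remaining task is an upper-tail concentration for $T(0,y) - \mathbb{E}T(0,y)$, uniform over the box, strong enough to survive a union bound over $\asymp t^d$ sites. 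Since only $\mathbb{E}\tau_e^\alpha < \infty$ with $\alpha > 1 + 1/d$ is assumed, the Gaussian-scale upper tail of Theorem~\ref{thm:futurereference} (which needs an exponential moment) is not directly available, so I would truncate: writing $\tau_e = (\tau_e \wedge M) + (\tau_e - M)_+$ for a large constant $M$, let $\bar T$ denote the passage time in the bounded weights $\tau_e\wedge M$ and $\bar\Gamma(0,y)$ one of its geodesics. Then
\[
T(0,y) \leq \bar T(0,y) + \sum_{e \in \bar\Gamma(0,y)} (\tau_e - M)_+,
\]
so Theorem~\ref{thm: subdiffusive} (applied to the bounded model) gives exponential-scale sub-diffusive concentration for $\bar T - \mathbb{E}\bar T$, while a greedy lattice path bound under $\mathbb{E}\tau_e^\alpha<\infty$ controls the residual sum. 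The threshold $\alpha > 1 + 1/d$ is precisely what makes the greedy-animal estimate summable after the union bound, and the four powers of $\log t$ arise as the combined price of (i) the exponential-scale concentration applied at deviation $\sim\log t$ worth of standard deviations, (ii) the union bound over the $\asymp t^d$ box, and (iii) the truncation level $M=M(t)$ slowly growing with $t$.

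The main obstacle is this final upper-tail concentration step: producing the truncation level $M=M(t)$, the greedy-animal bound, and the concentration scale $\lambda=\lambda(t)$ in concert so that the combined failure probability is summable over integer $t$ while keeping the deviation at the prescribed $(\log t)^4 \sqrt t$ order. Everything else is a routine combination of the shape theorem, Theorems~\ref{thm:non-randomAlexanderFPP} and~\ref{thm: subdiffusive}, the union bound, and Borel--Cantelli.
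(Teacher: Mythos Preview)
The paper does not supply its own proof of this theorem; it is quoted from \cite{DKubota}. So I can only assess your proposal on its merits and against what the surrounding tools in the paper support.

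Your argument for the outer inclusion $B(t)\subset(t+C\sqrt{t\log t})\,\mathcal B_\nu$ is correct and is the standard route: the Gaussian lower-tail bound of Theorem~\ref{thm:futurereference} (valid under $\mathbb E Y^2<\infty$) plus $\mu(y)\le\mathbb E T(0,y)$, a union bound over the $O(t^d)$ sites in a box, and Borel--Cantelli. One small correction: you justify the restriction to a box via the Cox--Durrett shape theorem, but Theorem~\ref{thm:limitshape} requires $\mathbb E(\min_{2d})^d<\infty$, which is \emph{not} implied by $\mathbb E(\min_d)^2<\infty$ in all dimensions (take $d=5$ and $\mathbb P(\tau>t)\sim t^{-2/5}(\log t)^{-2/5}$). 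The clean fix is to use instead Kesten's path-counting bound (Proposition~5.8 of \cite{KestenAspects}, quoted in the proof of Theorem~\ref{thm: new_geo_length_bound}), which gives $\mathbb P\bigl(B(t)\not\subset[-t/a,t/a]^d\bigr)\le e^{-ct}$ under only $F(0)<p_c$.

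For the inner inclusion your decomposition and the use of Theorem~\ref{thm:non-randomAlexanderFPP} for the non-random piece are right; the difficulty is entirely in the upper-tail step, and here your sketch has a genuine gap rather than a routine omission. Two specific issues. First, if $M=M(t)\to\infty$ then the constants $c_1,c_2$ in Theorem~\ref{thm: subdiffusive} for the truncated model depend on the law of $\tau_e\wedge M$ (through quantities like $\mathbb E(\tau_e\wedge M)^2$), and with only an $\alpha$-th moment for $\alpha$ possibly below $2$ these can blow up with $M$; you never quantify this trade-off, and it is exactly this balancing that produces the $(\log t)^4$. Second, the ``greedy lattice path bound'' you invoke for $\sum_{e\in\bar\Gamma}(\tau_e-M)_+$ is not justified: the classical greedy-animal theorems need a $d$-th moment on the summands, which you do not have, and $\bar\Gamma$ is not independent of $(\tau_e-M)_+$ (knowing $\tau_e\wedge M=M$ forces $\tau_e\ge M$). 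Your assertion that $\alpha>1+1/d$ is ``precisely what makes the greedy-animal estimate summable'' is not substantiated; that threshold is more transparently the value at which $\sum_{t}\,t^d\,\mathbb P\bigl(\min_{e\ni y}\tau_e>\sqrt t\bigr)<\infty$, which identifies the correct obstruction but does not by itself deliver the uniform upper-tail concentration you need. So the outline is reasonable, but the step you flag as the ``main obstacle'' is indeed the whole content of the inner bound and remains to be carried out.
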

The inner bound given here can be improved under finite exponential moments, as mentioned in \eqref{eq:tesrate}.

\subsection{Large deviations}

In this section we discuss some large deviation bounds for FPP. For given $\epsilon >0$, our first goal is to estimate the following probabilities
\begin{equation}
\begin{split}
 p^{u}_{n}(\epsilon) :&= \Pro \bigg( T(0,ne_{1}) > n (\mu(e_{1})+\epsilon) \bigg) \\
  p^{\ell}_{n}(\epsilon) :&= \Pro \bigg( T(0,ne_{1}) < n (\mu(e_{1}) -\epsilon) \bigg),
 \end{split}
\end{equation}
for $n$ large. If we assume \eqref{mincondition}, Theorem \ref{thm:timeconstant} implies that both $p^{u}_{n}(\epsilon)$ and $p^{\ell}_{n}(\epsilon)$ go to zero as $n\to \infty$. Stronger  assumptions will allow us to derive exponentially small upper bounds for these probabilities. 

Typically, these bounds will exhibit a strong asymmetry: the behavior of $p_{n}^{u}$ is  radically different from $p_{n}^{\ell}$. Indeed, If we want to increase the passage time $T(0,ne_{1})$, we may need to increase the passage time of every path from $0$ to $ne_{1}$. If the passage times are bounded, this roughly amounts to increasing almost every edge in a box with diameter of order $n$. Thus one should expect:  
\begin{equation}\label{upperUD}
p_{n}^{u}(\epsilon) \approx \exp (-n^{d}I_{u}(\epsilon)),
\end{equation}
where $a_{n} \approx b_{n}$ if $\log a_{n}/ \log b_{n} \to 1$. 

On the other hand, to decrease the passage time between two points, it suffices to force  the passage time of a single path to be very low. Thus,
\begin{equation}\label{upperLD}
p_{n}^{l}(\epsilon) \approx \exp (-n I_{\ell}(\epsilon)),
\end{equation}
as we should roughly decrease the passage time of order $n$ edges. 

The reader may have seen this asymmetry in large deviations in other areas of probability. In random matrix theory, for instance, the smallest eigenvalue of a $n\times n$ GOE matrix satisfies a large deviation principle of speed $n$ if forced out of the bulk; on the other hand, the probability that the smallest eigenvalue is  above the bottom edge of the equilibrium measure is of order $\exp(-cn^{2})$.

Before stating the results, let's make a remark. The boundedness assumption is important for \eqref{upperUD}. For example, suppose that $\tau_{e}$ is exponentially distributed with mean~$1$. We know that if all $2d$ edges connected to the origin take values larger than $n(\mu+\epsilon)$, then $T(0,ne_{1}) \geq n(\mu+\epsilon)$. Therefore, 

$$ p_{n}^{u}(\epsilon) \geq \Pro\bigg(\tau_{e} \geq n(\mu(e_{1}) +\epsilon) \bigg)^{2d}=
\exp\bigg(-2nd(\mu(e_{1})+\epsilon)\bigg),$$
so \eqref{upperLD} does not hold. 

We now state the main theorems of this section. We start with the large deviation lower bounds and we will assume that $F(0)<p_{c}(d)$ so that $\mu(e_{1}) >0$. Let $$\beta = \sup \bigg \{ x: F(\mu(e_{1})-x)>0 \bigg \}.$$

\begin{theorem}[Theorem 5.2 \cite{KestenAspects}]\label{thm:LLD}
For any $\epsilon>0$, there exist a positive constant $A(\epsilon)$ and  $I_{\ell}(\epsilon) \in (0,\infty]$ such that for any $n\geq 1$
$$p_{n}^{\ell}(\epsilon) \leq A(\epsilon)\exp(-nI_{\ell}(\epsilon)).$$
Furthermore,
\begin{equation}\label{eq:KestenLDPlb}
\lim_{n} \frac{1}{n} \log p_{n}^{\ell}(\epsilon) = -I_{\ell}(\epsilon),
\end{equation}
where
$0<I_{\ell} (\epsilon) < \infty$ for any $0<\epsilon < \beta$ and $I_{\ell} (\epsilon) = \infty$ for $\beta \leq \epsilon$. Setting $I_{\ell}(\epsilon) = 0$ for $\epsilon \leq 0$, the function $I_{\ell}:(-\infty,\beta) \to \mathbb R$ is convex on its domain and strictly increasing on $[0,\beta)$.
\end{theorem}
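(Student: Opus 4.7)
The plan is to establish the four subclaims of the theorem in order: existence of the limit in \eqref{eq:KestenLDPlb} together with the exponential upper bound; the classification $I_\ell=\infty$ for $\epsilon\geq\beta$ along with finiteness for $\epsilon<\beta$; strict positivity of $I_\ell$ on $(0,\beta)$; and convexity plus strict monotonicity.

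For the first step, I would exploit the superadditivity of $\log p^\ell_n(\epsilon)$ via FKG. Both events $A_m:=\{T(0,me_1)<m(\mu-\epsilon)\}$ and $B_n:=\{T(me_1,(m+n)e_1)<n(\mu-\epsilon)\}$ are decreasing in the edge weights, hence positively correlated, and on $A_m\cap B_n$ subadditivity of $T$ forces $T(0,(m+n)e_1)<(m+n)(\mu-\epsilon)$. Therefore
$$p^\ell_{m+n}(\epsilon)\geq\P(A_m\cap B_n)\geq p^\ell_m(\epsilon)\,p^\ell_n(\epsilon),$$
so $(\log p^\ell_n(\epsilon))_n$ is superadditive. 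Fekete's lemma yields $-I_\ell(\epsilon):=\lim_n n^{-1}\log p^\ell_n(\epsilon)=\sup_n n^{-1}\log p^\ell_n(\epsilon)\in[-\infty,0]$, and the supremum characterization gives the pointwise bound $p^\ell_n(\epsilon)\leq e^{-nI_\ell(\epsilon)}$, so $A(\epsilon)=1$ suffices. For $\epsilon\geq\beta$ the bound is trivial: by definition $\mu-\beta=\inf\mathrm{supp}(F)$, so $\tau_e\geq\mu-\beta$ a.s., and since every path from $0$ to $ne_1$ has at least $n$ edges, $T(0,ne_1)\geq n(\mu-\beta)\geq n(\mu-\epsilon)$ a.s., giving $p^\ell_n(\epsilon)=0$ and $I_\ell(\epsilon)=\infty$. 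Finiteness of $I_\ell$ for $0<\epsilon<\beta$ follows by picking $\epsilon_1\in(\epsilon,\beta)$ and forcing all $n$ edges of the monotone axis path from $0$ to $ne_1$ to have weight below $\mu-\epsilon_1$: this has probability $q^n$ with $q:=F(\mu-\epsilon_1)>0$, so $I_\ell(\epsilon)\leq-\log q$.

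The main obstacle is strict positivity $I_\ell(\epsilon)>0$ on $(0,\beta)$. Here it does not suffice to know $p^\ell_n<1$ for some $n$: one must produce a direct exponential bound $p^\ell_n(\epsilon)\leq Ce^{-cn}$, after which Fekete upgrades it to $I_\ell(\epsilon)\geq c>0$. My strategy is a union bound over self-avoiding paths combined with an exponential tilt: for a path $\gamma$ of length $k$,
$$\P(T(\gamma)<n(\mu-\epsilon))\leq e^{\lambda n(\mu-\epsilon)}\,\phi(\lambda)^{k}, \qquad \phi(\lambda):=\E e^{-\lambda\tau_e}.$$
The delicate point is that the naive combinatorial bound $(2d)^k$ on paths of length $k$ need not be defeated by $\phi(\lambda)^k$ when $F(0)$ is close to $p_c(d)$, since $\phi(\lambda)\to F(0)$ as $\lambda\to\infty$. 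One therefore uses the assumption $F(0)<p_c(d)$ to restrict, via Lemma~\ref{lem: geo_bound} (and its exponential refinement in \cite{DHS1}), to geodesics of length at most $Cn$ up to an $e^{-c_0n}$ event. On this restricted range, I would split into a short-path regime ($k\asymp n$, where the endpoint constraint from $0$ to $ne_1$ sharply cuts down the path count and Cram\'er's theorem applied at per-edge target $n(\mu-\epsilon)/k\approx\mu-\epsilon<\E\tau_e$ gives clean exponential decay) and a long-path regime ($k\gg n$, where the per-edge target is well below $\E\tau_e$, producing Cram\'er decay strong enough to dominate $(2d)^k$). Balancing these two regimes is where the genuine technical work lies.

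Finally, convexity follows by the same FKG-based argument with two tolerances: for $\epsilon_1\leq\epsilon_2$, $\lambda\in[0,1]$ and $m=\lfloor\lambda N\rfloor$, the intersection of $\{T(0,me_1)<m(\mu-\epsilon_1)\}$ and $\{T(me_1,Ne_1)<(N-m)(\mu-\epsilon_2)\}$ is contained in $\{T(0,Ne_1)<N(\mu-\epsilon_\lambda)\}$ with $\epsilon_\lambda=\lambda\epsilon_1+(1-\lambda)\epsilon_2$, and FKG yields $p^\ell_N(\epsilon_\lambda)\geq p^\ell_m(\epsilon_1)\,p^\ell_{N-m}(\epsilon_2)$. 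Passing to the limit gives $I_\ell(\epsilon_\lambda)\leq\lambda I_\ell(\epsilon_1)+(1-\lambda)I_\ell(\epsilon_2)$, i.e., convexity on $(-\infty,\beta)$. Combined with the convention $I_\ell\equiv 0$ on $(-\infty,0]$ and the strict positivity on $(0,\beta)$ from Step~3, convexity forces strict monotonicity on $[0,\beta)$: for $0<x<y<\beta$, writing $x=(x/y)y+(1-x/y)\cdot 0$ and applying convexity gives $I_\ell(x)\leq(x/y)I_\ell(y)<I_\ell(y)$.
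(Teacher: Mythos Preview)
Your Steps 1, 2 and 4 are fine and standard; the FKG-based superadditivity argument is indeed the natural way to get existence of the limit, the bound $p_n^\ell\le e^{-nI_\ell}$, and convexity. The problem is Step 3.

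You correctly diagnose the obstacle: a naive path-by-path union bound needs the per-edge Cram\'er rate to beat the path entropy, and this fails when $F(0)$ is close to $p_c$. But your proposed fix does not close the gap. Restricting to geodesics of length $\le Cn$ is fine, and for $k$ very close to $n$ your ``endpoint constraint sharply cuts down the path count'' is true (the entropy of paths from $0$ to $ne_1$ of length $(1+\delta)n$ vanishes as $\delta\to 0$). But in your ``long-path regime'' $k\in[(1+\delta_0)n,Cn]$ you assert ``Cram\'er decay strong enough to dominate $(2d)^k$''---which is exactly the statement you just explained can fail. For per-edge targets in the range $[(\mu-\epsilon)/C,\,\mu-\epsilon]$ the Cram\'er rate for $\tau_e$ is bounded, while the number of self-avoiding paths of length $k$ from $0$ to $ne_1$ is still genuinely exponential in $k$ once $k/n$ is bounded away from $1$. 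When $F(0)>1/\lambda$ (with $\lambda$ the connective constant) the rate does not beat the path count, and the geodesic-length cutoff only makes things worse, since the constant $C$ blows up as $F(0)\uparrow p_c$. So the two regimes do not meet.

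Kesten's proof (sketched in the survey via Proposition~\ref{Prop:KestenLDP}) sidesteps this entirely. Rather than decomposing a path into single edges, he decomposes into crossings of successive slabs of width $M$; the BK inequality (Lemma~\ref{eq:BK}) then compares $T(0,ne_1)$ with a sum of roughly $n/M$ i.i.d.\ slab-crossing times $s_{0,M}^N$, at the cost of a combinatorial factor only $(2d(15M/N)^d)^{\ell}$ per slab, not $(2d)^{M}$. The key point is that $\E s_{0,M}^N/M\to\mu$ as $M\to\infty$ by the shape theorem, so the target $n(\mu-\epsilon)$ now sits strictly below the mean of the sum, and Cram\'er applied to the $s_{0,M}^N$'s gives a positive rate that dominates the modest combinatorial factor for $M$ large. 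The missing idea in your outline is exactly this: you must compare to random variables whose mean per unit distance is $\mu$, not $\E\tau_e$. (If you are willing to cite later machinery, the lower-tail bound of Theorem~\ref{thm:futurereference}, valid for all $t\ge 0$, also gives positivity directly: $\P(T(0,ne_1)<n(\mu-\epsilon))\le\P(T-\E T<-n\epsilon)\le e^{-C_1\epsilon^2 n}$ since $\E T(0,ne_1)\ge n\mu$.)
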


\begin{remark}
Theorem \ref{thm:LLD} requires no assumptions on the moments of the edge passage times. In particular, it holds even when \eqref{eq:conditionmomentsLS} is not satisfied and $\mu(e_{1})$ is not defined. In this case, $\mu(e_{1})$ in the definition of $p_{n}^{\ell}(\epsilon)$ is replaced by the shell time constant $\hat \mu(e_{1}) := \lim_{n} \hat T(0,ne_{1})/n$ from Section \ref{sec:shell}. 
\end{remark}
\begin{remark}
The function $I_{\ell}(\epsilon)$ should depend on $d$ and on the underlying distribution of passage times $F$.
\end{remark}

We now turn to upper large deviation bounds. As remarked before, it does not suffice to have finite exponential moments to derive \eqref{upperUD}. The most natural assumption is to require the passage times to be bounded. This was the assumption used by Kesten in \cite{KestenAspects}.

\begin{theorem}[Theorem 5.9 \cite{KestenAspects}] \label{thm:LDPUP}Assume that $\int e^{\lambda x}~\text{d}\nu <\infty$ for some $\lambda>0$. Then for each $\epsilon>0$ there exist constants $A_{1} = A_{1}(\epsilon)$ and $B_{1}=B_{1}(\epsilon)>0$ such that for all $n\geq 1$,
\[
\mathbb{P}\big(T(0,ne_{1}) > n(\mu+\epsilon)\big) \leq A_{1} e^{-B_{1}n}.
\]
\item If $\tau_e$ is bounded with probability one, then for all $\epsilon>0$ there exist constants $A_{2} = A_{2}(\epsilon)$ and $B_{2}=B_{2}(\epsilon)>0$ such that for all $n \geq 1$
\begin{equation}\label{eq:KestenUBldp}
\mathbb{P}\big(T(0,ne_{1}) > n(\mu+\epsilon)\big) \leq A_{2} e^{-B_{2}n^d}\ .
\end{equation}
\end{theorem}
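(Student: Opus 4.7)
The theorem splits into two regimes with fundamentally different mechanisms: Part (i), at rate $n$, is a one-dimensional concentration statement, while Part (ii), at rate $n^d$, exploits the $d$-dimensional product structure of the bounded environment.

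\textbf{Part (i).} My plan is to write
\[
T(0,ne_1) - n\mu(e_1) = \big(T(0,ne_1) - \mathbb{E}T(0,ne_1)\big) + \big(\mathbb{E}T(0,ne_1) - n\mu(e_1)\big)
\]
and to use the non-random fluctuation bound from Theorem~\ref{thm:non-randomAlexanderFPP} to control the second term: $\mathbb{E}T(0,ne_1) \leq n\mu(e_1) + C(n\log n)^{1/2}$, so for $n$ large, $\{T > n(\mu+\epsilon)\} \subset \{T - \mathbb{E}T > n\epsilon/2\}$. I then apply Talagrand's upper-tail concentration inequality (Theorem~\ref{thm:futurereference}) with $t = \epsilon\sqrt{n}/2$ to obtain $e^{-C_1\epsilon^2 n/4}$ in the sub-Gaussian regime $\epsilon \leq 2C_2$. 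For larger $\epsilon$, one applies the same inequality at the endpoint $t = C_2\sqrt{n}$, which gives the uniform bound $e^{-C_1C_2^2 n}$ that dominates (the event being monotone in $\epsilon$).

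\textbf{Part (ii).} Let $M$ be an essential upper bound for $\tau_e$. The plan is a \emph{many disjoint tubes} argument. Fix $L = L(\epsilon)$ large (to be specified) and partition the cylinder
\[
C_n := [0,n] \times \big[-\epsilon n/(8M),\epsilon n/(8M)\big]^{d-1}
\]
into $N = \Theta_\epsilon(n^{d-1}/L^{d-1})$ pairwise edge-disjoint parallelepipeds (``tubes'') $T_1,\ldots,T_N$, each of cross-section $L^{d-1}$ and indexed by its transverse offset $y_i$. For each tube let $\hat T_i$ be the minimum passage time from its starting face $\{x_1=0\}\cap T_i$ to its ending face $\{x_1=n\}\cap T_i$, using only edges of $T_i$. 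The argument has three steps. \emph{(a) Comparison.} Since $\tau_e \leq M$, routing $0 \to T_i \to ne_1$ along $\ell^1$-shortest paths gives $T(0,ne_1) \leq 2M(|y_i|_1+Ld)+\hat T_i \leq \hat T_i + \epsilon n/2$ for all $n$ large, so $\{T > n(\mu+\epsilon)\} \subset \bigcap_i \{\hat T_i > n(\mu + \epsilon/2)\}$. \emph{(b) Tube concentration.} By subadditive ergodicity in the slab, $\hat T_i/n \to \mu_L$ a.s.\ for a slab time constant $\mu_L$, and a comparison with Theorem~\ref{thm:limitshape} yields $\mu_L \searrow \mu(e_1)$ as $L \to \infty$; choose $L$ such that $\mu_L \leq \mu(e_1)+\epsilon/8$. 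Since $\tau_e \leq M$, applying bounded-differences (Azuma--Hoeffding) to the $\Theta_L(n)$ edges determining $\hat T_i$ gives $\mathbb{P}(\hat T_i > n(\mu(e_1)+\epsilon/2)) \leq e^{-c(\epsilon,L)n}$. \emph{(c) Product bound.} Edge-disjointness makes the $\hat T_i$ jointly independent, and combining with (a),
\[
\mathbb{P}(T(0,ne_1) > n(\mu(e_1)+\epsilon)) \leq \prod_{i=1}^N e^{-c(\epsilon,L)n} = e^{-c(\epsilon,L)\,nN} \leq e^{-B_2(\epsilon)\,n^d}.
\]

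\textbf{Expected main obstacle.} The delicate step is the monotone convergence $\mu_L \searrow \mu(e_1)$, which amounts to showing that typical full-lattice geodesics do not require arbitrarily wide transverse wandering. This is intuitive from the shape theorem but requires a careful quantitative argument to fix $L$ in terms of $\epsilon$. A secondary technicality is the uniformity of the comparison constants across all $N$ tubes, which motivates the specific geometric choice of $C_n$ (forcing each tube offset to satisfy $|y_i|_1 \leq \epsilon n/(8M)$).
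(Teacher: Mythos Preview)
\textbf{Part (ii)} is essentially Kesten's argument and is correct. The paper does not spell out a proof here---it only remarks that the key tool is a strip/block decomposition (citing Proposition~\ref{Prop:KestenLDP}, which is in fact the \emph{lower}-tail estimate)---but your many-disjoint-tubes scheme is precisely how the $n^d$ rate is obtained. The obstacle $\mu_L\searrow\mu(e_1)$ is milder than you suggest for bounded weights: for fixed $N$, the infimum $T(0,Ne_1)$ is approximated by some finite path, so $T^{(L)}(0,Ne_1)\to T(0,Ne_1)$ a.s.\ as $L\to\infty$; since both are bounded by $MN$, dominated convergence gives convergence of the mean, and then subadditivity yields $\mu_L\le \mathbb{E}T^{(L)}(0,Ne_1)/N\to \mathbb{E}T(0,Ne_1)/N$, which is within $\epsilon/8$ of $\mu(e_1)$ for $N$ large. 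No assumption on $F(0)$ is needed.

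\textbf{Part (i)} takes a genuinely different route from Kesten's, and it carries a real gap. Both Theorem~\ref{thm:futurereference} and Theorem~\ref{thm:non-randomAlexanderFPP} are stated under the hypothesis $F(0)<p_c$, while Theorem~\ref{thm:LDPUP} makes no such assumption; when $F(0)\ge p_c$ one has $\mu(e_1)=0$, and you still owe the bound $\mathbb{P}(T(0,ne_1)>n\epsilon)\le A_1 e^{-B_1 n}$, which neither cited theorem supplies. Kesten's original argument is elementary and uniform in $F(0)$: choose $N$ with $\mathbb{E}t_{0,N}<N(\mu+\epsilon/2)$ for the \emph{cylinder} time $t_{0,N}$ (paths confined to the slab $[0,N]\times\mathbb{Z}^{d-1}$), bound $T(0,ne_1)\le\sum_{i=1}^{\lfloor n/N\rfloor} t_{(i-1)N,iN}+O(N)$ by a sum of \emph{independent} copies of $t_{0,N}$---each dominated by the direct-path sum of $N$ edge weights, hence with finite exponential moment---and apply a Chernoff bound. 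Your approach is valid once one adds $F(0)<p_c$, but it trades this one-page subadditivity argument for the full Talagrand--Alexander machinery; that machinery is historically downstream of the result you are proving.
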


Note that  Kesten's bound \eqref{eq:KestenUBldp} for bounded passage times is not of the form \eqref{upperUD}. Proving \eqref{upperUD} is still an open question.

\begin{question}\label{q:LDP}
Assume that the passage times are bounded and not concentrated at a single point. Show that there exists a convex function $I_{u}(\epsilon)$ such that the following limit holds:
$$ \lim_{n} \frac{1}{n^{d}} \log p_{n}^{u} (\epsilon) = I_{u}(\epsilon).$$
\end{question}

Theorem \eqref{thm:LDPUP} was extended by Cranston-Gauthier-Mountford. In \cite{CDM}, the authors found necessary and sufficient conditions on the distribution of edge weights so that $\limsup_{n} n^{-d}\log p_{n}^{u}(\epsilon) <0.$ Their result remains the most recent contribution concerning large deviation bounds of passage times.

\begin{theorem}[Theorem 1.3 in \cite{CDM}] Assume that for positive $M_{0}<\infty$ that there is a positive increasing function $f$ so that 
$$ \log \Pro \big( \tau_{e}>x\big)=-x^{d}f(x), \quad x>M_{0}.$$
Then for every sufficiently small $\epsilon>0$,
$$\limsup_{n} \frac{1}{n^{d}} \log \Pro\bigg(T(0,ne_{1}) \geq n(\mu+\epsilon)\bigg) <0$$ 
if and only if
$$ \sum_{n=1}^{\infty} \frac{1}{f(2^{n})^{1/(d-1)}}<\infty. $$
\end{theorem}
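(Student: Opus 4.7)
The plan is to follow CDM's strategy, which combines a multi-scale truncation for the upper bound with a nested-shell construction for the lower bound. Throughout, write $p_N = \mathbb{P}(T(0,Ne_1) \geq N(\mu+\epsilon))$ and note that the hypothesis gives a clean scale-by-scale formula $\mathbb{P}(\tau_e \in [2^n, 2^{n+1})) \asymp \exp(-2^{nd} f(2^n))$ for $2^n > M_0$.

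\textbf{If direction (sum converges $\Rightarrow \limsup < 0$).} First, truncate weights at a fixed level $K_0$. By the Cox-Kesten continuity theorem (Theorem~\ref{thm:contcox}) the time constant $\mu^{K_0}$ of the truncated environment $\tau_e \wedge K_0$ converges to $\mu$ as $K_0 \to \infty$, so pick $K_0$ with $\mu^{K_0} > \mu - \epsilon/4$. Since $\tau_e \wedge K_0$ is bounded, Kesten's large-deviation bound (Theorem~\ref{thm:LDPUP}, second part) yields $\mathbb{P}(T^{K_0}(0,Ne_1) \geq N(\mu^{K_0}+\epsilon/2)) \leq A e^{-B N^d}$. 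It remains to control the contribution from edges above $K_0$: on $\{T \geq N(\mu+\epsilon)\} \cap \{T^{K_0} < N(\mu^{K_0}+\epsilon/2)\}$, there must exist a path whose excess $\sum_{e \in \Gamma} (\tau_e - K_0)_+$ exceeds $N\epsilon/4$. Decomposing this excess into dyadic scales $n \geq \log_2 K_0$ and using the BK-type inequality (as in the derivation of \eqref{eq:BK}) over the edge-disjoint paths from $0$ to $Ne_1$, the probability of acquiring enough scale-$n$ edges to contribute $\sim s_n$ decays like $\exp(-c\, s_n^{d/(d-1)} f(2^n)^{1/(d-1)})$ by a Lagrange optimization. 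Summing over $n$ subject to $\sum_n s_n \geq N\epsilon/4$ and using $\sum_n f(2^n)^{-1/(d-1)} < \infty$ gives the matching upper bound $\exp(-c_1 N^d)$.

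\textbf{Only if direction (sum diverges $\Rightarrow \limsup \geq 0$).} Fix $\delta > 0$; I will build an event $E_N \subseteq \{T(0,Ne_1) \geq N(\mu+\epsilon)\}$ of probability $\geq e^{-\delta N^d}$ for infinitely many $N$. The construction is a nested barrier: for each $j = 1,\dots,\lfloor \log_2 N \rfloor$, declare the shell $\partial B_\infty(0,2^j)$ ``active'' by requiring each of its $\sim 2^{j(d-1)}$ crossing edges to have weight $\geq K_j$. The shells are edge-disjoint, so
\[
\mathbb{P}(E_N) \geq \exp\Bigl(-C \sum_j 2^{j(d-1)} K_j^d f(K_j)\Bigr).
\]
Any self-avoiding path from $0$ to $Ne_1$ must cross each active shell and therefore pick up at least $K_j$ from shell $j$; combining this with the lower-tail concentration for the passage time outside the shells (Theorem~\ref{thm:LLD}) gives $T(0,Ne_1) \geq \mu N + \sum_j K_j - o(N)$ on $E_N$. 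Minimize the cost subject to $\sum_j K_j \geq N\epsilon$ by Lagrange multipliers: the optimal $K_j$ satisfies $K_j^{d-1} f(K_j) \propto 2^{-j(d-1)}$. Substituting back, the resulting cost is
\[
C\, \frac{N^d \epsilon^d}{\bigl(\sum_j 2^{-j} f(K_j)^{-1/(d-1)}\bigr)^{d-1}}.
\]
The optimal choice makes $K_j$ decrease geometrically, so a change of variables $2^{-j}f(K_j)^{-1/(d-1)} \leftrightarrow f(2^n)^{-1/(d-1)}$ identifies the denominator (up to reindexing and a bounded factor) with partial sums of $\sum_n f(2^n)^{-1/(d-1)}$. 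When this series diverges, the denominator $\to \infty$, forcing the cost to be $o(N^d)$ for a suitable subsequence of $N$, hence $\mathbb{P}(E_N) \geq e^{-\delta N^d}$ along this subsequence, establishing $\limsup \geq 0$.

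\textbf{Main obstacle.} The delicate step is calibrating the Lagrange optimization in both directions so that the critical exponent $1/(d-1)$ emerges cleanly. In the if direction this requires a BK-type inequality applied jointly over the weight scales, with explicit control of the number of edge-disjoint paths and of how scale-$n$ contributions compound; in the only if direction it requires that the discrete (lattice, integer $K_j$) nature of the shell construction not disturb the continuum Lagrange optimum, and that the lower-order corrections from the annular passage times between shells are genuinely $o(N)$. The second point in particular relies on controlling the minimum-cost arrangement of excess so that one cannot evade the shells by detouring through lattice regions where the construction has not frozen the weights.
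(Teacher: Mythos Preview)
The survey itself does not prove this theorem; it merely quotes it from Cranston--Gauthier--Mountford, so there is no in-paper argument to compare against. That said, your sketch contains a concrete error in the only-if direction that makes the construction fail. With dyadic shells at radii $2^j$, the denominator in your optimized cost,
\[
S\;=\;\sum_{j\le \log_2 N} 2^{-j}\, f(K_j)^{-1/(d-1)},
\]
is uniformly bounded: each $K_j\ge M_0$ forces $f(K_j)\ge f(M_0)$, so $S\le f(M_0)^{-1/(d-1)}\sum_j 2^{-j}\le f(M_0)^{-1/(d-1)}$. Hence your cost is $\Theta(N^d)$ and never $o(N^d)$, regardless of whether $\sum_n f(2^n)^{-1/(d-1)}$ diverges. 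The claimed change of variables $2^{-j}f(K_j)^{-1/(d-1)}\leftrightarrow f(2^n)^{-1/(d-1)}$ cannot repair this, since it would have to absorb the geometric weight $2^{-j}$, and there is no mechanism for that. What actually works is to place shells at \emph{all} integer radii $1,2,\ldots,J$; the Lagrange denominator then becomes $\sum_{i\le J} i^{-1} f(K_i)^{-1/(d-1)}$, and with the optimal $K_i$ (which runs from roughly $N\epsilon$ down to $M_0$) the substitution $v=K_i$ turns this sum into a constant multiple of $\int_{M_0}^{K_1}\frac{dv}{v\,f(v)^{1/(d-1)}}\asymp\sum_{n\le\log_2 N} f(2^n)^{-1/(d-1)}$, which diverges exactly when the series does.

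Your if-direction exponents are also off. If the per-scale decay were $\exp\bigl(-c\,s_n^{d/(d-1)}f(2^n)^{1/(d-1)}\bigr)$, then optimizing over allocations $(s_n)$ with $\sum_n s_n\ge N\epsilon/4$ gives total rate of order $N^{d/(d-1)}$ (not $N^d$) and singles out the series $\sum_n f(2^n)^{-1}$ rather than $\sum_n f(2^n)^{-1/(d-1)}$. The rate that is genuinely dual to the shell construction is $\exp\bigl(-c\,s_n^{\,d}\,f(2^n)\bigr)$ per scale, and obtaining an \emph{upper} bound of that form requires a real combinatorial (Peierls/cut-set) argument for configurations of heavy edges that block every path --- not BK applied to the $O(1)$ edge-disjoint paths emanating from the single vertex $0$, which can only ever produce rate $O(N)$.
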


If instead of considering the sequence of point to point passage times $T(0,ne_{1})$, we take the box-to-plane passage times
$$ G(0,n) = \inf \bigg\{T(\Gamma) : \Gamma \text{ a path from } \{0\} \times [-n,n]^{d-1} \text{ to } \{n\} \times \mathbb Z^{d-1}\bigg\},$$
then \cite{GK2} $$\lim_{n} \frac{G(0,n)}{n} = \lim_{n} \frac{T(0,ne_{1})}{n}=\mu(e_{1}).$$ In this case, assuming $F(0)<p_{c}$, Chow and Zhang \cite{CZldp} showed the existence of convex functions $c(\epsilon)$ and $c'(\epsilon)$ so that
$$\lim_{n} \frac{1}{n^{d}} \log \Pro\bigg(G(0,n) \geq n(\mu+\epsilon)\bigg) =c(\epsilon) \text{ and }  \lim_{n} \frac{1}{n} \log \Pro\bigg(G(0,n) \leq n(\mu-\epsilon)\bigg) =c'(\epsilon). $$

We end this section with a brief remark on the proofs of Theorems \ref{thm:LLD} and \ref{thm:LDPUP}. The most important step in the proof of these theorems is Proposition \ref{Prop:KestenLDP} below. It estimates the distribution of $T(0,ne_{1})$ in terms of sums of independent passage times across large strips. These strips are still small with respect to $n$.  

Let $H_{k}$ be the hyperplane $\{ x=(x_{1},\ldots, x_{d}) : x_{1}=k \} \subset \mathbb R^{d}$. For a path $\Gamma$ we write 
$$ H_{k} < \Gamma < H_{l} $$ 
if all edges of $\Gamma$ lie strictly between $H_{k}$ and $H_{l}$. For $N\geq 1$, we define the $d-1$ dimensional box  
$$ H_{k}^{N}:= \bigg\{ x=(x_{1},x_{2}, \ldots, x_{d}): x_{1} = k,\; 0\leq x_{i} \leq N, i\geq 2 \bigg\}. $$
Note that $H_{k}^{N} \subset H_{k}$.  For $M\geq 1$ we also define the following cylinder passage time:

$$ s_{0,n}^{N} := \inf \bigg \{ T(\Gamma) : \Gamma \text{ a path from some point  in }  H_{0}^{N} \text{ to } H_{n} \text{ for which } H_{0}<\Gamma<H_{n} \bigg\}.  $$

\begin{proposition}[Proposition 5.23 in \cite{KestenAspects}]\label{Prop:KestenLDP}
Let $X_{i}(M,N)$ be independent random variables, each with distribution of $s_{0,M}^{N}$. Then for any $n\geq M \geq N \geq 1$, $t\geq0$ one has
\begin{equation}\label{eq:EstimateinAspects}
\Pro\bigg( T(0,ne_{1}) < t \bigg) \leq \sum_{\ell = n/(M+N) -1}^{\infty} (2d(15\frac{M}{N})^{d})^{\ell} \Pro\bigg(X_{1}(M,N)+\ldots+ X_{\ell}(M,N) < t \bigg).
\end{equation}
\end{proposition}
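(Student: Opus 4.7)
The strategy is to encode any path $\Gamma$ from $0$ to $ne_1$ as a greedy sequence of disjoint cylindrical ``crossings'' whose passage times are independent copies of $s_{0,M}^N$, and then union-bound over all possible such encodings. Concretely, I follow $\Gamma$ starting from $0$ and wait until it first advances by $M$ units in some axis direction $\pm e_\alpha$; at that moment I record a cylinder of length $M$ with cross-section an $N^{d-1}$-cube containing the entry vertex of $\Gamma$, oriented along $\pm e_\alpha$. I then skip a further $N$ units in direction $\pm e_\alpha$ before resuming, leaving a buffer so that the next cylinder shares no edges with the current one. Because the cylinders are edge-disjoint by construction, the random variables $X_1, X_2, \ldots$ giving the minimum passage time through each cylinder (each distributed as $s_{0,M}^N$ by translation invariance) are mutually independent, and the portion of $\Gamma$ inside cylinder $i$ has passage time at least $X_i$. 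Hence $T(\Gamma) \geq X_1 + \cdots + X_\ell$, where $\ell$ is the total number of cylinders placed.

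Next, since $\Gamma$ reaches the hyperplane $H_n$ and each cylinder accounts for at most $M+N$ units of $e_1$-progress, $\ell \geq n/(M+N) - 1$. To turn the pointwise estimate into a probability bound, I classify the event $\{T(0,ne_1) < t\}$ by the ``skeleton'' of a minimizing path, i.e., the sequence $(\pm e_{\alpha_i}, B_i)_{i=1}^\ell$ of directions and base boxes produced by the greedy procedure. For a fixed skeleton, the event that the associated cylinder passage times sum to less than $t$ has probability $\mathbb{P}(X_1(M,N) + \cdots + X_\ell(M,N) < t)$ by independence and the common distribution. Summing over skeletons of each length $\ell$ gives
\begin{equation*}
\mathbb{P}(T(0,ne_1) < t) \;\leq\; \sum_{\ell \geq n/(M+N)-1} \mathcal N(\ell)\,\mathbb{P}\bigl(X_1(M,N) + \cdots + X_\ell(M,N) < t\bigr),
\end{equation*}
where $\mathcal N(\ell)$ bounds the number of admissible skeletons of length $\ell$.

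The final and most delicate step is the combinatorial bound $\mathcal N(\ell) \leq (2d(15M/N)^d)^\ell$. At each greedy step there are $2d$ choices of axis direction; the key geometric observation is that between the placement of cylinder $i$ and of cylinder $i+1$, the path $\Gamma$ has not yet advanced $M$ units in any axis direction, so the base of cylinder $i+1$ lies in an $\ell^\infty$-ball of radius $O(M)$ around cylinder $i$'s exit face. A direct volume count of $N^{d-1}$-cubes in such a region (together with the $O(M)$ range of admissible positions along the direction of the previous cylinder) yields at most $(15M/N)^d$ admissible base boxes at each step, with the constant $15$ absorbing lattice-discretization slack. Multiplying across $\ell$ steps produces the factor $(2d(15M/N)^d)^\ell$, which gives \eqref{eq:EstimateinAspects}.

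The main obstacle is making this geometric count fully rigorous: one has to show that the greedy rule's transverse ``reach'' between consecutive cylinders genuinely remains $O(M)$ even when $\Gamma$ executes long excursions in the buffer regions, and one has to choose the greedy rule carefully --- e.g.\ by using the first forward crossing in each direction --- so that the recorded subpaths are guaranteed to terminate on the opposite face of their assigned cylinder and so that the resulting skeleton is a well-defined deterministic functional of $\Gamma$. Once these points are handled, the independence and distributional identification of the $X_i$ follow from the slab partition, and the union bound closes the argument.
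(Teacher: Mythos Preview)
Your skeleton/greedy construction and the combinatorial counting are along the right lines, but the independence claim has a genuine gap. The random variable $s_{0,M}^N$ is \emph{not} a bounded-cylinder crossing time: by definition it is the infimum over paths from the box $H_0^N$ to the full hyperplane $H_M$ that stay strictly between $H_0$ and $H_M$, with no transverse restriction. Consequently each translated/rotated copy of $s_{0,M}^N$ is measurable with respect to an entire infinite slab of width $M$. Two such slabs with different axis orientations always share infinitely many edges, and even two slabs with the same orientation can overlap if the path backtracks into an old $x_\alpha$-range. Your $N$-buffer only separates \emph{consecutive} slabs \emph{in the same direction}; it does nothing once the greedy direction changes or the path revisits earlier coordinates. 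So the assertion that ``the random variables $X_1, X_2, \ldots$ are mutually independent'' is not justified, and without it the union bound over skeletons does not yield the stated inequality.

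Kesten's proof (to which the paper defers) resolves precisely this point via the van den Berg--Kesten inequality, packaged here as Lemma~\ref{eq:BK}. The sub-paths of a self-avoiding $\Gamma$ corresponding to the successive slab crossings are automatically edge-disjoint, so the events ``there is a path from the $i$-th base box across the $i$-th slab in time $< t_i$'' \emph{occur disjointly}. BK then gives, for each fixed skeleton, the bound $\mathbb{P}\bigl(\sum_i X_i(M,N) < t\bigr)$ with genuinely independent $X_i \sim s_{0,M}^N$, with no need for the slabs themselves to be disjoint. That application of BK is the missing ingredient in your plan.
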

The proof of Proposition \ref{Prop:KestenLDP} relies on the van den Berg-Kesten (BK) inequality. In reality, it relies on the lemma below, one of the most useful illustrations of the BK inequality in FPP. Given sets of vertices of $\mathbb Z^{d}$, $A(i,j)$, $i\geq 1, 1\leq j \leq n(i)$, with $A(i,0) = \{ 0 \}$ and $A(i,n(i)+1)=\{x\}$, define
\begin{equation*}\begin{split}
\Pi_{i}(0,x,t):= \bigg \{ \Gamma: \Gamma &\text{ is a path from } 0 \text{ to } x \text{ that passes successively }\\ 
&\text{ through } A(i,1), \ldots A(i,n(i)) \text{ and has } T(\Gamma)<t \bigg\}.
\end{split}
\end{equation*}

\begin{lemma} [(4.13) in \cite{KestenAspects}]\label{eq:BK}For any $t>0$,
\begin{equation}
\Pro \bigg( \bigg \{ \bigcup_{i\geq 1} \Pi_{i}(0,x,t) \bigg\} \neq \emptyset \bigg) \leq \sum_{i\geq 1} \Pro\bigg( \sum_{j=0}^{n(i)} T'(i,j) < t \bigg), 
\end{equation}
where $T'(i,j)$ are all independent with $T'(i,j) \stackrel{d}{=} T(A(i,j), A(i,j+1))$.
\end{lemma}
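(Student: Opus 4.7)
The plan is to first peel off the union over $i$ by a simple union bound, reducing to the task of bounding $\Pro(\Pi_i(0,x,t) \neq \emptyset)$ for a fixed index $i$ by $\Pro\bigl(\sum_{j=0}^{n(i)} T'(i,j) < t\bigr)$ with the $T'(i,j)$ independent. So fix $i$ and write $A_j = A(i,j)$ for brevity.

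Next, I would extract the geometric content of the event $\Pi_i(0,x,t) \neq \emptyset$. If $\Gamma$ is a path witnessing this event, walk along $\Gamma$ from $0$ and, for each $j \in \{0,1,\ldots,n(i)\}$, let $u_j$ be the last vertex of $\Gamma \cap A_j$ before the first hit of $A_{j+1}$ and let $\Gamma_j$ be the sub-path of $\Gamma$ from $u_j$ to $u_{j+1}$. Because $\Gamma$ is a single path, the $\Gamma_j$ are \emph{edge-disjoint}, and
\[
\sum_{j=0}^{n(i)} T(\Gamma_j) \leq T(\Gamma) < t, \qquad T(\Gamma_j) \geq T(A_j,A_{j+1}).
\]
Hence the event $\Pi_i(0,x,t) \neq \emptyset$ entails the existence of nonnegative reals $t_0,\ldots,t_{n(i)}$ with $\sum_j t_j < t$ and edge-disjoint paths $\Gamma_j$ from $A_j$ to $A_{j+1}$ satisfying $T(\Gamma_j) < t_j$ for each $j$. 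In the language of disjoint-occurrence, the events $E_j(t_j) := \{T(A_j,A_{j+1}) < t_j\}$, which are decreasing in the edge weights and naturally witnessed by their minimizing paths, occur disjointly.

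Now I would apply the BK-van den Berg-Fiebig inequality for disjoint occurrence of decreasing cylinder events on the product space of edge weights, which gives
\[
\Pro\bigl(E_0(t_0) \circ E_1(t_1) \circ \cdots \circ E_{n(i)}(t_{n(i)})\bigr) \leq \prod_{j=0}^{n(i)} \Pro(T(A_j,A_{j+1}) < t_j).
\]
The product on the right is exactly the distribution of the sum $\sum_{j=0}^{n(i)} T'(i,j)$ with the $T'(i,j)$ independent copies of $T(A_j,A_{j+1})$. To convert the existential statement over thresholds into a single inequality, I would discretize the $t_j$ on a fine grid (or use an integral/convolution argument directly): approximating the simplex $\{(t_j) : t_j \geq 0, \sum t_j < t\}$ by a countable union of product cells and applying BK on each cell yields
\[
\Pro(\Pi_i(0,x,t) \neq \emptyset) \leq \Pro\Bigl(\sum_{j=0}^{n(i)} T'(i,j) < t\Bigr).
\]
Summing over $i$ completes the proof.

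The main obstacle is the BK step itself: the classical van den Berg-Kesten inequality is stated for Bernoulli product measures, and one must invoke its extension to product measures on $\R^E$ for decreasing events with disjoint witnesses (either by the discrete approximation of continuous weights followed by a limit, or by directly citing the van den Berg-Fiebig generalization). Verifying that the minimizing path is a valid ``witness'' in this sense, and that the discretization does not inflate probabilities in the limit, is the only delicate point; everything else reduces to the edge-disjoint path-splitting observation and a union bound.
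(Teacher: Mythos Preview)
The paper does not actually prove this lemma; it merely states it and cites (4.13) in Kesten's \emph{Aspects of first passage percolation}. Your sketch is essentially the standard argument Kesten gives there: reduce to a single $i$ by a union bound, split a witnessing path $\Gamma$ into consecutive (hence edge-disjoint, once $\Gamma$ is taken self-avoiding) segments between the successive sets $A(i,j)$, observe that this forces the decreasing events $\{T(A_j,A_{j+1})<t_j\}$ to occur disjointly for some thresholds summing to less than $t$, apply the BK inequality for disjoint occurrence on a general product space, and finally pass from fixed thresholds to the sum constraint by discretizing the simplex (or, equivalently, by a convolution argument). You correctly identify the only genuinely delicate point, namely that the BK step must be carried out for decreasing events under a general product measure rather than a Bernoulli one; Kesten handles this by truncation and discrete approximation, and citing van den Berg--Fiebig is an acceptable alternative. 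Your proposal is correct and follows the same route as the cited reference.
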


\subsection{Cases where Gaussian fluctuations appear}
In this section, we discuss a few cases where Gaussian fluctuations are known or expected to appear. The first example is Critical FPP, where the edge weights are taken with $F(0)=p_{c}$, exactly at the percolation threshold. The second case, is when we look at FPP in a thin cylinder, where geodesics want to be thinner than usual.
\subsubsection{Critical first-passage percolation}

From Kesten's work, we know that 
\[
p:=\mathbb{P}(\tau_e = 0) \geq p_c \Leftrightarrow g(x) = 0 \text{ for all }x,
\]
where $p_c$ is the critical threshold for bond percolation on $\mathbb{Z}^d$. Most theorems in FPP have the assumption that the above probability is strictly less than $p_c$ for this reason. Namely, if $p > p_c$, then one can show that as $x\to \infty$, the family of passage times $T(0,x)$ is stochastically bounded above by a random variable $Z$ that does not depend on $x$. The reason is that to travel from $0$ to $x$, one simply needs to go from 0 to the infinite cluster of zero-weight edges, then from here to $x$. On the other hand, if $p=p_c$, then very little is known. This is due to the fact that one of the outstanding problems in probability theory is to show that there is no infinite cluster of open edges in bond percolation on $\mathbb{Z}^d$ at the critical point. Therefore, we do not know if vertices can take advantage of an infinite cluster of zero-weight edges when $p=p_c$, and so we do not know even the growth rate of $T(0,x)$ as $x \to \infty$.

The first result on the critical case came from Chayes \cite{Chayes}, who showed in the Bernoulli case:
\begin{theorem}[Chayes \cite{Chayes}]
Let $d \geq 3$ and $\tau_e$ satisfy $\mathbb{P}(\tau_e=0) = p_c = 1-\mathbb{P}(\tau_e=1)$. If $\epsilon>0$, then
\[
\lim_n \frac{T(0,ne_1)}{n^\epsilon} = 0 \text{ almost surely}.
\]
\end{theorem}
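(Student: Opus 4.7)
The plan is to prove the stronger statement that, almost surely, $T(0, ne_1) \leq (\log n)^{C}$ for some $C = C(d) < \infty$. This immediately implies the claim since $(\log n)^{C}/n^{\epsilon} \to 0$ for any $\epsilon > 0$.

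The strategy is a multi-scale renormalization exploiting critical Bernoulli bond percolation on the zero-weight edges. First I would establish a baseline box-crossing estimate at criticality: there exist $L_{0}$ and $\rho > 0$ such that, with probability at least $\rho$, the box $[0, L_{0}]^{d}$ admits a path of zero-weight edges from the face $\{x_{1} = 0\}$ to $\{x_{1} = L_{0}\}$. This step uses the FKG inequality together with the positivity of the one-arm connection probability at $p_{c}$, and one can boost it using independent parallel attempts in transverse directions.

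Next I would iterate the construction across geometric scales $L_{k} = 2^{k} L_{0}$. Tile $\mathbb{Z}^{d}$ with boxes of side $L_{k}$ and declare a block "good" at scale $k$ if it contains a zero-weight crossing that matches with crossings in adjacent good blocks. By stochastic domination arguments, the configuration of good blocks dominates a Bernoulli field whose density can be made arbitrarily high by choosing $k$ large. Here the hypothesis $d \geq 3$ enters essentially: there is enough transverse room to reroute around a bad block of scale $L_{k-1}$ through parallel good blocks at comparable scale, a luxury absent in $d = 2$. One then constructs an explicit path from $0$ to $ne_{1}$ by chaining together good-block crossings at various scales, paying at most $O(1)$ weight-one edges at each transition between adjacent good blocks and at most a bounded number of scale-transitions per bad block. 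Summing over scales from $L_{0}$ up to $n$ gives $O((\log n)^{C})$ weight-one edges used in total.

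Finally I would invoke Borel--Cantelli. A union bound over the possible bad-block patterns, combined with the exponential decay of failure probabilities obtained at each renormalization step, yields $\mathbb{P}(T(0, ne_{1}) > (\log n)^{C+1}) \leq n^{-2}$ for large $n$. Borel--Cantelli along $n_{k} = 2^{k}$, together with monotonicity to interpolate to all $n$, gives the almost sure bound.

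The main obstacle is the hierarchical rerouting argument: one must verify that the recursion actually closes at each scale, with only a polylogarithmic cumulative cost in weight-one edges. This requires careful quantitative control of how bad blocks at different scales overlap and interact, and is where Chayes's construction does genuine work. The critical percolation structure in $d \geq 3$ allows independent parallel routes at every scale, whereas in $d = 2$ the same argument would force $\log$- rather than $\mathrm{polylog}$-level bounds and requires the distinct approach used in the recent work of Damron--Lam--Wang.
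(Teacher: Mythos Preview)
The paper does not supply its own proof of this theorem; it is stated with a citation to Chayes, followed by the remark that Kesten claims Chayes's argument can be pushed to $T(0,ne_1) \leq \exp(C\sqrt{\log n})$. So there is no in-paper proof to compare against; I evaluate your sketch on its own merits.

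There is a genuine gap at the renormalization step. You assert that ``the configuration of good blocks dominates a Bernoulli field whose density can be made arbitrarily high by choosing $k$ large.'' This is precisely the assertion that fails at criticality. In supercritical renormalization the block-crossing probability tends to $1$ because the underlying percolation is supercritical; at $p=p_c$ there is no such mechanism, and whether the crossing probability of an $L$-box even tends to $1$ as $L\to\infty$ is tied to unresolved questions about critical percolation in intermediate dimensions. Your definition of ``good'' also requires crossings in adjacent blocks to match, an additional constraint that does not come for free. You have not supplied the estimate that makes the recursion close, and standard tools do not supply it. The ``independent parallel attempts'' idea you mention is indeed the right ingredient, but it does not feed a scale recursion in the way you describe: at each scale one only knows that a crossing occurs with probability at least polynomially small in $L$, and one must spend many attempts in the $d-1\ge 2$ transverse directions to boost this; balancing these costs against the number of scales is where the work lies.

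A second warning sign is your target. You aim for $T(0,ne_1)\le (\log n)^C$, but the best bound attributed to this line of argument is Kesten's $\exp(C\sqrt{\log n})$, and the survey explicitly leaves open whether $\mathbb{E}T(0,\partial B(n))\asymp \log n$ already in $d=3$. If your sketch worked as written, it would substantially improve on Chayes--Kesten using the same ingredients; this is evidence that the recursion does not close with the polylogarithmic cost you claim.
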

\noindent See \cite[Remark~3]{KestenAspects} also, where Kesten claims that Chayes's argument can be extended to $$T(0,ne_1) \leq \exp(C\sqrt{\log n})$$ for large $n$ almost surely.

Although this is the extent of knowledge on the critical case in general, much more progress has been made in two dimensions. In this case, Zhang \cite{Zhang} showed that different critical edge-weight distributions can display completely different asymptotic behavior: defining the passage time to infinity as
\begin{equation}\label{eq: zhangs_rho}
\rho = \lim_n T(0,\partial B(n)),
\end{equation}
where $B(n) = [-n,n]^2$, there are some critical distributions for which $\rho=\infty$ almost surely, and there are some for which $\rho<\infty$ almost surely. He did this by comparing critical FPP to a model called the CCD incipient infinite cluster\cite{CCD1}. This is an inhomogeneous bond percolation model which is tuned to resemble a large percolation cluster at criticality. A complete characterization was given recently in \cite{DLW}, using methods related to 2$d$ invasion percolation. For its statement, let $F$ be the distribution function of $\tau_e$ and set
\[
F^{-1}(y) = \inf\{x : F(x) \geq t\} \text{ for } t \geq 0.
\]
\begin{theorem}[Damron-Lam-Wang  \cite{DLW}]\label{thm: DLW}
For $d=2$, one has $\rho < \infty$ almost surely if and only if $\sum_{n=2}^\infty F^{-1}(p_c + 2^{-n}) < \infty$.
\end{theorem}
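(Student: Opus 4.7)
My plan is to couple $\tau_e = F^{-1}(U_e)$ for i.i.d.\ $U_e \sim \mathrm{Unif}[0,1]$, so that $\{e : \tau_e \leq F^{-1}(p)\}$ is Bernoulli$(p)$ bond percolation. Write $A_n = B(2^n) \setminus B(2^{n-1})$ and $t_n := F^{-1}(p_c + 2^{-n})$. Note $\sum t_n < \infty$ forces $t_n \to 0$, hence $F(0) \geq p_c$; the strictly supercritical case $F(0) > p_c$ is trivial because the zero-weight cluster at $0$ then reaches infinity, so I focus on $F(0) = p_c$. A Kolmogorov 0--1 argument shows $\P(\rho < \infty) \in \{0,1\}$, so the dichotomy in the theorem is automatic once either implication is proven.

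For sufficiency ($\sum t_n < \infty \Rightarrow \rho < \infty$ a.s.), I plan to work with the 2D invasion percolation cluster $I_0$ rooted at $0$. Under the coupling, $I_0$ depends only on $(U_e)$, is a.s.\ infinite, and decomposes into the classical \emph{pond--outlet} structure: nested ponds $P_1 \subset P_2 \subset \cdots$ joined by outlet edges $\{o_k\}_{k \geq 1}$, with $o_k$ the heaviest edge invaded up to formation of the $k$-th pond. Non-outlet edges satisfy $U_e < p_c$, so $\tau_e = 0$ under the assumption $F(0) = p_c$. The crucial input I will invoke is the geometric outlet decay $U_{o_k} - p_c \lesssim 2^{-k}$ (with high probability, along any infinite self-avoiding path in $I_0$ from $0$), a 2D statement that follows from Kesten's scaling relations, separation-of-arms estimates, and a multi-scale analysis of invasion ponds due to Damron--Sapozhnikov, going back to Chayes--Chayes--Newman. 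Fixing any such path $\gamma$,
\[
\rho \;\leq\; \sum_{e \in \gamma} \tau_e \;=\; \sum_{k \geq 1} \tau_{o_k} \;\leq\; \sum_{k \geq 1} F^{-1}\Paren{p_c + C 2^{-k}} \;\lesssim\; \sum_n t_n \;<\; \infty.
\]

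For necessity ($\sum t_n = \infty \Rightarrow \rho = \infty$ a.s.), set $\tilde t_n := F^{-1}(p_c + \tfrac{1}{2} 2^{-n})$ and define the blocking event
\[
E_n := \{\text{the dual of } A_n \text{ contains a circuit around } 0 \text{ all of whose primal edges have } U_e > p_c + \tfrac{1}{2} 2^{-n}\}.
\]
On $E_n$, every primal crossing of $A_n$ must use an edge with $\tau_e \geq \tilde t_n$. By Kesten's scaling relations, the correlation length at density $p_c + \tfrac{1}{2} 2^{-n}$ is of order $2^{(4/3)n}$, dwarfing the annular width $2^n$, so percolation restricted to $A_n$ is effectively critical and RSW gives $\P(E_n) \geq \alpha_0 > 0$ uniformly in $n$. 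The events $E_n$ and $E_{n+k}$ are independent for $k$ large (they involve disjoint edges), so Borel--Cantelli produces a random $\mathcal I \subset \mathbb N$ of positive lower density on which $E_n$ holds; summing over disjoint scales,
\[
\rho \;\geq\; \sum_{n \in \mathcal I} \tilde t_n \;\gtrsim\; \sum_n t_n \;=\; \infty.
\]

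The main obstacle is the sufficiency half. The outlet decay $U_{o_k} - p_c \lesssim 2^{-k}$ is a deep 2D fact and not immediate from RSW or the shape theorem -- it rests on Kesten's scaling relations together with a delicate recursive decomposition of the invasion into hierarchical ponds. If that ingredient turned out to be inconvenient to wield directly, my fallback would be a scale-by-scale construction: use RSW to produce nested circuits around $0$ inside $A_k$ from edges with $U_e \leq p_c + 2^{-k}$, joining consecutive circuits by $O(1)$-length radial crossings of $A_k$. The delicate point there is to use only short arcs of each circuit rather than its full length $\sim 2^k$, since naively summing whole circuits would give the divergent bound $\sum 2^k t_k$; extracting the correct $\sum t_k$ scaling requires exploiting the nested geometry and a careful local modification argument.
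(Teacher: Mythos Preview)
The survey does not itself prove this theorem; it only cites \cite{DLW} and remarks that the argument ``uses methods related to 2d invasion percolation,'' so there is no proof here to compare against and I assess your sketch directly.

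Your sufficiency argument contains a genuine gap. The claim ``non-outlet edges satisfy $U_e < p_c$, so $\tau_e = 0$'' is false: edges in the $k$-th pond satisfy only $U_e \le U_{o_k}$, and since $U_{o_k} > p_c$ for every finite $k$, pond edges lying on the infinite ray can have $U_e \in (p_c, U_{o_k}]$ and hence strictly positive $\tau_e$. Between consecutive outlets the ray traverses on the order of $\mathrm{diam}(P_k)$ such edges, so the contribution you omit is $\sum_k \mathrm{diam}(P_k)\,F^{-1}(U_{o_k})$ --- precisely the $\sum 2^k t_k$ overcount you flag in your fallback, now reappearing inside the invasion argument rather than being bypassed by it. Summing outlet weights alone does not bound $T(\gamma)$, and extracting the sharp $\sum_k F^{-1}(p_c + 2^{-k})$ is the substantive work in \cite{DLW} that your sketch does not supply. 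Your necessity half is closer to correct, with two caveats: the correlation-length exponent $\nu = 4/3$ you quote is proven only for site percolation on the triangular lattice (on $\mathbb{Z}^2$ one must instead use the known polynomial two-sided bounds on $L(p)$ together with a condensation argument showing the summability condition is insensitive to the precise power), and the step $\sum_{n \in \mathcal I} \tilde t_n \gtrsim \sum_n t_n$ is invalid for general decreasing $t_n$ with $\mathcal I$ merely of positive density --- argue instead that $\sum_n \tilde t_n \,\indi_{E_n}$ has independent nonnegative summands with divergent mean, hence diverges almost surely.
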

The theorem implies that if $F$ is concentrated enough near $0$, then the passage time to infinity can be finite. Otherwise, if there are too many large edges, the passage time to infinity will be infinite. For example, $\rho < \infty$ when $F$ decays to $p_c$ like a polynomial near 0, but can be infinity if $F$ decays to $p_c$ like an exponential (with large enough rate) near 0.

In the case that $\rho = \infty$, one can ask about the rate of growth of $T(0,\partial B(n))$ as $n \to \infty$. The first result of this type was due to Chayes-Chayes-Durrett\cite{CCD}, who showed that in the critical Bernoulli case, $$\mathbb{E}T(0,\partial B(n)) \asymp \log n.$$ The general case was given in \cite{DLW}. Let $Y$ be the minimum of $4$ i.i.d. copies of $\tau_e$.
\begin{theorem}[Damron-Lam-Wang  \cite{DLW}]
For $d=2$, suppose that $F(0)=p_c$ and $\mathbb{E}Y^\alpha < \infty$ for some $\alpha>1/4$. There exists $C>0$ such that
\[
\frac{1}{C} \mathbb{E} T(0, \partial B(2^n)) \leq \sum_{k=2}^n F^{-1}(p_c + 2^{-k}) \leq C \mathbb{E}T(0,\partial B(2^n)).
\]
If $\mathbb{E}Y^\alpha < \infty$ for some $\alpha > 1/2$, then there exists $C'>0$ such that
\[
\frac{1}{C'} \mathrm{Var}~T(0,\partial B(2^n)) \leq \sum_{k=2}^n \left( F^{-1}(p_c+2^{-k}) \right)^2 \leq C' \mathrm{Var}~T(0,\partial B(2^n)).
\]
\end{theorem}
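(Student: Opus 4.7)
The proof is a multi-scale argument built on dyadic annuli $A_k := B(2^k)\setminus B(2^{k-1})$. Write $v_k := F^{-1}(p_c+2^{-k})$, so that the random edge set $\{e : \tau_e \leq v_k\}$ is an i.i.d.\ Bernoulli bond percolation configuration of density exactly $p_c+2^{-k}$, slightly supercritical. The heart of the argument is to establish that the minimum passage time $T_k$ across $A_k$ (between its inner and outer boundaries, through $A_k$) satisfies $\mathbb{E} T_k \asymp v_k$ and $\mathrm{Var}(T_k) \asymp v_k^2$ uniformly in $k$, with the $(T_k)_k$ independent because they depend on disjoint edge sets.

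For the lower bounds in the theorem, any path from $0$ to $\partial B(2^n)$ crosses every annulus $A_k$, giving deterministically $T(0,\partial B(2^n))\geq \sum_{k=2}^n T_k$. To obtain $\mathbb{E} T_k \geq cv_k$, I would compare against a lower threshold $v_k/2$: the event that $A_k$ has a crossing made entirely of edges of weight $<v_k/2$ reduces to a crossing in a percolation configuration of density strictly less than $p_c+2^{-k}$, which by an RSW/FKG argument happens with probability bounded away from $1$; hence with positive probability any crossing must contain an edge of weight $\geq v_k/2$, contributing $\geq cv_k$ to $T_k$. The analogous two-scale comparison yields $\mathrm{Var}(T_k)\geq cv_k^2$, and independence of the $T_k$ then gives both the expectation and variance lower bounds in the theorem.

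For the matching upper bounds, I would build an explicit near-optimal path by gluing circuits together: in each $A_k$, the Russo-Seymour-Welsh theorem applied to the slightly supercritical configuration $\{\tau_e\leq v_k\}$ produces, with probability bounded below, a circuit $\mathcal{C}_k\subset A_k$ around the origin using only edges of weight $\leq v_k$; consecutive circuits are connected by short radial segments whose cost is controlled by one-arm estimates at the same density. On $\mathcal{C}_k$ there are only $O(1)$ edges of weight close to the current threshold $v_k$, while the remaining edges carry weights $v_{k+1},v_{k+2},\dots$ and sum geometrically to $O(v_k)$. This gives $\mathbb{E} T_k\leq Cv_k$, and the same decomposition, together with conditional $L^2$-control of the dominant edge's weight, produces $\mathrm{Var}(T_k)\leq Cv_k^2$; the moment assumption $\mathbb{E} Y^\alpha<\infty$ (with $\alpha>1/4$ for the expectation and $\alpha>1/2$ for the variance) enters exactly to tame atypically heavy edges that could otherwise break this bookkeeping.

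The main obstacle is obtaining sharp scale-by-scale control. One needs RSW-type and arm-exponent inequalities that are uniform in the near-critical parameter $p_c+2^{-k}$, a gluing procedure that keeps the cost per annulus genuinely of order $v_k$ without accumulating logarithmic losses over the $n$ scales, and sufficient concentration of each $T_k$ around its mean so that summing over $k$ preserves the matching bounds for both $\mathbb{E} T(0,\partial B(2^n))$ and $\mathrm{Var}\, T(0,\partial B(2^n))$. Converting the ``with positive probability'' statements above into the sharp two-sided estimates $\mathbb{E} T_k\asymp v_k$, $\mathrm{Var}(T_k)\asymp v_k^2$ uniformly in $k$, and showing that the circuit-plus-radial-glue construction does not inflate the cost at any scale, is the most delicate part of the proof.
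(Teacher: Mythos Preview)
The survey does not contain a proof of this statement: it simply quotes the result from \cite{DLW} and moves on, noting only that the methods there are ``related to $2d$ invasion percolation.'' There is therefore no proof in the paper for you to be compared against.

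That said, your outline is broadly consistent with the actual strategy of \cite{DLW}: a dyadic annulus decomposition, the identification of $\{\tau_e\le v_k\}$ with near-critical Bernoulli percolation at density $p_c+2^{-k}$, RSW-type input to produce circuits in each annulus, and then gluing. Two points in your sketch deserve tightening. First, your independence claim is correct only for the annulus-restricted crossing times, which suffices for the lower bounds; for the upper bounds on the variance one instead needs a genuine decomposition of $T(0,\partial B(2^n))$ into independent annulus contributions with controlled second moments, and this is where the careful construction (and the moment hypothesis on $Y$) is really used. Second, your bookkeeping ``on $\mathcal{C}_k$ there are only $O(1)$ edges of weight close to $v_k$, while the remaining edges carry weights $v_{k+1},v_{k+2},\dots$ and sum geometrically to $O(v_k)$'' is morally right---because $F(0)=p_c$ forces almost all of the $\asymp 2^k$ edges of a circuit at density $p_c+2^{-k}$ to have weight zero---but as written it glosses over both why the count of nonzero edges is $O(1)$ and how you avoid traveling around the whole circuit. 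The original argument handles this by building the outward path scale by scale using crossings (not full circuits) and controlling the number of ``pivotal'' edges at each scale via near-critical arm estimates; you should make that mechanism explicit rather than folding it into a single sentence.
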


For FPP on the sites of a triangular lattice, Yao \cite{Yao2014} proved the existence of a constant $\mu$ such that $(\log n)^{-1}T(0,\partial B(n))$ converges to $\mu$ almost surely.

Back in $\mathbb Z^{d}$, in the case $F(0)<p_c$, establishing limiting laws for $T(0,ne_1)$ is far off, except in some related exactly solvable models. Kesten and Zhang, however, showed that in the Bernoulli critical case in two dimensions, one has a Gaussian central limit theorem on scale $\sqrt{\log n}$. Their theorem applied to all distributions with a gap near zero; that is, ones which have $F(0)=p_c$ and $F(a) = p_c$ for some $a>0$.
 This reason for this condition is precisely the uncertain behavior of $\rho$ in the critical case.

\begin{theorem}[Kesten-Zhang \cite{KestenZhang}] 
Let $d=2$. Suppose that for some $C>0$, $F(0)=F(C)=1/2$. Also assume that $\E \tau_e^{4+\delta} < \infty$ for some $\delta >0$. Then there exist positive constants $C_1, C_2, \gamma_n$ with $$C_1 (\log n)^{1/2} \leq \gamma_n \leq C_2 (\log n)^{1/2}$$ such that for all $t \in \R$
$$ \lim_{n \to \infty} \Pro \big(\gamma_n^{-1}\big[T(0,ne_1) - \E T(0,ne_1)\big] \leq t\big) = \Pro (Z\leq t)$$
where $Z$ is a standard Gaussian random variable.
\end{theorem}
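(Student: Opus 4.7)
The first observation is that under the assumption $F(0)=F(C)=1/2$, the zero-weight edges form critical Bernoulli bond percolation on $\mathbb Z^2$, whose dual is also critical. A path from $0$ to $ne_1$ must cross every dual circuit surrounding $0$ (or $ne_1$) that lies inside the box $[-n,n]^2$; each such crossing forces the path to pick up at least one edge of weight $C$. By the Russo--Seymour--Welsh theory and Harris's FKG inequality, on each dyadic scale $k$, the probability of a dual circuit in the annulus $A_k=B(2^{k+1})\setminus B(2^k)$ around $0$ is bounded away from $0$ and $1$ uniformly in $k$. Hence the number of forced ``$C$-crossings'' is of order $K:=\lfloor \log_2 n\rfloor$, which is the source of the $\log n$ scale. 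The plan is to realise $T(0,ne_1)$ as an approximately independent sum over these $\Theta(\log n)$ annuli and apply a central limit theorem.

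My step-by-step plan is as follows. First, fix annuli $A_k$ at dyadic scales $1\le k\le K$, and on each annulus define a block variable $X_k$ that records the minimum $\tau$-cost of crossing $A_k$ from its inner to outer boundary. Second, show that $X_k$ is measurable with respect to the edge weights inside $A_k$, hence the family $\{X_k\}$ is independent. Third, establish the upper bound $T(0,ne_1)\le \sum_k X_k + R_n$ by concatenating minimal annulus-crossings, and the matching lower bound $T(0,ne_1)\ge \sum_k X_k-R_n$ by a ``peeling'' argument: every path from $0$ to $ne_1$ must cross each $A_k$ and therefore pays at least $X_k$ inside it (up to an additive error bounded by the cost of connecting inside the innermost and outermost boxes, handled by an RSW-plus-gluing estimate that gives $\E R_n^2=O(1)$ uniformly in $n$). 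Fourth, apply the Lindeberg--Feller central limit theorem to the independent array $\{X_k-\E X_k\}_{k\le K}$, with normalisation $\gamma_n^2=\mathrm{Var}\big(\sum_k X_k\big)$. The Lindeberg condition reduces to showing $\sum_k \E[(X_k-\E X_k)^2\indi_{|X_k-\E X_k|>\e\gamma_n}]=o(\gamma_n^2)$, which follows from the $4+\delta$ moment bound: each $X_k$ is stochastically dominated by a sum of a geometrically distributed number of $\tau_e$'s, so $\E X_k^{4+\delta}\le C$ uniformly in $k$.

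Two quantitative inputs drive the variance estimate $C_1\sqrt{\log n}\le \gamma_n\le C_2\sqrt{\log n}$. For the upper bound, $\mathrm{Var}(X_k)\le \E X_k^2 \le C$ independently of $k$, and the $X_k$'s are independent, so $\gamma_n^2\le C K$. For the lower bound one needs $\mathrm{Var}(X_k)\ge c>0$ uniformly in $k$; this is the point where the gap condition and two-dimensionality really bite. Using RSW, with probability at least $p_0>0$ there is no zero-weight dual circuit in $A_k$, forcing $X_k\ge C$; and with probability at least $p_0$ there is a zero-weight crossing of $A_k$, so $X_k=0$. These two events give $\mathrm{Var}(X_k)\ge c C^2 p_0^2$, hence $\gamma_n^2\ge cK$.

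The main obstacle, as is usually the case for these block-decomposition CLTs, is not the application of Lindeberg but the careful control of the error term $R_n$ in the decomposition $T(0,ne_1)=\sum_k X_k+O(R_n)$. The cost of \emph{connecting} the optimal crossings of consecutive annuli is not zero: one must choose the exit/entry points of each annulus to match across scales, which in principle couples the $X_k$'s. The way I would handle this is to redefine $X_k$ as the infimum of passage times between carefully chosen ``contact sets'' on $\partial B(2^k)$ and $\partial B(2^{k+1})$ (for instance, the lowest zero-weight horizontal crossing points on each scale, when they exist), and to use an RSW ``gluing lemma'' in the thin shells $B(2^{k+1})\setminus B(2^{k+1}-1)$ to connect them at uniformly bounded expected cost. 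One must then verify that (i) the new $X_k$'s remain independent (they now depend also on a thin deterministic ``gluing'' shell, which can be put in a separate sigma-algebra and absorbed into $R_n$), and (ii) the boundary behaviour near $0$ and $ne_1$ contributes $O(1)$ to the variance so as not to disturb the order $\sqrt{\log n}$ normalisation. Once this gluing construction is in place, the $4+\delta$ moment hypothesis gives enough integrability to push all error terms into the $o(\gamma_n)$ regime and conclude the desired Gaussian limit.
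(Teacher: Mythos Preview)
The paper is a survey and does not prove this theorem; it merely cites the result to Kesten--Zhang. I will therefore assess your proposal against the actual argument.

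Your overall strategy --- dyadic annulus decomposition, RSW to get uniform per-scale variance, Lindeberg --- is the correct mechanism behind the $\sqrt{\log n}$ scale. The genuine gap is in the control of $R_n$. The lower bound $T(0,ne_1)\ge\sum_k X_k$ is indeed free, but for the upper bound the inter-annulus gluing is a sum of $K\asymp\log n$ nonnegative terms $G_k$, and each $G_k$ has $\E G_k$ bounded \emph{below} by a positive constant: with probability bounded away from zero the thin gluing shell contains no zero-weight circuit and you must pay at least $C$ to cross it. Hence $\E R_n\asymp\log n$ and, if the $G_k$ are independent, $\mathrm{Var}(R_n)\asymp\log n$, the same order as $\gamma_n^2$; your claim $\E R_n^2=O(1)$ is not correct. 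Absorbing $G_k$ into a redefined $X_k$ makes consecutive blocks share a shell and destroys independence; pushing the shell back into $R_n$ reinstates the problem. (A second, smaller omission: your annuli are only around $0$; the endpoint $ne_1$ needs its own family of annuli up to scale $n/2$, joined by a single $O(1)$-variance term at the top.)

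What makes the actual proof work is that one does not approximate: one uses open (zero-weight) circuits as regeneration surfaces. If $\mathcal C$ is an open circuit separating $0$ from $x$, then $T(0,x)=T(0,\mathcal C)+T(\mathcal C,x)$ \emph{exactly}, because traversing $\mathcal C$ costs nothing. Taking the innermost open circuit in each dyadic annulus where one exists yields an exact telescoping decomposition of $T(0,ne_1)$ into increments between successive circuits; conditionally on the circuit locations these increments depend on disjoint edge sets, and RSW gives a geometric tail on the number of scales between successive successful circuits (this, together with the $(4+\delta)$-moment hypothesis, supplies the uniform moment bounds). One then runs a CLT on this array. The crucial difference from your scheme is that gluing along an open circuit is free, so no error term of order $\log n$ ever appears.
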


This result was extended to general distributions in \cite{DLW}. Note that if $\sum_k F^{-1}(p_c + 2^{-k}) = \infty$ but $\sum_k (F^{-1}(p_c+2^{-k}))^2 < \infty$, then the variance of the passage time converges, whereas the mean diverges. In this case, we do not get a Gaussian limit, but rather $T(0,\partial B(n)) - \mathbb{E}T(0,\partial B(n))$ converges to some other variable.
\begin{theorem}[Damron-Lam-Wang  \cite{DLW}]
Suppose that $F(0) = p_c$, and $\sum_{k=2}^\infty F^{-1}(p_c + 2^{-k}) = \infty$. Suppose further that $\mathbb{E}Y^\alpha<\infty$ for some $\alpha > 1/2$. 
\begin{enumerate}
\item If $\sum_{k=2}^\infty ( F^{-1}(p_c + 2^{-k}))^2 < \infty$, then there is a random variable $Z$ with $\mathbb{E}Z=0$ and $\mathbb{E}Z^2 < \infty$ such that as $n \to \infty$,
\[
T(0,\partial B(n)) - \mathbb{E}T(0, \partial B(n)) \to Z \text{ almost surely and in } L^2.
\]
\item If $\sum_{k=2}^\infty (F^{-1}(p_c + 2^{-k}))^2 = \infty$, then as $n \to \infty$,
\[
\frac{T(0,\partial B(n)) - \mathbb{E}T(0, \partial B(n))}{\sqrt{\mathrm{Var}~T(0,\partial B(n))}} \Rightarrow N(0,1).
\]
\end{enumerate}
\end{theorem}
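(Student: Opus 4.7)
The plan is to exploit a decomposition of $T(0,\partial B(n))$ over dyadic annular scales $A_k := B(2^k)\setminus B(2^{k-1})$. For $k\ge 1$ set the \emph{characteristic edge weight} at scale $k$,
\[ r_k := F^{-1}(p_c + 2^{-k}), \]
which, by Russo--Seymour--Welsh theory applied to Bernoulli percolation at density $F(r_k) = p_c + 2^{-k}$, is the threshold at which the probability of a low-weight crossing of $A_k$ (using only edges of weight $\le r_k$) transitions from order one to near zero; equivalently, the correlation length of percolation at $p_c+2^{-k}$ has order $2^k$. Let $\Delta_k$ denote the minimal passage time of a path crossing $A_k$ from inner to outer boundary. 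The first main step is to prove, for constants $0<c<C<\infty$ independent of $k$,
\[ c\,r_k \le \mathbb{E}[\Delta_k] \le C\, r_k,\qquad c\,r_k^2 \le \mathrm{Var}\,\Delta_k \le C\,r_k^2. \]
The upper bounds follow from a BK-type crossing estimate combined with the moment assumption on $Y$. The matching lower bounds require an arm-exponent argument: conditional on a typical environment, every crossing of $A_k$ must traverse at least one edge of weight of order $r_k$, and resampling such an edge perturbs the passage time by order $r_k$, producing variance of order $r_k^2$.

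The next step is to reduce $T(0,\partial B(n))$ to a sum of independent annular contributions. I would pass to a sparse subsequence of scales $k_1<k_2<\cdots$ and restrict $\Delta_{k_j}$ to sub-annuli separated by wide buffer regions, so that the restricted contributions $\widetilde\Delta_{k_j}$ use disjoint edge sets and are genuinely independent. A gluing argument based on FKG/RSW produces, with high probability, connecting pieces of negligible $L^2$ cost, yielding
\[ T(0,\partial B(n)) \;=\; \sum_{j} \widetilde\Delta_{k_j} + R_n, \]
with remainder $R_n$ concentrated near its mean. For part $(1)$, under $\sum_k r_k^2<\infty$, the sum $\sum_j(\widetilde\Delta_{k_j}-\mathbb{E}\widetilde\Delta_{k_j})$ of independent centered terms with summable variances converges a.s.\ and in $L^2$ by Kolmogorov's theorem to a random variable $Z$; transferring this convergence to the full passage time via an $L^2$-control of $R_n - \mathbb{E}R_n$ gives the claim. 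For part $(2)$, with $\sum_k r_k^2=\infty$, we normalize by $\sigma_n := \sqrt{\mathrm{Var}\,T(0,\partial B(n))}\asymp \bigl(\sum_{k\le \log_2 n} r_k^2\bigr)^{1/2}$ and invoke the Lindeberg CLT on the independent skeleton. The Lindeberg condition
\[ \sigma_n^{-2}\sum_j \mathbb{E}\!\left[(\widetilde\Delta_{k_j}-\mathbb{E}\widetilde\Delta_{k_j})^2;\,|\widetilde\Delta_{k_j}-\mathbb{E}\widetilde\Delta_{k_j}|>\varepsilon\sigma_n\right]\to 0 \]
is where $\mathbb{E}Y^{\alpha}<\infty$ with $\alpha>1/2$ enters, supplying polynomial tail bounds on each $\widetilde\Delta_{k_j}/r_{k_j}$ uniform in $k_j$.

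The hard part will be the variance lower bound $\mathrm{Var}\,\Delta_k \gtrsim r_k^2$ and the accompanying ``scale rigidity'' — showing that the optimal crossing of $A_k$ genuinely probes edges of weight near $r_k$ and is not dominated by detours through other scales. This demands quantitative near-critical two-dimensional percolation estimates (one- and four-arm exponents, pivotality, and their stability under perturbation) to guarantee that enough pivotal edges of weight $\sim r_k$ exist to support order-$r_k^2$ variance after conditioning. A related subtlety is making the skeleton independence strong enough that $R_n$ can be absorbed uniformly in $n$ in both the a.s.\ and in-law senses. The moment hypothesis $\alpha>1/2$ is expected to be essentially sharp: with heavier tails, rare extreme edges at each scale would survive the Lindeberg truncation and produce non-Gaussian fluctuations. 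Finally, the assumption $\sum_k r_k=\infty$ is invoked merely to avoid the degenerate regime of Theorem~\ref{thm: DLW} in which $T(0,\partial B(n))$ is almost surely bounded.
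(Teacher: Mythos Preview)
The survey you are reading does not prove this theorem at all: it is simply stated there, with attribution to Damron--Lam--Wang \cite{DLW}, and no argument is given or sketched. So there is no ``paper's own proof'' against which to compare your proposal.

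That said, your outline is close in spirit to the actual DLW argument: the passage time is indeed analyzed through dyadic annular crossings, the contribution at scale $k$ has mean and standard deviation of order $r_k=F^{-1}(p_c+2^{-k})$, and this is established using near-critical planar percolation (correlation-length asymptotics, RSW, arm events). The main structural difference is that DLW do \emph{not} manufacture genuine independence by thinning to well-separated sub-annuli and then controlling a remainder $R_n$. Instead, they use the natural filtration $\mathcal{F}_k=\sigma(\tau_e: e\in B(2^k))$ so that the centered annular increments form a martingale-difference sequence, and they invoke a martingale CLT rather than Lindeberg for independent summands. Your buffer-zone route forces you to discard a positive fraction of scales and to prove uniform $L^2$ (and almost-sure) control of $R_n$, which is exactly the kind of gluing estimate that tends to be as hard as the theorem itself; the martingale decomposition sidesteps both issues. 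Your identification of where the moment hypothesis $\alpha>1/2$ enters (the Lindeberg-type negligibility condition) is correct.
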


In the case of site FPP on the triangular lattice with Bernoulli weights ($\mathbb{P}(t_v = 1) = 1/2 = \mathbb{P}(t_v=0)$), Yao \cite{Yao2} has recently used the conformal loop ensemble to find the exact values of the constants appearing the first-order asymptotics of the mean and variance of the passage time.
\begin{thm}[Yao \cite{Yao2}]
Consider site FPP on the triangular lattice with weights $(t_v)$ satisfying $\mathbb{P}(t_v=1)=1/2=\mathbb{P}(t_v=0)$. The following statements hold:
\begin{align*}
\lim_n \frac{T(0,\partial B(n))}{\log n} &= \frac{1}{2\sqrt{3}\pi} \text{ almost surely}, \\
\lim_n \frac{\mathbb{E}T(0,\partial B(n))}{\log n} &= \frac{1}{2\sqrt{3}\pi}, \\
\lim_n \frac{\mathrm{Var}~T(0,\partial B(n))}{\log n} &= \frac{4}{3\sqrt{3}\pi} - \frac{1}{\pi^2}.
\end{align*}
\end{thm}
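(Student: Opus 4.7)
The plan is to combine a planar max-flow/min-cut duality with Smirnov's convergence of critical site percolation on the triangular lattice to $\mathrm{SLE}_6$/$\mathrm{CLE}_6$, and then to invoke known exact loop-counting statistics for $\mathrm{CLE}_6$ around an interior point.

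\textbf{Step 1: combinatorial identity.} First I would rewrite the passage time as a geometric quantity. Any path from $0$ to $\partial B(n)$ must intersect every vertex-disjoint \emph{closed} circuit (a circuit of sites with $t_v = 1$) that surrounds $0$ inside $B(n)$, contributing at least one to its weight. The converse bound comes from a planar max-flow/min-cut (equivalently, Menger) argument with node capacities in $\{0,1\}$, yielding
\[
T(0,\partial B(n)) \;=\; N(n) \;:=\; \text{max.\ number of vertex-disjoint closed circuits around } 0 \text{ in } B(n).
\]

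\textbf{Step 2: reduction to a $\mathrm{CLE}_6$ loop count.} Then I would couple $N(n)$ to its continuum counterpart through the discrete radial exploration that successively peels off the outermost closed circuit around $0$, the outermost strictly inside it, and so on. By Smirnov's conformal invariance theorem for critical site percolation on the triangular lattice, this exploration rescaled by $1/n$ converges to radial $\mathrm{SLE}_6$ targeting $0$, and the nested closed circuits around $0$ converge to the nested $\mathrm{CLE}_6$ loops around the origin in the unit disk (Camia--Newman, Sheffield). Writing $M_\varepsilon$ for the number of $\mathrm{CLE}_6$ loops in the unit disk surrounding the origin with inner conformal radius at least $\varepsilon$, one obtains that $N(n)$ and $M_{c/n}$ (for a fixed $c>0$) have the same leading-order mean and variance.

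\textbf{Step 3: explicit constants from the continuum.} For $\mathrm{CLE}_\kappa$ the nested loops about a marked point carry an i.i.d.\ Markovian structure under the log-conformal-radius clock (Schramm--Sheffield--Wilson, Miller--Watson, Nacu): the increments of $-\log(\text{inner conformal radius})$ between successive nested loops are i.i.d.\ with an explicit law obtained from the disconnection dynamics of radial $\mathrm{SLE}_\kappa$. A renewal-theoretic law of large numbers and central limit theorem then give constants $a_\kappa, b_\kappa$ with $M_\varepsilon/\log(1/\varepsilon) \to a_\kappa$ almost surely and $(M_\varepsilon - a_\kappa \log(1/\varepsilon))/\sqrt{\log(1/\varepsilon)} \Rightarrow \mathcal{N}(0, b_\kappa)$. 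At $\kappa=6$, direct computation using the Bessel-process description of the conformal radius under radial $\mathrm{SLE}_6$ gives $a_6 = 1/(2\sqrt{3}\pi)$ and $b_6 = 4/(3\sqrt{3}\pi) - 1/\pi^2$, and by the coupling of Step 2 these are exactly the claimed constants for $T(0,\partial B(n))$.

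\textbf{Main obstacle.} The delicate part is Step~2: loop counting is not a continuous functional of the nested $\mathrm{CLE}$ in any natural topology, since loops at microscopic and near-macroscopic scales have to be treated separately. One must establish uniform moment bounds showing that, at the target scale $\log n$, both very small circuits (controlled via the one-arm exponent $5/48$ and quasi-multiplicativity of arm events) and circuits close to $\partial B(n)$ (a boundary-effect argument based on RSW) contribute negligibly to the mean and variance. Only with these quantitative estimates can one upgrade the distributional convergence in Step~2 to convergence of the first and second moments of $T(0,\partial B(n))/\log n$ and pin down the exact constants delivered by Step~3.
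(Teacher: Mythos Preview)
The survey does not itself prove this result; it is quoted from Yao \cite{Yao2} with only the remark that the argument ``used the conformal loop ensemble.'' Your outline is essentially Yao's method: the max-flow/min-cut identity rewrites $T(0,\partial B(n))$ as the maximal number of disjoint closed circuits around $0$ in $B(n)$, the Camia--Newman full scaling limit identifies the nested-circuit ensemble with $\mathrm{CLE}_6$, and the constants $1/(2\sqrt{3}\pi)$ and $4/(3\sqrt{3}\pi)-1/\pi^{2}$ are read off from the Schramm--Sheffield--Wilson law of the log-conformal-radius increments between successive nested $\mathrm{CLE}_6$ loops, after which the renewal LLN/CLT delivers the three limits. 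The obstacle you isolate --- upgrading distributional convergence of the loop ensemble to convergence of the first two moments of the circuit count, by bounding contributions from circuits at microscopic scales and near the boundary via arm-event/RSW estimates --- is exactly where the technical work in the cited paper lies.
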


There is a big difference between what is known in two dimensions and in three dimensions. This is due mostly to the lack of knowledge about critical percolation in three dimensions. A natural question is then:
\begin{question}
Let $d=3$ and $\mathbb{P}(\tau_e=0) = p_c = 1-\mathbb{P}(\tau_e=1)$. Is it true that 
\[
\mathbb{E}T(0,\partial B(n)) \asymp \log n?
\]
\end{question}
The two dimensional proof of this fact uses that with uniformly positive probability in $n$, every path which crosses $B(n) \setminus B(n/2)$ must contain at least one edge which does not have zero weight. This is no longer expected to be true in high dimensions (above, say, six \cite[Theorem 4]{Aiz}), so it is conceivable that for large $d$, one has $T(0, \partial B(n)) \ll \log n$ even in the Bernoulli case.

\subsubsection{Passage time in small cylinders}

A second case where Gaussian fluctuations are present is when we consider passage times of paths constrained in a thin cylinder. Precisely, for $h >0$ let $a_n(h)$ be the first passage time from $0$ to the point $ne_1$ in the graph $(\Z \times [-h,h]^{d-1}) \cap \Z^d$; that is, define
$$a_n(h) := \inf \{ T(\gamma) | \gamma \text{ is a path from } 0 \text { to } ne_1 \text{ in } \Z \times [-h,h]^{d-1} \}.$$

If $h$ is small enough, the constrained geodesic from $0$ to $ne_1$ should not coincide with the one in the full integer lattice. In this case, we have Gaussian fluctuations after proper centering and scaling.

\begin{theorem}[Chatterjee-Dey \cite{CD}] Suppose that $\E \tau_e^{2+\delta}<\infty$ for some $\delta >0$. Assume that $h=h(n)$ satisfies $h(n)=o(n^\alpha)$ with
\begin{equation}\label{eq.thin}
 \alpha < \frac{1}{d+1+2(d-1)/\delta}.
 \end{equation}
Then we have for all $t\in \R$
\begin{equation}\label{eq:thintheorem}
 \lim_{n \to \infty}  \Pro\bigg( \frac{a_n(h_n)-\E a_n(h_n)}{\sqrt{\mathrm{Var} (a_n(h_n))}} \leq t \bigg) = \Pro (Z\leq t)
\end{equation}
where $Z$ is a standard Gaussian random variable.
\end{theorem}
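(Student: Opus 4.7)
The plan is to approximate $a_n(h)$ by a sum of i.i.d.\ contributions from disjoint slabs of the cylinder, and then to apply a classical central limit theorem to that sum while showing the approximation error is negligible compared with the standard deviation. Concretely, partition the cylinder $\Z \times [-h,h]^{d-1}$ into consecutive slabs $S_k = [kL,(k+1)L] \times [-h,h]^{d-1}$ for a block length $L = L(n,h)$ to be chosen later, and set
\[
Y_k := \inf\{T(\gamma)\,:\, \gamma \text{ is a path from } (kL,0) \text{ to } ((k+1)L,0) \text{ inside } S_k\}.
\]
Since the $Y_k$ use edges in disjoint slabs, they are independent and (by translation invariance) identically distributed.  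Because every path from $0$ to $ne_1$ in the cylinder must cross every hyperplane $\{kL\}\times \R^{d-1}$, one should have
\[
a_n(h) \;=\; \sum_{k=0}^{n/L - 1} Y_k \;+\; R_n,
\]
where $R_n$ is a \emph{boundary} correction that records how geodesics enter/exit each slab at a point other than $(kL,0)$.

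I would then implement three quantitative steps.  First, a lower variance bound $\mathrm{Var}(Y_0) \geq c L$ for $L$ much larger than $h$: this is the ``essentially one-dimensional'' behaviour in a thin tube and can be obtained by the conditioning argument in the proof of Theorem~\ref{thm: kesten_variance} (perturb the $2(d-1)$ edges touching a fixed vertex on the axis), which yields $\mathrm{Var}(a_n(h)) \asymp n$.  Second, a CLT for $\sum_k Y_k$: since $n/L \to \infty$ and $\mathbb{E}|Y_0|^{2+\delta} \leq C(Lh^{d-1})^{1 + \delta/2}$ by Lemma~\ref{lemma:moments} applied in the slab, Lyapunov's condition reduces to $(n/L) (Lh^{d-1})^{1 + \delta/2} / (nL)^{1+\delta/2} \to 0$, i.e.\ to $h^{(d-1)(1+\delta/2)} = o(L)$.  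Third, an estimate $\|R_n\|_2 = o(\sqrt{n})$; bounding $R_n$ by a sum of $2n/L$ passage times across boxes of side $h$, each with variance controlled by Kesten's bound, gives roughly $\|R_n\|_2^2 \leq C (n/L) h$ (with extra logarithmic factors in the general moment regime), hence the requirement $h/L \to 0$.

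Combining the two constraints $h^{(d-1)(1+\delta/2)} = o(L)$ and $L = o(n)$ with the desired $h = o(n^{\alpha})$ fixes the exponent.  Setting $L = n^{\beta}$, we need $\alpha(d-1)(1 + \delta/2) < \beta < 1$, and optimising with respect to $\beta$ forces $\alpha(d-1)(1+\delta/2) + \alpha < 1$, i.e.\ $\alpha[(d-1)(1+\delta/2) + 1] < 1$; after rearrangement this is precisely condition \eqref{eq.thin}.  Finally, by Slutsky, since $a_n - \sum Y_k = R_n = o_{\mathbb{P}}(\sqrt{n})$ and $\sqrt{\mathrm{Var}(a_n)} \asymp \sqrt{n}$, the CLT transfers from $\sum Y_k$ to $a_n(h)$, which yields \eqref{eq:thintheorem}.

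The main obstacle is the boundary correction $R_n$: showing $\|R_n\|_2 = o(\sqrt{n})$ under only a $2+\delta$ moment assumption is delicate because each ``reconnection'' passage time across an $h$-box has heavy tails, and one must simultaneously control the maximum of $n/L$ such contributions.  This is where the precise exponent in \eqref{eq.thin} is forced: lower values of $\delta$ require weaker tail estimates for the local connectors, which in turn require the slabs to be longer (larger $L$) and the cylinder to be thinner (smaller $\alpha$).  A secondary, but non-trivial, obstacle is the variance lower bound in the thin regime; the standard argument works, but one must check that the constant $c$ is uniform in $h$, which requires some care about how the perturbation at a single vertex propagates to the passage time across a tube of width $h$.
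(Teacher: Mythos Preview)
First, note that the paper does not prove this result; it is quoted from Chatterjee--Dey \cite{CD} and followed by an open question about optimality of \eqref{eq.thin}. So there is no paper proof to compare against, and the question is simply whether your sketch is sound.

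The block decomposition followed by an i.i.d.\ CLT is indeed the right architecture, and it is the Chatterjee--Dey strategy. But your numerical bookkeeping does not produce the exponent in \eqref{eq.thin}, and the places where it fails are exactly the hard parts of the real proof. Two concrete problems:
\begin{itemize}
\item The Lyapunov computation is off. With $\mathrm{Var}(Y_0)\asymp L$ and $m=n/L$ blocks, the total variance is $\asymp n$, so the denominator in the Lyapunov ratio is $n^{1+\delta/2}$, not $(nL)^{1+\delta/2}$. Moreover, the bound $\mathbb{E}|Y_0|^{2+\delta}\le C(Lh^{d-1})^{1+\delta/2}$ is written for the \emph{uncentered} moment (which is at least $(\mathbb{E}Y_0)^{2+\delta}\asymp L^{2+\delta}$, already larger), and even as a centered moment it does not follow from Lemma~\ref{lemma:moments}; under only $\mathbb{E}\tau_e^{2+\delta}<\infty$, getting any useful bound on $\mathbb{E}|Y_0-\mathbb{E}Y_0|^{2+\delta}$ for a cylinder FPP time is itself a substantial piece of the Chatterjee--Dey argument.
\item The final ``after rearrangement this is precisely condition \eqref{eq.thin}'' is false. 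Your constraint $\alpha[(d-1)(1+\delta/2)+1]<1$ reads $\alpha<1/\bigl(d+(d-1)\delta/2\bigr)$, whereas \eqref{eq.thin} is $\alpha<1/\bigl(d+1+2(d-1)/\delta\bigr)$. For example, at $d=2$, $\delta=2$ you get $\alpha<1/3$ versus the theorem's $\alpha<1/4$. If your moment bound were valid, you would be claiming a strictly stronger theorem than Chatterjee--Dey; what is actually happening is that the moment and remainder bounds you assumed are too optimistic.
\end{itemize}

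There is also a structural gap. You write $a_n=\sum_k Y_k+R_n$, but this identity does not hold: with $Y_k$ defined via paths \emph{constrained to $S_k$}, subadditivity only gives $a_n\le\sum_k Y_k$, and the true geodesic can leave and re-enter slabs, so there is no simple two-sided comparison with a remainder that is a sum of $n/L$ independent box-crossings. Handling this ``matching across interfaces'' error under only a $(2+\delta)$-moment assumption --- and in particular controlling the \emph{maximum} of many such interface corrections --- is precisely where the dependence $2(d-1)/\delta$ in \eqref{eq.thin} enters. Your sketch names this obstacle correctly at the end, but the preceding calculation does not actually confront it.
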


The theorem above gives rise to the following question, raised in \cite{CD}.

\begin{question}
Is \eqref{eq.thin} optimal? More explicitly, let 
$$\gamma_F(d):= \sup \bigg \{ \alpha:  \eqref{eq:thintheorem} \text{ holds for } h_n=n^\alpha  \bigg\}.$$ What is the value of $\gamma_F(d)?$
\end{question}

A partial answer was given by the same paper of  Chatterjee and Dey. If one has finite exponential moments for the passage time distribution and assumes (a) existence of both fluctuation exponent $\chi(d)$ and transversal exponent $\xi(d)$ (See Section \ref{sec:exponents}), (b) strict convexity of the limiting shape and (c)   $d=2$ or $d=3$  or $\xi(d), \chi(d)>0$ then $\gamma_F(d) = \xi(d)$.


\newpage

\section{Geodesics}\label{sec:geodesics}

Recall the definition of a geodesic from $x$ to $y$.
\begin{definition}A path $\gamma$ from $x$ to $y$ with $T(\gamma) = T(x,y)$ is called a {\it geodesic} from $x$ to~$y$. 
\end{definition}
Soon, we will deal with infinite geodesics, so we will sometimes use the term finite geodesic for a geodesic between two points. We will also require geodesics from points to sets: if $A$ is a subset of $\mathbb{Z}^d$, then a geodesic from $x$ to $A$ is a geodesic from $x$ to $y$, where $y$ is the vertex of $A$ minimizing $T(x, y)$. If $A$ is a subset of $\mathbb{R}^d$, then we identify each point of $A$ with the nearest vertex of $\mathbb{Z}^d$, and define $\hat A$ to be the union of all such vertices; the set of all geodesics from $x$ to $A$ is defined to be the set of all geodesics from $x$ to $\hat A$.

\subsection{Existence of finite geodesics, their sizes, and the geodesic tree}
The very first step in the study of geodesics is to determine if a geodesic from $x$ to $y$ exists with probability one. 
Hammersley and Welsh \cite{HW} showed almost sure existence of finite geodesics if the distribution of the passage times is bounded above and below away from zero.
They also conjectured that almost sure existence held for all distributions, with no moment conditions. Smythe and Wierman \cite{SW,SW3} verified this conjecture in two dimensions except for distributions with an atom at zero equal to the critical bond percolation probability.  In $\Z^2$, the conjecture was proven in full generality by Wierman and Reh \cite{RW78}, and it is now the state-of-the-art. It was stated in \cite[Corollary~1.3]{RW78} for geodesics between 0 and $ne_1$, but the same proof applies to any pair of points.

\begin{theorem}[Wierman-Reh \cite{RW78}] For any passage time distribution $F$, With probability one, there exists a geodesic between any two points of $\Z^2$. 
\end{theorem}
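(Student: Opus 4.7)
The plan is to fix $x,y \in \Z^2$ and prove the infimum defining $T(x,y)$ is attained almost surely. Since $\tau_e<\infty$ a.s., any deterministic nearest-neighbor path from $x$ to $y$ has finite weight, so $T(x,y)<\infty$ a.s. It therefore suffices to produce a random $L = L(\omega)<\infty$ such that every self-avoiding path $\gamma$ from $x$ to $y$ with $|\gamma|>L$ satisfies $T(\gamma) > T(x,y)+1$: only finitely many self-avoiding paths of edge-length $\leq L$ issue from $x$, and one of these must then achieve the minimum. I would split the argument on the location of $F(0)$ relative to $p_c := p_c(\Z^2) = 1/2$.

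When $F(0) < p_c$, Theorem~\ref{thm:percolate} and the shape theorem (Theorem~\ref{thm:limitshape}) furnish a random $R_0<\infty$ and a constant $c>0$ with $T(x,z) \geq c \|z-x\|_1$ for all $\|z-x\|_1 \geq R_0$. A self-avoiding path of edge-length $\ell$ in $\Z^2$ must reach $\ell^\infty$-distance at least of order $\sqrt{\ell}$ from its start, so $T(\gamma) \geq c c'\sqrt{\ell}-O(1)$ for $\ell$ large, which produces the required $L$. When $F(0)>p_c$, the zero-weight edges have a unique infinite cluster $\mathcal C_\infty$, and a.s. $x$ and $y$ are at finite graph distance from $\mathcal C_\infty$; any excursion of a near-minimizer outside a fixed large box can be short-circuited through $\mathcal C_\infty$ at zero cost, again reducing the infimum to finitely many candidate paths.

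The main obstacle is the critical case $F(0)=p_c=1/2$, where neither zero- nor positive-weight edges percolate (Harris--Kesten) yet arbitrarily large zero-weight clusters persist at all scales. The cluster $\mathcal C_x$ of $x$ in the zero-weight edges is a.s. finite (similarly $\mathcal C_y$); if $\mathcal C_x=\mathcal C_y$ one has an explicit zero-weight geodesic inside the cluster, and otherwise I would use two-dimensional duality. RSW-type crossing estimates together with FKG give a positive probability, uniform in $k$, of a circuit of positive-weight edges in the dyadic annulus $A_k=B(2^{k+1})\setminus B(2^k)$; independence across well-separated scales and Borel--Cantelli then show that a.s. infinitely many $A_k$ host such a circuit. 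Every self-avoiding path of diameter $\geq 2^k$ from $x$ must cross each such circuit, using at least one positive-weight edge from each. The delicate point, and the true crux of the proof, is to promote this \emph{count} of positive-weight crossings into a quantitative bound $T(\gamma)\to\infty$ as $|\gamma|\to\infty$, since the individual crossing weights may be arbitrarily small: the plan is to choose $\epsilon>0$ so that $\mathbb P(\tau_e\geq\epsilon\mid \tau_e>0)>0$ and upgrade the existence of circuits at infinitely many scales to the existence, at infinitely many scales, of circuits along which a prescribed fraction of edges have weight $\geq \epsilon$, using independence of well-separated annuli and a second-moment/second Borel--Cantelli argument. Granting this accumulation-of-weight step, the random $L$ exists and the argument concludes uniformly across the three cases.
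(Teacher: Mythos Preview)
The paper does not prove this theorem; it cites \cite{RW78} and, in the proposition that follows, treats only the sub-case $F(0)<p_c$ by showing $\rho=\infty$. So I assess your sketch on its own.

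Your arguments for $F(0)<p_c$ and $F(0)>p_c$ are essentially right (one quibble: the shape theorem carries the moment hypothesis~\eqref{eq:conditionmomentsLS}, which is not available; the paper's proposition shows directly that $\rho=\infty$ under $F(0)<p_c$ alone, and that already suffices for your reduction). The real gap is at $F(0)=p_c=\tfrac12$. Your plan there is to exhibit dual circuits of positive-weight edges at many scales and argue that long self-avoiding paths from $x$ to $y$ accumulate unbounded weight. If that worked it would prove $\rho=\infty$ at criticality, but this is \emph{false in general}: Theorem~\ref{thm: DLW} in this very survey shows that when $\sum_n F^{-1}(\tfrac12+2^{-n})<\infty$ one has $\rho<\infty$ almost surely, so there are arbitrarily long self-avoiding paths from $x$ with uniformly bounded passage time. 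Your ``upgrade'' step cannot repair this: a path crossing a dual circuit of positive-weight edges may always select the lightest crossing edge, and the fact that a positive fraction of the circuit's edges are $\ge\epsilon$ says nothing about the single edge the path actually uses.

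The correct move is to run your $F(0)>p_c$ short-circuiting idea at criticality as well, but with \emph{zero}-weight circuits rather than positive-weight ones. At $p=\tfrac12$ on $\Z^2$, Harris's theorem (no infinite cluster for the dual process) yields, almost surely, zero-weight primal circuits surrounding every finite box. Taking such a circuit $C$ around $\{x,y\}$, any path that exits $C$ can be rerouted along $C$ at zero cost to stay inside, so $T(x,y)$ is a minimum over the finitely many self-avoiding paths confined to the region enclosed by $C$. This handles $F(0)\ge p_c$ in one stroke and succeeds precisely because it does \emph{not} try to force $T(\gamma)\to\infty$.
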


The theorem above was extended to connected, infinite subsets of $\Z^2$ with connected infinite complement in \cite{ADH1}. Existence under no assumptions on the distribution of passage times is currently open in $d$ dimensions if $d>2$.
\begin{question} Prove that under no assumptions on the distribution $F$ of passage times, geodesics exist in the $d$ dimensional lattice, $d\geq3$.
\end{question}

Next we will now explain how, as noted in \cite[(9.23)]{KestenAspects}, the existence of geodesics follows from standard estimates provided $F(0) < p_c$. In the case $F(0) > p_c$ with $\mathbb{E} \tau_e < \infty$, existence of geodesics was proved in \cite{Zhangsuper}.  The case $F(0) = p_c$ is still unresolved in general when $d > 2$. However, geodesics are known to  exist for distributions $F$ satisfying $F(0)= F(\delta) = p_c$ for some $\delta>0$ and in sufficiently high dimensions. See \cite[Theorem~8.1.7]{Zhangdoublebehavior}.

Recall the definition of the passage time to infinity \eqref{eq: zhangs_rho}, from \cite{Zhangdoublebehavior}:
\[
\rho = \rho(F) = \lim_n \left[ T(0,\partial B(n)) \right],
\]
where $B(n) = [-n,n]^d$. By the Komogorov 0-1 law, either $\rho = \infty$ almost surely, or $\rho < \infty$ almost surely.
\begin{lemma}
One has
\[
\rho = \inf\{T(\gamma) : \gamma \text{ is an infinite self-avoiding path from }0\}.
\]
\end{lemma}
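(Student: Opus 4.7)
Set $I := \inf\{T(\gamma) : \gamma \text{ is an infinite self-avoiding path from } 0\}$. The plan is to prove $I = \rho$ by establishing the two opposing inequalities.

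For $I \geq \rho$, the key observation is that any infinite self-avoiding path $\gamma$ from $0$ must exit every box $B(n)$, so some finite initial subpath of $\gamma$ forms a path from $0$ to $\partial B(n)$. By non-negativity of the edge weights, $T(\gamma) \geq T(0, \partial B(n))$, and letting $n \to \infty$ gives $T(\gamma) \geq \rho$. Taking the infimum over $\gamma$ yields $I \geq \rho$.

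For $I \leq \rho$, I may assume $\rho < \infty$ (otherwise there is nothing to prove, using the convention $\inf\emptyset = \infty$). For each $n$, pick a self-avoiding path $\gamma_n$ from $0$ to $\partial B(n)$ with $T(\gamma_n) \leq T(0, \partial B(n)) + 1/n$, so that $T(\gamma_n) \to \rho$; since $\partial B(n)$ lies at graph distance at least $n$ from $0$, the edge count of $\gamma_n$ tends to infinity as well. I then extract an infinite limit via a diagonal compactness argument: I will construct edges $e^{(1)}, e^{(2)}, \ldots$ and a nested family of infinite index sets $N_1 \supseteq N_2 \supseteq \cdots$ such that, for every $n \in N_k$, the first $k$ edges of $\gamma_n$ are exactly $e^{(1)}, \ldots, e^{(k)}$. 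At each inductive step this is just pigeonhole, since any vertex has only $2d$ incident edges, so among the (infinitely many) $n \in N_{k-1}$ with $\#\gamma_n \geq k$, at least one choice of the $k$-th edge occurs infinitely often. Diagonal extraction along $n_k \in N_k$ with $n_k \uparrow \infty$ produces an infinite self-avoiding path $\gamma = e^{(1)} e^{(2)} \cdots$ from $0$ (self-avoidance passes to the limit because every finite prefix sits inside some self-avoiding $\gamma_{n_k}$), and for every $k$ one has $\sum_{i=1}^k \tau_{e^{(i)}} \leq T(\gamma_{n_k})$. Sending $k \to \infty$, monotone convergence on the left and $T(\gamma_{n_k}) \to \rho$ on the right give $T(\gamma) \leq \rho$, hence $I \leq T(\gamma) \leq \rho$.

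The only delicate point is keeping the index sets $N_k$ infinite throughout the recursion; this is guaranteed by $\#\gamma_n \to \infty$ together with the finiteness of the set of possible next edges at each vertex, so the pigeonhole step always succeeds. Everything else is routine bookkeeping, and as a by-product the argument shows that when $\rho < \infty$ the infimum in the statement is attained by some infinite self-avoiding path from $0$.
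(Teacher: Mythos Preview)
Your proof is correct and follows essentially the same compactness/diagonal-extraction strategy as the paper. The one noteworthy difference is that you work with near-minimizing paths $\gamma_n$ satisfying $T(\gamma_n)\le T(0,\partial B(n))+1/n$, whereas the paper simply takes $\gamma_n$ to be a geodesic from $0$ to $\partial B(n)$; your version is slightly more robust since the lemma is stated for an arbitrary edge-weight configuration, where such geodesics need not exist a priori, and the extra $1/n$ disappears in the limit anyway.
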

\begin{proof}
Let $\gamma_n$ be a (self-avoiding) geodesic from 0 to $\partial B(n)$, with $\gamma$ a subsequential limit of $(\gamma_n)$ along some subsequence $(\gamma_{n_k})$. For a fixed $n$, if $\hat \gamma_n$ is the initial portion of $\gamma$ from 0 to $\partial B(n)$, then for all large $k$, the geodesic from 0 to $\partial B(n_k)$ contains $\hat \gamma_n$ as its first segment, so
\[
T(0, \partial B(n_k)) \geq T(\hat \gamma_n) \geq T(0, \partial B(n)).
\]
Letting $k \to \infty$, we obtain
\[
\rho \geq T(\hat \gamma_n) \geq T(0,\partial B(n)).
\]
As $n \to \infty$, $T(\hat \gamma_n) \to T(\gamma)$, so $T(\gamma) = \rho$, showing the inequality $\geq$ in the lemma. The other inequality holds because each infinite self-avoiding path from 0 contains an initial segment from 0 to $\partial B(n)$.
\end{proof}

From this lemma, we can show that $F(0) < p_c$ implies that geodesics exist.
\begin{proposition}
The following hold.
\begin{enumerate}
\item If $\rho = \infty$ for some configuration $(\tau_e)$, then for all $x,y \in \mathbb{Z}^d$, a geodesic exists from $x$ to $y$.
\item If $F(0)< p_c$, then almost surely, $\rho = \infty$.
\end{enumerate}
\end{proposition}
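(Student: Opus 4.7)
My plan for part (1) is to show that, when $\rho = \infty$, every minimizing sequence of self-avoiding paths from $x$ to $y$ is forced to live inside a bounded box, so the infimum defining $T(x,y)$ is realized by finiteness. First I would transport the preceding lemma to arbitrary vertices: setting $\rho_x := \lim_n T(x,\partial B(x,n))$, the same argument gives $\rho_x = \inf\{T(\gamma) : \gamma \text{ infinite self-avoiding starting at } x\}$. Next I would observe that $\rho = \infty$ forces $\rho_x = \infty$ for every $x \in \mathbb{Z}^d$, because concatenating any fixed finite path from $0$ to $x$ with an infinite self-avoiding path from $x$ (and then pruning loops) produces an infinite self-avoiding path from $0$, which yields $\rho \leq T(0,x) + \rho_x$. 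Fixing $x,y$ and the finite value $M := T(x,y) + 1$, the definition of $\rho_x$ then supplies a radius $R$ with $T(x,\partial B(x,R)) > M$. Any self-avoiding path from $x$ which exits $B(x,R)$ has passage time larger than $M$, so a minimizing sequence $(\gamma_n)$ of self-avoiding paths from $x$ to $y$ with $T(\gamma_n) \leq M$ eventually lives in the finite graph on $B(x,R) \cap \mathbb{Z}^d$, which contains only finitely many self-avoiding paths from $x$ to $y$. Hence the infimum is attained.

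For part (2), the plan is the standard subcritical-percolation reduction. Since the distribution function $F$ of $\tau_e$ is right-continuous at $0$ and satisfies $F(0) < p_c$, I would choose $\epsilon > 0$ with $F(\epsilon) < p_c$, and declare an edge $\epsilon$-open if $\tau_e \leq \epsilon$. These form Bernoulli bond percolation on $\mathbb{Z}^d$ at density $F(\epsilon) < p_c$, and therefore almost surely possess no infinite connected component. On this full-measure event, any infinite self-avoiding path $\gamma$ from $0$ must contain infinitely many edges that are not $\epsilon$-open, since otherwise some tail of $\gamma$ would lie in a single $\epsilon$-open cluster, forcing one to be infinite. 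Each such edge contributes at least $\epsilon$ to $T(\gamma)$, giving $T(\gamma) = \infty$. Using the preceding lemma to recast $\rho$ as the infimum of $T(\gamma)$ over infinite self-avoiding $\gamma$ from $0$, I would conclude that $\rho = \infty$ almost surely.

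Neither step is deep. The only place deserving a moment of care is the reduction to self-avoiding representatives in part (1), which relies on the fact that $\tau_e \geq 0$ (loop removal never increases the passage time), so the infimum in the definition of $T(x,y)$ is unchanged by restricting to self-avoiding paths; without this observation the minimizing sequence would not a priori fit into the finite ball $B(x,R)$. Part (2) is essentially an appeal to the absence of an infinite cluster in subcritical Bernoulli bond percolation on $\mathbb{Z}^d$, a tool already invoked throughout this section.
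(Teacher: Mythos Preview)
Your proposal is correct and follows essentially the same route as the paper. The only cosmetic difference is in part~(1): the paper transfers $\rho=\infty$ from $0$ to $x$ via the one-line triangle bound $|T(x,\partial B(n))-T(0,\partial B(n))|\le T(0,x)$ (using boxes centered at the origin), whereas you reprove this through the infinite-path characterization of $\rho_x$; both then reduce to finitely many self-avoiding paths in a fixed box, and part~(2) is identical.
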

\begin{proof}
If $\rho = \infty$ in some passage time configuration $(\tau_e)$, pick any $x,y$, and fix a path $\gamma$ between them. Note that since $\rho = \infty$ and
\[
\left| T(x,\partial B(n)) - T(0,\partial B(n)) \right| \leq T(0,x) < \infty,
\]
one can choose $n$ large enough so that (a) $x,y$ and $\gamma$ are in $B(n)$ and (b) $T(x,\partial B(n)) > T(\gamma)$. If $\gamma'$ is any path from $x$ to $y$ that exits $B(n)$, then
\[
T(\gamma') \geq T(x,\partial B(n)) > T(\gamma),
\]
so the infimum in the definition of $T(x,y)$ is over a finite set of paths, and there must be a minimizer.

If $F(0)<p_c$, then choose $\delta>0$ such that $F(\delta) < p_c$. By the definition of $p_c$, almost surely there is no infinite self-avoiding path of edges $e$ with $\tau_e \leq \delta$, and in fact each such path $\gamma$ must have infinitely many edges with weight at least $\delta$. So by the characterization in the previous lemma, $\rho = \infty$.
\end{proof}

The condition $\rho = \infty$ is not necessary for existence of geodesics, as they exist when $F(0) > p_c$ (and therefore $\rho < \infty$). It is unknown in general dimensions if $\rho = \infty$ for a given $F$, but in two dimensions, Theorem~\ref{thm: DLW} gives necessary and sufficient conditions.

We conclude this section with estimates on the size of finite geodesics. Let $\Gamma(0,x)$ denote the set of geodesics from $0$ to $x$. The diameter of the set $\Gamma(0,x)$ is controlled by the following result that requires no moment condition on the distribution of $\tau_{e}$.

\begin{theorem}[Theorem 6.2 \cite{ADH2}]\label{thm: geo_diameter_bound} Assume $F(0)<p_{c}(d)$. Then there exist positive constants $M, C$ such that
\begin{equation*}
\Pro \bigg( \mathrm{diam}\; \Gamma(0,x) \geq M \| x \|_{\infty}\bigg) \leq \exp(-C\|x \|_{\infty}), \quad \text{ for all } x \in \mathbb Z^{d}.
\end{equation*}
\end{theorem}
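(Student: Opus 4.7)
The strategy is a large-deviations squeeze: a large geodesic diameter forces some geodesic to pass through a vertex $v$ far from both $0$ and $x$, demanding that $T(0,v) \leq T(0,x)$; this is impossible when lower tails of $T(0,v)$ (at scale $\|v\|_\infty$) and upper tails of $T(0,x)$ (at scale $\|x\|_\infty$) are both exponentially small and $\|v\|_\infty \gg \|x\|_\infty$.

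\textbf{Reduction to a distant vertex.} Both $0$ and $x$ lie on every geodesic. On the event $A_M := \{\mathrm{diam}\,\Gamma(0,x) \geq M\|x\|_\infty\}$ pick two vertices in $\bigcup_{\gamma \in \Gamma(0,x)} \gamma$ at mutual $\ell^\infty$-distance at least $M\|x\|_\infty$; by the triangle inequality at least one of them, call it $v$, satisfies $\|v\|_\infty \geq M'\|x\|_\infty$ with $M':=M/2$. Since $v$ lies on some geodesic from $0$ to $x$ and subpaths of geodesics are geodesics, $T(0,v) + T(v,x) = T(0,x)$, so $T(0,v) \leq T(0,x)$. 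Thus
$$A_M \;\subseteq\; \bigcup_{v:\,\|v\|_\infty \geq M'\|x\|_\infty} \bigl\{T(0,v) \leq T(0,x)\bigr\}.$$

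\textbf{Squeeze via large deviations.} By Theorem~\ref{thm:percolate}, $F(0) < p_c$ makes $\mu$ a norm with $\mu(w) \geq c_L\|w\|_1$ for some $c_L > 0$, while $\mu(w) \leq c_U\|w\|_\infty$ for some $c_U$ by norm equivalence. Fix the threshold $K := (c_L/2)\,M'\,\|x\|_\infty$ and split
$$\{T(0,v) \leq T(0,x)\} \;\subseteq\; \{T(0,x) \geq K\} \cup \{T(0,v) \leq K\}.$$
Choose $M$ so large that $K > 2\mu(x)$. Then Kesten's upper large-deviation bound (Theorem~\ref{thm:LDPUP}, applied in the direction of $x$; the exponential moment assumption is the ADH2 standing hypothesis) gives $\mathbb{P}(T(0,x) \geq K) \leq e^{-c_1\|x\|_\infty}$. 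For every $v$ with $\|v\|_\infty \geq M'\|x\|_\infty$ we have $K \leq \mu(v)/2$, so the lower LDP (Theorem~\ref{thm:LLD}, applied in the direction of $v$) gives $\mathbb{P}(T(0,v) \leq K) \leq e^{-c_2\|v\|_1}$. A union bound over such $v$ yields
$$\sum_{v:\,\|v\|_\infty \geq M'\|x\|_\infty} e^{-c_2\|v\|_1} \;\leq\; \sum_{k \geq M'\|x\|_\infty}(2k+1)^{d-1}e^{-c_2 k} \;\leq\; e^{-c_3 M'\|x\|_\infty},$$
and combined with the upper-tail estimate this gives $\mathbb{P}(A_M) \leq 2e^{-c\|x\|_\infty}$ for $M$ sufficiently large.

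\textbf{Main obstacle.} The delicate step is obtaining the large-deviation estimates uniformly in the lattice direction of $v$ (and of $x$ for the upper tail): Theorems~\ref{thm:LDPUP} and~\ref{thm:LLD} as stated fix the direction $e_1$. One needs $\inf_{\hat w \in S^{d-1}} I_\ell\bigl(\hat w;\,c_L\|\hat w\|_1/2\bigr) > 0$ together with the corresponding uniform upper-tail rate; both follow from lower semicontinuity and strict positivity of the rate functions off the shape $\mathcal{B}_\nu$, combined with compactness of the sphere of directions. Alternatively, for the lower tail one can bypass the LDP entirely by using that $\{e:\tau_e > \delta\}$ is supercritical (for any $\delta$ with $F(\delta)<p_c$) to show every path from $0$ to $\partial B(n)$ picks up at least $cn$ edges of weight $>\delta$, giving a direction-free exponential estimate; for the upper tail one can dominate $T(0,x)$ by a sum along a fixed coordinate-monotone path and apply Cram\'er's theorem for i.i.d. sums. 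With either uniformity in hand the remainder is the elementary union bound above.
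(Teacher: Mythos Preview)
Your high-level squeeze strategy is the same as the paper's, but there is a genuine gap: the theorem as stated assumes only $F(0)<p_c(d)$, with \emph{no moment condition} on $\tau_e$ (the survey stresses this explicitly just before the statement). Your upper-tail input, whether via Theorem~\ref{thm:LDPUP} or via Cram\'er along a coordinate-monotone path, requires $\mathbb{E}e^{\alpha\tau_e}<\infty$. Without it, $\mathbb{P}(T(0,x)\geq K\|x\|_\infty)$ need not decay exponentially: if $\tau_e$ has, say, polynomial tails, then the event that all $2d$ edges at the origin exceed $K\|x\|_\infty$ already has only polynomial decay, and this forces $T(0,x)>K\|x\|_\infty$. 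So the step ``$\mathbb{P}(T(0,x)\geq K)\leq e^{-c_1\|x\|_\infty}$'' is simply false in the stated generality, and the claim that exponential moments are a ``standing hypothesis'' is not consistent with the theorem as written here.

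The paper circumvents this by never needing an upper-tail bound on $T(0,x)$. In the proof of the companion result, Theorem~\ref{thm: new_geo_length_bound}, which the survey says uses ``almost the same method,'' one picks $K$ so that $\{e:\tau_e\leq K\}$ is supercritical, then uses Kesten's lemma to find vertices $u$ and $w$ \emph{on the geodesic itself}, near $0$ and near $x$ respectively, that lie in the infinite cluster of $K$-truncated edges. Antal--Pisztora then gives $T(u,w)\leq 2K\rho\|x\|_\infty$ with exponentially high probability, independently of the upper tail of $\tau_e$. Since the far excursion of the geodesic lies on the segment $P_3$ from $u$ to $w$, this segment has at least order $M\|x\|_\infty$ edges, and Kesten's Proposition~5.8 (your lower-tail alternative, which is correct) forces $T(P_3)\geq aM\|x\|_\infty$, a contradiction for $M$ large. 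The key missing idea in your argument is this chemical-distance upper bound applied to points selected on the geodesic; it is precisely what replaces the unavailable upper large-deviation estimate.
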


Furthermore, we have the following linear bound on the number of edges in a geodesic.

\begin{theorem} \label{thm: new_geo_length_bound}
Assume $F(0) < p_c(d)$. Then there exist positive constants $M,C$ such that
\[
\mathbb{P}\left(m(x) \geq M\|x\|_\infty\right) \leq \exp\left( - C \|x\|_\infty^{1/d} \right) \text{ for all } x \in \mathbb{Z}^d,
\]
where $m(x) = \max\{|\sigma| : \sigma \text{ a self-avoiding geodesic from } 0 \text{ to } x\}$.
\end{theorem}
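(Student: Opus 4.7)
The plan is to combine the ``length versus time'' estimate invoked in the proof of Lemma~\ref{lem: geo_bound} (namely \cite[Prop.~1.3]{DHS1}) with Kesten's stretched-exponential upper tail bound for $T(0,x)$, both of which hold under the single hypothesis $F(0)<p_c(d)$. The underlying intuition is that a very long self-avoiding geodesic must accumulate a proportional amount of passage time, because the subgraph of ``cheap'' edges $\{e:\tau_e\leq\delta\}$ (for $\delta$ with $F(\delta)<p_c(d)$) is subcritical and cannot support arbitrarily long self-avoiding paths; meanwhile the total passage time $T(0,x)$ itself is of order $\|x\|_\infty$ with very high probability.

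Concretely, with $a>0$ and $Y_x=m(x)\mathbf{1}_{\{T(0,x)<am(x)\}}$ as in the proof of Lemma~\ref{lem: geo_bound}, \cite[Prop.~1.3]{DHS1} supplies $c_2>0$ so that $\mathbb{P}(Y_x\geq n)\leq e^{-c_2 n}$ uniformly in $x\in\mathbb{Z}^d$ and $n\geq 1$. On the event $\{m(x)\geq M\|x\|_\infty\}$ one has either $T(0,x)<am(x)$---equivalently $Y_x\geq M\|x\|_\infty$---or else $T(0,x)\geq am(x)\geq aM\|x\|_\infty$. This gives the union bound
\begin{equation*}
\mathbb{P}\bigl(m(x)\geq M\|x\|_\infty\bigr)\leq \mathbb{P}\bigl(Y_x\geq M\|x\|_\infty\bigr)+\mathbb{P}\bigl(T(0,x)\geq aM\|x\|_\infty\bigr).
\end{equation*}
The first term is at most $e^{-c_2 M\|x\|_\infty}$. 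For the second, I would invoke Kesten's upper tail bound (see \cite[Thm.~5.2]{KestenAspects}): under $F(0)<p_c(d)$ there exist $M_0,c_1>0$ with $\mathbb{P}(T(0,x)\geq t)\leq \exp(-c_1 t^{1/d})$ for all $t\geq M_0\|x\|_1$. Choosing $M$ large enough (absorbing the conversion constant between $\ell^\infty$ and $\ell^1$ norms) bounds the second term by $\exp\bigl(-c_1(aM)^{1/d}\|x\|_\infty^{1/d}\bigr)$. Since this stretched-exponential term dominates the first for large $\|x\|_\infty$, summing yields the claimed estimate after relabeling constants.

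The main obstacle, if one wished to reprove Proposition~1.3 of \cite{DHS1} from scratch, is that a naive union bound over self-avoiding paths of length $n$ through $0$ fails: there are of order $(2d)^n$ such paths, and a Chernoff estimate on $\#\{e\in\gamma:\tau_e>\delta\}$ for a fixed $\gamma$ only saves a factor $e^{-cn}$. The standard workaround is a multi-scale renormalization. Fix $\delta>0$ with $F(\delta)<p_c(d)$, tile $\mathbb{Z}^d$ by cubes of large side $L$, and declare a cube ``good'' when no ``cheap'' cluster in its enlargement crosses it. Exponential decay of subcritical cluster radii makes the block events a highly subcritical $1$-dependent percolation for $L$ large, and a long self-avoiding geodesic is then forced to cross many good cubes, each contributing a uniformly positive amount to the passage time via a concentration estimate on i.i.d.\ sums of the ``expensive'' edge weights. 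This is the step that genuinely uses $F(0)<p_c(d)$ rather than merely $F(0)<1$, and is the technical heart of the argument; the stretched exponent $1/d$ in the final bound, by contrast, is inherited from Kesten's tail estimate and reflects the cost of surrounding the origin by a ``bad shell'' of slow edges of radius of order $\|x\|_\infty^{1/d}$.
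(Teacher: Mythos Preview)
Your decomposition into $\{Y_x\geq M\|x\|_\infty\}$ and $\{T(0,x)\geq aM\|x\|_\infty\}$ is natural, and the first term is indeed handled by \cite[Prop.~1.3]{DHS1} (equivalently Kesten's \cite[Prop.~5.8]{KestenAspects}) under only $F(0)<p_c(d)$. The gap is in the second term. The stretched-exponential upper tail $\mathbb{P}(T(0,x)\geq t)\leq \exp(-c_1 t^{1/d})$ you attribute to \cite[Thm.~5.2]{KestenAspects} simply does not hold under the bare hypothesis $F(0)<p_c(d)$: that theorem concerns the \emph{lower} tail $\{T(0,ne_1)<n(\mu-\epsilon)\}$, and Kesten's upper-tail results (Thm.~5.9 there) need exponential moments. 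The obstruction is real: since $T(0,x)\geq\min\{\tau_e:e\ni 0\}$, if the edge law has, say, $\mathbb{P}(\tau_e>t)\sim t^{-\alpha}$ with $\alpha$ small, then $\mathbb{P}(T(0,x)\geq t)$ decays only polynomially, uniformly in $x$. So your bound on the second term is false in general, and the argument collapses.

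The paper's proof circumvents exactly this difficulty. Rather than bounding $T(0,x)$, it picks $K$ large so that $\{e:\tau_e\leq K\}$ is supercritical, then uses (i) Kesten's lemma that every path out of $B(n_x)$, $n_x=\lfloor\|x\|_\infty^{1/d}\rfloor$, hits the infinite $K$-cluster with probability $\geq 1-e^{-Cn_x}$, and (ii) the Antal--Pisztora chemical-distance bound to find $u\in B(n_x)$, $v\in B(x,n_x)$ in the cluster with $d_I(u,v)\leq 2\rho\|x\|_\infty$, hence $T(u,v)\leq 2K\rho\|x\|_\infty$ deterministically. The geodesic segment from $u$ to $v$ inside the long geodesic then has passage time at most $2K\rho\|x\|_\infty$ but (after trimming the ends in $B(n_x)$, $B(x,n_x)$, which cost at most $Cn_x^d$ edges) still has $\geq (M-C_2)\|x\|_\infty$ edges, and one finishes with \cite[Prop.~5.8]{KestenAspects} applied at $u$ with a union bound over $u,v$. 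The scale $n_x=\|x\|_\infty^{1/d}$ is chosen precisely so that (a) the cluster-intersection probability is stretched-exponential in $\|x\|_\infty^{1/d}$ and (b) the $O(n_x^d)$ edges lost to the trimming are negligible compared to $M\|x\|_\infty$. In short, the paper replaces your moment-dependent bound on $T(0,x)$ with a moment-free percolation ``highway'' bound on $T(u,v)$ for well-chosen nearby $u,v$.
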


Theorem~\ref{thm: new_geo_length_bound} implies that if $F(0)<p_c$, then for each $\alpha>0$, there is $C_\alpha$ such that for all $x \neq 0$,
\[
\mathbb{E}m(x)^\alpha \leq C_\alpha \|x\|_\infty^\alpha.
\]
Under the additional condition $\mathbb{E}e^{\beta \tau_e} < \infty$ for some $\beta>0$, \cite[Corollary~1.4(2)]{DHS2} shows a similar bound for exponential moments: there exists $\alpha_1>0$ such that
\[
\sup_{0 \neq x \in \mathbb{Z}^d} \frac{\log \mathbb{E}e^{\alpha_1 m(x)}}{\|x\|_\infty} < \infty.
\]

The proofs of the above two theorems use almost the same method, and since the first has appeared, we give the second.
\begin{proof}[Proof of Theorem~\ref{thm: new_geo_length_bound}]

For $p\in [0,1]$ let $\mathbb{P}_p$ be the product measure on $\Omega = \{0,1\}^{\mathcal{E}^d}$ with marginal $\mathbb{P}_p(\omega(e) = 1) = p$, where $\omega$ is a typical element of $\Omega$. In a configuration $\omega$ we write $x \to y$ if there is a path from $x$ to $y$ with edges $e$ satisfying $\omega(e) = 1$. This gives a connectivity equivalence relation and the equivalence classes are called open clusters. It is known that for $p>p_c$, there is almost surely a unique infinite open cluster. Define $B(n) = \{x \in \mathbb{Z}^d : \|x\|_\infty \leq n\}$ and $\partial B(n) = \{x \in \mathbb{Z}^d : \|x\|_\infty = n\}$.

\begin{lemma}\label{lem: kesten}
Let $A_n$ be the event that every path from 0 to $\partial B(n)$ intersects the infinite open cluster. There exists $p_0 \in (p_c,1)$ such that if $p \in [p_0,1]$ then for some $C_1>0$,
\[
\mathbb{P}_p(A_n) \geq 1 - e^{-C_1n} \text{ for all }n.
\]
\end{lemma}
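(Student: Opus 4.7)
The plan is to give a Peierls-style contour argument, exploiting that for $p$ close to $1$ the set of closed edges is deeply subcritical and large separating surfaces of closed edges are exponentially unlikely. Write $q = 1-p$, and let $\mathcal{C}_\infty$ denote the a.s.\ unique infinite open cluster. If $0 \in \mathcal{C}_\infty$, then $A_n$ holds trivially (the trivial path visits $0 \in \mathcal{C}_\infty$), so in bounding $\mathbb{P}_p(A_n^c)$ one may assume $0 \notin \mathcal{C}_\infty$. On $A_n^c$, fix a self-avoiding witness path $\gamma$ from $0$ to $\partial B(n)$ whose vertices all lie outside $\mathcal{C}_\infty$, and set
\[
U \;:=\; \bigcup_{v \in \gamma} C(v),
\]
the union of the open clusters of the vertices of $\gamma$.

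The first step is to extract the finite object that will carry the Peierls weight. Each cluster $C(v)$ for $v \in \gamma$ is finite (else $v \in \mathcal{C}_\infty$), so $U$ is finite. Moreover $U$ is connected in $\mathbb{Z}^d$: each $C(v)$ is connected and contains $v \in \gamma$, so $U = \gamma \cup \bigcup_v C(v)$ is a union of connected sets each meeting the connected path $\gamma$. Since $0 \in U$ and $U \cap \partial B(n) \neq \emptyset$, $\mathrm{diam}(U) \geq n$. Next, every edge $e \in \partial_e U$ is \emph{closed}: if $e$ had endpoints $v \in U$ and $u \notin U$ and were open, then $u$ would lie in $C(v) \subset U$, a contradiction. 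Thus $\partial_e U$ is a cutset of closed edges separating $0$ from $\infty$. A standard slicing argument now yields $|\partial_e U| \geq 2(d-1) n$: after applying a lattice symmetry, one may assume $U$ contains a vertex with $x_1 = n$, so the $e_1$-projection of $U$ is an interval of $\mathbb{Z}$ of length at least $n$; for each $i$ in that interval the slice $U_i := U \cap \{x_1 = i\}$ is a non-empty finite subset of $\mathbb{Z}^{d-1}$ and therefore contributes at least $2(d-1)$ edges to $\partial_e U$ in the $d-1$ transverse directions.

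The Peierls count then closes the argument. Each finite connected $U \ni 0$ determines a canonical outer contour $\Gamma(U) \subset \partial_e U$: the unique connected $(d-1)$-dimensional edge cutset separating $U$ from the infinite component of $\mathbb{Z}^d \setminus U$. Standard lattice-surface combinatorics provides a dimensional constant $\alpha = \alpha(d) < \infty$ such that the number of outer contours of size $k$ enclosing a fixed vertex is at most $\alpha^k$, and the projection bound above transfers to $|\Gamma(U)| \geq c\, n$ for some $c = c(d) > 0$. Since on $A_n^c$ every edge of $\Gamma(U)$ is closed, a union bound gives
\[
\mathbb{P}_p(A_n^c) \;\leq\; \sum_{k \geq cn} \alpha^k q^k \;=\; \sum_{k \geq cn} (\alpha q)^k \;\leq\; \frac{(\alpha q)^{cn}}{1 - \alpha q}
\]
whenever $\alpha(d)\, q < 1$, which holds for $p \geq p_0 := 1 - (2\alpha(d))^{-1}$; this is the desired bound with $C_1 = c \log(1/(\alpha q))$. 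The main obstacle is combinatorial rather than probabilistic: one must make precise the definition of the outer contour and the surface-counting bound $\alpha^k$ in dimensions $d \geq 3$, where planar duality is unavailable and one must instead use the standard surface-animal combinatorics that underlies every high-density Peierls argument.
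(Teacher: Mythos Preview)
Your Peierls-contour approach is essentially correct and is the standard route to this kind of statement. The paper itself gives no self-contained argument here---it simply defers to Kesten's lecture notes and to \cite{ADH2}---so your write-up is in fact a reasonable expansion of what those references do.

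One step deserves more than the single clause you gave it. You prove the slicing lower bound $|\partial_e U| \geq 2(d-1)n$ and then assert that ``the projection bound above transfers to $|\Gamma(U)| \geq c\,n$.'' This does not follow directly: if $U$ has finite holes, the edges surrounding those holes lie in $\partial_e U$ but not in the outer contour $\Gamma(U)$, and nothing in your argument prevents the $2(d-1)$ transverse boundary edges you exhibit in each slice from landing on such interior boundary. The fix is routine: pass to the hole-filled set $\bar U$ (the union of $U$ with all finite components of $U^c$); then $\bar U$ is still finite and connected, contains $0$ and a point of $\partial B(n)$, and one checks that $\partial_e \bar U = \Gamma(U)$, so the same slicing argument applied to $\bar U$ yields $|\Gamma(U)| \geq 2(d-1)n$ directly. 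With that patched, the remaining ingredients---closedness of $\partial_e U$, the $*$-connectivity of the outer boundary needed to make the $\alpha^k$ animal count legitimate (this is exactly the Tim\'ar result \cite{Timar} the paper invokes elsewhere), and the final union bound---go through as you describe.
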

\begin{proof}
The proof is a slight modification of the result of Kesten \cite[Theorem~2.24]{KestenAspects}, and is given as \cite[Lemma~6.3]{ADH2}.
\end{proof}

We now recall the result of Antal-Pisztora and afterward prove the theorem.
\begin{lemma}[Theorem~1.1 \cite{AntalPisztora}]\label{lem: AP}
Let $p>p_c$. Then there exists a constant $\rho = \rho(p,d) \in [1,\infty)$ such that
\[
\limsup_{\|y\|_\infty \to \infty} \frac{1}{\|y\|_\infty} \log \mathbb{P}_p(d_I(0,y) > \rho \|y\|_\infty,~0,y \in I) < 0,
\]
where $I$ is the infinite open cluster and $d_I$ is the intrinsic distance in $I$.
\end{lemma}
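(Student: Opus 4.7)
The plan is to establish the bound via a renormalization (block-coarse-graining) argument. Fix a large block scale $K$ and partition $\Z^d$ into cubes $B_z := Kz + [0,K)^d$, $z \in \Z^d$. For each $z$ declare the block $B_z$ \emph{good} if inside a slightly enlarged cube $\widetilde B_z$ (side length $\sim 3K$) there is a unique ``crossing'' open cluster $C_z$ touching every face of $B_z$, and for every nearest neighbor $z'$ of $z$ the crossings $C_z$ and $C_{z'}$ are connected to each other inside $\widetilde B_z \cup \widetilde B_{z'}$ by an open path of $C_z$-intrinsic length at most $\eta K$ for some constant $\eta = \eta(d)$. The point of this definition is twofold: first, goodness depends only on edge states in $\widetilde B_z$, so the indicator process $(\mathbf 1_{\{B_z \text{ good}\}})_z$ is $2$-dependent; second, a chain of neighboring good blocks automatically yields an open path of length at most $\eta K$ per block linking their crossing clusters.

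The first task is to show $\mathbb P_p(B_z \text{ good}) \to 1$ as $K \to \infty$ for every $p>p_c$. This is where one invokes the Grimmett–Marstrand slab theorem, which guarantees (for $p>p_c$) supercritical percolation in slabs of bounded thickness and hence allows one to couple the unique infinite cluster with a sufficiently ``fat'' crossing of each $\widetilde B_z$ with probability $\to 1$. Once this is known, by the Liggett–Schonmann–Stacey theorem the $2$-dependent field $(\mathbf 1_{\{B_z \text{ good}\}})$ stochastically dominates a Bernoulli site percolation of parameter $q=q(K)$ with $q(K) \to 1$; for $K$ large enough, $q$ is arbitrarily close to $1$, well above the site-percolation threshold $p_c^{\mathrm{site}}(\Z^d)$.

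Now condition on $0, y \in I$. On the full-measure event that both lie in the infinite cluster, they lie in good blocks $B_{z_0}$, $B_{z_y}$ with positive probability; by a standard local modification and surgery, one can assume this at the cost of a multiplicative constant. Consider the lattice-animal argument: in the good-block process, which dominates a supercritical site process, the good-block cluster of $z_0$ almost surely contains $z_y$ whenever $\|y\|_\infty$ is large, and the intrinsic distance $\widetilde d(z_0,z_y)$ in the good-block cluster satisfies an analogue of the Antal–Pisztora bound at the level of site percolation. Chaining the short open connectors provided by goodness then yields
\[
d_I(0,y) \;\le\; \eta K \cdot \widetilde d(z_0,z_y) + O(K),
\]
so it suffices to prove the chemical-distance bound for a supercritical site percolation on $\Z^d$ with parameter $q$ arbitrarily close to $1$. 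The final exponential tail comes from a contour/Peierls estimate: if $d_I(0,y) > \rho \|y\|_\infty$ yet $0,y \in I$, then any blocked shortest path in the good-block cluster must be forced to a length exceeding $\rho \|y\|_\infty/(\eta K)$, which requires either a long ``detour'' around a cluster of bad blocks of volume $\gtrsim \|y\|_\infty$ or an anomalously long shortest path in the good-block cluster itself. Both events have probability bounded by $\exp(-c(K,q)\|y\|_\infty)$ by standard exponential decay of bad-cluster sizes (since $1-q$ is small) and a counting-over-self-avoiding-paths argument. Choosing $\rho$ large enough (depending on $K$ and $\eta$) absorbs the prefactors.

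The main obstacle is the first step: establishing that good blocks dominate a highly supercritical site percolation for \emph{every} $p>p_c$, not just $p$ close to $1$. This is the content of Grimmett–Marstrand, and avoiding it is essentially impossible; the whole point of Antal–Pisztora is that the chemical-distance bound holds throughout the supercritical phase, and Grimmett–Marstrand is the only known tool that bridges from ``$p$ above $p_c$'' to ``slab percolation with arbitrarily high density.'' Once this ingredient is in hand, the renormalization and Peierls estimates are, while technically delicate, essentially bookkeeping around the two-dependence of goodness and large-deviation estimates for i.i.d.\ site percolation.
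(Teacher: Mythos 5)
The paper offers no proof of this lemma: it is quoted as Theorem~1.1 of \cite{AntalPisztora} and used as a black box in the proof of Theorem~\ref{thm: new_geo_length_bound}. Your sketch follows essentially the same route as the original Antal--Pisztora argument --- Pisztora-type coarse graining whose block estimate rests on Grimmett--Marstrand, Liggett--Schonmann--Stacey domination of the block field by highly supercritical Bernoulli percolation, and an exponential detour estimate through clusters of bad blocks --- so the strategy is the right one and, as you note, essentially the only known one valid throughout $p>p_c$.

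Two steps, however, are genuinely incomplete as written. First, your good-block event (a unique face-to-face crossing cluster $C_z$ in $\widetilde B_z$, linked to the crossings of neighboring blocks) is too weak: you also need the absorption property that every open cluster of diameter of order $K$ inside $\widetilde B_z$ is contained in $C_z$. This is exactly where the hypothesis $0,y\in I$ enters --- the clusters of $0$ and $y$ are infinite, hence have diameter of order $K$ in their blocks, and absorption is what attaches them to the chain of crossing clusters; uniqueness of the crossing cluster alone does not. Second, the ``standard local modification and surgery at the cost of a multiplicative constant'' is not available here: opening edges near $0$ or $y$ preserves $\{0,y\in I\}$ but can destroy $\{d_I(0,y)>\rho\|y\|_\infty\}$, so there is no monotone comparison, and a finite-energy swap gives no control of the event after modification. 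The correct fix (and what the original proof does) is to fold the starting and ending blocks into the detour estimate: if $B_{z_0}$ or $B_{z_y}$ is bad, any path must first traverse the bad-block cluster containing it, and the exponential tail of bad-cluster sizes in the dominated, highly supercritical field pays for this at linear cost; $\rho$ is then chosen large enough to absorb $\eta K$, the detour constant, and the $O(K)$ boundary terms. With these two repairs your outline is the Antal--Pisztora proof.
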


We choose $K>0$ such that $p:= \mathbb{P}(\tau_e \leq K) > p_0$ and define a percolation configuration $(\eta_e)$ from the weights $(\tau_e)$ by
\[
\eta_e = \begin{cases}
1 & \text{ if } \tau_e \leq K \\
0 & \text{ if } \tau_e > K
\end{cases}.
\]
For $x \in \mathbb{Z}^d$ and an integer $M>0$, we first write $n_x = \lfloor \|x\|_\infty^{1/d} \rfloor$ and estimate
\begin{equation}\label{eq: nachos_bellegrande}
\mathbb{P}(m(x) \geq M\|x\|_\infty) \leq 2e^{-C_1 n_x} + \mathbb{P}(m(x) \geq M\|x\|_\infty,~A_{n_x},~B_{n_x}),
\end{equation}
where $A_{n_x}$ is written for the event in Lemma~\ref{lem: kesten} for the percolation configuration $(\eta_e)$ and $B_{n_x}$ is the same event with $0$ translated to $x$. From now on, we take $\|x\|_\infty \geq C'$ with $C'$ chosen so that $n_x \geq 1$ and $4n_x < \|x\|_\infty$. On the event on the right we can select a self-avoiding geodesic $P$ from 0 to $x$ which has length at least $M\|x\|_\infty$. Write $P_1$ for the portion of $P$ from 0 to its first intersection of $\partial B(n_x)$ and $P_2$ for the portion from its last intersection of $\partial B(x,n_x)$ to $x$ (here $B(x,n)$ is the translate of $B(n)$ centered at $x$). We can then choose $u,v$ vertices of $P_1$ and $P_2$ respectively such that $\|u\|_\infty \leq n_x$ and $\|v-x\|_\infty \leq n_x$ and both $u,v$ are in the infinite open cluster of $(\eta_e)$. By construction, the portion $P_3$ of $P$ from $u$ to $v$ is a self-avoiding geodesic that has at least
\[
M\|x\|_\infty - C_2 n_x^d \geq (M-C_2)\|x\|_\infty \text{ number of edges}.
\]
We now apply Lemma~\ref{lem: AP} to find $C_3,C_4>0$ such that
\[
\mathbb{P}_p(d_I(0,y) > \rho\|y\|_\infty,~0,y \in I) \leq C_4 e^{-C_3 \|y\|_\infty} \text{ for all } y \in \mathbb{Z}^d.
\]
So for $u \in B(n_x)$ and $v \in B(x,n_x)$, as $2\|x\|_\infty \geq \|u-v\|_\infty \geq \|x\|_\infty/4$,
\[
\mathbb{P}_p(d_I(u,v) > 2\rho\|x\|_\infty,~u,v \in I) \leq C_4 e^{-(C_3 /4)\|x\|_\infty}
\]
and by a union bound,
\[
\mathbb{P}_p(d_I(u,v) > 2\rho\|x\|_\infty \text{ for some } u \in B(n_x) \cap I,~v\in B(x,n_x) \cap I) \leq C_5 e^{-C_6 \|x\|_\infty}
\]
for some $C_5,C_6>0$.

On the complement of this event, each $u \in B(n_x) \cap I$ and $v \in B(x,n_x) \cap I$ have $d_I(u,v) \leq 2\rho\|x\|_\infty$ and so $T(u,v) \leq 2K\rho\|x\|_\infty$. Use this in the right side of \eqref{eq: nachos_bellegrande} to bound it above by
\begin{equation}\label{eq: nachos_headache}
C_7 e^{-C_8 \|x\|_\infty^{1/d}} + \mathbb{P}\left( \begin{array}{c}
\exists~u \in B(n_x),v \in B(x,n_x) \text{ with } m(u,v) \text{ at} \\
\text{least } (M-C_2) \|x\|_\infty \text{ but } T(u,v) \leq 2K\rho\|x\|_\infty
\end{array} \right).
\end{equation}
Here, $m(u,v)$ is the maximal number of edges in a self-avoiding geodesic from $u$ to $v$.

Last, we appeal to Kesten's result \cite[Proposition~5.8]{KestenAspects}, which states that under the assumption $F(0)<p_c$, there exist constants $a,C_9 >0$ such that for all $n \geq 1$,
\[
\mathbb{P}\left( \exists \text{ self-avoiding } \gamma \text{ starting at }0 \text{ with } \#\gamma \geq n \text{ but with } T(\gamma) < an \right) \leq e^{-C_9 n}.
\]
By a union bound, for all $n \geq 1$ and $x \in \mathbb{Z}^d$ with $\|x\|_\infty \geq C'$,
\[
\mathbb{P}\left( \exists \text{ self-avoiding }\gamma \text{ from } B(n_x) \text{ to } B(x,n_x) \text{ with } \#\gamma \geq n \text{ but } T(\gamma) < an \right) \leq C_{10} n_x^d e^{-{C_9 n}}.
\]
If there exist $u \in B(n_x)$ and $v \in B(x,n_x)$ with $m(u,v) \geq (M-C_2)\|x\|_\infty$, then we may select a self-avoiding geodesic between $u$ and $v$ that has at least this number of edges. So fixing any $M$ with $2K\rho/a + C_2 < M$, the expression in \eqref{eq: nachos_headache} is bounded by
\[
C_7 e^{-C_8 \|x\|_\infty^{1/d}} + C_{10} n_x^de^{-C_9 (M-C_2)\|x\|_\infty},
\]
and this is bounded by $C_{11}e^{-C_{12}\|x\|_\infty^{1/d}}$ for some $C_{11},C_{12} >0$. Increasing $M$ if necessary, we obtain $e^{-C_{13}\|x\|_{\infty}^{1/d}}$, completing the proof.
\end{proof}

In several occasions we will be interested in the case where $\Gamma(0,x)$ is a single path from $0$ to $x$.

\begin{definition}
A metric space is called uniquely geodesic if between any two points there exists a unique geodesic.
\end{definition}
A sufficient condition to be uniquely geodesic is to require the passage times to have a continuous distribution. Given the existence of finite geodesics, one can define the geodesic graph as follows. For each $x \in \Z^d$ we define the (directed) graph $\mathcal T(x)$ with vertex set $\Z^d$ and whose edge set is the collection of all directed edges $e$ that belong to some finite geodesic between $x$ and another point $y$, oriented as they are traversed moving from $x$ to $y$. This graph has the following properties:
\begin{enumerate}
\item $\mathcal T(x)$ is almost surely connected.
\item Every finite directed path in $\mathcal T(x)$ is a finite geodesic.
\item If the space is uniquely geodesic, then $\mathcal T(x)$ is a spanning tree of $\mathbb Z^d$. In this case, $\mathcal T(x)$ is called the {\it geodesic tree} or the tree of infection of $x$.
\end{enumerate}

Many of the geodesic questions discussed in this survey can be rephrased in the language of geodesic trees. Many of the ``large scale'' questions -- for instance, the number of ends of the tree $\mathcal T(x)$ -- will require the study of infinite geodesics, which we will define shortly. Before discussing these large scale questions, let us focus on a natural small scale question: how straight is the geodesic between $x$ and $y$?

\subsection{The wandering exponent}\label{sec:exponents}
Given a set $A \subseteq \mathbb{R}^d$, let $\Gamma(x, A)$ denote the geodesic from $x$ to $A$ (in case $A = \{y\}$ is a singleton, write $\Gamma(x,y)$). If the first-passage metric is not uniquely geodesic but geodesics exist, let $\Gamma(x,A)$ instead denote the union of all geodesics between $x$ and $A$ (similarly for $A =\{y\}$).

A natural asymptotic question for geodesics is: what does $\Gamma(0,x)$ ``typically'' look like for $\|x\|$ large? Since the first-passage metric is in some sense a random perturbation of the $\ell^1$ metric, one could perhaps imagine that $\Gamma(0,x)$ often stays order  $\|x\|$ distance from the midpoint $x / 2$. On the other hand, as discussed in Section \ref{sec:limitshape}, it is reasonably well-accepted that the limit shape should have uniformly positive curvature, suggesting that $\Gamma(0,x)$ should perhaps look more like the straight line between $0$ and $x$, as in the case of the $\ell^2$ metric.

Indeed, the $\ell^2$ intuition is believed to be the correct one, and $\Gamma(0,x)$ is believed to lie close to the line
\[L_x = \{mx:\, m \in \mathbb{R}\}. \]
It is easy to see very weak versions of this claim under the assumption that the limit shape is strictly convex. For instance, if $x = x(n) = (n, n, \ldots, n)$, and we denote by $\alpha_n$ the path which first takes $n$ steps down the $e_1$ axis to connect $0$ to $n e_1$, then moves $n$ steps up the $e_2$ axis, etc., we see by the shape theorem that
\[\lim_{n \rightarrow \infty} T(\alpha_n) / n = d \mu(e_1)\ . \]

On the other hand, $\lim_{n \rightarrow \infty} T(0, x(n)) / n = \mu(e_1 + \ldots + e_d)$, which is strictly smaller if the limit shape is strictly convex. In particular, $\alpha_n$ is not only not the geodesic to $x(n)$, but actually pays order $\|x\|$ more in passage time than the geodesic does. This simple argument only rules out a single path, however, and stronger arguments have been developed to simultaneously control all candidate geodesics.

Set $D(0,x)$ to be the maximal distance of $\Gamma(0,x)$ to $L_x$. An early argument showing that $\Gamma(0,x)$ lies close to $L_x$ (under assumptions on the limit shape) appeared in \cite[Lemma (9.10)]{KestenAspects}, where it was shown that, assuming that the limit shape has an exposed point in the $e_1$ direction,
\begin{equation}
\label{eq:kestenwander}
 \lim_{\varepsilon \rightarrow 0} \liminf_{n\to \infty} \mathbb{P}\left(\mathrm{dist}(D(0,ne_1) < \varepsilon n \right) = 1\ .\end{equation}
To discuss strengthening \eqref{eq:kestenwander}, we will introduce the ``wandering exponent'' $\xi$.

Roughly speaking, $\xi$ is the number such that the maximal distance between $\Gamma(0,x)$ and $L_x$ is typically of order $\|x\|^\xi$. A simple attempt at giving a concrete definition could be to set $\varepsilon = n^{\alpha - 1}$ in the left-hand side of \eqref{eq:kestenwander} and define $\alpha$ to be the smallest number such that \eqref{eq:kestenwander} still holds; however, there is no general agreement on the ``correct'' definition, and we will discuss several possible alternatives. The exponent $\xi$ is believed to be universal in the sense that, for all reasonable choices of continuous distributions $F$, one should find the same value of $\xi$, and $\xi$ should govern geodesic fluctuations in all directions. In general, however, $\xi$ should depend on the dimension $d$. It is widely believed that, for $d = 2$, the value of $\xi$ should be $2/3$ \cite{HH,HHF,K,KPZ}. (For distributions in $\mathcal{M}_p$ with $p >\vec{p}_c$, defined in Section~\ref{sec: flat_edge}, infinite oriented paths of $1$-edges give $\xi=1$ -- and $\chi=0$ -- for directions in the percolation cone.)

\begin{figure}[h]\label{DefXiExponent}
\centering
 \includegraphics[scale=1]{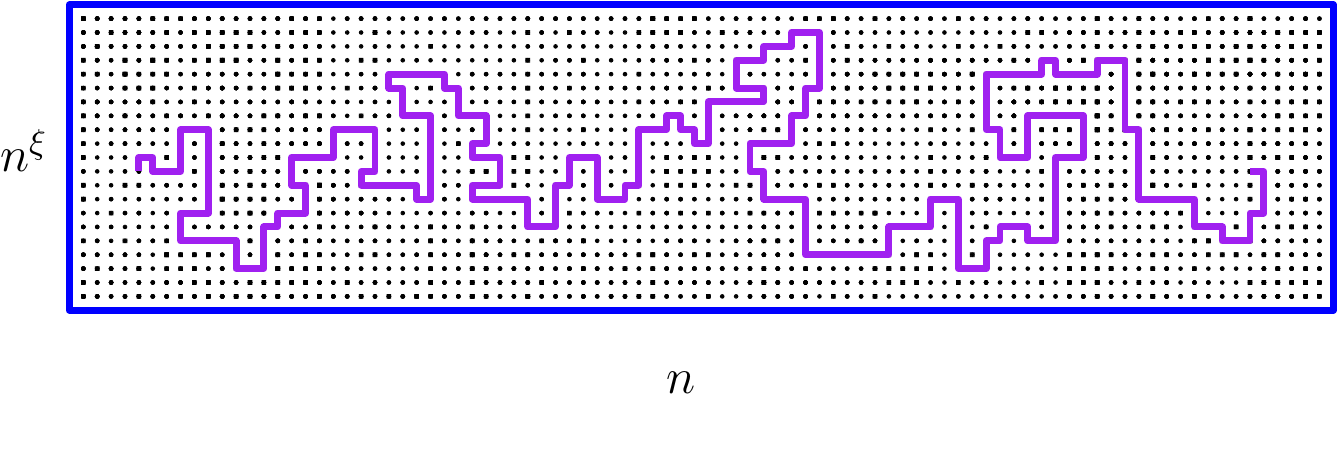}
 \caption{Representation of the wandering exponent $\xi$. The deviation of a geodesic from $0$ to $ne_1$ from the straight line that connects these points is of order $n^\xi$. }
\end{figure}
\subsubsection{Upper bounds on $\xi$}
We now discuss the most robust current upper bounds, from \cite{Newman, NewmanPiza}. 

Letting $\mathcal C(x, m)$ denote the cylinder with axis $L_x = \{mx: \, m \in \mathbb{R}\}$ and radius~$m$ -- i.e.,
\begin{equation}
\label{eq:cyldef}
\mathcal C(x,\,m) = \{z \in \mathbb{R}^d:\, \mathrm{dist}(z, \, L_x) \leq m\}\, , 
\end{equation}
then one expects $\mathcal C(x, \|x\|^\xi)$ is the ``right scale'' to contain $\Gamma(0,x)$. To be more precise: one expects
\begin{align}
\label{eq:xicylfirst}
\lim_{\|x\| \rightarrow \infty}\mathbb{P}\left(\Gamma(0,x) \subseteq \mathcal C(x, \|x\|^{\xi + \varepsilon}) \right) &= 1\ ,\\
\label{eq:xicylsecond}
\lim_{\|x\| \rightarrow \infty}\mathbb{P}\left(\Gamma(0,x) \subseteq \mathcal C(x, \|x\|^{\xi - \varepsilon}) \right) &= 0\ ,
\end{align}
for any $\varepsilon > 0$.

Note that \eqref{eq:xicylfirst} and \eqref{eq:xicylsecond} carry an implicit isotropy; if these equations are true, then the order of geodesic wandering must be the same in all directions. More should be true: if we replace $\Gamma(0,x)$ by, for instance, a geodesic to a hyperplane at distance $n$, then this geodesic should asymptotically be contained in cylinders of radius $n^{\xi + \varepsilon}$ (and not in cylinders of radius $n^{\xi - \varepsilon}$). Lastly, ``near-geodesics''should have the same behavior: if $\Gamma_M(x, y)$ is the set of all paths $\eta$ connecting $x$ to $y$ satisfying $T(\eta) \leq T(x, y) + M$, then $\Gamma_M$ should satisfy relations like \eqref{eq:xicylfirst} and \eqref{eq:xicylsecond}.

The article \cite{NewmanPiza}, in conjunction with its studies of $\chi$, provides a technique for bounding $\xi$ in terms of $\chi$ (a more detailed discussion of the relation between the exponents can be found in the next subsection). For a unit vector $x$, let 
$$
\xi_{x} = \inf \bigg\{ \alpha >0 : \liminf_n \Pro\big(\Gamma(0,nx) \subset \mathcal C(x,n^{\alpha})\big)> 0 \bigg \}.
$$  
We also define the fluctuation exponent
\begin{equation*}
\chi' = \inf \bigg \{\kappa: \, (t - t^\kappa) \mathcal B_\nu \subseteq B(t) \subseteq (t + t^\kappa) \mathcal B_\nu \text{ for large } t, \, \text{a.s.}\bigg \}
\end{equation*}
The following inequality is a version of one of the inequalities of the scaling relation $\chi = 2\xi-1$, discussed in Section~\ref{sec:scalingrelation} below. In their paper, Newman-Piza explain that their argument is essentially a rigorous version of the argument of Krug-Spohn \cite{KS} for this scaling relation.

\begin{theorem}[Theorem 6 \cite{NewmanPiza}]\label{thm:NPScaling} Assume $d \geq 2$ with $\mathbb{E} \tau_e^2 < \infty$ and $F(0)~<~p_c$. Assume that $x$ is a direction of curvature for the limit shape  (see Section \ref{sec:curvaturesection}). Then,
$$ \xi_{x} \leq \frac{1+\chi'}{2}.$$
\end{theorem}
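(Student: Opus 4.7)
The proof proceeds by a soft, deterministic-style argument that converts the quadratic curvature of $\mathcal{B}_\nu$ at $x$ into a penalty for transverse wandering, combined with the shape-theorem fluctuation bound encoded in $\chi'$. Fix $\alpha > (1+\chi')/2$ and choose $\kappa$ with $\chi' < \kappa < 2\alpha - 1$. The goal is to show $\mathbb{P}(\Gamma(0,nx) \subset \mathcal{C}(x, n^\alpha)) \to 1$.

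First I would set up the passage-time input: for $\kappa > \chi'$, the definition of $\chi'$ gives, almost surely for all large $n$, a two-sided bound $|T(0,z) - \mu(z)| \leq n^\kappa$ valid for every $z$ with $\|z\| \leq Mn$, where $M$ comes from the geodesic diameter bound of Theorem~\ref{thm: geo_diameter_bound} (so that with overwhelming probability $\Gamma(0,nx) \subset B(0,Mn)$). I need the analogous bound $|T(y,nx) - \mu(nx-y)| \leq n^\kappa$ for the relevant endpoints $y$ on the geodesic. By translation invariance, the distribution of $T(y, \cdot) - \mu(\cdot - y)$ matches the one at the origin, so for each fixed $y$ the shape-theorem bound transfers; to handle random $y$ one either takes a union bound over lattice sites in $B(0,Mn)$ (using any polynomial-decay moderate-deviation input one can take for granted beyond the $\chi'$ definition), or restricts attention a priori to a single well-chosen $y$ such as the midpoint of the geodesic.

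Second, I would extract from uniform curvature at $x$ the deterministic inequality
\[
\mu(y) + \mu(nx - y) - n\mu(x) \; \geq \; c \, \frac{d(y, L_x)^2}{n}
\]
for all $y$ in a suitable ``middle'' regime (projection $s$ onto $L_x$ with $s \in [\delta n, (1-\delta)n]$). This follows by writing $y = sx + w$ with $w \perp x$ and invoking homogeneity: $\mu(y) = s\mu(x + w/s)$ and $\mu(nx-y) = (n-s)\mu(x - w/(n-s))$, then applying the quadratic lower bound $\mu(x + v) \geq \mu(x) + c|v|^2$ from Definition~\ref{def:unifcurved} or its convex-hull version. For $y$ near an endpoint (small $s$ or small $n-s$), one uses $\mu(y) \geq c_L \|y\| \geq c_L d(y,L_x)$ directly, which already exceeds $n^\kappa$ when $d(y,L_x) \geq n^\alpha$ because $\alpha > \kappa$.

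Third, I would combine the two ingredients. Assume for contradiction that with non-negligible probability $\Gamma(0, nx)$ leaves $\mathcal{C}(x, n^\alpha)$, and pick $y$ on $\Gamma(0,nx)$ with $d(y, L_x) \geq n^\alpha$. Additivity along the geodesic gives
\[
n\mu(x) + n^\kappa \; \geq \; T(0, nx) \; = \; T(0,y) + T(y, nx) \; \geq \; \mu(y) + \mu(nx - y) - 2 n^\kappa \; \geq \; n\mu(x) + c\, n^{2\alpha - 1} - 2n^\kappa.
\]
This forces $c\, n^{2\alpha - 1} \leq 3 n^\kappa$, which fails for large $n$ since $2\alpha - 1 > \kappa$. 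Hence $\Gamma(0,nx) \subset \mathcal{C}(x, n^\alpha)$ with probability tending to $1$, giving $\xi_x \leq \alpha$; sending $\alpha \downarrow (1+\chi')/2$ completes the proof.

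The main obstacle is the uniform shape-theorem bound at \emph{random} starting points $y$ on the geodesic. The definition of $\chi'$ furnishes the bound only from the origin, so upgrading to uniformity over all $y \in B(0,Mn)$ (simultaneously on a high-probability event) is where the proof must do real work; the cleanest routes are either a union bound backed by a polynomial tail estimate for $T(y,z) - \mu(z-y)$ or, in the spirit of \cite{NewmanPiza}, restricting the argument to a single canonical point $y$ on $\Gamma(0,nx)$ so that only the origin-based $\chi'$ input is needed.
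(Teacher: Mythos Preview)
Your overall strategy is sound and is in fact the natural one for the $\chi'$ formulation: convert curvature into a quadratic penalty $\mu(y)+\mu(nx-y)-n\mu(x)\ge c\,n^{2\alpha-1}$ and play it against the shape-theorem error $n^\kappa$ with $\chi'<\kappa<2\alpha-1$. The paper, by contrast, does \emph{not} prove the $\chi'$ statement directly; it explicitly substitutes a stronger hypothesis (pointwise exponential concentration $\mathbb{P}(|T(0,z)-\mu(z)|\ge\lambda\|z\|^\chi)\le e^{-c\lambda^c}$) and then takes a union bound over all candidate vertices $v$ in a shell around the segment, bounding $\mathbb{P}(T(0,v)+T(v,ne_1)=T(0,ne_1))$ term by term. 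Your route avoids the union bound entirely and works straight from the almost-sure $\chi'$ input, which is more faithful to the theorem as stated.

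Your ``main obstacle'' is overstated, however. You do not need a shape theorem centered at the random point $y$, nor a union bound over lattice sites, nor to pick a canonical midpoint. Simply apply the shape theorem centered at the \emph{deterministic} point $nx$: by translation invariance, the event
\[
\bigl\{\,|T(nx,z)-\mu(nx-z)|\le C\|nx-z\|^\kappa \text{ for all } z \text{ with } \|nx-z\|>K\,\bigr\}
\]
has the same probability as its analogue at the origin, hence probability $\ge 1-\epsilon$ for some fixed $K$. On the intersection of this event, the event $\{T_0\le K\}$, and the geodesic-diameter event, every $y\in\Gamma(0,nx)$ with $d(y,L_x)\ge n^\alpha$ satisfies $\|y\|,\|nx-y\|\ge n^\alpha>K$, so both lower bounds $T(0,y)\ge\mu(y)-Cn^\kappa$ and $T(nx,y)\ge\mu(nx-y)-Cn^\kappa$ apply simultaneously. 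Your displayed chain of inequalities then goes through and yields $\mathbb{P}(\Gamma(0,nx)\subset\mathcal{C}(x,n^\alpha))\ge 1-3\epsilon$ for all large $n$, which already gives $\liminf_n>0$ and hence $\xi_x\le\alpha$. The only care needed is that the random threshold at $nx$ is not the same random variable as at $0$; but since you only need a probability bound uniform in $n$ (not an almost-sure statement simultaneously for all $n$), equality in distribution suffices.
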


A sketch of the argument that leads to Theorem \ref{thm:NPScaling} is given below, after Remark~\ref{rem: taco_head}. In the same paper, under the assumptions of Theorem \ref{thm:NP}, the authors also obtained the following bound, which they refer to as the Wehr-Aizenman bound, after the latter's work on phase transitions in disordered systems \cite{AW}:
\begin{equation}\label{eq:WehrAiz}
 \chi_{x} \geq \frac{1-(d-1)\xi_{x}}{2},
 \end{equation}
where $$\chi_{x} = \sup \bigg \{ \alpha>0 : \text{ for some } C>0, \; \mathrm{Var}\; T(0,nx) \geq Cn^{2\alpha} \text{ for all } n \bigg \}. $$

In dimension $2$, Theorem \ref{thm:NPScaling} can be strengthened when combined with the previous bound $\chi' \leq 1/2$, derived by Kesten \cite{Kesten} and Alexander \cite{Alexander}. One still assumes that $x$ is a direction of curvature of the limit shape. The result is the following.

\begin{theorem}[Theorem 7 \cite{NewmanPiza}]\label{thm:exponent_bounds} Assume $F(0)<p_c$, and that $\mathbb E \exp (\beta \tau_e)~<~\infty$ for some $\beta>0$. Then, if $x$ is a direction of curvature for the limit shape $\mathcal B_{\nu}$,
$$ \xi_x \leq 3/4, \quad \chi_x \geq 1/8,$$
the latter bound holding when $d=2$.
\end{theorem}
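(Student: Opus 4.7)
The plan is to assemble three ingredients already stated in the excerpt and arrange them in sequence: (i) the scaling inequality of Theorem \ref{thm:NPScaling} relating $\xi_x$ to the non-random fluctuation exponent $\chi'$, (ii) a quantitative bound $\chi' \leq 1/2$ coming from Kesten and Alexander's convergence-rate estimates together with Talagrand-type concentration, and (iii) the Wehr--Aizenman bound \eqref{eq:WehrAiz} relating $\chi_x$ to $\xi_x$.

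First I would verify that the hypotheses of Theorem \ref{thm:NPScaling} and \eqref{eq:WehrAiz} are met. Both require $d\geq 2$, $F(0)<p_c$, $\mathbb{E}\tau_e^2<\infty$, and curvature at $x$; the assumption $\mathbb{E}e^{\beta \tau_e}<\infty$ supplies the moments. Next I would establish $\chi' \leq 1/2$. By the definition of $\chi'$, it suffices to bound the Hausdorff distance between $B(t)$ and $t\mathcal{B}_\nu$ by $O(t^{1/2+o(1)})$. Alexander's theory (Theorem \ref{thm:non-randomAlexanderFPP}, and the refinement in \eqref{eq:tesrate}) gives the non-random bound $\mathbb{E}T(0,x) - \mu(x) = O((|x|\log |x|)^{1/2})$; under finite exponential moments, Talagrand's concentration (Theorem \ref{thm:futurereference}) together with a Borel--Cantelli argument over lattice points in $[-Ct,Ct]^d$ controls the random fluctuations on the same scale. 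Together these give $B(t) \subseteq (t+Ct^{1/2}\sqrt{\log t})\mathcal B_\nu$ and the matching inner bound eventually almost surely, whence $\chi' \leq 1/2$.

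With $\chi' \leq 1/2$ in hand, Theorem \ref{thm:NPScaling} yields immediately
\[
\xi_x \;\leq\; \frac{1+\chi'}{2} \;\leq\; \frac{1 + 1/2}{2} \;=\; \frac{3}{4}.
\]
For the lower bound on $\chi_x$ in $d=2$, I would plug this into \eqref{eq:WehrAiz}: since the Wehr--Aizenman inequality is monotone in the upper bound one has on $\xi_x$,
\[
\chi_x \;\geq\; \frac{1-(d-1)\xi_x}{2} \;=\; \frac{1-\xi_x}{2} \;\geq\; \frac{1 - 3/4}{2} \;=\; \frac{1}{8},
\]
completing the proof.

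There is no genuine obstacle beyond assembling the ingredients; the only subtle point is to confirm that the exponent $\chi'$ used in Theorem \ref{thm:NPScaling} is precisely the one bounded by the Kesten--Alexander results. This is an artefact of terminology: Theorem \ref{thm:NPScaling} is proved via an upper-tail concentration inequality of the form $\mathbb{P}(T(0,x) > \mu(x) + \lambda |x|^{\chi'}) \leq e^{-c\lambda}$, and the combined upper-tail estimates from Theorems \ref{thm:non-randomAlexanderFPP} and \ref{thm:futurereference} supply exactly such an inequality with $\chi' = 1/2$ (up to an irrelevant $\sqrt{\log|x|}$ factor, which can be absorbed since the exponent is defined via a limsup in $\log$). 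Once this identification is made, the chain of inequalities is immediate.
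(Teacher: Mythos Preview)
Your proposal is correct and follows exactly the approach the paper indicates: combine Theorem~\ref{thm:NPScaling} with the Kesten--Alexander bound $\chi' \leq 1/2$ to obtain $\xi_x \leq 3/4$, and then feed this into the Wehr--Aizenman inequality \eqref{eq:WehrAiz} in $d=2$ to deduce $\chi_x \geq 1/8$. The paper's own discussion (immediately preceding the theorem statement and in the sketch following Remark~\ref{rem: taco_head}) presents precisely this chain of inequalities, so there is no meaningful difference in approach.
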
 
\begin{remark} \label{rem: taco_head} Note that, for any limit shape $\mathcal B_{\nu}$, there is at least one direction of curvature. Indeed, for any nonempty bounded subset $B$ of $\mathbb R^d$, there is at least one point $z$ in $\partial B$ such that, for some closed ball $D$ with positive radius, $z \in \partial D$ and $\bar B \subset D$. If $B$ is not just the origin, just take a ball of radius $r$ large enough that contains $B$ and decrease $r$ to the first time it intersects $\partial B$.  
This  argument appeared in \cite{NewmanPiza}.
\end{remark}
Without the curvature assumption on the limit shape the best current bounds are still $0 \leq \xi \leq 1$, the upper bound following directly from the shape theorem.

We now give a sketch of the argument for Theorem~\ref{thm:NPScaling}. Rather than prove this inequality, we show a more intuitive Newman-Piza type argument under stronger assumptions, like those made in Chatterjee's work \cite{Sutav}. The argument also explains the inequality $\xi_x \leq 3/4$ from Theorem~\ref{thm:exponent_bounds}. Suppose that $\chi$ is any number for which there is $c>0$ such that
\begin{equation}\label{eq: exponential_assumption}
\mathbb{P}(|T(0,z) - \mu(z)| \geq \lambda \|z\|^{\chi}) \leq e^{-c\lambda^c}  \text{ for all } z \in \mathbb{Z}^d \text{ and } \lambda \geq 0.
\end{equation}
(In fact, it is only necessary here to make this assumption for deviations of $T(0,z)$ about its mean, not $\mu(z)$ because, as pointed out in \cite{Sutav}, one can run Alexander's argument under exponential bounds for $T(0,z) - \mathbb{E}T(0,z)$ to obtain upper bounds for non-random fluctuations on scale $\|z\|^{\chi}$, with logarithmic corrections.) We aim now to show that if $x$ is a direction of curvature, then
\begin{equation}\label{eq: easier_upper_bound}
\xi_x \leq \frac{1+\chi}{2}.
\end{equation}
Since the above exponential decay is known to hold for $\chi = 1/2+\delta$ for any $\delta>0$ (see Theorem~\ref{thm: subdiffusive}), one obtains $\xi_x \leq 3/4$.
\begin{proof}[Sketch of proof of \eqref{eq: easier_upper_bound}]
The version of the Newman-Piza argument we present here is from Chatterjee \cite{Sutav}. For simplicity, take $x=e_1$. Let $\epsilon>0$, $\hat \xi = \frac{1+\chi}{2} + \epsilon$, and note that since $\chi$ can be chosen arbitrarily close to $1/2$, for $\epsilon$ small one has $\hat \xi < 1$. Let $L$ be the $e_1$ axis, and for $n \geq 1$, let $L'$ the segment connecting $0$ and $ne_1$. Now define
\[
V = \{v \in \mathbb{Z}^d : \exists ~w \in L' \text{ such that } \|w-v\| \in [n^{\hat \xi},2n^{\hat \xi}]\},
\]
Then we must show that the probability that a geodesic from 0 to $ne_1$ enters $V$ goes to 0 with $n$. This will show that $\xi_x \leq \hat \xi$ and, taking $\epsilon \to 0$, we obtain the result.

By a union bound, the probability of this event is bounded by
\[
\sum_{v \in V} \mathbb{P}(v \text{ in a geodesic from } 0 \text{ to } ne_1) = \sum_{v \in V} \mathbb{P}(T(0,ne_1) = T(0,v) + T(v,ne_1)).
\]
The goal now is to show that the sum on the right is itself summable in $n$. We claim there is $c>0$ such that for all $n$ and all $v \in V$,
\begin{equation}\label{eq: to_show_burrito}
\mathbb{P}(T(0,ne_1) = T(0,v) + T(v,ne_1)) \leq (1/c)\exp\left( -cn^c \right).
\end{equation}
After summing over $v$ and then over $n$, we complete the proof.

The main step is to show that, due to the curvature assumption, there is $C$ such that for all $v \in V$,
\begin{equation}\label{eq: sutav_g_bound}
\mu(v) + \mu(ne_1-v) - \mu(ne_1) \geq Cn^{2\hat \xi - 1} = Cn^{\chi + 2\epsilon}.
\end{equation}
Note that this inequality implies that if we replace $T$ by $\mu$ in the event $\{T(0,v) + T(v,ne_1) = T(ne_1)\}$, then the event cannot occur and, furthermore, gives us large lower bound on the difference of both sides. Assuming this inequality for the moment, on the event $\{T(0,v) + T(v,ne_1) = T(ne_1)\}$, one has
\[
0 \geq [T(0,v) - \mu(v)] + [T(v,ne_1)-\mu(ne_1-v)] - [T(ne_1)-\mu(ne_1)] + Cn^{\chi + 2\epsilon},
\]
and so at least one of the first three terms on the right has absolute value at least $(C/3)n^{\chi+2\epsilon}$. Applying \eqref{eq: exponential_assumption} gives \eqref{eq: to_show_burrito} and would complete the proof.

To show \eqref{eq: sutav_g_bound} is actually quite simple in the case that $v = (n/2)e_1 + n^{\hat \xi}e_2$: for illustration, in this case, we use curvature and symmetry for
\begin{align*}
\mu(v) + \mu(ne_1-v) - \mu(ne_1) &= [\mu(v) - \mu(n/2 e_1)] + [\mu(ne_1-v)- \mu(n/2 e_1)] \\
& = 2[\mu(v) - \mu(n/2 e_1)] \\
&= n \left[ \mu\left( e_1 + \frac{2n^{\hat \xi}}{n}e_2\right) - \mu(e_1) \right] \\
&\geq Cn \left( \frac{n^{\hat \xi}}{n} \right)^2 \\
&= C n^{\chi + 2\epsilon}.
\end{align*}
To modify this argument for other $v$'s, we split into cases. Let $H$ be the hyperplane taken in the definition of curvature (which in the case $x=e_1$ is just orthogonal to $e_1$ by symmetry), and set $w$ to be the projection of $v$ onto $L$ along $H$. Note that convexity of the limit shape implies that $\mu(v) \geq \mu(w)$ and $\mu(ne_1-v) \geq \mu(ne_1-w)$. Furthermore, at least one of the two vectors $v$ or $ne_1-v$ has Euclidean norm at least $n/2$. 
\begin{enumerate}
\item Case 1: $w \in L'$. In this case, we may assume that $v$ is the vector with norm at least $n/2$, and use curvature as above:
\begin{align*}
\mu(v) - \mu(w) = \|w\|\left( \mu\left( \frac{w}{\|w\|} + \frac{v-w}{\|w\|}\right) - \mu\left( \frac{w}{\|w\|}\right) \right) &\geq C \|w\| \left\| \frac{v-w}{\|w\|} \right\|^2 \\
&\geq Cn^{2\hat \xi-1}.
\end{align*}
Therefore, since $w \in L'$, one has $\mu(ne_1-w)+\mu(w) = \mu(ne_1)$, and so
\begin{align*}
\mu(v) + \mu(ne_1-v) -\mu(ne_1) &= [\mu(v)-\mu(w)] + [\mu(ne_1-v)-\mu(ne_1-w)] \\
&\geq \mu(v)-\mu(w) \geq Cn^{2\hat \xi-1}.
\end{align*}
\item Case 2: $w \in L\setminus L'$, on the side closer to $ne_1$. In this case we still have as above
\[
\mu(v) - \mu(w) \geq C\|w\|\left\|\frac{v-w}{\|w\|} \right\|^2.
\]
Now
\[
n^{2\hat \xi} \leq \|v-ne_1\|^2 = (\|w\|-n)^2 + \|v-w\|^2 = (\|w\|-n)^2 + \|w\|^2 \left\|\frac{v-w}{\|w\|} \right\|^2,
\]
and $n \leq \|w\| \leq 3n$. Therefore either $\|w\|^2\left\|\frac{v-w}{\|w\|}\right\|^2 > n^{2\hat \xi}/2$, or $\|w - ne_1\| = \|w\|-n \geq n^{\hat \xi}/\sqrt{2}$. In the first case,
\begin{align*}
\mu(v)+\mu(ne_1-v)-\mu(ne_1) \geq \mu(v)-\mu(w) &\geq C\|w\|^2 \left\|\frac{v-w}{\|w\|}\right\|^2 \|w\|^{-1} \\
&\geq Cn^{2\hat \xi-1},
\end{align*}
and in the second case, since $\hat \xi < 1$,
\[
\mu(v)+\mu(ne_1-v)-\mu(ne_1) \geq \mu(w) - \mu(ne_1) \geq C\|w-ne_1\| \geq Cn^{\hat\xi} 
\]
is still at least $Cn^{2\hat \xi-1}$.
\item Case 3: $w \in L\setminus L'$, on the side closer to $0$. This case is similar to the previous one.
\end{enumerate}
These three cases together prove \eqref{eq: sutav_g_bound} and complete the proof.
\end{proof}

\subsubsection{Licea-Newman-Piza lower bound on $\xi$}
We turn to the question of lower bounds for the wandering exponent $\xi$, taken up in \cite{LNP}. The lower bounds proved in \cite{LNP} all depend on the particular definition of $\xi$ given.
We note a basic guideline for the possible lower bounds expected. As discussed earlier, one expects $\xi = 2/3$ when $d = 2$, and, in view of the scaling relation $\chi = 2\xi -1$ discussed below, one expects $\xi \geq 1/2$ for all $d$.

We are now ready to present the results of \cite{LNP}. They consider four different definitions of $\xi$ (in their words, proving these definitions are equivalent is  ``one of the open foundational problems of the subject''). Their first definition, $\xi^{(0)}$, is the only point-to-point exponent considered:
\[\xi^{(0)} := \sup\bigg \{\alpha \geq 0: \, \limsup_{\|x\| \rightarrow \infty}\, \mathbb{P}\left(\Gamma(0,x) \subseteq \mathcal C(x, \|x\|^\alpha)\right) < 1\bigg \}\ , \]
where $\mathcal C$ is as in \eqref{eq:cyldef}; compare \eqref{eq:xicylsecond}. The first result of \cite{LNP} is
\begin{theorem}\label{theorem:LNP0}
Let $d \geq 2$. Then
$\xi^{(0)} \geq 1/(d+1)$ .
\end{theorem}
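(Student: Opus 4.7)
My plan is to argue by contradiction: suppose $\xi^{(0)} < 1/(d+1)$. Then there exists $\alpha \in (\xi^{(0)}, 1/(d+1))$ such that, along some subsequence of $\|x\| \to \infty$,
\[
\mathbb{P}\bigl(\Gamma(0,x) \subseteq \mathcal{C}(x,\|x\|^\alpha)\bigr) \to 1.
\]
For concreteness I will take $x = ne_1$ (the argument for other directions is identical by a rotation). The first move is to obtain a lower bound on the variance by applying the Newman-Piza machinery (Theorem \ref{thm:NPgen}) with $U_k = \{e_k\}$, $D_k^0 = \{\tau_{e_k}\le s_1\}$, $D_k^1 = \{\tau_{e_k}\ge s_2\}$ for some $s_1 < s_2$ in the support of $\nu$, and $F_k = \{e_k \in \Gamma(0,ne_1)\}$. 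This yields $\mathrm{Var}\,T(0,ne_1) \ge c_0 \sum_e \mathbb{P}(e\in\Gamma(0,ne_1))^2$.

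Next I would exploit the localization hypothesis. Since $F(0)<p_c$, a trivial bound together with Theorem \ref{thm: new_geo_length_bound} guarantees $\sum_e \mathbb{P}(e\in\Gamma(0,ne_1)) = \mathbb{E}|\Gamma(0,ne_1)| \ge c_1 n$. Under the hypothesis that $\Gamma(0,ne_1) \subseteq \mathcal{C}(e_1,n^\alpha)$ with probability $1-o(1)$, edges outside the cylinder contribute only $o(n)$ to the sum, so essentially all of it comes from the $O(n^{1+(d-1)\alpha})$ edges within $\mathcal{C}(e_1,n^\alpha)$. Cauchy-Schwarz then gives
\[
\mathrm{Var}\,T(0,ne_1) \;\ge\; c_0 \sum_{e\in\mathcal{C}(e_1,n^\alpha)} \mathbb{P}(e\in\Gamma(0,ne_1))^2 \;\ge\; c_2\, n^{1-(d-1)\alpha}. \qquad (\star)
\]

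The delicate part of the proof is extracting a contradiction from $(\star)$, since the naive upper bound $\mathrm{Var}\,T(0,ne_1) \le Cn$ from Theorem \ref{thm: kesten_variance} only forces $\alpha \ge 0$. To sharpen this to $\alpha \ge 1/(d+1)$, I would pack $M \asymp n^{(d-1)(1-\alpha)}$ transversally separated endpoints $y_1,\ldots,y_M$ at distance $\Theta(n)$ from the origin so that the localization cylinders $\mathcal{C}(y_i,n^\alpha)$ are pairwise disjoint on their middle third. Applying the same localization hypothesis to each $y_i$ (using translation invariance and continuity of $\mu$ to transfer the bound from $ne_1$ to every $y_i$), the geodesics $\Gamma(0,y_i)$ are, with high probability, edge-disjoint away from a bounded neighborhood of $0$; a BK-type bound in the spirit of Lemma \ref{eq:BK} then converts this geometric disjointness into approximate independence of the passage times $T(0,y_i)$. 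Comparing the minimum $\min_i T(0,y_i)$ to $T(0,ne_1)$ via the curvature of $\mathcal{B}_\nu$ (there must be at least one direction of curvature, cf.\ Remark \ref{rem: taco_head}) produces a competing upper bound of the form $\mathrm{Var}\,T(0,ne_1) \le C n \cdot M^{-c} = C n^{1-c(d-1)(1-\alpha)}$, and balancing against $(\star)$ pins down the critical exponent $\alpha = 1/(d+1)$.

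The main obstacle I anticipate is the third paragraph's disjointness-to-independence step: geodesics from $0$ always share an initial segment near the origin, so one must carefully split each $\Gamma(0,y_i)$ into a short "shared" portion and a long "independent" portion, and control the passage time across this split without losing the factor $M^{1/2}$ which is responsible for the sharp exponent. Absent enough information about geodesic coalescence near $0$, one may need to introduce an auxiliary family of source points and apply the subadditive / comparison arguments of Section \ref{sec:loweboundNP} to make the decoupling quantitative.
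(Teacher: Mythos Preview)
Your lower bound $(\star)$ is essentially correct and mirrors the paper's computation. The genuine gap is in how you try to close the argument. You correctly observe that pairing $(\star)$ against Kesten's upper bound $\mathrm{Var}\,T(0,ne_1)\le Cn$ gives nothing, and you then embark on a program involving many shifted endpoints, BK-type decoupling, and curvature of the limit shape. None of this is available here: curvature is an unproven hypothesis (and certainly not assumed in Theorem~\ref{theorem:LNP0}), and the claimed variance upper bound $Cn\cdot M^{-c}$ would amount to proving sublinear variance with a power-law rate, which is far beyond what is known. The ``disjointness-to-independence'' step you flag as an obstacle is indeed fatal as stated.

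The idea you are missing is much simpler. Rather than work with $T(0,ne_1)$, consider the \emph{difference}
\[
\Delta T \;=\; T(0,x_n) - T(0',x_n'),
\]
where $0',x_n'$ are translates of $0,x_n$ by a vector of length $\Theta(\|x_n\|^{\alpha})$ perpendicular to $x_n$ (so that the two cylinders $\mathcal C(x_n,\|x_n\|^{\alpha})$ and its translate are disjoint). This difference comes with a \emph{free} a~priori bound: connecting $0$ to $0'$ and $x_n$ to $x_n'$ by short paths gives $|\Delta T|\lesssim \|x_n\|^{\alpha}$, hence $\mathrm{Var}\,\Delta T \lesssim \|x_n\|^{2\alpha}$. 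On the other hand, your Newman--Piza / Cauchy--Schwarz computation, applied to $\Delta T$ with the events $\overline F_j=\{e_j\in\Gamma(0,x_n)\}\cap\{e_j\notin\Gamma(0',x_n')\}$, still yields $\mathrm{Var}\,\Delta T \gtrsim \|x_n\|^{1-(d-1)\alpha}$ (the disjointness of cylinders makes the second event typical when the first holds). Comparing the two bounds gives $2\alpha \ge 1-(d-1)\alpha$, i.e.\ $\alpha \ge 1/(d+1)$, which is exactly the claim. No curvature, no BK, no packing of endpoints---just one shifted copy and the trivial upper bound on $|\Delta T|$.
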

The bound in Theorem \ref{theorem:LNP0} is extended to $\xi^{(1)}$, the second definition of $\xi$ made in \cite{LNP}; we omit it here for brevity. 

All the remaining bounds in \cite{LNP} are for point-to-hyperplane versions of $\xi$, so we will need notation for hyperplanes. For $\hat \theta$ a unit vector of $\mathbb{R}^d$, let
\[\Lambda(\hat \theta, \, L) := \{y \in \mathbb{Z}^d: \, y \cdot \hat \theta < L\}  \]
be the corresponding family of half-spaces. Let $\partial \Lambda (\hat \theta, L)$ be the set of $y \in \mathbb{Z}^d \setminus \Lambda(\hat \theta, \, L)$ adjacent to some vertex of $\Lambda(\hat \theta, \, L)$; the $\partial \Lambda$ are the families of hyperplanes to which we will consider geodesics.

For the next definition of $\xi$, let $\Gamma_M(x, A)$ be the set of ``near-geodesics'' from $x$ to a set $A$ as above.
Let $\mathrm{end}[\Gamma_M(x, A)]$ be the set of endpoints in $A$ of these near-geodesics.
\begin{align*}
\xi^{(2)}(M) = \sup \,\bigg \{&\alpha \geq 0: \, \exists  (\hat \theta_n)_n , \,(J_n)_n \text{ with } J_n \rightarrow \infty \text{ such that } \\
&\text{ there is no deterministic } A_n \text{ with } \mathrm{diam}(A_n) \leq J_n^\alpha\\
&\text{such that } \mathbb{P}\left(\mathrm{end}[\Gamma_M(0, \partial \Lambda(\hat \theta_n, J_n))] \subseteq A_n \right) \rightarrow 1  \bigg \}\ .
\end{align*}
Note that in some respects, this definition is considerably weaker than the definition of $\xi^{(0)}$: it concerns only near-geodesics, and it concedes a great deal of uniformity over direction and ``scale'' ($J_n$). On the other hand, it is stronger in the sense that we actually know where the wandering occurs (at the endpoints; in the defintion of $\xi^{(0)}$ it is not specified). The second theorem of \cite{LNP} is
\begin{theorem}
\label{theorem:LNP2}
Let $M > \inf \mathrm{supp}( \tau_e )$ and $d \geq 2$. Then $\xi^{(2)}(M) \geq 1/2$.
\end{theorem}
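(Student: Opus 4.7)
The plan is to argue by contradiction. Suppose $\xi^{(2)}(M) < 1/2$, so that one can fix $\alpha \in (0,1/2)$ and find, for every choice of direction $\hat\theta_n$ and scale $J_n \to \infty$, deterministic sets $A_n \subset \partial\Lambda(\hat\theta_n, J_n)$ of diameter at most $J_n^\alpha$ with
\[
\Pro\bigl(\mathrm{end}[\Gamma_M(0, \partial\Lambda(\hat\theta_n, J_n))] \subseteq A_n\bigr) \to 1.
\]
By lattice symmetry I would specialize to $\hat\theta_n = e_1$ and $J_n = n$; write $\Pi_n = \partial\Lambda(e_1, n)$ and $T_n = T(0, \Pi_n)$, and let $c_n$ denote a deterministic center for $A_n$.

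The first step is to establish a lower bound of the form $\mathrm{Var}(T_n) \ge c\,n$. The natural route is the Newman--Piza method of Theorem~\ref{thm:NPgen}, applied to $T = T_n$ with $F_k$ the event that the edge $e_k$ lies on some geodesic from $0$ to $\Pi_n$. Every such geodesic carries at least $n$ edges (Theorem~\ref{thm: new_geo_length_bound}), giving $\sum_k \Pro(F_k) = \E\,\#\textsc{Geo}(0,\Pi_n) \ge n$. Partitioning edges into hyperslabs of width $n^{1/2}$ perpendicular to $e_1$ and applying Jensen inside each slab, in the spirit of the proof of Theorem~\ref{thm:unbounded}, should upgrade this to $\sum_k \Pro(F_k)^2 \ge c\,n$, and then \eqref{eq:dadsadasdassdsadasdasdsa} yields the desired linear lower bound.

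For the contradiction, observe that on the high-probability event $\{\mathrm{end}[\Gamma_M(0, \Pi_n)] \subseteq A_n\}$ one has
\[
T_n \;=\; \min_{y \in A_n} T(0, y) + O(M),
\]
so $T_n$ is determined up to an $M$-error by passage times to at most $O(n^{(d-1)\alpha})$ target points, all lying within $\ell^1$-distance $n^\alpha$ of $c_n$. Kesten's Efron--Stein argument (Theorem~\ref{thm: kesten_variance}), combined with the geometric observation that geodesics to distinct points of $A_n$ can disagree only in a thin tube of volume $O(n \cdot n^\alpha)$ near $\Pi_n$, should bound the variance of this minimum by $O(n^{1+\alpha} \cdot n^{-1}) \cdot n = o(n)$ when $\alpha < 1/2$. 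The residual contribution from the complementary bad event is controlled by the subdiffusive concentration bounds of Theorem~\ref{thm: subdiffusive}, giving $\mathrm{Var}(T_n) = o(n)$ and contradicting the preceding paragraph.

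The main obstacle is precisely the variance-transfer step: showing rigorously that endpoint concentration improves Kesten's $O(n)$ variance bound for $T_n$ to strictly sublinear order. The replacement of $T_n$ by $\min_{y \in A_n} T(0,y)$ is clean on the good event, but extracting a variance upper bound from a minimum of correlated passage times requires carefully separating the contributions of edges near the origin (common to all paths in the family) from those near $\Pi_n$ (where the family disagrees), and then patching the $M$-slack and the small-probability ``escape'' event. This is the genuine content of the proof; in effect, one must show that pinning the target to a tube of width $n^\alpha$ shrinks the cross-section of the geodesic's endpoint distribution in a quantitative way, which is exactly what pins the value $\xi \geq 1/2$.
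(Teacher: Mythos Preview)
Your proposal has a genuine gap at both ends of the variance comparison.

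\textbf{Step 1 is not true.} A lower bound $\mathrm{Var}\,T_n \ge c\,n$ would force $\chi \ge 1/2$, which contradicts both the expected behaviour and the sublinear variance of Theorem~\ref{thm:DHS222}. The Newman--Piza machinery only produces $\sum_k \Pro(F_k)^2 \ge c\log n$ in $d=2$ (Theorem~\ref{thm:NP}), not $\ge c\,n$. Your slab-Jensen idea fails because a slab of width $n^{1/2}$ perpendicular to $e_1$ has infinite transverse extent: without already knowing that the geodesic to $\Pi_n$ stays in a tube of controlled width, the denominator $|\text{slab}|$ in Jensen is unbounded, and you recover nothing. Confining the geodesic to such a tube is precisely the wandering control you are trying to establish.

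\textbf{Step 2 is circular.} The claim that the family $\{\Gamma(0,y): y \in A_n\}$ ``can disagree only in a thin tube of volume $O(n\cdot n^\alpha)$ near $\Pi_n$'' is exactly a coalescence-on-scale-$\xi$ statement, which is not available and again amounts to the conclusion. Without it, $\min_{y\in A_n} T(0,y)$ has no better Efron--Stein bound than $T(0,c_n)$ itself, namely $O(n)$.

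The paper's argument is entirely different and uses no variance at all. It exploits that $\xi^{(2)}<1/2$ gives endpoint concentration for \emph{every} direction simultaneously, not just $e_1$. One builds a polygon $P_L$ inscribed in the ball of radius $\sim L$ whose sides $S_i$ have length $\sim L^\alpha$ and are perpendicular to carefully chosen $\hat\theta_i$'s, arranged so that each concentration set $A_i$ sits near the midpoint of $S_i$. Because consecutive sides differ in angle by $O(L^{\alpha-1})$, the hyperplane $\partial\Lambda(\hat\theta_{i-1},L_{i-1})$ comes within distance $O(L^{2\alpha-1})=o(1)$ of $A_i$. Extending a minimizer to $\partial\Lambda(\hat\theta_i,L_i)$ by $O(1)$ edges then produces an $M$-near-geodesic to $\partial\Lambda(\hat\theta_{i-1},L_{i-1})$ ending far from $A_{i-1}$, contradicting the assumed concentration. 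The key inequality is $2\alpha-1<0$, which is exactly $\alpha<1/2$. Your specialization to a single direction discards the leverage that makes this work.
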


The last definition of $\xi$ considered has in some ways all the weaknesses of both $\xi^{(0)}$ and $\xi^{(2)}$:
\begin{align*}
\xi^{(3)}(M):= \sup \, \bigg \{&\alpha \geq 0: \, \exists  (\hat \theta_n)_n , \,(J_n)_n \text{ with } J_n \rightarrow \infty \text{ such that } \\
&\text{ there is no deterministic } x_n \text{ such that }\\
& \mathbb{P}\left(\Gamma_M(0, \partial \Lambda(\hat \theta_n, J_n)) \subseteq \mathcal{C}(x_n, \, J_n^\alpha) \right) \rightarrow 1 \bigg \}\ .
\end{align*}
\begin{theorem}\label{theorem:LNP3}
Let $d = 2$ and $M > \inf \mathrm{supp}(\tau_e)$. Then $\xi^{(3)}(M) \geq 3/5$.
\end{theorem}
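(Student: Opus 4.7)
The plan is to argue by contradiction. Suppose $\xi^{(3)}(M) < 3/5$, and fix $\alpha \in (\xi^{(3)}(M), 3/5)$. By Remark \ref{rem: taco_head}, the limit shape $\mathcal B_\nu$ admits at least one direction of curvature, call it $\hat\theta$. Then by the definition of $\xi^{(3)}$, for every sequence $J_n \to \infty$ we may choose deterministic points $x_n$ such that $\mathbb P\big(\Gamma_M(0, \partial \Lambda(\hat\theta, J_n)) \subseteq \mathcal C(x_n, J_n^\alpha)\big) \to 1$. In particular, the near-geodesic endpoints lie in a hyperplane window of diameter of order $J_n^\alpha$ around some point $y_n \in \partial \Lambda(\hat\theta, J_n)$ with probability tending to one.

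The first step is a Wehr--Aizenman style lower bound on the variance of $T_n := T(0, \partial \Lambda(\hat\theta, J_n))$. Apply the Newman--Piza martingale machinery of Theorem \ref{thm:NPgen} with singleton sets $U_k = \{e_k\}$, taking $D_k^0, D_k^1$ to be the events that $\tau_{e_k}$ is small and large respectively, and setting $F_k$ to be the event that $e_k$ lies in a near-geodesic from $0$ to $\partial \Lambda(\hat\theta, J_n)$. One obtains
\[ \mathrm{Var}(T_n) \geq c \sum_{e \in \mathcal C(x_n, J_n^\alpha)} \mathbb P\bigl(e \in \Gamma_M(0, \partial \Lambda(\hat\theta, J_n))\bigr)^2. \]
Since $d=2$, the cylinder $\mathcal C(x_n, J_n^\alpha)$ contains of order $J_n^{1+\alpha}$ edges, while any near-geodesic contains at least order $J_n$ edges. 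Cauchy--Schwarz then forces the sum of squares to be at least of order $J_n^2 / J_n^{1+\alpha} = J_n^{1-\alpha}$, so $\mathrm{Var}(T_n) \geq c J_n^{1-\alpha}$.

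The second, and more delicate, step is a Krug--Spohn style upper bound on $\mathrm{Var}(T_n)$ that exploits the curvature of $\mathcal B_\nu$ at $\hat\theta$. Curvature yields $\mu(y) - \mu(y_n) \geq c|y - y_n|^2 / J_n$ for $y \in \partial \Lambda(\hat\theta, J_n)$ close to $y_n$, so a transverse shift of the endpoint by $J_n^\alpha$ raises the deterministic passage time by at least an amount of order $J_n^{2\alpha - 1}$. On the event that all near-geodesic endpoints lie within $\mathcal C(x_n, J_n^\alpha)$, the random variable $T_n$ cannot fluctuate by much more than this deterministic gap. Making this precise by comparing $T_n$ with $T(0, y)$ for deterministic $y$ in the transverse window, and applying the passage-time concentration bounds of Theorems \ref{thm:non-randomAlexanderFPP} and \ref{thm:futurereference} together with a Newman--Piza transverse-shift argument in the spirit of the proof of \eqref{eq: easier_upper_bound}, produces
\[ \mathrm{Var}(T_n) \leq C J_n^{4\alpha - 2 + o(1)}. \]

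Combining the two bounds gives $c J_n^{1-\alpha} \leq C J_n^{4\alpha - 2 + o(1)}$ for arbitrarily large $J_n$, which forces $1 - \alpha \leq 4\alpha - 2$, i.e.\ $\alpha \geq 3/5$, contradicting our choice of $\alpha$. The main obstacle is the curvature-based upper bound: the transverse-shift argument that runs cleanly for point-to-point passage times must here be adapted to point-to-hyperplane passage times, so one must carefully relate $T_n$ to $T(0, y)$ for $y$ in the window and control the small-probability event that a near-geodesic escapes the cylinder $\mathcal C(x_n, J_n^\alpha)$. The condition $M > \inf \mathrm{supp}(\tau_e)$ is essential here: it provides the slack in ``near-geodesic'' needed to connect $T_n$ with passage times to shifted endpoints, so that even on the bad escape event $T_n$ remains close to the typical value.
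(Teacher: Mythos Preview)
Your lower bound (Step~1) is essentially right, but the upper bound (Step~2) has a genuine gap, and the argument as written cannot be repaired without changing the object whose variance you bound.

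You claim that curvature plus endpoint confinement plus concentration yields $\mathrm{Var}(T_n)\leq C J_n^{4\alpha-2+o(1)}$ for the single point-to-hyperplane time $T_n=T(0,\partial\Lambda(\hat\theta,J_n))$. But knowing that the random endpoint $y_n^\ast$ sits in a window of size $J_n^\alpha$, and that $\mu(y)-\mu(y_n)\asymp |y-y_n|^2/J_n$ there, only controls the deterministic spread of $\mu$-values across the window; it says nothing about how much $T_n$ itself fluctuates from one realization to another. In the extreme case where the endpoint is deterministically $y_n$, your ``gap'' is zero yet $T_n=T(0,y_n)$ still has variance of order up to $J_n$ by all current bounds. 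For $\alpha$ just below $3/5$ you are asserting $\mathrm{Var}(T_n)\lesssim J_n^{2/5}$, which is strictly stronger than any known variance bound and would in fact imply $\chi\leq 1/5$---something certainly not obtainable from Theorems~\ref{thm:non-randomAlexanderFPP}, \ref{thm:futurereference}, or the transverse-shift argument you cite.

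The paper's proof avoids this by never trying to bound $\mathrm{Var}(T_n)$ at all. Instead it runs the polygon construction from the proof of Theorem~\ref{theorem:LNP2} (no curvature of the limit shape is used) and considers the \emph{difference}
\[
\Delta T=T\bigl(0,\partial\Lambda(\hat\theta_{j+1},L_{j+1})\bigr)-T\bigl(0,\partial\Lambda(\hat\theta_j,L_j)\bigr)
\]
for two adjacent sides $S_j,S_{j+1}$. Because the angular separation is $\sim L^{\alpha-1}$ and the relevant side lengths are $\sim L^\alpha$, the two hyperplanes lie within $\sim L^{2\alpha-1}$ of each other on the localized window, so a time-minimizing path to one can be extended by $O(L^{2\alpha-1})$ edges to reach the other; this is exactly where $M>\inf\mathrm{supp}(\tau_e)$ enters, as slack to absorb the extension cost. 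That gives a \emph{pointwise} bound $|\Delta T|\lesssim L^{2\alpha-1}$ and hence $\mathrm{Var}(\Delta T)\lesssim L^{4\alpha-2}$ immediately. The lower bound $\mathrm{Var}(\Delta T)\gtrsim L^{1-\alpha}$ then follows by your Step~1 martingale computation, now applied to $\Delta T$, using that the two geodesics are confined to largely disjoint cylinders. The comparison $L^{1-\alpha}\lesssim L^{4\alpha-2}$ forces $\alpha\geq 3/5$.
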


We will attempt to justify the above results without providing complete proofs. In some cases, unessential details or complications will be elided for presentation; for complete arguments, see \cite{LNP}.

\begin{proof}[Proof of Theorem \ref{theorem:LNP0}]
The idea is an extension of the variance estimate of Newman-Piza \cite{NewmanPiza} discussed in Section \ref{sec:loweboundNP}. This idea was later used in W\"uthrich \cite{Wuthrich}, Johansson \cite{Joha1}, Auffinger-Damron \cite{AD14}. Let $\alpha > \xi^{(0)}$ and choose some sequence of vertices $x_n$ with $\|x_n\| \rightarrow \infty$ such that
\begin{equation}
\label{eq:LNPtrapped}
\mathbb{P}(\Gamma(0, x_n) \subseteq \mathcal C(x_n, \, \|x_n\|^\alpha)) \rightarrow 1\ .
\end{equation}
We will need to define a shifted version of $x_n$; let $x_n^{\perp}$ be a unit vector perpendicular to $x_n / \|x_n\|$ and define $0'$ and $x_n'$ to be the closest vertices to $0 + C \|x_n\|^\alpha x_n^\perp$ and $x_n + C \|x_n\|^\alpha x_n^\perp$. where $C$ is some large constant independent of $n$.

Define $\Delta T := T(0, x_n) - T(0', x_n')$; the proof is by analysis of $\Delta T$. Note that $\Delta T$ cannot be too large in magnitude. Indeed, there are paths $\Gamma_1$ and $\Gamma_2$ connecting $0$ to $0'$ and $x_n$ to $x_n'$ respectively, such that $\#\Gamma_i \leq C \|x_n\|^\alpha$. Therefore,
\begin{align}
\label{eq:LNPapriori}
|\Delta T| \leq T(\Gamma_1) + T(\Gamma_2) \lessapprox C \|x_n\|^\alpha 
\end{align} 
where the precise strength of the bound is a function of the tail behavior of $\tau_e$ (if $\tau_e$ is bounded, then the left-hand side is really $\leq C \|x_n\|^\alpha$).

On the other hand, we can get a lower bound on $\mathrm{Var}\, \Delta T$ using the machinery of Newman-Piza. Defining as before the sigma-algebra $\Sigma_j$ generated by the first $j$ edge weights in some enumeration $(e_j)$ of the edges, we have
\begin{align}
\label{eq:LNPdeltavar}
\mathrm{Var}\, \Delta T = \sum_{j=1}^\infty \mathbb{E}
\left[\mathbb{E}\left[\Delta T \mid \Sigma_j\right] - \mathbb{E}\left[\Delta T \mid \Sigma_{j-1}\right]  \right]^2\ .
\end{align}
As before, one can provide a lower-bound for \eqref{eq:LNPdeltavar} in terms of influences of individual edges. There is an extra complication: when decreasing an edge weight $\tau_{e_i}$, we are only guaranteed to decrease $\Delta T$ if $e_i$ simultaneously satisfies 
\begin{equation}
\label{eq:LNPgeoinf}
e_i \in \Gamma(0, x_n) \text{ and } e_i \notin \Gamma(0', x_n')\ . \end{equation}
Letting $\overline F_i$ be the event described in \eqref{eq:LNPgeoinf} (compare to \eqref{eq:eventsinNP}), we have similarly to before
\begin{equation}
\label{eq:LNPfbar}
\mathrm{Var}(\Delta T) \geq C \sum_{j=1}^\infty \mathbb{P}(\overline F_j)^2\ .
\end{equation}

For a further lower bound, we can restrict the sum in \eqref{eq:LNPfbar} to $j$ such that $e_j \in \mathcal C(x_n, \|x_n\|^\alpha)$. For this restricted set $\mathfrak{J}$ of indices,
\begin{align*}
\sum_{j \in \mathfrak{J}} \mathbb{P}(\overline F_j) &\geq \mathbb{E}\#\left( \Gamma(0, x_n) \cap \mathcal C(x_n, \|x_n\|^\alpha)\right) - \mathbb{E}\#\left(\Gamma(0', x_n') \cap \mathcal C(x_n, \|x_n\|^\alpha)\right)\\
& \geq c \|x_n\|_1
\end{align*}
for some $c > 0$ and all $n$ large (the last inequality holds using \eqref{eq:LNPtrapped}). As before, using the Cauchy-Schwarz inequality on \eqref{eq:LNPfbar}, we see
\begin{align}\label{eq:sadequation}
\mathrm{Var}\, \Delta T \geq \sum_{j \in \mathfrak{J}} \mathbb{P}(\overline F_j)^2  &\geq \frac{1}{\# \mathcal C(x_n, \|x_n\|^\alpha)}\left( \sum_{j \in \mathfrak{J}} \mathbb{P}(F_j)\right)^2\\
&\geq C \|x_n\|^{1 - \alpha(d-1)}\ \nonumber .
\end{align}
Comparing this with \eqref{eq:LNPapriori} (which gives a variance upper bound of $\sim \|x\|^{2 \alpha}$) completes the proof.
\end{proof}
\begin{proof}[Proof of Theorem \ref{theorem:LNP2}]
The proof of this theorem relies on a geometrical construction. Rather than trying to exactly give the parameters of this construction, we will outline the main idea; we leave it to the reader to fill in the details (and in particular to understand why $\xi^{(2)}$ is defined as it is). We first outline the argument in case $d = 2$, then explain the necessary changes for $d > 2$.

Assume $\xi^{(2)} < 1/2$, and choose $1/2 > \alpha > \xi^{(2)} + \varepsilon$. We choose a finite set of unit vectors $\{\hat \theta_i\}_{i=0}^N$ and construct a polygon $P_L$ (having the symmetries of $\mathbb{Z}^2$) whose sides are segments of lines perpendicular to $\hat \theta_i$:
\[P_L\, =\, \bigcap_i \Lambda(\hat \theta_i, L_i)\ , \text{ where } L_i \sim L\ . \]
Let $S_i$ be the side of $P_L$ corresponding to $\hat \theta_i$--that is, the side of $P_L$ which is a segment of the boundary of $\Lambda(\hat \theta_i, L_i)$. We will choose $\hat \theta_i$ and $L_i$ such that the lengths $|S_i| \sim L^{\alpha}$.
We will also ensure our choice of $\hat \theta_i$ guarantees geodesics to the lines defining $P_L$ are localized on sides of $P_L$.

\begin{figure}
\scalebox{1.0}{ 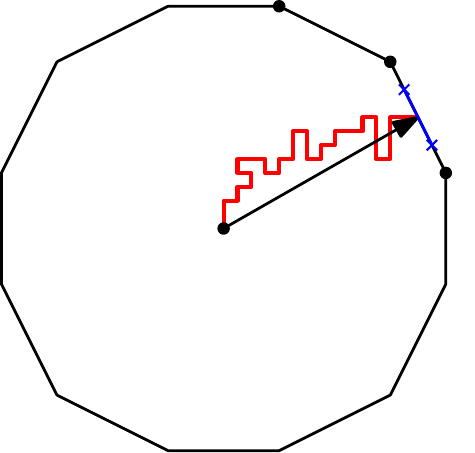}
 \centering
 \def \svgwidth{3000pt}
 \caption{The construction of the polygon $P_{L}$. The segments $A_{i}$ (in blue) are close to the midpoints of the sides $S_{i}$ and, with high probability, contain the end point of the geodesic $\Gamma(0,S_{i})$ (in red).}
 \label{fig:lnp}
\end{figure}

Let $A_i$ be a deterministic subset of $\partial \Lambda(\hat \theta_i, L)$ of diameter smaller than $L^{\xi^{(2)} + \varepsilon}$ in which $\mathrm{end}\, [\Gamma(0, \partial \Lambda(\hat \theta_i, L))]$ is localized with high probability; such a subset exists by the definition of $\xi^{(2)}$. Our main restriction in choosing $\hat \theta_i$ is that $A_i$ should be close to the midpoint of $S_i$, and in particular be at least distance $L^{\xi^{(2)} + \varepsilon}$ from $S_j, \, j \neq i$. This is done inductively.

To start the induction: we can choose $\hat \theta_0 = e_1$ and $L_0 = L$, with $|S_0| = L^{\alpha}$. Then by the symmetries of the lattice, we can choose $A_0$ symmetric about the $e_1$ axis (and in particular localized near the midpoint of $S_0$). Our last vector $\hat \theta_N$ will be the ``45 degree line'' $(\sqrt{2})^{-1}(e_1 + e_2)$; again, by the symmetries of the lattice, $A_N$ will be chosen symmetric about $\hat \theta_N$.
The remaining vectors will only be chosen to fill out the polygon in the arc of angles $[0, \pi / 4]$; once this is done, we fill the rest by symmetry. 

Assuming $\hat \theta_0, \, \ldots, \hat \theta_{j-1}$ and the corresponding $\{L_i\}$ have been chosen correctly, let $z_{j-1}$ be the counterclockwise endpoint of $S_{j-1}$. If $z_{j-1}$ is within distance $\leq  CL^{\alpha}$ from a point on the line $\{a\hat \theta_N : a \in \mathbb{R}\}$, then choose $\hat \theta_j = \hat \theta_N$ and $L_j$ is chosen so that $z_{j-1} \in \partial \Lambda(\hat \theta_{j}, \, L_j)$. Otherwise, we will choose $|S_j| = L^{\alpha}$, and we need to find a direction $\hat \theta_j$   with $\mathrm{arg}\, \hat \theta_j \in (\mathrm{arg}\,\hat \theta_{j-1}, \pi/4)$ such that $A_j$ is appropriately localized on $S_j$ (once we choose $\hat \theta_j$, then $L_j$ is fixed by the necessity of connecting to $S_{j-1}$).

The fact that such a choice is possible is somewhat subtle. If we were to try to choose $\hat \theta_j$ extremely close to $\hat \theta_{j-1}$, then we would fail. Indeed, $A_{j-1}$ is close to the midpoint of $S_{j-1}$, and as $\hat \theta_j \rightarrow \hat \theta_{j-1}$, the side $S_j$ is essentially a subsegment of the boundary of $\Lambda(\hat \theta_{j-1}, L_{j-1})$ and so $A_j$ should be identical to $A_{j-1}$, which lies order $L^\alpha$ distance {\it below} the midpoint of $S_j$. Similarly, taking $\hat \theta_j \rightarrow \hat \theta_N$ would give an $A_j$ lying too far {\it above} the midpoint. By a continuity argument, we can therefore find a $\hat \theta_j$ that meets our requirements.

After this somewhat lengthy construction, we can quickly conclude. Note that, because $|S_i| \leq C L^{\alpha}$, we have $$\arg(\hat \theta_i) - \arg(\hat \theta_{i-1}) \leq C L^{\alpha - 1}.$$ In particular, $\partial \Lambda(\hat \theta_{i-1}, L_{i-1})$ comes within distance $L^{2 \alpha - 1} = o(1)$ of the midpoint of $S_i$. But this means it takes only $O(1)$ edges to extend a time-minimizing path for $T(0, \partial \Lambda(\hat \theta_i, L_i))$ from $S_i$ to $\partial \Lambda(\hat \theta_{i-1}, L_{i-1})$--since this extension costs very little in the way of passage time, there is a ``near-geodesic" to $\partial \Lambda(\hat \theta_{i-1}, L_{i-1})$ lying on $S_i$, contradicting the placement of $A_{i-1}$. We conclude that $\alpha$ cannot be chosen $<1/2$; that is, $\xi^{(2)} \geq 1/2$. This argument assumes that $T(0,S_i) < T(0,S_{i-1})$, which, without loss in generality, holds with positive probability.

The case $d \geq 3$ is very similar. Now instead of using polygons $P_L$, we consider polygonal ``barrels'' of the form $P_L \times \mathbb{R}^{d-2}$, where $P_L$ is constructed similarly to before. The arguments above may be adapted to these barrels without substantially different ideas.

\end{proof}

\begin{proof}[Proof  of Theorem \ref{theorem:LNP3}]
The proof is in some ways heavily motivated by the proofs of both of the preceding theorems. 
Consider $\alpha > \xi^{(3)}$.
We construct a polygon $P_L$ with sides $\{S_j\}$ similarly to the arguments in the proof of Theorem~\ref{theorem:LNP2}. For fixed $j$, we consider $$\Delta T = T(0,\partial \Lambda(\hat \theta_{j+1}, L_{j+1})) - T(0, \partial \Lambda(\hat \theta_{j}, L_{j})).$$

As before, $\partial \Lambda(\hat \theta_j, L_j)$ comes within distance $\sim L^{2 \alpha - 1}$ of every point on $S_{j+1}$, so geodesics to $\partial \Lambda(\hat \theta_j, L_j)$ may be extended to $\partial \Lambda(\hat \theta_{j+1}, L_{j+1})$ at an increase in passage time $\lessapprox L^{2 \alpha - 1}$.
This (combined with a symmetric argument in the other direction) gives an a priori bound
\begin{equation}
\label{eq:lnpap2}
\mathrm{Var}(\Delta T) \lessapprox L^{4 \alpha - 2}\ .
\end{equation}

As in the proof of Theorem~\ref{theorem:LNP0}, we find an $\alpha$-dependent lower bound which contradicts this when $\alpha$ is too small. The geodesics to both hyperplanes are confined with high probability in cylinders with radius $L^{\alpha}$. These cylinders are largely disjoint for $L$ large, and so there are typically order $L$ edges which are on $\Gamma(0, \partial \Lambda(\hat \theta_j, L_j))$ but not on $\Gamma(0, \partial \Lambda(\hat \theta_{j+1}, L_{j+1}))$. 

With this in hand, we lower-bound $\mathrm{Var}(\Delta T)$ with a martingale expansion as in Theorem~\ref{theorem:LNP0}; we ultimately arrive at an analogue of \eqref{eq:sadequation}. As before, this yields
\[\mathrm{Var}(\Delta T) \geq  C L^{1 - \alpha}\ . \]
Comparing this bound with \eqref{eq:lnpap2} completes the contradiction.
\end{proof}

\subsection{The scaling relation $\chi = 2 \xi - 1$}\label{sec:scalingrelation}

Consider a geodesic from the origin to a point $x$ with passage time $T(0,x)$. One of the central questions in first-passage percolation (and in related models) is to establish the following statement.  There exists an intrinsic relation between the magnitude of deviation of $T(0,x)$ from its mean and the magnitude of deviation of the geodesic $T(0,x)$ from a straight line joining $0$ and $x$. This relation is \textit{universal}; that is, it is independent of the dimension $d$ and of the law of the weights (as long they satisfy certain moment assumptions).  

As discussed in Section \ref{sec:variancebounds} (see \eqref{eq:variancescaling}), the fluctuations of the passage time $T(0,x)$ about $\E T(0,x)$ should be of order $|x|^\chi$, where $\chi$ is the \textit{fluctuation exponent}. Analogously, the \textit{transversal or wandering exponent} $\xi$ studied in Section \ref{sec:exponents} measures the maximal Euclidean distance of a  geodesic from $0$ to $x$ from the straight line that joins $0$ to $x$. The intrinsic relation described above should be given as
\begin{equation}\label{ScalingRelation}
\chi = 2 \xi -1.
\end{equation}
As previously discussed, the existence and the `correct' definition of these exponents is part of the problem. Before we discuss the mathematical history behind \eqref{ScalingRelation} and current progress \cite{AD14, Sutav}, let us stress that, as of today, there is not a single distribution of passage times, where we know that \eqref{ScalingRelation} holds. Therefore proving equation \eqref{ScalingRelation} is still an open question. (As we will see in the pargraphs below, one will probably need to solve Question \ref{q:strictconvexity} first.)

\begin{question}
Find a distribution of passage times where \eqref{ScalingRelation} holds.
\end{question}
  
As shown in Theorem~\ref{thm:NPScaling}, a version of the inequality $ \chi \geq 2 \xi -1 $ was proved and understood in the 1995 work of Newman-Piza \cite{NewmanPiza}. A proof of the other inequality appeared first in a model of a Brownian particle in a Poissonian potential \cite{Wuthrich}; see also \cite{Joha1}.  
In FPP, Chatterjee \cite{Sutav} proposed a stronger definition of the exponents that allows a  proof of \eqref{ScalingRelation}. Unlike the definitions of $\xi$ and $\chi$ given in the previous sections, Chatterjee's exponents are not known to exist for any distribution of passage times. The proof in \cite{Sutav} relies on a construction similar to that in \cite{CD}. One first breaks a geodesic into smaller segments and then uses an approximation scheme to compare the passage time to a sum of nearly i.i.d. random variables. The proof is then a trade-off between minimizing the error and maximizing the variance of the passage time. Assuming that the distribution is `nearly Gamma' (see Section \ref{sec:logimprovementsUB}), the optimization can be achieved by choosing different parameters in the approximation.

The scaling relation should be valid only under the unproven assumption of uniform positive curvature of the limit shape. This is one of the main reasons for the introduction of a  strong definition of exponents. It is reasonable to believe that given a different curvature exponent $\kappa$ as defined in \eqref{def:curvatureexponent}, one would have the alternate scaling relation $\chi = \kappa \xi - (\kappa - 1)$; see \cite[Section~3]{AD14} for more details. The inequality $\leq$ comes from \eqref{eq:approxexpo} below, and the case $\kappa = 2$ reduces to $\chi = 2\xi - 1$.

We will now sketch how one could derive \eqref{ScalingRelation} assuming positive curvature. Here we follow the proof of \eqref{ScalingRelation} given in \cite{AD14}. It starts with a standard fact: if $X'$ is an independent copy of a random variable $X$ then 
$$ \text {Var } X = \frac{1}{2} \E (X-X')^2. $$
Thus, if we want to estimate $\chi$, it suffices to compare the difference of two independent copies of $T(0, ne_1)$ as 
\begin{equation}\label{eq:sushi}
n^{2\chi} \sim \E(T(0,ne_1) - T'(0,ne_1))^2.
\end{equation}

The exponent $\xi$ tells us exactly how to build such an independent copy $T'$. As in the proof of Theorem \ref{theorem:LNP0}, one just needs to consider passage times from starting points $0$ and $n^\xi e_2$, as the geodesics from there to $ne_1$ and $n^\xi e_2+ne_1$ will live in two disjoint cylinders. Therefore, one should expect that 

\begin{equation}\label{eq:dahi}
n^{2\chi} \lesssim \E (T'(0,ne_{1})-T(0,ne_{1}))^{2} \sim \E (T(n^\xi e_2, n^{\xi}e_{2}+ne_1)-T(0,ne_1))^{2}.
\end{equation}

Now, recall the definition of the curvature exponent given in Definition \ref{def:curvatureexponent}. If we are allowed to bound the passage time by its asymptotic value (as $T(0,x) \sim \mu(x)$), we would have

\begin{equation}\label{eq:approxexpo}
\begin{split}
T(n^\xi e_2,ne_1)-T(0,ne_1) &\leq  \mu(ne_1 - n^\xi e_2) -   \mu(ne_1) + \text{ error}\\&=  n\bigg( \mu (e_1 - n^{\xi-1} e_2) - \mu(e_1)\bigg) + \text{ error}\\
&\sim n n^{ \kappa(e_1) (\xi-1)} + \text{ error} \\
&= n^{2\xi-1} +\text{ error},
\end{split}
\end{equation}
if $\kappa(e_1)=2$. In the first equality above, we used the fact that $\mu$ is a norm. The above sequence of approximations tells us $2\xi-1$ comes from the assumption that the curvature exponent exists and it is equal to $2$.

Now we compare the left side of \eqref{eq:approxexpo} with the right side of \eqref{eq:dahi}. The difference in passage times in both equations is almost the same except that the ending points are not exactly equal. However, a crossing trick from \cite{AD14} and an argument from \cite{Sutav} using assumed exponential concentration on scale $n^\chi$ from the strong definition of $\chi$ allows us to take the same ending points, and thus we can replace \eqref{eq:approxexpo} in \eqref{eq:sushi} to get $\chi \leq 2\xi-1$. 

The above argument is made rigorous in \cite{AD14}. The assumptions on the exponents $\xi$ and $\chi$ are just to justify the approximations in \eqref{eq:approxexpo}.

\subsection{Infinite geodesics}
In this section we investigate infinite geodesics. We start with the following definitions. 

\begin{definition}We will say that an infinite self-avoiding nearest-neighbor path $\Gamma$ is an {\it infinite geodesic} (for a given edge-weight realization) if every finite subpath of $\Gamma$ is a finite geodesic.
\end{definition}
 Infinite geodesics come in two varieties:
 
 \begin{enumerate}
 \item Indexed by $\N$; that is, $\Gamma$'s edges in order are $e_1, e_2, \ldots.$ These are called {\it unigeodesics, singly infinite geodesics, geodesic rays} or (when it does not cause confusion) simply geodesics.
 \item Indexed by $\Z$. These are called {\it bigeodesics} or {\it geodesic lines}.
 \end{enumerate}
 
 Much work on geodesics in the model has focused on determining the number of geodesics of either type which exist, as well as their properties and relationships.
 
 \subsubsection{Existence of geodesic rays}
It is a simple argument to check that, given any $x \in \mathbb Z^d$ there exists (almost surely) at least one geodesic ray originating from $x$. The argument goes as follows. Suppose without loss of generality that $x$ is the origin $0$. Consider the sequence of geodesics $\Gamma_{n}=\Gamma(0,ne_{1})$. The first vertex of each $\Gamma_n$ is $0$. There are $2d$ possible choices for the second vertex of each $\Gamma_n$. Therefore, there must be some edge $e$ incident to the origin such that infinitely many $\Gamma_n$ have $e$ as their first edge. Repeating this argument on subsequences yields a singly infinite path which is seen to be a geodesic.
 
The natural question that follows is to determine if there exist multiple distinct geodesic rays with positive probability (here, distinct means sharing at most finitely many edges and vertices). A natural way to construct distinct geodesics would be to consider the subsequential limit obtained in the last construction by taking a different sequence of endpoints, such as $-ne_{1}$. Given the observations on the wandering exponent given above, it is reasonable to believe that one should be able to construct many infinite geodesics this way. However, this intuition is hard to make rigorous. 

In the two-dimensional case, with exponentially distributed edge weights, H\"aggstr\"om and Pemantle \cite{HP} showed that there are at least two distinct geodesic rays with positive probability. This result was extended to a wide range of first-passage distributions by Garet and Marchand \cite{GM} and Hoffman \cite{Hoffman1}. 

An important further advance was made in a paper of Hoffman \cite{Hoffman}. He demonstrated that it is in fact possible to derive the existence of more than two geodesic rays directly from properties of the limit shape. His theorem goes as follows. We write sides$(\mathcal B_\nu)$ for the number of sides of the limit shape $\mathcal B_\nu$ (that is, sides$(\mathcal B_\nu)$ is the number of extreme points of $\mathcal{B}_\nu$ and is finite if and only if $\mathcal B_\nu$ is a polygon). 

\begin{theorem}[Hoffman \cite{Hoffman}]\label{thm:Hoffman1} Assume $d=2$ and $\tau_e$ has continuous distribution with $\E \tau_e^{2+\alpha}~<~\infty$ for some $\alpha >0$. Define $G(x_1 , \ldots, x_k )$ to be the event that there exist distinct geodesic rays beginning at vertices $x_1,\ldots,x_k$. Then for any $k\leq$ sides$(\mathcal B_\nu)$ and any $\epsilon>0$, there exist $x_1,\ldots,x_k \in \Z^2$ such that
\begin{equation*}
\Pro (G(x_1,\ldots,x_k))>1-\epsilon.
\end{equation*}
\end{theorem}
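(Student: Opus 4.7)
My plan is to follow Hoffman's argument, which converts the geometric content of the limit shape $\mathcal{B}_\nu$ into the existence of multiple distinct geodesic rays via Busemann-type functions. Intuitively, each extreme point of $\mathcal{B}_\nu$ selects a distinct asymptotic ``slope'' for such a function, and the geodesic rays that descend these slopes must eventually branch apart.

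The first step is to construct Busemann-type functions in the spirit of Section~\ref{sec:visit}. For a unit vector $u \in \mathbb{R}^2$, pick a sequence $z_n \in \mathbb{Z}^2$ with $z_n/\|z_n\|_1 \to u$ and set $B_n(x,y) := T(x,z_n) - T(y,z_n)$. By the triangle inequality $|B_n(x,y)| \le T(x,y)$, so $\{B_n\}$ is tight, and the moment bound $\E\tau_e^{2+\alpha}<\infty$ combined with Kingman's weak-averaging trick (Alaoglu compactness applied to the shift operator on $L^1$, cf.\ \eqref{eq:metoohahaha}--\eqref{eq:Kingmanisgood}) yields a subsequential limit $B_u$ which is additive, translation-covariant in distribution, and ergodic. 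The shape theorem implies $B_u(0,nx)/n \to \ell_u(x)$ for a linear functional $\ell_u$ corresponding to a supporting hyperplane of $\mu\mathcal{B}_\nu$; as $u$ varies, $\ell_u$ probes the supporting functionals at different extreme points of $\mathcal{B}_\nu$.

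Given $k$ extreme points $p_1,\ldots,p_k$ of $\mathcal{B}_\nu$, I next choose unit vectors $u_1,\ldots,u_k$ so that the resulting functionals $\ell_{u_1},\ldots,\ell_{u_k}$ are pairwise distinct. Continuity of $\nu$ makes local minimizers almost surely unique, so at each $x$ the neighbor $y$ achieving $B_{u_i}(x,y)=-\tau_{xy}$ is uniquely defined; following these neighbors produces a geodesic ray $\Gamma_i(x)$ whose asymptotic functional is $\ell_{u_i}$. The central claim is that there exist deterministic $x_1,\ldots,x_k$ for which $\Gamma_1(x_1),\ldots,\Gamma_k(x_k)$ are simultaneously distinct (pairwise sharing only finitely many edges) with probability at least $1-\epsilon$.

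This distinctness step is the main obstacle. If two rays $\Gamma_i(x_i)$ and $\Gamma_j(x_j)$ coalesced past a shared vertex, their common infinite tail would force the Busemann values of $B_{u_i}$ and $B_{u_j}$ to agree up to a bounded constant on that tail, hence $\ell_{u_i}=\ell_{u_j}$ --- contradicting our choice. Ruling out \emph{pervasive} coalescence requires the two-dimensional hypothesis: one invokes a planar crossing/duality argument in the style of \cite{GM,HP,Hoffman1} to show that the forests induced by the $B_{u_i}$'s have asymptotically disjoint ends with positive probability. Finally, since the event ``$x$ is the root of a ray with asymptotic functional $\ell_{u_i}$'' is translation invariant and has positive probability, the ergodic theorem produces a positive density of such roots for each $i$; by placing $x_1,\ldots,x_k$ along a sufficiently large grid in suitable orientations, one can ensure that each $x_i$ lies on a $\Gamma_i$-ray (and that coalescence among these does not occur within a prescribed window) with probability at least $1-\epsilon$, yielding $\P(G(x_1,\ldots,x_k))>1-\epsilon$.
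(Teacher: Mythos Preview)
Your approach is closer in spirit to the later Damron--Hanson construction (Section~\ref{sec: busemann_increments}) than to Hoffman's original argument, which the paper sketches as Theorem~\ref{Hoof}. Hoffman never takes subsequential weak limits of Busemann functions. He works directly with finite-$n$ half-plane Busemann functions $B_{L_{n,v_i}}(x,y)=T(x,L_{n,v_i})-T(y,L_{n,v_i})$, where $L_{n,v_i}$ is the tangent line to $n\mathcal{B}_\nu$ at $nv_i$, and he picks the $v_i$'s to be points of differentiability of $\partial\mathcal{B}_\nu$ with distinct tangent lines. The averaging trick (Lemma~\ref{lem: expected_busemann_simple}) shows that for $x_i=Mv_i$ with $M$ large, one has $B_{L_{n,v_i}}(x_j,x_i)>0$ for all $j\neq i$ on a set of $n$'s of density $>1-\epsilon$, with probability $>1-\epsilon$. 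This inequality says species $i$ reaches $L_{n,v_i}$ before any other species; since this holds for infinitely many $n$, species $i$ infects infinitely many sites. The $k$ geodesic rays are then obtained as infinite branches of the geodesic trees $\mathcal{T}(x_i)$ restricted to the territory of species $i$; they are automatically distinct because the territories are disjoint.

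Your route has a genuine gap in the distinctness step. You argue that if $\Gamma_i(x_i)$ and $\Gamma_j(x_j)$ coalesce, then $B_{u_i}$ and $B_{u_j}$ agree on the shared tail, ``hence $\ell_{u_i}=\ell_{u_j}$.'' But agreement of two additive functions along a single geodesic ray only pins down their common value in the (possibly nonexistent) asymptotic direction of that ray; two distinct supporting linear functionals of $\mu$ can certainly agree in one direction, namely at a common boundary point of $\partial\mathcal{B}_\nu$. To upgrade this to $\ell_{u_i}=\ell_{u_j}$ you would need to know that the shared tail has a direction \emph{and} that $\partial\mathcal{B}_\nu$ is differentiable there---neither of which is available. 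There are further soft spots (the weak-limit $B_u$ need not live on the original probability space or be ergodic, as the paper notes around Lemma~\ref{thm:Busemannlimitshape}; and the final ``place the $x_i$ on a large grid'' paragraph does not produce the quantitative $1-\epsilon$ bound), but the distinctness claim is the place where the argument actually breaks. Hoffman's density/competition argument sidesteps all of this by never needing directions or limits.
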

This result also holds for a wide range of non-i.i.d. passage time distributions and was used by Hoffman to establish existence of $4$ distinct geodesic rays. In light of Hoffman's results, the following question could be seen as a weak version of the uniform curvature conjecture, since uniform curvature implies sides$(\mathcal B_{\nu})=\infty$. We will discuss this theorem (and give some idea of its proof) in another guise below as Theorem \ref{Hoof}.

\begin{question}
Show that for continuous distributions, there are infinitely many geodesic rays.
\end{question} 

It is expected that sides$(\mathcal B_{\nu}) = \infty$ for any distribution that has more than one point in its support. As we saw in Section \ref{sec:limitshape}, this fact is only known for distributions in $\mathcal{M}_p$ with $p \geq \vec{p}_c$.   

\begin{question}
Find a distribution that is not in $\mathcal M_{p}$ that has an infinite number of geodesic rays.
\end{question}
The last observation combined with Hoffman's result allows us to conclude: 

\begin{theorem}[Auffinger-Damron \cite{AD12}] Assume $d=2$. For any measure $\nu \in \mathcal M_p$, $p \geq \vec p_c$, almost surely, there exist infinitely many geodesic rays.
\end{theorem}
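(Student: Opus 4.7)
The plan is to combine the flat-edge characterization of Theorem~\ref{thm:marchand1} with the differentiability result of Theorem~\ref{thm: diffll} to show that $\mathcal B_\nu$ is not a convex polygon, and then invoke Hoffman's Theorem~\ref{thm:Hoffman1}.

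First I would prove that $\mathrm{sides}(\mathcal B_\nu) = \infty$ for every $\nu \in \mathcal M_p$ with $p \in [\vec p_c, 1)$, arguing by contradiction. Suppose $\mathcal B_\nu$ is a convex polygon with only finitely many sides. Consider first the case $p \in (\vec p_c, 1)$. By Theorem~\ref{thm:marchand1}(3), the segment $[M_p, N_p]$ is exactly $\mathcal B_\nu \cap [0,\infty)^2 \cap \partial \mathcal B_1$. Let $S$ be the (unique) polygon side of $\mathcal B_\nu$ containing $[M_p, N_p]$; convexity forces $S$ to lie on the line $\ell = \{x+y=1\}$. Because $p<1$ implies $\alpha_p < 1/\sqrt{2}$, the point $N_p$ has strictly positive second coordinate, so any extension of $S$ beyond $N_p$ along $\ell$ would remain in $[0,\infty)^2 \cap \partial \mathcal B_1$, enlarging $\mathcal B_\nu \cap [0,\infty)^2 \cap \partial \mathcal B_1$ beyond $[M_p,N_p]$ and contradicting Theorem~\ref{thm:marchand1}(3). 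Hence $S = [M_p, N_p]$, and $N_p$ is a vertex (corner) of the polygon, which contradicts the differentiability of $\partial \mathcal B_\nu$ at $N_p$ asserted in Theorem~\ref{thm: diffll}. In the critical case $p = \vec p_c$, the point $(1/2, 1/2) = N_{\vec p_c}$ must be either a vertex of the polygon (again ruled out by Theorem~\ref{thm: diffll}) or in the interior of some side $S'$; in the latter case, the tangent line to $\mathcal B_\nu$ at $(1/2, 1/2)$ must coincide with the tangent line to $\mathcal B_1$ at the same point, namely $\ell$, so $S' \subset \partial \mathcal B_1 \cap [0,\infty)^2$, contradicting Theorem~\ref{thm:marchand1}(4).

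Once $\mathrm{sides}(\mathcal B_\nu) = \infty$ is established, I would apply Theorem~\ref{thm:Hoffman1} for each integer $k \geq 1$: for every $\varepsilon > 0$ there exist $x_1^{(k)}, \ldots, x_k^{(k)} \in \mathbb Z^2$ with $\Pro(G(x_1^{(k)}, \ldots, x_k^{(k)})) > 1 - \varepsilon$. Since $G(x_1^{(k)}, \ldots, x_k^{(k)})$ is contained in the event
\[
A_k := \{ \text{there exist at least } k \text{ pairwise distinct geodesic rays in } \mathbb Z^2 \},
\]
one gets $\Pro(A_k) \geq 1 - \varepsilon$ for every $\varepsilon > 0$, so $\Pro(A_k) = 1$. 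Since $(A_k)_{k \geq 1}$ is a decreasing sequence of full-measure events, $\Pro(\bigcap_k A_k) = 1$, which is exactly the statement that almost surely infinitely many pairwise distinct geodesic rays exist. The trivial case $p = 1$ (i.e.\ $\nu = \delta_1$) requires no machinery: every monotone lattice path from a fixed vertex is a geodesic, producing uncountably many pairwise distinct geodesic rays deterministically.

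The main obstacle, as I see it, lies not in the geometric step (which is short once Theorems~\ref{thm:marchand1} and~\ref{thm: diffll} are in hand) but in the application of Theorem~\ref{thm:Hoffman1}, whose stated hypotheses include continuity of the edge-weight distribution and $\E\tau_e^{2+\alpha} < \infty$. Measures in $\mathcal M_p$ have an atom at $1$, so continuity fails outright. The excerpt remarks that Hoffman's result ``also holds for a wide range of non-i.i.d.\ passage time distributions,'' and one would need to verify that a variant of the theorem applies in the atomic $\mathcal M_p$ setting---either by inspecting Hoffman's proof (the key ingredient is that $\mathcal B_\nu$ has many extreme points, which is what we have just established) or by a perturbation argument approximating $\nu$ by continuous measures $\nu_n$ with the same flat-edge structure, using the continuity Theorem~\ref{thm:contcox} to transfer the conclusion. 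A mild moment assumption on the tail of $\nu$ above $1$ may be imposed freely, as it is in any case needed for the shape theorem.
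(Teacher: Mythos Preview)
Your approach is correct and matches the paper's: the survey's ``proof'' is just the one-line observation that the differentiability result (Theorem~\ref{thm: diffll}) combined with Marchand's flat-edge theorem forces $\mathrm{sides}(\mathcal B_\nu)=\infty$, after which Hoffman's theorem applies for every $k$. Your geometric case analysis fleshes out that first step cleanly, and your passage from ``$\Pro(A_k)=1$ for all $k$'' to infinitely many rays is exactly right.

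Your caveat about the hypotheses of Theorem~\ref{thm:Hoffman1} is well taken and is the only genuine issue: the survey states Hoffman's result under continuity and a $2+\alpha$ moment, neither of which is guaranteed for $\nu\in\mathcal M_p$. The survey glosses over this (it simply says the result ``holds for a wide range'' of distributions), and the point is handled in the original \cite{AD12}; but you are right that as written here one must either check that Hoffman's argument goes through with an atom at the infimum of the support, or impose the moment condition as an extra hypothesis. Note also, as the paper remarks immediately after the theorem, that for $p>\vec p_c$ the bare statement is trivial via oriented paths of weight-$1$ edges --- so the real content (and the place where the Hoffman machinery is needed) is at $p=\vec p_c$ and in the stronger assertion that the rays can be chosen outside the percolation cone.
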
 
It is important to note that when $p>\vec{p}_c$, for $\nu \in \mathcal M_p$, one can trivially find infinitely many geodesic rays by choosing them to be oriented up-right and contain only edges $e$ with $t_e=1$. In the above result, the geodesics constructed can be chosen to contain a positive fraction of edges $e$ with $t_e>1$. Each of these rays will be directed in a sector that is disjoint from the Durrett-Liggett flat edge of the limit shape.
  
   The argument that we provided at the beginning of this section for the existence of a geodesic ray leads to another important question.  If $\Gamma_n$ is a sequence of finite geodesics, we say that $\Gamma_n \to \Gamma$ for some path $\Gamma$ if for all $N>0$, the first $N$ steps of $\Gamma_n$ equal those of  $\Gamma$ for all $n$ large. 
\begin{question}
Is it true that the sequence of geodesics $\Gamma(0,ne_{1})$ converges to some $\Gamma$?
\end{question} 
By the definition of convergence, such $\Gamma$ is clearly a geodesic ray. The question above was solved if one replaces $\mathbb Z^2$ by the upper-half plane or any other infinite connected subgraph of $\mathbb Z^{2}$ with infinite connected complement in \cite{ADH1} (in the latter case, $ne_{1}$ is replaced by a sequence of points on the boundary of the domain). 
The following similar open question (a version of the ``BKS midpoint problem'' posed in \cite{BKS}) is related to the existence of geodesic lines, which is the topic of Subsection \ref{sec:bigeodesics}.
\begin{question}
Assume that the space is uniquely geodesic. Show that in $d=2$, there is no fixed vertex that is in infinitely many of the geodesics between $-ne_1$ and $ne_1$. 
\end{question} 

Both of the last two questions are solved if one assumes that the limit shape boundary is differentiable both in direction $e_1$, and at $\theta_1,\theta_2$, where the $\theta_i$'s are the endpoints of the sector of angles of contact of the unique tangent line to the limit shape in direction $e_1$. See Theorem~\ref{thm: new_DH} and Remark~\ref{rem: BKS_midpoint}. 
  
 \subsubsection{Directions and coalescence}
 
 In this section, we investigate questions of whether there exist geodesic rays that have an asymptotic direction, and if the geodesic in a given direction is unique. Much of the work described here was done in the '90s by Newman and co-authors; for a complete pedagogical account of these results we refer to the book of Newman \cite{Newmanbook}. (Also see Section~\ref{sec:Busemann} on Busemann functions, where geodesics are analyzed without a curvature assumption.)
 For $\theta \in \mathbb{R}^d$ with $\|\theta\|_2 = 1$ (identified with $[0,2\pi)$ when $d=2$), we will say that a self-avoiding singly infinite path $(v_1, e_1, v_2, \ldots)$ has direction $\theta$ if $\arg v_n := v_n/\|v_n\|_2 \to \theta$ as $n$ goes to infinity. Under the assumption of uniform positive curvature, the following theorem shows that infinite geodesics cannot wind around the origin; they must have a direction.
 
 \begin{theorem}[Newman \cite{Newman}]\label{theorem:Newman}
 For $d \geq 2$, assume that $F(0)<p_c$, there exists $\alpha>0$ such that $\E \exp(\alpha \tau_e)~<~\infty$ and that $\mathcal{B}_\nu$ is uniformly curved (see Definition \ref{def:unifcurved}). Then with probability one,
 \begin{enumerate}
\item Every geodesic ray has a direction.
\item For every $\theta$ with $\|\theta\|_2=1$, there exists a geodesic ray with direction $\theta$.
 \end{enumerate}
  \end{theorem}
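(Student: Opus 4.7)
The plan is to prove both parts by combining a quantitative wandering bound (using uniform curvature together with subdiffusive concentration) with a diagonal compactness extraction for part (2) and a strict-convexity argument for part (1). First I would establish the wandering estimate: almost surely, every finite geodesic $\Gamma(0,x)$ with $\|x\|_{2}$ large lies within perpendicular distance $C\|x\|_{2}^{3/4}\log^{C}\|x\|_{2}$ of the segment $[0,x]$. The geometric input is the triangle-excess inequality: for a vertex $v$ at perpendicular distance $\beta$ from $[0,x]$ with projection at distance $\alpha$ along the segment, uniform curvature of $\mathcal{B}_{\nu}$ gives
\[\mu(v)+\mu(x-v)-\mu(x)\;\geq\;c\beta^{2}\Big(\tfrac{1}{\alpha}+\tfrac{1}{\|x\|_{2}-\alpha}\Big)\;\geq\;\tfrac{4c\beta^{2}}{\|x\|_{2}}.\]
If $v\in\Gamma(0,x)$, then $T(0,v)+T(v,x)=T(0,x)$; combined with the Alexander nonrandom-fluctuation bound \eqref{nonrandomfluctuations} and the subdiffusive concentration of Theorem~\ref{thm: subdiffusive} (available under the finite exponential moment hypothesis), this forces $\beta^{2}/\|x\|_{2}\leq C\sqrt{\|x\|_{2}}\log^{C}\|x\|_{2}$, hence the claimed bound on $\beta$, after a union bound over the polynomially-many candidate vertices.

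For part (1), let $\Gamma=(0,w_{1},w_{2},\ldots)$ be any infinite geodesic and set $\hat w_{k}=w_{k}/\|w_{k}\|_{1}\in S^{d-1}_{\ell^{1}}$; since $\hat w_{k}-\hat w_{k-1}=O(1/\|w_{k}\|_{1})$, the sequence $\hat w_{k}$ moves continuously on the sphere. Suppose for contradiction that $(\hat w_{k})$ had two subsequential limits $\theta_{1}\neq\theta_{2}$; by Theorem~\ref{thm: new_geo_length_bound} one has $\|w_{k}\|_{1}\asymp k$, so continuity of $\hat w_{k}$ and Bolzano--Weierstrass let us select pairs $(j_{n},k_{n})$ with $j_{n}<k_{n}$, $\hat w_{j_{n}}\to\theta_{2}$, $\hat w_{k_{n}}\to\theta_{1}$, and $\|w_{j_{n}}\|_{1}/\|w_{k_{n}}\|_{1}\to\lambda\in(0,1)$. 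The geodesic property gives $T(0,w_{k_{n}})=T(0,w_{j_{n}})+T(w_{j_{n}},w_{k_{n}})$; the shape theorem converts this to $\mu(w_{k_{n}})=\mu(w_{j_{n}})+\mu(w_{k_{n}}-w_{j_{n}})+o(\|w_{k_{n}}\|_{1})$, and dividing by $\|w_{k_{n}}\|_{1}$ yields $\mu(\theta_{1})=\lambda\mu(\theta_{2})+\mu(\theta_{1}-\lambda\theta_{2})$. This is equality in the triangle inequality for $\mu$ at $a=\lambda\theta_{2}$ and $b=\theta_{1}-\lambda\theta_{2}$; strict convexity of the norm $\mu$ (guaranteed by uniform curvature) forces $a\parallel b$, hence $\theta_{1}\parallel\theta_{2}$, contradicting $\theta_{1}\neq\theta_{2}$ on the $\ell^{1}$ unit sphere.

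For part (2), fix $\theta$ and take $x_{n}$ to be the lattice vertex nearest $n\theta$. Since each vertex of $\Gamma_{n}:=\Gamma(0,x_{n})$ has only $2d$ outgoing edges, a diagonal extraction yields a subsequence along which the first $k$ edges of $\Gamma_{n}$ eventually stabilize for every $k$, producing a self-avoiding infinite path $\Gamma^{\infty}=(0,v_{1},v_{2},\ldots)$. Every finite subpath is a subpath of some $\Gamma_{n}$ and hence a geodesic, so $\Gamma^{\infty}$ is an infinite geodesic; by part (1) it has a unique asymptotic direction $\theta'$. One then checks $\theta'=\theta$ by noting that for large $k$ the vertex $v_{k}$ lies on $\Gamma_{n}$ for infinitely many $n$; choosing $n$ with $\|x_{n}\|_{2}$ comparable to $\|v_{k}\|_{2}$ (permissible because Theorem~\ref{thm: new_geo_length_bound} forces $\|x_{n}\|_{2}\gtrsim\|v_{k}\|_{2}$ for any such $n$), the wandering bound places $v_{k}$ within $C\|v_{k}\|^{3/4}\log^{C}\|v_{k}\|$ of the line $[0,x_{n}]$, itself converging to $\mathbb{R}_{+}\theta$, forcing $\theta'=\theta$.

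The main obstacle is the geometric-probabilistic coupling in the wandering estimate: uniform curvature is an unproven property for any genuinely random edge-weight distribution, and it is precisely the hypothesis that upgrades the weak triangle inequality $\mu(a+b)\leq\mu(a)+\mu(b)$ to the quantitative excess needed to convert $O(\sqrt{\|x\|_{2}})$-scale passage-time fluctuations into $O(\|x\|_{2}^{3/4})$-scale geometric wandering. A secondary subtlety is ensuring $\lambda\in(0,1)$ in the uniqueness-of-direction step, which uses the linear length bound $\|w_{k}\|_{1}\asymp k$ and the continuity $\hat w_{k}-\hat w_{k-1}\to 0$ on the sphere.
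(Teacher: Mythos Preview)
Your argument for part (1) has a genuine gap. You assert that one can select pairs $(j_n,k_n)$ with $\hat w_{j_n}\to\theta_2$, $\hat w_{k_n}\to\theta_1$, and $\|w_{j_n}\|_1/\|w_{k_n}\|_1\to\lambda\in(0,1)$, citing $\|w_k\|_1\asymp k$ and the continuity $\hat w_{k+1}-\hat w_k=O(1/k)$. These tools do let you force $\lambda<1$ (if $j_n/k_n\to 1$ then $\hat w_{j_n}-\hat w_{k_n}\to 0$, a contradiction), but they give no mechanism for $\lambda>0$. In fact your triangle-equality step uses only strict convexity of $\mu$ together with the \emph{qualitative} shape theorem, and those alone cannot force $\hat w_k$ to converge. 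Take $\mu=|\cdot|_2$ (Euclidean, hence uniformly curved) and the deterministic sequence $w_k=k(\cos\log\log k,\sin\log\log k)$; a short computation gives $|w_k-w_j|=(k-j)+O\!\big(k/(\log k)^2\big)=(k-j)+o(k)$ uniformly in $j\le k$, so this sequence satisfies exactly the approximate additivity $\mu(w_k)=\mu(w_j)+\mu(w_k-w_j)+o(k)$ that your argument exploits, yet $\hat w_k$ visits every point of $S^1$. Equivalently, the most your strict-convexity argument can extract is $\hat w_k-\hat w_{\lfloor ck\rfloor}\to 0$ for each fixed $c\in(0,1]$, and this does not imply convergence.

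The repair is to use the wandering estimate you set up for part (1) as well, exactly as the paper does. Once every finite geodesic $\Gamma(0,w_m)$ lies within $\|w_m\|^{3/4+o(1)}$ of $[0,w_m]$, you get $\|\arg w_n-\arg w_m\|\lesssim \|w_m\|^{3/4+o(1)}/\|w_n\|$ for $n<m$; an iteration over geometric scales (the paper's cone construction) upgrades this to $\|\arg w_n-\arg w_m\|\le C\|w_n\|^{-1/4+\epsilon}$ for all $m\ge n$, making $(\arg w_n)$ Cauchy. This is precisely where \emph{uniform} (quantitative) curvature enters, not mere strict convexity. There is also a smaller gap in part (2): the subsequence along which $\Gamma_n\to\Gamma^\infty$ may be too sparse to contain any $n$ with $\|x_n\|$ comparable to $\|v_k\|$, so $C\|x_n\|^{3/4}\log^C\|x_n\|$ need not be $o(\|v_k\|)$; this is easily fixed by applying the wandering bound to the sub-geodesic $\Gamma(0,v_m)\subset\Gamma^\infty$ for $m$ slightly larger than $k$, or by invoking the iterated cone estimate directly.
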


Newman's theorem from \cite[Theorem~2.1]{Newman} is stated for continuous distributions, in which both items above are deduced not for geodesic rays, but for infinite branches in $\mathcal{T}(0)$, but the same proofs yield the above extension.

\begin{proof}
We will need to control geodesics from all points at once, and we will control them by ensuring that they exit cones along certain parts of their boundaries. For this, we need definitions of cone sectors: for $x \in \mathbb{Z}^d$ and $\epsilon>0$, define the set
\[
C_x = \{z \in \mathbb{Z}^d : \|z\| \in [\|x\|/2, 2\|x\|],~ \|\arg x - \arg z\| \leq \|x\|^{-1/4 + \epsilon}\},
\]
its ``forward boundary''
\[
\partial_f C_x = \{z \in \mathbb{Z}^d : \exists ~w \in C_x \text{ such that } \|z-w\| = 1 \text{ and } \|z\| > 2\|x\|\},
\]
and the rest of the boundary is $\partial'C_x = \{z \in \mathbb{Z}^d \setminus C_x: \exists~w \in C_x \text{ such that } \|z-w\| = 1\} \setminus \partial_fC_x$. Define the ``out tree'' of $x$ as
\[
\text{out}(x) = \{z \in \mathbb{Z}^d : T(0,z) = T(0,x) + T(x,z)\}
\]
(the set of vertices whose geodesics from 0 contain $x$) and the event
\[
G_x = \{\text{out}(x) \cap \partial'C_x \neq \emptyset\}.
\]
A union bound gives
\[
\mathbb{P}(G_x) \leq \sum_{z \in \partial' C_x} \mathbb{P}(T(0,z) = T(0,x) + T(x,z)).
\]
Nearly the same argument as in the Newman-Piza bound (summing a similar bound to that given in \eqref{eq: to_show_burrito}) shows that there is $c>0$ such that for all $x \in \mathbb{Z}^d$,
\[
\mathbb{P}(G_x) \leq (1/c) e^{-c\|x\|^c}.
\]
(Here it is important that the aperture of the cone $C_x$ be at least $\|x\|^{-1/4+\epsilon}$, so that the distance between $x$ and the side boundaries of $C_x$ be larger order than $\|x\|^{3/4}$. The exponent $3/4$ is the bound for the wandering exponent from Theorem~\ref{thm:exponent_bounds}.) 

By Borel-Cantelli, we can almost surely find a random $M$ such that for $\|x\|\geq M$, the event $G_x^c$ occurs. Now fix any such configuration; we claim that there is a $C>0$ such that the following geodesic-direction estimate holds. Let $\gamma$ be a geodesic from $0$ with vertices $0=x_0, x_1, x_2, \ldots, x_r$. So long as $n \in [0,r]$ is such that $\|x_n\| \geq M$, then
\begin{equation}\label{eq: argument_bound}
\|\arg x_r - \arg x_n\| \leq C\|x_n\|^{-1/4+\epsilon},
\end{equation}
where $C = \sum_{i=1}^\infty 2^{-i(1/4-\epsilon)}$. To prove this inequality, define a sequence of points $y_1, \ldots$ inductively as follows. Let $y_1 = x_n$ and for $i=2, \ldots$, define $y_i$ to be the first point on $\gamma$ after $y_{i-1}$ and after the first exit from $C_{y_{i-1}}$ (as long as there is one). Then there is $I$ such that $x_r \in C_{y_I}$, and we can bound
\begin{align*}
\|\arg x_r - \arg x_n\| &\leq \sum_{i=1}^{I-1} \|\arg y_i - \arg y_{i+1}\| + \|\arg y_I - \arg x_r\| \\
&\leq \sum_{i=1}^I \|y_i\|^{-1/4+\epsilon} \\
&\leq \sum_{i=1}^\infty (2^i \|y_1\|)^{-1/4+\epsilon} \\
&\leq C\|x_n\|^{-1/4+\epsilon}.
\end{align*}
In the second to last inequality, we have used that $G_{y_i}^c$ occurs, and thus the geodesic exits $C_{y_i}$ along the forward boundary. This proves \eqref{eq: argument_bound}.

Now we can argue the two points of Newman's theorem. For any passage time configuration as above, let $\gamma$ be an infinite geodesic. We may assume it starts from 0 and label its vertices $x_0, x_1, \ldots$. Pick $n$ such that $\|x_n\| \geq M$ and note now that by \eqref{eq: argument_bound}, one has $\| \arg x_n - \arg x_m\| \leq \|x_n\|^{-1/4+\epsilon}$ for all $m \geq n$. This means $(\arg x_n)$ is a Cauchy sequence and thus converges, proving that $\gamma$ has a direction.

For the second point, for our given passage time configuration, pick any direction $\theta$ and let $(y_n)$ be any sequence of vertices with $\arg y_n \to \theta$. Let $\gamma_n$ be a self-avoiding geodesic from 0 to $y_n$ and let $(\gamma_{n_k})$ be a subsequence that converges to some $\gamma$. We claim that $\gamma$, which is a self-avoiding infinite geodesic, has direction $\theta$. Indeed, if its vertices are $x_0, x_1, x_2, \ldots$, then for any $n$, one must have for all large $k$ that $\gamma_{n_k}$ has as its initial segment the portion of $\gamma$ from 0 to $x_n$. Thus if $\|x_n\| \geq M$, by \eqref{eq: argument_bound},
\[
\|\arg y_{n_k} - \arg x_n\| \leq \|x_n\|^{-1/4+\epsilon}.
\]
Taking $k \to \infty$, we obtain $\|\theta - \arg x_n\| \leq \|x_n\|^{-1/4+\epsilon}$, and this means $\arg x_n \to \theta$.
\end{proof}

Under the unproved assumption of uniform curvature, one could now ask how many geodesic rays with direction $\theta$ exist. Is it possible that there is more than one such ray, and does the answer change if we insist that they share the same initial vertex?

In this direction, the following two results are due to Licea and Newman \cite{LN96}. The first one roughly says that geodesic rays cannot bifurcate in the same deterministic direction. We note, however, that birfucations are expected in random directions, and indeed such a statement has been proved in a related last-passage model \cite{coupier}. See Remark~\ref{rem: multiple_geos} below.
\begin{theorem}[Licea-Newman \cite{LN96}, Newman \cite{Newman}] \label{theorem:LN} Let $d=2$ and assume that $F$ is continuous. Fix $x \in \mathbb Z^2$. Then there exists some set $D\subseteq [0, 2\pi)$ of full Lebesgue measure such that if $\theta \in D$, then there is zero probability that there exist distinct geodesic rays starting at $x$ with direction $\theta$.
\end{theorem}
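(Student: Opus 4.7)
The plan is a Fubini argument reducing the theorem to a planar-topological claim: almost surely, only countably many directions admit multiple geodesic rays from a fixed vertex. First, by translation invariance of the i.i.d.\ edge-weight distribution, $\mathbb{P}(A_\theta(x))$ is independent of $x$, where $A_\theta(x)$ denotes the event that two distinct geodesic rays start at $x$ with asymptotic direction $\theta$. Taking $x=0$ and writing $N(\omega) = \{\theta \in [0,2\pi) : \omega \in A_\theta(0)\}$ for the random set of ``bad'' directions, Fubini gives
$$\int_0^{2\pi} \mathbb{P}(A_\theta(0))\,d\theta = \mathbb{E}[\lambda(N)],$$
so it suffices to show $\lambda(N) = 0$ almost surely. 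I will prove the stronger statement that $N$ is at most countable on a full-measure event.

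Second, on the almost-sure event that finite geodesics are unique (by continuity of $F$), the collection of edges belonging to some finite geodesic from $0$ forms a planar spanning tree $\mathcal{T}(0)$ of $\mathbb{Z}^2$. For each $\theta \in N$, choose the extreme (leftmost and rightmost) geodesic rays $\gamma_\theta^-,\gamma_\theta^+$ from $0$ with direction $\theta$ in the cyclic order on ends of $\mathcal{T}(0)$. Uniqueness of finite geodesics forces these two rays to agree on an initial segment, then to split at a first bifurcation vertex $v_\theta \in \mathbb{Z}^2$ along two distinct outgoing edges $e_\theta^-, e_\theta^+$, and to remain vertex-disjoint thereafter. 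Now fix a bifurcation datum $(v,e^-,e^+)$ and let $T^-, T^+$ be the subtrees of $\mathcal{T}(0)$ descending from $v$ through $e^-, e^+$. A standard planar-tree fact says that the ends of any subtree form a contiguous arc in the cyclic order on all ends of $\mathcal{T}(0)$, so the arcs $I^- := \mathrm{ends}(T^-)$ and $I^+ := \mathrm{ends}(T^+)$ are disjoint. Moreover, restricted to ends that possess a direction, the direction-assignment map is monotone for the cyclic order: if $\xi_1 < \xi_2 < \xi_3$ cyclically with directions $\theta_i$, then passing to large-$n$ vertices $v_n^i/\|v_n^i\|_2 \to (\cos\theta_i,\sin\theta_i)$ shows the $\theta_i$ appear in the same cyclic order. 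Consequently the preimage of each $\theta$ under the direction map is itself a contiguous subarc; in order for $\theta$ to intersect both $I^-$ and $I^+$, this subarc must straddle the boundary between them, which contains at most two points. Hence at most two values of $\theta$ share any given bifurcation datum $(v,e^-,e^+)$. Since bifurcation data lie in the countable set $\mathbb{Z}^2 \times (\text{pairs of edges at each vertex})$, the set $N$ is countable.

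The main obstacle is making rigorous the planar-topological claims underlying the second paragraph: that ends of disjoint subtrees of a planar tree project to disjoint contiguous arcs of the cyclic order on all ends, and that the cyclic combinatorial order coincides with the angular order at infinity for those ends that admit a direction. Both rest on Jordan-curve arguments combined with the uniqueness of finite geodesics, which prevents two bifurcated rays from ever recrossing and so allows one to cut the plane cleanly along $\gamma_\theta^- \cup \gamma_\theta^+$. Once these foundational planar lemmas are in place, the Fubini reduction together with the combinatorial bifurcation count immediately yields a full-measure set $D$ of directions for which $\mathbb{P}(A_\theta(x)) = 0$, giving the theorem.
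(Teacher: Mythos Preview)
Your proposal is correct in outline and follows the same overall strategy as the paper: reduce via Fubini to showing that the set $N$ of ``bad'' directions has Lebesgue measure zero almost surely, and prove this by exhibiting a countable labeling of $N$ through the planar structure of the geodesic tree $\mathcal{T}(0)$. The difference lies in the labeling device.

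The paper avoids your bifurcation-triple $(v,e^-,e^+)$ and the accompanying ``at most two $\theta$'s per datum'' argument entirely. Instead, given two rays $r_1,r_2$ with direction $\theta$ bifurcating at $u$ along edges $(u,v_1)$ and $(u,v_2)$ (with $r_1$ counter-clockwise to $r_2$), it defines for the single directed edge $e=(u,v_2)$ the ray $r^+(e)$ obtained by always choosing the most counter-clockwise outgoing edge in $\mathcal{T}(x)$. Planarity traps $r^+(e)$ between $r_1$ and $r_2$, so it too has direction $\theta$; and since any ray has at most one asymptotic direction, the event $G(e,\theta)=\{r^+(e)\text{ has direction }\theta\}$ occurs for at most one value of $\theta$. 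Summing over the countably many edges and applying Fubini finishes the proof.

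What this buys over your approach: the correspondence is $1$-to-$1$ rather than $2$-to-$1$, so the paper never needs to establish that the direction-assignment map is cyclically monotone on ends, nor that preimages of directions are contiguous arcs straddling the gaps between $I^-$ and $I^+$. Those claims are true but, as you note, require genuine Jordan-curve-type care (and you would also need to handle the possibility that the gaps between $I^-$ and $I^+$ are empty, or that the extreme rays $\gamma_\theta^\pm$ fail to exist because the set of ends with direction $\theta$ is not closed). The paper's extremal-ray construction $r^+(e)$ short-circuits all of this: it is well defined for every edge, and the statement ``a single ray has at most one direction'' is trivial.
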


The proof of Theorem \ref{theorem:LN} is surprisingly simple, and we will provide a sketch of the main idea below. Let us first mention that not much is known about the set $D$, although it is expected that $D=[0,2 \pi).$ Establishing the question below would be a step towards one of the main problems (absence of geodesic lines) discussed in Section \ref{sec:bigeodesics}.
\begin{question}\label{question:setD}
Show that $D = [0,2\pi).$ 
\end{question}
In \cite[Theorem 1.5]{Newmanbook}, it was shown by Zerner that $D$ has at most a countably infinite complement. But we do not even know that a particular angle - for instance, $0$ - belongs to $D$. 
\begin{question}
Show that $0\in D.$ 
\end{question}
These two questions have been recently solved by Damron-Hanson under additional assumptions on the limit shape boundary \cite{DHanson2}. Namely, the first is solved if one assumes that the limit shape boundary is differentiable everywhere. For the second, the condition is differentiability of the boundary in the direction $e_1$ and in directions $\theta_1$ and $\theta_2$, where the $\theta_i$'s are the endpoints of the sector of angles at which the unique tangent line to the limit shape in direction $e_1$ contacts the limit shape. See Theorem~\ref{thm: new_DH}.

\begin{proof}[Proof of Theorem \ref{theorem:LN}]  Suppose there are two different geodesic rays $r_1, r_2$ starting at some $x$ with the same direction $\theta$. As they are different, they will have to bifurcate at a vertex $u \in \mathbb Z^2$, one taking the directed edge $(u,v_{1})$, the other the edge $(u, v_{2})$, with $v_2 \neq v_1$. As the space is uniquely geodesic, $r_{1}$ and $r_{2}$ do not intersect after $u$. The construction that follows buids a geodesic between $r_{1}$ and $r_{2}$ by exploring the geodesic tree $\mathcal T(x)$ between these two rays. By planarity, any other geodesic ray that sits between these two geodesics must also have direction $\theta$. 

\begin{figure}[htbp]
\begin{center}
\def\svgwidth{12cm}
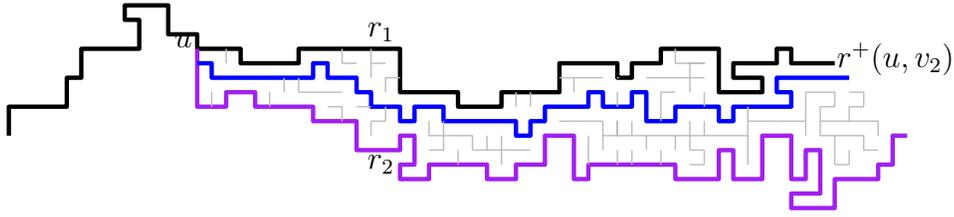
\caption{Construction of the path $r^{+}(u,v_{2}).$ Path $r_{1}$ depicted in black, $r_{2}$ in purple and $r^{+}(u,v_{2})$ in blue.}
\label{default}
\end{center}
\end{figure}

Assume without loss of generality that $r_1$ is asymptotically counter-clockwise to $r_2$. Now construct the geodesic ray $r^+(u,v_2)$ that starts with the edge $( u,v_2)$ and always chooses the most counter-clockwise edge possible in $\mathcal{T}(x)$ (in particular this edge must belong to at least one geodesic ray). Roughly speaking, the ray $r^{+}(u,v_{2})$ is the `closest' geodesic ray to $r_{1}$ that takes the edge $( u,v_{2})$ in the region delimited by $r_{1}$ and $r_{2}$. As $r_2$ is a geodesic ray, $r^+(u,v_2)$ sits between $r_2$ and $r_1$ and must have direction $\theta$. Through this construction, it is not difficult to see that to any direction $\theta$ with multiple geodesic rays from $x$ corresponds at least one edge $e$  such that $r^{+}(e)$ has direction $\theta$. Let $G(e,\theta)$ be the event that $r^{+}(e)$ has direction $\theta$.

We thus get the estimate
$$ \Pro\big(\text{there exist at least two geodesic rays from }x \text{ with direction } \theta\big) \leq \sum_{e} \Pro( G(e,\theta)).$$
Now, for each edge $e$, $r^+(e)$ cannot be a geodesic ray with direction $\theta$ for more than one $\theta$; thus Fubini's theorem implies
$$ \int \Pro(G(e,\theta)) d \theta = \int \int \indi_{G(e,\theta)}(\omega) d \theta d \Pro  = 0. $$
(One must check that $\indi_{G(e,\theta)}(\omega)$ is jointly measurable in $\theta, \omega$.) Combining the last two displays ends the proof of the theorem.
\end{proof}

\begin{remark}\label{rem: multiple_geos}
It is expected (and, as mentioned above, proved under the assumption of differentiability of $\partial \mathcal{B}_\nu$) that for any fixed $\theta$, one cannot find disjoint geodesic rays with direction $\theta$. However, there should exist (and in fact, it is provable if one assumes existence of geodesics in any deterministic direction) a random, dense, countably infinite set of $\theta$'s for which there are disjoint geodesic rays with direction $\theta$. Such a statement is proved in related exactly solvable last-passage percolation model by Coupier \cite{coupier}, along with the statement that almost surely, there exist no $\theta$ such that three geodesic rays from 0 have direction $\theta$. Although this last result does not yet have a counterpart in non-exactly solvable models, progress toward the first under limit shape assumptions was made in \cite{GRS}.
\end{remark}

The second theorem in this direction establishes that for all $\theta \in D$ any two geodesic rays with direction $\theta$ (starting from possibly different points) must coalesce. 

\begin{theorem}[Licea-Newman \cite{LN96}, Newman \cite{Newman}]\label{theorem:Newman2}
For $d=2$ and $\theta \in D$ with $F$ continuous, there is zero probability that there are disjoint geodesic rays in the direction $\theta$.
   \end{theorem}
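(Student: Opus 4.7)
The plan is to proceed by contradiction, combining Theorem~\ref{theorem:LN} (at most one $\theta$-directed geodesic ray from each vertex when $\theta\in D$) with $2d$ planarity and a translation/ergodicity argument. Suppose that with positive probability there exist two disjoint geodesic rays in direction $\theta$. The event ``there exist two disjoint $\theta$-directed geodesic rays somewhere in the configuration'' is invariant under all lattice translations, so by ergodicity of the i.i.d. edge-weight field under $\mathbb{Z}^2$-shifts, this event has probability one.

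For each vertex $x$ let $\gamma_x$ denote the (almost surely unique, when it exists) $\theta$-directed geodesic ray from $x$, and let $\mathcal{V}$ be the random set of vertices for which $\gamma_x$ exists. Declare $x\sim y$ iff $\gamma_x$ and $\gamma_y$ eventually coincide. Since each vertex has out-degree at most one in the union $\bigcup_x \gamma_x$, each $\sim$-class is organized as a directed in-forest whose rays all merge. Planarity now provides a crucial order structure: pick a line $L\subset\mathbb{R}^2$ transverse to $\theta$. Two distinct $\theta$-directed rays from $\mathcal{V}\cap L$ cannot cross infinitely often while having the same asymptotic direction, so if $\gamma_x$ and $\gamma_z$ coalesce and $y\in L$ lies between $x$ and $z$ with $\gamma_y$ defined, then $\gamma_y$ must coalesce with $\gamma_x$ and $\gamma_z$ as well (otherwise $\gamma_y$ would have to cross the common tail of $\gamma_x,\gamma_z$, contradicting their direction $\theta$). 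Consequently the partition of $\mathcal{V}\cap L$ induced by $\sim$ consists of consecutive order-blocks along $L$.

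The final step is a Burton--Keane style counting argument on $L$. Under translation along $L$, the random partition of $L\cap\mathcal{V}$ into coalescence blocks is stationary and ergodic. If with positive probability there is more than one block, then by ergodicity the density of block boundaries along $L$ is strictly positive. Each such boundary corresponds to a pair of adjacent $\theta$-directed rays $\gamma_x,\gamma_y$ that never coalesce. Translating one of these rays in a deterministic, measurable way produces a translation-invariant collection of pairwise disjoint infinite paths of strictly positive planar density. One then counts: in a box of side $N$, planarity forces the boundary rays to exit through the boundary, giving at most $O(N)$ of them, while positive density demands at least $cN^2$ of them---contradiction. Consequently there is only one coalescence class almost surely.

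The main obstacle is carrying out the planar counting step cleanly. The complications are (i) $\mathcal{V}$ is a random subset, so one cannot assume $\gamma_x$ exists at every vertex; (ii) one must verify that the relation $\sim$ and the partition on $L$ are measurable, and that a measurable selection of one ray from each boundary pair can be made without breaking translation invariance; and (iii) one must handle the possibility that $\theta$ is rational or axis-aligned, where planar arguments require extra care. The hypothesis $\theta\in D$ is essential: without it, the basic object $\gamma_x$ is not even well-defined at the level of uniqueness, and the entire order structure on $L$ collapses.
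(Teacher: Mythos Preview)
Your proposal has a genuine gap in the counting step. The claim ``if with positive probability there is more than one block, then by ergodicity the density of block boundaries along $L$ is strictly positive'' is false as stated. The number $N$ of coalescence classes is an almost-sure constant by ergodicity, but nothing prevents $N$ from being a fixed finite integer such as $2$. If $N=2$, your own planarity argument says the classes form two consecutive order-blocks along $L$, so there is a \emph{single} boundary point, and the density of boundaries is zero; the $cN^2$-versus-$O(N)$ contradiction never materializes. More generally, a global probability-one event (``at least two disjoint $\theta$-rays exist somewhere'') does not by itself yield a \emph{local}, positive-probability event whose translates are guaranteed to mark pairwise distinct trees. Your sentence ``translating one of these rays \ldots\ produces a translation-invariant collection of pairwise disjoint infinite paths of strictly positive planar density'' is exactly the unjustified leap: translates of a single chosen ray may all lie in the same tree.

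This is not a technicality; it is precisely the difficulty the paper's proof is built to overcome. The paper first boosts $N\ge 2$ to $N\ge 3$ by sliding along the $e_2$-axis and invoking planarity. It then performs a \emph{local modification}: on the event that three disjoint $\theta$-rays emanate from $(0,y_1),(0,y_2),(0,y_3)$ into the right half-plane, it raises the weights on a finite barrier of edges so that the tree containing the middle ray is trapped between the two outer rays and the barrier, and hence cannot reach the left half-plane. This produces a local event $F$ of positive probability with the key property that occurrences of $F$ at well-separated translates correspond to \emph{distinct} trees (each middle tree is confined by its own outer rays). Only now does the counting work: $\mathbb{E}[\#\{F\text{-translates in }[0,M]^2\}]\asymp M^2$, while the number of distinct infinite trees touching the box is $O(M)$. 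Your Burton--Keane sketch has no analogue of this barrier/trapping step, and without it the argument does not close.
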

   
The proof of the above theorem is two-dimensional. It is unclear if the result holds for $d>2$. 

\begin{question}
Decide whether Theorem \ref{theorem:Newman2} holds in arbitrary dimension.
\end{question}

Ending this subsection, we quote an apparently still open question raised in the end of the original paper of Hammersley and Welsh.

\begin{question}
Let $\Gamma(v)$ be a geodesic from $0$ to $v$. Consider the collection of all edges that belong to an infinite number of $\Gamma(v)$ and let $f(r)$ be the number of such edges which intersect the circle $x^2 + y^2 = r^2$. Does $f(r) \to \infty$ as $r \to \infty$; and, if so, how fast? 
\end{question}

\begin{proof}[Sketch of the proof of Theorem \ref{theorem:Newman2}]
We sketch the proof in the case where the weights are unbounded. The bounded weight case requires a little modification of the arguments, and the details can be found in the original paper \cite[Theorem 1]{LN96}. 

Consider all geodesic rays that have asymptotic direction $\theta\in D$. Construct the geodesic graph $G$ composed of the union of all these geodesic rays. By Theorem \ref{theorem:LN}, if two such rays meet, they must coalesce. Thus, the geodesic graph must be a forest with $N\geq 1$ distinct trees. The proof in \cite{LN96} has $3$ steps.
\vspace{0.1cm}

\textsc{Step 1}. \textit{ If $\Pro(N\geq 2) >0$ then $\Pro(N \geq 3)>0$}. 
\vspace{0.1cm}

\noindent To see this, without loss of generality, assume that the direction $\theta$ has positive $e_{1}$-coordinate. Also suppose that the two different geodesics touch the $e_{2}$ axis at two  points $y_1 \neq y_2$ and are contained in the half-plane $x \geq 0$. By vertical translation invariance, it is possible to find with positive probability two different points $y_3$, $y_4$ that also have two such disjoint geodesic rays. By planarity, at least three of these rays must be distinct (see figure). 
In fact, if $\Pro(N\geq 2) >0$ then there exist $y_1<y_2<y_3$ in $\mathbb Z$ such that $\mathbb{P}(E(y_1,y_2,y_3))>0$, where $E(y_1,y_2,y_3)$ is the event that there are three disjoint geodesic rays with direction $\theta$ starting from $(0,y_i)$ and contained in the half-plane $\{(x,y) \in \mathbb Z^2: x>0\}$ except their initial points.

\begin{figure}[htbp]
\begin{center}
\def\svgwidth{12cm}
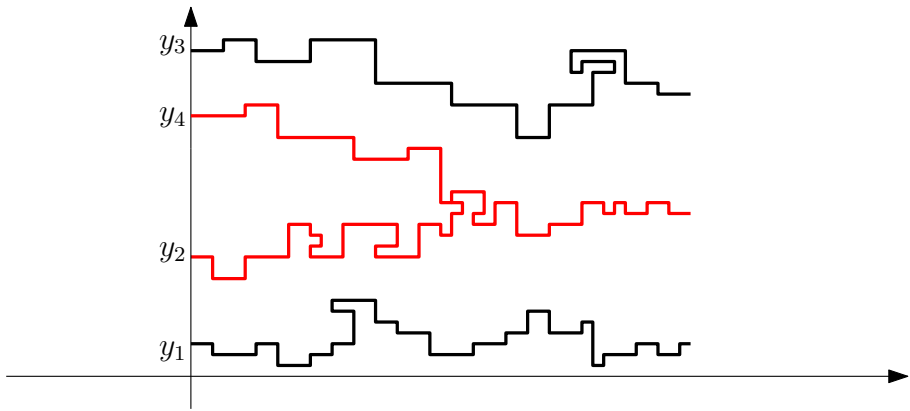
\caption{Construction of the three disjoint geodesics starting from $y_{1},y_{2},y_{3}.$ The geodesic starting from $y_{4}$ may intersect the geodesic emanating from $y_{2}$ but all others are disjoint.}
\label{fig:32}
\end{center}
\end{figure}

Let $F$ be the subevent of $E(y_1,y_2,y_3)$ on which the tree in $G$ containing the ``middle'' point $y_2$ does not intersect the half-plane $\{(x,y) \in \mathbb Z^2: x< 0\}$. The second step of the proof is the following. 

\vspace{0.1cm}

\textsc{Step 2}. If $\Pro(N\geq 3) >0$ then $\Pro(F)>0.$ 

\vspace{0.1cm}
\noindent By the arguments in step 1, with positive probability there are three disjoint geodesic rays $r_1, r_2, r_3$ starting respectively from the points $(0,y_1), (0,y_2), (0,y_3) \in L$. We now set $y^* = \max\{|y_1|,|y_3|\}$ and increase the values of the passage times on edges between $u=(-1,y)$ and $v=(0,y)$, $-y^* \leq y \leq y^*$. If we increase this finite collection of passage times enough, we create a barrier through which no geodesic can pass. (Specifically, if $\mathcal{F}$ is this set of edges, one needs that for all $e \in \mathcal{F}$, $\tau_e$ is greater than $T_e(x,y)$, where $x,y$ are the endpoints of $e$, and $T_e$ is the passage time among all paths which do not use $e$.) Note that, by our construction, this change also does not alter the $r_i's$. Furthermore, as we have unbounded passage times, one can construct such a  barrier with positive probability. Now, since there is zero probability that two geodesic rays with direction $\theta$  meet and do not coalesce, this barrier attached to $L$ together with $r_1$ and $r_3$ implies that the tree that contains $r_2$ is the desired tree.

\vspace{0.1cm}

\textsc{Step 3}. $\Pro(F)>0$ is impossible. 
\vspace{0.1cm}

Consider all possible translates $L^u$ of $L$ by points of the form $3uy^*$ for $u \in \mathbb Z^2$ and the corresponding translated events $F^u$ of $F$. If $F^u$ and $F^v$ both occur, then, by definition, the corresponding ``middle'' geodesics must be disjoint, as they are blocked from each other by the ``outer'' geodesics constructed in the event $F$. Now, let $C_M$ be the number of $L^u$'s contained in the box $[0,M] \times [0,M]$ and $N_M$ be the random number of the corresponding $F^u$'s which occur. By translation invariance, and the assumption that $\Pro(F)>0$, 
\begin{equation}\label{eq:sec4.22}
\E N_M = C_M \Pro(F) \geq cM^{2},
\end{equation}
for some $c>0$. But $N_M$ is bounded above by the number of disjoint trees that touch $[0,M] \times [0,M]$. Since each tree is infinite, this number cannot exceed the number of boundary points in that box, which is at most  $c'M$ for some $c'$. As $c'M < cM^2$ for $M$ large, this contradicts \eqref{eq:sec4.22}, showing that $\Pro(F)=0$.
\end{proof}

 \subsection{Absence of geodesic lines and connection to the two-dimensional Ising ferromagnet }
\label{sec:bigeodesics}

A final (and one of the most) important question in the study of infinite geodesics is the following:
\begin{question}\label{q:bi}
Do geodesic lines exist?
\end{question}

\subsubsection{Heuristic argument}
We start with a heuristic argument that there should (a.s.) exist no geodesic lines (bigeodesics), at least in dimensions $d$ where $\xi > 1/2$. In particular, the argument gives a plausible reason to disbelieve in bigeodesics in two dimensions. It is important to note that this argument is far from rigorous even under usual unproven assumptions (e.g. curvature) on the model.

Indeed, a main feature of the argument is that it uses various meanings of $\xi$ interchangably. For instance, it is assumed that the geodesics $\Gamma(0, n e_1)$ and $\Gamma(m^\xi, n e_1)$ ($m \ll n$) merge after a distance of order $m$, in addition to $\Gamma(0, n e_1)$ having transversal wandering typically of order $n^\xi$.

This argument was presented by C. Newman at the workshop ``First-passage percolation and related models'' at AIM in Summer, 2015.

\bigskip
\noindent
{\bf Setup.} Consider the box $[-N, N]^d$ for $N$ large; we denote the boundary of this box by the symbol $\partial_N$. Assume that there were positive probability that a bigeodesic existed (and hence that this event had probability one). Then this bigeodesic would have to pass through the origin, at least with positive probability.

Such a bigeodesic must pass through a vertex of $\partial_N$ before passing through $0$ and another vertex of $\partial_N$ afterwards. In particular, the event that a geodesic between a pair of vertices of $\partial_N$ passes through $0$ must have uniformly (in $N$) positive probability. To rule out bigeodesics, it therefore suffices to show
\begin{equation}
  \label{eq:ruleoutbig}
  \lim_{N \rightarrow \infty}\Pro\big(\exists \; x, y \in \partial_N \text{ such that } 0 \in \Gamma(x,y)\big) = 0\ .
\end{equation}
The key idea to prove \eqref{eq:ruleoutbig} is to use the wandering property of individual geodesics to show that they are likely to avoid $0$.

Unfortunately, using a union bound on the events $\{0 \in \Gamma(x,y)\}_{x, y \in \partial_N}$ is unlikely to give \eqref{eq:ruleoutbig}, since there are many such pairs $(x,y)$. To overcome this difficulty, we will group geodesics together, tiling $\partial_N$ with $(d-1)$-dimensional cubes $B_j$ of side length $\sim N^\xi$. For a particular cube $B_j$, let $B_{-j}$ denote the corresponding cube on the opposite side of $\partial_N$.

\bigskip
\noindent
{\bf Reducing to opposite blocks.}
We will need to make assumptions on the structure of geodesics between vertices in blocks $B_i, B_j$. The first is that we can neglect the contribution of the probability
\[\mathbb{P}\left(\exists x \in B_i, y \in B_j \text{ for some } i \neq -j \text{ such that } 0 \in \Gamma(x,y)\right) \]
to \eqref{eq:ruleoutbig}.

This is a not unreasonable assumption, since the straight line segment between such $x$ and $y$ will typically lie distance $\gg \|x-y\|^\xi$ from $0$ (perhaps requiring the diameter of $B_j$ to be at least $N^{\xi + \varepsilon}$). This of course is likely to necessitate a strong concentration bound on the variable
\[D(0, ne_1) / n^\xi \,  \]
where $D$ was defined above \eqref{eq:kestenwander}. In any case, once such a result is established, it allows us to reduce the problem of showing \eqref{eq:ruleoutbig} to showing
\begin{equation}
  \label{eq:ruleoutbig2}
\lim_{N \rightarrow \infty}\Pro\big(\exists j, \, x \in B_j, \, y \in B_{-j} \text{ such that } 0 \in \Gamma(x,y)\big) = 0\ .
\end{equation}

\bigskip
\noindent
{\bf Reducing within blocks.}
We have (at least assuming $\xi < 1$) succeeded in reducing the number of pairs of geodesics under consideration by a power by reducing to opposite pairs of blocks. Since geodesics emanating from vertices within the same $B_j$ are likely to be highly correlated, we can reduce further by grouping together such geodesics. In particular, we will assume that all geodesics originating in one block $B_j$ merge before coming close to the origin.

For each block $B_j$, choose a particular vertex $x_j$ near the center of $B_j$. As discussed earlier, it is likely that there is ``geodesic merging on scale $\xi$''. In particular, for any $y \in B_j$, $y' \in B_{-j}$, it is plausible that the event $\{0 \in \Gamma(y, y')\}$ has extremely small symmetric difference with the event $\{0 \in \Gamma(x_j, x_{-j})\}$. Thus, for fixed $j$ we can approximate
\begin{equation}
  \label{eq:blockapprock}
  \Pro\big(\exists\, x \in B_j, \, y \in B_{-j} \text{ such that } 0 \in \Gamma(x,y)\big) \approx \mathbb{P}\big(0 \in \Gamma(x_j, x_{-j}) \big)\ .
\end{equation}

With this approximation established, we have reduced the problem sufficiently to take a union bound:
\begin{align}
\label{eq:ruleoutbig3}
\eqref{eq:ruleoutbig2} \leq \mathbb{P}\big(\exists\, j \text{ such that } 0 \in \Gamma(x_j, x_{-j}) \big) \leq \#(\text{boxes } B_j)\  \mathbb{P}\big(0 \in \Gamma(x_1, x_{-1})\big) \ . 
\end{align}

\bigskip
\noindent
{\bf Individual geodesic wandering.}
We now need to control the probability $\mathbb{P}(0 \in \Gamma(x_1, x_{-1}))$, which will require a further assumption on geodesic wandering. In some sense, what is required here is a lower bound on fluctuations, that $\Gamma(x_1, x_{-1})$ is ``spread out'' on scale $N^\xi$ far from its endpoints. 

To be more precise, if $L$ is the line between $x_1$ and $x_{-1}$ and $H$ denotes the hyperplane passing through $0$ to which $L$ is normal, we assume that $\Gamma(x_1, x_{-1})$ passes through $\sim 1$ vertices of $H$ and every vertex in a block of diameter $N^\xi$ is equally likely to be passed through. In particular,
\[\mathbb{P}(0 \in \Gamma(x_1, x_{-1})) \sim N^{-\xi(d-1)}\ . \]

With this information, we can conclude by bounding the right-hand side of \eqref{eq:ruleoutbig3}. Note that the number of boxes $B_i$ is of the order of
\[\frac{N^{d-1}}{N^{\xi(d-1)}} = N^{(1-\xi)(d-1)}\ , \]
giving
\begin{align*}
\eqref{eq:ruleoutbig3} \lessapprox \frac{N^{(1-\xi)(d-1)}}{N^{\xi(d-1)}} =  \big(N^{1 - 2 \xi} \big)^{d-1}\ .
\end{align*}
In particular, if $\xi > 1/2$, then \eqref{eq:ruleoutbig} holds, giving a.s. absence of bigeodesics.

\subsubsection{Rigorous results}
In this section, we move to rigorous results, outlining the work of Licea-Newman and Wehr from the '90s. In Section~\ref{sec: bigeo_new}, we give the most recent results, ruling out bigeodesics in any deterministic direction under assumptions on the limit shape boundary. 

The first partial answer to the bigeodesic question comes from Theorem \ref{LN96theorem}. We say that a geodesic line has asymptotic directions $(\theta_1,\theta_2)$ if its two rays have directions $\theta_1$, $\theta_2$.

\begin{theorem}[Licea-Newman \cite{LN96}]\label{LN96theorem}
For $d=2$, $\theta_1, \theta_2 \in D$, and $F$ continuous, there is zero probability that there exists a geodesic line that has directions $(\theta_1, \theta_2)$.
\end{theorem}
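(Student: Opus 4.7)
The plan is to split into the two cases $\theta_1=\theta_2$ and $\theta_1\neq\theta_2$ and derive a contradiction in each from the uniqueness-of-rays and coalescence theorems already established, namely Theorems \ref{theorem:LN} and \ref{theorem:Newman2}.

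For $\theta_1=\theta_2=\theta$, first I would observe that if $B=(\ldots,v_{-1},v_0,v_1,\ldots)$ is a bigeodesic with both asymptotic directions equal to $\theta$, then the forward half $(v_0,v_1,v_2,\ldots)$ and the reversed backward half $(v_0,v_{-1},v_{-2},\ldots)$ are two self-avoiding geodesic rays emanating from $v_0$, each with asymptotic direction $\theta$; they are distinct since their first edges differ. Because $\theta\in D$, Theorem \ref{theorem:LN} asserts that this event has probability zero for each fixed $v_0\in\mathbb{Z}^2$, and a countable union over $v_0$ closes this case.

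For $\theta_1\neq\theta_2$, I would mimic the three-step scheme used in the proof of Theorem \ref{theorem:Newman2}. Theorem \ref{theorem:Newman2}, applied separately to $\theta_1$ and $\theta_2$, gives almost sure coalescence of all rays in each direction, so a bigeodesic through $v$ is forced to be the concatenation of the a.s.\ unique $\theta_1$-ray and $\theta_2$-ray from $v$, oppositely oriented. Suppose for contradiction that the event $\mathcal E$ that such a bigeodesic exists has positive probability; by translation invariance, the probability that a prescribed vertex lies on such a bigeodesic is some constant $p>0$. Step one: vertical translation invariance combined with a planarity/crossing argument, paralleling Step 1 in the proof of Theorem \ref{theorem:Newman2}, upgrades this to positive probability that three disjoint bigeodesics of this type cross a fixed vertical line $L$ at three prescribed points. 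Step two: perturb finitely many edge weights on a segment of $L$ upward to erect a barrier that no geodesic can cross, thereby trapping the middle bigeodesic between the two outer ones inside a half-plane and producing a positive-probability event $F$ (the unbounded-weight case is immediate, the bounded-weight case is handled by the refinement used in \cite{LN96}). Step three: the translates of $F$ along the lattice produce order $M^2$ occurrences inside an $M\times M$ box in expectation, each contributing a distinct trapped bi-infinite path; but the number of pairwise disjoint bigeodesics crossing an $M\times M$ box is bounded above by its perimeter, which is $O(M)$. This contradicts $\mathbb{P}(F)>0$ and forces $\mathbb{P}(\mathcal E)=0$.

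The hard part will be making step three fully rigorous for bigeodesics rather than for ray-based trees as in Theorem \ref{theorem:Newman2}. In particular, I would have to verify that distinct translates $F^u$ really do yield pairwise disjoint middle bigeodesics (not merely disjoint triples of outer/middle/outer objects), and that the perimeter bound genuinely applies; both rely on planarity together with the hypothesis $\theta_1\neq\theta_2$, which forces the middle bigeodesic to exit every half-strip on both asymptotic ends and prevents two trapped middle bigeodesics from sharing a vertex without one being confined to a strip that it cannot escape in a legal direction. A secondary subtlety is the continuous-distribution assumption, which ensures uniqueness of geodesics used throughout and, in the bounded case, validates the barrier step via a conditioning-plus-continuity argument as in \cite{LN96}.
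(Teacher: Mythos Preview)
Your $\theta_1=\theta_2$ case is fine. The $\theta_1\neq\theta_2$ case, however, has a real gap at Step one, and the gap is created by the very tools you invoke. You correctly use Theorem~\ref{theorem:Newman2} to say that all $\theta_1$-rays coalesce and all $\theta_2$-rays coalesce, and Theorem~\ref{theorem:LN} to say the bigeodesic through any $v$ is the unique $\theta_1$-ray glued to the unique $\theta_2$-ray. But put these together: if $B$ and $B'$ are two $(\theta_1,\theta_2)$-bigeodesics, their $\theta_1$-ends coalesce and their $\theta_2$-ends coalesce, so by unique geodesics (continuous $F$) they are identical. Hence there is almost surely \emph{at most one} such bigeodesic, and your ``upgrade to three disjoint bigeodesics'' is impossible. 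Note also that the Step~1 mechanism in Theorem~\ref{theorem:Newman2} bootstraps from two disjoint objects to three via a planarity argument; it does not go from one to two, so even without noticing the coalescence obstruction, the parallel you propose does not get off the ground.

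The paper's proof exploits exactly the observation you already have but then goes in a much shorter direction. From coalescence plus unique geodesics one gets at most one $(\theta_1,\theta_2)$-bigeodesic; the event $E$ that it exists is translation-invariant, so $\mathbb{P}(E)\in\{0,1\}$. If $\mathbb{P}(E)=1$, pick $L$ so the bigeodesic hits $B_L=[0,L]^2$ with probability $\geq 2/3$, and pick $z_n\to\infty$ in a direction $\alpha\notin\{\theta_1,\theta_2\}$. Since both ends of the unique bigeodesic are directed in $\{\theta_1,\theta_2\}$, it can meet only finitely many of the translates $z_n+B_L$, forcing $\mathbb{P}(E_{B_L}\cap E_{z_n+B_L})\to 0$; but translation invariance gives $\mathbb{P}(E_{B_L}\cap E_{z_n+B_L})\geq 1-2\cdot(1/3)=1/3$, a contradiction. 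So the Burton--Keane machinery is not needed here; once uniqueness of the bigeodesic is in hand, a two-line ``one path cannot be everywhere'' argument finishes it.
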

It turns out that the proof of Theorem \ref{LN96theorem} is not difficult once we have Theorem \ref{theorem:Newman2}.
\begin{proof}[Proof of Theorem \ref{LN96theorem}] If there are two distinct geodesic lines with directions $(\theta_1,\theta_2)$, then by Theorem \ref{theorem:Newman2} they must coalesce in both directions. This cannot happen, as we assumed the model is uniquely geodesic. Thus, there must be at most one geodesic line with directions $(\theta_1,\theta_2)$. Let $E$ be the event that there exists a unique geodesic line with directions  $(\theta_1,\theta_2)$. By the ergodic theorem, the probability of the event $E$ is either $0$ or $1$. 

Suppose $\Pro(E) = 1$. Then it is possible to find $L$ large enough such that the geodesic line intersects the box $B_L:=[0,L]\times[0,L]$ with probability at least $2/3$. Choosing a sequence of points $z_n \in \mathbb Z^2$ with $z_n/|z_n| \to \alpha \neq \theta_1$ or $\theta_2$, the geodesic line passes through only finitely many  shifted boxes $z_n + B_L$. Letting $E_S$ be the event that the geodesic line intersects the set $S \subseteq \mathbb Z^2$, this implies 
$$ \Pro( E_{B_L} \cap E_{z_n + B_L} ) \to 0 \text{ as } n\to \infty.$$
On the other hand, by translation invariance, 
$$\Pro( E_{B_L} \cap E_{z_n + B_L} ) \geq 1 - \Pro(E_{B_L}^c) - \Pro(E_{z_n + B_L}^c) \geq 1/3,$$
a contradiction.
\end{proof}

We note that it is possible to replace this proof with one that proceeds along the lines of Wehr's proof that the number of bigeodesics in all of $\mathbb{Z}^d$ must be zero or infinity:
\begin{theorem}[Wehr \cite{Wehr}]\label{thm: wehr_one_infty}
Assume $F$ is continuous, $d \geq 2$, and $\tau_e$ has finite mean. The number $\mathcal{N}$ of bigeodesics in a configuration $(\tau_e)$ is almost surely constant, and $\mathcal{N} \in \{0,\infty\}$.
\end{theorem}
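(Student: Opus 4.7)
My plan is to split the argument into two steps: (i) use ergodicity of the i.i.d.\ shift action to show $\mathcal{N}$ is almost surely constant; (ii) rule out the finite nonzero values $1\leq k<\infty$ via a density argument based on the linear length bound for self-avoiding geodesics. For (i), $\mathcal{N}$ is invariant under the $\mathbb{Z}^d$-translation action on the configuration space (each shift maps bigeodesics bijectively onto bigeodesics), and since $(\tau_e)$ is i.i.d.\ every non-trivial shift acts ergodically; hence $\mathcal{N}=k$ a.s.\ for some $k\in\{0,1,2,\ldots\}\cup\{\infty\}$.

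For (ii), suppose for contradiction $1\leq k<\infty$ a.s.\ and label the bigeodesics $\Pi_1,\ldots,\Pi_k$. Let $p:=\mathbb{P}(0\in\bigcup_j\Pi_j)$; translation invariance gives $\mathbb{P}(v\in\bigcup_j\Pi_j)=p$ for every $v\in\mathbb{Z}^d$. Setting $B_n=[-n,n]^d$ and $Y_n=|\{v\in B_n:\,v\in\bigcup_j\Pi_j\}|$, the multidimensional pointwise ergodic theorem applied to the shift-equivariant indicator $v\mapsto \mathbf{1}\{v\in\bigcup_j\Pi_j\}$ yields $Y_n/(2n+1)^d\to p$ almost surely. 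It therefore suffices to show $Y_n=O(n)$ almost surely, since for $d\geq 2$ this forces $p=0$, and then a countable union bound over $\mathbb{Z}^d$ shows that a.s.\ no vertex lies on any bigeodesic, contradicting $\mathcal{N}\geq 1$.

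To establish $Y_n=O(n)$, note that continuity of $F$ gives $F(0)=0<p_c(d)$, enabling Kesten's long-geodesic estimate \cite[Prop.~5.8]{KestenAspects}: there exist $a,C_9>0$ such that for any $v_0\in\mathbb{Z}^d$ and $m\geq 1$, the probability that some self-avoiding path starting at $v_0$ has $\#\gamma\geq m$ and $T(\gamma)<am$ is at most $e^{-C_9 m}$. A union bound over the $(2n+1)^d$ starting points in $B_n$ combined with Borel--Cantelli (using the summability of $(2n+1)^d e^{-C_9 n}$ in $n$) shows that almost surely, for $n$ large, every self-avoiding path starting in $B_n$ with at least $n$ edges has passage time at least $a$ times its length. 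Combined with an a.s.\ upper bound $\sup_{u\in B_n}T(0,u)\leq Cn$ for $n$ large (following from the shape theorem under the finite-mean hypothesis), one proceeds as follows: for each $\Pi_j$ meeting $B_n$, let $v_{-,j},v_{+,j}$ denote its first entry and last exit vertices; the sub-path of $\Pi_j$ between them is a self-avoiding geodesic starting in $B_n$ that contains every vertex of $\Pi_j\cap B_n$, and its length is therefore at most $n+T(v_{-,j},v_{+,j})/a\leq n+2Cn/a$. Summing over the $k$ bigeodesics yields $Y_n\leq k(1+2C/a)n$ a.s.\ for $n$ large, which combined with the ergodic-theorem limit forces $p=0$ and closes the argument.

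The main technical obstacle is the uniform linear bound $\sup_{u\in B_n}T(0,u)\leq Cn$ in the shape-theorem step: the Cox--Durrett moment condition $\mathbb{E}\min\{t_1^d,\ldots,t_{2d}^d\}<\infty$ is formally stronger than $\mathbb{E}[\tau_e]<\infty$ in high dimensions, so one must either verify it from Wehr's hypotheses (possibly invoking a mild additional moment) or bypass it by deriving the bound $T(v_{-,j},v_{+,j})=O(n)$ directly from the subadditive estimate $\mathbb{E}T(0,u)=O(\|u\|_1)$ together with Kesten's Prop.\ 5.8. The combinatorial half of the argument -- converting the $e^{-C_9 m}$ tail into an a.s.\ length bound via Borel--Cantelli -- is what gives the summability needed to handle all starting points in $B_n$ simultaneously and is the key mechanism behind the contradiction.
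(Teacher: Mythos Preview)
Your argument is correct and is essentially the same density-versus-length contradiction sketched in the paper: translation invariance forces a positive density of bigeodesic vertices in large boxes if $1\leq k<\infty$, while Kesten's Proposition~5.8 together with the $O(n)$ upper bound on passage times across $B_n$ caps each bigeodesic's intersection with $B_n$ at $O(n)$ vertices, yielding $p=0$ (the paper phrases the same contradiction in the other direction: positive density $\Rightarrow$ passage time of order $n^d$ along a segment whose endpoints lie at distance $O(n)$). Your flagged ``technical obstacle'' is not one: Markov's inequality gives $\mathbb{P}(\tau_e>s)\leq \mathbb{E}[\tau_e]/s$, so $\mathbb{E}\min\{t_1,\ldots,t_{2d}\}^d=\int_0^\infty d\,s^{d-1}(1-F(s))^{2d}\,ds$ is dominated near infinity by $\int_1^\infty s^{d-1}s^{-2d}\,ds<\infty$, and hence $\mathbb{E}[\tau_e]<\infty$ already yields the Cox--Durrett moment condition and the shape-theorem bound $\sup_{u\in B_n}T(0,u)\leq Cn$ you need.
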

The idea of the proof is that if there are only finitely many bigeodesics, then by translation invariance, in a large box, at least one of the bigeodesics must take a positive density of edges. So the passage time along this bigeodesic from its first entry to its last exit of the box must be of the order of the volume of the box. This contradicts the fact that the Euclidean distance between these two points is of the order of the length of the box, and so the passage time between them is also of this same order, with high probability. This theorem has been extended to some non-i.i.d. times by Boivin-Derrien \cite{BoivinDerrien}. In that paper, they also construct non-i.i.d. but translation invariant distributions on $(\tau_e)$ for which there exist bigeodesics.

A positive answer to Question \ref{question:setD}, combined with the above result, would exclude the possibility of geodesic lines with deterministic asymptotic directions. It would not solve Question \ref{q:bi}, as one could still have geodesic lines in random directions.  

Question \ref{q:bi} was answered negatively in the upper half-plane $\mathcal H$ of $\Z^2$ under passage times with finite mean and continuous distribution by Wehr and Woo \cite{WW98}. 
\begin{theorem}[Theorem 1 \cite{WW98}]\label{WWTheorem} Assume $F$ is continuous and $\tau_e$ has finite mean. With probability $1$, there exist no geodesic lines in $\mathcal H$.
\end{theorem}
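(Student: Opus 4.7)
I propose to argue by contradiction, adapting the strategy of Wehr's theorem (Theorem~\ref{thm: wehr_one_infty}) to exploit the single direction of translation symmetry available in $\mathcal{H}$. Suppose $\mathbb{P}(\exists \text{ a bigeodesic in } \mathcal{H}) > 0$. This event is invariant under the horizontal shifts $\tau_k : (\tau_e) \mapsto (\tau_{e + k e_1})$, which act ergodically on $(\Omega, \mathbb{P})$, so in fact it has probability one. Continuity of $F$ ensures that geodesics between any two vertices of $\mathcal{H}$ are almost surely unique, so distinct bigeodesics share only finitely many edges.

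The first substantive step is to rule out bigeodesics confined to a vertical strip of finite width. If $\gamma$ has all vertices in $S_{a,b} := \{a, a+1, \ldots, b\} \times \mathbb{Z}_{\geq 0}$, then both ends of $\gamma$ must escape to $y = +\infty$. Choosing two vertices $u,v$ of $\gamma$ at a common large height $y$ but separated by many edges along $\gamma$, the portion of $\gamma$ between $u$ and $v$ must pass through heights $\geq y$ and then return; by the shape theorem (Theorem~\ref{thm:limitshape}) its passage time is bounded below by roughly $2y\mu(e_2) + o(y)$. On the other hand, a nearly straight horizontal path in $S_{a,b}$ between $u$ and $v$ costs at most $(b-a)\mu(e_1) + o(b-a)$, which is strictly smaller once $y \gg b-a$ (using positivity of $\mu$ from Theorem~\ref{thm:percolate}). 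This contradicts the geodesic property, so every bigeodesic must have unbounded abscissa in at least one direction, and then, reapplying the argument with horizontal/vertical roles exchanged near the boundary, one concludes every bigeodesic crosses every vertical line $V_k := \{k\} \times \mathbb{Z}_{\geq 0}$. Let $M_k$ denote the number of bigeodesics crossing $V_k$; by translation invariance along $e_1$, the sequence $(M_k)$ is stationary with common expectation $\rho := \mathbb{E} M_0 \in [0,\infty]$.

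The final step is a Wehr-style edge-counting contradiction inside $B_N := [-N,N] \times [0, N]$. On one hand, by stationarity, the expected number of bigeodesic-edges in $B_N$ is at least $\rho \cdot (2N+1)$ if we only count one edge per column per bigeodesic, and in fact is $\Theta(\rho N^{1+\alpha})$ for some $\alpha > 0$ if one shows that a bigeodesic crossing $V_k$ must occupy a positive fraction of edges in a neighborhood of its crossing (using unique geodesics and a no-coalescence count to avoid overcounting). On the other hand, each bigeodesic restricted to $B_N$ is a geodesic between two points of $\partial B_N$, whose passage time is $O(N)$ in expectation by Theorem~\ref{thm:limitshape}; combined with the greedy lattice animal lower bound on passage time per edge from \cite[Prop.~5.8]{KestenAspects}, this caps the total bigeodesic-edge count in $B_N$ by a constant multiple of $\mathbb{E}[M_0] \cdot N$ (the constant coming from the finite mean of $\tau_e$). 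Matching both estimates across the range of $N$ forces $\rho = 0$, contradicting the a.s. existence of a bigeodesic. The main obstacle I foresee is the strip-confinement step: without curvature of $\mathcal{B}_\nu$ one cannot invoke Newman's directional result (Theorem~\ref{theorem:Newman}) to assign asymptotic directions to the two rays of a bigeodesic, so the horizontal-versus-vertical comparison must be executed using only the shape theorem and (if needed) the concentration bounds of Section~\ref{sec:variancebounds} to absorb the sublinear fluctuations uniformly across the strip.
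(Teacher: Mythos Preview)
Your proposal has a genuine gap in the final counting step, and it is not where you anticipate. The crux is the lower bound $\Theta(\rho N^{1+\alpha})$ with $\alpha>0$ for the expected number of bigeodesic-edges in $B_N$, which you leave conditional on an unproved claim. A bigeodesic that crosses every $V_k$ can perfectly well contribute only $O(N)$ edges to $B_N$ (picture a near-horizontal path at some fixed height), so ``one edge per column per bigeodesic'' gives exactly $\rho\cdot(2N+1)$ and nothing forces more. Wehr's argument in the full plane obtains a positive \emph{area} density because the set of bigeodesics is invariant under the full $\mathbb{Z}^2$ action, so a single bigeodesic together with its translates fills a positive fraction of the box. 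In $\mathcal{H}$ you have only horizontal shifts, and a horizontally invariant family of paths can have zero two-dimensional density. Without a mechanism to force vertical wandering, your lower and upper bounds are both $\Theta(\rho N)$ and no contradiction arises. There is also a secondary gap earlier: ruling out confinement to a vertical strip shows only that the $x$-coordinates are unbounded in at least one direction, and ``exchanging horizontal and vertical roles'' is unavailable in $\mathcal{H}$, so you have not established that every bigeodesic crosses every $V_k$.

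The paper's argument is structurally different and exploits the half-plane boundary rather than trying to imitate the full-plane density count. The key object is a \emph{lowest} bigeodesic $\Gamma_0$, obtained as the monotone limit of the finite geodesics from $(-n,0)$ to $(n,0)$; any bigeodesic must lie in $R^-(\Gamma_0)\cup\Gamma_0$. One then tiles a large square by disjoint translates $B_{i,j}$ of a box $B$ and shows $\Gamma_0$ hits each $B_{i,j}$ with probability at least $1-2\delta$. The mechanism is \emph{reflection} (invariance of the law under $180^\circ$ rotation), not translation: if $B_{i,j}\subset R^+(\Gamma_0)$, then since the excursions of $\Gamma_0$ below the relevant height are bounded, no bigeodesic in the reflected half-plane with ceiling through the top of $B_{i,j}$ can touch $B_{i,j}$, an event of probability $\delta$; similarly $B_{i,j}\subset R^-(\Gamma_0)$ traps $B_{i,j}$ away from any bigeodesic in the shifted half-plane with floor through its bottom. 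Once $\Gamma_0$ is forced, with probability $\geq 1/2$, to visit $k^2$ disjoint boxes inside a region of $\ell^1$-diameter $O(kL)$, pick the first and last visited points along $\Gamma_0$: the sub-geodesic between them has at least $k^2L/4$ edges but connects two points at distance $\leq 2kL$, contradicting Theorem~\ref{thm: new_geo_length_bound} for $k$ large and $L\gg l$. In short, the super-linear edge count you need is manufactured not by a density argument over all bigeodesics but by singling out $\Gamma_0$ and using reflection symmetry to pin it to every box.
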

We outline the proof of Theorem \ref{WWTheorem} at the end of this section. The assumption of finite mean was relaxed in \cite{ADH1}, where one just requires uniqueness of finite geodesics. That work also included a range of non-i.i.d. times.

Before sketching Wehr-Woo's argument, let us stress that the importance of determining non-existence of geodesic lines goes beyond the field of FPP. Apparently, this problem was first posed by H. Furstenberg in a different context. As far as we know, the first mathematical reference is in \cite{KestenAspects}. Question \ref{q:bi} is equivalent to the problem of existence of nontrivial (that is, non-constant) ground states in the two-dimensional Ising ferromagnet with random exchange constants. Ground states of higher-dimensional random ferromagnets are similarly related to hypersurfaces with minimal random weights \cite{KestemMaxflow, Wehr}. 

More precisely, consider the lattice dual to $\Z^2$, defined by 
$$ (\Z_*^2, \mathcal E_*^2) = (\mathbb Z^2, \mathcal E^2) + \frac{1}{2} (e_1 + e_2),$$
where $\mathcal{E}^2$ is the set of nearest-neighbor edges of $\mathbb{Z}^2$, and define a spin configuration as an element $\sigma$ of $\{\pm1\}^{\Z^2_*}$. Let $(J_{x,y})_{(x,y) \in \mathcal E_*^2}$ be a collection of i.i.d. positive random variables. For any configuration and any finite set $S\subset  \Z_*^2$ define the (random) energy 
$$H_S(\sigma) = - \sum_{\stackrel{\{x,y\} \in \mathcal E_*^2}{x\in S}} J_{x,y} \sigma_x \sigma_y.$$

We will call $\sigma$ a ground-state for $(J_{x,y})$ if, for each configuration $\rho$ such that $\rho_x = \sigma_x$ for all $x$ outside some finite set, we have
$$H_S(\sigma) \leq H_S(\rho) \text{ for all finite } S\subset  \Z_*^2. $$
It is an open problem to describe the set of ground states for this ferromagnetic model. In particular, it is not known how many ground states there are for a given $(J_{x,y})$, although it is conjectured (see, for instance, the discussion in \cite[Chapter 1]{Newmanbook}) that, if the law of $J$ is continuous, there should be only two almost surely.

These two are the constant configurations $\sigma = \pm1$, which are clearly almost surely ground states. If any non-constant ground states $\sigma$ exist, they cannot have finite regions of disagreement; that is, there can be no finite $S$ such that $\sigma_x =+1$ for all $x \in S$ and $\sigma_y =-1$ for all $y \in \partial S$, or vice-versa. Therefore, any non-constant ground state must have a two-sided (and circuitless) infinite (original lattice) path of edges dual to edges $\{x,y\}$ such that $\sigma_x =-\sigma_y$.

Now consider FPP with $\tau_e = J_{(x,y)}$ if $\{x,y\}$ is the dual of the edge $e$. If such a first-passage configuration had a geodesic line, then the configuration $\sigma$ which takes the value $+1$ on one side of the geodesic line and $-1$ on the other would be a non-constant ground state for the associated spin model. Therefore in $d=2$, based on the analogy above with the Ising ferromagnet, it is believed that one should have a negative answer for Question \ref{q:bi}. 

Partial results in this direction were obtained by Newman under the assumption of uniform curvature. In \cite{Newman}, it was shown that there cannot exist geodesic lines other that those that have directions on both ends. As we have seen also, Theorem \ref{LN96theorem} ruled out almost all possible pairs of deterministic directions and Theorem~\ref{thm: wehr_one_infty} showed that the number of geodesic lines is 0 or infinity. There are also further arguments in the physics literature \cite{FLN} against the existence of non-constant ground states.

Let us end this subsection by presenting the argument of Wehr-Woo for absence of geodesic lines in the upper half-plane $\mathcal H$. For simplicity, we assume that the passage time $\tau_e$ has finite mean and continuous distribution so that finite geodesics are unique almost surely. Let
\[
E = \{\text{there exists a geodesic line in } \mathcal H\}.
\]
By horizontal translation ergodicity, $\mathbb{P}(E)$ is zero or one; so let us assume for a contradiction that $\mathbb{P}(E)=1$.

Any geodesic line $\Gamma$ divides $\mathbb{R}^2 \setminus \Gamma$ into two components, say $R^+=R^+(\Gamma)$ and $R^- = R^-(\Gamma)$; that is,
\begin{align*}
R^+(\Gamma) \cap R^-(\Gamma) &= \emptyset\ , \\
R^+(\Gamma) \cup R^-(\Gamma) &= \mathbb{R}^2 \setminus \Gamma\ , \\
\partial R^+ &= \partial R^- = \Gamma\ ,
\end{align*}
where $R^-$ is a region that contains $(0,-1)$ and where $\partial A$ denotes the usual boundary of a set $A \subset \mathbb{R}^2$. Hence, by uniqueness of passage times, for any points $x,y \in R^-(\Gamma)$, no edge $b$ belonging to the finite geodesic $\Gamma(x,y)$ can intersect $R^+(\Gamma)$. 
\vspace{0.1cm}

\textsc{Step 1}. The first step of the proof  is to establish the following fact \cite[Proposition~4]{WW98}. If a geodesic line exists with probability one, then there is a lowest one.

\begin{proposition}\label{prop: lowest_def}
Assume that $\Pro (E) =1$. Consider the sequence of geodesics $\Gamma_n$ from $(-n,0)$ to $(n,0)$ for $n \in \mathbb{N}$. With probability 1, this sequence has a limit:
\[
\Gamma_0 = \lim_{n \to \infty} \Gamma_n \ .
\]
Moreover, $\Gamma_0$ is a geodesic line, and for any geodesic line $\Gamma$,
\[
\Gamma_0 \subset \left[ R^-(\Gamma) \cup \Gamma\right]\ .
\]
\end{proposition}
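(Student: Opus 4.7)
The plan is to combine a trapping argument (each $\Gamma_n$ lies below every geodesic line in $\mathcal{H}$) with planarity of $\mathbb{Z}^2$ and the almost sure uniqueness of finite geodesics in order to extract a pointwise limit of the edge indicators of $(\Gamma_n)$ and then to rule out distinct subsequential limits.

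First I would prove the trapping: for any geodesic line $\Gamma \subset \mathcal{H}$, $\Gamma_n \subset R^-(\Gamma) \cup \Gamma$. The endpoints $(\pm n, 0)$ lie in $\overline{R^-(\Gamma)}$, so if some vertex of $\Gamma_n$ were in $R^+(\Gamma)$, the path $\Gamma_n$ would have to cross $\Gamma$, and in $\mathbb{Z}^2$ two self-avoiding paths can only cross at shared vertices. Picking two consecutive crossing vertices $u, v$ that bracket a segment of $\Gamma_n$ lying in $R^+(\Gamma)$, the subpath of $\Gamma_n$ from $u$ to $v$ and the subpath of $\Gamma$ from $u$ to $v$ are both finite geodesics between the same endpoints. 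Continuity of $F$ guarantees such geodesics are almost surely unique, so these subpaths coincide, contradicting that the $\Gamma_n$-segment entered $R^+(\Gamma)$.

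Next, since $\mathbb{P}(E) = 1$, almost surely a geodesic line $\Gamma$ exists, and the trapping forces every $\Gamma_n$ to sit inside $\overline{R^-(\Gamma)}$. For any $M$, the intersection of $\overline{R^-(\Gamma)}$ with the slab $[-M, M] \times \mathbb{Z}_{\geq 0}$ is vertically bounded by the height of $\Gamma$ above $[-M, M]$, hence contains only finitely many edges. Diagonalizing over all edges of $\mathbb{Z}^2$, I would extract a subsequence $(n_k)$ along which $\mathbf{1}\{e \in \Gamma_{n_k}\}$ converges for every edge $e$, and let $\Gamma_0$ denote the set of edges whose limit is $1$. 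For each such slab $B$, the self-avoiding paths $\Gamma_{n_k} \cap B$ are drawn from a finite set of configurations, so this restriction is eventually constant and equal to $\Gamma_0 \cap B$. Consequently $\Gamma_0$ is a self-avoiding bi-infinite path, and each of its finite subpaths, being eventually equal to a subpath of $\Gamma_{n_k}$, is a finite geodesic. Hence $\Gamma_0$ is a geodesic line, and passing to the limit in the trapping yields $\Gamma_0 \subset R^-(\Gamma) \cup \Gamma$ for every geodesic line $\Gamma$.

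Finally I would prove that all subsequential limits agree, which upgrades the above to full convergence $\Gamma_n \to \Gamma_0$. If $\Gamma'$ and $\Gamma''$ are two such limits, both are geodesic lines with $\Gamma' \subset \overline{R^-(\Gamma'')}$ and $\Gamma'' \subset \overline{R^-(\Gamma')}$. If they share two vertices $u, v$, then by a.s.\ uniqueness of finite geodesics the subpaths between $u$ and $v$ coincide, and iterating this along a sequence of shared vertices shows $\Gamma' = \Gamma''$. If they share at most finitely many vertices, then outside a bounded region they are disjoint bi-infinite self-avoiding paths in $\mathcal{H}$, and planarity forces one to lie strictly above the other on some unbounded side, contradicting the mutual containment in the closed lower regions. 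This last planar argument is the main obstacle: Steps 1 and 2 are essentially local and reduce to uniqueness of finite geodesics, whereas the simultaneous, unbounded-scale comparison of two bi-infinite lattice paths requires combining strict lowness with the linear structure of $\partial \mathcal{H}$ to force a contradiction.
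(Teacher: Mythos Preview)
Your argument is correct. The trapping step and the extraction of subsequential limits via local compactness match the paper. Where you diverge is in the upgrade from subsequential to full convergence. The paper does not compare two subsequential limits; it observes instead that the same crossing argument you used for trapping, applied to the pair $\Gamma_m,\Gamma_n$ with $m<n$, shows that $\Gamma_m$ lies in the closed region bounded by $\Gamma_n$ and the $x$-axis segment $[-n,n]\times\{0\}$: the endpoints $(\pm m,0)$ already lie in that region, and any excursion of $\Gamma_m$ across $\Gamma_n$ would produce two distinct finite geodesics between the crossing points. Hence $(\Gamma_n)$ is monotone (each lies weakly above its predecessors) and bounded above by the fixed geodesic line $\Gamma$, which gives convergence of the whole sequence directly.

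Your route through mutual containment of two subsequential limits also works, and in fact your case analysis on shared vertices is unnecessary. From $\Gamma'\subset\overline{R^-(\Gamma'')}$ the connected open set $R^+(\Gamma'')$ misses $\Gamma'$, hence lies in a single component of $\mathbb{R}^2\setminus\Gamma'$; since points with large second coordinate lie in both $R^+(\Gamma')$ and $R^+(\Gamma'')$, this forces $R^+(\Gamma'')\subset R^+(\Gamma')$. By symmetry $R^+(\Gamma')=R^+(\Gamma'')$, so taking boundaries gives $\Gamma'=\Gamma''$. This is clean, but the paper's monotonicity observation is simpler still, because it recycles the trapping lemma rather than requiring a separate Jordan-type argument.
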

\begin{proof}
Suppose that $\Gamma$ is a geodesic line. By uniqueness of passage times, for every $n$, $\Gamma_n$ must lie in $R^{-}(\Gamma) \cup \Gamma$. Now,  compactness (that is, the same argument used to establish existence of geodesic rays) implies that (passing through a subsequence) $\Gamma_{n_k}$ must converge. For instance, there is only a finite number of possible edges for the intersection of $\Gamma_{n}$ with the $e_{2}$, axis and this intersection is nonempty for every $n\geq 1$.  However, the geodesics $\Gamma_n$ are increasing in the sense that for $n>m$, $\Gamma_n$ must lie above $\Gamma_m$. Thus, they converge.
\end{proof}

\vspace{0.25cm}

\textsc{Step 2}. If $\Gamma_0$ intersects the strip $S_n= \{ (x,y) \in \mathbb Z^2 : 0\leq y < n\}$ then $R^+(\Gamma_0) \cap S_n$ is a non-empty union of bounded connected sets. See Figure \ref{Fig:WW}.

\begin{figure}
\scalebox{1.0}{ 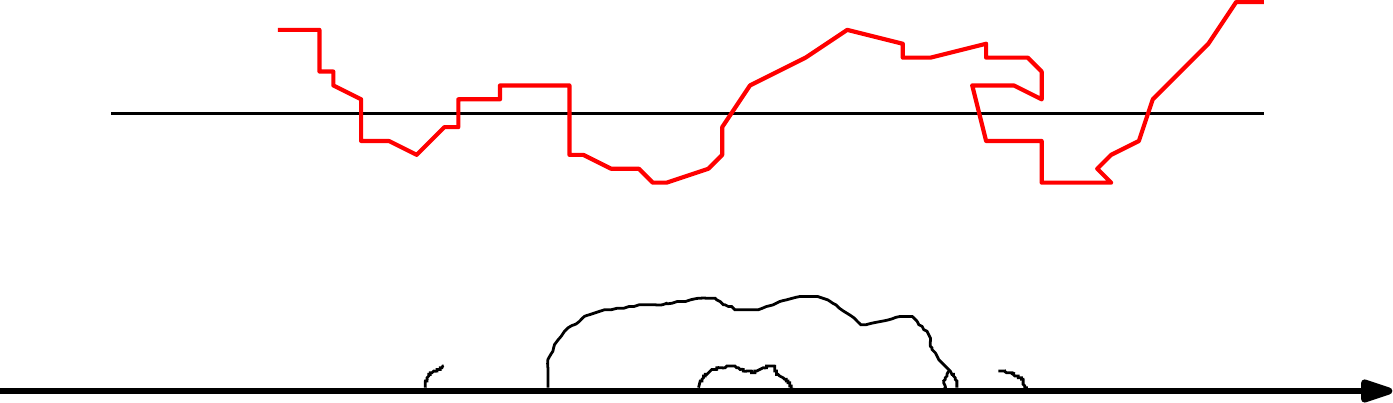}
 \centering
 \def \svgwidth{3000pt}
 \caption{ The geodesic line $\Gamma_0$ (in red) and the regions $R^{+}(\Gamma_0)$ and $R^{-}(\Gamma_0)$. The intersection of $\Gamma$ and the strip $S_{n}$ is a union of bounded sets. The geodesics $\Gamma_{n}$ monotonically converge to $\Gamma_0$.}
 \label{Fig:WW}
\end{figure}

\vspace{0.1cm}
This step is the main tool to prove:
\vspace{0.25cm}

\textsc{Step 3}. Uniquely geodesic implies that $\Gamma_0$ must intersect any large box with probability bounded below uniformly in the position of the box. 

Precisely, for $l \in \mathbb{N}$, write $B=B(l) = [-l,l] \times [0,2l]$ and let $K$ be the event that at least one geodesic line intersects $B$. Define, for $L \in \mathbb{N}$, translations of $B$ by
\[
B_{i,j} = B_{i,j}(l,L) = B + (iL,jL) \text{ for } (i,j) \in \mathbb Z^2, j \geq 0 \ .
\]
For $L > 2l$, the $B_{i,j}$ are mutually disjoint.
\begin{prop}\label{prop: hit_all_boxes}
Let $\delta = 1-\mathbb{P}(K)$. Then

\begin{equation}\label{eq:WW1}
\mathbb{P}\bigg(B_{i,j} \subset R^+(\Gamma_0)\bigg) \leq \delta, \quad \quad  \mathbb{P}\bigg(B_{i,j} \subset R^-(\Gamma_0)\bigg) \leq \delta\  \text{ and }
\end{equation}
\begin{equation}\label{eq:WW2}
\mathbb{P}\bigg(\Gamma_0 \cap  B_{i,j} \neq \emptyset \text{ for all } 0\leq i, j<k\bigg) \geq 1 -2k^2\delta.
\end{equation}
\end{prop}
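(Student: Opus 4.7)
The argument reduces \eqref{eq:WW2} to \eqref{eq:WW1} via a union bound, so the substance lies in \eqref{eq:WW1}. Since $\Gamma_0$ is a connected bi-infinite path separating $\mathbb R^2 \setminus \Gamma_0$ into $R^+(\Gamma_0) \sqcup R^-(\Gamma_0)$, and $B_{i,j}$ is connected, the event $\{\Gamma_0 \cap B_{i,j} = \emptyset\}$ decomposes as the disjoint union $\{B_{i,j} \subset R^+(\Gamma_0)\} \sqcup \{B_{i,j} \subset R^-(\Gamma_0)\}$. Granted \eqref{eq:WW1}, a union bound over the $k^2$ pairs $(i,j) \in \{0,\dots,k-1\}^2$ applied to the resulting bound $\mathbb P(\Gamma_0 \cap B_{i,j} = \emptyset) \leq 2\delta$ immediately yields \eqref{eq:WW2}.

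For the first half of \eqref{eq:WW1}, on $\{B_{i,j} \subset R^-(\Gamma_0)\}$ the bigeodesic $\Gamma_0$ passes strictly above $B_{i,j}$. By Proposition \ref{prop: lowest_def}, every bigeodesic $\Gamma$ satisfies $\Gamma \subset R^+(\Gamma_0) \cup \Gamma_0$, so every bigeodesic lies above $B_{i,j}$; hence no bigeodesic intersects $B_{i,j}$, i.e.\ the translated hitting event $K_{i,j}^c$ occurs. To conclude $\mathbb P(K_{i,j}^c) \leq \delta$, I would use horizontal translation invariance in the $i$-direction, and for $j \geq 1$ rerun the construction of Proposition \ref{prop: lowest_def} with the anchor row shifted to $y = jL$; this, combined with horizontal translation invariance at the shifted height, gives that ``some bigeodesic hits $B_{0,j}$'' has probability at least $\mathbb P(K) = 1-\delta$. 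The second half $\mathbb P(B_{i,j} \subset R^+(\Gamma_0)) \leq \delta$ needs more care, because $\Gamma_0$ passing below $B_{i,j}$ does not by itself prevent a higher bigeodesic from crossing the box. I would introduce an auxiliary bigeodesic $\Gamma^*_{i,j}$, defined as the limit of the finite geodesics joining $(iL - n,\,jL + 2l + 1)$ and $(iL + n,\,jL + 2l + 1)$; an argument paralleling Proposition \ref{prop: lowest_def} (using compactness, planarity, and uniqueness of finite geodesics under the assumption $\mathbb P(E)=1$) yields existence of the limit together with a maximality property among bigeodesics reaching the anchor row. On the event $\{B_{i,j} \subset R^+(\Gamma_0)\}$, a planarity argument forces $\Gamma^*_{i,j}$ to lie above $B_{i,j}$: if it dipped into the box it would have to cross $\Gamma_0$, violating uniqueness of finite geodesics. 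Consequently every bigeodesic is sandwiched between $\Gamma_0$ and $\Gamma^*_{i,j}$ in the vertical strip containing $B_{i,j}$ and cannot intersect $B_{i,j}$, so $\{B_{i,j} \subset R^+(\Gamma_0)\} \subseteq K_{i,j}^c$ and the same translation argument closes the bound.

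The main obstacle is the second half, specifically the construction and control of $\Gamma^*_{i,j}$. Unlike the anchoring in Proposition \ref{prop: lowest_def}, which is pinned to the boundary $y = 0$ of $\mathcal H$, the anchor for $\Gamma^*_{i,j}$ lies in the interior of the half-plane, so the monotonicity of the finite anchored geodesics in $n$ is not automatic and the existence of a genuine limiting bigeodesic must be re-proved with additional care. A second subtlety is propagating the bound $\mathbb P(K_{i,j}^c) \leq \delta$ from the $j=0$ base case to $j \geq 1$, which depends on a delicate use of Step 2 (the bounded-component structure of $R^+(\Gamma_0) \cap S_n$) together with the uniqueness of finite geodesics provided by continuity of $F$.
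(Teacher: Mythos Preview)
Your reduction of \eqref{eq:WW2} to \eqref{eq:WW1} via the partition $\{\Gamma_0 \cap B_{i,j}=\emptyset\}=\{B_{i,j}\subset R^+(\Gamma_0)\}\sqcup\{B_{i,j}\subset R^-(\Gamma_0)\}$ and a union bound is correct, and for the $R^-$ case your containment $\{B_{i,j}\subset R^-(\Gamma_0)\}\subseteq\{\text{no }\mathcal H\text{-bigeodesic meets }B_{i,j}\}$ is valid. The gap is that working entirely with $\mathcal H$-bigeodesics cannot close the probability step: the half-plane model has only \emph{horizontal} translation invariance, so for $j\ge 1$ nothing ties $\{\text{some }\mathcal H\text{-bigeodesic meets }B_{0,j}\}$ back to $K$. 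Rerunning Proposition~\ref{prop: lowest_def} with anchors at height $jL$ in $\mathcal H$ still just produces some $\mathcal H$-bigeodesic with no locational control, while rerunning it in $\mathcal H_j=\{y\ge jL\}$ produces an $\mathcal H_j$-bigeodesic, which is not a priori an $\mathcal H$-bigeodesic. For the $R^+$ case there is an additional error: on $\{B_{i,j}\subset R^+(\Gamma_0)\}$ the curve $\Gamma_0$ lies \emph{below} $B_{i,j}$, so your $\Gamma^*_{i,j}$ can dip into the box without ever crossing $\Gamma_0$; the sandwiching argument does not go through.

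The paper dissolves both obstacles with one device you are missing: change the half-plane so that the desired symmetry becomes exact. For $R^+$ one passes to the \emph{reflected} half-plane $\mathcal H_j^r=\{y\le jL+2l\}$, and for $R^-$ to the \emph{shifted} half-plane $\mathcal H_j=\{y\ge jL\}$. In each case $B_{i,j}$ abuts the boundary of the auxiliary half-plane exactly as $B$ abuts $\partial\mathcal H$, so the i.i.d.\ invariance under translation and reflection gives $\mathbb P(K_{i,j})=\mathbb P(K)$ immediately, where $K_{i,j}$ is now the event that some bigeodesic \emph{of that auxiliary half-plane} meets $B_{i,j}$. The containment then comes from Steps~1--2: on $\{B_{i,j}\subset R^+(\Gamma_0)\}$, Step~2 places $B_{i,j}$ in a bounded component of $R^+(\Gamma_0)\cap\mathcal H_j^r$, so an $\mathcal H_j^r$-bigeodesic entering it would meet $\Gamma_0$ twice, contradicting uniqueness of finite geodesics in the strip $\mathcal H_j^r\cap\mathcal H$; on $\{B_{i,j}\subset R^-(\Gamma_0)\}$, the approximants $\Gamma_n$ together with the $x$-axis eventually enclose $B_{i,j}$, so an $\mathcal H_j$-bigeodesic entering the box would cross $\Gamma_n$ twice. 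The reflection in particular replaces your elusive ``highest bigeodesic in $\mathcal H$'' by the lowest bigeodesic of a downward half-plane, where the construction of Proposition~\ref{prop: lowest_def} applies verbatim by symmetry.
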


To prove \eqref{eq:WW1}, one uses \textsc{Step 1} and \textsc{Step 2}. Consider  the reflected half-planes
\[
\mathcal H_{j}^{r}=\big \{ (x,y) \in \mathbb R^{2}: y \leq jL+2l \big \}
\]
and set the event
\[
K_{i,j}:= \big \{ \text{at least one } \mathcal H_{j}^{r}-\text{geodesic line intersects } B_{i,j}  \big \}.
\]
Since the model is invariant under translations and under rotations by $180$ degrees, 
\begin{equation}\label{eq:reflected}
\Pro(K)=\Pro(K_{i,j}).
\end{equation}
 
If $B_{i,j} \subset R^+(\Gamma_0)$ then, by \textsc{Step 2}, $B_{i,j}$ is contained in exactly one of the bounded components of $R^+(\Gamma_0) \cap \mathcal H_{j}^{r}$. 
Hence, there is no $\mathcal H_{j}^{r}$-geodesic line intersecting $B_{i,j}$, since otherwise two different finite $\mathcal H_{j}^{r} \cap \mathcal H$-geodesics would meet more than once. 
Thus the event $\{B_{i,j} \subset R^+(\Gamma_0) \}$ is contained in the event  $K_{i,j}^c$, which combined with \eqref{eq:reflected} leads to the first part of \eqref{eq:WW1}. 

The second equation in \eqref{eq:WW1} uses \textsc{Step 1} and the fact that the box $B_{i,j}$ would be trapped by the union of the geodesics $\Gamma_n$ for $n$ large and the boundary of the upper-half plane. Therefore no geodesic line in the shifted half-plane $\mathcal{H}_j =\{(x,y) \in \mathbb{R}^2: y \geq jL\}$ can intersect $B_{i,j}$. To get \eqref{eq:WW2}, one uses \eqref{eq:WW1} to obtain 
\[
\Pro( \Gamma_0 \cap  B_{i,j} \neq \emptyset ) \geq 1 - 2 \delta
\]
for any $i,j$. Thus, 
\[
\Pro\bigg( \bigcap_{0\leq i,j < k} \left\{ \Gamma_0 \cap  B_{i,j} \neq \emptyset\right\}  \bigg) \geq 1 -2k^{2}\delta.
\]

\vspace{0.1cm}

\textsc{Step 4}. Equation \eqref{eq:WW2}  contradicts $\Pro(E)=1$. Thus, $\Pro(E)=0$.

\noindent To see this, assume  $\Pro(E)=1$ and for any $k\geq1$, choose $l=l(k)$ large enough so that $B=B(l)$ satisfies 
\[
\Pro(\Gamma_0 \cap B \neq \emptyset) = \Pro(K) >1-1/4k^2.
\]
By \eqref{eq:WW2}, this leads to
\begin{equation}\label{eq:WWcontradiction} 
\Pro\bigg( \bigcap_{0\leq i,j < k} \left\{ \Gamma_0 \cap  B_{i,j} \neq \emptyset \right\} \bigg) \geq 1/2 .
\end{equation}
The proof now consists of showing that \eqref{eq:WWcontradiction} cannot hold for $k$ large if $L$, the distance of the shift, is large enough.

To see this, choose $L>2l$ so all shifted boxes $B_{i,j}$ are disjoint. If the event in \eqref{eq:WWcontradiction} happens, it is possible to find $k^{2}$ different points in the geodesic line $\Gamma_{0}$, each one inside a different box $B_{i,j}$. Let $x_{m}, 1\leq m \leq k^{2}$ be these points, ordered as they appear in $\Gamma_{0}$. As the $\ell^{1}$-diameter of the union of the $B_{i,j}$ is at most $2Lk$, one has 
\[
L-2l < \|x_{1} - x_{k^{2}}\|_1 < 2Lk \; \text{ and } |\Gamma(x_{1},x_{k^{2}})| \geq k^{2}L/4,
\]
where $|\Gamma(x_{1},x_{k^{2}})|$ is the number of edges in the geodesic $\Gamma(x_{1},x_{k^{2}})$. There at most  $4l^2k^{2}$ choices for each $x_{1}$ and $x_{k^{2}}$
so an union bound leads to 
\[
\Pro \bigg(\bigcap_{0\leq i,j < k} \left\{\Gamma_0 \cap  B_{i,j} \neq \emptyset\right\} \bigg) \leq 4l^{2} k^{4} \sup_{\|x-y\|_1\leq 2Lk} \Pro\bigg(|\Gamma(x,y)|>k^{2}L/4\bigg).
\]
If $k$ (and therefore $l$) is fixed large enough then, by Theorem \ref{thm: new_geo_length_bound}, the supremum on the right side is exponentially small in $L$. This contradicts \eqref{eq:WWcontradiction} for $L$ large, ending the proof of Theorem \ref{WWTheorem}.


\newpage
\section{Busemann functions}\label{sec:Busemann}

The modern approach to infinite geodesics involves Busemann functions. These functions were first exploited in FPP in the important papers of Hoffman \cite{Hoffman, Hoffman1}, and they allow one to obtain some information about geodesics without the unproven assumptions (for example, positive curvature of the limiting shape) of Newman.

\subsection{Basics of Busemann functions}

Long before FPP existed, H. Busemann invented various tools to study the geometry of geodesics in certain metric spaces \cite{Busemann}. Busemann functions grew out of the attempt to understand parallelism between geodesics. We can give some basic properties in the setting of FPP. Consider an infinite geodesic $R$ (which we will call a geodesic ray), and list its vertices in order as
\[
V(R) = \{x_0, x_1, \ldots\}.
\]
Here, $x_0$ is taken as the initial vertex of the ray. 

\begin{definition}
The Busemann function associated to the ray $R$ is $f_R:\mathbb{Z}^d \to \mathbb{R}$, given by
\[
f_R(x) = \lim_n \left[ T(x,x_n) - T(x_0,x_n) \right].
\]
\end{definition}
This limit exists since the terms are bounded ($|T(x,x_n) - T(x_0,x_n)| \leq T(x_0,x)$) and monotone:
\begin{align*}
T(x_0,x_{n+1})-T(x,x_{n+1}) &= T(x_0,x_n) + T(x_n,x_{n+1}) - T(x,x_{n+1}) \\
&\geq T(x_0,x_n) - T(x,x_n).
\end{align*}

The Busemann function for $R$ measures asymptotically how far behind $x_0$ is relative to $x$ when both points attempt to travel down the ray $R$. The next lemma collects a few properties of this function. 
\begin{lemma}\label{lem:busproperties} The following holds.

\begin{enumerate}
\item[$(a)$] For $m < n$, one has $f_R(x_m) - f_R(x_n) = T(x_m,x_n)$.
\item[$(b)$] For all $x$, one has $|f_R(x)| \leq T(x,x_0)$.
\item[$(c)$] If $R_1$ and $R_2$ are geodesic rays that coalesce (they have finite symmetric difference) and have initial points $x_0$ and $y_0$, then
\[
f_{R_1}(x) = f_{R_2}(x) - f_{R_2}(x_0) \text{ for all } x \in \mathbb{Z}^d.
\]
\end{enumerate}
\end{lemma}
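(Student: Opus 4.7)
The plan is to prove the three parts by direct manipulation of the defining limit, exploiting (i) the subpath property of geodesics for (a), (ii) the triangle inequality for (b), and (iii) a tail-comparison along the shared portion of the two rays for (c). None of the three requires any deep input beyond what is already recorded in the definition, so I expect all three to be short; the only mildly subtle step is reindexing the two rays in part (c) so that the limits are taken along a common sequence of vertices.

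For part (a), the key observation is that on a geodesic ray every finite subpath is itself a geodesic, so for each $n\ge m$ one has $T(x_0,x_n)=T(x_0,x_m)+T(x_m,x_n)$. Hence the increments $T(x_m,x_n)-T(x_0,x_n)=-T(x_0,x_m)$ are constant in $n$, giving $f_R(x_m)=-T(x_0,x_m)$; subtracting the analogous identity at $x_n$ and using once more that $T(x_m,x_n)=T(x_0,x_n)-T(x_0,x_m)$ yields $f_R(x_m)-f_R(x_n)=T(x_m,x_n)$. For part (b), the triangle inequality gives $|T(x,x_n)-T(x_0,x_n)|\le T(x,x_0)$ for every $n$, and passing to the limit in $n$ produces $|f_R(x)|\le T(x,x_0)$.

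For part (c), suppose $R_1=(x_0,x_1,\dots)$ and $R_2=(y_0,y_1,\dots)$ coalesce. Then there exist integers $M,N\ge 0$ such that $x_{M+k}=y_{N+k}$ for every $k\ge 0$. Writing the two defining limits along this common tail (i.e., replacing $n$ by $M+k$ in the first limit and by $N+k$ in the second, which is legitimate because the limits exist and agree with any subsequential limit), one finds
\begin{align*}
f_{R_1}(x) &= \lim_{k\to\infty}\bigl[T(x,y_{N+k}) - T(x_0,y_{N+k})\bigr],\\
f_{R_2}(x) &= \lim_{k\to\infty}\bigl[T(x,y_{N+k}) - T(y_0,y_{N+k})\bigr].
\end{align*}
Subtracting gives
\[
f_{R_2}(x)-f_{R_1}(x) = \lim_{k\to\infty}\bigl[T(x_0,y_{N+k}) - T(y_0,y_{N+k})\bigr] = f_{R_2}(x_0),
\]
which is exactly the claimed identity $f_{R_1}(x)=f_{R_2}(x)-f_{R_2}(x_0)$.

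The only step to watch carefully is the legality of shifting the index in (c): the definition of $f_{R_1}$ is a limit along the ray $R_1$, but once the two rays agree from $x_M=y_N$ onward, the tail vertices of $R_1$ are literally the tail vertices of $R_2$ after a shift, so the limit can equivalently be written along the common tail. This is the entire content of the argument, and I do not anticipate any obstacle beyond making this reindexing explicit.
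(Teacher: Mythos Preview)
Your proof is correct and follows essentially the same approach as the paper. The paper dismisses (a) and (b) as following ``directly from the definition'' and for (c) performs exactly your reindexing-along-the-common-tail argument (writing $x_n = y_{n+N}$ for large $n$ and telescoping); your version is simply more explicit.
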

\begin{proof} Proofs of items $(a)$ and $(b)$ follow directly from the definition. As for $(c)$, write the vertices of $R_1$ as $x_0, x_1, \ldots$ and the vertices of $R_2$ as $y_0, y_1, \ldots$. By coalescence, we can find $N \in \mathbb{Z}$ such that for $n$ large, one has $x_n = y_{n+N}$. For such $n$,
\begin{align*}
T(x_0,x_n) - T(x,x_n) &= T(x_0,y_{n+N}) - T(y_0,y_{n+N}) + T(y_0,y_{n+N}) - T(x,y_{n+N}) \\
&\to f_{R_2}(x_0) - f_{R_2}(x). 
\end{align*}
\end{proof}

We will often use a Busemann function that does not depend on the initial point. If $R$ is a geodesic ray, then set
\[
f_R(x,y) = f_R(x)-f_R(y).
\]
We note the following properties of this Busemann function:
\begin{enumerate}
\item $f_R$ is additive in the following sense:
\[
f_R(x,y) = f_R(x,z) + f_R(z,y) \text{ for } x,y,z \in \mathbb{Z}^d.
\]
\item If $R_1$ and $R_2$ are geodesic rays that coalesce, then $f_{R_1}(x,y) = f_{R_2}(x,y)$ for all $x,y \in \mathbb{Z}^d$.
\begin{proof}
Using item $(c)$ in Lemma \ref{lem:busproperties} and writing $x_0$ for the initial point of $R_1$, one has
\begin{align*}
f_{R_1}(x) - f_{R_1}(y) &= (f_{R_2}(x)-f_{R_2}(x_0)) - (f_{R_2}(y) - f_{R_2}(x_0)) \\
&= f_{R_2}(x) - f_{R_2}(y).
\end{align*}
\end{proof}
\item (Translation covariance) Let $\theta$ be a translation of the lattice by an integer vector. Then
\[
f_R(x,y)(\omega) = f_{\theta R}(\theta(x),\theta(y))(\theta(\omega)).
\]
Here, the translated weight-configuration $\theta(\omega)$ is defined as $t_e(\theta(\omega)) = t_{\theta^{-1}e}(\omega)$ for any edge $e$. Furthermore $\theta R$ is the translated geodesic ray.
\end{enumerate}

\subsection{Hoffman's argument for multiple geodesics}
In this section we present Hoffman's \cite{Hoffman} argument that there exist at least two infinite geodesics almost surely. This result was also proved by Garet-Marchand \cite{GM} in less generality, but Hoffman's techniques, involving Busemann functions, led to many other results. We will use the assumptions of Hoffman:
\begin{enumerate}
\item $\mathbb{P}$ is ergodic under lattice translations.
\item $\mathbb{P}$ has unique geodesics.
\item $\mathbb{E}t_e^{2+\delta}<\infty$ for some $\delta>0$.
\item The limit shape for $\mathbb{P}$ is bounded.
\end{enumerate}

We say that two geodesic rays $R_{1}$ and $R_{2}$ are disjoint if the cardinality of both sets $R_{1} \setminus R_{2}$ and $R_{2} \setminus R_{1}$ are infinite. Let $\mathcal{N} = \mathcal{N}(\omega)$ be the maximal number of disjoint geodesic rays in the edge-weight configuration $\omega$.  It can be shown that $\mathcal{N}$ is a measurable function (relative to the product sigma-algebra) and is invariant under lattice translation. So by ergodicity, it is almost surely constant.
\begin{theorem}
The number $\mathcal{N}$ is at least two.
\end{theorem}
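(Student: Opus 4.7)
The plan is to argue by contradiction. Assume $\mathcal{N} \leq 1$; since an infinite geodesic ray exists from any vertex (by the compactness argument recalled at the start of the section), this forces $\mathcal{N} = 1$ almost surely, meaning every pair of infinite geodesics must coalesce. Under this hypothesis, property (c) of Lemma~\ref{lem:busproperties} implies that
$$
B(x,y) := f_R(x) - f_R(y)
$$
does not depend on the choice of infinite geodesic ray $R$ used to compute it, so $B \colon \mathbb{Z}^d \times \mathbb{Z}^d \to \mathbb{R}$ is a single globally defined random field. It inherits additivity $B(x,y) + B(y,z) = B(x,z)$; the bound $|B(x,y)| \leq T(x,y)$; and translation covariance $B(x,y)(\omega) = B(\theta x, \theta y)(\theta\omega)$ for any lattice shift $\theta$.

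The next step is to show $\mathbb{E}[B(0,e_i)] = 0$ for every coordinate direction $e_i$. Translation covariance combined with the antisymmetry $B(y,x) = -B(x,y)$ yields $\mathbb{E}[B(0,-e_i)] = -\mathbb{E}[B(0,e_i)]$. On the other hand, the coordinate reflection $\rho_i$ through $\{x_i=0\}$ preserves the distribution of $\omega$ (by the i.i.d.\ symmetry of $\nu$) and sends geodesic rays to geodesic rays, so that $B(0,e_i)(\rho_i\omega) = B(0,-e_i)(\omega)$, which gives $\mathbb{E}[B(0,e_i)] = \mathbb{E}[B(0,-e_i)]$. Combining these two identities forces $\mathbb{E}[B(0,e_i)] = 0$. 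Applying Birkhoff's ergodic theorem to the stationary sequence $k \mapsto B(ke_i,(k+1)e_i)$ (ergodic since the coordinate shift is ergodic for i.i.d.\ weights) then gives $B(0, ne_i)/n \to 0$ a.s., and by additivity $B(0, nx)/n \to 0$ a.s.\ for every $x \in \mathbb{Z}^d$.

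The key technical step is to upgrade this pointwise statement to the uniform Busemann shape theorem
$$
\lim_{N \to \infty} \frac{1}{N} \max_{\|y\|_1 \leq N} |B(0,y)| \;=\; 0 \quad \text{a.s.}
$$
I would prove it by mimicking the proof of Theorem~\ref{thm:limitshape}: the bound $|B(0,x) - B(0,y)| = |B(y,x)| \leq T(x,y)$ together with the difference estimate of Lemma~\ref{lem:estimate} provides exactly the interpolation tool needed to pass from pointwise convergence along a countable dense set of rational rays (given by Birkhoff) to a uniform bound on $\{\|y\|_1 \leq N\}$. This uniform upgrade is the substantive obstacle, and it is precisely where the moment hypothesis $\mathbb{E}\tau_e^{2+\delta} < \infty$ enters, since Lemma~\ref{lem:estimate} requires controlling simultaneously the passage times to all lattice points in a ball.

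The contradiction then comes from the bounded-limit-shape hypothesis. Fix a geodesic ray $R_0 = (0 = x_0, x_1, x_2, \ldots)$. Property (a) of Lemma~\ref{lem:busproperties} gives $B(0, x_n) = T(0, x_n)$. Because $\mathcal{B}_\nu$ is bounded, $\mu(x) \geq c\|x\|_1$ for some $c>0$, so the shape theorem yields $T(0, x_n) \geq \tfrac{c}{2}\|x_n\|_1$ for all large $n$; moreover self-avoidance of $R_0$ forces $\|x_n\|_1 \to \infty$. For every sufficiently large $N$ one can therefore choose $n$ with $\|x_n\|_1 \in [N, 2N]$, producing a deterministic-in-$\omega$ lattice point at which $|B(0, x_n)| \geq cN/2$. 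This directly contradicts the uniform Busemann shape theorem above and completes the proof.
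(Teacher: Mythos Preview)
Your proposal is correct and follows essentially the same route as the paper: assume $\mathcal{N}=1$, obtain a well-defined translation-covariant Busemann field $B$, use symmetry to show $\mathbb{E}B(0,e_i)=0$, upgrade the radial ergodic limits to a uniform Busemann shape theorem via the difference estimate, and then derive a contradiction by evaluating $B$ along a geodesic ray where $B(0,x_n)=T(0,x_n)$ grows linearly because the limit shape is bounded. Your symmetry step spells out the reflection $\rho_i$ explicitly (the paper uses it implicitly), and your final contradiction is phrased via $\max_{\|y\|_1\le N}|B(0,y)|$ rather than directly as $T(0,x_n)/\|x_n\|_1\to 0$, but these are cosmetic differences.
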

\begin{proof}
By taking a sub-sequential limit of the geodesics from $0$ to $ne_1$, we see that $\mathcal{N} > 0$. Assume for a contradiction that $\mathcal{N}=1$. Let $R = R(\omega)$ be any geodesic ray and define $f:\mathbb{Z}^d \times \mathbb{Z}^d \to \mathbb{R}$ by
\[
f(x,y) = f_R(x,y) = f_{R(\omega)}(x,y)(\omega).
\]
On the probability 1 set on which $\mathcal{N}=1$, this function is well-defined and is independent of the choice of $R$. Indeed, if $R' = R'(\omega)$ is another geodesic ray, then it cannot be disjoint from $R$. Due to unique geodesics, $R'$ must coalesce with $R$ and therefore $f_R = f_{R'}$.

One can also check that $f$ is measurable. Due to translation covariance of $f_R$, the function $f$ is also translation covariant:
\begin{equation}\label{eq: ergodic_meat}
f(x,y)(\omega) = f\big(\theta (x), \theta (y)\big)(\theta (\omega)) \text{ almost surely}
\end{equation}
for any lattice translation $\theta$. This, combined with additivity and the ergodic theorem, implies for each $x \in \mathbb{Z}^d$,
\[
\frac{1}{n} f(0,nx) = \frac{1}{n} \sum_{k=1}^n f((k-1)x,kx) \to \mathbb{E}f(0,x) \text{ almost surely and in }L^1.
\]

To find this limit, we use symmetry: for $k=1, \ldots d$,
\[
0 = \mathbb{E}f(0,e_k) + \mathbb{E}f(e_k,0)  = \mathbb{E}f(0,e_k) + \mathbb{E}f(-e_k,0)
\]
and by translation invariance, this equals
\[
\mathbb{E}f(0,e_k) + \mathbb{E}f(0,e_k).
\]
We then obtain
\[
\mathbb{E}f(0,me_k) = m \mathbb{E}f(0,e_k) = 0
\]
for all integers $m$ and then by additivity,
\[
\mathbb{E}f(0,x) = 0 \text{ for all } x \in \mathbb{Z}^d.
\]
Combining with \eqref{eq: ergodic_meat}, one has
\[
\frac{1}{n} f(0,nx) \to 0 \text{ almost surely and in }L^1.
\]

To upgrade this convergence to an omni-directional statement, one can use the condition $\mathbb{E}\tau_e^{2+\delta}<\infty$ along with the bound $|f(x,y)| \leq T(x,y)$ to prove a type of shape theorem for the function $f$. That is, for each $\epsilon>0$,
\[
\mathbb{P}\left( |f(0,x)| < \epsilon \|x\|_1 \text{ for all } x \in \mathbb{Z}^d \text{ with } \|x\|_1 \text{ large enough} \right)=1.
\]
In particular, if we define $R$ to be the (almost surely unique) limit of geodesics from $0$ to $ne_1$, and we write its vertices in order as $x_0, x_1, \ldots$, then one has
\[
f(0,x_n)/\|x_n\|_1 \to 0 \text{ almost surely}.
\]
On the other hand, by property 1 of $f_R$, one has $f(0,x_n) = T(0,x_n)$. Therefore
\[
T(0,x_n)/\|x_n\|_1 \to 0 \text{ almost surely}.
\]
This contradicts the fact that the limit shape is bounded and therefore shows $\mathcal{N}>1$.
\end{proof}

\subsection{Directions of geodesics via Busemann functions}

In \cite{Newman}, Newman introduced the following limits, in an attempt to study the local geometry of the boundary of the growing ball $B(t)$:
\[
\lim_n [T(x,x_n) - T(y,x_n)] \text{ for } x,y \in \mathbb{Z}^2,
\]
where $(x_n)$ is a deterministic sequence diverging to infinity in a direction $\theta$. He was able to show that under a strong (unproven) curvature assumption on the limit shape, the limit exists almost surely for Lebesgue-almost every direction. 

In 2008, Hoffman \cite{Hoffman} defined Busemann-type functions similar to the above, and introduced the idea of using Busemann functions to prove properties of geodesics. These ideas were continued by Damron-Hanson in 2013 \cite{DHanson}. In the next two sections, we will describe some of the ideas therein. For simplicity, we will assume that the $\tau_e$'s are i.i.d., with continuous distribution having finite mean, although this is far from necessary. 

Let $H_n$ be the hyperplane $\{x \in \mathbb{R}^d : x \cdot e_1 = n\}$ and define the function
\[
B_n(x,y) = T(x,H_n) - T(y,H_n) \text{ for } x,y \in \mathbb{Z}^d.
\]
For purposes of illustration, we will in this section assume that
\begin{equation}\label{eq: busemann_assumption}
B(x,y) := \lim_n B_n(x,y) \text{ exists almost surely}
\end{equation}
for $x,y \in \mathbb{Z}^d$. 
This assumption should be compared to assumption \eqref{Busemanningeneral}.

\begin{lemma}\label{lem: expected_busemann_simple}
Assume \eqref{eq: busemann_assumption}. One has
\[
\mathbb{E}B(x,y) = (x-y) \cdot \rho \text{ for } x,y \in \mathbb{Z}^d,
\]
where $\rho = e_1 g(e_1)$.
\end{lemma}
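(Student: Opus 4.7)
The plan is to reduce the statement to the axis directions via additivity and translation covariance, kill the directions perpendicular to $e_1$ by a reflection symmetry, and then compute the single remaining number $\mathbb{E}B(0,e_1)$ by a Cesaro averaging argument against the shape theorem.

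First I would record three free facts that the definition gives for free. (i) \emph{Additivity}: $B(x,y)=B(x,z)+B(z,y)$, hence $B(x,x)=0$ and $B(y,x)=-B(x,y)$. (ii) \emph{$L^1$ domination}: $|B_n(x,y)|\le T(x,y)$ and the same for $B$, so under the standing moment hypothesis $\mathbb{E}T(x,y)<\infty$ and $B(x,y)\in L^1$. (iii) \emph{Translation covariance in expectation}: for $v\in\Z^d$, letting $\theta_v$ denote the shift of edge weights by $v$, one computes $B_n(x+v,y+v)(\omega)=B_{n-v\cdot e_1}(x,y)(\theta_{-v}\omega)$, and since $\theta_{-v}\omega\stackrel{d}{=}\omega$, letting $n\to\infty$ gives $\mathbb{E}B(x+v,y+v)=\mathbb{E}B(x,y)$. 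Telescoping with these three ingredients along coordinate axes reduces the lemma to evaluating $c_k:=\mathbb{E}B(e_k,0)$, $k=1,\dots,d$, since writing $x-y=\sum_k n_k e_k$ one obtains $\mathbb{E}B(x,y)=\sum_k n_k c_k$.

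Next I would dispose of $k\neq 1$ by symmetry. Let $\sigma$ be the reflection of $\Z^d$ across the affine hyperplane $\{z\cdot e_k=1/2\}$. Then $\sigma$ fixes $H_n$ setwise (it only permutes coordinates orthogonal to $e_1$), preserves the i.i.d.\ law of $(\tau_e)$, and swaps $0$ with $e_k$. Therefore $B_n(e_k,0)\circ\sigma\stackrel{d}{=}B_n(0,e_k)=-B_n(e_k,0)$, so $\mathbb{E}B_n(e_k,0)=0$ for every $n$, and hence $c_k=0$ after passing to the limit via dominated convergence.

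For the remaining axis, translation invariance of $\mathbb{P}$ yields $\mathbb{E}T(e_1,H_n)=\mathbb{E}T(0,H_{n-1})$, so with $a_n:=\mathbb{E}T(0,H_n)$ one has $\mathbb{E}B_n(e_1,0)=a_{n-1}-a_n$. By dominated convergence $c_1=\lim_n(a_{n-1}-a_n)$. Independently, the shape theorem gives $T(0,H_n)/n\to\mu(e_1)$ a.s., and the $L^1$ bound $T(0,H_n)\le T(0,ne_1)$ (with $\mathbb{E}T(0,ne_1)/n\to\mu(e_1)$) promotes this to $a_n/n\to\mu(e_1)$. The telescoping identity
\[
\frac{a_n}{n}=\frac{1}{n}\sum_{j=1}^{n}(a_j-a_{j-1})
\]
is a Cesaro average of a convergent sequence, so its limit must agree with the underlying limit, forcing $c_1=-\mu(e_1)$. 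Combining with the vanishing of the other $c_k$'s gives $\mathbb{E}B(x,y)=n_1 c_1$, which is $(x-y)\cdot \rho$ with $\rho=e_1 g(e_1)$ modulo the sign convention chosen for $B_n$.

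The main obstacle is not the algebra but the justification of the two exchanges of limit and expectation: the dominated-convergence step requires $T(0,e_1)\in L^1$, and the Cesaro step requires the expectations $a_n$ themselves (not just $T(0,H_n)$ pathwise) to grow like $n\mu(e_1)$, which is where an $L^1$-version of the shape theorem, and implicitly the curvature/extreme-point property that makes $e_1$ the minimizer of $\mu$ on $H_1$, enter. Both are standard under the assumptions \eqref{mincondition} in force whenever the Busemann limit is being considered.
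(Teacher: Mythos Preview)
Your argument is correct and is essentially the same as the paper's: both proofs reduce to the identity $\mathbb{E}T(0,H_n)=\sum_{j=1}^n \mathbb{E}B_j(0,e_1)$ (which is your telescope $a_n=\sum_j(a_j-a_{j-1})$ after the translation $\mathbb{E}T(e_1,H_n)=a_{n-1}$), invoke dominated convergence via $|B_n|\le T(0,e_1)$ to get $\mathbb{E}B_j(0,e_1)\to\mathbb{E}B(0,e_1)$, and identify the Ces\`aro limit with $\mu(e_1)$ from the shape theorem. Your reflection through $\{z\cdot e_k=1/2\}$ is exactly the symmetry the paper cites in one line for $k\ne1$; one small over-caution is that no curvature or extreme-point hypothesis is needed for $e_1$ to minimize $\mu$ on $H_1$---this is automatic from convexity and invariance under sign flips in the non-$e_1$ coordinates.
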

\begin{proof}
By translation invariance,
\[
\mathbb{E}B(x,y) = \mathbb{E}B(\theta(x),\theta(y))
\]
for any lattice translation $\theta$. As $B$ is additive and satisfies $\mathbb{E}B(0,-x) = - \mathbb{E}B(0,x)$, we find that $\mathbb{E}B(x,y)$ is a linear function of $x-y$. To find this function, it suffices to find $\mathbb{E}B(0,e_i)$ for $i=1, \ldots, d$. By symmetry, one has $\mathbb{E}B(0,e_i)=0$ for $i > 1$. To find the other expectation, we use an averaging trick introduced by Garet-Marchand and Hoffman (and also appears in some form in Kingman \cite[Eq.~(26)]{Kingman}). By translating by $e_1$, we obtain
\begin{align*}
\mathbb{E}T(0,L_n) &= \sum_{k=1}^n \mathbb{E}[T((k-1)e_1,H_n) - T(ke_1,H_n)] \\
&= \sum_{k=1}^n \mathbb{E}[T(0,H_{n-k+1}) - T(e_1, H_{n-k+1})] \\
&= \sum_{k=1}^n \mathbb{E}B_k(0,e_1).
\end{align*}
Using the bound $|B_k(x,y)| \leq T(x,y)$ and dominated convergence, one has
\[
\mathbb{E}B(0,e_1) = \lim_n \frac{1}{n} \mathbb{E}T(0,H_n) =\mu(e_1).
\]
The last equality follows from the shape theorem.
\end{proof}

Using the lemma, along with the ergodic theorem, one has almost surely and in $L^1$,
\[
\frac{1}{n} B(0,nx) = \frac{1}{n} \sum_{k=1}^n B((k-1)x,kx) \to x \cdot \rho \text{ for } x \in \mathbb{Z}^d.
\]
It is not difficult to upgrade this to a sort of shape theorem for the Busemann function. See, for example, Damron-Hanson \cite[Section 4]{DHanson}, where this is done for a ``reconstructed Busemann function.''
\begin{lemma}\label{lem: busemann_shape}
Assume \eqref{eq: busemann_assumption} and that $\mathbb{E}T(x,y) < \infty$ for all $x,y$. For each $\epsilon>0$,
\[
\mathbb{P}\left( |B(0,x) - x\cdot \rho| > \epsilon \|x\|_1 \text{ for only finitely many }x \right) = 1,
\]
where $\rho = e_1 g(e_1)$.
\end{lemma}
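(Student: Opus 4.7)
The plan is to mimic the structure of the proof of the Cox--Durrett shape theorem (Theorem~\ref{thm:limitshape}), using $|B(x,y)| \leq T(x,y)$ as the crucial interpolation bound and leveraging the already-established shape theorem for $T$.

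First, I would establish convergence along rational rays. Fix $x \in \mathbb{Z}^d$. By additivity of $B$, one has
\[
B(0,nx) = \sum_{k=1}^n B((k-1)x, kx).
\]
By translation covariance, the sequence $\{B((k-1)x, kx)\}_{k \geq 1}$ is stationary, and is ergodic since the underlying edge weights are i.i.d.\ under shifts by $x$. The summands are $L^1$ because $|B((k-1)x, kx)| \leq T((k-1)x, kx)$ and $\mathbb{E}T(0,x) < \infty$. By Birkhoff's ergodic theorem and Lemma~\ref{lem: expected_busemann_simple},
\[
\frac{1}{n} B(0,nx) \to \mathbb{E}B(0,x) = x \cdot \rho \quad \text{a.s. and in } L^1.
\]
Taking a countable intersection over $x \in \mathbb{Z}^d$, there is a full-measure event $\Xi_1$ on which this convergence holds simultaneously for every integer $x$.

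Second, I would set up the interpolation. The key inequality
\[
|B(0,y) - B(0,z)| \leq |B(y,z)| \leq T(y,z)
\]
reduces the problem of controlling $B(0,y)$ for an arbitrary $y$ to controlling $B(0,z)$ for a nearby lattice point $z = nx$ (with $x \in \mathbb{Z}^d$ a chosen ``scale'') plus the passage time $T(y,z)$. The latter is controlled by the difference estimate (Lemma~\ref{lem:estimate}) combined with Borel--Cantelli, giving a full-measure event $\Xi_2$ and a constant $\kappa$ such that any vertex $w$ with $\|w\|_1$ large is $\kappa$-``good'' along a density-positive subsequence of integer multiples of any fixed direction, and such that $T(y,z) \leq \kappa\|y-z\|_1$ uniformly along the good subsequence.

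Third, I would run the contradiction argument of Theorem~\ref{thm:limitshape} verbatim, replacing the pair $(T(0,\cdot), \mu)$ by $(B(0,\cdot), \rho\cdot)$. Namely, assume for contradiction that on a positive-probability event there are infinitely many $y_i \in \mathbb{Z}^d$ with $|B(0,y_i) - y_i \cdot \rho| > \delta \|y_i\|_1$. Passing to a subsequence with $y_i/\|y_i\|_1 \to u$ on the unit sphere, approximate $u$ by $z/\|z\|_1$ with $z \in \mathbb{Z}^d$, and then by a multiple $n_k M z$ of ``good'' vertices. Use additivity and translation covariance of $B$, the convergence $B(0, n_k M z)/(n_k M) \to z\cdot\rho$ on $\Xi_1$, the linearity of $\rho \cdot (\cdot)$ (which replaces continuity of $\mu$ used in Theorem~\ref{thm:limitshape}), and the bound $|B(y_i, n_k M z)| \leq T(y_i, n_k M z) \leq \kappa \|y_i - n_kMz\|_1$ to derive a contradiction.

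The main obstacle is in the third step: one must be careful that the interpolation bound $|B(y,z)|\leq T(y,z)$ is strong enough to match the $\ell^1$-Lipschitz constant $\kappa$ coming from the good-vertex construction, and one must handle the fact that---unlike $\mu$---the target functional $x \mapsto x\cdot\rho$ is signed, so one cannot simply appeal to norm properties to bound $|\rho\cdot(y-z)|$. Fortunately linearity gives the even simpler bound $|\rho\cdot(y-z)| \leq \|\rho\|_\infty \|y-z\|_1$, so this is a cosmetic rather than substantive difficulty; the real work is the good-vertex/Borel--Cantelli argument, which carries over from Theorem~\ref{thm:limitshape} with essentially no modification.
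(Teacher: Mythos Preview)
Your proposal is correct and is precisely the approach the paper indicates: the paper does not spell out a proof of this lemma, saying only that the radial limits from Birkhoff are ``not difficult to upgrade'' and pointing to \cite[Section~4]{DHanson}, but earlier (in the proof of Theorem~\ref{thm:limitshape}) it explicitly remarks that the good-vertex interpolation method ``was further used for the Busemann shape theorem.'' Your three steps---radial convergence via Birkhoff and Lemma~\ref{lem: expected_busemann_simple}, interpolation through $|B(y,z)|\le T(y,z)$ and the good-vertex construction of Lemma~\ref{lem:estimate}, and the Cox--Durrett contradiction with $\mu$ replaced by the linear functional $\rho\cdot(\cdot)$---are exactly what is intended.
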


We now show how assumption \eqref{eq: busemann_assumption}, along with a mild curvature assumption, implies existence of geodesic rays with asymptotic direction $e_1$. Let $S$ be the set
\[
S = \partial \mathcal{B} \cap \{w : \rho \cdot w = 1\}.
\]
Because the latter set is a supporting hyperplane for the limit shape at the point $e_1/g(e_1)$, the set $S$ is a portion of the boundary $\partial \mathcal{B}$ containing $e_1/g(e_1)$. We will say that a geodesic ray with vertices $x_0,x_1, \ldots$ is asymptotically directed in $S$ if each limit point of $\{x_n/g(x_n)\}$ is contained in $S$. If $S$ contains only one point, we say that this ray has an asymptotic direction. If the point $e_1/g(e_1)$ is exposed (there is a hyperplane for $\mathcal{B}$ that touches $\mathcal{B}$ only at $e_1/g(e_1)$), then by symmetry, $S = \{e_1/g(e_1)\}$. In that case, the following theorem implies that almost surely, there is an infinite geodesic from 0 that has asymptotic direction $e_1$. It is an open problem to show without assumption \eqref{eq: busemann_assumption} that with positive probability, there is an geodesic ray with an asymptotic direction. The following theorem is similar to \cite[Theorem~5.3]{DHanson}.


\begin{theorem}
Assume \eqref{eq: busemann_assumption}. With probability one, every subsequential limit of geodesics from 0 to $H_n$ is asymptotically directed in $S$.
\end{theorem}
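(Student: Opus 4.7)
The plan is to exploit the fact that if $x$ lies on a geodesic from $0$ to $H_n$, then $B_n(0,x) = T(0,x)$, and combine this identity with Lemma~\ref{lem: busemann_shape} and the standard shape theorem (Theorem~\ref{thm:limitshape}) to force every subsequential direction of $x$ onto the contact set $S$.

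First I would fix a realization in the probability-one event on which (i) the limit $B(x,y) = \lim_n B_n(x,y)$ exists for all $x,y \in \mathbb{Z}^d$, (ii) the Busemann shape theorem of Lemma~\ref{lem: busemann_shape} holds in the form $B(0,x) = x\cdot\rho + o(\|x\|_1)$, and (iii) the FPP shape theorem holds, so $T(0,x) = g(x) + o(\|x\|_1)$ with $g = \mu$ (here I would take any $\epsilon > 0$ and work on the event where both error terms are bounded by $\epsilon \|x\|_1$ for all sufficiently large $x$, letting $\epsilon \downarrow 0$ through a countable sequence at the end). Let $\gamma$ be a subsequential limit (in the sense of eventual agreement on initial segments) of geodesics $\gamma_n$ from $0$ to $H_n$, extracted along some subsequence $(n_k)$. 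Because $\gamma_n$ has at least $n$ vertices, $\gamma$ is an infinite self-avoiding path from $0$.

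Next, I would show the key identity $B(0,x) = T(0,x)$ for every vertex $x \in \gamma$. Fix such an $x$. By definition of subsequential limit, $x \in \gamma_{n_k}$ for all sufficiently large $k$. Since $\gamma_{n_k}$ is a geodesic from $0$ to $H_{n_k}$ passing through $x$, its restriction from $x$ onward is a geodesic from $x$ to $H_{n_k}$, giving $T(0,H_{n_k}) = T(0,x) + T(x,H_{n_k})$, i.e.\ $B_{n_k}(0,x) = T(0,x)$. Sending $k \to \infty$ and using assumption \eqref{eq: busemann_assumption} yields $B(0,x) = T(0,x)$.

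Now combine with the two shape theorems. For any vertex $x$ on $\gamma$ with $\|x\|_1$ large,
\begin{equation*}
 x\cdot\rho + o(\|x\|_1) \;=\; B(0,x) \;=\; T(0,x) \;=\; g(x) + o(\|x\|_1).
\end{equation*}
Since $\gamma$ is infinite and self-avoiding, it contains vertices of arbitrarily large $\ell^1$-norm, and $g$ is a norm bounded above and below by constant multiples of $\|\cdot\|_1$ (because $\mathcal B_\nu$ is bounded with nonempty interior). Dividing by $g(x)$ and letting $x \to \infty$ along $\gamma$, any subsequential limit $w$ of $x/g(x)$ satisfies $w \in \partial \mathcal B_\nu$ together with $w \cdot \rho = 1$; this is exactly the defining condition for $w \in S$. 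Hence every subsequential direction of $\gamma$ lies in $S$, proving the theorem.

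The statement is essentially a direct consequence of the two shape-type theorems once one has the identity $B(0,x) = T(0,x)$ on vertices of the limiting geodesic, so the main (though mild) technical point is to justify passing from the discrete-$n$ identity $B_{n_k}(0,x) = T(0,x)$ to its limit. This is handled cleanly by noting that for each \emph{fixed} vertex $x$ on $\gamma$, the equality is eventually deterministic in $k$, so no uniformity across vertices is required; the uniformity needed in the final step comes only from the two $o(\|x\|_1)$ estimates, which hold simultaneously for all large $x$ by Lemma~\ref{lem: busemann_shape} and Theorem~\ref{thm:limitshape}.
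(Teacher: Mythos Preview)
Your proof is correct and follows essentially the same approach as the paper: establish $B(0,x) = T(0,x)$ for every vertex $x$ on the limiting geodesic via the identity $B_{n_k}(0,x) = T(0,x)$ and assumption~\eqref{eq: busemann_assumption}, then combine the Busemann shape theorem with the ordinary shape theorem to force any subsequential limit of $x/g(x)$ to lie in $\partial\mathcal{B}_\nu \cap \{w:\rho\cdot w = 1\} = S$. The paper's argument is identical in structure and content.
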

\begin{proof}
Let $\gamma$ be any subsequential limit of geodesics from 0 to $H_n$ and label its vertices $0=x_0,x_1,x_2, \ldots$. First note that for $m \geq 0$, one has
\[
B(0,x_n) = T(0,x_n).
\]
Indeed, choose $(n_k)$ a subsequence such that some geodesics $\gamma_{n_k}$ from $0$ to $H_{n_k}$ converge to $\gamma$. For $k$ large, the first $n$ vertices of $\gamma_{n_k}$ coincide with those of $\gamma$, so
\[
B(0,x_n) = \lim_N \left[ T(0,H_N) - T(x_n,H_N) \right] = \lim_k [ T(0,H_{n_k}) - T(x_n,H_{n_k}) ] = T(0,x_n).
\]

Choose $z$ to be any limit point of $\{x_n/g(x_n)\}$ so that for some subsequence $(n_j)$, one has $x_{n_j}/g(x_{n_j}) \to z$. Now by the Busemann shape theorem, almost surely for any choice of $\gamma$ and $(n_j)$,
\[
\lim_j \frac{B(0,x_{n_j})}{g(x_{n_j})} = \lim_j \rho \cdot (x_{n_j}/g(x_{n_j})) = \rho \cdot z.
\]
On the other hand, by the shape theorem and the above remarks, this equals
\[
\lim_j \frac{T(0,x_{n_j})}{g(x_{n_j})} = 1.
\]
Therefore $\rho \cdot z = 1$. Since $z \in \partial \mathcal{B}$, this means $z \in S$.
\end{proof}

\begin{cor}
Assume \eqref{eq: busemann_assumption} and that $e_1/g(e_1)$ is an exposed point of $\mathcal{B}$. With probability one, every subsequential limit of geodesics from 0 to $H_n$ has asymptotic direction $e_1$.
\end{cor}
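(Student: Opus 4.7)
The plan is to deduce this corollary directly from the preceding theorem together with a geometric observation that $S$ is a singleton under the exposedness assumption. Once that is established, the notion of being asymptotically directed in $S$ collapses to having an asymptotic direction equal to $e_1/g(e_1)$, i.e.\ having direction $e_1$.

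First I would recall the setup: the theorem guarantees that, almost surely, for every subsequential limit $\gamma$ of geodesics from $0$ to $H_n$ with vertices $0 = x_0, x_1, x_2, \ldots$, every limit point $z$ of the sequence $(x_n/g(x_n))$ satisfies $z \in S = \partial \mathcal B \cap \{w : \rho \cdot w = 1\}$, where $\rho = e_1 g(e_1)$. Thus it suffices to show that under the exposedness hypothesis one has $S = \{e_1/g(e_1)\}$, because then every limit point of $x_n/g(x_n)$ equals $e_1/g(e_1)$, which (by the shape theorem giving $g(x_n)/\|x_n\|_1 \to \mu(e_1/\|\cdot\|_1\text{-direction})$ and continuity/compactness) is equivalent to $\arg x_n := x_n/\|x_n\|_2 \to e_1$.

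The key geometric step is the identification of $S$. Observe that $\rho \cdot w = g(e_1)\, w_1$, so $\{w : \rho \cdot w = 1\}$ is the affine hyperplane $\{w_1 = 1/g(e_1)\}$. A short symmetrization argument shows that this is exactly the hyperplane of maximum first coordinate on $\mathcal B$: if $(x_1,x_2,\ldots,x_d) \in \mathcal B$, then by the symmetries of $\mathcal B$ across the coordinate hyperplanes $\{w_i = 0\}$ for $i \geq 2$, and by convexity, the averaged point $(x_1, 0, \ldots, 0)$ lies in $\mathcal B$, so $x_1 \leq 1/\mu(e_1) = 1/g(e_1)$. Hence $\{w_1 = 1/g(e_1)\}$ is a supporting hyperplane at $e_1/g(e_1)$, and $S$ is precisely the face of contact.

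The hard (and only nontrivial) part is the following: the exposedness assumption provides \emph{some} supporting hyperplane touching $\mathcal B$ only at $e_1/g(e_1)$, but a priori it need not be the one defining $S$. Here I would again use the symmetries of $\mathcal B$ fixing $e_1/g(e_1)$ (namely reflections through $\{w_i = 0\}$ for $i \geq 2$): if $\ell$ is a linear functional exposing $e_1/g(e_1)$, then averaging $\ell$ over this finite reflection group produces a symmetric linear functional $\tilde \ell$ which still has $\tilde \ell(e_1/g(e_1)) = 1$ and $\tilde \ell(w) \leq 1$ on $\mathcal B$, with equality only at points fixed by none of the reflections nontrivially interacting with the original exposing set. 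Any such symmetric functional must be a nonnegative multiple of $w \mapsto w_1$, and hence the hyperplane $\{w_1 = 1/g(e_1)\}$ itself exposes $e_1/g(e_1)$. Therefore $S = \{e_1/g(e_1)\}$, and combining with the theorem finishes the proof. The main obstacle to watch out for is this symmetrization step, which is where the exposedness hypothesis is actually used.
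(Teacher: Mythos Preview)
Your proposal is correct and follows the same route as the paper: the corollary is not given a separate proof there, but is deduced from the preceding theorem together with the remark (stated just before that theorem) that ``by symmetry, $S=\{e_1/g(e_1)\}$'' when $e_1/g(e_1)$ is exposed. You have simply unpacked that symmetry argument explicitly via averaging the exposing functional over the reflection group fixing $e_1$, which is fine (a slightly shorter variant: if $w\in S$ with $w\neq e_1/g(e_1)$, reflect to get $w'=(w_1,-w_2,\ldots,-w_d)\in S$, and then $e_1/g(e_1)=\tfrac12(w+w')$ would fail to be extreme, contradicting exposedness).
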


\subsection{Busemann increment distributions and geodesic graphs}\label{sec: busemann_increments}

It is not known how to show that limits like \eqref{eq: busemann_assumption} exist, and this is one of the first and main obstacles when dealing with Busemann functions.
\begin{question}\label{q22}
Prove that under some conditions on the edge weights the limit \eqref{eq: busemann_assumption} exists.  
\end{question}
\begin{remark}
For the generalized Busemann functions introduced in Section \ref{sec:visit}, the question above translates to Question \ref{q10}. One may also want to consider Busemann limits to points rather than lines. 
\end{remark}

Here we will explain one method of getting around Question \ref{q22} to obtain partial results about directions of geodesic rays. This method shares some similarities with the plan of attack used in the original paper of Kingman, described in Section \ref{sec:visit}. (We make this relation precise below in Remark~\ref{rem: busemann}.) Instead of trying to establish the limit \eqref{eq: busemann_assumption}, we will focus on a weak limit by considering translational averages. We follow Damron-Hanson \cite{DHanson}. Their results were stated only for dimension $d=2$, but the existence of geodesics directed in sectors (the first theorem below) also holds for general dimensions, so we outline this more general argument.

We will focus on i.i.d. measures, although the theorems apply to a wide class of translation-ergodic measures. Consider the following conditions on the distribution $\mathbb{P}$ of the edge-weights:
\begin{enumerate}
\item[I.] $\mathbb{P}$ is i.i.d. with $\mathbb{E}Y^d < \infty$, where $Y$ is the minimum of $2d$ i.i.d. edge-weights. Furthermore, $\mathbb{P}(\tau_e=0)<p_c$, the bond percolation threshold for $d$ dimensions.
\item[II.] $\mathbb{P}$ has continuous marginals.
\end{enumerate}
Recall that a supporting hyperplane $H$ for $\mathcal{B}$ at a point $z_0 \in \partial \mathcal{B}$ is a hyperplane that contains $H$ and such that $\mathcal{B}$ does not intersect both components of $H^c$. If there is only one such hyperplane, we say that $\mathcal{B}$ is differentiable at $z_0$. In this case, we write $H_{z_0}$ for this hyperplane. Write
\[
S_{z_0} = H_{z_0} \cap \partial \mathcal{B}.
\]


\begin{theorem}\label{theorem: directions}
Assume (I). If $\partial \mathcal B$ is differentiable at $z_0$, then with probability one, there is a geodesic ray $\gamma$ containing the origin which is asymptotically directed in $S_{z_0}$. This means every limit point of $\{x/g(x) : x \in \gamma\}$ is contained in $S_{z_0}$.
\end{theorem}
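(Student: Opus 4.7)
My plan would be to follow the Damron--Hanson strategy of using translational (Cesàro) averages of candidate Busemann functions in order to bypass the missing almost-sure limit in \eqref{eq: busemann_assumption}. Let $\rho_{z_0}$ be the unique (up to normalization) vector such that $H_{z_0}=\{w:\rho_{z_0}\cdot w=\mu(z_0)\}$ is the supporting hyperplane for $\mathcal B$ at $z_0$, and for $n\geq 1$ let $\widetilde H_n=\{x:\rho_{z_0}\cdot x=n\}$. I would work with the ``Busemann increments'' $B_n(x,y)=T(x,\widetilde H_n)-T(y,\widetilde H_n)$. These are uniformly bounded by $T(x,y)$, so by condition (I) and Lemma~\ref{lemma:moments} the family $\{B_n(x,y)\}$ is tight for each pair $(x,y)$. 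I would pass, via a diagonal argument applied to a countable dense family of increments and a Bourbaki--Alaoglu / weak-$*$ extraction, to a weak subsequential limit $(B(x,y))_{x,y\in\mathbb Z^d}$ defined on an enlarged probability space which still carries the original edge weights and on which the joint distribution is translation-ergodic. The limit $B$ is additive, translation-covariant, and satisfies $|B(x,y)|\leq T(x,y)$; these properties are preserved by weak limits.

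Next I would identify the expectation of $B$ using the Garet--Marchand/Hoffman averaging trick that already appeared in Lemma~\ref{lem: expected_busemann_simple}: telescoping along shifts in the $\rho_{z_0}$ direction and dividing by $n$ gives, after dominated convergence, $\mathbb E B(0,x)=\rho_{z_0}\cdot x$. By translation covariance this determines $\mathbb E B$ on all pairs, and the ergodic theorem along each rational direction together with the uniform moment bound (as in Lemma~\ref{lem: busemann_shape}) upgrades this to a ``Busemann shape theorem'': almost surely,
\[
\lim_{\|x\|_1\to\infty}\frac{B(0,x)-\rho_{z_0}\cdot x}{\|x\|_1}=0.
\]

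The third step is to build an infinite geodesic from the origin using $B$. For each vertex $x$, the finite geodesic from $x$ to $\widetilde H_n$ starts with some edge $\{x,y_n\}$ satisfying $B_n(x,y_n)=\tau_{\{x,y_n\}}$; since there are only $2d$ choices for $y_n$ and the weights are continuous (condition (II) gives uniqueness, but here one only needs compactness), one can extract a neighbor $y$ for which $B(x,y)=\tau_{\{x,y\}}$ in the weak limit. Iterating, I construct a self-avoiding nearest-neighbor path $\gamma=(x_0=0,x_1,x_2,\dots)$ with $B(x_{k-1},x_k)=\tau_{\{x_{k-1},x_k\}}$ for every $k$. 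Additivity of $B$ then gives, for every $n$,
\[
\sum_{k=1}^{n}\tau_{\{x_{k-1},x_k\}}=B(0,x_n)\leq T(0,x_n),
\]
so $\gamma$ is a geodesic ray (the self-avoidance and infiniteness come from the lower bound $\mathbb EB(0,x)=\rho_{z_0}\cdot x$, which forces $\|x_n\|_1\to\infty$).

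Finally, I identify the direction. Let $z$ be any limit point of $x_n/\mu(x_n)$ along $\gamma$. The shape theorem gives $T(0,x_n)/\mu(x_n)\to 1$, while the Busemann shape theorem gives $B(0,x_n)/\mu(x_n)\to \rho_{z_0}\cdot z$. Since $B(0,x_n)=T(0,x_n)$ along the constructed geodesic, we obtain $\rho_{z_0}\cdot z=1$, which combined with $z\in\partial\mathcal B$ shows $z\in H_{z_0}\cap\partial\mathcal B=S_{z_0}$. The hypothesis that $\partial\mathcal B$ is differentiable at $z_0$ enters here in a crucial way: it guarantees that the hyperplane $H_{z_0}$ (and hence the expectation $\rho_{z_0}$) is the \emph{unique} supporting hyperplane, so that the extracted $B$ genuinely ``sees'' the direction $z_0$ and not some other face.

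\medskip

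\noindent\textbf{Main obstacle.} The principal technical difficulty is the weak-extraction step and the construction of the geodesic from it: because the limit $B$ is only obtained along Cesàro/weak subsequences on an enlarged space, one cannot literally take a pointwise limit of the ``first edges'' $y_n$. The argument must be arranged so that, in the joint weak limit of $(B_n,\mathbf 1_{\{y_n=y\}})$, the event $\{B(x,y)=\tau_{\{x,y\}}\}$ is inherited. Handling this measurability carefully, and then verifying that the resulting geodesic graph contains no traps (no finite trees), is the delicate part that truly requires the setup of \cite{DHanson}.
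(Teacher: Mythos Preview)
Your proposal follows essentially the same route as the paper---Ces\`aro-averaged Busemann increments to hyperplanes, a weak subsequential limit on an enlarged space carrying both edge weights and Busemann data, identification of the mean via the averaging trick, a shape theorem for $B$, and then the computation $B(0,x_n)=T(0,x_n)\Rightarrow \rho\cdot z=1$ along the constructed geodesic. However there is a genuine gap in how you handle ergodicity, and this causes you to misplace the role of differentiability.

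You claim that the weak limit can be arranged to be \emph{translation-ergodic}, and from this you derive a Busemann shape theorem with the \emph{deterministic} linear functional $\rho_{z_0}$. In general the averaged measure $\mu_k^*=\frac1k\int_0^k\mu_\alpha\,d\alpha$ and its subsequential limits are only translation-\emph{invariant}, not ergodic; the averaging over $\alpha$ introduces extra randomness that can destroy ergodicity. The paper deals with this by proving a shape theorem for $B$ with a \emph{random} vector $\varrho$: additivity plus the ergodic theorem give $B(0,nx)/n\to \varrho\cdot x$ for some translation-invariant $\varrho$, the inequality $B(0,x)\leq T(0,x)$ forces $\{w:\varrho\cdot w=1\}$ to be a supporting line for $\mathcal B$, and the mean computation gives $\mathbb E_{\mu^*}\varrho=\rho$ together with $\varrho\cdot z_0=1$ a.s. It is \emph{here} that differentiability at $z_0$ enters: since there is only one supporting hyperplane at $z_0$, one concludes $\varrho=\rho$ a.s. Your placement of the differentiability hypothesis (``so that the extracted $B$ genuinely sees the direction $z_0$'') is therefore off; it is used to collapse the random $\varrho$ to the deterministic $\rho$, not to determine the mean.

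On the construction of the geodesic: the paper sidesteps your ``main obstacle'' entirely by carrying the directed geodesic graph $\eta_\alpha$ as a third coordinate on the enlarged space $\widetilde\Omega=\Omega_1\times\Omega_2\times\Omega_3$ and taking the weak limit of the triple $(\omega,B_\alpha,\eta_\alpha)$. In the limit, $\mathbb G(\eta)$ automatically has the properties that each directed path is a geodesic for $(\tau_e)$ and that $x\to y$ implies $B(x,y)=T(x,y)$; no post-hoc extraction of ``first edges'' is needed. Finally, the pull-back to the original space (which you do not mention) is immediate: the event ``there exists a geodesic ray from $0$ directed in $S_{z_0}$'' depends only on the $\Omega_1$-coordinate, and the $\Omega_1$-marginal of $\mu^*$ is the original edge-weight law. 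Also note that only condition (I) is assumed here; (II) is not available.
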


Note that if $z_0$ is an exposed point of differentiability, then there is a geodesic ray with asymptotic direction $z_0$, since in that case, $S_{z_0} = \{z_0\}$. This is an improvement on Newman's theorem because we have replaced the global curvature condition with a local, directional condition.

The next result is related to coalescence of geodesic rays.
\begin{theorem}\label{theorem: coalesce_increments}
Take dimension $d=2$, assume (I) and (II) and that $\partial \mathcal{B}$ is differentiable at $z_0$. With probability one, there exists a collection of geodesic rays $(\Gamma_x : x \in \mathbb{Z}^2)$ satisfying the following properties:
\begin{enumerate}
\item Each $x \in \mathbb{Z}^2$ is a vertex of $\Gamma_x$.
\item Each $\Gamma_x$ is asymptotically directed in $S_{z_0}$.
\item For all $x,y \in \mathbb{Z}^2$, the paths $\Gamma_x$ and $\Gamma_y$ coalesce.
\item Each $x \in \mathbb{Z}^2$ is on $\Gamma_y$ for only finitely many $y \in \mathbb{Z}^2$.
\end{enumerate}
\end{theorem}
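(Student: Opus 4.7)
The plan is to construct the family $(\Gamma_x)$ via a Busemann-increment-based geodesic graph, following the strategy pioneered by Hoffman and developed by Damron--Hanson. Since one cannot establish the pointwise limit \eqref{eq: busemann_assumption} directly, I would work with Ces\`aro averages of the finite increments. For each $n \geq 1$ let $H_n$ be the line normal to $H_{z_0}$ at distance $n$, and consider the law $\mu_n$ on the enlarged space of pairs $(\omega, (B_n(x,y))_{x,y \in \mathbb{Z}^2})$ of edge-weight configurations together with the increment field $B_n(x,y) = T(x,H_n) - T(y,H_n)$. The bound $|B_n(x,y)| \leq T(x,y)$ together with hypothesis (I) gives tightness of $N^{-1}\sum_{n=1}^N \mu_n$; I would extract a weakly convergent subsequence whose limit $\mu^*$ supports a random field $B^*(x,y)$ that is additive, invariant under translations transverse to $H_{z_0}$, satisfies $|B^*(x,y)| \leq T(x,y)$, and (by averaging the computation of Lemma~\ref{lem: expected_busemann_simple}) has mean $\mathbb{E}_{\mu^*} B^*(0,x) = \rho_0 \cdot x$, where $\rho_0$ is the outward normal to $H_{z_0}$ at $z_0$. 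A variant of Lemma~\ref{lem: busemann_shape} then yields the Busemann shape theorem $B^*(0,x) = \rho_0 \cdot x + o(\|x\|_1)$.

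Next, continuity of the marginals (hypothesis (II)) guarantees that for $\mu^*$-almost every realization and every $x$ there is a unique neighbor $y \sim x$ with $B^*(x,y) = \tau_{\{x,y\}}$. I would define $\Gamma_x$ as the forward path obtained by iterating this choice. Additivity of $B^*$ implies that along any finite initial segment $x = x_0, x_1, \ldots, x_m$ of $\Gamma_x$ one has $\sum_{i=0}^{m-1} \tau_{\{x_i,x_{i+1}\}} = B^*(x_0, x_m)$, and combined with the lower bound $B^*(x_0, x_m) \geq -T(x_0, x_m)$ used in reverse this forces every such segment to be a geodesic; hence $\Gamma_x$ is a geodesic ray through $x$, giving property (1). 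Applying the Busemann shape theorem to any limit point $z$ of $\{y/g(y) : y \in \Gamma_x\}$, one argues as in the corollary at the end of Section~5.3 that $\rho_0 \cdot z = 1$, and since $z \in \partial \mathcal{B}$, differentiability at $z_0$ places $z \in S_{z_0}$. This gives property (2).

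The heart of the argument is coalescence (property (3)), and this is where $d = 2$ and the Licea--Newman technique (Theorem~\ref{theorem:Newman2}) enter. If $\Gamma_x$ and $\Gamma_y$ failed to coalesce with positive probability, translation ergodicity of $\mu^*$ in directions transverse to $\rho_0$ would produce a positive density of vertices from which two disjoint $\Gamma$-rays, both asymptotically directed into $S_{z_0}$, emerge. I would then replicate the three-step Licea--Newman scheme sketched in the proof of Theorem~\ref{theorem:Newman2}: promote ``two disjoint rays'' to ``three disjoint rays'' via a horizontal-shift/planarity argument, use unbounded weights (or the bounded-weight variant from \cite{LN96}) to block off the middle ray into a tree that meets the $e_1$-axis in a positive-density set of starting points, and count: inside an $[0,M]^2$ box, translation invariance forces the number of such disjoint trees to be of order $M^2$, yet the trees are infinite and so can number at most $O(M)$. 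The main technical obstacle here, and the place where the proof is most delicate, is that translation covariance for the coupled field $(\omega, B^*)$ holds in only one lattice direction and only after integrating against $\mu^*$; care is required to ensure the reflection and shift tricks survive this weakening.

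Finally, property (4) follows by a mass-transport / Burton--Keane argument on top of coalescence. If with positive probability some $x$ were on $\Gamma_y$ for infinitely many $y$, then by translation invariance the density of such ``accumulation points'' would be positive; sending unit mass from each $y$ to the unique coalescence point of $\Gamma_y$ with a fixed reference ray and invoking the mass-transport principle on $\mathbb{Z}^2$ yields a contradiction, since the incoming mass at accumulation points is infinite while the outgoing mass per vertex is one. Putting the four steps together produces the desired family $(\Gamma_x)$. I expect the Licea--Newman step in the third paragraph to be by far the hardest, both because it requires careful bookkeeping to transfer the deterministic-direction arguments from $T$ to the averaged field $B^*$, and because the planarity arguments there are genuinely two-dimensional and do not extend to $d \geq 3$.
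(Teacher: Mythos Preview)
Your plan is essentially the Damron--Hanson construction that the paper itself sketches: averaged weak limits of Busemann increments, a Busemann shape theorem, Licea--Newman for coalescence, and a Burton--Keane count for property~(4). Three places where the paper is more careful than your outline:

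\emph{The enlarged state space.} The paper carries not only $(\omega,B)$ but also the directed geodesic-graph configuration $\eta\in\{0,1\}^{\vec{\mathcal E}^2}$ through the weak limit. This matters because the out-degree-one property you need (``unique $y\sim x$ with $B^*(x,y)=\tau_{\{x,y\}}$'') is a \emph{closed} null event under each $\mu_n$, and Portmanteau gives the inequality in the wrong direction for such sets. By contrast, ``out-degree of $x$ in $\eta$ equals $1$'' is a cylinder event in $\{0,1\}^{\vec{\mathcal E}^2}$, hence clopen, and survives the limit automatically. Reconstructing $\Gamma_x$ from $B^*$ alone is doable but needs an extra argument you have not supplied.

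\emph{The random asymptotic slope.} Because $\mu^*$ is only translation-invariant, not a priori ergodic, the shape theorem for $B^*$ first yields a \emph{random} vector $\varrho$ with $B^*(0,x)=\varrho\cdot x+o(\|x\|_1)$ and $\mathbb E_{\mu^*}\varrho=\rho_0$. Differentiability at $z_0$ is then used to force $\varrho=\rho_0$ almost surely (the line $\{w:w\cdot\varrho=1\}$ must be a supporting line at $z_0$, and there is only one). Your sketch jumps straight to the deterministic asymptotic; this is exactly where the differentiability hypothesis first enters, not only later in verifying property~(2).

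\emph{Property~(4).} Your mass-transport rule (``send mass to the coalescence point with a fixed reference ray'') is not translation-covariant, so the mass-transport principle does not apply to it. The paper instead runs a direct Burton--Keane trifurcation count: if the backward cluster of some vertex is infinite, coalescence manufactures a vertex with two disjoint infinite backward paths and one forward path; such trifurcation points in $[-n,n]^2$ can number at most $Cn$ (each forces a distinct boundary exit), contradicting the $cn^2$ expected count from translation invariance.
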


As discussed in Section \ref{sec:exponents},  Licea-Newman \cite{LN96} (along with an improvement by Zerner \cite{Newmanbook}) have shown (under a global curvature condition) that there exists a deterministic set $D \subset [0,2\pi)$ with countable complement such that, for each $\theta \in D$, the following holds almost surely.
\begin{enumerate}
\item (Existence) There exists a collection of infinite geodesics $(\Gamma_x : x \in \mathbb{Z}^2)$ such that each $\Gamma_x$ starts from $x$, has asymptotic direction $\theta$, and each $\Gamma_x$ and $\Gamma_y$ coalesce.
\item (Uniqueness) For each $x$, $\Gamma_x$ above is the only infinite geodesic starting from $x$ with asymptotic direction $\theta$.
\end{enumerate}
The above theorem shows that one can take $D = [0,2\pi)$ for existence, but does not address uniqueness.

\begin{proof}[Idea of proof]

The idea is to work with sub-sequential Busemann limits in distribution. First we set $\Omega_1 = [0,\infty)^{\mathcal{E}^d}$ to be a copy of our passage time space and let $\Omega_2 = \mathbb{R}^{\mathbb{Z}^d \times \mathbb{Z}^d}$ be our space for recording Busemann increments. Last, we have a space $\Omega_3 = \{0,1\}^{\vec{\mathcal{E}}^d}$, where $\vec{\mathcal{E}}^d$ is the set of oriented nearest-neighbor edges of $\mathbb{Z}^d$, which will record geodesic graphs. Put $\widetilde \Omega = \prod_{i=1}^3 \Omega_i$.

Let $H$ be any supporting hyperplane for $\mathcal{B}$ and let $\rho$ be the vector with $H = \{w : w \cdot \rho = 1\}$. Define $H_\alpha = \{w : w \cdot \rho = \alpha\}$ for $\alpha \in \mathbb{R}$. We now define Busemann increments and geodesic graphs toward $H_\alpha$. For an edge-weight configuration $\omega \in \Omega_1$, set
\[
B_\alpha(\omega) = \left( B_\alpha(x,y) : x,y \in \mathbb{Z}^d \right) \in \Omega_2,
\]
where $B_\alpha(x,y) = T(x,H_\alpha) - T(y,H_\alpha)$. Furthermore define the geodesic graph configuration $\eta_\alpha(\omega)$ by
\[
\eta_\alpha(\omega)(\langle x,y \rangle) = \begin{cases}
1 & \text{ if } \{x,y\} \in G_\alpha(z) \text{ for some }z \text{ and } B_\alpha(x,y) \geq 0 \\
0 & \text{ otherwise}
\end{cases}.
\]
Here, $G_\alpha(z)$ is the union of all edges in geodesics from $z$ to $H_\alpha$. Last, define $\Phi_\alpha : \Omega_1 \to \widetilde \Omega$ by
\[
\Phi_\alpha(\omega) = (\omega, B_\alpha(\omega), \eta_\alpha(\omega))
\]
and $\mu_\alpha$ the push-forward of $\mu$ through $\Phi_\alpha$. The measure $\mu_\alpha$ is a Borel probability measure on $\widetilde{\Omega}$.

A generic element of $\widetilde{\Omega}$ we write as 
\[
\widetilde \omega = ((t_e), B, \eta).
\]
$B$ is called the reconstructed Busemann function. We would like to take $\alpha$ to infinity and sample a configuration $\widetilde \omega$ from the limit of $(\mu_\alpha)$. Such a measure would be the distribution of Busemann increments and geodesic graphs ``at infinity'' in a direction related to $H$. However, we do not know how to show that this sequence converges, so we settle for sub-sequential limits of an average of $\mu_\alpha$ over $\alpha$. The averaging is done to ensure that any limit is translation invariant. That is, set
\[
\mu_k^* = \frac{1}{k} \int_0^k \mu_\alpha~\text{d}\alpha
\]
and let $\mu^{*}$ be any sub-sequential limit of $(\mu_k^*)$. (Tightness is not difficult to verify, due to the condition $|B_\alpha(x,y)| \leq T(x,y)$.) The measure $\mu_k^*$ can be thought of as representing first sampling a hyperplane $H_\alpha$ uniformly at random for $\alpha \in [0,k]$ and then sampling the configuration $\Phi_\alpha(\omega)$.


The idea is then to try to approximate the proof of last section to (a) find asymptotics of the function $B(0,x)$ as $x \to \infty$ and (b) use them to control geodesics. First, we give the mean of $B$, writing $\mathbb{E}_{\mu^{*}}$ for expectation relative to $\mu^{*}$. Recall that $\rho$ is the vector with $H_\alpha = \{w : w\cdot \rho = \alpha\}$.
\begin{lemma}\label{lem:todayisaniceday}
Assume (I). The mean of the reconstructed Busemann function is
\[
\mathbb{E}_{\mu^{*}} B(x,y) = \rho \cdot (y-x) \text{ for } x,y \in \mathbb{Z}^d.
\]
\end{lemma}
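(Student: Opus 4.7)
The plan is to mimic the computation of Lemma \ref{lem: expected_busemann_simple}, using only translation invariance of $\mathbb{P}$, the shape theorem for passage times to hyperplanes, and a uniform integrability argument to pass through the subsequential weak limit defining $\mu^{*}$.

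The first task is to show that $\mu^{*}$ is translation invariant on $\widetilde{\Omega}$. For $v \in \mathbb{Z}^d$, let $\theta_v$ denote the associated shift. A direct computation using $H_\alpha + v = H_{\alpha + v\cdot\rho}$ and translation invariance of $\mathbb{P}$ gives $\theta_v \circ \Phi_\alpha = \Phi_{\alpha + v\cdot\rho}\circ\theta_v$, hence $\theta_v(\mu_\alpha) = \mu_{\alpha + v\cdot\rho}$, so
\begin{equation*}
\theta_v(\mu_k^{*}) \;-\; \mu_k^{*} \;=\; \frac{1}{k}\int_k^{k+v\cdot\rho}\mu_\alpha\,d\alpha \;-\; \frac{1}{k}\int_0^{v\cdot\rho}\mu_\alpha\,d\alpha,
\end{equation*}
whose total variation is at most $2|v\cdot\rho|/k$ and therefore vanishes in the weak limit. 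Thus $\mu^{*}$ is $\theta_v$-invariant for every $v$, and in particular $\mathbb{E}_{\mu^{*}}B(x,y)$ depends only on $y-x$. Additivity $B(x,y)=B(x,z)+B(z,y)$ is a closed condition on the continuous coordinates that holds $\mu_\alpha$-a.s.\ for every $\alpha$, so by the Portmanteau theorem it also holds $\mu^{*}$-a.s. Together, translation invariance and additivity imply that $x\mapsto\mathbb{E}_{\mu^{*}}B(0,x)$ is linear on $\mathbb{Z}^d$, so it suffices to show $\mathbb{E}_{\mu^{*}}B(0,e_i) = \rho_i := \rho\cdot e_i$ for $i = 1,\ldots,d$.

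For the explicit computation, translation invariance of $\mathbb{P}$ gives $\mathbb{E}_{\mu_\alpha}B(0,e_i) = F(\alpha) - F(\alpha - \rho_i)$, where $F(\alpha) := \mathbb{E}T(0,H_\alpha)$. Integrating in $\alpha$ and changing variables in the second piece,
\begin{equation*}
\mathbb{E}_{\mu_k^{*}}B(0,e_i) \;=\; \frac{1}{k}\int_{k-\rho_i}^k F(\alpha)\,d\alpha \;-\; \frac{1}{k}\int_{-\rho_i}^{0} F(\alpha)\,d\alpha.
\end{equation*}
The second integral is $O(1/k)$ and so vanishes. Since $H = H_1$ supports $\mathcal{B}$, one has $\inf\{\mu(w) : w\cdot\rho = 1\} = 1$, and the shape theorem together with the moment condition in (I) (which, via Lemma \ref{lemma:moments}, upgrades almost-sure to $L^1$ convergence) gives $F(\alpha)/\alpha \to 1$. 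A short mean-value argument then yields $\tfrac{1}{k}\int_{k-\rho_i}^k F(\alpha)\,d\alpha \to \rho_i$, so $\lim_{k\to\infty}\mathbb{E}_{\mu_k^{*}}B(0,e_i) = \rho_i$.

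The remaining step---passing this limit through the subsequence $(k_j)$ defining $\mu^{*}$---is the main technical point, since $B(0,e_i)$ is an unbounded coordinate on $\widetilde{\Omega}$ and weak convergence does not a priori preserve its expectation. The key is the pointwise bound $|B(0,e_i)| \leq T(0,e_i)$, which holds $\mu_\alpha$-a.s.\ for every $\alpha$ and, being a closed condition on continuous coordinates, passes to $\mu^{*}$-a.s. By assumption (I) and Lemma \ref{lemma:moments}, $T(0,e_i) \in L^d$ with $d \geq 2$, so the family $\{B(0,e_i)\}$ is $L^d$-bounded along $\{\mu_{k_j}^{*}\}$ and hence uniformly integrable. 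This gives $\mathbb{E}_{\mu_{k_j}^{*}}B(0,e_i) \to \mathbb{E}_{\mu^{*}}B(0,e_i)$, yielding $\mathbb{E}_{\mu^{*}}B(0,e_i) = \rho_i$, and by the linearity reduction $\mathbb{E}_{\mu^{*}}B(x,y) = \rho\cdot(y-x)$ for all $x,y \in \mathbb{Z}^d$.
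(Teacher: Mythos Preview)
Your proof is correct and follows essentially the same approach as the paper: the telescoping integral computation for $\mathbb{E}_{\mu_k^*}B$ via translation invariance of $\mathbb{P}$, together with the uniform-integrability argument based on $|B(0,e_i)|\le T(0,e_i)$ and the moment hypothesis in (I), is exactly the paper's route. The only difference is organizational---you first establish translation invariance of $\mu^*$ and reduce to basis vectors, while the paper carries out the telescoping computation for general $x$ directly and invokes translation invariance of $\mu^*$ only at the end.
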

The idea of this lemma is that the Busemann function on average measures distances traveled perpendicular to $H$, but from a sort of stationary state, where the velocity is given by the time constant in direction $z_0$ (which is the same time constant corresponding to the line $H$). The reader should compare to Lemma~\ref{lem: expected_busemann_simple} in the previous section, where the mean of the Busemann function is similarly given by a projection along the vertical hyperplane $H$ used to create the function.
\begin{proof}
The proof uses a version of the averaging trick (inspired by Gou\'er\'e \cite{Gouere}) from last section. For $k \geq 1$ we can use stationarity to write
\begin{align*}
\mathbb{E}_{\mu_k^*} B(-x,0) &= \frac{1}{k} \left[ \int_0^k \mathbb{E}T(-x,H_\alpha)~\text{d}\alpha - \int_0^k \mathbb{E}T(0,H_\alpha)~\text{d}\alpha \right] \\
&= \frac{1}{k} \left[ \int_k^{k+\rho \cdot x} \mathbb{E}T(0,H_\alpha) ~\text{d}\alpha - \int_0^{\rho \cdot x}\mathbb{E}T(0,H_\alpha)~\text{d}\alpha \right].
\end{align*}
Choosing $k_j$ so that $\mu_{k_j}^* \to \mu^{*}$, one can show using translation invariance and the bound $\mathbb{E}_{\mu_{{k}_{j}}^*}B(-x,0)^2 \leq \mathbb{E}T(-x,0)^2$ that the left side converges to $\mathbb{E}_{\mu^{*}} B(0,x)$. So we obtain the formula
\[
\mathbb{E}_{\mu^{*}} B(0,x) = \lim_{j \to \infty} \int_0^{\rho \cdot x} \frac{\mathbb{E}T(0,H_{\alpha + k_j})}{k_j}~\text{d}\alpha.
\]
By the shape theorem, the integrand converges to 1, so by dominated convergence, we obtain $\rho \cdot x$. By translation invariance, then, of $\mu^*$, one has
\[
\mathbb{E}_{\mu^*} B(x,y) = \mathbb{E}_{\mu^*}B(0,y-x) = \rho \cdot (y-x).
\]
\end{proof}

As usual, we can prove asymptotics for the reconstructed Busemann function in the form of a shape theorem. However, due to the extra randomness introduced through $\alpha$, we may have lost ergodicity for $\mu$ under translations, and therefore obtain a random ``shape.''
\begin{lemma}[Shape theorem for $B$]\label{thm:Busemannlimitshape}
Assume (I). There exists a random vector $\varrho \in \mathbb{R}^2$ such that for any $\epsilon>0$,
\[
\mu^{*}\left( |B(0,x) - x \cdot \varrho| > \epsilon \|x\|_1 \text{ for infinitely many } x\in \mathbb{Z}^d \right) = 0.
\]
The vector $\varrho$ satisfies the following conditions:
\begin{enumerate}
\item $\mu^{*}$-almost surely, the hyperplane 
\[
H_\varrho := \{w \in \mathbb{R}^d : w \cdot \varrho = 1\}
\]
is a supporting line for $\mathcal{B}$ at $z_0$.
\item The mean of $\varrho$ under $\mu^*$ is $\rho$.
\end{enumerate}
\end{lemma}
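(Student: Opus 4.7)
The plan is to imitate the Cox--Durrett proof of the classical shape theorem (Theorem~\ref{thm:limitshape}), applied to the additive cocycle $B$ under the translation-invariant (but possibly non-ergodic) measure $\mu^{*}$. First I will extract $\varrho$ as a directional Birkhoff limit of $B(0,nx)/n$; then I will lift directional convergence to the uniform statement using the Lipschitz estimate $|B(x,y)|\le T(x,y)$ together with the classical shape theorem for $T$; finally I will obtain the two properties of $\varrho$ from Lemma~\ref{lem:todayisaniceday} and the shape theorem bound $|\varrho\cdot x|\le\mu(x)$.

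Any subsequential limit $\mu^{*}$ of $\mu_k^{*}$ is invariant under every lattice translation $\theta_z$, because the covariance $\Phi_\alpha\circ\theta_z=\Phi_{\alpha-z\cdot\rho}$ shows that a lattice translation is absorbed by an $O(1)$ shift of $\alpha$, killed by the average $k^{-1}\int_0^k$ as $k\to\infty$. Let $\mathcal I$ be the $\sigma$-algebra of $\mu^{*}$-events invariant under every $\theta_z$, and set $\varrho_i:=\mathbb{E}_{\mu^{*}}[B(0,e_i)\mid\mathcal I]$; additivity of $B$ and translation invariance of $\mathcal I$ give $\mathbb{E}_{\mu^{*}}[B(0,x)\mid\mathcal I]=\varrho\cdot x$ for every $x\in\mathbb{Z}^d$. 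Birkhoff's ergodic theorem applied to the stationary sequence $B((k-1)x,kx)$ (integrable via (I) and $|B|\le T$) yields $B(0,nx)/n\to L(x)$ $\mu^{*}$-a.s., where a priori $L(x)$ is only $\theta_x$-invariant. To upgrade to $\mathcal I$-measurability, additivity gives
\[
B(y,y+nx)/n=[B(y,0)+B(0,nx)+B(nx,y+nx)]/n,
\]
and $\mathbb{E}_{\mu^{*}}T(nx,y+nx)^2=\mathbb{E}T(0,y)^2<\infty$ by (I), so a second-moment Borel--Cantelli argument forces $T(nx,y+nx)/n\to 0$ $\mu^{*}$-a.s.; thus $L(x)\circ\theta_y=L(x)$ for every $y\in\mathbb{Z}^d$, so $L(x)$ is $\mathcal I$-measurable and agrees with $\varrho\cdot x$.

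Working conditionally on $\mathcal I$, the vector $\varrho$ is deterministic, and I copy the interpolation from the proof of Theorem~\ref{thm:limitshape}. Suppose for contradiction that on a set of positive $\mu^{*}$-measure there exist $\delta>0$ and a sequence $x_i\in\mathbb{Z}^d$ with $\|x_i\|_1\to\infty$, $x_i/\|x_i\|_1\to y$, and $|B(0,x_i)-\varrho\cdot x_i|>\delta\|x_i\|_1$. Approximate $y$ by a rational direction $z=x/M$ and apply Lemma~\ref{lem:estimate} to the first marginal of $\widetilde\omega$ (which is distributed as the original $\mathbb P$); this supplies an a.s.\ positive density of good vertices $n_kMz$, along which $B(0,n_kMz)/(n_kM)\to\varrho\cdot z$. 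The Lipschitz bound $|B(0,x_i)-B(0,n_kMz)|\le T(x_i,n_kMz)\le\kappa\|x_i-n_kMz\|_1$ (coming from the good-vertex property) combined with continuity of $w\mapsto\varrho\cdot w$ then contradicts the assumed deviation. The main technical obstacle is precisely the passage to $\mathcal I$: without conditioning, the target linear functional would fluctuate between ergodic components and the Cox--Durrett interpolation, which pegs a deterministic limit, could not be applied verbatim.

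The two properties of $\varrho$ are now quick. For (2), $|B(0,nx)|\le T(0,nx)$ together with $\mathbb{E}_{\mu^{*}}T(0,nx)\le n\,\mathbb{E}T(0,x)$ justifies dominated convergence; combined with Lemma~\ref{lem:todayisaniceday} this gives
\[
\mathbb{E}_{\mu^{*}}(\varrho\cdot x)=\lim_n \mathbb{E}_{\mu^{*}}B(0,nx)/n=\rho\cdot x
\]
for every $x\in\mathbb{Z}^d$, hence $\mathbb{E}_{\mu^{*}}\varrho=\rho$. For (1), the inequality $|B(0,nx)|\le T(0,nx)$ combined with Theorem~\ref{thm:limitshape} yields $|\varrho\cdot x|\le\mu(x)$ $\mu^{*}$-a.s.\ for each $x\in\mathbb{Z}^d$, and by $\mathbb Q$-homogeneity plus continuity of $\mu$, $|\varrho\cdot w|\le\mu(w)$ for all $w\in\mathbb{R}^d$; in particular $\varrho\cdot w\le 1$ for $w\in\mathcal B$, so $H_\varrho$ is a random affine hyperplane not crossing $\mathcal B$. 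Since $z_0\in\mathcal B$, this gives $\varrho\cdot z_0\le 1$ $\mu^{*}$-a.s., while $\mathbb{E}_{\mu^{*}}(\varrho\cdot z_0)=\rho\cdot z_0=1$ because $H=H_\rho$ supports $\mathcal B$ at $z_0$; combined, these force $\varrho\cdot z_0=1$ $\mu^{*}$-a.s., so $H_\varrho$ actually touches $\partial\mathcal B$ at $z_0$, establishing (1).
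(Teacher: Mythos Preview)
Your proposal is correct and follows essentially the same route the paper sketches: obtain radial limits $B(0,nx)/n\to\varrho\cdot x$ via Birkhoff (with the key step of upgrading the limit from $\theta_x$-invariance to full translation invariance, which the paper also singles out as the main tool), then interpolate to the uniform statement using the Cox--Durrett good-vertex machinery together with the Lipschitz bound $|B(x,y)|\le T(x,y)$. Your derivations of properties (1) and (2) match the paper's verbatim, via Lemma~\ref{lem:todayisaniceday} and the almost-sure inequality $\varrho\cdot x\le\mu(x)$ combined with $\mathbb{E}_{\mu^*}(\varrho\cdot z_0)=\rho\cdot z_0=1$.
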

The proof of the above result establishes first radial limits of the form $\lim_n B(0,nx)/n$ and patches them together using shape theorem arguments. A key tool is that the vector $\varrho$ is invariant under translating the edge-weights, Busemann increments, and geodesic graphs. The mean of $\varrho$ follows directly from the previous lemma, whereas the fact that $H_\varrho$ is a supporting line for $\mathcal{B}$ follows from the statements (a) $x \cdot \varrho \leq 1$ for all $x \in \mathcal{B}$ and (b) $z_0 \cdot \varrho = 1$. The claim (a) is shown by noting that for $x \in \mathcal{B}$, one has
\[
x \cdot \varrho = \lim_n \frac{B(0,nx)}{n} \leq \lim_n \frac{T(0,nx)}{n} = \mu(x) \leq 1.
\]
On the other hand, using the mean of $\varrho$, one has
\[
\mu^{*}(\varrho \cdot z_0 \leq 1)=1 \text{ and } \mathbb{E}_{\mu^{*}}(\varrho \cdot z_0) = \rho \cdot z_0 = 1.
\]
This implies that $\varrho \cdot z_0 = 1$ with $\mu^{*}$-probability one.

At this point, we may identify the asymptotics of the reconstructed Busemann function if $\partial \mathcal{B}$ is differentiable at $z_0$. In this case, there is a unique supporting hyperplane for $\mathcal{B}$ at $z_0$, so this must be $H_\varrho$. In other words, one has
\begin{corollary}
Assume (I). If $\partial \mathcal{B}$ is differentiable at $z_0$, then
\[
\mu^{*}(\varrho = \rho) = 1.
\]
\end{corollary}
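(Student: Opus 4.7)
The plan is to deduce this corollary directly from part (1) of Lemma \ref{thm:Busemannlimitshape} together with the definition of differentiability of $\partial \mathcal B$ at $z_0$. First I would invoke that lemma to obtain the statement that, with $\mu^*$-probability one, the hyperplane $H_\varrho = \{w\in\mathbb{R}^d : w\cdot \varrho = 1\}$ is a supporting hyperplane for $\mathcal B$ at the distinguished boundary point $z_0$.

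Next I would apply the differentiability hypothesis. By definition, this means that $\mathcal B$ admits a \emph{unique} supporting hyperplane at $z_0$. Since the fixed hyperplane $H = \{w : w \cdot \rho = 1\}$ was chosen at the beginning of the construction of $\mu^*$ to be a supporting hyperplane for $\mathcal B$ at $z_0$, uniqueness forces the identity $H_\varrho = H$ on the full-$\mu^*$-measure event on which $H_\varrho$ supports $\mathcal B$ at $z_0$.

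To conclude, I would use the elementary fact that the assignment $v \mapsto \{w \in \mathbb{R}^d : w\cdot v = 1\}$ is a bijection between nonzero vectors in $\mathbb{R}^d$ and affine hyperplanes of $\mathbb{R}^d$ not passing through the origin. The shape theorem guarantees that $0$ lies in the interior of $\mathcal B$, so no supporting hyperplane of $\mathcal B$ contains $0$; hence both $\rho$ and $\varrho$ are well-defined nonzero vectors uniquely recovered from $H$ and $H_\varrho$ respectively. Combining with the previous paragraph, $\varrho = \rho$ with $\mu^*$-probability one. There is no real obstacle here: all the nontrivial work is already packaged into Lemma \ref{thm:Busemannlimitshape}, and the only sanity check needed is that $0 \in \mathrm{int}(\mathcal B)$ so that the hyperplane-to-vector correspondence is genuinely a bijection.
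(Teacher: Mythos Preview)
Your proposal is correct and follows essentially the same approach as the paper: invoke Lemma~\ref{thm:Busemannlimitshape}(1) to get that $H_\varrho$ is almost surely a supporting hyperplane at $z_0$, then use uniqueness of the supporting hyperplane under the differentiability assumption to force $H_\varrho = H$ and hence $\varrho = \rho$. Your extra remark that $0 \in \mathrm{int}(\mathcal B)$ guarantees the hyperplane-to-vector correspondence is a bijection is a nice sanity check that the paper leaves implicit.
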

In the presence of differentiability, the asymptotics of the Busemann function are given exactly by projection onto the line $H$ used to create the function. This result gives us a major piece needed to establish directional properties of geodesic rays sampled from $\mu^{*}$. To complete the proofs of the directional results, we simply need to argue as in the last section, combining asymptotics of $B$ with the shape theorem for $T$. 

To do this, we consider the configuration $\eta \in \Omega_3$ sampled from $\mu^{*}$ and build a directed graph from it. Specifically, we set $\mathbb{G} = \mathbb{G}(\eta)$ to be the directed graph with vertex set equal to $\mathbb{Z}^d$ and with edge set equal to $\{\langle x,y \rangle : \eta(\langle x,y \rangle) = 1\}$. Simple properties of geodesics carry over to the weak limit. For example: with $\mu^{*}$-probability one, for all $x,y \in \mathbb{Z}^d$,
\begin{itemize}
\item Each directed path in $\mathbb{G}$ is a geodesic and from each $x$ there is an infinite self-avoiding directed path.
\item If $x \to y$ in $\mathbb{G}$ (there is a directed path from $x$ to $y$ in $\mathbb{G}$), then $f(x,y) = T(x,y)$.
\item Under assumption (II), viewed as an undirected graph, $\mathbb{G}$ has no circuits.
\item Each vertex $x$ has out-degree at least 1 in $\mathbb{G}$. Under assumption (II), the out-degree is exactly 1, and thus emanating from each $x$ is a unique infinite directed path $\Gamma_x$.
\end{itemize}

Given these properties, the next result follows as in the last section.
\begin{theorem}
Assume (I). With $\mu^{*}$ probability 1, each directed infinite path from 0 in $\mathbb{G}$ is asymptotically directed in $S_\varrho = \partial \mathcal{B} \cap H_\varrho$.
\end{theorem}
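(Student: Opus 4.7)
The plan is to follow the strategy of the analogous theorem at the end of the previous section (existence of geodesics directed in $S$ under assumption \eqref{eq: busemann_assumption}), replacing the genuine Busemann convergence by the $\mu^*$-shape theorem for the reconstructed Busemann function (Lemma \ref{thm:Busemannlimitshape}). The key observation is that when $0\to y$ in $\mathbb G$ one has $B(0,y)=T(0,y)$, so the asymptotics of $B$ and of $T$ along an infinite directed path must agree, and this forces the asymptotic direction into $S_\varrho$.

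First I would fix a $\mu^*$-measure-one event $\Omega_0$ on which, simultaneously, three things hold. (i) The classical shape theorem for $T$ applies, i.e.\ $T(0,x)/g(x)\to 1$ as $\|x\|_1\to\infty$; this is legitimate since the $\Omega_1$-marginal of $\mu^*$ is $\mu$ (averaging in $\alpha$ and taking weak limits do not affect the $\omega$-marginal) and assumption (I) gives the shape theorem for $\mu$. (ii) The Busemann shape theorem holds: there is a (random) $\varrho$ with $|B(0,x)-x\cdot\varrho|=o(\|x\|_1)$ as $\|x\|_1\to\infty$ (Lemma \ref{thm:Busemannlimitshape}). (iii) The structural properties of $\mathbb G$ listed in Section \ref{sec: busemann_increments} hold; in particular every directed path in $\mathbb G$ is a (self-avoiding) geodesic, and whenever $x\to y$ in $\mathbb G$ one has $B(x,y)=T(x,y)$.

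Next, on $\Omega_0$, let $\gamma=(0=x_0,x_1,x_2,\dots)$ be any directed infinite path from $0$ in $\mathbb G$. Because $\gamma$ is self-avoiding we have $\|x_n\|_1\to\infty$, and since every finite initial segment is a geodesic, $g(x_n)\to\infty$ as well (using $g$ is a norm and $F(0)<p_c$, so $g$ is strictly positive on $\mathbb R^d\setminus\{0\}$). Let $z$ be any limit point of $\{x_n/g(x_n)\}$ and choose a subsequence $(n_j)$ with $x_{n_j}/g(x_{n_j})\to z$. The shape theorem for $T$ gives $T(0,x_{n_j})/g(x_{n_j})\to 1$, which in particular forces $g(z)=1$, so $z\in\partial\mathcal B$. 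Because $0\to x_{n_j}$ in $\mathbb G$, property (iii) yields $B(0,x_{n_j})=T(0,x_{n_j})$, hence
\[
\frac{B(0,x_{n_j})}{g(x_{n_j})}=\frac{T(0,x_{n_j})}{g(x_{n_j})}\longrightarrow 1.
\]
On the other hand, the Busemann shape theorem gives
\[
\frac{B(0,x_{n_j})}{g(x_{n_j})}=\varrho\cdot\frac{x_{n_j}}{g(x_{n_j})}+o(1)\longrightarrow \varrho\cdot z.
\]
Comparing the two limits yields $\varrho\cdot z=1$, i.e.\ $z\in H_\varrho$, so $z\in H_\varrho\cap\partial\mathcal B=S_\varrho$. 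Since $z$ was an arbitrary limit point, $\gamma$ is asymptotically directed in $S_\varrho$, and since $\gamma$ was an arbitrary directed infinite path from $0$ in $\mathbb G$, the theorem is proved.

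I expect the genuinely substantive step to be ensuring that (i)--(iii) really hold simultaneously on a common full-$\mu^*$-measure event; this is mostly bookkeeping, but some care is required because $\mu^*$ is only translation-invariant, not a priori ergodic, so $\varrho$ is random and one must verify that the shape theorem for $B$ applies pointwise in the $\varrho$-fibers (this is already handled in the statement of Lemma \ref{thm:Busemannlimitshape}). The rest of the argument is a direct comparison of $T$-asymptotics with $B$-asymptotics, pushed through the identity $B(0,x_n)=T(0,x_n)$ which is the whole raison d'\^etre of the geodesic graph $\mathbb G$.
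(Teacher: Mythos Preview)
Your proof is correct and follows essentially the same approach as the paper, which simply says the result ``follows as in the last section'' (referring to the argument under assumption \eqref{eq: busemann_assumption}): compare the $T$-shape theorem with the $B$-shape theorem along a directed path in $\mathbb G$, using the identity $B(0,x_n)=T(0,x_n)$ to force any limit point $z$ of $x_n/g(x_n)$ to satisfy $\varrho\cdot z=1$. One tiny expository nit: the conclusion $g(z)=1$ (i.e.\ $z\in\partial\mathcal B$) comes directly from continuity and homogeneity of $g$ (each $x_{n_j}/g(x_{n_j})$ lies on $\partial\mathcal B$), not from the shape theorem for $T$.
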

The ``asymptotic shape'' for the Busemann function has boundary which is the line $H_\varrho$, whereas the asymptotic shape for $T$ is simply the limit shape for the FPP model. The picture here is that where these two make contact are angles in which geodesic rays are created. Therefore if $\mathcal{B}$ has an exposed point in $H_\varrho$, then we can produce geodesic rays with asymptotic directions. To use this theorem, one should know the exact line $H_\varrho$, so we impose a condition of differentiability. By pulling this result back to the space $\Omega_1$, we can obtain Theorem~\ref{theorem: directions}.

The proof of the coalescence statement of Theorem~\ref{theorem: coalesce_increments} follows the argument of Licea-Newman outlined in Section~\ref{sec:exponents}. The main difference is that one must be careful to construct barrier events on the original space $\Omega_1$ and port them over to $\widetilde \Omega$, and this creates a considerable headache. We refer the reader to \cite[Section 6]{DH}. As for absence of infinite backward paths, we have
\begin{theorem}\label{thm: no_bigeo_BK}
Assume (I) and (II) in dimension $d=2$. With $\mu^{*}$ probability 1, the set $\{y : 0 \in \Gamma_y\}$ is finite.
\end{theorem}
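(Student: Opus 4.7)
The strategy is a Burton--Keane counting of trifurcations in the random forest $\mathbb{G}$ sampled from $\mu^{*}$, coupled with a Wehr-style shape-theorem bound. First I would reduce the claim to absence of bi-infinite geodesics in $\mathbb{G}$ passing through the origin. By condition~(II), every vertex has out-degree exactly one in $\mathbb{G}$ and $\mathbb{G}$ has no undirected cycles, so $\mathbb{G}$ is a forest. If $|T_{0}|=\infty$, K\"onig's lemma applied to the locally finite infinite tree $T_{0}$ produces an infinite backward ray $0=u_{0},u_{1},u_{2},\ldots$ with $u_{i}\in \Gamma_{u_{i+1}}$, which concatenated with the forward ray $\Gamma_{0}$ yields a bi-infinite path (``bigeodesic of $\mathbb{G}$'') through $0$. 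So it suffices to prove that $\mu^{*}$-a.s. no such bigeodesic passes through $0$.

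Set $p=\mu^{*}(0 \text{ lies on a bigeodesic of }\mathbb{G})$ and assume $p>0$ for contradiction. Call such vertices \emph{bi-vertices}; by translation invariance of $\mu^{*}$, $\mathbb{E}_{\mu^{*}}[\#\{\text{bi-vertices in }B_{n}\}]=p(2n+1)^{2}$, where $B_n = [-n,n]^2$. Let $N$ be the (translation-invariant) number of distinct bigeodesics in $\mathbb{G}$. Consider first the event that $N$ is finite and positive. On this event, at least one bigeodesic contains $\geq p(2n+1)^{2}/N=\Omega(n^{2})$ vertices inside $B_{n}$; the corresponding subpath from first entry to last exit of $B_{n}$ is a geodesic with passage time $\Omega(n^{2})$ (summing the edge weights, whose mean we may assume positive after truncating as in the proof of Theorem~\ref{thm: wehr_one_infty}), yet its endpoints lie on $\partial B_{n}$ at $\ell^{1}$-distance $O(n)$, so Lemma~\ref{lem: Pokahontas} and the shape theorem force the passage time between them to be $O(n)$, a contradiction. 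Hence $N=\infty$ $\mu^{*}$-a.s. on the event that $p>0$.

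Next I would rule out $N=\infty$ via Burton--Keane. Call $v\in\mathbb{Z}^{2}$ a \emph{trifurcation} if $v$ has at least two in-neighbors $u_{1},u_{2}$ in $\mathbb{G}$ with $|T_{u_{1}}|=|T_{u_{2}}|=\infty$ and $T_{u_{1}}\cap T_{u_{2}}=\emptyset$; such a $v$ emits three vertex-disjoint infinite rays in $\mathbb{G}$. Let $q=\mu^{*}(0\text{ is a trifurcation})$. If $q=0$, then $\mu^{*}$-a.s. $\mathbb{G}$ has no trifurcations, which forces each connected component of $\mathbb{G}$ (viewed as an undirected subforest of $\mathbb{Z}^{2}$) to have at most two ends and hence to host at most one bigeodesic. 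Combining planarity of $\mathbb{Z}^{2}$ (the number of disjoint infinite connected subsets of $\mathbb{Z}^{2}$ intersecting $B_{n}$ is $O(n)$) with the $O(n)$ per-bigeodesic Wehr length bound rules out density $p>0$, so $q>0$. Finally, by the standard planar Burton--Keane tree argument, in any realization the number of trifurcations in $B_{n}$ is at most $C|\partial B_{n}|=O(n)$: treating the edges of $\mathbb{G}$ leaving $B_{n}$ as external leaves of the induced forest $\mathbb{G}\cap B_{n}$, each trifurcation contributes at least one extra leaf via the Euler-type relation $\sum_v(\deg(v)-2)=-2$ for trees. But $\mathbb{E}_{\mu^{*}}[\#\{\text{trifurcations in }B_{n}\}]=q(2n+1)^{2}=\Theta(n^{2})$, a contradiction for large $n$. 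Therefore $p=0$ and $|T_{0}|<\infty$ $\mu^{*}$-a.s.

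\textbf{Main obstacle.} The delicate step is establishing $q>0$: ruling out the intermediate configuration where $\mathbb{G}$ has infinitely many bigeodesics but no trifurcations, so that the bigeodesics live in distinct tree components. This requires a refined two-dimensional planar counting, combining the $O(n)$ bound on disjoint infinite subgraphs crossing $B_{n}$ with the Wehr-style $O(n)$ bound on the length of a single bigeodesic segment in $B_{n}$, followed by an averaging over translates to produce a vanishing-density contradiction with $p>0$. A secondary technical annoyance is that $\mu^{*}$ need not be ergodic under translations (being a subsequential limit of averages); one handles this via an ergodic decomposition, applying the above dichotomy on each ergodic component.
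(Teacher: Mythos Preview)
Your overall architecture (K\"onig reduction to bigeodesics, then a Burton--Keane trifurcation count) is the right idea, but you are working much harder than necessary because you have overlooked the key input available at this point in the paper: under (I) and (II), \emph{all forward paths $\Gamma_x$ in $\mathbb{G}$ coalesce} $\mu^{*}$-a.s. (this is item~3 of Theorem~\ref{theorem: coalesce_increments}, established just before the present theorem via a Licea--Newman style argument). In particular $\mathbb{G}$ is a single tree, and your whole ``intermediate case'' of infinitely many bigeodesics living in distinct tree components simply cannot arise.

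With coalescence in hand, the paper's proof is short: assume the backward cluster of $0$ is infinite with positive $\mu^{*}$-probability. Using directedness, find a line $L$ (transverse to $S_\varrho$) such that every $\Gamma_y$ eventually stays on one side of $L$; then by the ergodic theorem along $L$ there are, with positive probability, two vertices $x_1,x_2\in L$ each having an infinite backward path and a forward path leaving $L$ immediately. Since $\Gamma_{x_1}$ and $\Gamma_{x_2}$ coalesce, their meeting point is a trifurcation, so $q=\mu^{*}(A_0)>0$. The Burton--Keane contradiction is then immediate. There is no need for a Wehr-style length argument, no $N$ finite/infinite dichotomy, and no ergodic decomposition.

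The step you flagged as your ``main obstacle''---producing a trifurcation when there are many non-coalescing bigeodesics---is a genuine obstacle without coalescence, and your proposed resolution (planar boundary count plus a per-bigeodesic $O(n)$ length bound) is not obviously enough: the planar $O(n)$ bound on disjoint infinite clusters touching $B_n$ controls how many \emph{components} meet $B_n$, but a single component without trifurcations could still carry one bigeodesic with $O(n)$ vertices in $B_n$, and $O(n)$ such components already account for the $\Theta(n^2)$ bi-vertices, so no contradiction emerges. (Also, Lemma~\ref{lem:Pokahontas} assumes exponential moments, which (I) does not give; you would want the geodesic-length bound of Theorem~\ref{thm: new_geo_length_bound} instead.) Use the coalescence result and the difficulties evaporate.
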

\begin{proof}
The idea is a modification of that of Burton-Keane \cite{BK89}, from their proof of uniqueness of the infinite occupied cluster in Bernoulli percolation. For $x \in \mathbb{Z}^2$, define the event $A_x$ that in $\mathbb{G}$, $x$ has an infinite forward path and two disjoint infinite backward paths. A counting argument shows that in the box $B(n) = [-n,n]^2$, one has surely
\[
\# \{x \in B(n) : A_x \text{ occurs}\} \leq Cn,
\]
as the number of points for which $A_x$ occurs is bounded above by the number of points on the boundary of $B(n)$. On the other hand, if we show that $\mathbb{P}(A_x) > 0$, then translation invariance gives
\[
\mathbb{E}_{\mu^{*}} \#\{x \in B(n) : A_x \text{ occurs}\} = \#B(n) \mu^{*}(A_x),
\]
and this would be a contradiction for large $n$.

Assume then that with positive probability, the set $\{y : 0 \in \Gamma_y\}$ is infinite; we will show that this implies $\mu^{*}(A_x)>0$. By directedness of paths in $\mathbb{G}$, we may find a line $L$ intersecting $\mathbb{Z}^2$ such that each $\Gamma_y$ eventually stays on one fixed side of $L$. Choose $y$ on the other side of $L$ and follow $\Gamma_y$ on the event that $\{z : y \in \Gamma_z\}$ is infinite until its last intersection with $L$. Such a point has an infinite forward path in $\mathbb{G}$ that does not intersect $L$ except at its initial point, and has an infinite backward path. By the ergodic theorem, we can then find $x_1,x_2 \in L$ such that with positive $\mu^{*}$ probability, each of $x_1$ and $x_2$ has an infinite forward path that does not touch $L$ except at the initial point and has an infinite backward path. Because all forward paths in $\mathbb{G}$ coalesce, we may choose $x$ to be the coalescence point of $\Gamma_{x_1}$ and $\Gamma_{x_2}$ and so $A_x$ occurs. By translation invariance, $\mu^{*}(A_x)>0$.
\end{proof}

Combining the previous theorems, this ends the proof of Theorem \ref{theorem: coalesce_increments}.
\end{proof}

\begin{remark}\label{rem: busemann}
Here we explain the relation between the above method and Kingman's approach. The analogue of Kingman's variables defined in \eqref{eq:metoohahaha} and \eqref{eq:sohumble} would be, respectively
\[
f_k = \frac{1}{k} \sum_{j=1}^k(T(0,H_j) - T(e_1,H_j)) = \frac{1}{k} \sum_{j=1}^k B_j(0,e_1)
\]
and the replacement for
\[
f_k + Tf_k + \ldots + T^{n-1}f_k
\]
would be
\[
\frac{1}{k} \sum_{j=1}^k B_j(0,ne_1).
\]
(These last two are not quite equal, but the average of Busemann increments is more natural in our setup.) Note that $f_k$ is similar to the projection of our averaged measure $\mu_k^*$ on the Busemann field coordinate, but it is one level lower (at the level of random variables rather than measures). To make this precise, suppose that $f_k$ converges almost surely to some random variable $f$ and that $\mu_k^*$ converges to a measure $\mu$. Then one would have
\[
f = \mathbb{E}_\mu \left[ B(0,e_1) \mid (t_e) \right],
\]
where we have conditioned on the edge-weight configuration $(t_e)$. In other words, $f$ would be an average of the reconstructed Busemann function $B(0,e_1)$ over the additional randomness we introduced into $\mu$ (the uniformly random hyperplane $H_n$).

In fact, exactly this approach is taken in Liggett's proof of his improved subadditive ergodic theorem \cite{LiggettST}. In that result, Kingman's assumptions are weakened, and the question of building a decomposition into an additive and nonnegative subadditive process is handled using weak limits, instead of employing weak-$*$ compactness to find a limit point for the sequence $(f_k)$ in $(L^1)^{**}$, as Kingman did. Precisely, for a subadditive ergodic sequence $(X_{m,n})$, Liggett defined an independent uniform $\{1, \ldots, n\}$ random variable $U_n$ and set
\[
Y_i^{(n)} = X_{0,i+U_n}-X_{0,i+U_n-1}.
\]
Note the distribution of $Y_i^{(n)}$ is the same as that given by averaging the distributions of $X_{i,i+j}-X_{i+1,i+j}$ for $j=1, \ldots, n$. Then he sets $(Y_1, Y_2, \ldots)$ to be any subsequential limit in distribution of the sequences $(Y_1^{(n)}, Y_2^{(n)}, \ldots)$ and uses the distributional monotonicity
\[
(Y_1, Y_{1}+Y_2, Y_{1}+Y_{2}+Y_{3} \ldots) \leq_{st} (X_{0,1}, X_{0,2}, X_{0,3} \ldots)
\]
in place of Kingman's $f \leq X_{0,1}$. If one wants to recover a pointwise monotonicity, one can then simply couple these sequences together with a standard coupling.

One could ask whether the approach used in this section could work by only considering quantities at the level of the random variable $f_k$ rather than at the level of $\mu_k^*$. This may not be possible in general translation-ergodic environments. Suppose that geodesics from $0$ to $L_n$ and $e_1$ to $L_n$ have exactly two pairs of subsequential limiting geodesics $\gamma_0^{(1)}, \gamma_0^{(2)}$ and $\gamma_{e_1}^{(1)}, \gamma_{e_1}^{(2)}$. Then if $\gamma_0^{(i)}$ and $\gamma_{e_1}^{(i)}$ coalesce for each fixed $i$, one would expect the Busemann functions constructed from these limiting geodesics to be different. However it is still possible that the limit $f$ of $f_k$ exists, assuming the subsequences on which we took limits are regular enough. Then $f$ is simply an average of these two Busemann functions corresponding to the different geodesics and for this reason, it will no longer necessarily have property (a) from Lemma~\ref{lem:busproperties}. That is, if only one of the two geodesics passes over the edge from $0$ to $e_1$, then it need not be true that $f = \tau_{\{0,e_1\}}$. This property is essential for deriving directional properties of geodesics from their Busemann functions.
\end{remark}

\subsection{Busemann functions along boundaries in $\mathbb{Z}^2$}
In one case, Busemann limits such as \eqref{eq: busemann_assumption} can be shown to exist surely, and this implies that limits exist almost surely for sequences of finite geodesics. We explain in this section the results of Auffinger-Damron-Hanson \cite{ADH1}, where these statements are proved when Busemann limits are taken to points on boundaries of subsets in $\mathbb{Z}^2$. The existence of a boundary will allow for a ``paths-crossing'' trick due to Alm-Wierman \cite{AW99} (first introduced in \cite{Alm} in a special case), and existence of limits follow from this.

For simplicity, the results below will be stated for the half plane with vertices 
\[
V_H = \{(x,y) \in \mathbb{Z}^2 : y \geq 0\}
\]
and edges $E_H = \{\{x,y\} : x,y \in V_H\}$, although we will remark about extending them to general subsets. For $x,y \in V_H$, let $T_H(x,y)$ be the minimum passage time among all paths with all vertices in $V_H$ from $x$ to $y$.
\begin{theorem}
Let $x_n = (n,0)$ and let $(\tau_e)$ be any edge-weight configuration in $[0,\infty)^{E_H}$. For all $x,y \in V_H$, the Busemann limit to $x_n$ exists:
\begin{equation}\label{eq: busemann_sequence_pizza}
B_H(x,y) = \lim_n \left[ T_H(x,x_n) - T_H(y,x_n) \right].
\end{equation}
\end{theorem}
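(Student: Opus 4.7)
The plan is to establish convergence via a deterministic monotonicity argument driven by the Alm--Wierman paths-crossing (swap) trick. Set $a_n := T_H(x,x_n) - T_H(y,x_n)$ and observe first that $|a_n| \leq T_H(x,y) < \infty$ by the triangle inequality (finiteness holds for any configuration since we can use any fixed connecting path). It therefore suffices to show that $(a_n)$ is eventually monotone.

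Without loss of generality assume $y \cdot e_1 \leq x \cdot e_1$, and fix $n < m$ large enough that both $x_n$ and $x_m$ lie to the right of both $x$ and $y$. Given $\varepsilon > 0$, pick self-avoiding paths $\alpha$ from $y$ to $x_m$ and $\beta$ from $x$ to $x_n$, both contained in $V_H$, with $T(\alpha) \leq T_H(y,x_m) + \varepsilon$ and $T(\beta) \leq T_H(x,x_n) + \varepsilon$ (such paths exist since the infimum is finite and attained in the limit along a finite truncation region). The key geometric claim is that $\alpha$ and $\beta$ must share a vertex $z$. This follows from a Jordan curve argument in the closed half-plane: the $x$-coordinates of the four points are ordered $y_1 \leq x_1 < n < m$ and both paths terminate on the boundary $\partial V_H = \mathbb{Z} \times \{0\}$, so the concatenation of $\alpha$ with the axis segment from $x_m$ to $+\infty$ separates $x$ from the axis segment to the left of $x_n$ inside the half-plane.

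Given such a crossing vertex $z$, write $\alpha = \alpha_1 \cup \alpha_2$ (from $y$ to $z$, then from $z$ to $x_m$) and $\beta = \beta_1 \cup \beta_2$ (from $x$ to $z$, then from $z$ to $x_n$). The swap produces an $x$-to-$x_m$ path $\beta_1 \cup \alpha_2$ and a $y$-to-$x_n$ path $\alpha_1 \cup \beta_2$, both in $V_H$, giving
\[
T_H(x,x_m) + T_H(y,x_n) \leq T(\beta_1 \cup \alpha_2) + T(\alpha_1 \cup \beta_2) = T(\alpha) + T(\beta) \leq T_H(x,x_n) + T_H(y,x_m) + 2\varepsilon.
\]
Letting $\varepsilon \downarrow 0$ and rearranging yields $a_m \leq a_n$, so $(a_n)$ is eventually decreasing. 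Being also bounded (by $T_H(x,y)$ in absolute value), it converges, which is the desired limit \eqref{eq: busemann_sequence_pizza}.

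The main obstacle is making the planar crossing step fully rigorous for arbitrary (possibly non-uniquely-minimizing) configurations: one has to handle near-optimal rather than exactly optimal paths and to argue carefully that the Jordan-curve separation in the discrete half-plane does force a common vertex (not merely a topological crossing of continuous interpolants). Everything else--the bound $|a_n| \leq T_H(x,y)$, the symmetry reduction, and the passage to the limit--is routine once the swap inequality is in hand, and the argument is manifestly deterministic, explaining the sure (not almost sure) conclusion. The same scheme extends to other subsets of $\mathbb{Z}^2$ with connected complement by choosing an appropriate boundary arc to play the role of $\mathbb{Z} \times \{0\}$.
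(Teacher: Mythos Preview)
Your overall strategy---bound $a_n$ by $T_H(x,y)$ and prove eventual monotonicity via the Alm--Wierman swap---is exactly the paper's, but the crossing step as written is wrong, and the error is not merely a matter of discretizing a Jordan curve argument. With $y_1\le x_1<n<m$ you pair $\alpha:y\to x_m$ with $\beta:x\to x_n$. When $x,y$ lie on the axis this puts the endpoint pairs $\{y_1,m\}$ and $\{x_1,n\}$ in \emph{nested} position on $\partial V_H$, and nested chords in a half-plane need not meet. Concretely, take $y=(0,0)$, $x=(1,0)$, $n=2$, $m=3$, give weight $\varepsilon$ to the edges of $\alpha=(0,0)\!\to\!(0,1)\!\to\!(1,1)\!\to\!(2,1)\!\to\!(3,1)\!\to\!(3,0)$ and to the edge $(1,0)\!\to\!(2,0)$, and weight $1$ elsewhere. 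Then $\alpha$ and $\beta=(1,0)\!\to\!(2,0)$ are the true geodesics, they share no vertex, and in fact $T_H(x,x_m)+T_H(y,x_n)=2+2\varepsilon>6\varepsilon=T_H(y,x_m)+T_H(x,x_n)$ for small $\varepsilon$: the very inequality you derive from the swap is false. The paper instead takes $\sigma_1:x\to x_{n_1}$ and $\sigma_2:y\to x_{n_2}$ with the \emph{left} point going to the \emph{near} target (so endpoints interleave), which does force a common vertex and yields monotonicity in the opposite direction.

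There is a second gap: even after you fix the pairing, the interleaving argument needs all four endpoints on the boundary arc, so it only treats $x,y\in L_0$. For general $x,y\in V_H$ the crossing is again not forced (e.g.\ $y=(0,0)$, $x=(1,10)$: the geodesic $y\to x_n$ can run along the axis while $x\to x_m$ stays at height $10$ until past $x_n$). The paper handles this by a genuinely different second step: remove a box $B\supset\{x,x'\}$, redo the crossing argument along $\partial(V_H\setminus B)$ to get Busemann limits $B'(y,z)$ for $y,z\in\partial V'$, and then write $T_H(x,x_n)=\min_{y\in\partial V'}\{T(x,y)+T'(y,x_n)\}$ to transfer the limit back. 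Your sketch does not contain this idea, and some such reduction appears to be necessary.
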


As usual, existence of Busemann limits gives us quite a bit of control on geodesics. Using this theorem, we can prove existence of limiting geodesic graphs. Formally, we represent geodesic graphs as elements of a directed graph space, as in the last section. Let $\vec{E}_H$ be the set of directed edges of $V_H$
\[
\vec{E}_H = \{\langle x,y \rangle : x,y \in V_H\}
\]
and write $\eta$ for an arbitrary element of $\{0,1\}^{\vec{E}_H}$. Build the graph $\mathbb{G} = \mathbb{G}(\eta)$ as follows: the vertices are all the vertices of $V_H$ and an edge $e \in \vec{E}_H$ is present in the graph if and only if $\eta(e) =1$. For a sequence of vertices $(x_n)$, we let $\eta_n(e) = 1$ if $e=\langle x,y \rangle$ is in a geodesic from some point to $x_n$ and $T_H(x,x_n) \geq T_H(y,x_n)$; we then set $\mathbb{G}_n = \mathbb{G}(\eta_n)$. The graphs $\mathbb{G}_n$ converge to a graph $\mathbb{G} = \mathbb{G}(\eta)$ if for each $e \in \vec{E}_H$, one has $\eta_n(e) \to \eta(e)$.

\begin{theorem}\label{theorem: limiting_geodesics}
Let $\mathbb{P}$ be a probability measure on $[0,\infty)^{E_H}$ such that
\begin{equation}\label{eq: geodesics_exist}
\mathbb{P}(\exists \text{ geodesic between } x,y  \text{ for all } x,y \in V_H) = 1.
\end{equation}
Then with probability one, $(\mathbb{G}_n)$ converges to a graph $\mathbb{G}$. Each directed path in $\mathbb{G}$ is a geodesic.
\end{theorem}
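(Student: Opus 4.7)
My plan is to use the surely‑existing Busemann limit $B_H(x,y) = \lim_n [T_H(x,x_n) - T_H(y,x_n)]$ from the preceding theorem as the backbone. First I would reformulate the edge indicator: since geodesics exist between any pair of vertices of $V_H$ by hypothesis, one checks that
$$ \eta_n(\langle x,y\rangle) \;=\; \mathbf{1}\bigl\{ d_n := T_H(x,x_n) - T_H(y,x_n) = \tau_{\{x,y\}} \bigr\}, $$
because the edge $\{x,y\}$ lies on some geodesic ending at $x_n$ and oriented from $x$ to $y$ exactly when $T_H(x,x_n) = \tau_{\{x,y\}} + T_H(y,x_n)$ (prepending $\{x,y\}$ to any geodesic from $y$ to $x_n$ then exhibits such a geodesic). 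The triangle inequality gives $|d_n| \leq \tau_{\{x,y\}}$.

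The central step will be to show that $d_n$ is eventually monotone in $n$, via an Alm--Wierman paths‑crossing argument in the planar domain $V_H$. For $n$ large enough that $x_n$ lies strictly to the right of both $x$ and $y$, I would consider geodesics $\sigma^x_n$ from $x$ to $x_n$ and $\sigma^y_{n+1}$ from $y$ to $x_{n+1}$. Planarity of $V_H$, together with the fact that $x_n$ and $x_{n+1}$ are adjacent on the boundary line, will force these paths to share a vertex in one of the two relative planar configurations of $x$ and $y$; swapping tails at such an intersection produces admissible paths from $x$ to $x_{n+1}$ and from $y$ to $x_n$ of total weight $T_H(x,x_n) + T_H(y,x_{n+1})$. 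This yields $d_{n+1} \leq d_n$, or, by symmetry with the companion pair $\sigma^y_n,\sigma^x_{n+1}$, the opposite inequality. In either case, $d_n$ is an eventually monotone sequence in the bounded interval $[-\tau_{\{x,y\}},\tau_{\{x,y\}}]$. A monotone sequence in such an interval either hits and remains at an endpoint or is eventually strictly interior; consequently, both $\eta_n(\langle x,y \rangle)$ and $\eta_n(\langle y,x \rangle)$ stabilize, yielding convergence $\mathbb{G}_n \to \mathbb{G}$ in the product topology.

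For the geodesic property, observe that $\langle x,y\rangle \in \mathbb{G}$ entails $d_n = \tau_{\{x,y\}}$ for all large $n$, whence $B_H(x,y) = \tau_{\{x,y\}}$. Given any directed path $z_0 \to z_1 \to \cdots \to z_k$ in $\mathbb{G}$, additivity $B_H(u,w)=B_H(u,v)+B_H(v,w)$ (immediate from the telescoping definition) combined with the bound $B_H(u,w) \leq T_H(u,w)$ (which follows from $|T_H(u,x_n) - T_H(w,x_n)| \leq T_H(u,w)$) will give
$$\sum_{i=0}^{k-1} \tau_{\{z_i,z_{i+1}\}} \;=\; \sum_{i=0}^{k-1} B_H(z_i,z_{i+1}) \;=\; B_H(z_0,z_k) \;\leq\; T_H(z_0,z_k),$$
so the concatenated path has passage time at most $T_H(z_0,z_k)$, hence is a geodesic.

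The main obstacle will be the paths‑crossing step: one must handle several cases depending on whether $x$ and $y$ are interior to $V_H$, lie on the boundary, or coincide with some $x_m$, and verify in each case that the relevant pair of geodesics must share a vertex to which the tail‑swap applies. This is precisely the two‑dimensional ingredient that underwrites existence of the Busemann limit in the first place, so I would expect the planar crossing construction used in the proof of the preceding theorem to adapt here with only minor modifications.
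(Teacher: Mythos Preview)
Your reduction $\eta_n(\langle x,y\rangle)=\mathbf{1}\{d_n=\tau_{\{x,y\}}\}$ and the final ``geodesic property via additivity of $B_H$'' step are both correct and clean. The gap is in the central claim that $d_n=T_H(x,x_n)-T_H(y,x_n)$ is eventually monotone for an arbitrary adjacent pair $x,y$.

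The paths-crossing argument works only because of a topological separation: when $x,y,x_n,x_{n+1}$ all lie on the one-dimensional boundary of the region, the geodesic $\sigma^x_n$ together with the boundary arc from $x$ to $x_n$ encloses a region that separates $y$ from $x_{n+1}$, forcing $\sigma^y_{n+1}$ to meet $\sigma^x_n$. Once one of $x,y$ is interior to $V_H$, no such separation is available. For instance, with $x=(0,1)$, $y=(0,2)$ one can choose weights so that $\sigma^x_n$ runs along row $0$ and $\sigma^y_{n+1}$ runs along row $2$, and these are disjoint; in the same configuration the companion pair may or may not meet depending on the weights. Thus which inequality you get---$d_{n+1}\le d_n$ or $d_{n+1}\ge d_n$---is not determined by the ``relative planar configuration of $x$ and $y$'' but by the realization, and can switch with $n$. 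So $(d_n)$ need not be eventually monotone, and mere convergence $d_n\to B_H(x,y)$ (which is all the preceding theorem gives) is not enough to stabilize $\mathbf{1}\{d_n=\tau\}$.

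The paper's proof avoids this by the same box-removal trick that actually underlies the preceding theorem: choose a box $B$ containing the edge $\{x,y\}$, set $V'=V_H\setminus B$, and observe that every geodesic from $x$ to $x_n$ (for large $n$) decomposes as $T_H(x,x_n)=\min_{u\in\partial V'}\{T(x,u)+T'(u,x_n)\}$. Now $\partial V'$ is a finite piece of the linearly ordered boundary of $V'$, so the differences $T'(u,x_n)-T'(u',x_n)$ \emph{are} monotone in $n$ by the genuine paths-crossing argument. From this monotonicity (not of $d_n$, but of all pairwise comparisons $\psi_n(u)-\psi_n(u')$ on the finite set $\partial V'$) one deduces that the minimizer set $\mathfrak m_n\subset\partial V'$ of $\psi_n$ is eventually constant; since $\eta_n(e)$ is determined by $\mathfrak m_n$ together with fixed data inside $B$, it stabilizes. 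That reduction to a finite, linearly ordered boundary is the missing idea in your plan.
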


\begin{remark}
The above two theorems are valid in the following more general setting. Let $V$ be any subset of $\mathbb{Z}^2$ that is connected and infinite and has infinite connected complement. (The reader can think of slit planes, sectors, etc.) There is a unique doubly infinite path in $\mathbb{Z}^2 + (1/2,1/2)$ which separates $V$ and $V^c$. Enumerate the vertices in $V$ that are adjacent to this dual path as $\ldots, x_{-1},x_0,x_1, \ldots$. If we define passage times between vertices $x,y$ in $V$ to be $T_V(x,y)$, the minimum passage time among all paths with all vertices in $V$ connecting $x$ and $y$, then the Busemann limits $B_V(x,y) = \lim_n \left[ T_V(x,x_n) - T_V(y,x_n) \right]$ exist for all edge-weight configurations in $[0,\infty)^{E_V}$, and limiting geodesic graphs exist almost surely for any measure on $[0,\infty)^{E_V}$ that has the geodesic property \eqref{eq: geodesics_exist}. Here, $E_V$ is the set of nearest-neighbor edges with both endpoints in $V$.
\end{remark}

In the case of the half-plane, one can say more about the structure of the limiting graph $\mathbb{G}$. We will state the theorem, but we refer to \cite{ADH2} for the complete proof.
\begin{theorem}
Let $\mathbb{P}$ be a product measure on $[0,\infty)^{E_H}$ with continuous marginals. The limiting geodesic graph $\mathbb{G}$ from Theorem~\ref{theorem: limiting_geodesics} satisfies the following almost surely.
\begin{enumerate}
\item Each vertex in $V_H$ has out-degree 1. Therefore from each $x \in V_H$ emanates a unique infinite directed path $\Gamma_x$.
\item Viewed as an undirected graph, $\mathbb{G}$ has no circuits.
\item For each $x\in V_H$, the backward cluster $B_x = \{y \in V_H : x \in \Gamma_y\}$ is finite.
\item For all $x,y \in V_H$, $\Gamma_x$ and $\Gamma_y$ coalesce. That is, their edge symmetric difference is finite.
\end{enumerate}
\end{theorem}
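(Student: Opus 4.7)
The plan is to verify the four properties in turn, using that continuous marginals give almost sure uniqueness of finite geodesics and that $\mathbb{G}$ is, by construction, the pointwise limit of the pre-limiting graphs $\mathbb{G}_n$ of all geodesics to $x_n = (n,0)$.

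First, for item (1), I would observe that continuity of the marginals guarantees almost surely that no two distinct finite self-avoiding paths in $V_H$ have equal passage times, so for every $n$ each vertex $x \neq x_n$ has exactly one outgoing edge in $\mathbb{G}_n$, namely the first edge of its unique geodesic to $x_n$. Since $\eta_n(e) \to \eta(e)$ pointwise means the value stabilizes for each $e$, and since $x$ has at most four candidate out-edges, by pigeonhole the out-edge stabilizes to a single edge, giving out-degree exactly one in $\mathbb{G}$. Item (2) follows similarly: each $\mathbb{G}_n$ is the edge set of a tree rooted at $x_n$ (being the union of the unique geodesics to $x_n$), so it has no undirected circuit; if $\mathbb{G}$ contained a circuit $C$, then $C \subset \mathbb{G}_n$ for all large $n$ by pointwise convergence, contradicting the tree property of $\mathbb{G}_n$.

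For item (3), I would adapt the Burton--Keane argument in the spirit of Theorem~\ref{thm: no_bigeo_BK}. Horizontal shifts are measure preserving and ergodic. If with positive probability some vertex has an infinite backward cluster, then by traversing the forward path and using (1)--(2) to stay inside a single tree component, one can locate a vertex $u$ which is a genuine encounter point: from $u$ at least two disjoint infinite backward directed paths in $\mathbb{G}$ merge, and one infinite forward path emanates. By translation invariance, such encounter points occur with positive density in each horizontal strip; however, in a box $[-n,n]\times [0,n]$ a standard Burton--Keane counting, based on following the two backward paths out to distinct vertices on the box boundary, forces the number of encounter points to be at most of order $n$. Letting $n \to \infty$ produces the contradiction.

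Finally, for item (4), I would exploit planarity of the half plane together with the fact that, for every $n$, the two finite geodesics from $x$ and from $y$ to the common endpoint $x_n$ coalesce at some vertex $z_n$ (since $\mathbb{G}_n$ is a tree). The key claim is that $(z_n)$ is almost surely bounded; given this, a diagonal extraction forces $\Gamma_x$ and $\Gamma_y$ to share a common vertex $z$ and hence, by (1) and (2), the same tail $\Gamma_z$. To show $(z_n)$ stays bounded, I would argue by contradiction: if $z_n \to \infty$ along a subsequence, then $\Gamma_x$ and $\Gamma_y$ would be two disjoint infinite forward paths in $V_H$, both arising as limits of geodesics to $(n,0)$. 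Planarity gives a well-defined vertical ordering between any two non-coalescing such paths, and by horizontal translation ergodicity non-coalescence on a set of positive probability promotes to a positive density of distinct infinite tree components in $\mathbb{G}$; a Burton--Keane-type counting on vertical slices, using that at most $h+1$ disjoint infinite trees can cross a vertical segment of height $h$ of the half plane, then contradicts this positive density. The main obstacle is item (4): in general dimensions and arbitrary directions coalescence without curvature is a major open problem (cf.\ the discussion after Theorem~\ref{theorem:Newman2}), and only the combination of planarity with the canonical boundary-endpoint choice $x_n = (n,0)$ makes the half-plane argument tractable; item (3) is the other nontrivial step, where the Burton--Keane counting has to be set up carefully to accommodate the boundary of $V_H$.
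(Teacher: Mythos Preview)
Your treatments of (1) and (2) are correct and are essentially what the paper has in mind; they follow directly from unique geodesics and pointwise stabilization of $\eta_n$. The paper itself defers the full proof of (3) and (4) to \cite{ADH1}, but the gaps in your sketches are visible already against the analogous full-plane argument in Section~\ref{sec: busemann_increments}.

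For (3), your Burton--Keane outline asserts that one can locate an encounter point $u$ with \emph{two} disjoint infinite backward paths merging there. K\"onig's lemma applied to an infinite backward cluster only produces \emph{one} infinite backward ray; the second one is not free. In the paper's parallel argument (proof of Theorem~\ref{thm: no_bigeo_BK}) the encounter point is manufactured precisely as the coalescence point of $\Gamma_{x_1}$ and $\Gamma_{x_2}$ for two vertices each carrying an infinite backward ray --- so item (4) is an \emph{input} to (3), and your ordering is backwards. An independent route to (3) in the half-plane is via Wehr--Woo (Theorem~\ref{WWTheorem}, extended in \cite{ADH1} to require only unique geodesics): an infinite backward ray $\ldots,z_2,z_1,z_0=x$ concatenated with $\Gamma_x$ is a half-plane bigeodesic, since every finite subpath is a segment of some $\Gamma_{z_k}$ and hence a geodesic. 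For (4), your density-versus-counting argument does not close: having $\sim cn$ distinct tree components touching $[-n,n]\times\{0\}$ is perfectly compatible with ``at most $h+1$ components meet a height-$h$ vertical segment,'' because a side-$n$ box has order-$n$ boundary vertices through which those components may escape --- there is no surface/volume mismatch. Coalescence proofs of this kind (cf.\ the Licea--Newman argument sketched after Theorem~\ref{theorem:Newman2} and its adaptation for Theorem~\ref{theorem: coalesce_increments}) require an edge-weight modification step to build barriers and produce \emph{trapped} trees; planarity and horizontal ergodicity alone do not supply the contradiction.
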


\begin{proof}
We give the ideas of the proofs of the first two items only. Busemann limits are proved as a consequence of the ``paths crossing'' trick of Alm and Wierman \cite{AW99}. We begin by showing the limit for $x = m_1e_1, y = m_2 e_1$ with $m_1 < m_2 \in \mathbb{Z}$. For simplicity, let us assume that for each $w,z \in V_H$, there is a geodesic between $w$ and $z$. If this is not the case, then the geodesics can be replaced by paths that have passage time within $\epsilon$ of the infimum.

Let $n_2 > n_1 > m_2$ and let $\sigma_1$ be a geodesic from $x$ to $n_1e_1$, with $\sigma_2$ a geodesic from $y$ to $n_2e_1$. Note that by planarity, the paths $\sigma_1$ and $\sigma_2$ must share a vertex $z$. Define the path $\hat{\sigma}_1$ by traversing $\sigma_1$ from $x$ to $z$ and then $\sigma_2$ from $z$ to $n_2e_1$. Define $\hat{\sigma}_2$ by traversing $\sigma_2$ from $y$ to $z$ and then $\sigma_1$ from $z$ to $n_1e_1$. Then (see Figure \ref{fig:AWtrick}),
\begin{align*}
T_H(x,n_1e_1) + T_H(y,n_2e_1) = T_H(\sigma_1) + T_H(\sigma_2) &= T_H(\hat{\sigma}_1) + T_H(\hat{\sigma}_2) \\
&\geq T_H(x,n_2e_1) + T_H(y,n_1e_1).
\end{align*}

\begin{figure}[h]
\scalebox{1.4}{ 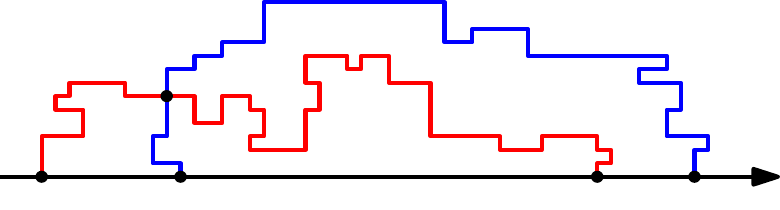}
 \centering
 \def \svgwidth{3000pt}
 \caption{``Paths crossing'' trick: the red path $\sigma_{1}$  and the blue path $\sigma_{2}$ must intersect at a point $z$.  }
 \label{fig:AWtrick}
\end{figure}

Rearranging this, we obtain
\[
T_H(x,n_1e_1) - T_H(y,n_1e_1) \geq T_H(x,n_2e_1) - T_H(y,n_2e_1).
\]
Therefore the sequence in \eqref{eq: busemann_sequence_pizza} is monotone and the limit exists.

To generalize this off the axis, we take $x \in V_H$ be of the form $(x_1,x_2)$ with $x_2>0$ and $x' \in V_H$. Let $B$ be a set of vertices of the type $[-k,k] \times [0,k]$ for some $k$ large enough so that $B$ contains both $x$ and $x'$. The set $V' := V_H \setminus B$ is connected and infinite with complement that is also connected and infinite. So we define for $y,z \in V'$ the restricted passage time $T'(y,z)$ to be the minimum of passage time of all paths from $y$ to $z$ which have only vertices in $V'$. One may then repeat the above paths crossing argument to see that if $y,z$ are on $\partial V'$, the inner boundary of $V'$ (they have a $\mathbb{Z}^2$-neighbor which is in $B$), then the limit $$B'(y,z) = \lim_n \left[ T'(y,x_n) - T'(z,x_n)\right]$$ exists. Furthermore, for all such $y,z$, the sequence defining $B'(y,z)$ is monotone.

Now the idea is to use existence of Busemann limits in the restricted region to prove it for $x$ and any other point. The crucial point is that for large $n$,
\[
T(x,x_n) = \min \{T(x,y) + T'(y,x_n) : y \in \partial V'\}.
\]
For $z \in \partial V'$ fixed and $y \in \partial V'$ variable, we set $\psi_n(y) = T(x,y) + T'(y,x_n)-T'(z,x_n)$. We find then that $\psi_n(y)$ has a finite limit $\psi(y)$ for each $y$ and furthermore
\[
\lim_n \left[ T(x,x_n) - T'(z,x_n) \right] = \min \{ \psi(y) : y \in \partial V'\}.
\]
The same argument works if we replace $x$ by $x'$, so $\lim_n \left[ T(x,x_n)-T(x',x_n) \right]$ exists. 

To prove existence of limiting geodesic graphs, we take $e = \langle x,y \rangle \in \vec{E}_H$ and again let $B$ be a box of the form $[-k,k]\times [0,k]$ which is large enough to contain $x$ and $y$. If $N$ is large enough so that $x_n \notin B$ for all $n \geq N$, then if we again put $V' = V_H \setminus B$ one has for all $y \in \partial V'$,
\[
y \text{ is on a geodesic from }x \text{ to }x_n \Leftrightarrow \psi_n(y) = \min\{\psi_n(u) : u \in \partial V'\},
\]
where $\psi_n(y)$ was defined as above: $$T(x,y) + T'(y,x_n) - T'(z,x_n)$$ for the fixed $z \in \partial V'$. However, due to monotonicity of the differences $T'(y,x_n) - T'(z,x_n)$ in $n$ as $y$ ranges over $\partial V'$, the set of minimizers $\mathfrak{m}_n$ of $\psi_n$ is eventually constant for $n$ large. Thus, the set of points in $\partial V'$ which are in geodesics from $x$ to $x_n$ is eventually constant in $n$. 

The above work readily implies that $\mathbb{G}_n$ converges. Indeed, one can check that $\eta_n(e)=1$ if and only if $e$ is in a geodesic from $x$ to a vertex of $\mathfrak{m}_n$. Since $\mathfrak{m}_n$ is eventually constant, the value of $\eta_n(e)$ must also be eventually constant.
\end{proof}

\subsection{Nonexistence of Bigeodesics in fixed directions}\label{sec: bigeo_new}

Recently, Busemann functions have been used in last-passage percolation models by Sepp\"al\"ainen, Rassoul-Agha, and Georgiou to prove analogues (and improved versions) of the directional results from \cite{DHanson} in FPP. This is a big advance because most work on LPP has assumed exponential or geometric weights, where random matrix and queueing theory analysis can be used. 

A main tool in the general LPP models is the directedness of paths, which allows one to use various forms of the ``paths crossing'' argument of Alm and Wierman. Applied to Busemann functions, this gives a certain monotonicity of directional Busemann functions. In FPP, paths are not directed, and this creates a fundamental obstacle to obtaining similar improvements. 

In this section, we describe recent work of Damron-Hanson \cite{DHanson2} which shows that the LPP results are also valid in FPP. These theorems address the ``uniqueness'' issue in Newman's conjectures about infinite geodesics, and consequently rule out bigeodesics in fixed directions. In that paper, the following theorem is proved for general passage-time distributions considered in the first work of Damron-Hanson \cite{DHanson}, but we will focus again on i.i.d. weights, assuming I and II from Section~\ref{sec: busemann_increments}.

We will again make some assumptions on the limiting shape: let $\theta \in [0,2\pi)$ and let $v_\theta$ be the point on the boundary of the limit shape in direction $\theta$. When $v_\theta$ is a point of differentiability of the boundary of the limit shape, let $L_\theta$ be the unique tangent line and let $S_\theta$ be the sector of angles of contact of $L_\theta$ with $\partial \mathcal{B}$. Let $\theta_1$ and $\theta_2$ be the endpoints of $S_\theta$.

\begin{theorem}[Damron-Hanson \cite{DHanson2}]\label{thm: new_DH}
Assume I, II, and that the limit shape boundary is differentiable at $\theta, \theta_1$, and $\theta_2$. The following hold with probability one.
\begin{enumerate}\label{thm: uniqueness_upgrade}
\item For each $x \in \mathbb{Z}^2$, there is an infinite geodesic $\Gamma_x$ that is directed in $S_\theta$ such that for any (random) sequence $(x_n)$ of points directed in $S_\theta$,
\[
\Gamma_x = \lim_n \Gamma(x,x_n).
\]
\item For each $x,y \in \mathbb{Z}^2$, the geodesics $\Gamma_x$ and $\Gamma_y$ coalesce.
\item There are no bigeodesics with one end directed in $S_\theta$.
\end{enumerate}
\end{theorem}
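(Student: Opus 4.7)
The plan is to upgrade the subsequential Busemann-limit construction of Section 5.4 into a deterministic (environment-measurable) limit, using the extra differentiability of $\partial \mathcal B$ at the endpoints $\theta_1, \theta_2$ of the sector $S_\theta$. I would begin by introducing, for each $x,y \in \mathbb Z^2$, the extreme Busemann functions
\[
B^+_\theta(x,y) = \limsup_{n \to \infty} [T(x, H_n^\theta) - T(y, H_n^\theta)], \qquad B^-_\theta(x,y) = \liminf_{n \to \infty} [T(x, H_n^\theta) - T(y, H_n^\theta)],
\]
where $H_n^\theta = \{w : w \cdot \rho = n\}$ and $\rho$ is the normal to the unique tangent $L_\theta$. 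Both are well-defined random variables bounded by $T(x,y)$. Reducing the theorem to the equality $B^+_\theta = B^-_\theta$ a.s.\ is the main game: once that is known, defining $\Gamma_x$ as the unique path emanating from $x$ along edges $\{x,y\}$ with $B_\theta(x,y) = \tau_{\{x,y\}}$ (unique by continuity of $F$) gives the candidate family of coalescing geodesics.

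For the crucial equality of $B^\pm_\theta$, I would build on the averaged-measure framework of Section 5.4. Let $\mu^{*,+}, \mu^{*,-}$ be the pushforwards of $\mathbb P$ under the maps $\omega \mapsto (\omega, B^\pm_\theta, \eta^\pm_\theta)$, where $\eta^\pm_\theta$ is the associated limiting geodesic graph. The Busemann shape theorem (Lemma~\ref{thm:Busemannlimitshape}) produces random asymptotic slopes $\varrho^\pm$ for $B^\pm_\theta$, and I would argue that differentiability of $\partial \mathcal B$ at $\theta$ forces $\varrho^+ = \varrho^- = \rho$ almost surely, exactly as in the Corollary following that lemma. To close the gap between $B^+_\theta$ and $B^-_\theta$ themselves (not just their means), I would exploit the additional differentiability at $\theta_1$ and $\theta_2$: the Busemann functions constructed from sequences approaching $v_{\theta_1}$ and $v_{\theta_2}$ have the same normal $\rho$, and an Alm--Wierman type path-crossing argument (Section 5.5) run against the boundary of a large finite box $B(N)$ gives the ordering $B^-_\theta \leq B^+_\theta$ as well as a matching reverse inequality in expectation. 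The reverse inequality in expectation together with $B^+_\theta \geq B^-_\theta$ forces equality a.s.

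With $B_\theta := B^+_\theta = B^-_\theta$ in hand, the three claimed statements follow quickly. For (1), the paths $\Gamma_x$ just described are infinite geodesics (by additivity of $B_\theta$ along its minimizing edges), they are asymptotically directed in $S_\theta$ by the Busemann shape theorem combined with the true shape theorem (as in the corollary of Section 5.3), and if $(x_n)$ is directed in $S_\theta$, then any subsequential limit of $\Gamma(x, x_n)$ must satisfy the same Busemann-minimizing property, so by uniqueness of edge minimizers it coincides with $\Gamma_x$. For (2), coalescence of $\Gamma_x, \Gamma_y$ now follows from the Licea--Newman argument (Theorem~\ref{theorem:Newman2}): the analogous ``no bifurcation'' step used in its proof applies here because the Busemann function is intrinsic to the environment rather than depending on an auxiliary hyperplane. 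For (3), any bigeodesic with one end directed in $S_\theta$ would imply that infinitely many vertices $y$ have $x$ on $\Gamma_y$, contradicting the Burton--Keane-type finiteness of the backward cluster $\{y : x \in \Gamma_y\}$ proved exactly as in Theorem~\ref{thm: no_bigeo_BK}.

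The hard part is the middle step, the identification $B^+_\theta = B^-_\theta$. In last-passage percolation the analogous identity is immediate from monotonicity of Busemann increments along up-right paths, a consequence of path directedness; the undirected FPP setting removes this monotonicity and forces one to control crossings of paths only against finite boundaries, as in Section 5.5. The differentiability of $\partial \mathcal B$ at the two endpoints $\theta_1, \theta_2$ is what replaces directedness: it squeezes the slopes of $B^\pm_\theta$ in every direction of $S_\theta$ onto the common line $L_\theta$, eliminating the possibility of a gap between $B^+_\theta$ and $B^-_\theta$ that would otherwise persist as $N \to \infty$ in the finite-box crossing argument.
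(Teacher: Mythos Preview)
Your overall architecture---reduce to proving uniqueness of a Busemann limit, then deduce the three items---is correct in spirit, but the implementation has a genuine gap at the central step.

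The functions $B^+_\theta(x,y)=\limsup_n[T(x,H_n^\theta)-T(y,H_n^\theta)]$ and $B^-_\theta$ are \emph{not additive}: $\limsup$ of a sum is not the sum of $\limsup$'s, so in general $B^+_\theta(x,z)+B^+_\theta(z,y)\neq B^+_\theta(x,y)$. Without additivity, the Busemann shape theorem (Lemma~\ref{thm:Busemannlimitshape}) does not apply, there is no well-defined slope $\varrho^\pm$, and the argument that differentiability at $\theta$ forces $\varrho^+=\varrho^-=\rho$ never gets off the ground. Relatedly, your proposed Alm--Wierman crossing argument ``against the boundary of a large box $B(N)$'' does not produce an inequality between $B^+$ and $B^-$ in the full plane: the Section~5.5 trick requires that geodesics from ordered boundary points to a far target must cross, which is a planarity/ordering statement that fails for undirected full-plane geodesics to a hyperplane (this is exactly the obstacle the paper flags when comparing FPP to LPP).

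The paper's proof avoids both problems by changing the objects whose equality is to be shown. It first reduces to the upper half-plane (the ``most work'' step you did not include), where one can order infinite geodesics and define, for each $x$, the left-most and right-most infinite half-plane geodesics $\Gamma^L_{x,H},\Gamma^R_{x,H}$ directed in $S_\theta$. Each of these families coalesces (Licea--Newman), so their associated Busemann functions $B^L_H,B^R_H$, defined along the geodesics themselves, \emph{are} additive. In the half-plane the right-most geodesic from $0$ and the left-most from $e_1$ must share a vertex, and the paths-crossing trick then gives the pointwise inequality $B^L_H(0,e_1)\geq B^R_H(0,e_1)$. Differentiability at $\theta$ forces both linear functionals to have normal $\rho$, so the ergodic average of the difference is zero; combined with the sign this yields $B^L_H=B^R_H$ and hence $\Gamma^L_{0,H}=\Gamma^R_{0,H}$. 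Differentiability at $\theta_1,\theta_2$ is used separately, in a trapping lemma, to show that any subsequential limit of $\Gamma(0,x_n)$ with $(x_n)$ directed in $S_\theta$ is itself directed in $S_\theta$ and hence squeezed between $\Gamma^L$ and $\Gamma^R$. Items~2 and~3 then follow as you outlined. The key missing idea in your proposal is thus the half-plane reduction and the use of extremal geodesics (rather than $\limsup/\liminf$ of hyperplane increments) to manufacture additive Busemann functions that can be compared by a legitimate crossing argument.
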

Notice that this result reduces Licea-Newman's ``exceptional set'' $D^c$ (see Theorem~\ref{theorem:LN}) of directions to be empty. Furthermore, their result rules out bigeodesics with both ends directed off the exceptional set, whereas the above theorem only requires directedness of one end (in a sector).

From this result we obtain the following progress on the bigeodesic conjecture.
\begin{corollary}
Assume I, II and that the limit shape boundary is differentiable. Then for each $\theta$,
\[
\mathbb{P}(\text{there is a bigeodesic with one end in direction }\theta)=0.
\]
\end{corollary}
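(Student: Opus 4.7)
The plan is to reduce the corollary directly to conclusion (3) of Theorem \ref{thm: new_DH}. Fix an angle $\theta \in [0, 2\pi)$. Under the blanket assumption that $\partial \mathcal{B}$ is differentiable everywhere, the theorem's hypotheses are automatic: differentiability at $\theta$ is immediate, and differentiability at the endpoints $\theta_1, \theta_2$ of the sector $S_\theta$ (the set of angles at which the unique tangent line $L_\theta$ contacts $\partial\mathcal{B}$) follows from the same blanket assumption.

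Next I would verify the tautological inclusion $\theta \in S_\theta$. Since $v_\theta \in \partial\mathcal{B}$ lies on the tangent line $L_\theta$ by construction, the direction $\theta$ is itself one of the angles of contact of $L_\theta$ with $\partial\mathcal{B}$. Consequently, any bigeodesic one of whose two ends has asymptotic direction $\theta$ is, in particular, a bigeodesic one of whose ends is asymptotically directed in $S_\theta$ (in the sense used in Theorem \ref{thm: new_DH}, namely that every limit point of $\{x/g(x) : x$ on the ray$\}$ lies in the closed arc $S_\theta \cap \partial \mathcal{B}$). Thus
\[
\{\exists \text{ bigeodesic with one end in direction }\theta\} \subseteq \{\exists \text{ bigeodesic with one end directed in } S_\theta\}.
\]

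Finally I would invoke Theorem \ref{thm: new_DH}(3), which asserts that the latter event has probability zero, to conclude the same for the former. There is essentially no additional work: the conclusion of the corollary is a special case of the theorem once one observes the above set inclusion.

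The only point of mild subtlety, and what I would expect to be the main obstacle if one tried to prove it without the differentiability hypothesis, is that under differentiability the sector $S_\theta$ may still be nondegenerate, so having an end ``in direction $\theta$'' is genuinely a stronger event than having an end ``directed in $S_\theta$''; the corollary is much weaker than Theorem \ref{thm: new_DH}(3) and its proof becomes a one-line application. Without the differentiability assumption, in contrast, one loses access to the tangent line $L_\theta$ and the Busemann-function technology underlying Theorem \ref{thm: new_DH}, and the statement would revert to the general bigeodesic problem which remains open.
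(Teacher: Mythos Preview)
Your proposal is correct and matches the paper's approach: the paper states the corollary as an immediate consequence of Theorem~\ref{thm: new_DH} without further proof, and you have spelled out exactly the intended reduction, namely that global differentiability of $\partial\mathcal{B}$ yields the local differentiability at $\theta,\theta_1,\theta_2$ required by the theorem, and that a bigeodesic with one end in direction $\theta$ is in particular one with an end directed in $S_\theta$, so item~(3) applies.
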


\begin{remark}
Since geodesic lines with fixed directions cannot exist, one should ask if infinite geodesics are even required to have directions. One can show using planarity and the results of \cite{DHanson} that if $\partial B$ is differentiable and I and II hold, then the following statements are true with probability one:

\begin{enumerate}
\item for all $\theta$, there is an infinite geodesic starting from $0$ directed in $S_{\theta}$ and
 \item every infinite geodesic is directed in $S_{\theta}$ for some $\theta$.
 \end{enumerate}
\end{remark}

As a consequence of the theorem and arguments like those presented elsewhere in this section, one has
\begin{theorem}
Assume I, II, and that the limit shape boundary is differentiable at $\theta, \theta_1$, and $\theta_2$. With probability one, for each $x,y \in \mathbb{Z}^2$ and sequence $(x_n)$ directed in $S_\theta$, the limit
\[
B(x,y) = \lim_n[T(x,x_n) - T(y,x_n)]
\]
exists. Furthermore, letting $\rho$ be the unique vector in $\mathbb{R}^2$ such that $\{r \in \mathbb{R}^2 : r \cdot \rho = 1\}$ is the tangent line to $\mathcal{B}$ in direction $\theta$, one has:
\begin{enumerate}
\item $\mathbb{E}B(0,x) = \rho \cdot x$ for all $x \in \mathbb{Z}^2$.
\item For each $\epsilon>0$, the set of $x \in \mathbb{Z}^2$ such that $|B(0,x) - \rho \cdot x| > \epsilon \|x\|_1$ is finite.
\end{enumerate}
\end{theorem}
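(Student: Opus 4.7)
The plan is to build the Busemann function directly from the coalescent geodesic tree provided by Theorem~\ref{thm: new_DH}, and then establish the three assertions of the theorem by combining Birkhoff's theorem with the shape theorem for $T$ and the differentiability of $\partial\mathcal B$ at $\theta$.

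First I would show existence of $B$. For $x,y\in\mathbb Z^2$ and any $(x_n)$ directed in $S_\theta$, Theorem~\ref{thm: new_DH} produces coalescent infinite geodesics $\Gamma_x=\lim_n\Gamma(x,x_n)$ and $\Gamma_y=\lim_n\Gamma(y,x_n)$; let $z$ be any vertex on their common tail. For all $n$ large enough the initial portion of $\Gamma(x,x_n)$ agrees with $\Gamma_x$ past $z$ (and similarly for $y$), so $z\in\Gamma(x,x_n)\cap\Gamma(y,x_n)$. Geodesic additivity then forces $T(x,x_n)-T(y,x_n)=T(x,z)-T(y,z)$ eventually, so the limit $B(x,y)=T(x,z)-T(y,z)$ exists (the defining sequence is in fact eventually constant). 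From this representation one reads off additivity $B(x,y)+B(y,w)=B(x,w)$ and translation covariance $B(x+v,y+v)\stackrel{d}{=}B(x,y)$ for $v\in\mathbb Z^2$.

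Next I would establish a shape theorem for $B$ with an a priori unknown deterministic tilt $\alpha\in\mathbb R^2$. For each $v\in\mathbb Z^2\setminus\{0\}$, the sequence $\{B((k-1)v,kv)\}_{k\geq 1}$ is stationary and ergodic under the $v$-shift of the i.i.d. environment, with integrable marginals since $|B(0,v)|\leq T(0,v)$ and $\mathbb E\,T(0,v)<\infty$ by Lemma~\ref{lemma:moments}. Birkhoff's theorem yields $B(0,nv)/n\to\mathbb E\,B(0,v)$ almost surely, and additivity together with translation invariance of the mean force $v\mapsto\mathbb E\,B(0,v)$ to be linear, so $\mathbb E\,B(0,v)=\alpha\cdot v$ for a deterministic $\alpha\in\mathbb R^2$. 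Because the bound $|B|\leq T$ transfers the difference estimate of Lemma~\ref{lem:estimate} to $B$, the interpolation argument used in the proof of Theorem~\ref{thm:limitshape} upgrades this radial convergence to the full shape theorem $|B(0,x)-\alpha\cdot x|=o(\|x\|_1)$ almost surely.

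The main obstacle is then the identification $\alpha=\rho$, and this is the only place where the differentiability hypotheses are needed. The bound $|B(0,x)|\leq T(0,x)$, combined with the two shape theorems, gives $\alpha\cdot r\leq\mu(r)$ for every $r\in\mathbb R^2$; equivalently, $\{r:\alpha\cdot r=1\}$ lies weakly above $\mathcal B$. To produce a contact point I would use the identity $B(0,y_k)=T(0,y_k)$ along any vertex enumeration $(y_k)$ of $\Gamma_0$, valid because by coalescence $\Gamma_{y_k}$ is the tail of $\Gamma_0$, so passage-time additivity along $\Gamma_0$ applies. Since $\Gamma_0$ is asymptotically directed in $S_\theta$, every subsequential limit $\hat\psi$ of $y_k/\|y_k\|$ lies in $S_\theta$; applying the two shape theorems along such a subsequence yields $\alpha\cdot\hat\psi=\mu(\hat\psi)=\rho\cdot\hat\psi$, i.e.\ $\alpha\cdot v_\psi=1$. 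Thus $\{r:\alpha\cdot r=1\}$ is a supporting line of $\mathcal B$ at a point $v_\psi\in S_\theta\cap\partial\mathcal B$, and the differentiability of $\partial\mathcal B$ at $\theta$, $\theta_1$, and $\theta_2$ (with automatic uniqueness at interior points of the flat segment $[v_{\theta_1},v_{\theta_2}]$, if present) forces this line to coincide with $L_\theta=\{r:\rho\cdot r=1\}$, so $\alpha=\rho$. Conclusion (2) is then the shape theorem of the previous paragraph with $\alpha=\rho$, and conclusion (1) follows from $\mathbb E\,B(0,e_i)=\alpha\cdot e_i=\rho\cdot e_i$ together with additivity.
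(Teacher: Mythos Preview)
Your proposal is correct and follows essentially the same route the paper indicates: the paper presents this theorem as ``a consequence of the theorem [Theorem~\ref{thm: new_DH}] and arguments like those presented elsewhere in this section,'' and your proof is precisely the synthesis of those ingredients---coalescence from Theorem~\ref{thm: new_DH} to build $B$, the Birkhoff/interpolation shape theorem for $B$ as in Lemma~\ref{thm:Busemannlimitshape}, and the identification of the tilt via the supporting-line argument exactly as in the lemma ``Under our differentiability assumption on $\partial\mathcal B$, $c=0$'' inside the sketch of proof of Theorem~\ref{thm: uniqueness_upgrade}.
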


In these theorems, one may ask the degree to which the differentiability assumption is necessary. The main obstacle to removing differentiability is the result of H\"aggstrom-Meester (Remark~\ref{rem: HM} and Theorem~\ref{thm:HM}), which we recall states that given any compact, convex subset $\mathcal{C}$ of $\mathbb{R}^d$ that has the symmetries of $\mathbb{Z}^d$ that fix the origin, there exists a translation-ergodic distribution of edge-weights whose limit shape is $\mathcal{C}$. In particular, there are models of FPP whose limit shapes are polygons. In these situations, it is reasonable to believe that one can construct models in which the only infinite geodesics are ones which do not have asymptotic directions -- they wander across the sectors corresponding to the sides of the limit shape. This leads one to consider directedness in sectors. However, it is reasonable to expect some FPP models which have polygonal limit shapes, and for which there are infinite geodesics directed along the corners. In this case, uniqueness of infinite geodesics in sectors corresponding to these sides will not hold. This is prevented by making a differentiability assumption.

\begin{remark}\label{rem: BKS_midpoint}
The above results give progress toward solving the ``BKS midpoint problem'' introduced in \cite{BKS}. The question is: is it true that the probability that $(n/2)e_1$ is in a geodesic from 0 to $ne_1$ goes to zero at $n \to \infty$? By a translation, if the answer is no, then 
\[
\mathbb{P}\left(0 \text{ is in a geodesic from } (-n/2)e_1 \text{ to } (n/2)e_2 \text{ for infinitely many }n\right)>0.
\]
On this event, 0 is in a bigeodesic, and under the differentiability assumptions of Theorem~\ref{thm: new_DH}, one of its ends is directed in the sector corresponding to $\theta=0$. This is impossible by the theorem.
\end{remark}

\begin{proof}[Sketch of proof of Theorem~\ref{thm: uniqueness_upgrade}] 

The proof we give is more similar to the proof in LPP \cite{GRS} because, for ease of exposition, we will omit the many applications of the Jordan curve theorem needed to deal with undirected paths.

For simplicity, we will take $\theta = \pi/2$ and $\pi > \theta_1 \geq \theta \geq \theta_2 > 0$. We will first reduce to the half-plane, where it is easier to order infinite geodesics. We will skip this step, although it is the most work, because it does not involve Busemann functions, but just topological arguments. However we mention that this reduction is done using the fact that every path that is directed in $S$ must have at most finitely many intersections with $L_0 = \{(x_1,0) : x_1 \in \mathbb{Z}\}$. So consider the upper half-plane $\mathbb{H}$ with vertices $V_H$ and nearest-neighbor edges $E_H$, and define passage times $T_H(x,y)$ for vertices $x,y \in V_H$ by considering only paths in the upper half-plane. We will content ourselves with proving the theorem for geodesics constructed in the upper half-plane using $T_H$.

Let $\mathcal{G}_H(x)$ be the set of infinite geodesics starting from $x$ in the half-plane which are directed in $S = S_\theta$. By ordering in the half-plane, for each $x \in L_0$, there is a left-most infinite geodesic in $\mathcal{G}_H(x)$, written as $\Gamma_{x,H}^L$. Similarly there is a right-most infinite geodesic written as $\Gamma_{x,H}^R$. The main step is to argue that that
\begin{equation}\label{eq: left_right_equal}
\Gamma_{0,H}^L = \Gamma_{0,H}^R \text{ with probability one}.
\end{equation}
Once we show this, then part 1 of the theorem follows from the following lemma, whose proof uses the assumption that $\partial \mathcal{B}$ is differentiable at the endpoints $\theta_1,\theta_2$.
\begin{lemma}\label{lem: directed_in_S}
With probability one, the following holds. If $(x_n)$ is any (random) sequence of points directed in $S$, and $\Gamma$ is a subsequential limit of half-plane geodesics from $0$ to $x_n$, then $\Gamma$ is directed in $S$.
\end{lemma}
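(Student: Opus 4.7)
I would argue by contradiction, supposing some subsequence $(y_k)$ of vertices of $\Gamma$ satisfies $y_k/g(y_k) \to z$ with $z \in \partial\mathcal{B}\setminus S$. The differentiability of $\partial \mathcal{B}$ at both endpoints $\theta_1,\theta_2$ is what gives us a separating linear functional: it forces $L_\theta$ to be the \emph{unique} supporting hyperplane of $\mathcal{B}$ at every point of $S$, so writing $\rho$ for the vector with $L_\theta = \{u : u\cdot\rho = 1\}$, we get $u\cdot\rho \le \mu(u)$ for all $u$, with equality iff $u/\mu(u) \in S$. Since $z\notin S$ and $\partial\mathcal{B}\setminus S$ is compactly separated from $S$, there exists $\delta > 0$ with $z\cdot\rho \le 1-\delta$, so
\[
\mu(y_k) - y_k\cdot\rho \;\ge\; (\delta/2)\,\mu(y_k) \qquad \text{for $k$ large.}
\]

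Next I would use that $\Gamma$ is a subsequential limit of $\Gamma_{0,x_{n_j}}$, so for each $k$ and all $j \ge J(k)$, the vertex $y_k$ lies on the geodesic $\Gamma_{0,x_{n_j}}$ and hence
\[
T_H(0,x_{n_j}) \;=\; T_H(0,y_k) + T_H(y_k,x_{n_j}).
\]
The key reduction is then to replace the far endpoint $x_{n_j}$ by an intermediate vertex $\tilde x_k$ of $\Gamma_{0,x_{n_j}}$ chosen at scale $\mu(\tilde x_k) \asymp \mu(y_k)$, which exists because the lattice path traverses all intermediate scales between $\mu(y_k)$ and $\mu(x_{n_j})$. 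The initial segment of $\Gamma_{0,x_{n_j}}$ from $0$ to $\tilde x_k$ is itself a geodesic passing through $y_k$, so the same additive identity holds with $x_{n_j}$ replaced by $\tilde x_k$. Applying the uniform shape theorem (Lemma~\ref{lem:Pokahontas}) in a box of radius $\asymp\mu(y_k)$ to all three passage times and using $u\cdot\rho \le \mu(u)$ yields
\[
[\mu(y_k) - y_k\cdot\rho] + [\mu(\tilde x_k - y_k) - (\tilde x_k - y_k)\cdot\rho] \;\le\; [\mu(\tilde x_k) - \tilde x_k\cdot\rho] + O(\epsilon\,\mu(y_k))
\]
for any prescribed $\epsilon>0$ and $k$ large. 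Both summands on the left are non-negative, so dropping the second and using Step~1,
\[
(\delta/2)\,\mu(y_k) \;\le\; [\mu(\tilde x_k) - \tilde x_k\cdot\rho] + O(\epsilon\,\mu(y_k)).
\]

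To close the contradiction I need that $\tilde x_k$ is essentially directed in $S$, i.e.\ $\mu(\tilde x_k) - \tilde x_k\cdot\rho = o(\mu(\tilde x_k)) = o(\mu(y_k))$; then taking $\epsilon < \delta/8$ and $k$ large gives $(\delta/2)\mu(y_k) \le (\delta/4)\mu(y_k)$, the desired contradiction. This directedness of the intermediate vertex is the main obstacle, since $\tilde x_k$ is not one of the given endpoints. I would resolve it by invoking the half-plane ordering developed earlier in the proof: the leftmost and rightmost infinite geodesics $\Gamma^L_{0,H}$ and $\Gamma^R_{0,H}$ from $0$ directed in $S$ sandwich the finite geodesic $\Gamma_{0,x_{n_j}}$ (because $x_{n_j}$ is directed in $S$, so any path leaving the sandwich region would have to be crossed by $\Gamma^L_{0,H}$ or $\Gamma^R_{0,H}$, violating uniqueness of geodesics). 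Consequently every vertex of $\Gamma_{0,x_{n_j}}$ lies between $\Gamma^L_{0,H}$ and $\Gamma^R_{0,H}$, so the scaled vertex $\tilde x_k/\mu(\tilde x_k)$ is trapped between the asymptotic directions of these two sandwich geodesics, both of which lie in $S$; the differentiability assumption at $\theta_1,\theta_2$ again prevents any concentration of mass outside $S$, giving the needed $\mu(\tilde x_k) - \tilde x_k\cdot\rho = o(\mu(\tilde x_k))$ and completing the proof.
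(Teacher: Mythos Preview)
Your final ``sandwich'' step is where the argument breaks, and the gap is essentially circular. You claim that the finite geodesic $\Gamma_{0,x_{n_j}}$ lies between the extremal infinite geodesics $\Gamma^L_{0,H}$ and $\Gamma^R_{0,H}$ of $\mathcal{G}_H(0)$. But $(x_n)$ being directed in $S$ does \emph{not} force $x_{n_j}$ to lie between $\Gamma^L_{0,H}$ and $\Gamma^R_{0,H}$: the leftmost geodesic $\Gamma^L_{0,H}$ may well have asymptotic direction strictly interior to $S$, while $x_{n_j}/\|x_{n_j}\|$ could converge to the endpoint $\theta_1$, leaving $x_{n_j}$ on the \emph{left} of $\Gamma^L_{0,H}$ for all large $j$. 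In that case $\Gamma_{0,x_{n_j}}$ (after separating from $\Gamma^L_{0,H}$ at $0$) lies entirely to the left of $\Gamma^L_{0,H}$, and your intermediate vertex $\tilde x_k$ is not trapped at all. Notice also that if the sandwich \emph{did} hold, it would prove the lemma outright and render your entire Busemann--shape-theorem computation with $\tilde x_k$ superfluous; this is a sign that the real content of the lemma has been pushed into an unjustified claim.

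A related symptom is that your argument never genuinely uses differentiability of $\partial\mathcal{B}$ at $\theta_1,\theta_2$. Your Step~1 invokes it to get ``$L_\theta$ is the unique supporting hyperplane at every point of $S$,'' but that inequality $u\cdot\rho\le\mu(u)$ with equality exactly on $S$ holds by the very definition of $S$ as the contact set of $L_\theta$, with no differentiability needed. The paper's proof uses differentiability at $\theta_1$ for a completely different purpose: it guarantees extreme points of $\partial\mathcal{B}$ accumulating at $v_{\theta_1}$ from \emph{outside} $S$, and hence (via a half-plane version of the sector-directed geodesic existence from \cite{DHanson}) a sequence of barrier geodesics directed in sectors strictly to the left of $S$, with directions converging to $\theta_1$. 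Because each such barrier is directed \emph{strictly outside} $S$, the endpoint $x_{n_j}$ (directed in $S$) is eventually on its right, so by unique geodesics $\Gamma$ is trapped to the right of every barrier, hence to the right of $\theta_1$; symmetrically for $\theta_2$. The crucial difference from your attempt is that the barriers are taken outside $S$ rather than inside.
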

The idea of the proof of this lemma is to use trapping. Since $\partial \mathcal{B}$ is differentiable at $\theta_1$, one can find infinitely many extreme points of $\mathcal{B}$ which converge to $v_{\theta_1}$ outside of $S$. One then derives a half-plane version of the existence of infinite geodesics directed in sectors from \cite{DHanson}, and uses this to construct infinitely many geodesics which are directed in distinct sectors whose endpoints converge to $v_{\theta_1}$ from outside $S$. The subsequential limit $\Gamma$ cannot intersect any of these geodesics more than once, due to unique passage times, and so it must be directed ``to the right'' of $\theta_{1}$. A similar argument works for $\theta_2$ and traps $\Gamma$ in $S$.

If we assume \eqref{eq: left_right_equal} and use the previous lemma, we find that any subsequential limit of geodesics from 0 to a (random) sequence $(x_n)$ directed in $S$ must itself be directed in $S$, and must then sit between $\Gamma_{0,H}^L$ and $\Gamma_{0,H}^R$. This means all three geodesics are equal and this proves item 1.

Now we argue for \eqref{eq: left_right_equal}. Similarly to the definition of left and right-most geodesics in the half-plane, one can define left-most and right-most infinite geodesics from $x$ in the full-plane; we write these as $\Gamma_x^L$ and $\Gamma_x^R$. One can show that with positive probability, $\Gamma_{x,H}^L = \Gamma_x^L$ and similarly for $\Gamma_{x,H}^R = \Gamma_x^R$. This is argued using the fact that any path that is directed in $S$ and starts on $L_0$ has a last intersection with $L_0$. One can then use a Licea-Newman style argument to show that each $\Gamma_x^L$ and $\Gamma_y^L$ coalesce (and similarly for right-most) and then deduce this same statement for the half-plane geodesics. This allows one to define Busemann functions
\[
B_H^*(x,y) = \lim_n \left[ T_H(x,x_n) - T_H(y,x_n) \right]
\]
for $*=L$ or $R$, where $(x_n)$ is the sequence of vertices on $\Gamma_{0,H}^*$. A similar statement holds for full-plane geodesics and in this case, one may define the Busemann functions on all of $\mathbb{Z}^2$.

By the ergodic theorem, if we set $\Delta_H(x,y)$ to be $B_H^L(x,y) - B_H^R(x,y)$ (and similarly for $\Delta(x,y)$ as the difference of full-plane Busemann functions), then there is a $c$ such that
\[
\frac{1}{n} \Delta_H(0,ne_1) \to c \text{ and } \frac{1}{n} \Delta_H(0,-ne_1) \to -c
\]
almost surely. Furthermore, one can use equality of full-plane and half-plane geodesics (with positive probability) to show that these limits exist for $\Delta$ (the full-plane difference) and are equal to $c$ and $-c$, respectively.

Now \eqref{eq: left_right_equal} follows from two results:
\begin{lemma}
Under our differentiability assumption on $\partial \mathcal{B}$, $c=0$.
\end{lemma}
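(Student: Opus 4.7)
The plan is to identify the limit $c$ with an expectation via the ergodic theorem, and then use the differentiability hypothesis to show that the Busemann functions $B_H^L$ and $B_H^R$ have the same mean. By the half-plane analogue of the additivity property in Lemma \ref{lem:busproperties}(a) and the translation covariance of both Busemann functions along $L_0$, I would write
\[
\Delta_H(0, n e_1) \;=\; \sum_{k=1}^{n} \Delta_H\bigl((k-1) e_1, k e_1 \bigr),
\]
and observe that the summands form a stationary ergodic sequence under horizontal translation (horizontal shifts preserve the half-plane). Birkhoff's theorem then identifies the almost sure limit as $c = \mathbb{E}\bigl[ B_H^L(0, e_1) - B_H^R(0, e_1) \bigr]$, so it suffices to prove that these two expectations agree.

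Next, for each $\ast \in \{L, R\}$ I would apply the averaging trick of Lemma \ref{lem: expected_busemann_simple} (and its relative Lemma \ref{lem:todayisaniceday}) in the half-plane setting: combining additivity, translation invariance, and the uniform bound $|B_H^\ast(x, y)| \leq T_H(x, y)$ with the shape theorem for $T_H$ yields a deterministic vector $\rho_\ast \in \mathbb{R}^2$ with $\mathbb{E} B_H^\ast(x, y) = (x - y) \cdot \rho_\ast$. A Busemann shape theorem, proved exactly as in Lemma \ref{thm:Busemannlimitshape} but now with the simplification that the relevant geodesic rays $\Gamma_{0,H}^\ast$ are already directed in the fixed sector $S_\theta$, identifies $L_\ast := \{w : w \cdot \rho_\ast = 1\}$ as a supporting line of $\mathcal{B}$ that touches $\mathcal{B}$ at every asymptotic direction of $\Gamma_{0,H}^\ast$.

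I would then invoke Lemma \ref{lem: directed_in_S} to conclude that both $\Gamma_{0,H}^L$ and $\Gamma_{0,H}^R$ are directed in the sector $S_\theta$, whose extreme angles are $\theta_1$ and $\theta_2$. Thus each of $L_L$ and $L_R$ is a supporting line of $\mathcal{B}$ at some point on the closed arc from $v_{\theta_2}$ to $v_{\theta_1}$. Now the differentiability assumption comes in decisively: since $\partial \mathcal{B}$ is differentiable at $\theta$, $\theta_1$, and $\theta_2$, the tangent line $L_\theta$ is the unique supporting line of $\mathcal{B}$ at each point $v_{\theta_1}$ and $v_{\theta_2}$; but $L_\theta$ already contacts $\mathcal{B}$ along the entire arc from $v_{\theta_2}$ to $v_{\theta_1}$, so any supporting line at a point of that arc must coincide with $L_\theta$. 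Writing $L_\theta = \{w : w \cdot \rho = 1\}$, this pins down $\rho_L = \rho_R = \rho$, and therefore $\mathbb{E} B_H^L(0, e_1) = -\rho \cdot e_1 = \mathbb{E} B_H^R(0, e_1)$, giving $c = 0$.

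The main obstacle will be establishing the half-plane version of the Busemann shape theorem needed in the second step, together with the rigorous identification of $\rho_\ast$ as the normal to a supporting line at the asymptotic direction of $\Gamma_{0,H}^\ast$. The averaging argument of Lemma \ref{lem: expected_busemann_simple} relied on dominated convergence against $T_H$, which is available here since $\mathbb{E} T_H(0, x) < \infty$ under assumption (I); and the trapping argument behind Lemma \ref{lem: directed_in_S} already uses exactly the differentiability hypotheses at $\theta_1$ and $\theta_2$, so no further geometric input is required. Once these ingredients are assembled, the three steps combine to give $c = 0$.
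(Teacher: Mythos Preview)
Your overall strategy---identify $c$ as an expectation via the ergodic theorem, use a Busemann shape theorem to show that the expectation is governed by a vector $\rho_\ast$ normal to a supporting line of $\mathcal{B}$ at some direction in $S_\theta$, and then invoke differentiability at $\theta,\theta_1,\theta_2$ to force that supporting line to be $L_\theta$ for both $\ast=L$ and $\ast=R$---is exactly the paper's.

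There is, however, a genuine gap in your second step. You claim that the averaging trick of Lemma~\ref{lem: expected_busemann_simple} and Lemma~\ref{lem:todayisaniceday} gives a deterministic vector $\rho_\ast$ with $\mathbb{E}B_H^\ast(x,y)=(x-y)\cdot\rho_\ast$ \emph{in the half-plane}. Those lemmas deduce linearity of the expected increment from translation invariance in \emph{all} coordinate directions. The half-plane is invariant only under horizontal shifts, so while $\mathbb{E}B_H^\ast(0,ne_1)=n\,\mathbb{E}B_H^\ast(0,e_1)$ is fine, there is no reason that $\mathbb{E}B_H^\ast((k-1)e_2,ke_2)$ is independent of the height $k$, and hence no vector $\rho_\ast$ with the claimed property. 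Without linearity in both directions you cannot run the Busemann shape theorem, and therefore cannot identify $L_\ast$ as a supporting line of $\mathcal{B}$.

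The paper circumvents this by passing to the \emph{full-plane} Busemann functions $B^L,B^R$: immediately before the lemma it is established that the full-plane difference $\Delta$ has the same almost-sure limit $c$ along $ne_1$ as $\Delta_H$. In the full plane one has translation invariance by all of $\mathbb{Z}^2$, so $x\mapsto\mathbb{E}B^\ast(0,x)$ is genuinely linear with some $\rho^\ast$; the shape theorem then holds, one evaluates along the directed geodesic $\Gamma_0^\ast$ to see that $\{r:r\cdot\rho^\ast=1\}$ supports $\mathcal{B}$ at a point with argument in $S_\theta$, and your differentiability argument (which is correct as written) pins down $\rho^L=\rho^R$. The missing ingredient in your proposal is precisely this passage to the full plane.
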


\begin{proof}
For $*=L,R$ the function $x \mapsto \mathbb{E}B^*(0,x)$ is linear, and so there is a vector $\rho^*$ such that
\[
\mathbb{E}B^*(x,y) = \rho^* \cdot (y-x) \text{ for } x,y \in \mathbb{Z}^2.
\]
Once again, one may upgrade this to a shape theorem: for each $\epsilon>0$, 
\[
\mathbb{P}\left( |B^*(0,x) - \rho^* \cdot x| > \epsilon \|x\|_1 \text{ for finitely many }x \in \mathbb{Z}^2\right) = 1.
\]
Label the last intersections of $\Gamma_0^*$ with $L_0, L_1, L_2, \ldots$ as $x_0^*, x_1^*, x_2^*, \ldots$. (Here, $L_k = L_0 + ke_2$.) Then choose a subsequence $(x_{n_k}^*)$ such that $x_{n_k}^* / \|x_{n_k}^*\|_1$ converges to some vector $z$. By the shape theorem,
\[
B^*(0,x_n^*)/\|x_n^*\|_1 = T(0,x_n^*)/\|x_n^*\|_1 \to \mu(z).
\]
On the other hand, this converges to $\rho^*\cdot z$, so
\[
\rho^* \cdot z = \mu(z).
\]
Furthermore, as usual, $\rho^* \cdot r \leq \mu(r)$ for all $r \in \mathbb{R}^2$, using the fact that Busemann functions are bounded by the passage time. As $z$ must have argument in $S$, one concludes that
\[
L^* = \{r \in \mathbb{R}^2 : r \cdot \rho^* = 1\}
\]
is a supporting line for the limit shape at $z$. By differentiability, there is only one such supporting line, and so we conclude that $\mathbb{E}B^*(0,e_1) = 0$ for both $*=L,R$. The ergodic theorem completes the proof.
\end{proof}

\begin{lemma}
With probability one, $\Delta_H(0,e_1) \geq 0$. If $\Gamma_{0,H}^L \neq \Gamma_{0,H}^R$ with positive probability, then $c>0$.
\end{lemma}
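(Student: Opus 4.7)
My plan is to combine the coalescence of left- and right-most half-plane geodesics, built into the proof of Theorem~\ref{thm: uniqueness_upgrade}, with the Alm--Wierman paths-crossing trick in $\mathbb H$ that was used to establish Busemann limits in Theorem~\ref{theorem: limiting_geodesics}. By coalescence, the left-most geodesics $\Gamma_{0,H}^L$ and $\Gamma_{e_1,H}^L$ merge at some random vertex $v^L$, and similarly $\Gamma_{0,H}^R$ and $\Gamma_{e_1,H}^R$ merge at $v^R$. For any $x_n^*$ on $\Gamma_{0,H}^*$ past $v^*$, both $\Gamma_H(0,x_n^*)$ and $\Gamma_H(e_1,x_n^*)$ share the common sub-path of $\Gamma_{0,H}^*$ from $v^*$ to $x_n^*$, so the Busemann increment telescopes to a finite expression
\[
B_H^{*}(0,e_1) \;=\; T_H(0,v^*)-T_H(e_1,v^*), \qquad * \in \{L,R\},
\]
and hence
\[
\Delta_H(0,e_1) \;=\; \bigl[T_H(0,v^L)+T_H(e_1,v^R)\bigr]-\bigl[T_H(0,v^R)+T_H(e_1,v^L)\bigr].
\]

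For the first assertion I will force a crossing by planarity. With the paper's $L/R$ labelling, $v^L$ and $v^R$ sit on opposite extreme sides of the sector $S$, so the appropriate pair of half-plane geodesics joining the boundary vertices $\{0,e_1\}$ to $\{v^L,v^R\}$ ``crosswise'' must share at least one vertex $z$; this is the same Jordan-arc argument already used in the proof of Theorem~\ref{theorem: limiting_geodesics}. Splicing the two crosswise geodesics at $z$ exchanges endpoints and produces paths $\tau_1\colon 0\to z\to v^{*}$ and $\tau_2\colon e_1\to z\to v^{*'}$ whose combined length equals the original sum; minimality of $T_H$ applied to these spliced paths yields the paths-crossing inequality that translates directly into $\Delta_H(0,e_1)\geq 0$ almost surely.

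For the second assertion I will upgrade this inequality to a strict one on the positive-probability event $E=\{\Gamma_{0,H}^L\neq\Gamma_{0,H}^R\}$. On $E$ the two left/right-most geodesics out of $0$ leave along different first edges, so at least one of the two spliced paths begins with an edge that is not the first edge of the (unique, by assumption~II) geodesic between its endpoints; uniqueness then forces the inequality $T_H(\text{spliced})\geq T_H(\text{geodesic})$ to be strict, hence $\Delta_H(0,e_1)>0$ strictly on $E$. Combined with the almost sure bound $\Delta_H(0,e_1)\geq 0$ from the first part, this yields $\mathbb E\Delta_H(0,e_1)>0$. Finally, additivity of Busemann functions gives $\Delta_H(0,ne_1)=\sum_{k=1}^n\Delta_H((k-1)e_1,ke_1)$, so horizontal translation ergodicity implies
\[
c \;=\; \lim_n \frac{1}{n}\Delta_H(0,ne_1) \;=\; \mathbb E\Delta_H(0,e_1) \;>\; 0,
\]
as required. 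The step I expect to be most delicate is making the planarity/Jordan-curve argument fully rigorous with boundary endpoints $0,e_1\in\partial\mathbb H$: one must verify carefully that, once $v^L$ and $v^R$ lie on the left-most and right-most geodesics, the Jordan arc formed by $\Gamma_H(0,v^R)$ together with a non-self-intersecting ray from $v^R$ to infinity in $\mathbb H$ and the relevant segment of the $x$-axis separates $e_1$ from $v^L$, forcing $\Gamma_H(e_1,v^L)$ to intersect $\Gamma_H(0,v^R)$.
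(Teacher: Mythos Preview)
Your overall approach matches the paper's: the paths-crossing trick applied to the pair $\Gamma_{0,H}^R$ and $\Gamma_{e_1,H}^L$ for the almost-sure inequality, then strictness on $E=\{\Gamma_{0,H}^L\neq\Gamma_{0,H}^R\}$ to force $c\neq 0$ via the ergodic theorem. The paper, however, avoids your finite reduction to $v^L,v^R$: it simply takes $y\in\Gamma_{e_1,H}^L$ and $y'\in\Gamma_{0,H}^R$ beyond the shared vertex $z$, writes $T_H(0,y')+T_H(e_1,y)\geq T_H(0,y)+T_H(e_1,y')$, and sends $y,y'\to\infty$ along the rays. This sidesteps your Jordan-arc concern entirely: one never needs $z$ to lie inside the finite segments $[0,v^R]$ and $[e_1,v^L]$, and your sketch does not rule out the intersection with your Jordan arc occurring on the extending ray beyond $v^R$ rather than on $\Gamma_H(0,v^R)$ itself.

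There is a genuine gap in your strictness step. You assert that on $E$ the geodesics $\Gamma_{0,H}^L$ and $\Gamma_{0,H}^R$ ``leave along different first edges,'' but $E$ does not imply this: by uniqueness of finite geodesics, $\Gamma_{0,H}^L$ and $\Gamma_{0,H}^R$ may share an arbitrarily long initial segment and branch only at some later vertex $w_0$. In that case your spliced path $\tau_1$ begins with exactly the same edge as the true geodesic $\Gamma_{0,H}^L|_{[0,v^L]}$, and your argument for $T(\tau_1)>T_H(0,v^L)$ collapses. The paper's sketch handles strictness differently, appealing directly to continuity of the weights (together with the observation that on $E$ one has $\Gamma_{x,H}^L\neq\Gamma_{x,H}^R$ for \emph{every} $x\in L_0$) to conclude $\Delta(0,e_1)\neq 0$; a correct argument must track the branching structure rather than just the first step out of $0$.
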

\begin{proof}
If $\Gamma_{0,H}^L \neq \Gamma_{0,H}^R$, then by coalescence, the same must be true for geodesics starting from any point $x$ on $L_0$. Since the edge-weights are continuous, one then deduces that $\Delta(0,e_1) \neq 0$. So if we can show that $\Delta(0,e_1) \geq 0$ almost surely, one then has $\mathbb{E}\Delta(0,e_1) > 0$, and the ergodic theorem will prove the lemma.

To show the almost sure inequality, we use the ``paths crossing'' trick. We claim that $\Gamma_{0,H}^R$ and $\Gamma_{e_1,H}^L$ must share a vertex. If $\Gamma_{0,H}^L = \Gamma_{0,H}^R$, then by coalescence, this is obvious. Otherwise if follows from the fact that $\Gamma_{0,H}^R$ coalesces with $\Gamma_{e_1,H}^R$, which is to the right of $\Gamma_{e_1,H}^L$. Let $y$ be a vertex on $\Gamma_{e_1,H}^L$ beyond $z$ and let $y'$ be a vertex on $\Gamma_{0,H}^R$ beyond $z$. Then
\begin{align*}
T_H(0,y') + T_H(e_1,y) &= T_H(0,z) + T_H(z,y') + T_H(e_1,z) + T_H(y,z) \\
&\geq T_H(0,y) + T_H(e_1,y').
\end{align*}
Rearranging and taking $y,y' \to \infty$ along their respective geodesics, we obtain $B_H^L(0,e_1) \geq B_H^R(0,e_1)$.
\end{proof}

Now that $\Gamma_{0,H}^L = \Gamma_{0,H}^R$ almost surely, coalescence of left and right-most geodesics implies item 2 of the theorem. For the last item, we use the argument of Theorem~\ref{thm: no_bigeo_BK}. To recall, this result states that in general cases in which from each point $x$, there is an infinite geodesic $\Gamma_x$ such that (a) for $x,y \in \mathbb{Z}^2$, $\Gamma_x$ and $\Gamma_y$ coalesce, and (b) there is a line $L$ such that almost surely, each $\Gamma_x$ intersects this line finitely often, the tree of geodesics $(\Gamma_x : x \in \mathbb{Z}^2)$ cannot have infinite backward paths. In other words, with probability one, the set $\{y : 0 \in \Gamma_y\}$ is finite. If there were a bigeodesic with one end directed in $S$, then by uniqueness, it would have to be the forward infinite path of the tree of geodesics directed in $S$. However this tree cannot have an infinite backward path, which would be a contradiction.

\end{proof}

\newpage

\section{Growth and infection models, competition interface, and more }\label{sec:growth}

\subsection{Eden Model and the limit shape in high dimensions}

The Eden model was introduced by Eden \cite{Eden} in 1961 with the goal of analyzing a simple question in cell reproduction: 

`{\it Starting from a single cell which may divide, its daughters cells divide again and again, what are the structural properties of the resulting colony of cells and how do various possible constraints effect the architecture? }'

We assume that each cell is identical to every other cell, that each cell is connected to at least one other cell and that each of their locations is specified by a node in a lattice. Eden considered the lattice to be two-dimensional, and reasoned that the close biological counterpart to this process was the growth of bacterial cells or tissue cultures that are constrained from moving. One example given is {\it Ulva latuca} \cite{Burrows}, the common sea lettuce, that grows as a sheet of two cells thick and apparently grows only at its periphery. Although sometimes (wrongly) attributed as a possible model for cancer, it is important to point out that there is strong {\it in vitro} and {\it in vivo} evidence that Eden model does not explain any possible tumor growth (see for instance \cite{BioPeople} and the references therein).

The model can be described as follows. We start with a cell at the origin. Next, we adjoin one of the four neighbor vertices with equal probability. This two-celled configuration has six adjacent nodes.  We attach one of these six neighbor vertices with equal probability $1/6$. We continue this procedure indefinitely, each time adjoining a single cell uniformly from the collection of neighbor vertices. The construction is similar in any dimension.

The model just described can be seen as a time change of a site first passage percolation model in $\mathbb Z^d$ with exponential mean one passage times (on the vertices). Indeed, let $\mathcal A(0) = \{ 0\}$ and, for $n\geq 1$, define $\mathcal A(n)$ as the subset of $\mathbb Z^{d}$ after we attach $n$ cells (vertices) to $\mathcal A(0)$ according to the above construction. Let $\sigma_{0}=0$ and 
$$\sigma_k = \inf \bigg\{t \geq 0: B(t) \text{ contains } k+1 \text{ points of } \Z^d \bigg\},$$
where $B(t):= \{ x \in \mathbb Z^{d}: T(0,x) \leq t\}$ and the vertex weights have distribution $F(t)=\Pro(\tau_{e}\leq t) = 1-e^{-t}$. Then, Richardson \cite{Richardson} noticed that
\begin{equation}\label{eq:Eden}
\big \{ \mathcal{A}(k) : k\geq 1 \big \} \text{ and } \big \{ B(\sigma_{k}) : k\geq 1 \big \} \text{ have the same distribution.}
\end{equation}

One of the most important consequences of \eqref{eq:Eden} is that, contrary to other vertex weight distributions, the evolution of the ball $B(t)$ when the weights are exponentially distributed is Markovian. Precisely, in the Eden model, the process $$B(\sigma_1), B(\sigma_2), B(\sigma_3), \ldots$$ is Markovian on the space of possible shape configurations. Indeed, one only needs to know the (boundary) of the ball at time $\sigma_i$ to know its possible transitions at time $\sigma_{i+1}$.

\begin{figure}
\centering
\def \svgwidth{14cm}
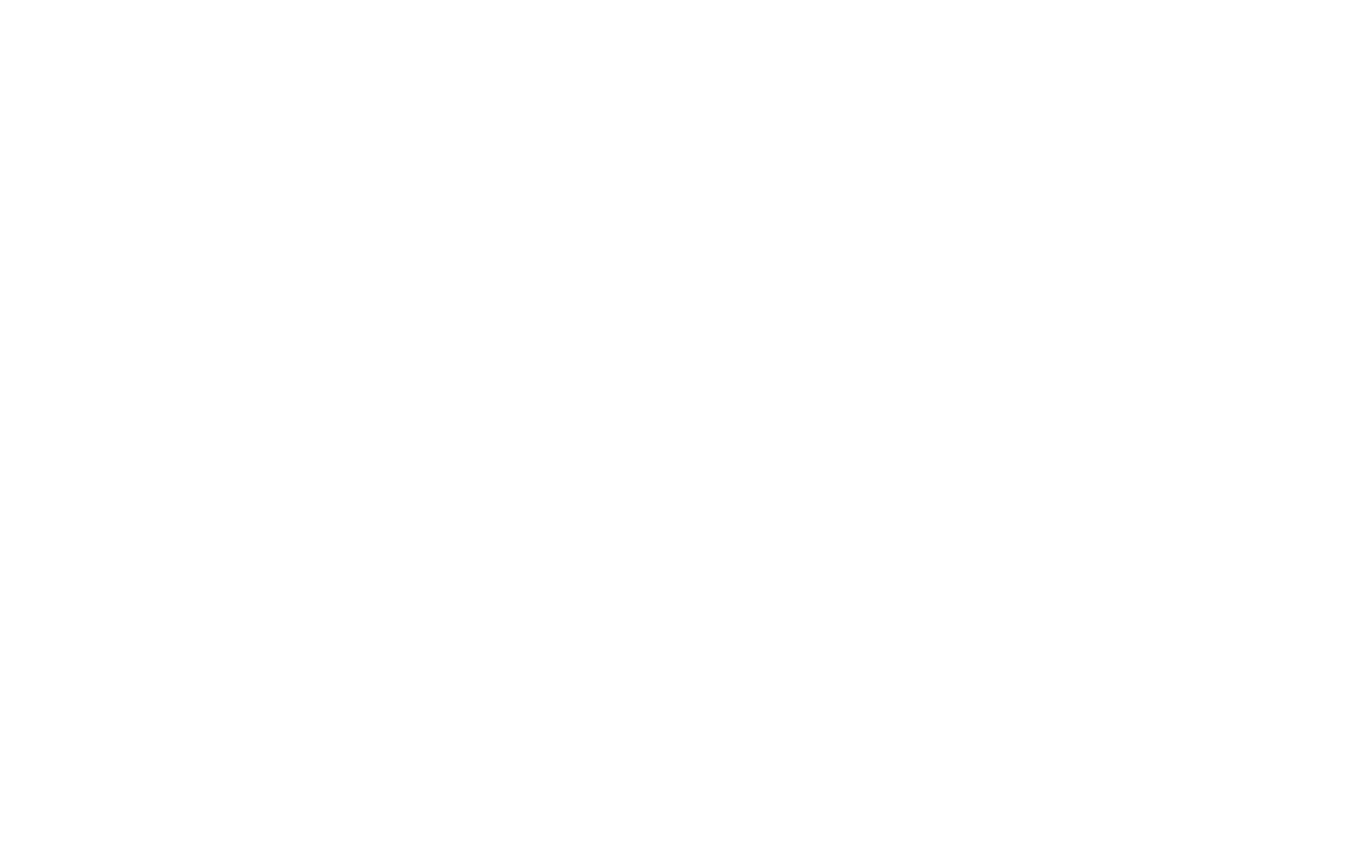
 \caption{The evolution of the Eden model. On the left, $B(\sigma_{14})$ in yellow and the possible $16$ neighbor squares that can be added. We choose one (shaded in pink) uniformly with probability $1/16$. On the right, $B(\sigma_{15})$ is shown after the attachment. Note that the order in which we attached previous boxes does not matter for the decision, only the knowledge of the boundary is needed to establish the next transition. The vertices in the picture are vertices of the dual lattice $\mathbb Z^2_*$ and a box represents a vertex of the lattice $\Z^2$.}
 \label{figEden}
\end{figure}

The Markovian property of the growing ball allows different tools that are not available for general passage times. This has been explored since \cite{Richardson}. The reader may think that this powerful property would open doors to a better knowledge of the limit shape and its properties. The truth is, however, distressing. As of today, there are almost no results that successfully explored the Markovian nature of $B$ to describe limit shape and geodesic results. The state of the art of the Eden model is essentially the same as for any FPP model. In Section \ref{LPP}, we discuss the last-passage analogue of the Eden model, where through combinatorics and explicitly formulae (achieved through the RSK correspondence), one indeed can say much more. We stress however that the methods of that section are not available in FPP. Connections between first-passage percolation and growth processes are further explored in Section \ref{growth}.

We end this section with the few theorems that explicitly use the Markovian nature of the Eden model. The first one is the determination of the growth of the time constant as the dimension diverges.

\begin{theorem}[Dhar \cite{Dhar}] For exponential passage times,

\begin{equation*}
\lim_{d \rightarrow \infty} \mu(d) \frac{d}{\log d} = \frac{1}{2}
\end{equation*}

\end{theorem}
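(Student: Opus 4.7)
The plan is to prove the asymptotic $\mu(d) \sim \tfrac{\log d}{2d}$ by establishing matching upper and lower bounds, both exploiting the many self-avoiding paths available in $\mathbb{Z}^d$ for large $d$.

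For the upper bound, I would construct an explicit path from $0$ to $ne_1$ and estimate its expected passage time. The key idea is that, for each $m \geq 0$, one can form paths of length $n+2m$ from $0$ to $ne_1$ by inserting $m$ length-$2$ detours of the form $+e_j, -e_j$ into the direct path, using distinct perpendicular coordinates $j \in \{2, \ldots, d\}$. The number of such self-avoiding walks is approximately $\binom{d-1}{m}\cdot(n+2m)!/n!$, and each has passage time distributed as $\mathrm{Gamma}(n+2m, 1)$. Treating the passage times over distinct paths as approximately independent and invoking the bound $\mathbb{P}(\mathrm{Gamma}(k,1) \leq t) \leq (et/k)^k$, one estimates the expected minimum over all such length-$(n+2m)$ paths, then optimizes over $m$. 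Sub-additivity of $T(0, ne_1)$ then converts this into a bound on $\mu(d)$.

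For the lower bound, I would run a first-moment / union bound:
\begin{equation*}
  \mathbb{P}(T(0, ne_1) \leq t) \leq \sum_{k \geq n} N_k(d)\cdot \mathbb{P}(\mathrm{Gamma}(k,1) \leq t),
\end{equation*}
where $N_k(d)$ counts self-avoiding walks of length $k$ from $0$ to $ne_1$. Combining a sharp bound on $N_k(d)$ that accounts for both the combinatorial explosion in $d$ and the constraint of ending at the specific vertex $ne_1$ with the Gamma tail estimate, I aim to show the right-hand side tends to zero as $n \to \infty$ whenever $t/n \leq \tfrac{\log d}{2d}(1-\epsilon)$. Since $T(0, ne_1)/n \to \mu(d)$ almost surely, this forces $\mu(d) \geq \tfrac{\log d}{2d}(1-\epsilon)$ for all sufficiently large $d$.

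The main obstacle is obtaining the sharp constant $\tfrac{1}{2}$. A naive union bound using only $N_k(d) \leq (2d-1)^k$ (ignoring the constraint that paths end at $ne_1$) yields only $\mu(d) \gtrsim 1/(ed)$, missing the decisive $\log d$ factor. The $\log d$ arises from the combinatorial factor $\binom{d-1}{m}$ counting choices of perpendicular directions for detours, and the coefficient $\tfrac{1}{2}$ emerges from the balance between this factor and the Gamma tail bound at the optimal $m$ (which is of order $\log d$, so that $k - n$ is substantially larger than any fixed multiple of $n$ but much smaller than $dn$). Extracting the sharp constant requires careful estimation of $N_k(d)$ in the relevant regime of $k$, control over the dependencies among paths sharing edges (so that the first-moment bound is not lossy), and, for the upper bound, a complementary second-moment or correlation argument showing that the many nearly-disjoint candidate paths genuinely behave like independent minima. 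This balance of combinatorial, probabilistic, and geometric considerations is the delicate heart of Dhar's argument.
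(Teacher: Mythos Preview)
The paper does not actually prove this theorem; it is stated without proof and attributed to Dhar. The only contextual hint the survey gives is that it groups this result among ``the few theorems that explicitly use the Markovian nature of the Eden model,'' i.e., Dhar's proof exploits the fact that with exponential weights the growing ball $B(t)$ is a Markov process, and analyzes the growth rate via that structure rather than through path enumeration.

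Your proposal takes a genuinely different route: first-moment path counting for the lower bound and explicit detour constructions for the upper bound, with no use of the Markov property. This is closer in spirit to the general-distribution bounds of Kesten and later authors than to Dhar's argument. Your last sentence, calling the combinatorial balance ``the delicate heart of Dhar's argument,'' misattributes the method.

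As to whether your sketch would succeed: the upper bound via detour paths is plausible and has been carried out in the literature. The lower bound is where the real work lies. You correctly note that the crude bound $N_k(d)\le(2d-1)^k$ loses the $\log d$, but you do not indicate how to refine it. One needs to count walks from $0$ to $ne_1$ by their excess length $k-n=2m$, and the point is that for such a walk the $2m$ ``non-forward'' steps contribute a factor of order $\binom{k}{2m}(2d)^m$ (choose positions for the excursions and one of $\sim 2d$ directions for each detour), not $(2d)^{2m}$. Balancing $\binom{k}{2m}(2d)^m$ against the Gamma tail $(et n/k)^k$ at $t=\tfrac{\log d}{2d}(1-\epsilon)$ and optimizing over $m$ is the computation that yields the constant $\tfrac12$; without spelling this out your lower bound remains a hope rather than an argument. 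The independence/second-moment issue you raise for the upper bound is real but is typically sidestepped by finding enough edge-disjoint paths rather than by a variance calculation.
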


The next result is the only one which rules out certain shapes as possible limit shapes. However it was only proved in astronomically large dimensions.

\begin{theorem}[Kesten \cite{KestenAspects}] If $d>650,000$ then the limit shape of the Eden model is not an Euclidean ball.

\end{theorem}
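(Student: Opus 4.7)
The plan is to assume for contradiction that $\mathcal B_\nu$ is a Euclidean ball and derive a pair of incompatible estimates for the axial time constant $\mu(e_1)$. Since $\mathcal B_\nu$ is convex, compact, and symmetric under all lattice symmetries fixing the origin (Theorem~\ref{thm:limitshape}), a Euclidean-ball hypothesis forces $\mathcal B_\nu = B_2(0,r)$ with radius $r = 1/\mu(e_1)$; in particular, its $d$-dimensional Lebesgue volume is $|\mathcal B_\nu| = \pi^{d/2} r^d/\Gamma(d/2+1)$. The strategy is to upper bound $|\mathcal B_\nu|$ by a combinatorial path count, then use Stirling's formula to translate this into a lower bound on $\mu(e_1)$ which is inconsistent with Dhar's asymptotic.

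First I would estimate $\E|B(t)|$ via a union bound over self-avoiding paths from the origin. Any vertex $x \in B(t)\cap\Z^d$ is joined to $0$ by a self-avoiding path $\gamma$ with $T(\gamma) \leq t$; the number of self-avoiding paths of length $k$ from $0$ is at most $(2d)^k$, and for each such $\gamma$ the sum $T(\gamma)$ of $k$ independent $\mathrm{Exp}(1)$ weights satisfies $\Pr(T(\gamma)\leq t) \leq t^k/k!$, so
\begin{equation*}
\E|B(t)| \;\leq\; \sum_{k=0}^{\infty} (2d)^k\,\frac{t^k}{k!} \;=\; e^{2dt}.
\end{equation*}
Since $|B(t)|/t^d \to |\mathcal B_\nu|$ almost surely by the shape theorem, Fatou's lemma yields $|\mathcal B_\nu| \leq \inf_{t>0} e^{2dt}/t^d = (2e)^d$, the infimum being attained at $t=1/2$.

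Next I would feed this into the ball hypothesis using Stirling's formula $\Gamma(d/2+1) \sim \sqrt{\pi d}\,(d/(2e))^{d/2}$. The inequality $\pi^{d/2}r^d/\Gamma(d/2+1) \leq (2e)^d$ rearranges to $r \leq (1+o(1))\sqrt{2ed/\pi}$, so
\begin{equation*}
\mu(e_1) \;=\; 1/r \;\geq\; (1-o(1))\sqrt{\pi/(2ed)}.
\end{equation*}
But Dhar's theorem (stated just above) gives the opposite asymptotic $\mu(e_1) = (1+o(1))\log d/(2d)$, which is of strictly smaller order than $d^{-1/2}$. The two estimates are inconsistent for every sufficiently large $d$, ruling out the Euclidean-ball hypothesis.

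The hard part is not the existence of some threshold beyond which $\mathcal B_\nu$ cannot be a ball---that already follows once $d$ exceeds a constant multiple of $(\log d)^2$---but pinning down the explicit threshold $d > 650{,}000$. This requires replacing every asymptotic above by an effective, non-asymptotic inequality: two-sided Stirling bounds on $\Gamma(d/2+1)$, the sharper self-avoiding walk count $2d(2d-1)^{k-1}$, an optimization of $t$ tuned to the true geometry rather than fixed at $1/2$, and, most delicately, an effective upper bound on $\mu(e_1)$ of the form $\mu(e_1) \leq C\log d/d$ coming from a greedy path construction (since Dhar's estimate is purely asymptotic). Carrying out this quantitative optimization of all constants simultaneously is the technical heart of the argument.
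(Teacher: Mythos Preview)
The survey does not prove this theorem; it simply states it and cites Kesten's Saint-Flour notes. So there is no ``paper's own proof'' to compare against, but your proposal can be assessed on its merits and against what Kesten actually does.

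Your overall plan is reasonable, but the Fatou step is a genuine gap. From $|B(t)|/t^d \to |\mathcal B_\nu|$ a.s.\ as $t\to\infty$, Fatou's lemma yields only
\[
|\mathcal B_\nu| \;\le\; \liminf_{t\to\infty}\frac{\E|B(t)|}{t^d} \;\le\; \liminf_{t\to\infty}\frac{e^{2dt}}{t^d} \;=\; +\infty,
\]
which is vacuous. You cannot replace $\liminf_{t\to\infty}$ by $\inf_{t>0}$: the shape theorem tells you nothing about $B(t)$ at the fixed finite time $t=1/2$ where your infimum is attained, and the exponential upper bound is useless once $t$ is large.

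The path-counting estimate does carry the information you need, but it should be extracted as a \emph{pointwise} lower bound on the time constant rather than as a volume bound. The same union bound over self-avoiding walks gives, for every $\alpha<1/(2de)$ and every $x$,
\[
\Pro\bigl(T(0,nx)\le \alpha n|x|_1\bigr)\;\le\;\sum_{k\ge n|x|_1}(2d)^k\frac{(\alpha n|x|_1)^k}{k!}\;\longrightarrow\;0,
\]
so that $\mu(x)\ge |x|_1/(2de)$ for all $x$. Evaluated at the full diagonal $x=(1,\dots,1)$ this reads $\mu(1,\dots,1)\ge 1/(2e)$, and under the Euclidean-ball hypothesis $\mu(1,\dots,1)=\sqrt d\,\mu(e_1)$, hence $\mu(e_1)\ge 1/(2e\sqrt d)$ --- precisely the $d^{-1/2}$ lower bound you were after, now obtained legitimately. (Equivalently, the containment $\mathcal B_\nu\subset 2de\,B_{\ell^1}$ gives $|\mathcal B_\nu|\le (4de)^d/d!$, after which your Stirling computation goes through with a worse constant than $(2e)^d$.) The contradiction with an effective upper bound $\mu(e_1)\le C(\log d)/d$ then proceeds as you describe, and your closing paragraph about tracking explicit constants is on target.

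Once repaired in this way, the argument \emph{is} Kesten's: compare the time constants in an axis and a diagonal direction and show their ratio is incompatible with the Euclidean value. The volume detour is unnecessary and is where the error crept in.
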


The theorem above was later improved by Couronn\'{e}, Enriquez, Gerin \cite{CEG11} to $d \geq 35$. Eden and Richardson observed by simulation that the limit shape looks circular in dimension two. However, one of the most fascinating long-standing questions remains:

\begin{question}
Show that for $d \geq 2$, the limit shape of the Eden model is not an Euclidean ball.
\end{question}

\subsection{Growth and competition models}\label{growth}

First-passage percolation is closely related to certain growth and competition models. In fact, the original version of the shape theorem was proved by Richardson \cite{Richardson} for a simplified growth model now known as the original 1-type Richardson model. In this model on $\mathbb{Z}^d$, we suppose that the origin houses an infection at time $t=0$ and all other sites are healthy. At each subsequent time $t=1, 2, \ldots$, any healthy site with at least one infected neighbor becomes infected independently of all other sites with probability $p \in (0,1)$. Richardson proved a shape theorem for the infected region at time $t$ as $t \to \infty$ and believed, on the basis of computer simulations, that as $p$ varies from $1$ to $0$, the limit shape varies from a ``diamond to a disk.'' This model was shown to be equivalent to an FPP model with i.i.d. weights on sites with a geometric distribution \cite{DurrettLiggett}. It is also common to describe the infection as species that are trying to colonize all sites of the space $\mathbb Z^d$.

We can build an infection/species model based on edge FPP in an analogous manner. Let $(\tau_e)$ be a realization of passage times. We infect the origin at time $0$ and the infection spreads at unit speed across edges, taking time $\tau_e$ to cross the edge $e$. In the case that the edge weights are exponential, the memoryless property of the distribution implies that the growth of the infected/colonized region is equal in distribution to a time change of a type of Eden process (discussed in the previous section). For general distributions, this growth process is called a 1-type Richardson model.

Building on the above definition, we may describe first-passage competition models, first introduced in the plane by H\"aggstr\"om and Pemantle \cite{HP} with two competing infections (species). We will consider the same model in $\mathbb Z^d$ with $k \in \mathbb N \cup \{ \infty\}$ different species. Fix $k$ vertices $x_{1},\ldots,x_{k}\in\mathbb{Z}^{d}$ and at time $t=0$, infect site $x_i$ by an infection of type $i$.  Each species spreads at unit speed as before, taking time $\tau_{e}$
to cross an edge $e$. An uninhabited site is exclusively
and permanently colonized by the first species that reaches it; that is,
$y\in\mathbb{Z}^{d}$ is occupied at time $t$ by the $i$-th species
if $\tau(y,x_{i})\leq t$ and $\tau(y,x_{i})<\tau(y,x_{j})$ for all
$j\neq i$.  If multiple $x_i$'s try to colonize a site at the same time (an event that happens with probability $0$ if the passage time distribution is continuous), we use some rule to break ties.

Now, consider the set colonized
by the $i$-th species starting at vertex $x_i$:
$$C(x_i)=\{y\in\mathbb{Z}^{d}\,:\, y\mbox{ is eventually occupied by }i\}.$$
We are interested in describing the geometry of these random sets.  In particular, much of the research so far has been driven to answer the following questions:
 
\begin{enumerate}
\item Can multiple species simultaneously succeed in invading an infinite subset of the lattice? In other words, does coexistence occur?
\item Assuming coexistence of at least two species, what can we say about the boundary interface? Does it have an asymptotic direction? A possible scaling limit?  
\item What happens if one species moves with faster speed than the others? Can we show that coexistence is impossible?
\end{enumerate}
These questions will be discussed in the following sections.

\subsection{Competition with same speed}\label{sec:growthr}

Before describing previous work, let's be precise and put the above questions on solid ground. We start with the first question on coexistence. Recall that $\nu$ denotes the probability measure of the passage times. One says that $\nu$ \emph{admits coexistence of $k$ species} if for some choice of initial sites
$x_{1},\ldots,x_{k}$, 
\begin{equation}\label{eq:coex}
\mathbb{P}\bigg(|C(x_i)|=\infty\mbox{ for all }i=1,\ldots,k\bigg)>0.
\end{equation}
The first point to make is that the choice of the initial sites is not important. One just needs to make sure that with positive probability, a new set of initial sites $y_1, y_2, \ldots, y_k$ are respectively infected by the initial sites $x_1, x_2, \ldots, x_k$. 
When the passage times are exponentially distributed, this can be easily done using the Markov property (see \cite[Propositon 1.1]{HP}, when $d=k=2$). When the passage times are not exponentially distributed, this modification argument follows the same lines as those described in Section $3$ and can be found in \cite[Section 5 and 6]{GM}. One therefore has the following proposition:

\begin{proposition}\label{prop:initialsites} Assume that the measure $\nu$ is non-atomic and the $\text{supp} \; \nu = \mathbb R^+$. Suppose that $x_1, \ldots, x_k$ and $y_1, \ldots, y_k$ are points in $\mathbb Z^d$ such that there exists disjoint paths from $x_i$ to $y_i$. Then 
$$\mathbb{P}\bigg(|C(x_i)|=\infty\mbox{ for all }i=1,\ldots,k\bigg)>0 \Leftrightarrow \mathbb{P}\bigg(|C(y_i)|=\infty\mbox{ for all }i=1,\ldots,k\bigg)>0.$$
\end{proposition}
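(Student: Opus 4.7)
The plan is to prove the forward implication (coexistence from the $y_i$'s implies coexistence from the $x_i$'s); the reverse is symmetric, since the existence of disjoint paths is symmetric in the two endpoints (traverse them in reverse). So assume $\mathbb{P}(|C(y_i)|=\infty \ \forall i)>0$.

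First, fix disjoint self-avoiding paths $\gamma_1,\dots,\gamma_k$ with $\gamma_i$ from $x_i$ to $y_i$; set $E_0=\bigcup_i E(\gamma_i)$, $V_i=V(\gamma_i)$, $V=\bigcup_i V_i$, and let $E_1$ be the (finite) set of edges $e\notin E_0$ having at least one endpoint in $V$. The strategy is a finite-modification argument. Pick $\epsilon>0$ small and $M\gg \ell\epsilon$, where $\ell=\max_i|\gamma_i|$, and consider
\[
A=\{\omega(e)<\epsilon : e\in E_0\}\cap\{\omega(e)>M : e\in E_1\}.
\]
Since $E:=E_0\cup E_1$ is finite, $\nu$ is non-atomic and $\mathrm{supp}(\nu)=\mathbb{R}^+$, we have $\mathbb{P}(A)>0$ and $A$ is independent of $\mathcal{F}_{E^c}$. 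On $A$, starting from the $x_i$'s, species $i$ reaches $y_i$ by time $\ell\epsilon$ along $\gamma_i$, while no species $j\neq i$ can enter $V_i$ in that time because every edge into $V_i$ from outside lies in $E_1$ and costs $>M\gg \ell\epsilon$. Hence at time $\ell\epsilon$, the species configuration on $V$ is exactly ``species $i$ on $V_i$.''

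Second, extract a stable coexistence event from the $y$-process. Pick a large finite box $B\supset V$; on the positive-probability event of coexistence from the $y_i$'s, by enlarging $B$ we can find (deterministic) exit vertices $z_i\in\partial B$ and a positive-probability sub-event $\mathcal{C}'$ on which species $i$ (in the $y$-process) first exits $B$ through $z_i$ and $|C(z_i)|=\infty$ thereafter. The key decoupling is that the ``outer piece'' of $\mathcal{C}'$ — that $k$ species seeded at the $z_i$'s produce infinite colonies — depends only on edge weights with at least one endpoint outside $B$, and by time-stationarity of the i.i.d.\ environment is insensitive to the actual seeding times. Now, using independence of $\mathcal{F}_{E^c}$ from the weights inside $B$, arrange a further finite modification inside $B$ giving small weights on disjoint interior paths from each $y_i$ to $z_i$, while retaining the $\epsilon/M$ regime on $E_0\cup E_1$. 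With positive probability, the $x$-process then races along $\gamma_i$, takes over $V_i$, traverses the engineered low-weight path from $y_i$ to $z_i$, and species $i$ is the first to exit $B$ through $z_i$; on $\mathcal{C}'$ the outer piece then furnishes an infinite colony for species $i$. Hence $\mathbb{P}(|C(x_i)|=\infty \ \forall i)>0$.

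The main obstacle is verifying the decoupling in the second step: that the event ``infinite coexistence seeded at the $z_i$'s using only edges outside $B$'' is genuinely independent of what happens inside $B$ and insensitive to the entry times. This requires choosing $B$ large enough that any competing route from $x_j$ ($j\neq i$) to $z_i$ in the modified configuration must cross high-weight $E_1$-edges and lose the race; it also requires the existence of disjoint interior paths from the $y_i$'s to the $z_i$'s, which holds for large $B$ since the $z_i$'s may be taken well-separated on $\partial B$. Continuity of $\nu$ rules out boundary-tie issues.
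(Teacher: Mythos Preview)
Your overall strategy---a finite local modification to route species $i$ through $y_i$---is exactly the idea the paper sketches. The paper itself says only that ``one just needs to make sure that with positive probability [each] $y_i$ [is] respectively infected by [species $i$],'' and then defers all remaining details to Sections~5--6 of Garet--Marchand. So there is no detailed argument in the paper to compare against; your Step~1 reproduces the paper's one line.

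Your Step~2, however, has a real gap. The assertion that the competition seeded at the $z_i$'s is ``insensitive to the actual seeding times'' is false. Whether a distant site $v$ lands in $C(x_i)$ is decided by the sign of $T(x_i,v)-T(x_j,v)$; if species~$i$ reaches $z_i$ at time $s_i$ and continues to $v$, the offset $s_i$ remains in that comparison. Should the vector of exit times $(s_1,\dots,s_k)$ differ between the $x$- and $y$-processes---even by bounded amounts---the pattern of winners outside $B$ can change globally. Enlarging $B$ does not help: the discrepancy in the $s_i$'s is produced by the engineered weights inside $B$, not by $|B|$. The companion claim that the outer event ``depends only on edge weights with at least one endpoint outside $B$'' is likewise unjustified, since geodesics from $z_i$ to far $v$ may re-enter $B$.

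A workable completion avoids temporal decoupling altogether and argues directly with passage-time margins. On your event $A$ one has $|T(x_i,v)-T(y_i,v)|\le \ell\varepsilon$ for every $v$, so any $v$ with $\min_{j\ne i}\bigl(T(y_j,v)-T(y_i,v)\bigr)>2\ell\varepsilon$ automatically lies in $C(x_i)$. The substantive work---occupying the two sections the paper cites---is to show that this ``coexistence with margin $2\ell\varepsilon$'' from the $y_i$'s holds with positive probability on an event compatible with $A$. That compatibility is not free: $A$ alters all the $T(y_i,v)$ themselves (lowering some via $E_0$, raising others via $E_1$), so one cannot simply intersect $A$ with the original coexistence event and be done.
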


The proposition above deals with choosing the initial sites. One has a similar result if starting with $k$ different infected sets instead of single points \cite{DH06}. However, the key and difficult step to prove coexistence is to establish that there indeed exist sources such that \eqref{eq:coex} occurs. Heuristically, the shape theorem gives the intuition that the larger the distance between the sources, the harder it is for one infection to surround the other one. If one hopes to prove that \eqref{eq:coex} holds, then the best strategy is to choose the sources far from each other and hope that their infections will grow at least in different directions. This gives an indication that coexistence is strongly related to the existence of distinct geodesic rays with different asymptotic directions. We will explore this avenue in a few moments. First, let us briefly say what is the state of the art on Question 1 above. 

When $\nu$ is the exponential distribution, H\"{a}ggstr\"{o}m and Pemantle \cite{HP} proved coexistence of 2 species (one could also see \cite{Blair-Stahn} for a review of results on Richardson models, focused on exponential passage times). Shortly thereafter, Garet and Marchand \cite{GM} and Hoffman \cite{Hoffman1} independently extended these results to prove coexistence of 2 species for a broad class of translation-invariant measures, including some non-i.i.d. ones.  Later, Hoffman \cite{Hoffman} demonstrated coexistence of $4$ species in $d=2$ for a similarly broad class of measures by establishing a relation with the number of
sides of the limit shape in the associated FPP. Tying the question of coexistence with the number of sides of the limit shape is a very neat argument that we try to sketch below. Recall that the number of sides of the limit shape $\mathcal{B}_\nu$ is equal to $k$ if $\partial \mathcal{B}_\nu$ is a polygon with $k$ sides. If $\partial \mathcal{B}_\nu$ is not a polygon, the number of sides is defined to be infinity.

\begin{theorem}[Hoffman \cite{Hoffman}]\label{Hoof}
Suppose that $\mathcal{B}_\nu$ is bounded, $\mathbb E \tau_e^{2+\eta} <\infty$ for some $\eta >0$, and that $\nu$ is uniquely geodesic. If the number of sides of $\mathcal{B}_\nu$ is at least $k$ then for any $\epsilon >0$ there exist $x_{1},\ldots,x_{k}$ such that 
\begin{equation}\label{eq:lda32}
\mathbb{P}\bigg(|C(x_i)|=\infty\mbox{ for all }i=1,\ldots,k\bigg)>1-\epsilon.
\end{equation}
\end{theorem}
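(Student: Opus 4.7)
The natural plan is to reduce Theorem~\ref{Hoof} to Theorem~\ref{thm:Hoffman1}, which already supplies, for any $k \leq \mathrm{sides}(\mathcal{B}_\nu)$ and any $\epsilon > 0$, a choice of sources $y_1, \ldots, y_k \in \mathbb Z^2$ such that the event $G(y_1, \ldots, y_k)$ (existence of $k$ pairwise distinct geodesic rays $R_1, \ldots, R_k$ from these vertices) has probability at least $1 - \epsilon$. First I would invoke this to fix such $y_i$'s, and then argue that on $G(y_1, \ldots, y_k)$ the species starting at $y_i$ colonizes infinitely many vertices along $R_i$. Given this, Proposition~\ref{prop:initialsites} lets us replace the $y_i$'s by any prescribed sources $x_1, \ldots, x_k$ at a potentially smaller, but still positive, probability cost, which we absorb by reshuffling $\epsilon$ from the start. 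So the whole problem collapses to: on $G$, each $R_i$ carries infinitely many $y_i$-colonized vertices.

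The heart of the argument is directional. Under the uniquely geodesic hypothesis, the unique geodesic from $y_i$ to any $z \in R_i$ is the initial segment of $R_i$, so $T(y_i, z) = f_{R_i}(y_i, z)$ by Lemma~\ref{lem:busproperties}(a). For species $j$ to colonize $z$ we would need $T(y_j, z) \leq T(y_i, z)$. The Busemann framework of Section~\ref{sec:Busemann} suggests constructing Busemann functions $f_{R_1}, \ldots, f_{R_k}$ along the rays; each $f_{R_i}$ should (on $G$) admit an associated asymptotic linear functional $\rho_i$ whose level line is a supporting hyperplane to $\mathcal{B}_\nu$ along a particular side, in the spirit of Lemma~\ref{thm:Busemannlimitshape}. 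Since $\mathcal{B}_\nu$ has at least $k$ sides, the $\rho_i$ are distinct. For $z$ far along $R_i$ one then has $T(y_i,z) \sim \rho_i \cdot (z - y_i)$ while $T(y_j, z) \geq f_{R_j}(y_j, z) + o(|z|) \sim \rho_j \cdot (z - y_j)$, and the strict inequality $\rho_i \cdot z < \rho_j \cdot z$ (up to a lower-order shift coming from the fixed differences $y_i - y_j$) holds eventually along $R_i$. Thus infinitely many $z \in R_i$ satisfy $T(y_j, z) > T(y_i, z)$ simultaneously for all $j \neq i$, which is precisely $z \in C(y_i)$.

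The main obstacle is twofold and lives entirely in the second paragraph. First, one must show that the rays $R_i$ guaranteed by Theorem~\ref{thm:Hoffman1} not only are disjoint but are \emph{directionally} distinct in the sense that their associated Busemann slopes $\rho_i$ are pairwise distinct. In the uniquely geodesic setting this should follow because two rays with the same asymptotic direction would, by a Licea--Newman-style planarity and coalescence argument (cf.\ Theorem~\ref{theorem:Newman2}), eventually coincide, contradicting distinctness; and the identification of $k$ distinct $\rho_i$'s with $k$ sides of $\mathcal{B}_\nu$ is exactly the content encoded in the hypothesis $k \leq \mathrm{sides}(\mathcal{B}_\nu)$. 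Second, one must upgrade the almost-sure asymptotic comparison $\rho_i \cdot z < \rho_j \cdot z$ into a high-probability statement that survives the $o(|z|)$ error in the Busemann shape theorem. Here I would invoke the concentration bounds of Section~\ref{sec:fluctuations} (Theorem~\ref{thm: subdiffusive}) to control the fluctuations $|T(y_j, z) - f_{R_j}(y_j, z)|$ uniformly along a subsequence of $z$'s on $R_i$, ensuring that the deterministic linear gap $(\rho_j - \rho_i) \cdot z$ eventually dominates with the required probability, which is what closes the argument.
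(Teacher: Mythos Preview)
Your directional comparison is where the argument breaks. Suppose (granting you everything) that $R_i$ is directed toward some $v_i \in \partial\mathcal{B}_\nu$ and that $\rho_i$ is the dual vector, so $\{w : \rho_i \cdot w = 1\}$ is the tangent line at $v_i$. For $z$ far along $R_i$ one has $\rho_i \cdot z \approx \mu(z)$, since the tangent touches at $v_i$; but the supporting-hyperplane property gives $\rho_j \cdot z \leq \mu(z)$, with \emph{strict} inequality precisely because the tangent at $v_j$ does not pass through $v_i$. Your claimed inequality $\rho_i \cdot z < \rho_j \cdot z$ is therefore reversed: in fact $\rho_j \cdot z < \rho_i \cdot z$ along $R_i$. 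The Busemann lower bound $T(y_j, z) \geq f_{R_j}(y_j, z) \approx \rho_j \cdot z$ lands \emph{below} $T(y_i, z) \approx \mu(z)$ and cannot establish $T(y_j, z) > T(y_i, z)$. Both passage times equal $\mu(z) + O(1)$; which is larger is a constant-order question that your linear-scale comparison (and Theorem~\ref{thm: subdiffusive}, which only controls $o(|z|)$ fluctuations) is silent on. There are further problems --- Theorem~\ref{thm:Hoffman1} is, as the survey itself notes, Theorem~\ref{Hoof} ``in another guise,'' so invoking it is circular; and without curvature assumptions neither directions of rays nor a Busemann shape theorem for \emph{random} rays is available --- but the inequality reversal alone is fatal.

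The paper's argument is structurally different and never studies geodesic rays from the sources. It fixes deterministic points $v_1, \ldots, v_k \in \partial\mathcal{B}_\nu$ with pairwise distinct tangent lines, sets the sources $x_i = Mv_i$, and compares point-to-\emph{line} Busemann differences
\[
B_{L_{n,v_i}}(x_j, x_i) \;=\; T(x_j, L_{n,v_i}) - T(x_i, L_{n,v_i}).
\]
The placement $x_i = Mv_i$ creates a geometric head-start of order $M(\rho_i \cdot v_i - \rho_i \cdot v_j) > 0$, and Hoffman's averaging/density trick (the computation behind Lemma~\ref{lem: expected_busemann_simple}) shows this gap is realized for a positive density of $n$ with probability at least $1-\epsilon$. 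On that event species $i$ reaches some point of $L_{n,v_i}$ before every other species, forcing $|C(x_i)|=\infty$. The conceptual difference you are missing: the paper compares all species' distances to one \emph{deterministic} target (the line $L_{n,v_i}$), rather than trying to lower-bound $T(y_j, z)$ via the Busemann function of the competing ray $R_j$, which --- as you have just seen --- points the wrong way.
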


\begin{remark} J.-B. Gou\'er\'e has extended these methods to give a geometric description of invaded regions; see \cite{Gouere}.
\end{remark}

We now sketch the proof of the above theorem.

\textsc{Step 1} - \textsl{ Moving to Busemann functions.}

We start the proof by some geometric considerations. Let the set $V$ consist of all $v\in \partial \mathcal{B}_\nu$ such that there is a unique line $L_v$ which is tangent to $\mathcal{B}_\nu$ through $v$. For such a $v$, let $w(v)$ be a unit vector parallel to $L_v$. Let $L_{n,v}$ be the line through $nv$ in the direction of $w(v)$. It is not difficult to see that if the number of sides of $\partial \mathcal{B}_\nu$ is at least $k$ then one can find points $v_1, \ldots, v_k \in V$ such that the lines $L_{v_i}$ are distinct for all $i$. Fix such $v_i$'s from now on. The first step is to establish the following lemma. Define the Busemann-type function $B_S(x,y)$ for $x,y \in \mathbb{Z}^2$ and $S \subset \mathbb{R}^2$ as
\[
B_S(x,y) = T(x,S) - T(y,S).
\]
From the subadditivity of $T$ and the definition of $B_S$, we have $B_S(x,y) \leq T(x,y)$ and 
\begin{equation}\label{cocycle}
B_S(x,y)+B_S(y,z)=B_S(x,z).
\end{equation} These Busemann functions are useful because of the following fact.

\begin{lemma} Assume that the number of sides of $\mathcal{B}_\nu$ is at least $k$ and let $v_i, 1 \leq i \leq k$ be defined as above. If there exist $\epsilon > 0$, and $x_1,\ldots,x_k$ such that 
\begin{equation}\label{eq:Busagain}
\mathbb P \bigg( B_{L_{n,v_i}}(x_j,x_i)>0 \; \forall i \neq j \bigg) \geq 1-\epsilon
\end{equation}
for infinitely many $n$, then \eqref{eq:lda32} holds.
\end{lemma}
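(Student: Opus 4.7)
The plan is to read the Busemann inequality $B_{L_{n,v_i}}(x_j,x_i)>0$ as a strict race to the auxiliary line $L_{n,v_i}$ won by $x_i$, conclude that on the event
\[
E_n := \bigl\{B_{L_{n,v_i}}(x_j,x_i) > 0 \text{ for all } i \neq j\bigr\}
\]
each source $x_i$ captures at least one lattice site sitting near $L_{n,v_i}$, and then promote ``one captured site per scale $n$'' into infinitely many captured sites via a reverse Fatou argument.

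First I would unpack the Busemann inequality. By definition $B_{L_{n,v_i}}(x_j,x_i) = T(x_j,L_{n,v_i}) - T(x_i,L_{n,v_i})$, so positivity simply says $T(x_i,L_{n,v_i}) < T(x_j,L_{n,v_i})$ for all $j \neq i$. Let $y_i(n)$ denote the (a.s.\ unique) lattice minimizer of $z \mapsto T(x_i,z)$ over the set of lattice sites within Euclidean distance~$1$ of $L_{n,v_i}$; a minimizer exists because $T(x_i,z) \to \infty$ as $\|z\|\to\infty$ on that set, and is a.s.\ unique since $\nu$ is continuous (uniquely geodesic). Then on $E_n$, for every $j \neq i$,
\[
T(x_j,y_i(n)) \;\geq\; T(x_j,L_{n,v_i}) \;>\; T(x_i,L_{n,v_i}) \;=\; T(x_i,y_i(n)),
\]
so $y_i(n) \in C(x_i)$: the species originating at $x_i$ reaches (and hence colonizes) the site $y_i(n)$ strictly before every competitor.

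Second, I would note that the family $\{L_{n,v_i}\}_{n \geq 1}$ consists of parallel, pairwise distinct lines whose Euclidean distance from the origin grows linearly in $n$, since $L_{v_i}$ is a supporting line of $\mathcal B_\nu$ that does not pass through the origin. Consequently $\|y_i(n)\|_2 \to \infty$ as $n \to \infty$, and in particular the sites $\{y_i(n)\}_n$ are pairwise distinct for all $n$ sufficiently large. Applying reverse Fatou to the hypothesis $\mathbb{P}(E_{n_\ell}) \geq 1-\epsilon$ along a subsequence $n_1 < n_2 < \cdots$ yields
\[
\mathbb{P}\Bigl(\limsup_{\ell \to \infty} E_{n_\ell}\Bigr) \;\geq\; \limsup_{\ell\to\infty}\mathbb{P}(E_{n_\ell}) \;\geq\; 1-\epsilon.
\]
On the event $\limsup_\ell E_{n_\ell}$, the first step gives $y_i(n_\ell) \in C(x_i)$ for all $i=1,\dots,k$ simultaneously and for infinitely many $\ell$, and the second step ensures that these are pairwise distinct lattice points. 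Hence $|C(x_i)| = \infty$ simultaneously for every $i$, which is exactly \eqref{eq:lda32}.

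The argument itself is short; the only mildly delicate point is the measurable-selection of $y_i(n)$, which is routine under continuity of $\nu$, together with the observation that the inequality $T(x_j,y_i(n)) > T(x_i,y_i(n))$ genuinely forces $y_i(n)$ into the colony $C(x_i)$ (no ties, a.s.). The genuine difficulty of Theorem~\ref{Hoof} lies elsewhere: it is the construction of sources $x_1,\ldots,x_k$ for which the Busemann condition \eqref{eq:Busagain} holds with probability $\geq 1-\epsilon$ along infinitely many scales $n$. That step is where the ``at least $k$ sides'' hypothesis on $\mathcal B_\nu$ is exploited, via the shape theorem combined with a geometric argument using the $k$ distinct tangent lines $L_{v_i}$ and their associated directions $w(v_i)$, and is the main obstacle handled by the remainder of Hoffman's proof.
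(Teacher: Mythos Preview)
Your proposal is correct and follows the same approach as the paper: pick the minimizer $y_i(n)$ of $T(x_i,\cdot)$ on (the lattice version of) the line $L_{n,v_i}$, use the strict Busemann inequality to conclude $y_i(n)\in C(x_i)$, and then upgrade to infinite colonies via reverse Fatou along the given subsequence. The paper's own proof is sketched in one paragraph with exactly this idea (the point it calls $z_n$ is your $y_i(n)$); your write-up is simply a more careful version of the same argument, with the reverse Fatou step made explicit.
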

\begin{proof}
It is easy to be convinced that such lemma must be true. Recall that geometrically $B_{L_{n,v_i}}(x,y)$ is roughly given by the orthogonal projection of the vector $y-x$ on the line orthogonal to $L_{n,v_i}$. If for all $j\neq 1$, $B_{L_{n,v_1}}(x_j,x_1) >0$ then roughly speaking $x_1$ is closer than $x_j$ is to the line $L_{n,v_1}$ and the species $1$ will infect in the direction orthogonal to $L_{n,v_1}$. 

Precisely, if for all  $j\neq 1$ $$B_{L_{n,v_1}}(x_j,x_1) >0,$$ then one can find $z_n \in L_{n,v_1}$ such that $T(z_n,x_1) < T(z_n,x_j)$ for all $j \neq 1$. Thus the species $1$ will infect infinitely many sites. Repeating the same argument for the other species, we obtain exactly what we want.
\end{proof}
\textsc{Step 2} - \textsl{ The density argument.}
Now, to end the proof of Theorem \ref{Hoof}, one  needs to construct points $x_i, 1 \leq i \leq k$ such that \eqref{eq:Busagain} holds. The idea is to choose $x_i = M v_i$ with $M$ large enough. The proof then boils down to proving the following stronger statement.

\begin{lemma} There exists $c>0$ such that for all $\epsilon>0$ there exists $M$ such that the density of $n$ with 
$$ \Pro \bigg( B_{L_{n,v_i}}(Mv_j,Mv_i) > cM \quad \forall i \neq j\bigg) > 1-\epsilon$$
is at least $1-\epsilon$.
\end{lemma}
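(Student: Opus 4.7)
The plan is to exploit a weak-compactness construction in the spirit of Section~\ref{sec: busemann_increments}, applied to hyperplane passage times, to reduce the question to a deterministic geometric inequality coming from the uniqueness of the tangent lines at the $v_i$'s. For each $i$, let $\rho_i\in\mathbb{R}^d$ be the unique dual vector with $L_{v_i}=\{w:\rho_i\cdot w=1\}$ and $\rho_i\cdot w\le\mu(w)$ everywhere; uniqueness comes from $v_i\in V$. Since the $L_{v_i}$ are distinct, $v_j\notin L_{v_i}$ for $i\ne j$, so $\rho_i\cdot v_j<1$; set
$$c:=\tfrac14\min_{i\ne j}\bigl(1-\rho_i\cdot v_j\bigr)>0,\qquad A_n:=\bigcap_{i\ne j}\bigl\{B_{L_{n,v_i}}(Mv_j,Mv_i)>cM\bigr\}.$$
It suffices to prove $\limsup_{N\to\infty}\frac{1}{N}\sum_{n=1}^N\Pro(A_n^c)\le\epsilon^2$, because Markov's inequality applied to the Cesaro mean of $\mathbf{1}_{A_n^c}$ then bounds by $\epsilon$ the density of $n$ with $\Pro(A_n^c)>\epsilon$, which is exactly the required conclusion.

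For each $i$, define the pushforward measures $\mu_n^{(i)}$ on $\Omega_1\times\Omega_2$ by $\omega\mapsto(\omega,(B_{L_{n,v_i}}(x,y))_{x,y})$, form the Cesaro averages $\bar\mu_N^{(i)}=\frac{1}{N}\sum_{n=1}^N\mu_n^{(i)}$, and extract a weak subsequential limit $\mu^{*,i}$. Tightness is automatic from $|B_{L_{n,v_i}}(x,y)|\le T(x,y)$. On $\mu^{*,i}$ the reconstructed Busemann function $B^{*,i}$ is translation-covariant, additive, and integrable, and the averaging identity of Lemma~\ref{lem:todayisaniceday} (applied to the family $\{L_{n,v_i}\}_n$ rather than horizontal hyperplanes) yields $\E_{\mu^{*,i}}B^{*,i}(0,x)=\rho_i\cdot x$ for every $x\in\mathbb{Z}^d$. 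The Busemann shape theorem (Lemma~\ref{thm:Busemannlimitshape} and its extension to $d\ge2$) then produces a random vector $\varrho_i$ with $\E\varrho_i=\rho_i$ such that $\{w:w\cdot\varrho_i=1\}$ is a supporting hyperplane of $\mathcal{B}_\nu$ at $v_i$ $\mu^{*,i}$-a.s.; since $v_i\in V$, this supporting hyperplane is unique, so $\varrho_i=\rho_i$ $\mu^{*,i}$-a.s. Setting $\eta:=\epsilon^2/[k(k-1)]$ and choosing $\delta<c$, the shape theorem under $\mu^{*,i}$ now supplies $M_0=M_0(\eta,\delta)$ such that for all $M\ge M_0$ and $i\ne j$,
$$\mu^{*,i}\bigl(B^{*,i}(Mv_j,Mv_i)>(1-\rho_i\cdot v_j-\delta)M\bigr)>1-\eta,$$
and $(1-\rho_i\cdot v_j-\delta)M\ge 3cM$ by our choices of $c$ and $\delta$.

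Perturbing $3cM$ infinitesimally so that $\{B^{*,i}(Mv_j,Mv_i)\le 3cM\}$ is a continuity set for every subsequential limit of the tight family $\{\bar\mu_N^{(i)}\}_N$ (only countably many levels carry atoms), the Portmanteau theorem gives, along each defining subsequence $(N_\ell)$,
$$\frac{1}{N_\ell}\sum_{n=1}^{N_\ell}\Pro\bigl(B_{L_{n,v_i}}(Mv_j,Mv_i)\le 3cM\bigr)\longrightarrow \mu^{*,i}\bigl(B^{*,i}(Mv_j,Mv_i)\le 3cM\bigr)<\eta.$$
Because every subsequential limit of the bounded sequence $\frac{1}{N}\sum_{n=1}^N\Pro(B_{L_{n,v_i}}(Mv_j,Mv_i)\le 3cM)$ is strictly below $\eta$, its $\limsup$ is $\le\eta$. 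Summing this bound over the $k(k-1)$ ordered pairs $(i,j)$ and using $3cM>cM$ yields the desired $\limsup\frac{1}{N}\sum_{n=1}^N\Pro(A_n^c)\le\epsilon^2$, and the Markov reduction of the first paragraph completes the proof.

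The main obstacle lies in transferring the Busemann machinery of Section~\ref{sec: busemann_increments} to hyperplanes $L_{n,v_i}$ whose normals $\rho_i$ may be irrational and to general dimension. Concretely, the averaging identity $\E_{\mu^{*,i}}B^{*,i}(0,x)=\rho_i\cdot x$ requires replacing the axis-aligned telescoping in the proof of Lemma~\ref{lem:todayisaniceday} by a telescoping along integer paths approximating the direction $\rho_i$, and the Busemann shape theorem (proved for $d=2$ in \cite{DHanson}) must be shown in $d\ge2$ under only the moment assumption $\E\tau_e^{2+\eta}<\infty$; both adaptations are technical but standard, and they are where the bulk of the work in the proof will lie.
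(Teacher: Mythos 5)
Your skeleton assembles the right ingredients — the tangent functionals $\rho_i$ with $\rho_i\cdot v_j<1$ for $i\neq j$, the identification of the Busemann mean with $\rho_i\cdot x$, and differentiability at $v_i$ forcing the random supporting line to be $L_{v_i}$ — and it departs from the paper's route, which follows Hoffman: split $B_{L_{n,v_i}}(Mv_j,Mv_i)$ by the cocycle property through an intermediate lattice point $w$, control $B_{L_{n,v_i}}(Mv_j,Mw)$ in expectation by the averaging trick, and control $B_{L_{n,v_i}}(Mw,Mv_i)$ using differentiability at $v_i$. However, your implementation has a genuine gap exactly where the work lies. The machinery you import from Section~\ref{sec: busemann_increments} (the mean identity of Lemma~\ref{lem:todayisaniceday} and the shape theorem, Lemma~\ref{thm:Busemannlimitshape}) is proved for averages over a \emph{continuum} of hyperplane heights precisely because a lattice translation by $z$ shifts the target hyperplane by $\rho_i\cdot z$, which the continuous averaging window absorbs; this is what makes the limit translation invariant and makes the telescoping change of variables legitimate. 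Your Cesaro average over the discrete family $\{L_{n,v_i}\}_{n\le N}$ has neither feature: writing $L_{n,v_i}=\{x:\rho_i\cdot x=n\}$, a shift by $z\in\mathbb{Z}^2$ moves the family to heights $n+\rho_i\cdot z$, which for a generic boundary point $v_i$ are incommensurate with $\mathbb{Z}$, so translation invariance of $\mu^{*,i}$ and the identity $\E_{\mu^{*,i}}B^{*,i}(0,x)=\rho_i\cdot x$ do not come for free, and without them the ergodic-theorem-based shape theorem for $B^{*,i}$ is unavailable. You defer this as ``telescoping along integer paths approximating $\rho_i$, technical but standard,'' but this is precisely the content of Hoffman's Lemmas 4.4--4.5, which the survey identifies as the central part of his paper; the error accumulated over incommensurate steps is controlled there only by combining the a.s. bound $B\le T$, shape-theorem upper bounds, and the $\epsilon$-density bookkeeping that is the very reason the lemma is stated in terms of densities of $n$. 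Deferring it leaves the core of the lemma unproved.

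There is also a logical gap in the pullback step. Your threshold $M_0(\eta,\delta)$ is supplied by the shape theorem \emph{under one particular} subsequential limit $\mu^{*,i}$, but the assertion ``every subsequential limit of the bounded sequence $\frac1N\sum_{n\le N}\Pro\big(B_{L_{n,v_i}}(Mv_j,Mv_i)\le 3cM\big)$ is strictly below $\eta$'' requires the bound, with that same fixed $M$, for \emph{every} weak subsequential limit of $(\bar\mu_N^{(i)})$; these limits need not be unique, $M_0$ depends on the limit measure, and no uniformity over the (weakly compact but possibly large) set of limits is established. The continuity-set perturbation suffers from the same quantifier issue: an atom-free level must be chosen simultaneously for all subsequential limits, not just countably many for a fixed one. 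Both problems are avoided in Hoffman's argument (and the paper's sketch) by working with expectations and densities of $n$ directly in the prelimit, which is why the decomposition through the intermediate point $w$, rather than a weak-limit transfer, is the route taken there.
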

\begin{proof} We sketch the argument here. The idea is that $B_{L_{n,v_i}}(Mv_j,Mv_{i})$ for large $M$ behaves as $f(M(v_{j}-v_{i}))$ where $f$ is a linear functional with null space in direction $w(v_{i})$ such that $f(v_{i}) = \mu(v_{i})$. Precisely, for each pair $(v_i, v_j)$ with $i \neq j$ we will find $w \in \mathbb Z^2$  and $c>0$ (depending on the angle between the points $v_{i}$ and $v_{j}$) such that for all large enough $M$ the events: 
\begin{equation}\label{eq:dadfafdsfsdfsdfsdfdsffasds}
 B_{L_{n,v_i}}(Mv_j,Mw) > Mc(1-\epsilon)
\end{equation}
and 
\begin{equation}\label{eq:dajidsfdsfdfsdajo}
 B_{L_{n,v_i}}(Mw,Mv_i) > -M\epsilon
\end{equation}
happen with probability $1- \epsilon$ in a set of values of $n$ with density at least $1-\epsilon$. Then \eqref{cocycle} implies $$B_{L_{n,v_i}}(Mv_j,Mv_i) > M c (1-2\epsilon).$$
Establishing \eqref{eq:dadfafdsfsdfsdfsdfdsffasds} and \eqref{eq:dajidsfdsfdfsdajo} was the central part Hoffman's paper (see Lemma 4.4, 4.5 in \cite{Hoffman}) and these have much in common with finding the mean of the Busemann function (see Lemma \ref{lem:todayisaniceday}). For \eqref{eq:dadfafdsfsdfsdfsdfdsffasds}, Hoffman developed his averaging trick, which was explained in Lemma \ref{lem: expected_busemann_simple} in Section \ref{sec:Busemann}. The proof of \eqref{eq:dajidsfdsfdfsdajo} relies on the assumption that the limit shape is differentiable at $v_{i}$. 
\end{proof}

\vspace{1cm}

 Coexistence of infinitely many species is defined similarly and clearly implies coexistence of $k$ species for any $k$.   In \cite{DH}, Damron and Hochman extended the above Theorem to include the case $k=\infty$ and used it to construct a first example of a passage time distribution that admits infinite coexistence. The current state of the art is given by a combination of Theorem \ref{Hoof} and Theorem \ref{thm: diffll}. It establishes that infinite coexistence is not only possible but is necessary for all measures in $\mathcal{M}_p$:
\begin{theorem}\label{thm:infinitecoexistence}
If $\nu \in \mathcal{M}_p$ for $p\in[\vec p_c,1)$ then $\nu$ admits coexistence of infinitely many species.
\end{theorem}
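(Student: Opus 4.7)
The plan is to show that the limit shape $\mathcal{B}_\nu$ has infinitely many sides, and then invoke the $k=\infty$ extension of Hoffman's Theorem~\ref{Hoof} due to Damron--Hochman. The input ingredients are exactly the two results explicitly cited in the paragraph preceding the statement: the Durrett--Liggett--Marchand flat edge (Theorem~\ref{thm:marchand1}) and the differentiability of $\partial \mathcal{B}_\nu$ at the corner of the percolation cone (Theorem~\ref{thm: diffll}).

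First I would combine these two facts to rule out that $\mathcal{B}_\nu$ is a polygon. By Theorem~\ref{thm:marchand1}, for $\nu \in \mathcal{M}_p$ with $p \in [\vec p_c, 1)$, the segment $[M_p, N_p]$ is a flat edge of $\partial \mathcal{B}_\nu$, and by symmetry the same holds for the seven reflections of this segment across the coordinate axes and diagonals. Suppose toward contradiction that $\mathcal{B}_\nu$ is a polygon with only finitely many sides. Then the endpoint $N_p$ is a vertex of this polygon: to its left along the boundary lies the straight segment $[M_p, N_p]$, and to its right lies another straight segment with a different slope (since, by Marchand's strict inequality $1/\mu(e_1) > 1/2 + \alpha_p/\sqrt 2$ recalled above Theorem~\ref{thm: diffll}, the flat edge cannot be extended past $N_p$ along the same line without leaving $\mathcal{B}_\nu$). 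Two straight pieces of $\partial \mathcal{B}_\nu$ meeting at $N_p$ with different slopes produce a corner there, contradicting the differentiability at $N_p$ guaranteed by Theorem~\ref{thm: diffll}. Hence $\mathcal{B}_\nu$ has infinitely many sides in the sense used before Theorem~\ref{Hoof}.

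Next I would feed this into Theorem~\ref{Hoof}. Since $\nu$ is supported in $[1,\infty)$, the shape is bounded, and the standard reductions (a small perturbation of the atom at $1$, or the observation that $\mathcal{M}_p$ is preserved under replacing $\nu$ by an equivalent measure with the required continuity and $(2+\eta)$-moment properties while keeping the same $p$, hence the same flat edge endpoints $M_p, N_p$) put us in the regime where Theorem~\ref{Hoof} applies. By the $k = \infty$ extension of Theorem~\ref{Hoof} due to Damron--Hochman \cite{DH}, having sides$(\mathcal B_\nu)=\infty$ forces the existence, for every $k$, of source points $x_1, \ldots, x_k$ so that the infection probability in \eqref{eq:lda32} is at least $1-\epsilon$; a diagonal argument (choose $k_n \to \infty$ and $\epsilon_n \to 0$, then apply Proposition~\ref{prop:initialsites} to move the sources to a common deterministic infinite family) then promotes this to coexistence of infinitely many species in the sense of \eqref{eq:coex}.

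The main obstacle I anticipate is the passage from Hoffman's finite-$k$ coexistence to the $k = \infty$ version, since it is not automatic: one must choose the sequence of source configurations so that the events $\{|C(x_i)|=\infty \text{ for all } i \leq k\}$ are compatible as $k$ grows, and then apply a Borel--Cantelli or compactness argument to extract a single configuration on which all $|C(x_i)|$ are infinite simultaneously. This is precisely what is carried out in \cite{DH}, and once one has identified the correct sequence of $k$-tuples obtained from the infinitely many distinct vertices of differentiability of $\partial \mathcal{B}_\nu$ (of which there are now infinitely many by the first step), the geometric input is exactly the infinite-sidedness we established. A secondary technical point is checking the hypotheses of Theorem~\ref{Hoof} (moments and unique geodesics) for all $\nu \in \mathcal{M}_p$: this can be handled by an approximation of $\nu$ by measures $\nu_\epsilon \in \mathcal{M}_p$ with continuous marginals on $(1,\infty)$ and compact support, using the continuity of the flat edge structure and of the time constant (Theorem~\ref{thm:contcox}) to transfer the conclusion back to $\nu$.
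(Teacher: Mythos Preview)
Your core strategy matches the paper's exactly: combine Theorem~\ref{thm:marchand1} with Theorem~\ref{thm: diffll} to conclude that $\partial\mathcal{B}_\nu$ is non-polygonal (a polygon would force a corner at $N_p$, contradicting differentiability there), hence $\mathrm{sides}(\mathcal{B}_\nu)=\infty$, and then invoke the Damron--Hochman $k=\infty$ extension of Theorem~\ref{Hoof}. This is precisely the ``combination of Theorem~\ref{Hoof} and Theorem~\ref{thm: diffll}'' the paper cites, and the paper offers no further proof beyond that sentence.

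The one genuine gap is your attempt to remove the unique-geodesics and $(2+\eta)$-moment hypotheses of Theorem~\ref{Hoof} by approximation. Replacing $\nu$ by a nearby $\nu_\epsilon\in\mathcal{M}_p$ satisfying those hypotheses and then ``transferring the conclusion back'' via continuity of the time constant (Theorem~\ref{thm:contcox}) does not work. Continuity of $\mu(e_1)$, or even of the whole limit shape, says nothing about continuity of the coexistence probability $\mathbb{P}(|C(x_i)|=\infty\text{ for all }i)$ as a function of the law of $\tau_e$: the competition process is built pathwise from the weights, and perturbing the law changes every set $C(x_i)$ in a way not controlled by the time constant. There is no soft limiting argument here. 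In fact the survey's statement is slightly loose on this point; the source result in \cite{AD12} explicitly carries the hypotheses $\mathbb{E}\tau_e^{2+\alpha}<\infty$ and unique passage times (note that an arbitrary $\nu\in\mathcal{M}_p$ has an atom at $1$ and no moment assumption, so neither is automatic). The correct move is simply to keep those hypotheses rather than try to argue them away.
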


\subsection{Competition with different growth speeds}

If two species have two different growth rules, with one faster than the other, one may expect that non-coexistence occurs with probability $1$. We try to make this precise in this section. We first consider the original case treated in \cite{HP}, where the passage times are exponentially distributed. We restrict ourselves to the case of $2$ infections.

The model now has two parameters, denoted by $\lambda_1$ and $\lambda_2$, indicating the intensities of the infections. We assume that $\lambda_1 < \lambda_2$ so species $1$ spreads faster than species $2$. The infection rule is the same as in the last section. We start with one infected site of each species. An uninfected site becomes type $i$ infected at a rate proportional to the number of type $i$ infected nearest neighbors and then stays type $i$ for the rest of its life.

The best result to date in this case is still from the original paper \cite{HP2}. With one of the intensities fixed, coexistence has probability $0$ for all but at most countably many values of the other intensity.
It is however expected that this should be true for all values of $\lambda_1 \neq \lambda_2$. 

\begin{question}
Show that for all values of $\lambda_1 \neq \lambda_2$ and all $x_1 \neq x_2$,
$$\mathbb{P}\bigg(|C(x_1)|=|C(x_2)| = \infty \bigg)=0. $$
\end{question}

It is worth mentioning that at this point, we are still left in the embarrassing situation where we know that for almost every value of $\lambda_2$, coexistence does not occur, but we are unable to exhibit any single value of $\lambda_2$ such that coexistence actually does not happen. A possible way to answer this conjecture is to show:

\begin{question} Fix $\lambda_1$. Show that the event $\{|C(x_1)|=|C(x_2)| = \infty \}$ is monotone in $\lambda_2$. 
\end{question}

Monotonicity is believed to hold on $\mathbb Z^d$, although there are examples of graphs where such a statement does not hold \cite{DefHag2}. As in Proposition \ref{prop:initialsites}, the initial placement of the infections does not matter and one could start with any bounded initial sets. Certain special cases where one of the initial sets is unbounded (a half-space against a point, for instance) are treated in \cite{DefHag} and extended in \cite{AntunovicProcaccia}.

As in Section \ref{sec:growthr}, one can define competition with different speeds using more general passage times. This was studied by Garet and Marchand in \cite{GM3}, where they extended the results of \cite{HP} when the passage time distributions for the two species are stochastically comparable. Under certain conditions on the passage times, in this setting, they established in any dimension $d$ that if the slow species survives, the fast species cannot occupy a very high density of space. For $d = 2$, they show that almost surely, one species must finally occupy a set of full density in the plane while the other species occupies only a set of null density.

\subsection{The competition interface}
 
Before ending the chapter, it is worth mentioning that in the past twenty years, there has also been a growing interest in questions related to the asymptotic shape of infected regions. 

In the physics literature, Derrida and Dickman \cite{DD} describe simulations for the Eden model in which two clusters grow into a vacant sector of the plane with angle $\theta$. They obtain values for the roughening exponents of the competition interface depending on the angle.
Two species competition with mutation and selection was also studied numerically in the Eden model in a paper by Kuhr, Leisner and Frey \cite{KLF} (see the references therein for other variations of the model).  

However, precise results are scarce, except for solvable models of Last Passage Percolation, where there is a coupling between the competition interface and the second class particle in TASEP \cite{FMPim2009, FPimentel2005, Pimentel}. One of the main difficulties (again!) in the case of FPP is the lack of knowledge of the limit shape $\mathcal B_{\nu}$.

\begin{figure}
\centering
\def \svgwidth{14cm}
\input{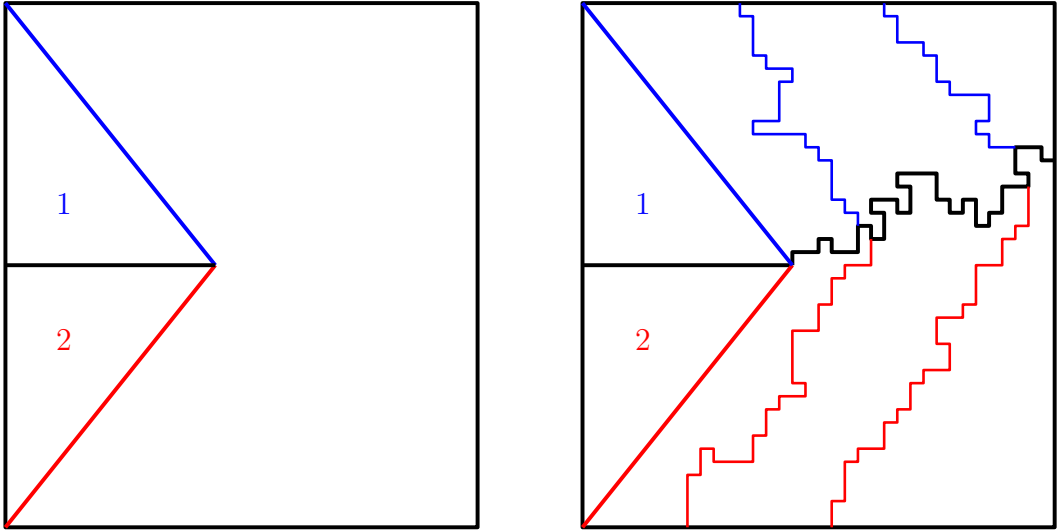}
 \caption{Competition interface between two species. On the left, the starting configuration, with initial angle $\theta > 180^{\circ}$. In blue, the region infected by species $1$ while, in red, the region infected by species $2$. In black, the random curve defining the competition interface.}
 \label{figCompetition}
\end{figure}

One way to define the competition interface is as follows (see Figure \ref{figCompetition}). For a given $\theta \in (0,2\pi),$ consider the sets $$\mathrm{Blue} = \{ (x,y) \in \mathbb Z^{2} : \frac{\theta}{2}< \arctan (x/y) < \pi \}$$ and $$\mathrm{Red} = \{ (x,y) \in \mathbb Z^{2} : \pi < \arctan (x/y) < 2 \pi - \frac{\theta}{2}  \}. $$
Blue and Red are the initial infected sites. We now color a vertex $v$ in $\mathbb Z^{2} \setminus (\mathrm{Blue} \cup \mathrm{Red})$ blue if 
$$T(v,\mathrm{Blue}) < T(v, \mathrm{Red}),$$
and red otherwise. If the passage time distribution is continuous, almost surely there will be no ties and a color will be assigned to each vertex of $\mathbb Z^2$. The competition interface is the unique path on the dual lattice that sits between the blue and red regions. Now for $v \in \mathbb Z^{2} \setminus (\mathrm{Blue} \cup \mathrm{Red})$ define 
$$p(v) = \mathbb P (v \text{ is } blue)$$
and let the width of the interface at absissa $x$ be defined as $$\omega(x)= \sum_{v:v=(x,y)} p(v)(1-p(v)).$$
For $x$ large, \cite{DD} predicted that the function $\omega(x)$ has power law behavior with exponent $\omega(x) \sim x^{\zeta}$ where 
\begin{equation*}
\zeta = \begin{cases}
&\frac{1}{3}, \text{ if } \theta < 180^\circ \\ 
&\frac{2}{3}, \text{ if } \theta = 180^\circ \\
&1, \text{ if } \theta > 180^\circ.
\end{cases}
\end{equation*}
As far as we know, none of these questions were investigated except in the case of LPP with exponential weights. Some recent result in this exactly solvable case were also obtained in \cite{GRS}.

\newpage

\section{Related models and questions}\label{sec:alltheotherthings}

In this section, we discuss a sequence of related questions that were not addressed in the previous sections. They all share something in common: they are all variants of a first-passage percolation model. Here we try to summarize the most well-studied connections and variants for the reader; however, we refer to the papers for their proofs and methodologies.

\subsection{The maximum flow}
The maximum flow problem can be considered a higher-dimensional version of first-passage percolation, where the passage times $\tau(e)$ play the role of the maximal amount of fluid that can go through an edge $e$ per unit of time.
Consider an open bounded connected subset $\Upsilon$ of $\mathbb R^d$ such that the boundary $\partial \Upsilon$ is  piecewise of class $\mathcal C^1$. Let $\Lambda_1, \Lambda_2$ be two open disjoint subsets of $\partial \Upsilon$. For simplicity, let's consider the case where $\Upsilon$ is a $d$-dimensional cube of diameter $n$ and $\Lambda_1$ and $\Lambda_2$ are two opposite faces. $\Lambda_1$ will play the role of the source of the network, while $\Lambda_2$ denotes the sink. A flow $g$ from $\Lambda_1$ to $\Lambda_2$ is an assignment of non-negative values $g(e)$ and a direction to each edge $e$ in $\Upsilon$ such that $$0 \leq g(e) \leq \tau_e, \quad \text{ for all } e $$ 
and such that for each vertex $v \in \Upsilon \setminus (\Lambda_1 \cup \Lambda_2)$, the total inflow equals the total outflow; that is,
$$ \sum_{e \in v^+} g(e) = \sum_{e \in v^-} g(e)$$
where $v^+$ ($v^-$) is the set of edges incident to $v$ and directed towards $v$ (away from $v$). Furthermore, ``inflow" should be equal to the ``outflow"; that is,
$$ \sum_{v\in \Lambda_1}  \sum_{ x \notin \Lambda_1, x \sim v} g(\langle v,x \rangle ) = \sum_{z \in \Lambda_2}\sum_{ x \notin \Lambda_2, x \sim z}  g(\langle x,z \rangle), $$
where $\langle v,x \rangle$ denotes the oriented edge that starts at $v$ and ends at $x$ and $x\sim v$ means $x$ and $v$ are neighboring vertices.  The strength of the flow $g$ is now defined as $$S(g) =  \sum_{v\in \Lambda_1}  \sum_{ x \notin \Lambda_1, x \sim v} g(\langle v,x \rangle). $$

Clearly, if we choose $g(e)=0$ at all edges, we have a flow with $S(g) = 0$. Intuitively, as we increase the strength, it becomes harder to find flows. The main question is therefore: 

\vspace{0.3cm}
{\it Given the passage times, the source $\Lambda_1$, the sink $\Lambda_2$ and the network $\Upsilon$, what is the maximum strength flow from $\Lambda_1$ to $\Lambda_2$? How does its strength scale as $\Lambda_1$ and $\Lambda_2$ are moved far apart? }
\vspace{0.3cm}

Precisely, we define the max-flow from $\Lambda_1$ to $\Lambda_2$ as 
 
 $$ \Phi(\Lambda_1, \Lambda_2)= \max_{g: g \text{ flow from } \Lambda_1 \text{ to }\Lambda_2} S(g).$$
Our goal is to study the behavior of $ \Phi(\Lambda_1, \Lambda_2)$ as $n$ (the diameter of the $d$ dimensional cube) goes to infinity.

The connection with first-passage percolation is not only in the construction of the network with the passage times $\tau_e$. The maximum flow can be represented in a different way, as the next theorem shows. We need a few definitions. We say that a set of edges $E$ separates $\Lambda_1$ from $\Lambda_2$ if there is no path in $\Upsilon \setminus E$ from $\Lambda_1$ to $\Lambda_2$. We call $E$ a cut of $(\Lambda_1, \Lambda_2)$ if $E$ separates $\Lambda_1$ from $\Lambda_2$ and if no proper subset of $E$ separates $\Lambda_1$ from $\Lambda_2$. Now the max-flow and min-cut theorem (see~\cite{B79}) states that

 \begin{theorem} The following equality holds:
 \begin{equation*}
 \Phi(\Lambda_1, \Lambda_2) = \min \big\{ \sum_{e\in E} \tau_e : E \text{ is a cut of  }(\Lambda_1, \Lambda_2) \big\}.
 \end{equation*}
 \end{theorem}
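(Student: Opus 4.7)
The plan is to establish the two inequalities separately, following the classical Ford--Fulkerson approach. The easy direction is $\Phi(\Lambda_1,\Lambda_2) \leq \min_{E} \sum_{e \in E} \tau_e$, where the minimum is taken over cuts $E$ of $(\Lambda_1,\Lambda_2)$. For this, I would fix an arbitrary flow $g$ and an arbitrary cut $E$, and track the net flow across $E$. Using the conservation of flow at each interior vertex (the inflow-outflow equation), one shows by a telescoping sum over vertices in the component of $\Upsilon \setminus E$ containing $\Lambda_1$ that the strength $S(g)$ equals the signed flow crossing $E$; since $0 \leq g(e) \leq \tau_e$, this signed flow is bounded above by $\sum_{e \in E}\tau_e$. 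Taking sup over $g$ and inf over $E$ yields the weak duality.

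For the reverse inequality I would use the standard augmenting-path construction. Given a flow $g$, define the residual network whose edges carry capacity $\tau_e - g(e)$ in the forward direction and $g(e)$ in the reverse direction. An augmenting path is a path from $\Lambda_1$ to $\Lambda_2$ in the residual network with all residual capacities strictly positive. Whenever such a path exists, one may push $\varepsilon>0$ units of flow along it (where $\varepsilon$ is the minimum residual capacity along the path), producing a new feasible flow with strictly larger strength. Iterating, we obtain a sequence of flows whose strengths increase; since the setting is finite (finitely many edges in the bounded region $\Upsilon$, each with finite capacity $\tau_e$), and since a cleanly chosen augmenting procedure (e.g.\ shortest augmenting paths, as in Edmonds--Karp) terminates, we eventually reach a flow $g^*$ for which no augmenting path exists.

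From such a $g^*$ I would extract a matching cut. Let $A$ be the set of vertices reachable from $\Lambda_1$ in the residual network of $g^*$. By construction $A \cap \Lambda_2 = \emptyset$, so the set $E^* $ of edges of $\Upsilon$ with one endpoint in $A$ and one in $\Upsilon \setminus A$ (after pruning to a minimal such set) is a cut of $(\Lambda_1,\Lambda_2)$. Moreover, for every edge $e \in E^*$ oriented from $A$ to $A^c$ we must have $g^*(e) = \tau_e$ (else the endpoint in $A^c$ would be reachable), and for every edge oriented from $A^c$ to $A$ we must have $g^*(e) = 0$. Combining with the identification of $S(g^*)$ as the signed flow across $E^*$ used in the first step, this gives $S(g^*) = \sum_{e \in E^*} \tau_e$, so the minimum cut capacity is at most $S(g^*) \leq \Phi(\Lambda_1,\Lambda_2)$, completing the other inequality.

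The main obstacle is making the augmenting-path iteration terminate and yield a genuine maximum, since the $\tau_e$ are real-valued (not integer). The cleanest way around this is to appeal to compactness: the set of feasible flows is a compact convex subset of $\mathbb{R}^{|E(\Upsilon)|}$ and $S(\cdot)$ is continuous, so a maximizer $g^*$ exists, and the saturation argument in the previous paragraph can be applied directly to any maximizer without needing an iterative construction. I would present both viewpoints but rely on the compactness argument to handle real-valued capacities, noting that the interpretation via augmenting paths is what makes the identity transparent and what ties the result to the first-passage percolation cut picture used elsewhere in the survey.
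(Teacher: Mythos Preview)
Your proposal is correct and is the classical Ford--Fulkerson argument, with the compactness observation being the right fix for real-valued capacities. The paper does not actually prove this theorem: it simply states the max-flow min-cut identity and cites Bollob\'as's graph theory text \cite{B79}, so there is no ``paper's proof'' to compare to beyond noting that your argument is the standard one appearing in that reference.
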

 
 Thus, the max-flow can be viewed as the first-passage time from $\Lambda_1$ to $\Lambda_2$ over all possible cuts. In two dimensions, a cut $E$ reduces to a path and we can clearly see the connection. In higher dimensions, $E$ is a hypersurface and thus the claim is that the maximum flow is the analogue of FPP in higher dimensions.
 
 Now, let $d=3$ and write $\Lambda = [0,k] \times [0,l] \times [0,m]$. The following result is due to Kesten,~\cite{KestemMaxflow}. 
 \begin{theorem}[Kesten \cite{KestemMaxflow}]\label{Kestenmax} Assume that $m(k,l)\rightarrow \infty$ as $k\geq l \rightarrow \infty$ and that for some $\delta>0$,
 \begin{equation}\label{eq:Kestenmaxflow}
 k^{-1+\delta} \log m(k,l) \rightarrow \infty.
 \end{equation}
 Then there exists $p^*$, with $1/27 \leq p^*\leq p_c$ such that for any distribution $F$ with $F(0)<p^*$  and finite exponential moments
 \begin{equation}\label{eq:maxflow}
 \lim_{k,l,m \rightarrow \infty} \frac{1}{kl}  \Phi(\Lambda_1, \Lambda_2) = \heartsuit \quad \text{ a.s.  and in } L_1
 \end{equation}
  for some $\heartsuit>0$.
\end{theorem}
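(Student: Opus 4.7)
The plan is to establish the convergence in \eqref{eq:maxflow} by means of a subadditive argument in the cross-sectional dimensions $k$ and $l$, combined with a Peierls-type combinatorial bound on the number of minimal cuts that provides strict positivity of the limit when $F(0)<p^{*}$. The max-flow / min-cut duality is the starting point: $\Phi(\Lambda_{1},\Lambda_{2})=\min_{E}\sum_{e\in E}\tau_{e}$, where $E$ ranges over all cuts of $(\Lambda_{1},\Lambda_{2})$. This instantly gives subadditivity in $k$ and $l$: if $\Lambda=[0,k_{1}+k_{2}]\times[0,l]\times[0,m]$ is split into two adjacent vertical slabs of widths $k_{1}$ and $k_{2}$, then the union of a min-cut in each slab is a cut for $\Lambda$, hence $\Phi_{k_{1}+k_{2},l,m}\leq \Phi^{(1)}_{k_{1},l,m}+\Phi^{(2)}_{k_{2},l,m}$, with the two summands independent. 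The flat ``top face'' cut supplies the deterministic bound $\Phi_{k,l,m}\leq \sum_{(i,j)\in[0,k]\times[0,l]}\tau_{e_{i,j}}$, which under finite exponential moments yields both a linear-in-$kl$ upper bound on $\mathbb{E}\Phi_{k,l,m}$ and the integrability required for $L^{1}$ convergence.

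First I would feed this subadditivity into Kingman's theorem (Theorem~\ref{subadditivity}) applied to $k\mapsto \Phi_{k,l,m}$ with $l,m$ fixed to obtain $\Phi_{k,l,m}/k\to\psi(l,m)$ almost surely and in $L^{1}$; a second application in $l$ produces $\psi(l,m)/l\to\chi(m)$. Monotonicity in $m$ (enlarging the height only enlarges the admissible family of cuts) implies $\chi(m)$ is nonincreasing in $m$, so it converges to some $\heartsuit\geq 0$. The growth hypothesis \eqref{eq:Kestenmaxflow} is exactly what is needed to upgrade these successive one-dimensional limits to the joint limit $\Phi_{k,l,m(k,l)}/(kl)\to\heartsuit$: a Borel--Cantelli / concentration estimate for $\Phi_{k,l,m}/(kl)$ on rectangular slabs becomes summable along the sequence $m=m(k,l)$ precisely under the condition $k^{-1+\delta}\log m(k,l)\to\infty$.

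The main obstacle, and the heart of the argument, is showing $\heartsuit>0$ under the hypothesis $F(0)<p^{*}$. A cut separating $\Lambda_{1}$ from $\Lambda_{2}$ corresponds, in the dual lattice, to a connected union of plaquettes which, once anchored at a given dual face, is counted by at most $c_{0}^{A}$ for its area $A$, for an absolute constant $c_{0}$; an elementary surgery gives $c_{0}\leq 27$, which is the source of the lower bound $p^{*}\geq 1/27$. For any such cut $E$ with $|E|=A$, the finite exponential moments of $\tau_{e}$ together with a Cram\'er-type bound yield $\mathbb{P}\bigl(\sum_{e\in E}\tau_{e}\leq \alpha A\bigr)\leq e^{-I(\alpha)A}$ with $I(\alpha)>0$ for $\alpha$ below a threshold depending only on $F$. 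If $F(0)<1/c_{0}$ one may choose $\alpha>0$ so small that $c_{0}e^{-I(\alpha)}<1$; summing first over cuts of a fixed area $A\geq kl$ and then over $A$ gives
\[
\mathbb{P}\!\left(\Phi(\Lambda_{1},\Lambda_{2})\leq \alpha\, kl\right)\;\leq\;\sum_{A\geq kl}(\text{\# cuts of area }A)\,e^{-I(\alpha)A}\;\leq\;C\,e^{-c\,kl},
\]
which combined with the almost sure convergence above forces $\heartsuit\geq \alpha>0$.

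The hardest part will be making this Peierls counting rigorous in three dimensions and extracting a genuinely summable tail: one must control cuts whose area is much larger than $kl$ (where the combinatorial entropy is large) as well as cuts close to the flat minimal one (where the combinatorial factor is small but the deviation bound is weakest). A surgery step that strips away ``bubbles'' to reduce an arbitrary cut to a minimal representative, together with a careful accounting of the resulting complexity as in Kesten's original treatment, is what both makes the estimate work and forces the quantitative threshold $p^{*}\geq 1/27$ rather than the naive $p_{c}$.
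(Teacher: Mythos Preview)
The survey does not prove this theorem: it is quoted as a result of Kesten and the text immediately passes to the later improvements of Zhang and of Cerf--Th\'eret. There is therefore no proof in the paper to compare your sketch against.

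On the sketch itself: the architecture you describe --- subadditivity in the cross-sectional variables $k,l$ for existence of the limit, and a Peierls-type enumeration of dual plaquette surfaces for strict positivity under $F(0)<p^{*}$ --- is indeed the skeleton of Kesten's argument, and the source of the constant $1/27$ is exactly the crude bound on the connective constant of plaquette surfaces you indicate. Two points, however, are not quite right as written.

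First, your iterated application of Kingman's theorem (``fix $l,m$, send $k\to\infty$; then fix $m$, send $l\to\infty$; then send $m\to\infty$'') does not directly apply here, because $m=m(k,l)$ is coupled to $k,l$; one cannot freeze $m$ while $k\to\infty$. The actual mechanism is a concentration estimate for $\Phi_{k,l,m}$ around its mean (available from the finite exponential moments and the bounded-difference structure of the min-cut), combined with a union bound over cuts, and it is here that the growth hypothesis on $m$ enters.

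Second, and relatedly, you have the role of \eqref{eq:Kestenmaxflow} backwards. A larger $m$ gives the cut more room to wander vertically, which \emph{increases} the entropy in the Peierls sum (the number of anchored plaquette surfaces of area $A$ inside a slab of height $m$ picks up factors involving $m$), so a faster-growing $m$ makes the union bound \emph{harder}, not easier, to sum. The restriction on $m$ in this type of argument is an \emph{upper} bound on its growth relative to $k$ --- compare Zhang's hypothesis $\log m(k)\le\max_{i}k_{i}^{1-\delta}$ quoted just below in the survey --- so that the entropy of cuts does not overwhelm the $e^{-c\,kl}$ energy decay. Your sentence ``becomes summable precisely under the condition $k^{-1+\delta}\log m(k,l)\to\infty$'' therefore cannot be correct as stated; either the inequality in the survey's display is a transcription slip, or the condition is playing a different role than the one you assign to it.
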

In the same paper, Kesten showed that if $F(0)>1-p_c$ the limit above exists and it is equal to zero. The critical case when $F(0)=1-p_c$ was also  considered by Zhang \cite{Zhang3}.
 
 \begin{theorem}[Zhang \cite{Zhang3}] If $F(0)=1-p_c$ and $F(0^-)=0$ and if in addition $\E \tau < \infty$ then for any $l>0$, 
 $$ \lim_{k,m\rightarrow \infty} \frac{1}{kl} \Phi(\Lambda_1, \Lambda_2) =0$$
and 
$$\lim_{k,l,m\rightarrow \infty} \frac{1}{kl} \Phi(\Lambda_1, \Lambda_2) =0,$$ where $k,l,m$ go to infinity without any restriction (in particular wiithout assuming \eqref{eq:Kestenmaxflow}).
\end{theorem}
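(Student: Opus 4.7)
The plan is to use the max-flow/min-cut duality noted just above, combined with the critical behavior of the $\{\tau_e>0\}$ bond percolation. By duality it suffices to construct, for every $\epsilon>0$, a cut of $\Lambda_1$ from $\Lambda_2$ with expected total weight $\le\epsilon kl$ (for $k,m$ large enough in part (a), and similarly for part (b)). The structural input is that $F(0)=1-p_c$ puts the positive-weight edges exactly at the critical density $p_c=p_c(\Z^3)$, while the zero-weight edges have density $1-p_c>1/2=p_c(\Z^2)$, so in every $2$-dimensional slab the zero-weight edges are highly supercritical.

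First I would truncate: using $\E\tau<\infty$ choose $T=T(\epsilon)$ with $\E[(\tau-T)_+]<\epsilon/4$, so the contribution of $(\tau_e-T)_+$ to any cut of $O(kl)$ edges has expectation $\le(\epsilon/4)kl$; it therefore suffices to exhibit a cut of weight $\le(\epsilon/2)kl$ under the truncated weights $\tau_e\wedge T$. Next I would construct the cut as a ``wavy sheet''
\begin{equation*}
A_f=\{(x,y,z)\in[0,k]\times[0,l]\times[0,m]:z\ge f(x,y)\},\qquad f:[0,k]\times[0,l]\to\{1,\dots,m\},
\end{equation*}
whose boundary consists of one vertical edge per column $(x,y)$ at level $f(x,y)$, plus, for every pair of neighboring columns, a stack of $|f(x,y)-f(x',y')|$ horizontal edges. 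I would partition the base into blocks $B_\alpha$ of side $R$ (to be chosen) and take $f$ roughly constant, equal to a block-dependent level $j_\alpha$, on each $B_\alpha$.

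The central use of criticality is to choose $j_\alpha$ so that the vertical contribution from $B_\alpha$ is $o(R^2)$. The key fact is that for any fixed $R$, the percolation of positive-weight edges restricted to the slab $B_\alpha\times\Z$ is subcritical, because the critical parameter of any $[0,R]^2\times\Z$ slab is strictly greater than $p_c(\Z^3)$. Consequently, the probability that $B_\alpha\times[0,m]$ is crossed top-to-bottom by positive-weight edges tends to $0$ exponentially in $m$. On the complementary event a zero-weight cut of the sub-column exists, and by averaging this cut over horizontal levels one extracts a level $j_\alpha$ at which the cross-section contains only $o(R^2)$ positive-weight vertical edges; the resulting vertical contribution is $\le\epsilon R^2T$.

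The hard part is reconciling the horizontal step-cost with the block-scale argument. Between neighboring blocks with different levels $j_\alpha,j_{\alpha'}$, a naive piecewise-constant $f$ requires $R\cdot|j_\alpha-j_{\alpha'}|$ horizontal step-edges per interface, which can be of order $mR$; summed over the $O(kl/R^2)$ interfaces this gives a step-cost of order $mklT/R$, which would force $R\gg m$ and conflict with the block-scale decay estimate. The fix is to modify $f$ near block boundaries so that the transition from $j_\alpha$ to $j_{\alpha'}$ is carried out through horizontal edges of zero weight: using that in every horizontal $2$-dimensional slab the zero-weight edges form a supercritical percolation with an infinite cluster (density $1-p_c>1/2=p_c(\Z^2)$), for each pair of neighboring blocks and with probability $\to1$ a zero-weight horizontal path exists connecting the two block boundaries through intermediate zero-weight slabs. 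Routing $f$'s interface along such paths pays only constant weight per interface, so summing gives a total cut weight $\le\epsilon Tkl+o(kl)$, proving part (a). Part (b) follows from (a) by slab-subadditive decomposition of $[0,l]$ into tubes $[0,k]\times[(j-1)l_0,jl_0]\times[0,m]$ of fixed width $l_0$ and summing the estimates of part (a), yielding $\Phi\le(l/l_0)\cdot o(kl_0)=o(kl)$.
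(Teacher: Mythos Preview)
This survey does not give its own proof of Zhang's theorem; the result is simply quoted from \cite{Zhang3}. So there is no paper proof to compare against, and I assess your argument on its own merits. Your identification of the percolation picture (positive-weight edges at density exactly $p_c(\Z^3)$, tubes $[0,R]^2\times\Z$ having $p_c=1$, hence zero-weight cuts in each tube with high probability) is correct; the problems are in what follows.

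\textbf{The ``averaging'' step is wrong.} You claim that from a zero-weight cut of the tube $B_\alpha\times[0,m]$ one can extract a level $j_\alpha$ whose cross-section contains only $o(R^2)$ positive-weight vertical edges. But at \emph{every} fixed level the number of positive-weight vertical edges is a sum of $\sim R^2$ i.i.d.\ Bernoulli$(p_c)$ variables, hence concentrated near $p_c R^2$, not $o(R^2)$. The existence of a zero-weight \emph{surface} cut gives no information about flat cross-sections. If instead you mean to use the zero-weight surface itself, then $f$ is no longer piecewise constant and your step-cost bookkeeping does not apply.

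\textbf{The horizontal patching is not justified.} Adding all wall edges between blocks costs $O(klmT/R)$, forcing $R\gg m$; but the crossing decay rate in the tube $[0,R]^2\times\Z$ is only of order $(1-p_c)^{R^2}$, so guaranteeing zero-weight cuts in every block needs $m\gg e^{cR^2}\log(kl)$. These constraints are incompatible. Your alternative --- routing $f$'s interface along zero-weight horizontal paths in 2D slabs --- is not made precise: in the wavy-sheet framework the horizontal edges of the cut are \emph{determined} by the jumps of $f$, and you have not explained how to arrange that these specific edges are zero-weight while keeping $f$ near its block values.

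\textbf{The reduction of (b) to (a) goes the wrong way.} Max-flow is \emph{superadditive} across parallel $y$-slabs (flows in disjoint slabs combine), so $\Phi(k,l,m)\ge\sum_j\Phi_j(k,l_0,m)$, not $\le$. To get an upper bound via cuts one must also sever all interface $y$-edges, at cost $(l/l_0-1)\cdot(k+1)(m+1)$ edges, i.e.\ order $m/l_0$ after normalizing by $kl$, which does not tend to $0$ as $m\to\infty$ with $l_0$ fixed.
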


\begin{remark} The arguments in \cite{Zhang3} are also valid for dimensions $d\geq 2$.
\end{remark}

Assuming finite exponential moments, Zhang \cite{Zhang3} improved the above results and proved the following theorem.

\begin{theorem}[Zhang \cite{Zhang3}] Let $d\geq 2$. Assume finite exponential moments and that for all $m=m(k)$ that goes to infinity when all the length of the sides $k_i$, $i=1,\ldots, d-1$ go to infinity in such a way that 
$$\exists \; \delta \in (0,1), \quad \log m(k) \leq \max_{i=1,\ldots, d-1} k_i^{1-\delta}.$$
Then we have
$$ \lim_{k_1, \ldots, k_{d-1} \rightarrow \infty} \frac{1}{\prod_{i=1}^{d-1} k_i}\Phi(\Lambda_1,\Lambda_2) = \heartsuit(d) \quad \text{ a.s.  and in } L_1.$$
Moreover, this limit is positive if and only if $F(0) < 1-p_c(d).$
\end{theorem}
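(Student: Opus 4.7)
The plan is to follow and upgrade the strategy used by Kesten for $d=3$: establish convergence by a superadditive argument, identify the positivity regime via a percolation duality, and promote the a.s. convergence to $L^{1}$ using the exponential-moment assumption.

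First I would prove existence of the a.s. limit by exploiting the superadditivity of $\Phi$ in the cross-sectional directions. Concretely, if we partition the cross-section of $\Lambda$ into two disjoint sub-boxes and take any two max-flows in them, their union (extended by $0$ elsewhere) is still a flow in $\Lambda$ with respect to the same source and sink, since capacities on shared boundary edges are not violated and node conservation is preserved. This gives superadditivity $\Phi(k_{1}+k_{1}',k_{2},\ldots,k_{d-1},m) \geq \Phi(k_{1},\ldots)+\Phi(k_{1}',\ldots)$, up to boundary effects on the interface between the two boxes of size $O(m\, k_{2}\cdots k_{d-1})$. Under the hypothesis $\log m(k) \leq \max_i k_i^{1-\delta}$ these boundary terms are of smaller order than $\prod_{i} k_{i}$, so a multi-parameter subadditive ergodic theorem (Akcoglu--Krengel) yields almost-sure convergence of $\Phi/\prod_{i}k_{i}$ to a deterministic constant $\heartsuit(d)\in[0,\infty)$.

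Second, to identify the positivity regime, I would separately prove each direction of the equivalence $\heartsuit(d)>0 \iff F(0)<1-p_{c}(d)$. For the implication $F(0)\geq 1-p_{c}(d)\Rightarrow \heartsuit(d)=0$, observe that the edges with $\tau_{e}>0$ form a Bernoulli bond percolation process of density $1-F(0)\leq p_{c}(d)$, hence sub- or critical. In the subcritical case, by sharpness of the phase transition (Aizenman--Barsky / Men'shikov / Duminil-Copin--Tassion) the probability of a positive-weight path from $\Lambda_{1}$ to $\Lambda_{2}$ decays exponentially in $m$, so by the max-flow/min-cut theorem $\Phi=0$ with probability tending to one. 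At criticality, a renormalization argument shows that one can build cuts consisting almost entirely of zero-weight edges plus a sub-extensive number of positive-weight edges whose total weight, thanks to exponential integrability, is $o(\prod_{i} k_{i})$. For the reverse implication $F(0)<1-p_{c}(d)\Rightarrow \heartsuit(d)>0$, the positive-weight edges are supercritical, so by Grimmett--Marstrand slab percolation and Antal--Pisztora chemical-distance bounds, one can tile the box by mesoscopic cubes of side $L$, declare a cube ``good'' if its restricted positive-weight percolation cluster is crossing and quantitatively well-connected, and then show that good cubes percolate with density close to one. Constructing a flow that routes a fixed positive amount through each of $\gtrsim \prod_{i}(k_{i}/L)$ disjoint columns of good cubes yields the linear lower bound $\Phi \geq c L^{-(d-1)} \prod_{i} k_{i}$.

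Third, the $L^{1}$ convergence follows from uniform integrability of $\Phi/\prod_{i}k_{i}$, which is obtained from the trivial cut consisting of the cross-section $\{z=0\}$: one has $\Phi \leq \sum_{e\in\mathrm{cross}}\tau_{e}$, and under $\mathbb{E}\exp(\alpha\tau_{e})<\infty$ standard concentration for i.i.d.\ sums gives both the required uniform integrability and, combined with the a.s.\ limit, the $L^{1}$ convergence.

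The hardest part will be the sharp threshold in the positivity statement. Kesten's original argument for $d=3$ only reached $p^{*}\geq 1/27$ because its block construction lost a constant factor at each renormalization step; pushing all the way up to $1-p_{c}(d)$ requires the full strength of the modern supercritical percolation toolkit (slab percolation, Antal--Pisztora, sharpness of the phase transition) on one side, and a careful duality-based treatment of min-cuts in the critical and subcritical regimes on the other. The multi-dimensional subadditive/superadditive argument, while conceptually standard, also requires care since the cross-section grows in $d-1$ directions simultaneously and the boundary-effect estimates must be consistent with the growth constraint on $m(k)$.
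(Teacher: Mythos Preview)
The survey does not prove this theorem; it is quoted as a result of Zhang with a citation to \cite{Zhang3}, so there is no proof here to compare against.

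Evaluating your sketch on its own: the architecture is natural, but there is a genuine gap in the direction $F(0)\geq 1-p_c(d)\Rightarrow \heartsuit(d)=0$. You claim that when positive-weight edges are subcritical the probability of a positive-weight path from $\Lambda_1$ to $\Lambda_2$ decays exponentially in $m$, whence $\Phi=0$ with high probability. Sharpness gives that decay only from a \emph{single} vertex; the union bound over the $\prod_i k_i$ vertices of $\Lambda_1$ costs exactly that factor, and since the hypothesis permits $m(k)$ to grow arbitrarily slowly (for instance $m=\log\log k$ satisfies $\log m\leq k^{1-\delta}$), the resulting bound $\prod_i k_i\cdot e^{-cm}$ diverges and positive-weight crossings typically \emph{do} exist. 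What must actually be shown is the weaker statement $\Phi/\prod_i k_i\to 0$, uniformly over all admissible heights, and that requires constructing cuts of sublinear total weight rather than zero weight---the surface/renormalization input you reserved for the critical case is in fact needed throughout the sub- and critical regimes. A second structural point: invoking Akcoglu--Krengel for the cross-sectional superadditivity gives, for each fixed $m$, a limit that a priori depends on $m$; showing that all admissible sequences $m(k)$ produce the same $\heartsuit(d)$ is additional content (usually supplied by concentration of $\Phi$ around its mean) and is not delivered by the ergodic theorem alone.
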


After the work of Kesten and Zhang, remarkable progress has been made over a period of years. Flows between general subsets of $\mathbb Z^d$ were studied in the papers of Garet \cite{GaretMax} and Rossignol-Theret \cite{RossignolTheret10, RossignolTheret13} in dimension 2. The study culminates in a series of papers of Cerf and Th\'{e}ret \cite{CT1,CT2, CT3}. Under finite exponential moments for the passage times, they establish the analogue of Theorem \ref{Kestenmax} for very general networks and derive lower and upper large deviations (of surface order $n^{d-1}$, and volume order $n^d$, respectively). 

\begin{question}\label{oq:maxflow}
Determine if the limit in \eqref{eq:maxflow} exists without the exponential moment assumption.
\end{question}

\subsection{Variants}

\subsubsection{Different edge weights}

Over the years, some attention has been devoted to cases of FPP with non-i.i.d edge weights. Boivin \cite{B90} considers the general stationary and ergodic case and proved the analogue of Theorem~\ref{thm:timeconstant} and Theorem~\ref{thm:limitshape}. (See Bjorklund \cite{Bjorklund} for the most general known version.) In the special case where the $\tau_e$'s are also bounded, this was first noted by Derrienic (see \cite[page 259]{Kestensurvey}). A remarkable contribution came in the work of H\"aggstr\"om and Meester \cite{HM}, who classify all possible limit shapes in the stationary, ergodic case:

\begin{theorem}[Haggstrom-Meester \cite{HM}]\label{thm:HM} Let $\mathcal C$ be the set of all subsets of $\R^d$ that are compact, convex, and symmetric about the axes with nonempty interior. Let $\mathcal C^*$ be the set of all compact subsets of $\R^d$ with nonempty interior that can arise as limiting shapes for stationary first-passage percolation. Then $\mathcal C = \mathcal C^*$.
\end{theorem}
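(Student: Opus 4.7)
The easy inclusion $\mathcal{C}^* \subseteq \mathcal{C}$ follows from Boivin's extension of the shape theorem to stationary ergodic passage times. Subadditivity of $T$ under the ergodic shift produces a time constant $\mu$ that extends to a seminorm on $\mathbb{R}^d$; the axis-symmetry of the measure transfers directly to $\mu$ (and hence to the unit ball $\mathcal{B}$), convexity of $\mathcal{B}$ is just the triangle inequality for $\mu$, and compactness with nonempty interior is equivalent to $0 < \mu(x) < \infty$ for all $x \neq 0$, which is built into membership in $\mathcal{C}^*$ via Boivin's shape theorem.

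The substantive content is the reverse inclusion $\mathcal{C} \subseteq \mathcal{C}^*$. Given $C \in \mathcal{C}$, the plan is to construct a stationary ergodic axis-symmetric probability measure on $[0,\infty)^{\mathcal{E}^d}$ whose FPP limit shape is exactly $C$. I would proceed in two stages: first realize every symmetric rational convex polytope, then approximate. For a polytope $P$ with extreme points $\{\pm v_1,\ldots,\pm v_m\}$ (together with their axis reflections), $v_i \in \mathbb{Q}^d$, the construction is to plant, for each extreme direction $v_i$, a periodic family of parallel infinite lattice ``highways'' oriented along $v_i$, indexed by the cosets of a large sub-lattice $N_i \mathbb{Z}^d$ in the hyperplane transverse to $v_i$. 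Shifting the $i$-th family by an independent uniform translation $U_i$ on $\{0,\ldots,N_i-1\}^d$ gives a stationary, ergodic configuration. Edges lying on a highway receive a small passage time calibrated so that travel along direction $v_i$ at unit rate costs the $P$-gauge of $v_i$ per unit length; all other edges receive a very large passage time $M$. The resulting time constant should then satisfy $\mu(v_i) = \|v_i\|_P$ for every extreme direction, and convexity plus symmetry upgrade this to $\mu = \|\cdot\|_P$ on all of $\mathbb{R}^d$.

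For a general $C \in \mathcal{C}$, write $C = \bigcap_{k \geq 1} P_k$ as an intersection of a decreasing sequence of symmetric rational convex polytopes $P_k \supset C$ with $P_k \downarrow C$. The plan is to superimpose the highway families for $P_1, P_2, \ldots$ at geometrically increasing spacings $N^{(k)} \uparrow \infty$, so that at observation scale $n$, only the polytopes $P_k$ with $N^{(k)} \ll n$ effectively contribute to the cheapest routes. With enough separation between scales, the asymptotic cost of traveling from $0$ to $nx$ is then governed by $\sup_k \|x\|_{P_k} = \|x\|_C$, forcing the limit shape to be $\bigcap_k P_k = C$. Axis-symmetry is preserved at every level because each highway family is closed under the axis reflections, and ergodicity survives because the random shifts across different scales and directions are independent.

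The main obstacle will be the verification in the polytope step: one must show that the highway construction really produces $P$ and not some strictly smaller polytope. The hard work is an upper/lower matching argument for $\mu(x)$ in an \emph{arbitrary} direction $x$, where an efficient path is free to hop between highway families of several different extreme directions. This reduces to a linear programming problem over convex combinations of the $v_i$, and must be handled carefully so that transition costs between highways are negligible at large scales while off-highway shortcuts are provably suboptimal; this is where the tuning of the spacings $N_i$ and the background weight $M$ enters in a nontrivial way. A secondary obstacle is the approximation step, where superposing infinitely many highway systems risks either collapsing the time constant (if highways percolate too densely) or destroying the delicate balance that produced the correct gauge; a geometric separation of scales in $N^{(k)}$ is the natural way to keep these effects under control.
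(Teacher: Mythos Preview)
The paper does not prove this theorem; it cites H\"aggstr\"om--Meester and remarks only that their construction is explicit, produces bounded weights with trivial tail $\sigma$-fields, and is never i.i.d. Your easy inclusion and your single-polytope step are in the right spirit and broadly match their approach: randomly shifted families of lattice highways along extreme directions, with the verification reducing to the linear-programming computation you identify.

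The approximation step, however, is backwards. You take $P_k \supset C$ decreasing and superimpose their highway systems at spacings $N^{(1)} < N^{(2)} < \cdots$. But then the $P_1$-highways are simultaneously the fastest (since $P_1$ is the largest polytope, $\|\cdot\|_{P_1}$ is the smallest gauge) and the densest (smallest spacing), hence the most readily available at every scale. A path from $0$ to $nx$ will simply ride $P_1$-highways and pay about $n\|x\|_{P_1}$, so the time constant comes out as $\inf_k \|x\|_{P_k} = \|x\|_{P_1}$, not the $\sup_k$ you claim: passage time is an infimum over paths, so the cheapest available system wins, and $P_1$ is always available. The fix is to approximate from the \emph{inside} with inscribed polytopes $P_k \subset C$ whose vertices lie on $\partial C$ and $P_k \uparrow C$. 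Then every highway direction $v$ satisfies $\|v\|_C = 1$, so any concatenation of highway segments reaching $x$ costs at least $\|x\|_C$ by convexity, while density of the vertices on $\partial C$ delivers the matching upper bound $\inf_k \|x\|_{P_k} = \|x\|_C$. This is essentially how H\"aggstr\"om--Meester proceed, working directly with a countable dense family of rational boundary directions at geometrically separated scales rather than through an explicit two-stage polytope scheme.
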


The proof of the theorem above given by  H\"aggstr\"om and Meester is constructive. That is, given a set $B \in \mathcal C^*$, they construct a stationary measure $\mathbb{P}$ whose limit shape is $B$. The  measure $\mathbb{P}$ has bounded support and trivial tail $\sigma$-fields. For any set $B$, their construction does not lead to an i.i.d. environment. The following open question is a rephrasing of Question \ref{q1}.

\begin{question}
Find a convex, compact set $B$ with non-empty interior and symmetry about the axes, not equal to the $\ell^1$ unit ball, and an i.i.d. first passage model that has $B$ as its limit shape. 
\end{question}

Other types of dependent passage times were considered in the literature. Fontes and Newman \cite{FNewman}  and Chayes and Winfield \cite{Chayes92} considered dependent first-passage weights coming from random colorings of vertices of $\mathbb Z^d$. Van den Berg and Kiss \cite{vdBergkiss12} extended the BKS sublinear variance theorem to a class of dependent weights that includes passage times that correspond with the minimal number of sign changes in a subcritical Ising landscape, a model investigated by Higuchi and Zhang \cite{HiguchiZhang}.

\begin{figure}\label{fig:Michael}
\centering
 \includegraphics[scale=0.6]{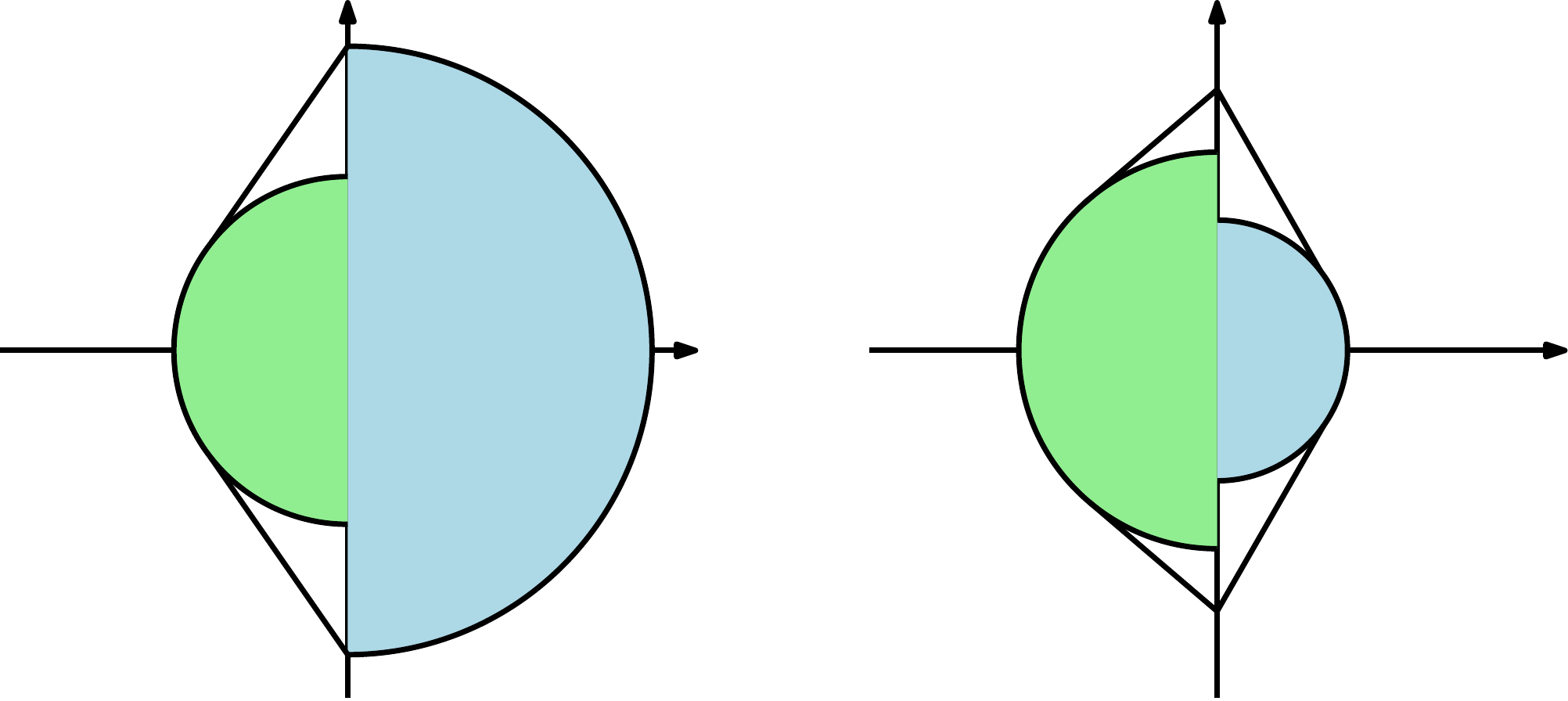}
 \caption{Representation of the limit shape in the left-right inhomogeneous model. On the left, the edge distribution $F_+$ is more variable than $F_-$. On the right, the case where no such relation is present.}
\end{figure}

 The following variation was also considered recently in \cite{ADS13}. Consider an inhomogeneous version of FPP where edges in the left and right half-planes of the $\mathbb Z^2$
lattice are assigned weights independently according to distributions $F_-$ and $F_+$.  Complementing the classical approach of the proof of Theorem \ref{thm:limitshape} with large deviation estimates for half-plane passage times as in \cite{Daniel1}, the authors proved a shape theorem. They also show how the limit shape behaves as a function of the limit shapes of regular (homogeneous) FPP with weights from $F_-$ and $F_+$. If either of $F_-$ and $F_+$ is more variable than the other, then the asymptotic shape is the convex hull of the restriction to respective half-planes of the asymptotic shapes for $F_-$ and $F_+$. When no such relation is present, the asymptotic
shape equals the convex hull of the two half-shapes and a potentially wider additional line
segment along the vertical axis (see Figure~\ref{fig:Michael}). The question introduced in that work is:

\begin{question}
Find necessary and sufficient conditions on $F_-$ and $F_+$ so that the asymptotic speed in the vertical direction is strictly greater than that given by either $F_-$ or $F_+$ in the standard homogeneous model.
\end{question} 

\subsubsection{FPP on different graphs}

FPP on graphs different from $\mathbb Z^d$ has also appeared in the recent literature. In the past decade, several authors were interested in FPP on complete graphs, on the hypercube and also on random graphs such as Erdos-Renyi and the configuration model. As most of these examples are finite graphs, the questions are slightly different from those considered here. We describe them below.

Given a finite graph $G=(V,E)$ and weights $(\tau_e)_{e \in E}$, the passage time between two vertices $u$ and $v$ is defined analogously to \eqref{definition:passagetime} as the minimum passage time among paths between $u$ and $v$. A few new and old quantities emerge as the main object of study. First, one would like to understand the typical passage time and the number of edges of a geodesic between two vertices. That is, choosing $U$ and $V$ uniformly at random from the vertices of $G$, we want to understand the behavior of the passage time $T(U,V)$ and the length of a geodesic $|\Gamma(U,V)|$. Further objects of interest include the flooding time $\max_{v \in V} T(u,v)$ and the the weight diameter $\max_{u,v \in V} T(u,v)$. 

For results in this direction we refer the reader to the beautiful notes of Remco van der Hofstad \cite[Chapter 8]{Remco} and the references therein.

\begin{figure}\label{fig:directed}
\centering
 \includegraphics[scale=0.6]{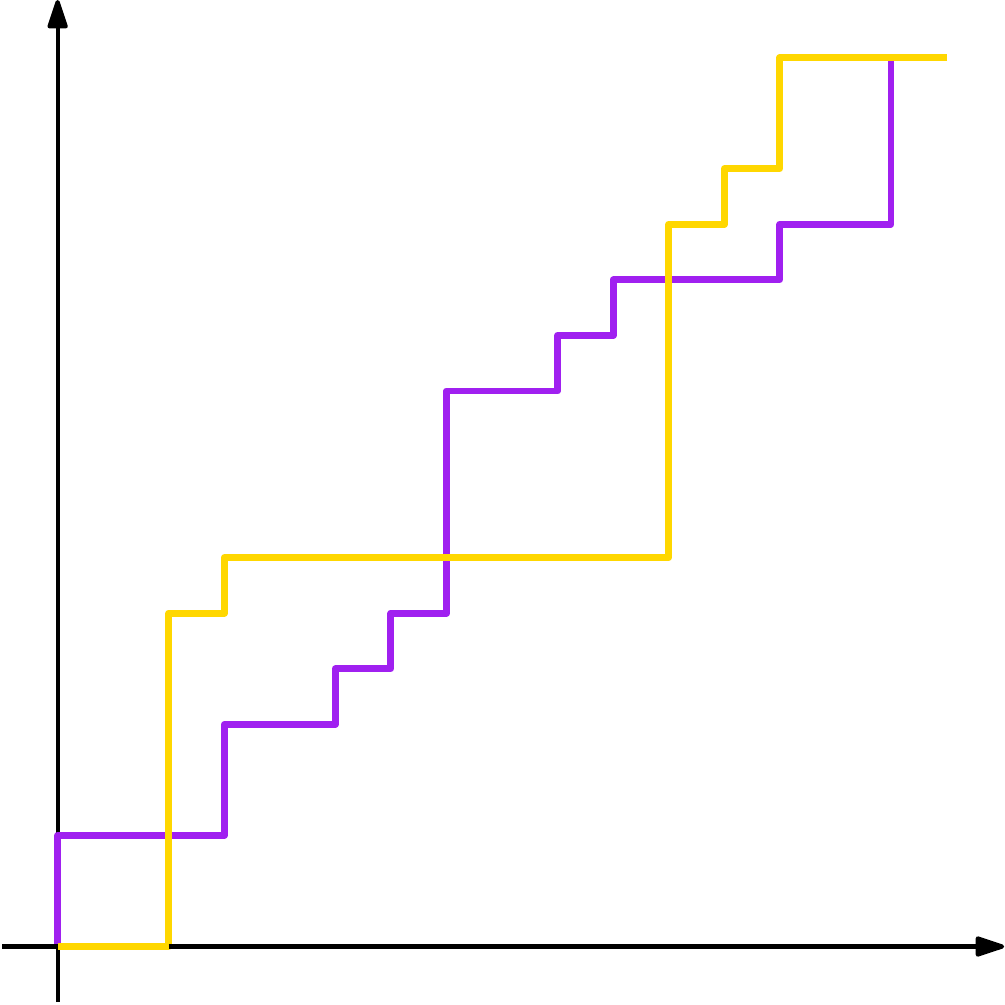}
 \caption{Examples of directed paths from the origin to $(n,n)$. These paths are used in oriented first passage and last passage percolation models.}
\end{figure}

\subsubsection{Last Passage Percolation}\label{LPP}

In last-passage percolation (LPP), paths are constrained to have non-decreasing coordinates. Usually the weights are placed on vertices instead of the edges and the passage time between two points is given by 
\begin{equation*}
L(u,v) = \max_{\Gamma :u\rightarrow v} T(\Gamma )\ .
\end{equation*}
Existence of time constants and the limit shape theorem are proved as in Section \ref{sec:limitshape}. The proof follows the same strategy with the difference that the process is now superadditive and $B_\nu$ is concave instead of convex. The directness of paths, however, requires that vertices $u$, $v$, $w$ be ordered in a non-decreasing fashion so that $L(u,w) \geq L(u,v) + L(v,w)$. The details for the proof of the limit shape can be found in  \cite{Martin04}, including a necessary extra step to handle the continuity of $\mathcal{B}_\nu$ up to the boundary of the upper right quadrant.  Here, we will discuss the model in two dimensions and in this subsection, we will use the notation $\mu(x,y)= \mu(xe_1+ye_2)$ for the time constant.

The importance of this small variation of FPP is that in dimension $2$ there are very natural correspondences between LPP models and certain queueing networks, systems of queues in tandem, and interacting particle systems. This connection reaches a deeper level as very precise scaling laws have been obtained for very particular distributions. If the passage times are exponentially distributed with mean $1$, then the time constant can be explicitly computed:
$$\mu(x,y) = (\sqrt{x} + \sqrt{y})^2,$$
as it was first shown by Rost \cite{Rost}. If $\nu$ is the probability measure of a geometric random variable with parameter $p$ then \cite{Joha}
$$\mu(x,y) = \frac{1}{p} (x+y+ 2\sqrt{xy(1-p)})\ .$$
In both cases, finer asymptotics are available \cite{Joha}, as the distribution of
 \begin{equation}\label{Joha}
 \frac {L(0,n(x,y)) - n\mu(x,y)}{n^{1/3}}
\end{equation} 
converges to a non-degenerate limit as $n$ goes to infinity. The proof of \eqref{Joha} goes through the following special identity that identifies the law of the passage time with the law of the largest eigenvalue of the Laguerre Unitary ensemble. Let $A$ be an $n\times n$ matrix with entries that are complex Gaussian random variables with mean zero and variance $1/2$.

\begin{theorem}[Johansson \cite{Joha}]\label{thm:joha} If $\nu$ is exponentially distributed with mean $1$
then for all $t >0$,
\begin{equation}
\Pro( L(0,(n,n)) \leq t ) = \Pro (\lambda_n \leq t), 
\end{equation}
where $\lambda_n$ is the largest eigenvalue of the $n \times n$ matrix $AA^*$.
\end{theorem}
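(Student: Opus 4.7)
The plan is to prove this identity via the Robinson-Schensted-Knuth (RSK) correspondence applied to a discrete geometric model, and then to pass to an exponential scaling limit. This is essentially the route taken by Johansson.

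First I would recall the combinatorial input. The RSK algorithm is a bijection between $n \times n$ matrices $W = (w_{ij})$ with nonnegative integer entries and pairs $(P,Q)$ of semistandard Young tableaux (SSYT) of the same shape $\lambda = (\lambda_1 \geq \lambda_2 \geq \cdots \geq \lambda_n \geq 0)$ with entries in $\{1,\ldots,n\}$. The key property (Greene's theorem) is that $\lambda_1$ equals $G(W) := \max_\Gamma \sum_{(i,j)\in\Gamma} w_{ij}$, the maximum weight of an up-right path from $(1,1)$ to $(n,n)$. Moreover, RSK preserves total weight: $\sum_{i,j} w_{ij} = |\lambda|$.

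Second, I would apply RSK to i.i.d.\ geometric weights. Assume $\Pro(w_{ij}=k) = (1-q)q^k$ for $k \geq 0$. Since $\Pro(W) = (1-q)^{n^2} q^{|W|}$ depends only on $|\lambda|$, summing over pairs $(P,Q)$ of common shape gives
\[
\Pro(\text{shape} = \lambda) = (1-q)^{n^2}\, q^{|\lambda|}\, \bigl(s_\lambda(1^n)\bigr)^2,
\]
where $s_\lambda(1^n) = \prod_{1 \leq i < j \leq n} (\lambda_i - \lambda_j + j - i)/(j-i)$ is the number of SSYT of shape $\lambda$ with entries in $\{1,\ldots,n\}$ (Weyl dimension formula). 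Hence
\[
\Pro(\lambda) \;=\; \frac{1}{Z_{n,q}}\, q^{|\lambda|} \prod_{1\leq i<j\leq n}(\lambda_i - \lambda_j + j - i)^2,
\]
a discrete Meixner-type orthogonal polynomial ensemble in the variables $\ell_i = \lambda_i + n - i$. Because $\lambda_1 = G(W)$, this determines the law of the geometric-weight last-passage time.

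Third, I would pass to the exponential scaling limit. If $g_q$ is geometric with parameter $q$, then $(1-q) g_q \xrightarrow{d} \text{Exp}(1)$ as $q \to 1^-$, and by coupling one has $(1-q)\,G^{(q)}(n,n) \xrightarrow{d} L(0,(n,n))$. Making the change of variables $x_i = (1-q)\ell_i$ and using $q^{|\lambda|} = \exp(\log q \sum \ell_i) = \exp(-\sum x_i + O(1-q))$ together with $\prod_{i<j}(\ell_i-\ell_j)^2 = (1-q)^{-n(n-1)} \prod_{i<j}(x_i-x_j)^2$, the ordered vector $((1-q)\lambda_1, \ldots, (1-q)\lambda_n)$ converges weakly to the probability density
\[
\rho(x_1,\ldots,x_n) \;=\; \frac{1}{C_n}\prod_{1\leq i < j\leq n}(x_i-x_j)^2 \prod_{i=1}^n e^{-x_i}, \quad x_i > 0,
\]
on the Weyl chamber $x_1 > \cdots > x_n > 0$, which is the Laguerre Unitary Ensemble (LUE). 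Finally, a classical computation in random matrix theory---diagonalizing $AA^* = U\,\mathrm{diag}(\mu_i)\,U^*$, integrating out $U$ against Haar measure on $U(n)$, and computing the Jacobian, which produces the Vandermonde squared---shows that the joint eigenvalue density of $AA^*$ is exactly $\rho$. Taking the distribution of the largest coordinate on each side yields the claimed equality $\Pro(L(0,(n,n)) \leq t) = \Pro(\lambda_n \leq t)$.

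The main obstacle is step one, namely Greene's theorem identifying $\lambda_1$ as the LPP time; this requires a careful inductive analysis of the RSK row-insertion procedure and is the least transparent combinatorial ingredient. A secondary technical point is justifying tightness and weak convergence of the marginal law of the maximum under the scaling limit, which can be handled either by direct analysis of the joint density (and dominated convergence on the Fredholm determinant representation for $\Pro(\lambda_1 \leq t)$) or by a uniform-integrability argument using upper tail bounds on $G^{(q)}(n,n)$ that are uniform in $q$.
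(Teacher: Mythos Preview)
Your proposal is correct and follows essentially the same route sketched in the paper (and in Johansson's original paper): start from i.i.d.\ geometric weights, use RSK and Greene's theorem to identify the last-passage time with the first row of the shape, read off the resulting Meixner-type ensemble, and then take the $q\to 1^-$ scaling limit to obtain exponential weights and the Laguerre ensemble, which is the eigenvalue law of $AA^*$. Your write-up is in fact more detailed than the paper's sketch; the only points to be careful with are the ones you already flag, namely the rigorous justification of the scaling limit for the joint law (tightness and convergence of the largest coordinate) and the reliance on Greene's theorem as the nontrivial combinatorial input.
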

The identity above is achieved by considering independent geometric passage times with parameter $p$ first. In this case, one can condition on the sum of the weights in the box $[0,N-1] \times [0,N-1]$ to discover:
\begin{enumerate}
\item The conditional law is uniform on the space $\mathcal M_N(s)$ of all possible non-negative integer valued $N \times N$ matrices with fixed sum $s$.
\end{enumerate} Indeed, letting $A(i,j)$ be the passage time attached to the vertex $(i,j)$, we have for the matrix $A=(A(i,j))$, $$\Pro \bigg(A= (a_{ij})\bigg) = \prod_{1 \leq i, j \leq N}(1-p)^{a_{ij}}p = p^{N^2}(1-p)^{\sum_{ij} a_{ij}}.$$
\begin{enumerate}
\item[2.] The space $\mathcal M_N(s)$ is in one-to-one correspondence with the space of generalized permutations of length $s$ on the set $\{ 1, \ldots, N\}$. The last-passage time is exactly the length of the longest non-decreasing subsequence. 
\end{enumerate}

This allowed Johansson to use the RSK correspondence. At the end of the day, it is possible to connect the probability of the event $L(0,(n,n)) \leq t $ to the number of pairs of semi-standard Young tableaux with the size of the first row. Determining the distribution of the passage times is thus a combinatorial problem and it has an explicit expression for any finite $N$. By approximating an exponential random variable by geometric, one derives Theorem \ref{thm:joha}. We refer the reader to the very readable Section 2 of \cite{Joha} for the details.

The law of $\lambda_n$ is well-known to be explicit and amenable to asymptotic analysis through the asymptotics of Laguerre orthogonal polynomials (see \cite{AGZbook} and the references therein). 
In particular, \eqref{Joha} is a simple combination of the theorem above and the fact that \cite{Joha}

\begin{equation*}
Z_n:=  \frac{\lambda_n - 4n}{2^{4/3}n^{1/3}} \stackrel{\mathcal D}{\rightarrow} W_2 \sim F_2,\
 \end{equation*}
where 
$$F_2(s) = \exp\bigg(-\int_{s}^\infty (x-s) q(x)^2 d x\bigg),$$ 
with $q$ the solution of the Painlev\'e II differential equation
\begin{eqnarray*}
q''(x)= xq(x) +2q^3(x)\\
q(x)\sim Ai(x) \text{ as } x \rightarrow +\infty,
\end{eqnarray*}
and $Ai(x)$ denotes the Airy function. The $F_2$ distribution is called the Tracy-Widom distribution due to the famous paper \cite{TW}. In the case of geometric passage times, the asymptotic analysis was also carried out in \cite{Joha} using the asymptotics of Meixner polynomials. Once proper rescaled the passage times fluctuate according to $F_2$.

As mentioned before, the explicit formula for the limit shape and the connection with TASEP was also explored to understand the asymptotic behavior of infinite geodesics and the competition interface.  The reader interested in this direction should look at the work of Ferrari, Martin and Pimentel \cite{FMPim2009} and the references therein.

For passage times that are not geometric or exponentials, the results in LPP are somewhat limited to same stage as those in FPP. Some recent progress was obtained in the sequence of papers \cite{GRS,GRS2, Seppa2} where, using results from queueing theory, the authors manage to (1) derive variational formulas for the time constant, similar to those in Section \ref{sec:3243}, (2) obtain asymptotic results for infinite geodesics and Busemann functions under mild assumptions on the limit shape.




\newpage

\section{Summary of open questions} \label{sec:open}

Here, just for convenience, we catalogue all questions that appear in the manuscript. They are certainly not exhaustive, but we believe that this set provides a good scope of the open and important areas of the field.

\begin{questionend}
Find a non-trivial explicit distribution for  which we can actually determine $\mu(e_1)$. 
\end{questionend}

\begin{questionend} Suppose that the support of the distribution of $\tau_e$ equals $\mathbb R^+$. 
Let $$X_n = \max \{ \tau_e: e \text{ is in a geodesic from } 0 \text{ to } ne_1 \}.$$ How does  $X_n$ scale with $n$?
\end{questionend}

\begin{questionend}
Extend the comparision theorem to the case $d>2$, $r>0$ and $\vec p_c(d)\leq F(r)$.
\end{questionend}

\begin{questionend}
Show that if $F$ is a continuous distribution then the limit shape is strictly convex.
\end{questionend}

\begin{questionend}
Show that the $d$-dimensional cube is not a possible limit shape for a FPP model with independent, identically distributed passage times.
\end{questionend}

\begin{questionend}
Show that for any measure $\nu \in \mathcal M_p$ the boundary of the limit shape is not flat outside the percolation cone. 
\end{questionend}

\begin{questionend}
Show that if the limit shape of a measure $\nu$ has a flat piece then $\nu \in \mathcal M_p$, for $p > p_{c}$ and the flat piece is delimited by the percolation cone. 
\end{questionend}

\begin{questionend}
Show that for any measure $\nu$, in direction $e_1$ the boundary of the boundary of the limit shape does not contain any segment parallel to the $e_{2}$ axis.  
\end{questionend}

\begin{questionend}
Find a non-trivial example of a measure $\nu$ such that in direction $e_1$ the limit shape boundary is not parallel to the $e_{2}$ axis. 
\end{questionend}

\begin{questionend}
Find conditions that guarantee the existence of the limit \eqref{Busemanningeneral}.
\end{questionend}

\begin{questionend}
Show that for continuous distributions of passage times, the limit shape is uniformly curved.
\end{questionend}

\begin{questionend}
Assume $F(0)=0$. Show that $\mu \geq \mathbb E_{4} \tau_{e}$.
\end{questionend}

\begin{questionend}
Show that for any $d\geq 2$, under suitable conditions on $F$, 
$\chi<1/2$. 
If $d=2$, show that $\chi=1/3$.
\end{questionend}

\begin{questionend}
Show that $$\lim_{d \to \infty} \chi(d) = 0.$$
\end{questionend}

\begin{questionend} For suitable $d$, show that $\chi >0$.
\end{questionend}

\begin{questionend}
Show that in $d=2$ there exists $a>0$ such that the expected overlap of two geodesics  from $0$ to $nx$ sampled independently is at least $n^a$.
\end{questionend}

\begin{questionend}
Show that  $\underline{\chi}_2 = \overline{\chi}_{2+\delta}$ for some $\delta>0$. 
\end{questionend}


\begin{questionend}
Assume that the passage times are bounded and not concentrated at a single point. Show that there exists a convex function $I_{u}(\epsilon)$ such that the following limit holds:
$$ \lim_{n} \frac{1}{n^{d}} \log p_{n}^{u} (\epsilon) = I_{u}(\epsilon).$$
\end{questionend}

\begin{questionend}
Let $d=3$ and $\mathbb{P}(\tau_e=0) = p_c = 1-\mathbb{P}(\tau_e=1)$. Is it true that 
\[
\mathbb{E}T(0,\partial B(n)) \asymp \log n?
\]
\end{questionend}

\begin{questionend}
Is \eqref{eq.thin} optimal? More explicitly, let 
$$\gamma_F(d):= \sup \bigg \{ \alpha:  \eqref{eq:thintheorem} \text{ holds for } h_n=n^\alpha  \bigg\}.$$ What is the value of $\gamma_F(d)?$
\end{questionend}

\begin{questionend} Prove that under no assumptions on the distribution $F$ of passage times, geodesics exist in the $d$ dimensional lattice, $d\geq3$.
\end{questionend}

\begin{questionend}
Find a distribution of passage times where \eqref{ScalingRelation} holds.
\end{questionend}

\begin{questionend}
Show that for continuous distributions, there are infinitely many geodesic rays.
\end{questionend} 

\begin{questionend}
Find a distribution that is not in $\mathcal M_{p}$ that has an infinite number of geodesic rays.
\end{questionend}

\begin{questionend}
Is it true that the sequence of geodesics $\Gamma(0,ne_{1})$ converges to some $\Gamma$?
\end{questionend} 

\begin{questionend}
Assume that the space is uniquely geodesic. Show that in $d=2$, there is no fixed vertex that is in infinitely many of the geodesics between $-ne_1$ and $ne_1$. 
\end{questionend} 

\begin{questionend}
Show that $D = [0,2\pi).$ 
\end{questionend}

\begin{questionend}
Show that $0\in D.$ 
\end{questionend}

\begin{questionend}
Decide whether Theorem \ref{theorem:Newman2} holds in arbitrary dimension.
\end{questionend} 

\begin{questionend}
Let $\Gamma(v)$ be a geodesic from $0$ to $v$. Consider the collection of all edges that belong to an infinite number of $\Gamma(v)$ and let $f(r)$ be the number of such edges which intersect the circle $x^2 + y^2 = r^2$. Does $f(r) \to \infty$ as $r \to \infty$; and, if so, how fast? 
\end{questionend}

\begin{questionend}
Do geodesic lines exist?
\end{questionend}

\begin{questionend}
Prove that under some conditions on the edge weights the limit \eqref{eq: busemann_assumption} exists.  
\end{questionend}

\begin{questionend}
Show that for $d \geq 2$, the limit shape of the Eden model is not an Euclidean ball.
\end{questionend}

\begin{questionend}
Show that for all values of $\lambda_1 \neq \lambda_2$ and all $x_1 \neq x_2$,
$$\mathbb{P}\bigg(|C(x_1)|=|C(x_2)| = \infty \bigg)=0. $$
\end{questionend}

\begin{questionend} Fix $\lambda_1$. Show that the event $\{|C(x_1)|=|C(x_2)| = \infty \}$ is monotone in $\lambda_2$. 
\end{questionend}

\begin{questionend}
Determine if the limit in \eqref{eq:maxflow} exists without the exponential moment assumption.
\end{questionend}

\begin{questionend}
Find a convex, compact set $B$ with non-empty interior and symmetry about the axes, not equal to the $\ell^1$ unit ball, and an i.i.d. first passage model that has $B$ as its limit shape. 
\end{questionend}

\begin{questionend}
Find necessary and sufficient conditions on $F_-$ and $F_+$ so that the asymptotic speed in the vertical direction is strictly greater than that given by either $F_-$ or $F_+$ in the standard homogeneous model.
\end{questionend}

\bigskip
\noindent

\newpage


\begin{thebibliography}{1}

\bibitem{Daniel1} Ahlberg, D. (2013). Convergence towards an asymptotic shape in first-passage percolation on cone-like subgraphs of the integer lattice. {\it To appear in J. Theoret. Probab.}.

\bibitem{ADS13} Ahlberg, D., Damron, M. and Sidoravicius, V. (2013). Inhomogeneous first-passage percolation. {\it arXiv:1311.4058.}

\bibitem{Aiz} Aizenman, M. (1997). On the Number of Incipient Spanning Clusters. {\it Nuclear Physics B}, {\bf 485}, 551.

\bibitem{Alexander} Alexander, K. (1997). Approximation of subadditive functions and convergence rates in limiting-shape results. {\it Annals of Probab.}, {\bf 1}, 30--55.

\bibitem{Alexander2} Alexander, K. (1993). A Note on Some Rates of Convergence in First-Passage Percolation. {\it Ann. Appl. Probab. },{\bf  3} (1), 81--90.  

\bibitem{AlexanderDirected} Alexander, K. (2011). Subgaussian rates of convergence of means in directed first passage percolation. {\it arXiv: 1101.1549}.

\bibitem{AlexZygouras}  Alexander, K. and Zygouras, N. (2013). Subgaussian concentration and rates of convergence in directed polymers. {\it Electronic Journal of Probability} {\bf 18}, 1--28. 

\bibitem{Alm} Alm, S. E. (1998). A Note on a Problem by Welsh in First-Passage Percolation. {\it Combin. Probab. Comput.}{\bf 7} (1), 11--15.

\bibitem{AlmDeifjen} Alm, S. E.  and Deifjen, M. (2014). First passage percolation on $\mathbb Z^{2}$ - a simulation study. {\it arXiv:1412.5924}.


\bibitem{AW99}
Alm, S. E.  and Wierman,  J. C. (1999). Inequalities for means of restricted first-passage times in percolation theory. {\it Combin. Probab. Comput.}, {\bf 8}, 307--315.

\bibitem{AGZbook} Anderson, G., Guionnet, A. and Zeitouni, O. (2010) {\it An introduction to random matrices}. Cambridge studies in advanced mathematics, {\bf 118}.

\bibitem{AntalPisztora} Antal, P. and Pisztora, A. (1996). On the chemical distance for supercritical Bernoulli percolation. {\it Annals of Probab.}, {\bf 24}, 1036--1048.

\bibitem{AntunovicProcaccia} Antunovic, T. and Procaccia, E. (2015). Stationary Eden model on groups. {\it arXiv 1410.4944}


\bibitem{AD13} Auffinger, A. and Damron, M. (2013). The universal scaling relation for polymers in a random environment and related models \textit{ALEA, Lat. Am. J. Probab. Math. Stat.}, {\bf 10} (2), 857--880.


\bibitem{AD14} Auffinger, A. and Damron, M. (2014). A  simplified proof of the relation between scaling exponents in first-passage percolation. \textit{Annals of Probab.}, {\bf 42}, No. 3, 1197--1211. 


\bibitem{AD12}  Auffinger, A. and Damron, M. (2013). Differentiability at the edge of the limit shape and related results in first passage percoaltion. {\it Probab. Theory and Rel. Fields}, {\bf 156},193--227.

\bibitem{ADH1} Auffinger, A., Damron, M. and Hanson, J. (2013). Limiting geodesics for first-passage percolation on subsets of $\Z^2$, \textit{To appear in Annals of Applied Probability}.

\bibitem{ADH2} Auffinger, A., Damron, M. and Hanson, J. (2014). Rate of convergence of the mean for sub-additive ergodic sequences. {\it To appear at Adv. in Math.}

\bibitem{AL11} Auffinger, A. and Louidor, O. (2011). Directed polymers in random environment with heavy tails. \textit{Comm. on Pure and Applied Mathematics}, {\bf 64}, 183--204.


\bibitem{BNGG}  Basdevant, A.  Enriquez, N.,  Gerin, L.,  Gou\'{e}r\'{e}, J.-B. (2014). The shape of large balls in highly supercritical percolation. {\it Elect. Journal Probab.} {\bf 19}, 1--14.

\bibitem{BNGG1} Basdevant, A.  Enriquez, N.,  Gerin, L. (2013). Distances in the highly supercritical percolation cluster. {\it Annals of Probab.}, {\bf 41}, no. 6, 4342--4358.


\bibitem{BenaimRossignol}  Benaim, M. and Rossignol, R. (2008). Exponential concentration for first passage percolation through modified Poincar\'{e} inequalities. {\it : Ann. Inst. H. Poincar\'{e} Probab. Statist. }, {\bf 44}, 544--573.


\bibitem{BKS} Benjamini, I. Kalai, G. and Schramm, O. (2003). First passage percolation has sublinear distance variance. {\it Annals of Probab.}, {\bf 31}, 1970--1978.

\bibitem{TesseraB} Benjamini, I. and Tessera, R. (2014). First passage percolation on nilpotent Cayley graphs and beyond. {\it arXiv:1410.3292}.


\bibitem{Blair-Stahn} Blair-Stahn, N. D. (2010). First passage percolation and competition models. {\it arXiv:1005.0649}.

\bibitem{Bjorklund} Bjorklund, M. (2010). The asymptotic shape theorem for generalized first passage percolation. {\it Annals of Probab.}, {\bf 38}, 632--660.


\bibitem{B90}
Boivin, D. (1990). First passage percolation: the stationary case. {\it Probab. Theory Relat. Fields}. {\bf 86} 491--499.

\bibitem{BoivinDerrien}
Boivin, D. and Derrien, J.-M. (2002). Geodesics and recurrent of random walks in disordered systems. {\it Ann. Probab.} {\bf 7}, 101-115.


\bibitem{B79} Bollob\'as, B. (1979). Graph theory. {\it Graduate Texts in Mathematics. Springer-
Verlag, New York. An introductory course.}

\bibitem{BR06} Bollob\'as, B. and  Riordan, O. Percolation. {\it Cambridge University Press, New York}, 2006. x+323 pp.

\bibitem{BLM} Boucheron, S, Lugosi, G., and Massart, P. Concentration inequalities. A nonasymptotic theory of independence. With a foreword by Michel Ledoux. {\it Oxford University Press, Oxford}, 2013. x+481 pp.

\bibitem{Bridson} Bridson, M. and  Haefliger, A. {\it Metric Spaces of Non-Positive Curvature}, Grundlder Math. Wiss. 319, Springer Verlag, 1999.

\bibitem{BioPeople} Bru, A., Albertos, S. Subiza, J., Garcia, J., and Bru, I. The Universal Dynamics of Tumor Growth. (2003). {\it Biophys J.}, \textbf{85}, 2948--2961. 

\bibitem{Busemann} Busemann, H.  {\it The geometry of geodesics.} Vol. 6. Dover Publications, 1985.

\bibitem{Burago}  Burago, D., Burago, Y. and Ivanov, S. (2001). A Course in Metric Geometry.  { \it American Mathematical Society}.


\bibitem{Burkholder} Burkholder, D. L. (1973). Discussion of Prof. Kingman's paper. {\it Annals of Probab.}, {\bf 1}, 900--902.

\bibitem{Burrows} Burrows, E.M. (1991). Seaweeds of the British Isles. {\it London: Natural History Museum.} ISBN 0-565-00981-8.

\bibitem{BK89} Burton, R.M. and Keane, M. (1989). Density and uniqueness in percolation.
{\it Comm. Math. Phys.}, {\bf 121}, Number 3, 501--505.

\bibitem{CT0} Cerf, R. and  Th\'{e}ret, M. (2014). Weak shape theorem in first passage percolation with infinite passage times. {\it arXiv:1404.4539.}

\bibitem{CT1}  Cerf, R. and  Th\'{e}ret, M. (2011). Law of the large numbers for the maximal flow through a domain of $\mathbb{R}^d$ in first passage percolation. {\it Trans. Amer. Math. Soc.}, {\bf 363}, 3665--3702. 

\bibitem{CT2}  Cerf, R. and  Th\'{e}ret, M. (2011). Lower large deviations for the maximal flow through a domain of $\mathbb{R}^d$ in first passage percolation. {\it Probab. Theory and Relat. Fields}, {\bf 150} no. 3, 635--661.

\bibitem{CT3}  Cerf, R. and  Th\'{e}ret, M. (2011). Upper large deviations for the maximal flow through a domain of $\mathbb{R}^d$ in first passage percolation. {\it Ann. of Appl. Probab.}, {\bf 21}, No. 6, 2075--2108. 
%
\bibitem{Sutav} Chatterjee, S. (2013). The universal relation between scaling exponents in first-passage percolation. {\it Ann. of Math. (2)}, {\bf 177}, no. 2, 663--697.

\bibitem{CD} Chatterjee, S. and Dey, P. (2009). Central limit theorem for first-passage percolation time across thin cylinders. {\it arXiv:0911.5702}.
%
\bibitem{Chayes} Chayes, L. (1991). On the critical behavior of the first passage time in $d \geq 3$. {\it Helvetica Phys. Acta}, {\bf 67}, 1055--1071. 

\bibitem{CCD} Chayes, J.T., Chayes, L. and Durrett, R. (1986). Critical behavior of the two-dimensional first passage time. {\it Journal of Stat. Physics}, {\bf 45}, 933--951.

\bibitem{CCD1} Chayes, J. T., Chayes, L. and Durrett, R. (1987). Inhomogeneous percolation problems and incipient infinite clusters. {\it J. Phys. A}, {\bf 20}, 1521--1530.

\bibitem{Chayes92} Chayes, L. and Winfield, C. (1993). The density of interfaces: A new first passage problem. {\it J. Appl. Prob.}, {\bf 30}, 851--862.

\bibitem{coupier} Coupier, D. (2011). Multiple geodesics with the same direction. {\it Elect. Com. Probab.}, {\bf 16}, 517--527.

\bibitem{CZldp} Chow, Y. and Zhang, Y. (2003). Large deviations in first-passage percolation. {\it Annals of Appl. Probab.}, {\bf 13}, 1601--1614. 

\bibitem{CEG11} Couronn\'{e}, O., Enriquez, N. and  Gerin, L. (2011) Construction of a short path in high-dimensional first passage percolation. {\it Elect. Comm. in Probab.}, {\bf 16}, 22--28.

\bibitem{Cox80} Cox, J. T. (1980). The time constant of first-passage percolation on the square lattice. {\it Adv. Appl. Prob.}, {\bf 12}, 864--879.

\bibitem{CoxDurrett} Cox, J. T. and Durrett, R. (1981). Some limit theorems for percolation with necessary and sufficient conditions. {\it Annals of Probab.}, {\bf 9}, 583--603.

\bibitem{CoxKesten81} Cox, J. T. and Kesten, H. (1981). On the continuity of the time  constant of first-passage percolation. {\it J. Applied Prob.}, {\bf 18}, 809--819.

\bibitem{CDM} Cranston, M. Gauthier, D. and Mountford, T. S. (2009). On large deviation regimes for random media models. {\it Annals of Appl. Probab.}, {\bf 19}, 826--862. 
%
\bibitem{DHS1} Damron, M., Hanson, J. and Sosoe, P. Subdiffusive concentration in first-passage percolation. {\it  arXiv:1401.0917}. 

\bibitem{DHS2} Damron, M., Hanson, J. and Sosoe, P.  Sublinear variance in first-passage percolation for general distributions. {\it  To appear in Probab. Theory Rel. Fields}. 

%
\bibitem{DH} Damron, M. and Hochman, M. (2013) Examples of non-polygonal limit shapes in i.i.d. first-passage percolation and infinite coexistence in spatial growth models. {\it Annals of Applied Probability}, {\bf 23}, No. 3, 1074--1085. 

\bibitem{DHanson} Damron, M. and Hanson, J. (2014). Busemann functions and infinite geodesics in two-dimensional first-passage percolation. {\it Comm. Math. Phys.}, {\bf 325}, 917--963.  
%

\bibitem{DHanson2} Damron, M. and Hanson, J. (2015). Bigeodesics in first-passage percolation. {\it In preparation.}

\bibitem{DKubota} Damron, M. and Kubota, N. (2014). Gaussian concentration for the lower tail in first-passage percolation under low moments. {\it arXiv:1406.3105.}

\bibitem{DLW} Damron, M., Lam, W.-K. and Wang, X. (2015). Asymptotics for $2D$ Critical First Passage Percolation. {\it arXiv:1505.07544}.

\bibitem{DH06} Deijfen, M. and  H\"aggstr\"om, O. (2006). The initial configuration is irrelevant for the possibility of mutual unbounded growth in the two-type Richardson model. {\it Combin. Probab. Comput.} {\bf 15} 345--353.

\bibitem{DefHag2} Deijfen, M. and  H\"aggstr\"om, O. (2006). Nonmonotonic coexistence regions for the two- type Richardson model on graphs. {\it Electron. J. Probab.}, {\bf 11}, 331--344.

\bibitem{DefHag} Deijfen, M. and  H\"aggstr\"om, O. (2007).  The two-type Richardson model with unbounded initial configurations. {\it Ann. Appl. Probab.}, {\bf 17}, 1639--1656.

\bibitem{DD} Derrida, B. and Dickman, R.  (1991). On the interface between two growing Eden clusters. {\it Journal of Physics A: Mathematical and General}, {\bf 24}, 1385--1422.


%
\bibitem{Dhar} Dhar, D. (1986). Asymptotic shape of Eden clusters, {\it
On growth and form. Ed. H. E. Stanley and N. Ostrowsky, Martinus Nijhoff.}, 288--292.

\bibitem{Durrettbook} Durrett, R. (2005). {\it Probability: theory and examples.}  Duxbury Advanced Series. Third Edition.

\bibitem{Durrett} Durrett, R. (1984). Oriented percolation in two dimensions. {\it Annals of Probab.} {\bf 12}, 999--1040.

\bibitem{DurrettLiggett} Durrett, R. and Liggett, T. (1981). The shape of the limit set in Richardson's growth model. {\it Annals of Probab.}, {\bf 9}, 186--193.

\bibitem{Eden} Eden, M. A two-dimensional growth process. 1961 {\it Proc. 4th Berkeley Sympos. Math. Statist. and Prob., Vol. IV}, 223--239, {\it Univ. California Press, Berkeley, Calif.}

\bibitem{Efron} Efron, B. and Stein, C. (1981). The jackknife estimate of variance. {\it Annals of Statistics}, {\bf 9},  586--596.


\bibitem{FS}
Falik, D. and Samorodnitsky, A. (2007). Edge-isoperimetric inequalities and influences. {\it Combin. Probab. Comput.}, {\bf 16}, 693--712.

\bibitem{FMPim2009} Ferrari, P. A., Martin, J. B, and Pimentel, L. (2009). A phase transition for competition interfaces. {\it Ann. Appl. Probab.}, {\bf 19}, 281--317.




\bibitem{FPimentel2005} Ferrari, P. A. and Pimentel, L. (2005). Competition interfaces and second class particles. {\it Annals of Probab.}, {\bf 33}, 1235--1254.



\bibitem{FNewman} Fontes, L. and Newman, C.M. (1993). First passage percolation for random colorings of $\Z^d$. {\it Ann. Appl. Probab.}, {\bf 3}, 746--762.

\bibitem{FLN}
Forgacs, G., Lipowsky R. and Nieuwenhuizen, T. (1991). The behavior of interfaces in ordered and disordered systems. In {\it Phase transitions and critical phenomena} (C. Domb and J. Lebowitz, eds.) {\bf 14} 135--363. Academic, London.

\bibitem{GaretMax} Garet, O. (2009).  Capacitive Flows on a 2D random net. {\it Ann. of Appl. Probab.}, {\bf 19}, 641--660.
\bibitem{GM} Garet, O. and Marchand, R. (2005). Coexistence in two-type first-passage percolation models. {\it Ann. Appl. Probab.}, {\bf 15}(1A), 298--330.
%
\bibitem{GM2} Garet, O. and Marchand, R. (2004). Asymptotic shape for the chemical distance and first-passage percolation on the infinite Bernoulli cluster. {\it ESAIM Probab. Stat.} {\bf 8}, 169--199. 

\bibitem{GM3} Garet, O. and Marchand, R. (2008). First-passage competition with different speeds: positive density for both species is impossible. {\it Elect. Journal. of Probab.}, {\bf 13}, 2118--2159.


\bibitem{GM5} Garet, O. and Marchand, R. (2014). Large deviations for the contact process in random environment. {\it Annals of Probab.}, {\bf 42}, 1438--1479.

\bibitem{GMPT} Garet, O., Marchand, R., Procaccia, E. and Th\'{e}ret, M. Continuity of the time and isoperimetric constants in supercritical percolation. {\it arXiv:1512.00742}. 


\bibitem{GM4} Garet, O., Gouere, J-B.  and Marchand, R. (2013). The number of open paths in oriented percolation. {\it arXiv:1312:2571}.

\bibitem{GRS} Georgiou, N., Rassoul-Agha, F. and Sepp\"{a}l\"{a}inen, T. (2015). Geodesics and the competition interface for the corner growth model. {\it arXiv:1510.00860}.

\bibitem{GRS2} Georgiou, N., Rassoul-Agha, F. and Sepp\"{a}l\"{a}inen, T. (2015). Stationary cocycles and Busemann functions for the corner growth model. {\it arXiv:1510.00859}.

\bibitem{Seppa2} Georgiou, N., Rassoul-Agha, F. and Sepp\"{a}l\"{a}inen, T. (2013). Variational formulas and cocycle solutions for directed polymer and percolation models. {\it arXiv:1311.3016.}



\bibitem{Gouere} Gou\'er\'e, J.-B. (2007). Shape of territories in some competing growth models. {\it Ann. Appl. Probab.}, {\bf 17}(4), 1273--1305.

\bibitem{Grimmett} Grimmett, G. (1999). {\it Percolation}. 2nd edition. {\it Springer, Berlin.}

\bibitem{GK2} Grimmett, G. and Kesten, H. (1984). First-passage percolation, network flow and electrical resistances {\it Z..Wahrsch. Verw. Gebiete}, {\bf 66}, 335--366.


\bibitem{GrimmettKesten} Grimmett, G. and Kesten, H. Percolation since Saint-Flour. {\it arXiv:1207.0373}.


\bibitem{GrimmettMarstrand} Grimmett, G. and Marstrand, J. M. (1990). The supercritical phase of percolation is well behaved. {\it Proc. Roy. Soc. London Ser. A}, {\bf 430}, 439--457.

\bibitem{Gross75} Gross, L. (1975). Logarithmic Sobolev inequalities. {\it Amer. J. Math.}, {\bf 97}, 1061--1083.

\bibitem{HM} H\"aggstr\"om, O. and  Meester, R.  (1995). Asymptotic shapes for stationary first passage percolation. {\it Annals of Probab.}, {\bf 23}, 1511--1522. 

\bibitem{HP} H\"aggstr\"om, O. and Pemantle, R. (1998). First passage percolation and a model for competing spatial growth. {\it J. Appl. Probab.}, {\bf 35}, 683--692.

\bibitem{HP2} H\"aggstr\"om, O. and Pemantle, R. (2000). Absence of mutual unbounded growth for almost all parameter values in the two-type  Richardson model. {\it Stochastic Process. Appl.}, {\bf 90}, 207--222.





%
\bibitem{HMartin} Hambly, B. and  Martin, J. B. (2007). Heavy tails in last-passage percolation.{\it Probab. Th. Rel. Fields,} {\bf 137}, 227--275.

\bibitem{HW}  Hammersley, J. and  Welsh, D. First-passage percolation, subadditive processes, stochastic networks, and generalized renewal theory. 1965 {\it Proc. Internat. Res. Semin., Statist. Lab., Univ. California, Berkeley, Calif.}, 61--110, {\it Springer-Verlag, New York}.

\bibitem{hansonthesis} Hanson, J. (2013). Ph.D. thesis.

\bibitem{HiguchiZhang} Higuchi, Y. and Zhang, Y. (2000). On the speed of convergence for two-dimensional first passage Ising percolation. {\it Annals of Probab.}, {\bf 28}, 353--378.
%

%
\bibitem{Hoffman1} Hoffman, C. (2005). Coexistence for Richardson type competing spatial growth models. {\it Ann. Appl. Probab.}, {\bf 15}(1B), 739--747.

\bibitem{Hoffman} Hoffman, C. (2008). Geodesics in first passage percolation. {\it Ann. Appl. Probab.}, {\bf 18}, 1944-1969.
%
\bibitem{Howard} Howard, C. D. Models of first-passage percolation. {\it Probability on discrete structures,} 125--173, Encyclopaedia Math. Sci., 110, {\it Springer, Berlin,} 2004.

\bibitem{HowardNewman} Howard, C. D. and Newman,  C. M.,
From greedy lattice animals to Euclidean first-passage percolation. {\it Perplexing Problems in Probability, (R. T. Durrett, ed.), Progr. Probab.}, {\bf 44}, Birkh\"{a}user, Boston, 107--119.

\bibitem{HN01} Howard, C. D. and Newman,  C. M. (2001).
Geodesics and spanning trees for Euclidean first-passage percolation. {\it Annals of Probab.}, {\bf 29}, 577--623.

\bibitem{HH}
Huse, D. and Henley, C. (1985). Pinning and roughening of domain walls in Ising systems due to random impurities. {\it Phys. Rev. Lett.}, {\bf 54}, 2708--2711.

\bibitem{HHF} Huse,  D. A., Henley, C. L., Fisher, D. S.: (1985). Huse, Henley, and Fisher respond. {\it Phys. Rev. Lett.}, {\bf 55},  2924--2924.


\bibitem{Joha} Johansson, K. (2000). Shape fluctuations and random matrices.  {\it Comm. Math. Phys.}, {\bf 209}, 437--476.

\bibitem{Joha1} Johansson, K. (2000). Transversal fluctuations for increasing subsequences on the plane. {\it Probab. Theory and Rel. Fields}, {\bf 116}, 445--456.

 
\bibitem{K} Kardar, M. (1985). Roughening by impurities at finite temperatures.  {\it Phys. Rev. Lett.}, {\bf 55}, 2923--2923.

 \bibitem{KPZ}  Kardar, M. and Parisi, G. and Zhang, Y. (1986). Dynamic scaling of growing interfaces.  {\em Phys. Rev. Lett.}  {\bf 56} 889--892.

\bibitem{KZ} Kardar, M. and Zhang, Y.-C. (1987). Scaling of directed polymers in random media. {\it Phys. Rev. Lett. }, {\bf 56}, 2087--2090.

\bibitem{KestenAspects} Kesten, H. (1986). Aspects of first passage percolation. 
{\it \'{E}cole d'\'{E}t\'{e} de Probabilit\'{e}s de Saint Flour XIV,}
Lecture Notes in Mathematics, {\bf 1180}, 125--264. 

\bibitem{KestemMaxflow} Kesten, H. (1987). Surfaces with minimal random weight and maximal flows: A higher dimensional version of first passage percolation, {\it Illinois J. Math.}, {\bf 31}, 99--166.
\bibitem{Kesten} Kesten, H. (1993). On the speed of convergence in first-passage percolation. {\it Ann. Appl. Probab.}, {\bf 3}, 296--338.

\bibitem{Kestensurvey} Kesten, H. First-passage percolation. {\it From classical to modern probability,} 93--143, Progr. Probab., 54, {\it Birkh\"auser, Basel,} 2003.


\bibitem{KestenZhang} Kesten, H. and Zhang, Y. (1996). A central limit theorem for critical first-passage percolation in two dimensions. {\it Probab. Theory Relat. Fields}, {\bf 107}, 137--160. 

\bibitem{Kingman}  Kingman, J. F. C. (1968). The ergodic theory of sub additive stochastic processes. {\it J. Roy. Statist. Soc. Ser.}, {\bf 30}, 499--510.


\bibitem{Kingman73}  Kingman, J. F. C. (1973). Subadditive Ergodic Theory. {\it Annals of Probab.}, {\bf 1}, 883--899.




\bibitem{Arjun} Krishnan, A. (2013)
 Variational formula for the time-constant of first-passage percolation I: Homogenization. {\it arXiv:1311.0316}
 
 \bibitem{KrishnanPhD} Krishnan, A. (2013).
 Ph.D. Thesis. 

\bibitem{Krug} Krug, J. (1987). Scaling relation for a growing surface. {\it Phys. Rev. A}, {\bf 36}, 5465--5466.

\bibitem{KS} Krug, J. and Spohn, H. Kinetic roughening of growing interfaces. in Solids far from equilibrium. Edited by C. Godr\`eche. Collection Al\'ea-Saclay: Monographs and Texts in Statistical Physics, 1. {\it Cambridge University Press, Cambridge,} 1992. xvi+588 pp. ISBN: 0-521-41170-X

\bibitem{Kubota} Kubota, N. (2013). Rates of convergence in first passage percolation with low moment conditions. {\it arXiv:1306.5917}.

\bibitem{KLF}  Kuhr, J.-T., Leisner, M. and Frey, E. (2011).
Range expansion with mutation and selection: dynamical phase transition in a two-species Eden model. {\it New Journal of Physics}, {\bf 13}.



\bibitem{Lalley} Lalley, S. (2003). Strict convexity of the limit shape in first-passage percolation.
{\it Electron. Comm. Probab.}, {\bf 8}, 135--141.

\bibitem{LN96}
Licea, C. and Newman, C. M.  (1996). Geodesics in two-dimensional first-passage percolation. {\it Annals of Probab.}, {\bf 24}, 399--410.

\bibitem{LNP} Licea, C. and Newman, C. and Piza, M. (1996). Superdiffusivity in first-passage percolation. {\it Probab. Theory Related Fields} {\bf 106}, 559--591.


\bibitem{LiggettST} Liggett, T. (1985). An Improved Subadditive Ergodic Theorem. {\it  Annals of Probab.}, {\bf 13}, 4, 1279--1285.

\bibitem{LS}  Lions, P.-L. and P. E. Souganidis. (2005). Hamilton-Jacobi equations in stationary ergodic media. {\it Communications in Partial Differential Equations,} {\bf 30}(1-3), 335--375.

\bibitem{LS2}  Lions, P.-L. and P. E. Souganidis. (2010). Stochastic homogenization of Hamilton-Jacobi and viscous Hamilton-Jacobi equations with convex nonlinearities - revisited. {\it Communications in Mathematical Sciences}, {\bf  8} (2), 627-- 637.

\bibitem{Marchand} Marchand, R. (2002). Strict inequalities for the time constant in first passage percolation. {\it Ann. Appl. Probab.}, {\bf 12}, 1001--1038.

\bibitem{Martin04} Martin, J. (2004). Limiting shape for directed percolation models. {\it Annals of Probab.}, {\bf 32}, 2908--2937.

\bibitem{Mourrat} Mourrat, J.-C. (2012). Lyapunov exponents, shape theorems and large deviations for the random walk in random potential {\it ALEA, Lat. Am. J. Probab. Math. Stat.}, {\bf 9}, 165--209. 

\bibitem{Newman} Newman, C. M. A surface view of first-passage percolation. Proceedings of the International Congress of Mathematicians, Vol. 1, 2 (Z\"urich, 1994), 1017--1023, Birkh\"auser, Basel, 1995.

\bibitem{Newmanbook} Newman, C. M. Topics in disordered systems. Springer, 1997.

\bibitem{NewmanPiza} Newman, C. M.  and Piza, M. (1995). Divergence of shape fluctuations in two dimensions. {\it Annals of Probab.}, {\bf 23}, 977--1005.


\bibitem{PP} Pemantle, R. and Peres, Y. (1994). Planar first-passage percolation times are not tight. {\it Probability and phase transition (Cambridge, 1993)}, 261-264, NATO Adv. Sci. Inst. Ser. C Math. Phys. Sci., 420, {\it Kluwer Acad. Publ., Dordrecht}.


\bibitem{Pimentel} Pimentel, L. P. R. (2007). Multitype shape theorems for first passage percolation models. {\it Adv. in Appl. Probab.}, {\bf 39}(1), 53--76.


\bibitem{Rhee} Rhee, W. (1995). On Rates of Convergence for Common Subsequences and First Passage Time. {\it Ann. Appl. Probab.}, {\bf  5}, 44--48.


\bibitem{Richardson} Richardson, D. (1973). Random growth in a tessellation. {\it Proc. Cambridge Philos. Soc.}, {\bf 74}, 515--528.

\bibitem{R06} Rossignol, R. (2006). Threshold for monotone symmetric properties through a logarithmic Sobolev inequality. {\it Annals of Probab.}, {\bf 35}, 1707--1725.

\bibitem{Rost} Rost, H. (1981). Nonequilibrium behaviour of a many particle process: density profile and local equilibria. {\it Z. Wahrsch. Verw. Gebiete}, {\bf 58}, 41--53.

\bibitem{RossignolTheret10} Rossignol, R. and Th\'{e}ret. (2010). Law of large numbers for the maximal flow through tilted cylinders in two-dimensional first passage percolation. {\it Stoch. Proc. Appl.,} {\bf 120}, 873--900.

\bibitem{RossignolTheret13} Rossignol, R. and Th\'{e}ret. (2013). Lower large deviations for the maximal flow through tilted cylinders in two-dimensional first passage percolation. {\it ESAIM: Probability and Statistics,} {\bf 17}, 70--104.

\bibitem{TSreview} Sepp\"{a}l\"{a}inen, T. (2009). Lecture Notes on the Corner Growth Model. {\it http://www.math.wisc.edu/~seppalai/cornergrowth-book/ajo.pdf}.

\bibitem{Seppa98} Sepp\"{a}l\"{a}inen, T. (1998). Exact limiting shape for a simplified model of first- passage percolation on the plane. {\it Annals of Probab.,} {\bf 26}(3), 1232--1250.


\bibitem{SW} Smythe, R. and  Wierman, J. C. (1977).
 First-Passage Percolation on the Square Lattice.~I.
{\it Advances in Applied Probability}, {\bf 9}, No. 1, 38--54. 

\bibitem{SW3}
Smythe, R. and  Wierman, J. C. (1978).
First-Passage Percolation on the Square Lattice.~III.
{\it Advances in Applied Probability}, {\bf 10}, No. 1,  155--171.

\bibitem{Sodin} Sodin, S. (2014). Positive temperature versions of two theorems on first-passage percolation. {\it Geometric Aspects of Functional Analysis}, {\bf 2116}, Lecture Notes in Mathematics, 441--453.

\bibitem{Steele} Steele, J. (1986). An Efron-Stein inequality for nonsymmetric statistics. {\it Annals of Statistics}, {\bf 14}, 753--758.


\bibitem{Talagrand_Russo} Talagrand, M. (1994). On Russo's approximate zero-one law. {\it Annals of Probab.}, {\bf 22}, 1576--1587.

\bibitem{Talagrand_IHES} Talagrand, M. (1995). Concentration of measure and isoperimetric inequalities in product spaces. {\it Publ. Math. I.H.E.S.}, {\bf 81}, 73--205.

\bibitem{Tessera} Tessera, R. (2014). Speed of convergence in first passage percolation and geodesicity of the average distance. {\it arXiv:1410.1701}.

\bibitem{Timar} Tim\'{a}r, A. (2013). Boundary-connectivity via graph theory. {\it Proceedings of the AMS}, {\bf 141}, 475--480.

\bibitem{TW} Tracy, C. A.; Widom, H. (1994). Level-spacing distributions and the Airy kernel, {\it Comm. in Math. Phys.}, {\bf 159}, 151--174


\bibitem{Wehr}
Wehr, J. (1997). On the number of infinite geodesics and ground states in disordered systems. {\it J. Statist. Phys.}, {\bf 87}, 439--447.

\bibitem{AW}   Wehr, J. and Aizenman, M. (1987). Fluctuations of extensive functions of quenched random couplings. {\it J. Stat. Phys.}, {\bf 60}, 287--306.

\bibitem{WW98}
Wehr, J. and Woo, J. (1998). Absence of geodesics in first-passage percolation on a half-plane. {\it Annals of Probab.}, {\bf 26}, 358--367.


\bibitem{Wierman} Wierman, J. C., (1980). Weak moment conditions for time coordinates in first-passage percolation models. {\it Journal of Applied Probability}, {\bf 17}, 968--978.

\bibitem{RW78}
Wierman, J. C. and Reh, W. (1978). On conjectures in first passage percolation theory. {\it Annals of Probab.}, {\bf 6}, 388--397.


\bibitem{Wolf} Wolf, D. E. and Kertesz, J. (1987). Noise reduction in Eden models: I. {\it J. Phys. A}, {\bf 20}, L257-L261.



\bibitem{Wuthrich} Wuthrich, M. V. (1998). Scaling identity for crossing Brownian motion in a Possonian potential. {\it Probab. Theory Related Fields,} {\bf 112}, 299--319.

\bibitem{vdbk} Van den Berg, J. and Kesten, H. (1993). Inequalities for the time constant in first-passage percolation. {\it Ann. Appl. Probab.}, {\bf 3}, 56--80.

\bibitem{vdBergkiss12} Van den Berg, J. and Kiss, D. (2012). Sublinearity of the travel-time variance for dependent first-passage percolation. {\it Annals of Probab.}, {\bf 40}, 743--764.

\bibitem{Remco} Van der Hofstad, R. (2014). Random Graphs and Complex Networks. Vol. II. {\it Online at http://www.win.tue.nl/\textasciitilde rhofstad/NotesRGCNII.pdf.}


\bibitem{Yao2014} Yao, C.-L. (2014). Law of large numbers for critical first-passage percolation on the triangular lattice. 	{\it Electronic Communications in Probability},{\bf 19}  1--14.

\bibitem{Yao2} Yao, C.-L. (2015). Limit theorems for critical first-passage percolation on the triangular lattice. {\it arXiv: 1602.00065}.

\bibitem{Zabolitsky} Zabolitsky, J. G. and Stauffer, D. (1986). Simulation of large Eden clusters. {\it Phys. Rev. A} {\bf 34} 1523--1530.

%
\bibitem{Zhang3} Zhang, Y. (2000). Critical behavior for maximal flows on the cubic lattice.
{\it J. Stat. Phys.} {\bf 98}, 799--811.

\bibitem{Zhangdoublebehavior} Zhang, Y. (1999). Double behavior of critical first-passage percolation. {\it Perplexing Problems in Probability}, 143--158.

\bibitem{Zhang2} Zhang, Y. (2010). On the concentration and the convergence rate with a moment condition in first-passage percolation. {\it Stoch. Proc. Appl.} {\bf 120} 1317-1341.
%

\bibitem{Zhang} Zhang, Y. (2008). Shape fluctuations are different in different directions. {\it Annals of Probab.}, {\bf 36}, 331--362.


\bibitem{Zhangsuper} Zhang, Y. (1995). Supercritical behaviors in first-passage percolation. {\it Stochastic Proc. Appl.} {\bf 59}, 251--266.


\end{thebibliography}
\end{document}